\numberwithin{equation}{subsection}
\newtheorem{theorem}{Theorem}
\newtheorem{proposition}{Proposition}[section]
\newtheorem{lemma}[proposition]{Lemma}
\newtheorem{corollary}[proposition]{Corollary}
\theoremstyle{definition}
\newtheorem{definition}{Definition}[section]
\newtheorem{remark}{Remark}[section]
\theoremstyle{remark}
\renewcommand{\theproposition}{\arabic{section}-\arabic{proposition}}
\newcommand{\eqdef}{\overset{\mbox{\tiny{def}}}{=}}
\newcommand{\dParameter}{\ell}
\newcommand{\Ldual}{{^{\star \hspace{-.03in}} \mathscr{L}}}
\newcommand{\Far}{\mathcal{F}}
\newcommand{\Gar}{\mathcal{G}}
\newcommand{\Max}{\mathcal{M}}
\newcommand{\Fardual}{{^{\star \hspace{-.03in}}\mathcal{F}}}
\newcommand{\FarMinkdual}{{^{\ostar \hspace{-.03in}}\mathcal{F}}}
\newcommand{\Maxdual}{{^{\star \hspace{-.05in}}\mathcal{M}}}
\newcommand{\MaxMinkdual}{{^{\ostar \hspace{-.05in}}\mathcal{M}}}
\newcommand{\Stress}{\dot{Q}}
\newcommand{\ualpha}{\underline{\alpha}}
\newcommand{\uL}{\underline{L}}
\newcommand{\ualphadot}{{^{\odot} \underline{\alpha}}}
\newcommand{\alphadot}{{^{\odot} \alpha}}
\newcommand{\rhodot}{{^{\odot} \rho}}
\newcommand{\sigmadot}{{^{\odot} \sigma}}
\newcommand{\unabla}{\underline{\nabla}}
\newcommand{\conenabla}{\overline{\nabla}}
\newcommand{\angn}{\not \nabla}
\newcommand{\Minkvolume}{\upsilon}
\newcommand{\uvolume}{\underline{\upsilon}}
\newcommand{\angupsilon}{{\not \upsilon}}
\newcommand{\angm}{\not m}
\newcommand{\um}{\underline{m}}
\newcommand{\coneproject}{\overline{\pi}}
\newcommand{\nablamod}{\hat{\nabla}}
\newcommand{\Lie}{\mathcal{L}}
\newcommand{\Liemod}{\hat{\mathcal{L}}}
\newcommand{\Electricfield}{E}
\newcommand{\Displacement}{D}
\newcommand{\Magneticinduction}{B}
\newcommand{\Magneticfield}{H}
\newcommand{\Farinvariant}{\mbox{\lightning}}
\newcommand{\decayparameter}{\upkappa}
\newcommand{\myarray}[2][]{\Big(
		\begin{array}{lr}
    	 #1 \\
    	 #2 
     \end{array} \Big)}
\begin{document}
\pagestyle{fancy}
\title{The Global Stability of the Minkowski Spacetime Solution to the Einstein-Nonlinear Electromagnetic System in Wave Coordinates}
\address{Princeton University, Department of Mathematics,
Fine Hall, Washington Road, Princeton, NJ 08544-1000\\ \indent USA}
\email{jspeck@math.princeton.edu}
\author{ Jared Speck$^*$}
\thanks{$^*$Princeton University, Department of Mathematics, Fine Hall, Washington Road, Princeton, NJ 08544-1000, USA}
\thanks{The author was supported in part by the Commission of the European Communities, ERC Grant Agreement No 208007. He was also supported in part by an NSF All-Institutes Postdoctoral Fellowship administered by the Mathematical Sciences Research Institute through its core grant DMS-0441170.}

\begin{abstract}
In this article, we study the coupling of the Einstein field equations of general relativity to a family of models of 
nonlinear electromagnetic fields. The family comprises all covariant electromagnetic models that satisfy the following criteria: they are derivable from a sufficiently regular Lagrangian, they reduce to the linear Maxwell model in the weak-field limit, and their corresponding energy-momentum tensors satisfy the dominant energy condition. Our main result is a proof of the global nonlinear stability of the $1 + 3-$dimensional Minkowski spacetime solution to the coupled system for any member of the family, which includes the linear Maxwell model. This stability result is a consequence of a small-data global existence result for a reduced system of equations that is equivalent to the original system in our wave coordinate gauge. Our analysis of the spacetime metric components is based on a framework recently developed by Lindblad and Rodnianski, which allows us to derive
suitable estimates for tensorial systems of quasilinear wave equations with nonlinearities that satisfy the weak null condition. Our analysis of the electromagnetic fields, which satisfy quasilinear first-order equations, is based on an extension of a geometric energy-method framework developed by Christodoulou, together with a collection of pointwise decay estimates for the Faraday tensor developed in the article. We work directly with the electromagnetic fields, and thus avoid the use of electromagnetic potentials.

\end{abstract}

\keywords{Born-Infeld; canonical stress; energy currents; global existence; Hardy inequality; Klainerman-Sobolev inequality;  Lagrangian field theory; nonlinear electromagnetism; null condition; null decomposition; quasilinear wave equation; vectorfield method; weak null condition}
\subjclass{Primary: 35A01; Secondary: 35L99; 35Q60; 35Q76; 78A25; 83C22; 83C50}
\date{Version of \today}
\maketitle

\setcounter{tocdepth}{2}

\pagenumbering{roman} 
\tableofcontents 
\newpage 

\pagenumbering{arabic}

\section{Introduction} \label{S:Introduction} 
The Einstein field equations of general relativity connect the \emph{Einstein tensor} $R_{\mu \nu} - \frac{1}{2}g_{\mu \nu} R,$
which contains information about the curvature of spacetime\footnote{By spacetime, we mean a four-dimensional time-oriented Lorentzian manifold $\mathfrak{M}$ together with a Lorentzian metric $g_{\mu \nu}$ of signature $(-,+,+,+).$} $(\mathfrak{M},g_{\mu \nu}),$ to the energy-momentum-stress-density tensor (energy-momentum tensor for short) $T_{\mu \nu},$ which contains information about the matter present in $\mathfrak{M}.$ Here, $g_{\mu \nu}$ is the \emph{spacetime metric}, $R_{\mu \nu}$ is the \emph{Ricci curvature tensor}, and $R = (g^{-1})^{\kappa \lambda} R_{\kappa \lambda}$ is the \emph{scalar curvature}. In this article, we show the stability of the $1 + 3-$dimensional vacuum Minkowski spacetime solution of the Einstein-nonlinear electromagnetic system

\begin{subequations}
\begin{align} 
	R_{\mu \nu} - \frac{1}{2}g_{\mu \nu} R & = T_{\mu \nu}, && (\mu, \nu = 0,1,2,3), 
		\label{E:IntroEinstein} \\
	(d \Far)_{\lambda \mu \nu} & = 0, && (\lambda, \mu, \nu = 0,1,2,3), \label{E:IntrodFaris0} \\
	(d \Max)_{\lambda \mu \nu} & = 0, && (\lambda, \mu, \nu = 0,1,2,3), \label{E:IntrodMis0}
\end{align}
\end{subequations}
where $T_{\mu \nu}$ (see \eqref{E:electromagnetictensorloweroinTermsofLagrangian}) is one of the energy-momentum tensors corresponding to a family of nonlinear models of electromagnetism, $d$ denotes the exterior derivative operator, the two-form $\Far_{\mu \nu}$ denotes the \emph{Faraday tensor}, the two-form $\Max$ denotes the \emph{Maxwell tensor}, and $\Max_{\mu \nu}$ is connected to $(g_{\mu \nu},\Far_{\mu \nu})$ through a constitutive relation. We make the following three assumptions concerning the electromagnetic matter model: $(i)$ its Lagrangian $\Ldual$ is a scalar-valued function of the two electromagnetic invariants\footnote{Throughout the article, we use Einstein's summation convention in that repeated indices are summed over.} $\Farinvariant_{(1)} \eqdef \frac{1}{2} (g^{-1})^{\kappa \mu} (g^{-1})^{\lambda \nu} \Far_{\kappa \lambda} \Far_{\mu \nu},$ $\Farinvariant_{(2)} \eqdef \frac{1}{4} (g^{-1})^{\kappa \mu} (g^{-1})^{\lambda \nu} \Far_{\kappa \lambda} \Fardual_{\mu \nu},$ where $\star$ denotes the Hodge duality operator corresponding to $g_{\mu \nu};$ $(ii)$ the energy-momentum tensor $T_{\mu \nu}$ corresponding to $\Ldual$ satisfies the \emph{dominant energy condition} (sufficient conditions on $\Ldual$ are given in \eqref{E:DECL1} - \eqref{E:DECTrace} below);
$(iii)$ $\Ldual$ is a sufficiently differentiable function of $(\Farinvariant_{(1)},$ $\Farinvariant_{(2)}),$ and its Taylor expansion around $(0,0)$ agrees with that of the linear Maxwell-Maxwell equations to first order; i.e., $\Ldual(\Farinvariant_{(1)}, \Farinvariant_{(2)}) = - \frac{1}{2} \Farinvariant_{(1)} + O^{\dParameter + 2}\big(|(\Farinvariant_{(1)}, \Farinvariant_{(2)})|^2 \big),$ where $\dParameter \geq 8$ is an integer; see Section \ref{SS:Oando} regarding the notation $O^{\dParameter + 2}(\cdots)$. We briefly summarize our main results here. They are rigorously stated and proved in Section \ref{S:GlobalExistence}.

\begin{changemargin}{.25in}{.25in} 
\textbf{Main Results.} \
The vacuum Minkowski spacetime background solution to the system \eqref{E:IntroEinstein} - \eqref{E:IntrodMis0} is globally stable. In particular, small perturbations of the trivial initial data corresponding to the background solution have maximal globally hyperbolic developments that are geodesically complete. Furthermore, the perturbed solution converges to the vacuum Minkowski spacetime solution. These conclusions are consequences of a small-data global existence result for the \emph{reduced} system \eqref{E:Reducedh1Summary} - \eqref{E:ReduceddMis0Summary}, which is equivalent to the study of \eqref{E:IntroEinstein} - \eqref{E:IntrodMis0} in a wave coordinate system (i.e., a coordinate system 
$\lbrace x^{\mu} \rbrace_{\mu = 0,1,2,3}$ on $\mathbb{R}^{1+3}$ satisfying 
$(g^{-1})^{\kappa \lambda} \mathscr{D}_{\kappa} \mathscr{D}_{\lambda} x^{\mu} = 0, (\mu = 0,1,2,3),$ where 
$\mathscr{D}$ is the Levi-Civita connection corresponding to $g_{\mu \nu}$).
\end{changemargin}

We recall the following standard facts (see e.g. \cite{dC2008}, \cite{rW1984}) concerning the initial data for the system \eqref{E:IntroEinstein} - \eqref{E:IntrodMis0}, which we refer to as ``abstract'' initial data. The abstract initial data consist of a $3-$dimensional manifold $\Sigma_0,$ together with the following fields on $\Sigma_0:$ a Riemannian metric $\mathring{\underline{g}}_{jk},$ a symmetric type $\binom{0}{2}$ tensorfield $\mathring{K}_{jk},$ and a pair of electromagnetic one-forms $\mathring{\mathfrak{D}}_j, \mathring{\mathfrak{B}}_j,$ $(j,k =1,2,3).$ Furthermore, they must satisfy the \emph{Gauss}, \emph{Codzazzi}, and \emph{electromagnetic} constraint equations, which are respectively given by

\begin{subequations}
\begin{align}
	\underline{\mathring{R}} - \mathring{K}_{ab} \mathring{K}^{ab} + \big[(\mathring{\underline{g}}^{-1})^{ab} \mathring{K}_{ab}\big]^2 & = 
		2T(\hat{N},\hat{N})|_{\Sigma_0}, && \label{E:GaussIntro} \\
	(\mathring{\underline{g}}^{-1})^{ab} \underline{\mathring{\mathscr{D}}}_a \mathring{K}_{bj} - (\mathring{\underline{g}}^{-1})^{ab}  \underline{\mathring{\mathscr{D}}}_j \mathring{K}_{ab} & = 
		T(\hat{N},\frac{\partial}{\partial x^j})|_{\Sigma_0}, && (j=1,2,3), \label{E:CodazziIntro} \\
	(\mathring{\underline{g}}^{-1})^{ab} \underline{\mathring{\mathscr{D}}}_a \mathring{\mathfrak{\Displacement}}_b & = 0,&& 
		\label{E:DivergenceD0Intro} \\
	(\mathring{\underline{g}}^{-1})^{ab} \underline{\mathring{\mathscr{D}}}_a \mathring{\mathfrak{\Magneticinduction}}_b & = 0.&& 
		\label{E:DivergenceB0Intro}
\end{align}
\end{subequations}
In the above expressions, the indices are lowered and raised with $\mathring{\underline{g}}_{jk}$ and $(\mathring{\underline{g}}^{-1})^{jk},$ $\underline{\mathring{R}}$ denotes the scalar curvature of 
$\mathring{\underline{g}}_{jk},$ $\underline{\mathring{\mathscr{D}}}$ denotes the Levi-Civita connection corresponding to $\mathring{\underline{g}}_{jk},$ and $\hat{N}^{\mu}$ is the future-directed unit $g-$normal to $\Sigma_0$ (viewed as an embedded submanifold of $(\mathfrak{M},g_{\mu \nu})$). The one-forms $\mathring{\mathfrak{D}}_j$ and $\mathring{\mathfrak{B}}_j$
together form a geometric decomposition of $\Far_{\mu \nu}|_{\Sigma_0},$ and the right-hand sides of \eqref{E:GaussIntro} - \eqref{E:CodazziIntro} can be computed (in principle) in terms of $\mathring{\underline{g}}_{jk},$ $\mathring{\mathfrak{\Displacement}}_j,$ and $\mathring{\mathfrak{\Magneticinduction}}_j$ alone; see Section \ref{SS:EBDH} for more details concerning the relationship of $\mathring{\mathfrak{D}}_j$ and $\mathring{\mathfrak{B}}_j$ to $\Far_{\mu \nu}|_{\Sigma_0}.$ The dominant energy condition manifests itself along $\Sigma_0$ as the inequalities $T(\hat{N},\hat{N}) \geq 0$ and $T(\hat{N},\hat{N})^2 - (\mathring{\underline{g}}^{-1})^{ab} T(\hat{N},\frac{\partial}{\partial x^a})T(\hat{N},\frac{\partial}{\partial x^b}) \geq 0.$ 

In this article, we consider the case $\Sigma_0 = \mathbb{R}^3.$ We will construct spacetimes of the form $\mathfrak{M} = I \times \mathbb{R}^3,$ where $I$ will be a time interval, and $\Sigma_0$ will be a spacelike Cauchy hypersurface in $(\mathfrak{M},g_{\mu \nu}).$ The constraints \eqref{E:GaussIntro} - \eqref{E:CodazziIntro} are necessary to ensure that \eqref{E:IntroEinstein} can be satisfied along $\Sigma_0,$ while the constraints \eqref{E:DivergenceD0Intro} - \eqref{E:DivergenceB0Intro} are necessary to ensure that the electromagnetic equations \eqref{E:IntrodFaris0} - \eqref{E:IntrodMis0} can be satisfied along $\Sigma_0.$ Our stability criteria include both decay assumptions at $\infty$ and smallness assumptions for the abstract initial data. We provide here a description of our decay assumptions at $\infty,$ which are based on the assumptions of \cite{hLiR2010}; our smallness assumptions will be addressed in detail in Section \ref{S:SmallDataAssumptions}.


\textbf{Assumptions on the abstract initial data:}
We assume that there exists a global coordinate chart $x = (x^1,x^2,x^3)$ on $\Sigma_0 = \mathbb{R}^3,$ a real number $\decayparameter > 0,$ and an integer $\dParameter \geq 8$ such that (with $r \eqdef |x| \eqdef \big[(x^1)^2 + (x^2)^2 + (x^3)^2 \big]^{1/2}$ and $j,k = 1,2,3$)

\begin{subequations}
\begin{align}
	\mathring{\underline{g}}_{jk} & = \delta_{jk} + \mathring{\underline{h}}_{jk}^{(0)} + \mathring{\underline{h}}_{jk}^{(1)},
	&& \label{E:metricdataexpansion} \\
	\mathring{\underline{h}}_{jk}^{(0)} & = \chi(r) \frac{2M}{r} \delta_{jk}, && \chi(r) \ \mbox{is defined in} \ 
		\eqref{E:chidef}, \label{E:h0AbstractDataAsymptotics} \\
	\mathring{\underline{h}}_{jk}^{(1)} & =  o^{\dParameter+1}(r^{-1 - \decayparameter}), && \mbox{as} \ r \to \infty, \label{E:h1AbstractDataAsymptotics} \\
	\mathring{K}_{jk} & = o^{\dParameter}(r^{-2 - \decayparameter}), && \mbox{as} \ r \to \infty, \label{E:KAbstractDataAsymptotics} \\
	\mathring{\mathfrak{\Displacement}}_j & = o^{\dParameter}(r^{-2 - \decayparameter}), && \mbox{as} \ r \to \infty, \\
	\mathring{\mathfrak{\Magneticinduction}}_j & = o^{\dParameter}(r^{-2 - \decayparameter}),&& \mbox{as} \ r \to \infty,
	\label{E:BdecayAssumption}
\end{align}
\end{subequations}
where the meaning of $o^{\dParameter}(\cdots)$ is described in Section \ref{SS:Oando}.

The parameter $M$ in \eqref{E:metricdataexpansion}, which is known as the \emph{ADM mass}, is constrained by the following requirements: according to the \emph{positive mass theorem} of Schoen-Yau \cite{rSstY1979}, \cite{rSstY1981}, and Witten \cite{eW1981}, under the assumption that $T_{\mu \nu}$ satisfies the dominant energy condition, the only solutions $\mathring{\underline{g}}_{jk}$ to the constraint equations \eqref{E:GaussIntro} - \eqref{E:DivergenceB0Intro} that have an expansion of the form \eqref{E:metricdataexpansion} with the asymptotic behavior \eqref{E:h0AbstractDataAsymptotics} - \eqref{E:KAbstractDataAsymptotics} either have $M > 0,$ or have $M = 0$ and $\mathring{\underline{g}}_{jk} = \delta_{jk}.$ The groundbreaking work \cite{dCsK1993} of Christodoulou and Klainerman (which is discussed further in Section \ref{SSS:MathematicalComparisons}) demonstrated the stability of the Minkowski spacetime solution to the Einstein-vacuum equations in the case that the initial data are \emph{strongly asymptotically flat}, which corresponds to the parameter range $\decayparameter \geq 1/2$ in the above expansions. Our work here, which relies on the framework developed by Lindblad and Rodnianski in \cite{hLiR2010} (see Section \ref{SSS:MathematicalComparisons}), allows for the parameter range $\kappa > 0.$

In this article, we do not consider the issue of solving the constraint equations. 
To the best of our knowledge, under the restrictions on $\Ldual$ described at the beginning of Section \ref{S:Introduction},
there are presently no rigorous results concerning the construction of initial data on the manifold $\mathbb{R}^3$ that satisfy the constraints. However, we remark that for the Einstein-vacuum equations $T_{\mu \nu} \equiv 0,$ initial data that satisfy the constraints and that coincide with the standard Schwarzschild data

\begin{subequations}
\begin{align}
	\mathring{\underline{g}}_{jk} & = \big(1 + \frac{2M}{r} \big) \delta_{jk},\\
	\mathring{K}_{jk} & = 0
\end{align}
\end{subequations}
outside of the unit ball centered at the origin were shown to exist in \cite{pCeD2002erratum} - \cite{pCeD2002} and \cite{jC2000}. The stability of the Minkowski spacetime solution to the Einstein-vacuum equations for such data follows from the methods of the aforementioned works \cite{dCsK1993}, \cite{hLiR2010} (and its precursor \cite{hLiR2005}), and also from the \emph{conformal method} approach of Friedrich \cite{hF1986a}.

\begin{remark}
	The only role of the dominant energy condition in this article is to ensure the physical condition $M \geq 0;$ 
	we assume this physical condition throughout the article. However, although the 
	smallness of $|M|$ is needed to prove our global stability result, the sign of $M$ does not enter into the stability 
	analysis. In particular, if there existed small initial data with small negative ADM mass, we would still be able to prove 
	that the corresponding solution to the equations exists globally. Similarly, if we made the replacement
	$T_{\mu \nu} \rightarrow - T_{\mu \nu}$ in the reduced equations \eqref{E:Reducedh1Summary} - \eqref{E:ReduceddMis0Summary},
	we could still prove a small-data global existence result.
\end{remark}

\subsection{Comparison with previous work}

\subsubsection{Mathematical comparisons} \label{SSS:MathematicalComparisons}

Our result is an extension of a large and growing hierarchy of stability results for the ${1 + 3-}$dimensional
Minkowski spacetime solution to the Einstein equations, which began with the celebrated work \cite{dCsK1993} of Christodoulou and Klainerman, and which was later replicated by Klainerman and Nicol{\`o} in \cite{sKfN2003} using alternate techniques. Both of these proofs used a manifestly covariant framework for both the formulation of the problem and the derivation of the estimates. However, mathematically speaking, the closest relatives to the present article are the seminal works \cite{hLiR2005} and \cite{hLiR2010}, in which Lindblad and Rodnianski developed a technically simpler framework for showing the stability of the vacuum Minkowski spacetime solution of the Einstein-scalar field system using a \emph{wave coordinate} gauge. Although their decay estimates are not as precise as those of \cite{dCsK1993} and \cite{sKfN2003}, their work was \emph{much} shorter than its predecessors, yet is robust enough to allow for modifications, including the presence of the nonlinear electromagnetic fields examined in this article. We remark that many of the technical results we need are contained in \cite{hLiR2005} and \cite{hLiR2010} and we will often direct the reader to these works for their proofs. 

Other stability results in this vein include \cite{nZ2000}, in which Zipser extended the framework of \cite{dCsK1993} to show the stability of the vacuum Minkowski spacetime solution to the Einstein-Maxwell system, and \cite{lBnZ2009}, in which Bieri weakened the assumptions of \cite{dCsK1993} on the decay of the initial data at infinity. We also mention the works \cite{jL2008} (see also \cite{jL2006}, \cite{jL2009}), in which Loizelet used the framework of \cite{hLiR2005} and \cite{hLiR2010} to demonstrate the stability of the vacuum Minkowski spacetime solution of the Einstein-scalar field-Maxwell system in $1 + n,$ $n \geq 3,$ dimensions. Moreover, in spacetimes of dimension $1 + n,$ with $n \geq 5$ odd, it has been shown \cite{yCBpCjL2006} that the \emph{conformal method} can be used to show the stability of the Einstein-Maxwell system for initial data that coincide with the standard Schwarzschild data outside of a compact set. Roughly speaking, the conformal method is a way of mapping a global existence problem into a local existence problem. Whenever it is available, the method tends to give very precise information concerning the asymptotics of the global solutions. In particular, the results of \cite{yCBpCjL2006} provide a more detailed description of the asymptotics than the results of \cite{jL2008}.

We state with emphasis that the techniques used in this article differ in a fundamental way from those used by Loizelet in \cite{jL2008}. More specifically, in \cite{jL2008}, Loizelet analyzed the familiar linear Maxwell-Maxwell\footnote{Our use of the terminology ``Maxwell-Maxwell'' equations, which are commonly referred to as the ``Maxwell'' equations,
is explained in \cite{jS2010a}.} equations through the use of a four-potential\footnote{Recall that a four-potential is a one-form $A_{\mu}$ such that $\Far_{\mu \nu} = (dA)_{\mu \nu}.$} $A_{\mu}$ satisfying the \emph{Lorenz gauge} condition $(g^{-1})^{\kappa \lambda} \mathscr{D}_{\kappa} A_{\lambda} = 0,$ where $\mathscr{D}$ is the Levi-Civita connection corresponding to $g_{\mu \nu}.$ In Loizelet's analysis of the linear Maxwell-Maxwell equations, the Lorenz gauge leads to a system of linear wave equations for the components $A_{\mu}.$ Furthermore, these equations can be analyzed using the same techniques that are used in the study of the components of the metric (see equation \eqref{E:Reducedh1Summary}) and the scalar field. In particular, in Loizelet's case, Lemma \ref{L:weightedenergy} can be used to deduce suitable weighted energy estimates for the components $\nabla_{\mu} A_{\nu}.$ In contrast, as discussed in \cite{jS2010a}, it is not clear that Lorenz gauge can be used for the kinds of quasilinear electromagnetic field equations \eqref{E:IntrodMis0} studied in this article. More specifically, it is not clear that the Lorenz gauge in general leads to a hyperbolic formulation of the electromagnetic equations that is suitable for deriving the kinds of $L^2$ energy estimates needed for our analysis. For this reason, throughout this article, we work directly with the Faraday tensor. In particular, as described in detail in Section \ref{E:EOVandStress}, we use Christodoulou's geometric framework \cite{dC2000} to generate \emph{energy currents} that can be used to derive the kinds of $L^2$ estimates needed in our analysis. In this way, we prove Lemma \ref{L:weightedenergyFar}, which compensates for the fact that Lemma \ref{L:weightedenergy} is not generally available for controlling the electromagnetic quantities. We remark that there is another advantage to working directly with the Faraday tensor: \emph{our smallness condition for stability depends only on the physical field variables, and not on auxiliary mathematical quantities such as the values achieved by the components $\nabla_{\mu} A_{\nu}.$}

Now roughly speaking, the reason that we are able to prove our main stability result is because in our wave coordinate gauge
(see the discussion in Section \ref{SSS:Settinguptheequations}), the nonlinear terms have a special algebraic structure, which Lindblad and Rodnianski have labeled \cite{hLiR2003} \emph{the weak null condition}. We remark that in order for small-data global existence to hold, it is essential that the quadratic nonlinearities have special structure: John's blow-up result \cite{fJ1981} shows that quadratic perturbations of the linear wave equation in $1 + 3$ dimensional Minkowski space (of which our equations \eqref{E:Reducedh1Intro} below are an example), \emph{do not necessarily} have small-data global existence. Now by definition, a system of PDEs satisfies the weak null condition if the corresponding \emph{asymptotic system} has 
small-data global solutions. The asymptotic system is obtained by discarding cubic and higher order terms, and also derivatives that are tangential to the outgoing Minkowskian null cones (see the discussion in Section \ref{SSS:GeometryandNullDecompositions}); the discarded terms are expected to decay faster than the remaining terms. The general philosophy is that if the asymptotic system has small-data global existence, then one should be hopeful that the original system does too. In \cite{hLiR2010}, Lindblad and Rodnianski showed that the asymptotic system corresponding to the Einstein-scalar field system in wave coordinates has global solutions. Although we do not carry out such an analysis in this article, we remark that it can be checked that the asymptotic system\footnote{To obtain this asymptotic system, one also discards the quadratic terms containing the fast-decaying null components $\alpha[\Far], \rho[\Far]$ and $\sigma[\Far]$ of the Faraday tensor; see Section \ref{SSS:GeometryandNullDecompositions}.} corresponding to the Einstein-nonlinear electromagnetic system in wave coordinates also has global solutions. This was our original motivation for pursuing the present work.

The aforementioned weak null condition is a generalization of the classic \emph{null condition} of Klainerman \cite{sK1986} (see also Christodoulou's work \cite{dC1986}), in which the quadratic nonlinearities are \emph{standard null forms} (which are defined below in the statement of Lemma \ref{L:RicciInWave}). By now, there is a very large body of global existence and almost-global existence results that are based on the analysis of quadratic nonlinearities that satisfy generalizations of Klainerman's null condition. This includes the stability results for the Einstein equations mentioned above, but also many other results; there are far too many to list exhaustively, but we mention the following as examples: \cite{sK2005}, \cite{sKtS1996}, \cite{hL2004}, \cite{hL2008}, \cite{jMcS2007}, \cite{jMmNcS2005b}, \cite{tS1996}, \cite{jS2010a}.

\subsubsection{Connections to the ``divergence'' problem}

One of the most important unresolved issues in physics is that of the so-called ``divergence problem.'' In the setting of classical electrodynamics, this problem manifests itself as the unhappy fact that the familiar linear Maxwell-Maxwell equations with \emph{point charge} sources (i.e., delta function source terms), together with the \emph{Lorentz force law}\footnote{Recall that the Lorentz force is $F_{Lorentz} = q [\Electricfield + v \times \Magneticinduction],$ where $q$ is the charge associated to the point charge, $\Electricfield$ is the electric field, $v$ is the instantaneous point charge velocity, and $\Magneticinduction$ is the magnetic induction field.}, do not comprise a well-defined system of equations. This is because the theory dictates that the Lorentz force at the location of a point charge is ``infinite in all directions,'' so that the charge's motion is ill-defined. A further symptom of the divergence problem in this theory is that the energy of a static point charge is infinite. Moreover, our present-day flagship model of quantum electrodynamics (QED), which is based on a quantization of the classical Maxwell-Dirac field equations, has not yet fixed the crux of the problem; similar manifestations of the divergence problem arise in QED; see \cite{mK2004a}, \cite{mK2004b} for a detailed discussion of these issues.

Now in \cite{mK2004a}, \cite{mK2004b}, Kiessling has taken a preliminary step in the direction of resolving the divergence problem by reconsidering classical electrodynamics. One of Kiessling's primary strategies is to follow the lead of of Max Born \cite{mB1933} by replacing the linear Maxwell-Maxwell equations with a suitable nonlinear system, the hope being that it will be possible to make rigorous mathematical sense of the motion of point charges in the nonlinear theory. As is discussed below, Kiessling's leading candidate is the Maxwell-Born-Infeld (MBI) model \cite{mBlI1934} of classical electromagnetism, a model put forth by Born and Infeld in $1934$ based on Born's earlier ideas. The electromagnetic Lagrangian for this model is 
\begin{align} \label{E:LMBI}
	\Ldual_{(MBI)}  \eqdef \frac{1}{\upbeta^4} - \frac{1}{\upbeta^4} \big(1 + \upbeta^4 \Farinvariant_{(1)} - \upbeta^8 
		\Farinvariant_{(2)}^2 \big)^{1/2} = \frac{1}{\upbeta^4} - \frac{1}{\upbeta^4} \big(\mbox{det}_g(g + \Far) \big)^{1/2},
\end{align}
where $\upbeta > 0$ denotes \emph{Born's ``aether'' constant.} We point out that as verified in e.g. \cite{jS2010a}, this Lagrangian satisfies the assumptions \eqref{E:Ldualassumptions} and \eqref{E:DECL1} - \eqref{E:DECTrace} below, so that the main results of this article apply to the MBI model. Now it turns out that it was not enough for Kiessling to simply replace the linear Maxwell-Maxwell equations with the Maxwell-Born-Infeld equations, for such a modification fails to fix the problem of the Lorentz force being ill-defined at the location of the point charge. On the other hand, in MBI theory on the Minkowski spacetime background, there exist \emph{Lipschitz-continuous} electromagnetic potentials corresponding to single static point charge solutions to the field equations. Kiessling observed that this level of regularity is (just barely) sufficient for a relativistic version of Hamilton-Jacobi theory to be well-defined; he thus proposed a new relativistic Hamilton-Jacobi ``guiding law'' of motion for the point charges (see \cite{mK2004a} for the details).

Kiessling's interest in the Maxwell-Born-Infeld system was further motivated by results contained in \cite{gB1969} and \cite{jP1970}, which show that it is the unique\footnote{More precisely, there is a one-parameter family of such theories indexed by $\upbeta > 0.$} theory of classical electromagnetism that is derivable from an action principle and that satisfies the following $5$ postulates (see also the discussion in \cite{iBB1983}, \cite{mK2004a}):

\begin{enumerate}
	\item The field equations transform covariantly under the Poincar\'e group.
	\item The field equations are covariant under a Weyl (gauge) group.
	\item The electromagnetic energy surrounding a stationary point charge is finite.
	\item The field equations reduce to the linear Maxwell-Maxwell equations in the weak field limit.
	\item The solutions to the field equations are not birefringent.
\end{enumerate}
We remark that the linear Maxwell-Maxwell system satisfies all of the above postulates except for (iii), and that
the MBI system was shown to satisfy (iii) by Born in \cite{mB1933}. Physically, postulate (v) is equivalent to the statement that the ``speed of light propagation'' is independent of the polarization of the wave fields. Mathematically, this is the postulate that there is only a single \emph{null cone}\footnote{In general this ``light cone'' does not have to coincide with the gravitational null cone, although it \emph{does} in the case of the linear Maxwell-Maxwell equations.} associated to the electromagnetic equations; in a typical theory of classical electromagnetism, the causal structure of the electromagnetic equations is more complicated than the structure corresponding to a single null cone (see \cite{jS2010a} for a detailed discussion of this issue in the context of the Maxwell-Born-Infeld equations on the Minkowski spacetime background).

It is here that we can mention the connection of the present article to Kiessling's work. First, as noted in \cite{mK2004a}, 
Kiessling expects that his theory can be generalized to the case of a curved spacetime through a coupling to the Einstein equations. Next, we mention that although the Maxwell-Born-Infeld system is Kiessling's leading candidate for an electromagnetic model, he is also considering other models. In particular, by relaxing postulate (v) above, a relaxation that in principle could be supported by experimental evidence, one is led to consider a larger family of electromagnetic models. Now one basic criterion for any viable electromagnetic model is that small, nearly linear-Maxwellian electromagnetic fields in near-Minkowski vacuums should not lead to a severe breakdown in the structure of spacetime or other degenerate behavior. The present work confirms this criterion for a large family of electromagnetic models coupled to the Einstein equations, including the Maxwell-Born-Infeld system and many other models that fall under the scope of Kiessling's program.

\subsection{Discussion of the analysis}
\subsubsection{The splitting of the spacetime metric and setting up the equations} \label{SSS:Settinguptheequations}

As in the works \cite{hLiR2005} and \cite{hLiR2010}, in order to analyze the spacetime metric,
we split it into the following three pieces:

\begin{subequations}
\begin{align} 
	g_{\mu \nu} & = m_{\mu \nu} + h_{\mu \nu}, && (\mu, \nu = 0,1,2,3), \label{E:gmhexpansion} \\
	h_{\mu \nu} & = h_{\mu \nu}^{(0)} + h_{\mu \nu}^{(1)}, && (\mu, \nu = 0,1,2,3), \label{E:hdefIntro} \\
	h_{\mu \nu}^{(0)} & \eqdef \chi\big(\frac{r}{t}\big)\chi(r)\frac{2M}{r} \delta_{\mu \nu}, \ \Big(h_{\mu \nu}^{(0)}|_{t = 0} = 	
	\chi(r)\frac{2M}{r} \delta_{\mu \nu}, \ \partial_t h_{\mu \nu}^{(0)}|_{t = 0} = 0 \Big), && (\mu, \nu = 0,1,2,3), 	
		\label{E:h0defIntro}
\end{align} 
\end{subequations}
where $m_{\mu \nu} = \mbox{diag}(-1,1,1,1)$ is the Minkowski metric, and the function
$\chi$ plays several roles that will be discussed in Section \ref{SSS:h0}. Above and throughout,
$\chi(z)$ is a fixed cut-off function that satisfies

\begin{align}
	\chi \in C^{\infty}, \qquad \chi \equiv 1 \ \mbox{for} \ z \geq 3/4, \qquad \chi \equiv 0 \ \mbox{for} \ z \leq 1/2.
\end{align}
\textbf{We remark that here and throughout the rest of the article, unless we explicitly indicate otherwise 
(which, as is explained in Section \ref{SS:Indices}, we sometimes do with the use of the symbol $\#$), all indices on all tensors are lowered and raised with the Minkowski metric $m_{\mu \nu} = \mbox{diag}(-1,1,1,1)$ and its inverse $(m^{-1})^{\mu \nu} = \mbox{diag}(-1,1,1,1)$.} Furthermore, as in \cite{hLiR2005} and \cite{hLiR2010}, we work in a wave coordinate system, which is a coordinate system in which the contracted Christoffel symbols 
$\Gamma^{\mu} \eqdef (g^{-1})^{\kappa \lambda} \Gamma_{\kappa \ \lambda}^{\ \mu}$ (see \eqref{E:EMBIChristoffeldef}) of the metric $g_{\mu \nu}$ satisfy

\begin{align} \label{E:Wavecoordinateintro}
	\Gamma^{\mu} & = 0,&& (\mu = 0,1,2,3).
\end{align}
We remark that several equivalent definitions of the wave coordinate condition \eqref{E:Wavecoordinateintro} are discussed in Section \ref{SS:WaveCoordinates}, and that the viability of the wave coordinate gauge for the system \eqref{E:IntroEinstein} - \eqref{E:IntrodMis0} (which is a rather standard result based on the ideas of \cite{CB1952}) is discussed in Section \ref{SS:WaveCoordinatesPreserved}.

As is discussed in detail in Section \ref{SS:ReducedEquations}, in a wave coordinate system $(t,x),$ the equations \eqref{E:IntroEinstein} - \eqref{E:IntrodMis0} are equivalent to the \emph{reduced equations} 

\begin{subequations}
\begin{align}
	\widetilde{\Square}_{g} h_{\mu \nu}^{(1)} & = \mathfrak{H}_{\mu \nu} - \widetilde{\Square}_{g} h_{\mu \nu}^{(0)},&& 
		(\mu, \nu = 0,1,2,3), \label{E:Reducedh1Intro} \\
	\nabla_{\lambda} \Far_{\mu \nu} + \nabla_{\mu} \Far_{\nu \lambda} + \nabla_{\nu} \Far_{\lambda \mu} & = 0,&&
		(\lambda, \mu, \nu = 0,1,2,3), \label{E:ReduceddFis0Intro} \\
	N^{\#\mu \nu \kappa \lambda} \nabla_{\mu} \Far_{\kappa \lambda} & = \mathfrak{F}^{\nu},&& (\nu = 0,1,2,3),
		\label{E:ReduceddMis0Intro} 
\end{align}
\end{subequations}
where $\widetilde{\Square}_{g} = (g^{-1})^{\kappa \lambda} \nabla_{\kappa} \nabla_{\lambda}$ is the reduced wave operator corresponding 
to $g_{\mu \nu},$ $\nabla$ is the Levi-Civita connection corresponding to the \emph{Minkowski metric} $m_{\mu \nu},$
$N^{\#\mu \nu \kappa \lambda} \eqdef \frac{1}{2} \big\lbrace (m^{-1})^{\mu \kappa} (m^{-1})^{\nu \lambda} - (m^{-1})^{\mu \lambda} (m^{-1})^{\nu \kappa} - h^{\mu \kappa} (m^{-1})^{\nu \lambda} + h^{\mu \lambda} (m^{-1})^{\nu \kappa} - (m^{-1})^{\mu \kappa} h^{\nu \lambda} + (m^{-1})^{\mu \lambda} h^{\nu \kappa} \big\rbrace + N_{\triangle}^{\#\mu \nu \kappa \lambda},$ $N_{\triangle}^{\#\mu \nu \kappa \lambda} = O^{\dParameter}\big(|(h,\Far)|^2\big)$ is a quadratic error term that depends on the chosen model of nonlinear electromagnetism, and $\mathfrak{H}_{\mu \nu},$ $\mathfrak{F}^{\nu}$ are inhomogeneous terms that depend in part on on the chosen model of nonlinear electromagnetism.

The question of the stability of the Minkowski spacetime solution to \eqref{E:IntroEinstein} - \eqref{E:IntrodMis0}
has thus been reduced to two subquestions: i) show that the reduced system \eqref{E:Reducedh1Intro} - \eqref{E:ReduceddMis0Intro}, where the unknowns are viewed to be $(h_{\mu \nu}^{(1)},\Far_{\mu \nu}),$ has small-data global existence (if the ADM mass $M$ is sufficiently small); ii) show that the resulting spacetime $(\mathbb{R}^{1+3}, g_{\mu \nu} = m_{\mu \nu} + h_{\mu \nu}^{(0)} + h_{\mu \nu}^{(1)})$ is geodesically complete. The second question is very much related to the first, for as in \cite[Section 16]{hLiR2005}, \cite[Section 9]{jL2008}, 
the question of geodesic completeness can be answered if one has sufficiently detailed information about the asymptotic behavior of $h_{\mu \nu}^{(1)};$ our stability theorem (see Section \ref{S:GlobalExistence}) provides sufficient information.

\subsubsection{The smallness condition}
Our smallness condition the abstract initial data is stated in terms of the ADM mass $M$ and a weighted Sobolev norm
of the abstract initial data $\underline{\nabla}_i \mathring{\underline{h}}^{(1)}_{jk},$ $\mathring{K}_{jk},$
$\mathring{\mathfrak{\Displacement}}_j,$ and $\mathring{\mathfrak{\Magneticinduction}}_k.$ More specifically, in order to deduce global existence, we will require that

\begin{align}
	E_{\dParameter;\upgamma}(0) + M < \varepsilon_{\dParameter},
\end{align}	
where $\varepsilon_{\dParameter} > 0$ is a sufficiently small positive number, $E_{\dParameter;\upgamma}(0) \geq 0$ is defined by

\begin{align}   \label{E:DataNormIntro}
	E_{\dParameter;\upgamma}^2(0) 
	& \eqdef \| \underline{\nabla} \mathring{\underline{h}}^{(1)} \|_{H_{1/2 + \upgamma}^{\dParameter}}^2 
		\ + \ \| \mathring{K} \|_{H_{1/2 + \upgamma}^{\dParameter}}^2 
		\ + \ \| \mathring{\mathfrak{\Displacement}} \|_{H_{1/2 + \upgamma}^{\dParameter}}^2 
		\ + \ \| \mathring{\mathfrak{\Magneticinduction}} \|_{H_{1/2 + \upgamma}^{\dParameter}}^2,
\end{align}
the weighted Sobolev norm $\| \cdot \|_{H_{1/2 + \upgamma}^{\dParameter}}$ is defined in 
Definition \ref{D:HNdeltanorm} below, $0 < \upgamma < 1/2$ is a constant, and $\dParameter \geq 8$ is an integer. The condition $\dParameter \geq 8$ is needed for various weighted Sobolev embedding results, including the weighted Klainerman-Sobolev inequality \eqref{E:KSIntro}, and the results stated in Appendix \ref{A:SobolevMoser}. In the above expressions, $\underline{\nabla}$ is the Levi-Civita connection corresponding to the Euclidean metric\footnote{Throughout the article, we use the symbol 
$\um$ to denote both the Euclidean metric $\um_{jk} \eqdef \mbox{diag}(1,1,1)$ on $\mathbb{R}^3,$ and the 
first fundamental form $\um_{\mu \nu} \eqdef \mbox{diag}(0,1,1,1)$ of the constant time hypersurfaces
$\Sigma_t$ viewed as embedded hypersurfaces of Minkowski spacetime; this double-use of notation should not cause any confusion.} $\um_{jk} \eqdef \mbox{diag}(1,1,1).$ 
Note that the assumed fall-off conditions \eqref{E:h1AbstractDataAsymptotics} - \eqref{E:BdecayAssumption} 
guarantee the existence of a constant $0 < \upgamma < 1/2$ such that $E_{\dParameter;\upgamma}(0) < \infty.$

Although the norm \eqref{E:DataNormIntro} is useful for expressing the small-data global existence condition
in terms of quantities inherent to the data, from the perspective of analysis, a more useful quantity is
the energy ${\mathcal{E}_{\dParameter;\upgamma;\upmu}(t) \geq 0},$ which is defined by

\begin{align} \label{E:EnergyIntro}
	\mathcal{E}_{\dParameter;\upgamma;\upmu}^2(t) & \eqdef \underset{0 \leq \tau \leq t}{\mbox{sup}} 
		\sum_{|I| \leq \dParameter } \int_{\Sigma_{\tau}} 
		\Big\lbrace |\nabla \nabla_{\mathcal{Z}}^I h^{(1)}|^2 + |\Lie_{\mathcal{Z}}^I \Far|^2 \Big\rbrace w(q) \, d^3 x,
\end{align}
where $\nabla$ denotes the Levi-Civita connection corresponding to the \emph{full Minkowski spacetime metric}, 
$q \eqdef |x| - t$ is a null coordinate, the weight function $w(q)$ is defined by

\begin{align} \label{E:weightintro}
	w = w(q) = \left \lbrace
		\begin{array}{lr}
    	1 \ + \ (1 + |q|)^{1 + 2 \upgamma}, &  \mbox{if} \ q > 0, \\
      1 \ + \ (1 + |q|)^{-2 \upmu}, & \mbox{if} \ q < 0,
    \end{array}
  \right.,
\end{align}
$\upgamma$ is from \eqref{E:DataNormIntro}, and $0 < \upmu < 1/2$ is a fixed constant. In the above expression, 
$\mathcal{Z} \eqdef \big\lbrace \partial_{\mu}, x_{\mu} \partial_{\nu} - x_{\nu} \partial_{\mu}, 
x^{\kappa} \partial_{\kappa} \big\rbrace_{0 \leq \mu < \nu \leq 3}$ is a subset of the conformal Killing fields of
Minkowski space, $I$ is a vectorfield multi-index, $\nabla_{\mathcal{Z}}^I$ represents iterated Minkowski covariant differentiation with respect to vectorfields in $\mathcal{Z},$ and $\Lie_{\mathcal{Z}}^I$ represents iterated Lie differentiation with respect to vectorfields in $\mathcal{Z}.$ The significance of the set $\mathcal{Z}$ is that it is needed
for the weighted Klainerman-Sobolev inequality \eqref{E:KSIntro}, which is discussed below.

\begin{remark}\label{R:Roleofmu}
	The presence of the parameter $\upmu > 0$ in \eqref{E:weightintro} might seem unnecessary, since 
	$1 \ + \ (1 + |q|)^{-2 \upmu} \approx 1.$ However, as is explained in Section \ref{SSS:EnergyandStress}, 
	the presence of $\upmu > 0$ ensures that $w'(q) > 0,$ a fact that plays a key role in our energy estimates.
\end{remark}

\subsubsection{Overall strategy of the proof} \label{SS:DiscussionofProof}

The overall strategy is to deduce a hierarchy of Gronwall-amenable inequalities for the energies $\mathcal{E}_{k;\upgamma;\upmu}(t),$ $(0 \leq k \leq \dParameter );$ this is accomplished in \eqref{E:Mainenergyinequalityreexpressed} below. The net effect is that under the assumption $E_{\dParameter;\upgamma}(0) + M \leq \varepsilon,$ we are able to deduce the following a-priori estimate for the solution, which is valid during its classical lifetime:

\begin{align} \label{E:EnergyaprioriIntro}
	\mathcal{E}_{\dParameter;\upgamma;\upmu}(t) \leq c_{\dParameter} \varepsilon (1 + t)^{\widetilde{c}_{\dParameter} \varepsilon}.
\end{align}
In the above inequality, $c_{\dParameter}$ and $\widetilde{c}_{\dParameter}$ are positive constants. Now it is a standard result in the theory
of hyperbolic PDEs that if $\varepsilon$ is sufficiently small, then an a-priori estimate of the form \eqref{E:EnergyaprioriIntro} implies that the solution exists for $(t,x) \in (-\infty, \infty) \times \mathbb{R}^3;$ see Proposition \ref{P:LocalExistence} for more details. Furthermore, as shown in \cite{hLiR2005} and \cite{jL2008}, if $\varepsilon$ is sufficiently small, then it also follows that the spacetime $(\mathbb{R}^{1+3}, g_{\mu \nu} = m_{\mu \nu} + h_{\mu \nu}^{(0)} + h_{\mu \nu}^{(1)})$ is geodesically complete. \textbf{The main goal of this article is therefore to derive \eqref{E:EnergyaprioriIntro}}.

\subsubsection{Geometry and null decompositions} \label{SSS:GeometryandNullDecompositions}

Let us now describe the tools used to derive \eqref{E:EnergyaprioriIntro}. First and foremost, as mentioned above in Section \ref{SSS:MathematicalComparisons}, the reason we are able to prove our stability result is that the reduced equations \eqref{E:Reducedh1Intro} - \eqref{E:ReduceddMis0Intro} have special algebraic structure, and satisfy (in the language of Lindblad and Rodnianski) the \emph{weak null condition}. Now in order to see the special structure of the terms in the reduced equations, we follow the strategy of Lindblad and Rodnianski and decompose them into their \emph{Minkowskian null components}; we refer to this as a \emph{Minkowskian null decomposition}. We emphasize the following point: \textbf{the Minkowskian geometry is not the ``correct'' geometry to use for analyzing the equations, for the actual characteristics of the system 
correspond to the null cones of the spacetime metric $g_{\mu \nu}$ and the characteristics of the nonlinear electromagnetic equations (which in general do not have to coincide with the gravitational null cones). However, the errors that we make in using the Minkowskian geometry (which has the advantage of being simple) for our analysis are controllable.} Let us briefly recall the meaning of a Minkowskian null decomposition; a more detailed description is offered in Section \ref{S:NullFrame}. The notion of a Minkowskian null decomposition is intimately connected to the following spacetime subsets: the \emph{outgoing Minkowskian null cones} $C_{q}^+ \eqdef \lbrace (\tau,y) \ | \ |y| - \tau = q \rbrace,$ the \emph{ingoing Minkowskian null cones} $C_{s}^- \eqdef \lbrace (\tau,y) \ | \ |y| + \tau = s \rbrace,$ the \emph{constant time slices} $\Sigma_t \eqdef \lbrace (\tau,y) \ | \ \tau = t \rbrace,$ and the \emph{Euclidean spheres} $S_{r,t} \eqdef \lbrace (\tau,y) \ | \ t = \tau, |y| = r \rbrace.$ Observe that the \emph{null coordinate} $q \eqdef |x| - t$ associated to the spacetime point with coordinates $(t,x)$ is constant on the outgoing cones, and the null coordinate $s \eqdef |x| + t$ is constant on the ingoing cones. These coordinates will be used throughout the article to discuss the rates of decay of various quantities. With $\omega^j \eqdef x^j/r,$ $(j=1,2,3),$ we also define the \emph{ingoing Minkowskian null geodesic vectorfield} $\uL^{\mu} \eqdef (1,-\omega^1,-\omega^2,-\omega^3),$ which satisfies $m_{\kappa \lambda}\uL^{\kappa} \uL^{\lambda} = 0$ and is tangent to the $C_{s}^-,$ and the \emph{outgoing Minkowskian null geodesic vectorfield} $L^{\mu} \eqdef (1,\omega^1,\omega^2,\omega^3),$ which satisfies $m_{\kappa \lambda} L^{\kappa} L^{\lambda} = 0,$ $m_{\kappa \lambda} \uL^{\kappa} L^{\lambda} = -2$ and is tangent to the $C_{q}^+.$ Furthermore, in a neighborhood of each non-zero spacetime point $p,$ there exists a locally defined pair of $m-$orthonormal vectorfields $e_1, e_2$ that are tangent to the family of Euclidean spheres, and $m-$orthogonal to $\uL$ and $L.$ The set $\mathcal{N} \eqdef \lbrace \uL, L, e_1, e_2 \rbrace,$ which spans the tangent space, is known as a \emph{Minkowskian null frame}. In the discussion that follows, we will also make use of the set $\mathcal{T} \eqdef \lbrace L, e_1, e_2 \rbrace,$ which is the subset consisting of only those frame vectors tangent to the $C_{q}^+,$ and the set $\mathcal{L} \eqdef \lbrace L \rbrace.$

Given any two form $\Far,$ we can decompose it into its Minkowskian null components $\ualpha[\Far],$ $\alpha[\Far],$
$\rho[\Far],$ and $\sigma[\Far],$ where $\ualpha,$ $\alpha$ are two-forms $m-$tangent\footnote{By $m-$tangent, we mean that their vector duals relative to the Minkowski metric are tangent to the $S_{r,t}.$} to the spheres $S_{r,t}$, and $\rho,$ $\sigma$ are scalars. More specifically, we define $\ualpha_A = \Far_{A \uL},$ $\alpha_A = \Far_{AL},$
$\rho = \frac{1}{2} \Far_{\uL L},$ and $\sigma = \Far_{12},$ where $A \in \lbrace 1,2 \rbrace,$ and we have abbreviated
$\Far_{A \uL} \eqdef e_{A}^{\kappa} \uL^{\lambda} \Far_{\kappa \lambda},$ etc. Similarly, we can decompose the tensor
$h_{\mu \nu}$ into its null components $h_{LL},$ $h_{\uL L},$ $h_{LT},$ etc., where $T$ stands for any of the vectors in
$\mathcal{T}.$ We are now ready to discuss one of the major themes running throughout this article: the rates of decay of the various null components of $\Far$ and $h$ are distinguished by the kinds of contractions taken against the null frame vectors. In particular, contractions against $L,e_1,e_2$ are associated with favorable decay, with $L$ being the most favorable, while contractions against $\uL$ are associated with unfavorable decay. Similarly, differentiation in the directions $L,e_1,e_2$ are associated with creating \emph{additional favorable decay} in the null coordinate $s,$ while differentiation in the direction $\uL$ is associated with creating less favorable additional decay in $q$  (see Lemma \ref{L:PointwisetandqWeightedNablainTermsofZestiamtes} for a precise version of this claim).
Equivalently, the operator $\conenabla$ creates favorable decay in $s,$ while $\nabla$ only creates decay in $q.$ Here and throughout, $\conenabla$ is the projection (of the derivative component only) of the Minkowski connection $\nabla$ onto the outgoing Minkowski null cones. From this point of view, the most dangerous terms in the equations are $\ualpha$ and $h_{\uL \uL},$ and the $\partial_q \sim \nabla_{\uL}$ derivatives (see Section \ref{SS:Derivatives}) of these quantities. We recommend that at this point, the reader examine the conclusions of Propositions \ref{P:UpgradedDecayhA} and \ref{P:UpgradedDecayh1A} to get a feel for the kind of decay properties possessed by the various null components.

The main idea behind the Minkowskian null decomposition is that it can be used to show the following fact: \emph{the worst possible combinations of terms, from the point of view of decay rates, are not present in the reduced equations} \eqref{E:Reducedh1Intro} - \eqref{E:ReduceddMis0Intro}. This special algebraic structure, which is of central importance in our small-data global existence proof, is examined in detail in Propositions \ref{P:AlgebraicInhomogeneous} - \ref{P:harmonicgauge} of Section \ref{S:AlgebraicEstimates}. We remark that as revealed in \cite{hLiR2003}, \cite{hLiR2005}, and \cite{hLiR2010}, this special algebraic structure is tensorial in nature.

\subsubsection{Energy inequalities and the canonical stress} \label{SSS:EnergyandStress}

The first major analytical step in deriving the all-important Gronwall-amenable estimate \eqref{E:Mainenergyinequalityreexpressed} is to deduce the energy inequalities of Lemma \ref{L:weightedenergyFar} and Lemma \ref{L:weightedenergy}, which respectively provide $L^2$ estimates for solutions to the electromagnetic \emph{equations of variation} (which are the linearized equations satisfied by the derivatives of solutions $\Far$ to \eqref{E:ReduceddFis0Intro} - \eqref{E:ReduceddMis0Intro}), and $L^2$ estimates for solutions to quasilinear wave equations whose principal operator agrees with that of \eqref{E:Reducedh1Intro} (i.e., $\widetilde{\Square}_g$). As is explained below, such equations come into play because we require $L^2$ estimates for derivatives of $h^{(1)}$ and $\Far$ in order to close our global existence argument. We will comment mainly on the estimates for the electromagnetic equations of variation, since the estimates of Lemma \ref{L:weightedenergy} are perhaps more familiar to the reader, and in any case are explained in detail in \cite[Lemma 6.1 and Proposition 6.2]{hLiR2010}. Our proof of Lemma \ref{L:weightedenergyFar} is based on the construction of a suitable \emph{energy current} $\dot{J}^{\mu} \eqdef - \Stress_{\ \nu}^{\mu}X^{\nu},$ where $\Stress_{\ \nu}^{\mu}$ is the \emph{canonical stress}, which is a tensorfield that depends quadratically on the linearized variables $\dot{\Far}_{\mu \nu},$ $X^{\nu} \eqdef w(q) \delta_0^{\nu},$ $(\nu=0,1,2,3)$ is a ``multiplier vectorfield,'' and $w(q)$ is the weight function defined in \eqref{E:weightintro}. The end result is provided by inequality \eqref{E:FirstweightedenergyFar} below. Although at first glance inequality \eqref{E:FirstweightedenergyFar} below may appear to be a standard energy inequality, one of the most important features of this particular energy current is that it provides the \emph{additional positive} spacetime integral $\int_{t_1}^{t_2} \int_{\Sigma_{\tau}} \big(\dot{\alpha}^2 + \dot{\rho}^2 + \dot{\sigma}^2 \big) w'(q) \,d^3x \, d \tau$ on the left-hand side of \eqref{E:FirstweightedenergyFar}; here, $\dot{\alpha},$ $\dot{\rho},$ and $\dot{\sigma}$ are the ``favorable'' null components of the two-form $\dot{\Far}.$ This additional positive quantity, which is analogous to the quantity $\int_{t_1}^{t_2} \int_{\Sigma_{\tau}} |\conenabla \phi|^2 w'(q) \,d^3x \, d \tau$ on the left-hand side of \eqref{E:Firstweightedenergyscalar} that was exploited by Lindblad and Rodnianski, is one of the key advantages afforded by our use of a weight function of the form \eqref{E:weightintro}. Its availability is directly related to the fact that we have better integrated control over the quadratic terms $\dot{\alpha}^2 + \dot{\rho}^2 + \dot{\sigma}^2$ than we do over the term $\dot{\ualpha}^2.$ The quantity plays a key role in the derivation of the energy inequality \eqref{E:Mainenergyinequalityreexpressed}.

Let us now make a few comments concerning the canonical stress and the construction of the above energy current. A very detailed description is located in \cite{dC2000} and \cite{jS2010a}, so we confine ourselves here to its two most salient features. The canonical stress (see \eqref{E:Stressdef}) plays the role of an energy-momentum-type tensor for the electromagnetic equations of variation. Because these linearized equations depend on the ``background'' $\Far_{\mu \nu}$ in addition to the linearized variables $\dot{\Far}_{\mu \nu},$ it is \emph{not} the case that $\mathscr{D}_{\mu} \Stress_{\ \nu}^{\mu} = 0;$ this is in contrast to the property  $(g^{-1})^{\kappa \lambda}\mathscr{D}_{\kappa} T_{\lambda \nu} = 0$ (see \eqref{E:TisDivergenceFree}) enjoyed by the energy-momentum tensor. However, we now point out the first key property of the canonical stress: $\nabla_{\mu} \Stress_{\ \nu}^{\mu}$ is lower-order in the sense that it does not depend on $\nabla_{\lambda} \dot{\Far}_{\mu \nu};$ by using the equations of variation for substitution, the $\nabla_{\lambda} \dot{\Far}_{\mu \nu}$ terms can be replaced with inhomogeneous terms (see \ref{E:divergenceofStress}). It is already important to appreciate the availability of this non-trivial quadratic quantity whose divergence can be controlled by the background and inhomogeneous terms. For the availability of such a quantity is not a feature inherent to all systems of equations\footnote{It is however a feature inherent to all scalar quasilinear wave equations.}, but is instead related to the symmetry properties of the indices of the principal terms (i.e., the terms on the left-hand side) in equations \eqref{E:EOVdFis0} - \eqref{E:EOVdMis0}, which themselves are related to the fact the original nonlinear equations are derivable from a Lagrangian. 

The second key property enjoyed by the canonical stress is that of positivity upon contraction against certain covector/vector pairs $(\xi, X).$ That is, for certain choices of $(\xi, X),$ the quantity $\Stress_{\ \nu}^{\mu}\xi_{\mu}X^{\nu}$ is a positive definite quadratic form in $\dot{\Far}.$ 
These properties are analogous to (but distinct from) the positivity properties of an energy-momentum tensor satisfying the dominant energy condition, and the positivity properties of the Bel-Robinson tensor (which played a central role in \cite{dCsK1993}). As is explained in \cite{dC2000} and \cite{jS2010a}, the set of pairs leading to integrated positivity is intimately connected to the \emph{hyperbolicity of and the geometry of the electromagnetic equations}, and to the speeds and directions of propagation in the system. In this article, the only pair $(\xi, X)$ that we make use of is $\xi_{\mu} = - \delta_{\mu}^0,$ and $X^{\nu} = w(q) \delta_0^{\nu}.$ The special positivity properties stemming from this choice of $(\xi, X)$ are derived in Lemma \ref{L:weightedenergyFar}.

\subsubsection{Weighted Klainerman-Sobolev inequalities}

Based on the energy inequalities of Proposition \ref{P:weightedenergy}, which are relatively straightforward consequences of
Lemmas \ref{L:weightedenergyFar} and Lemmas \ref{L:weightedenergy}, it is clear that most of the hard work in deriving the estimate \eqref{E:Mainenergyinequalityreexpressed} goes into estimating the integrals of the inhomogeneous terms on the
right-hand sides of \eqref{E:Secondweightedenergyscalar} and \eqref{E:SecondweightedenergyFar}. In particular, we attempt to summarize here the origin of the factors $(1 + \tau)^{-1}$ and $(1 + \tau)^{-1 + C \varepsilon}$ that appear in 
\eqref{E:Mainenergyinequalityreexpressed}, and that are of central importance in our derivation of the fundamental energy inequality \eqref{E:EnergyaprioriIntro}. Roughly speaking, these factors arise from a variety of pointwise decay estimates that we will soon explain. The first tools of interest to us along these lines are the weighted Klainerman-Sobolev inequalities, which allow us to deduce pointwise decay estimates for functions $\phi \in C_0^{\infty}(\mathbb{R}^3),$ in terms of weighted $L^2$ estimates for $\phi$ and its Minkowskian covariant derivatives with respect to vectorfields $Z \in \mathcal{Z}.$ More specifically (see also Appendix \ref{A:WeightedKS}), the weighted Klainerman-Sobolev inequalities state that 

\begin{align} \label{E:KSIntro}
	(1 + t + |x|)[(1 + |q|) w(q)]^{1/2} |\phi(t,x)| & \leq C \sum_{|I| \leq 2} 
	\big\| w^{1/2}(q) \nabla_{\mathcal{Z}}^I \phi(t, \cdot) \big \|_{L^2}, && q \eqdef |x| - t.
\end{align}
We refer to these estimates as ``weak pointwise decay estimates,'' since they have nothing to do
with the special structure of the Einstein-nonlinear electromagnetic equations; a major theme permeating this article is that
in order to close our global existence bootstrap argument, the estimate \eqref{E:KSIntro} need to be upgraded using the special structure of the equations. Inequality \eqref{E:KSIntro} can therefore be viewed as a preliminary estimate that will play a role in the proof of the upgraded estimates.

The form of the inequalities \eqref{E:KSIntro} raises several important issues. First, in order to apply the 
weighted Klainerman-Sobolev inequalities to $h^{(1)},$ we have to achieve $L^2$ control over the quantities $w^{1/2}(q) \nabla_{\mathcal{Z}}^I h^{(1)}.$ To this end, we have to study the equations satisfied by the quantities $\nabla_{\mathcal{Z}}^I h^{(1)}.$ In order to derive these equations, we have to commute the operator $\nabla_{\mathcal{Z}}^I$ through the reduced wave operator term $\widetilde{\Square}_{g} h^{(1)}.$ Lindblad and Rodnianski accomplished this commutation through the use of \emph{modified covariant derivatives} $\hat{\nabla}_Z,$ which are equal to ordinary covariant derivatives plus a scalar multiple (depending on $Z \in \mathcal{Z}$) of the identity; see Definition \ref{D:ModifiedDerivatives}. The main advantage of these operators is that $\hat{\nabla}_Z \Square_{m} - \Square_{m} \nabla_Z = 0,$ where $\Square_{m} \eqdef (m^{-1})^{\kappa \lambda} \nabla_{\kappa} \nabla_{\lambda}$ denotes the wave operator of the Minkowski metric; see Lemma \ref{L:NablaModZLiemodMinkowskiWaveOperatorCommutator}. Therefore, $\nabla_{\mathcal{Z}}^I h^{(1)}$ is a solution to the equation $\widetilde{\Square}_{g} \nabla_{\mathcal{Z}}^I h^{(1)}$ $= \hat{\nabla}_{\mathcal{Z}}^I \widetilde{\Square}_{g} h^{(1)}$ $+ H^{\kappa \lambda} \nabla_{\kappa} \nabla_{\lambda} \nabla_{\mathcal{Z}}^I h^{(1)}$ $- \hat{\nabla}_{\mathcal{Z}}^I \big(H^{\kappa \lambda} \nabla_{\kappa} \nabla_{\lambda} h^{(1)}\big),$ where $\widetilde{\Square}_{g} h^{(1)}$ is equal to the inhomogeneous term on the right-hand side of \eqref{E:Reducedh1Intro} above, and $H^{\mu \nu} \eqdef (g^{-1})^{\mu \nu} - (m^{-1})^{\mu \nu} = - h^{\mu \nu} + O(|h|^2).$ We remark that the analysis of the commutator term $H^{\kappa \lambda} \nabla_{\kappa} \nabla_{\lambda} \nabla_{\mathcal{Z}}^I h^{(1)}$ 
$- \hat{\nabla}_{\mathcal{Z}}^I \big(H^{\kappa \lambda} \nabla_{\kappa} \nabla_{\lambda} h^{(1)}\big),$ which was performed in \cite{hLiR2010} (see also Propositions \ref{P:InhomogeneousTermsNablaZIh1} and Lemmas \ref{L:NablaZIBoxCommutatorIntegrated}), is among the most challenging work encountered. Rather than repeat this analysis and the discussion behind it, which is throughly explained and carried out in \cite{hLiR2010}, we will instead focus on the analogous difficulties that arise in our analysis of $\Far.$ We do, however point out the role that Hardy inequalities of Proposition \ref{P:Hardy} play in the analysis of $h^{(1)}:$ they are used to estimate a weighted $L^2$ norm of $\nabla_{\mathcal{Z}}^I h^{(1)},$ which is \emph{not} directly controlled in $L^2$ by the energy $\mathcal{E}_{\dParameter;\upgamma;\upmu}(t),$ by a weighted $L^2$ norm of $\nabla \nabla_{\mathcal{Z}}^I h^{(1)},$ which \emph{is} controlled in $L^2$ by the energy. The cost of applying this inequality is powers of $1 + |q|,$ which are always sufficiently available thanks to our use of the weight $w(q).$

\subsubsection{The role of Lie derivatives}

The next important issue concerning inequality \eqref{E:KSIntro} is that it is more convenient to work with
Lie derivatives of $\Far$ rather than covariant derivatives of $\Far;$ this claim has already been suggested by the
definition \eqref{E:EnergyIntro} of our energy $\mathcal{E}_{\dParameter;\upgamma;\upmu}(t).$ According to inequality \eqref{E:LieZIinTermsofNablaZI} below, inequality \eqref{E:KSIntro} remains valid if we replace the operators $\nabla_{\mathcal{Z}}^I$ with $\Lie_{\mathcal{Z}}^I.$
However, as in the case of covariant derivatives, we have to study the equations satisfied by the $\Lie_{\mathcal{Z}}^I \Far.$
Now on the one hand, Lemma \ref{L:Liecommuteswithcoordinatederivatives} shows that the operator $\Lie_Z$ can be commuted through the Minkowski connection $\nabla$ in equation \eqref{E:ReduceddFis0Intro}. On the other hand, to commute Lie derivatives through equation \eqref{E:ReduceddMis0Intro}, it is convenient to work with \emph{modified Lie derivatives} $\Liemod_Z,$ which are equal to ordinary Lie derivatives plus a scalar multiple\footnote{The multiple is $2 c_Z,$ where $c_Z$ is the multiple corresponding to the modified covariant derivative $\hat{\nabla}_Z.$} (depending on $Z \in \mathcal{Z}$) of the identity; see Definition \ref{D:ModifiedDerivatives}. Unlike covariant derivatives, these operators have favorable commutation properties with the linear Maxwell term $\nabla_{\mu} \Far^{\mu \nu},$ which is the leading term in \eqref{E:ReduceddMis0Intro}.
More specifically, $\Liemod_{Z} \Big\lbrace \big[(m^{-1})^{\mu \kappa} (m^{-1})^{\nu \lambda} - (m^{-1})^{\mu \lambda} (m^{-1})^{\nu \kappa}\big] \nabla_{\mu} \Far_{\kappa \lambda} \Big\rbrace = \big[(m^{-1})^{\mu \kappa} (m^{-1})^{\nu \lambda} - (m^{-1})^{\mu \lambda} (m^{-1})^{\nu \kappa}\big] \nabla_{\mu} \Lie_Z \Far_{\kappa \lambda};$ see Lemma \ref{L:LiemodZLiemodMaxwellCommutator}. As is captured by Proposition \ref{P:InhomogeneoustermsLieZIFar}, these operators are also useful for differentiating equation \eqref{E:ReduceddMis0Intro}; the error terms generated have a favorable null structure that is captured in Proposition \ref{P:EnergyInhomogeneousTermAlgebraicEstimate}.

\subsubsection{The tensorfield $h_{\mu \nu}^{(0)}$} \label{SSS:h0}

Let us now discuss the ideas behind the Lindblad-Rodnianski splitting of the metric defined in 
\eqref{E:gmhexpansion} - \eqref{E:h0defIntro}. We first note that because of 
the $2M/r$ ADM mass term present in $h_{\mu \nu}^{(0)},$ substituting the tensorfield 
$h_{\mu \nu} \eqdef h_{\mu \nu}^{(0)} + h_{\mu \nu}^{(1)}$ in place of $h_{\mu \nu}^{(1)}$ in the definition of the energy would lead to $\mathcal{E}_{\dParameter;\upgamma;\upmu}(0) = \infty.$ Thus, as a practical matter, the introduction of $h_{\mu \nu}^{(1)}$ allows us to work with a quantity of finite energy. Now according to the discussion in \cite{hLiR2010}, the precise form $h_{\mu \nu}^{(0)} = \chi\big(\frac{r}{t}\big)\chi(r)\frac{2M}{r} \delta_{\mu \nu}$ was determined by making an ``educated'' guess concerning the contribution of the ADM mass term $(2M/r) \delta_{\mu \nu}$ to the solution. The term $h_{\mu \nu}^{(0)}$ manifests itself in the reduced equations as the $\widetilde{\Square}_{g} h_{\mu \nu}^{(0)}$ inhomogeneous term on the right-hand side of the reduced equation \eqref{E:Reducedh1Summary}. Because of the identity $\Square_m (1/r ) = 0$ for $r > 0,$ where $\Square_m = (m^{-1})^{\kappa \lambda} \nabla_{\kappa}\nabla_{\lambda}$ is the Minkowski wave operator, it follows that the main contribution of the term $\widetilde{\Square}_{g} h_{\mu \nu}^{(0)}$ comes from the ``interior'' region $\lbrace (t,x) \mid 1/2 < r/t < 3/4 \rbrace;$ this is because the derivatives of $\chi(z)$ are supported in the interval $[1/2, 3/4].$ Now in the interior region, the quantities $1 + |q|$ and $1 + s$ are uniformly comparable. Thus, the weighted Klainerman-Sobolev inequality \eqref{E:KSIntro} predicts strong decay for the solution in this region, and consequently, one can derive suitable weighted Sobolev bounds for the inhomogeneity $\widetilde{\Square}_{g} h_{\mu \nu}^{(0)};$ see Lemma \ref{L:mathfrakH0partialZIhAproductestimate} for a precise statement of this estimate.

\subsubsection{The wave coordinate condition}

Before expanding our discussion of the pointwise decay estimates, we will discuss the analytic role of the \emph{wave coordinate} condition $\nabla_{\nu} [\sqrt{|\mbox{det}\, g|}(g^{-1})^{\mu \nu}] = 0,$ $(\mu = 0,1,2,3),$ which plays \emph{multiple} roles in this article. First, it hyperbolizes the Einstein equations and allows us to replace
certain unfavorable terms from the equations \eqref{E:IntroEinstein} - \eqref{E:IntroEinstein}
with more favorable ones; the culmination of this procedure is exactly the reduced system \eqref{E:Reducedh1Intro} - \eqref{E:ReduceddMis0Intro}. In addition, the wave coordinate condition allows us to deduce several \emph{independent and improved estimates}, both at both the pointwise level and the $L^2$ level, for the components $h_{LL}$ and $h_{LT}.$ As we will see, these improved estimates are central to the structure of the proof of 
Theorem \ref{T:ImprovedDecay}, and our stability argument would not close without them. More specifically, as shown in \cite{hLiR2010}, a null decomposition of the wave coordinate condition leads to the algebraic inequality

\begin{align} \label{E:wavecoordinatenulldecompintro}
	|\nabla h|_{\mathcal{L}\mathcal{T}} + |\nabla \nabla_Z h|_{\mathcal{L}\mathcal{L}}
	& \lesssim |\conenabla h| + |h||\nabla h|,
\end{align}
where $\conenabla$ is the projection of $\nabla$ (the derivative component only) onto the outgoing Minkowski cones. Note that the right-hand side of \eqref{E:wavecoordinatenulldecompintro} involves only favorable derivatives of $h$ and quadratic error terms, while the left-hand side involves \emph{all} derivatives of $h,$ including the dangerous $\nabla_{\uL}$ derivative. Generalizations of \eqref{E:wavecoordinatenulldecompintro} for $\nabla_{\mathcal{Z}}^I h$ are stated in Proposition \ref{P:harmonicgauge}. We remark that it is important to note in these generalizations that the estimates for 
$|\nabla \nabla_Z h|_{\mathcal{L}\mathcal{L}}$ are stronger than what can be proved for $|\nabla \nabla_Z h|_{\mathcal{L}\mathcal{T}}.$

\subsubsection{Upgraded pointwise decay estimates}

We now discuss the full collection of \emph{upgraded pointwise decay estimates} (see Propositions \ref{P:FLUTTimproveddecay} - \ref{P:UpgradedDecayh1A} below), which are of central importance in closing the global existence bootstrap argument. For, as mentioned above, the pointwise decay estimates \eqref{E:KSIntro} are not sufficient to close the argument. Aside from the components $h_{LL}$ and $h_{LT},$ which are controlled by the wave coordinate condition,
there is a relatively strong coupling between the remaining components of $h$ and the dangerous $\ualpha[\Far]$ component of the Faraday tensor. Therefore, our proofs of the upgraded estimates (and Proposition \ref{P:UpgradedDecayh1A} in particular) have a hierarchal structure; i.e., the order in which they are proved is very important. Although we don't provide a complete description of all of the subtleties of this hierarchy in this introduction, we do provide a preliminary description of some of its salient features. We first emphasize the following important feature: most null components of $h,$ all null components of $\Far,$ and the components $\nabla_Z h_{LL}$ (for $Z \in \mathcal{Z}$) have better $t-$decay properties than their higher-order-derivative counterparts; this is the content of Proposition \ref{P:UpgradedDecayhA}. Roughly speaking, the reason for this discrepancy is that the un-differentiated reduced equations have a more favorable algebraic structure than the differentiated reduced equations. This feature will be particularly important during our global existence argument, for the principal terms (from the point of view of differentiability) in the Leibniz expansion of the operator $\nabla_{\mathcal{Z}}^I$ acting on a quadratic term are of the form $u \nabla_{\mathcal{Z}}^I v,$ and similarly for the operator $\Lie_{\mathcal{Z}}^I.$ Consequently, the strong pointwise decay property of the un-differentiated quantity, which is represented by $u,$ 
is an important ingredient the derivation of the $C\varepsilon \int_{0}^{t} (1 + \tau)^{-1} \mathcal{E}_{k;\upgamma;\upmu}^2(\tau) \, d \tau$ term on the right-hand side of \eqref{E:Mainenergyinequalityreexpressed}. We emphasize that our stability proof would not go through if this term were replaced with $C \varepsilon \int_{0}^{t} (1 + \tau)^{-1 + C \varepsilon} \mathcal{E}_{k;\upgamma;\upmu}^2(\tau) \, d \tau.$

The derivation of the upgraded pointwise decay estimates for the Faraday tensor begins with Proposition \ref{P:EOVNullDecomposition}, which provides a null decomposition of the electromagnetic equations of variation, and 
Proposition \ref{P:EnergyInhomogeneousTermAlgebraicEstimate}, which provides a null decomposition of the inhomogeneous terms
that result when differentiating the reduced electromagnetic equations with modified Lie derivatives.
The net effect is that the null components of the \emph{lower-order} Lie derivatives of $\Far$ satisfy ODEs along ingoing and outgoing cones (see Proposition \ref{P:ODEsNullComponentsLieZIFar}), and furthermore, the inhomogeneous terms appearing on the right-hand side of the ODEs can be inductively controlled (see Proposition \ref{P:UpgradedDecayh1A}). It is important to distinguish between two classes of ODEs that play a role in this analysis. The first class consists of ODEs for the null components $(\dot{\alpha}, \dot{\rho}, \dot{\sigma}) \eqdef (\alpha[\Lie_{\mathcal{Z}}^I \Far], \rho[\Lie_{\mathcal{Z}}^I \Far], \sigma[\Lie_{\mathcal{Z}}^I \Far]),$ and involves differentiation in the direction of the null generators of the \emph{ingoing Minkowskian cones}; i.e., the principal part of the ODEs is $\nabla_{\uL}.$ We remark that this point of view represents a rather crude treatment of equations \eqref{E:uLdotalphaEOVnulldecomp} - \eqref{E:uLdotnablasigmaEOVnulldecomp}, but because of the favorable decay properties of the inhomogeneities, this approach is sufficient to conclude the desired estimates: by integrating back towards the Cauchy hypersurface $\Sigma_0,$ we are able to deduce $t-$decay for $\alpha[\Lie_{\mathcal{Z}}^I \Far],$ $\rho[\Lie_{\mathcal{Z}}^I \Far],$ and $\sigma[\Lie_{\mathcal{Z}}^I \Far]$ from $t-$decay of the inhomogeneous terms at the expense of a loss of decay in $q.$ We remark that the proof of the upgraded estimates for these components happens in two stages. We refer to the first stage, which are proved in Proposition \ref{P:FLUTTimproveddecay}, as the ``initial upgraded'' pointwise decay estimates. These first-stage estimates follow from using the weighted Klainerman-Sobolev estimates to bound the inhomogeneous terms in the ODEs. The second-stage upgraded estimates, which we refer to as ``fully upgraded'' pointwise decay estimates, are proved at the end of
Section \ref{P:UpgradedDecayh1A}, after all of the other upgraded pointwise decay estimates for the remaining components of 
the lower-order derivatives of $h$ and $\Far$ have been proved. For at this point in the upgraded hierarchy, we will have better pointwise control over the inhomogeneous terms in the ODEs than that afforded by the weighted Klainerman-Sobolev estimates.

The next class consists of ODEs for the null components $\dot{\ualpha} \eqdef \ualpha[\Lie_{\mathcal{Z}}^I \Far].$ Notice that (see equation \eqref{E:dotualphaEOVnulldecomp}) unlike the other null components, the $\dot{\ualpha}$ do \emph{not} satisfy an ODE that to $0-$th order involves differentiation in the direction of $\uL.$ Instead, at first sight, it might appear that one should reason in analogy with the first class and view equation \eqref{E:dotualphaEOVnulldecomp} as ODE in the direction of $L$ with inhomogeneous terms. However, the desired decay estimates do \emph{not} close at this level. Instead, one must also consider the effect of the quadratic term $-\angm_{\nu}^{\ \lambda} h^{\mu \kappa} \nabla_{\mu} \dot{\Far}_{\kappa \lambda}.$ A null decomposition of this term reveals that it contains the dangerous term $\frac{1}{4} h_{LL} \nabla_{\uL} \dot{\ualpha}_{\nu},$ which decays too slowly to be treated as an inhomogeneous term in the ODE satisfied by $\dot{\ualpha}.$ To remedy this difficulty, we introduce the vectorfield $\Lambda = L + \frac{1}{4} h_{LL} \uL,$ which can be viewed as a first-order correction to the Minkowski outgoing null direction arising from the presence of a non-zero tensorfield $h$ in the expansion $g_{\mu \nu} = m_{\mu \nu} + h_{\mu \nu}.$ Note that for these upgraded pointwise decay estimates for the lower-order Lie derivatives, we do not bother to correct for the fact that the electromagnetic model is not necessarily the linear Maxwell model; the deviation from the linear Maxwell model comprises cubic terms, which we can treat as small inhomogeneities. We may thus view equation \eqref{E:dotualphaEOVnulldecomp} as ODE in the direction of $\Lambda$ with inhomogeneous terms; this is exactly the point of view emphasized in Proposition \ref{P:ODEsNullComponentsLieZIFar}. Because we have a sufficiently strong independent decay estimates for $h_{LL}$ (this is yet another example of the special role played by the component $h_{LL}$) and also for the inhomogeneities, this approach is sufficient to achieve the desired estimates.

Our analysis of the upgraded pointwise decay estimates for the metric-related quantities $h$ and $h^{(1)}$ closely mirrors the analysis in \cite{hLiR2010}. Hence, we will not discuss them in full detail here, but instead refer the reader to the discussion in \cite{hLiR2010}. The estimates can be divided into three classes, the first one being the estimates \eqref{E:partialhLTpartialZhLLpointwise} and \eqref{E:hLTZhLLpointwise} for 
$|\nabla h|_{\mathcal{L} \mathcal{T}},$ $|\nabla \nabla_{Z} h|_{\mathcal{L} \mathcal{L}},$
$|h|_{\mathcal{L} \mathcal{T}},$  and $|\nabla_{Z} h|_{\mathcal{L} \mathcal{L}}.$ As was suggested above, the first-class estimates are consequences of the additional special algebraic structure that follows from the wave coordinate condition, together with the weighted Klainerman-Sobolev inequality. The second class consists of the estimates \eqref{E:partialhTUpointwise} and \eqref{E:partialhpointwise} for $|\nabla h|_{\mathcal{T} \mathcal{N}}$ and $|\nabla h|.$ These estimates heavily rely on the decay estimates of Lemma \ref{L:scalardecay} and Corollary \ref{C:systemdecay} below, which
were proved in \cite{hLiR2010} and which are independent of the specific structure of the Einstein equations. The lemma and its corollary can be viewed as a second-order counterpart to the ODE estimates for the Faraday tensor discussed in the previous paragraphs. It is important to note that the hypotheses of the lemma and its corollary are satisfied \emph{as a consequence} of the independent upgraded pointwise decay estimates provided by the wave coordinate condition. The third class consists of the estimates \eqref{E:partialZIh1Aupgraded}, \eqref{E:ZIh1Aupgraded}, and \eqref{E:barpartialZIh1Aupgraded} for $|\nabla \nabla_{\mathcal{Z}}^I h^{(1)}|,$ $|\nabla_{\mathcal{Z}}^I h^{(1)}|,$ and $|\conenabla \nabla_{\mathcal{Z}}^I h^{(1)}|$ (related estimates for the tensorfield $h$ also hold). Their derivation is similar in spirit to the derivation of the second-class estimates, but the inductive proof we give is highly coupled to the simultaneous derivation of analogous upgraded pointwise decay estimates for $|\Lie_{\mathcal{Z}}^I\Far|,$ which were discussed two paragraphs ago. 

\subsubsection{The geometry of Lie derivatives}

We make some final comments concerning the relationship between Lie derivatives and covariant derivatives. On the one hand, 
since we differentiate the equations satisfied by $h^{(1)}$ with the operators $\nabla_{\mathcal{Z}}^I,$
our analysis of $h^{(1)}$ naturally allows us to estimate the quantities $|\nabla_{\mathcal{Z}}^I h|,$ 
$|\nabla_{\mathcal{Z}}^I h|_{\mathcal{L}\mathcal{L}}$ and $|\nabla_{\mathcal{Z}}^I h|_{\mathcal{L}\mathcal{T}},$ etc. Furthermore, as discussed above, the quantities $|\nabla_{\mathcal{Z}}^I h|_{\mathcal{L}\mathcal{L}}$ and $|\nabla_{\mathcal{Z}}^I h|_{\mathcal{L}\mathcal{T}}$ have a distinguished role in view of their connection to the wave coordinate condition. One the other hand, because we use modified Lie derivatives to differentiate the electromagnetic equations, we will have to confront the terms $|\Lie_{\mathcal{Z}}^I h|,$ $|\Lie_{\mathcal{Z}}^I h|_{\mathcal{L}\mathcal{L}},$ and $|\Lie_{\mathcal{Z}}^I h|_{\mathcal{L}\mathcal{T}},$ etc. In order to bridge the gap between Lie derivative estimates and covariant derivative estimates, we provide Proposition \ref{P:LievsCovariantLContractionRelation}, the proof of which relies on the special algebraic structure of the vectorfields in $\mathcal{Z}.$ Proposition \ref{P:LievsCovariantLContractionRelation} is an especially important ingredient in the null decomposition estimate \eqref{E:EnergyInhomogeneousTermAlgebraicEstimate}. As an example of the role played by this proposition, we cite the estimate \eqref{E:LieZILLinTermsofNablaZILLLieZJLTPlusJunk}, which reads $|\Lie_{\mathcal{Z}}^I h|_{\mathcal{L}\mathcal{L}} \lesssim |\nabla_{\mathcal{Z}}^I h|_{\mathcal{L}\mathcal{L}} + \underbrace{\sum_{|J| \leq |I|-1} |\nabla_{\mathcal{Z}}^J h|_{\mathcal{L} \mathcal{T}}}_{\mbox{absent if $|I| = 0$}}
+ \underbrace{\sum_{|J'| \leq |I|-2} |\nabla_{\mathcal{Z}}^{J'} h|}_{\mbox{absent if $|I| \leq 1$}}.$ This shows that in the translation from Lie derivatives to covariant derivatives, the error terms that arise in the analysis of the $|\cdot|_{\mathcal{L}\mathcal{L}}$ seminorm are either $1$ degree lower in order \emph{and} controllable by the wave coordinate condition (i.e. the terms with $|J| \leq |I|-1$), or are $2$ degrees lower in order (i.e. the terms with $|J'| \leq |I|-2$). This fact, and others similar to it, play a role in allowing our hierarchy of estimates unfold in a viable order.

\subsection{Outline of the article}

The remainder of the article is organized as follows.

\begin{itemize}
	\item In Section \ref{S:Notation}, we provide for convenience a summary of the notation that is used throughout 
		the article.
	\item In Section \ref{S:ENESinWaveCoordinates}, we discuss the Einstein-nonlinear electromagnetic equations in detail.
		We also introduce our wave coordinate condition and our assumptions on the electromagnetic Lagrangian.
		Next, we derive a reduced system of equations, which is equivalent to the system of interest in our wave coordinate gauge.
		In Section \ref{SS:ReducedEquations}, we summarize the version of the reduced equations that we work with for most of the 
		article.
	\item In Section \ref{S:IVP}, we construct initial data for the reduced system from the 
		abstract initial data in a manner compatible with the wave coordinate condition. We also sketch a proof of the fact
		that the wave coordinate condition is preserved by the flow of the reduced equations.
	\item In Section \ref{S:NullFrame}, we introduce the notion of a Minkowskian null frame and discuss the 
		corresponding null decomposition of various tensorfields. 
	\item In Section \ref{S:DifferentialOperators}, we introduce the differential operators that will be used throughout 
		the remainder of the article, including modified Lie derivatives and modified covariant derivatives with respect to a
		special subset $\mathcal{Z}$ of Minkowskian conformal Killing fields. We also provide a collection of lemmas that
		relate the various operators.
	\item In Section \ref{S:EquationSatisfiedbyNablaZIh1}, we provide a preliminary algebraic expression for the equations
		satisfied by $\nabla_{\mathcal{Z}}^I h^{(1)},$ where $h^{(1)}$ is a solution to the reduced equations.
	\item In Section \ref{E:EOVandStress}, we introduce the electromagnetic equations of variation, which 
		are a linearized version of the electromagnetic equations. We also provide a preliminary algebraic expression 
		for the inhomogeneous terms in the equations of variation
		satisfied by $\Lie_{\mathcal{Z}}^I \Far,$ where $\Far$ is a solution to the reduced equations.
		We then introduce the canonical stress tensor and use it to construct an energy current that will be used to control 
		weighted Sobolev norms of $\Lie_{\mathcal{Z}}^I \Far.$
	\item In Section \ref{S:DecompositionsofElectromagneticEquations}, we perform two decompositions of the electromagnetic 
		equations, including a null decomposition of the electromagnetic equations of variation, and a decomposition of the 
		electromagnetic equations into constraint equations and evolution equations for the Minkowskian one-forms 
		$\Electricfield,$ $\Displacement,$ $\Magneticinduction,$ and $\Magneticfield.$ In order to connect these one-forms 
		to the abstract initial data, we also introduce the geometric electromagnetic one-forms 
		$\mathfrak{\Electricfield},$ $\mathfrak{\Displacement},$ 
		$\mathfrak{\Magneticinduction},$ and $\mathfrak{\Magneticfield}.$
	\item In Section \ref{S:SmallDataAssumptions}, we introduce our smallness condition on the abstract initial data. 
		We then prove that this smallness condition guarantees that the energy $\mathcal{E}_{\dParameter;\upgamma;\upmu}(t)$ of the 
		corresponding solution to the reduced equations is small at $t = 0;$ it is this smallness of 
		$\mathcal{E}_{\dParameter;\upgamma;\upmu}(0)$ that will lead to a global solution of the reduced equations. 
	\item In Section \ref{S:AlgebraicEstimates}, we provide algebraic estimates for the inhomogeneities in the reduced
		equations under the assumption that the wave coordinate condition holds. We also derive differential inequalities for the 
		null components of $\Lie_{\mathcal{Z}}^I \Far,$ and provide algebraic estimates for the corresponding inhomogeneities.
	\item In Section \ref{S:WeightedEnergy}, we prove weighted energy estimates for solutions to the electromagnetic
		equations of variation. We also recall some results of \cite{hLiR2010} that provide analogous weighted energy estimates for 
		both scalar wave equations and tensorial systems of wave equations with principal part $(g^{-1})^{\kappa \lambda} 
		\nabla_{\kappa} \nabla_{\lambda}.$
	\item In Section \ref{S:WaveEquationDecay}, we recall some results of \cite{hLiR2010} that
		provide pointwise decay estimates for both scalar wave equations and tensorial systems of wave equations with principal 
		part $(g^{-1})^{\kappa \lambda} \nabla_{\kappa} \nabla_{\lambda}.$
	\item In Section \ref{S:LocalExistence}, we state a basic local existence result and continuation principle for the
		reduced equations. The continuation principle will be used in Section \ref{S:GlobalExistence} in order to
		deduce small-data global existence for the reduced equations.
	\item In Section \ref{S:DecayFortheReducedEquations}, we introduce our bootstrap assumption on the energy 
		$\mathcal{E}_{\dParameter;\upgamma;\upmu}(t).$ We then use this assumption to deduce a collection of pointwise decay estimates for 
		solutions to the reduced equations under the assumption that the wave coordinate condition holds.
	\item In Section \ref{S:GlobalExistence}, we prove our main stability results. The results are separated into two theorems.
		In Theorem \ref{T:ImprovedDecay}, we use the decay estimates proved in Section \ref{S:DecayFortheReducedEquations} to
		derive a ``strong'' inequality for the energy $\mathcal{E}_{\dParameter;\upgamma;\upmu}(t);$ 
		\textbf{the proof of this theorem is the centerpiece of the article}. Theorem \ref{T:MainTheorem}, which is our main 
		theorem, is then an easy consequence of Theorem \ref{T:ImprovedDecay} and the continuation principle of Section 
		\ref{S:LocalExistence}. Both of these theorems rely upon the assumption that the wave coordinate condition holds.
\end{itemize}

\section{Notation}\label{S:Notation}
For convenience, in this section we collect together some of the important notation that is introduced throughout the article.

\noindent \hrulefill
\ \\

\subsection{Constants}
We use the symbols $c,$ $\widetilde{c},$ $C,$ and $\widetilde{C}$ to denote generic \emph{positive} constants that are free to vary from line to line. In general, they can depend on many quantities, but in the small-solution regime that we consider in this article, they can be chosen uniformly. Sometimes it is illuminating to explicitly indicate one of the quantities $\mathfrak{Q}$ that a constant depends on; we do by writing e.g. $C_{\mathfrak{Q}}.$ If $A$ and $B$ are two quantities, then we often write 
\begin{align*}
	A \lesssim B
\end{align*}
to mean that ``there exists a $C > 0$ such that $A \leq C B.$'' Furthermore, if $A \lesssim B$ and $B \lesssim A,$ then we 
often write

\begin{align*}
	A \approx B.
\end{align*}

\subsection{Indices} \label{SS:Indices}
\begin{itemize}
	\item Lowercase Latin indices $a,b,j,k,$ etc. take on the values $1,2,3.$
	\item Greek indices $\kappa, \lambda, \mu, \nu,$ etc. take on the values $0,1,2,3.$
	\item Uppercase Latin indices $A,B$ etc. take on the values $1,2$ and are used to enumerate
		the two Minkowski-orthogonal null frame vectors tangent to the spheres $S_{r,t}.$
	\item As a convention, the tensorfields $\Far_{\mu \nu},$ $\Max_{\mu \nu},$ $R_{\mu \nu},$ $T_{\mu \nu},$ 
		$\epsilon_{\mu \nu \kappa \lambda	},$ and $N_{\mu \nu \kappa \lambda}$ are assumed to ``naturally'' have all of their 
		indices downstairs, and unless indicated otherwise, all indices on all tensors 
		are lowered and raised with the Minkowski metric $m_{\mu \nu}$ 
		and its inverse $(m^{-1})^{\mu \nu};$ e.g. $T^{\mu \nu} \eqdef (m^{-1})^{\mu \kappa} (m^{-1})^{\nu \lambda}T_{\kappa 
		\lambda}.$
	\item The symbol $\#$ is used to indicate that all indices of a given tensorfield have been raised with $g^{-1};$ e.g.
		$T^{\#\mu \nu} \eqdef (g^{-1})^{\mu \kappa} (g^{-1})^{\nu \lambda}T_{\kappa \lambda}.$
	\item Repeated indices are summed over.
\end{itemize}

\subsection{Coordinates}

\begin{itemize}
	\item $\lbrace x^{\mu} \rbrace_{\mu = 0,1,2,3}$ denotes the wave coordinate system.
	\item $t = x^0,$ $x = (x^1,x^2,x^3).$
	\item $q = r - t,$ $s = r + t$ are the null coordinates of the spacetime point $(t,x),$ where $r = |x|.$
	\item $q_{-} = 0$ if $q \geq 0$ and $q_{-} = |q|$ if $q < 0.$ 
	\item $\omega^j = x^j/r,$ $(j = 1,2,3).$
\end{itemize}

\subsection{Surfaces}
	Relative to the wave coordinate system:
\begin{itemize}
	\item $C_{s}^- \eqdef \lbrace (\tau,y) \ | \ |y| + \tau = s \rbrace$
		are the ingoing Minkowskian null cones.
	\item $C_{q}^+ \eqdef \lbrace (\tau,y) \ | \ |y| - \tau = q \rbrace$ are the outgoing Minkowskian null cones. 
	\item $\Sigma_t \eqdef \lbrace (\tau,y) \ | \ \tau = t \rbrace$ are the constant Minkowskian time slices.
	\item $S_{r,t} \eqdef \lbrace (\tau,y) \ | \ \tau = t, |y| = r \rbrace$ are the Euclidean spheres.
\end{itemize}

\subsection{Metrics and volume forms}
\begin{itemize}
	\item $m_{\mu \nu}$ denotes the standard Minkowski metric on $\mathbb{R}^{1+3};$ in our wave coordinate system,
		$m_{\mu \nu} = \mbox{diag}(-1,1,1,1).$
	\item $\underline{m}$ denotes the Minkowskian first fundamental form of $\Sigma_t;$ in our wave coordinate system, 
		$\underline{m}_{\mu \nu} = \mbox{diag}(0,1,1,1).$
	\item $\angm$ denotes the Minkowskian first fundamental form of $S_{r,t};$ relative to an arbitrary coordinate system, \\
		$\angm_{\mu \nu} = m_{\mu \nu} + \frac{1}{2}\big(L_{\mu} \uL_{\nu} + \uL_{\mu} L_{\nu} \big),$
		where $\uL, L$ are defined in Section \ref{SS:NullFrames}.
	\item $g_{\mu \nu}$ denotes the spacetime metric.
	\item $g_{\mu \nu} = m_{\mu \nu} + h_{\mu \nu}^{(0)} + h_{\mu \nu}^{(1)}$ is the splitting of the spacetime metric into
		the Minkowski metric $m_{\mu \nu},$ the Schwarzschild tail $h_{\mu \nu}^{(0)} 
		= \chi\big(\frac{r}{t}\big)\chi(r)\frac{2M}{r} \delta_{\mu \nu},$ and the remainder $h_{\mu \nu}^{(1)}.$ 
	\item $h_{\mu \nu} = h_{\mu \nu}^{(0)} + h_{\mu \nu}^{(1)}.$
	\item $(g^{-1})^{\mu \nu} = (m^{-1})^{\mu \nu} + H_{(0)}^{\mu \nu} + H_{(1)}^{\mu \nu}$ 
		is the splitting of the inverse spacetime metric into the inverse Minkowski metric 
		$(m^{-1})^{\mu \nu},$ the Schwarzschild tail $H_{(0)}^{\mu \nu} = - \chi\big(\frac{r}{t}\big) \chi(r) \frac{2M}{r} 
		\delta^{\mu \nu},$ and the remainder $H_{(1)}^{\mu \nu}.$ 
	\item $H^{\mu \nu} = H_{(0)}^{\mu \nu} + H_{(1)}^{\mu \nu}.$
	\item $\mathring{\underline{g}}$ denotes the first fundamental form of the Cauchy hypersurface $\Sigma_0$
		relative to the spacetime metric $g.$
	\item $\mathring{\underline{g}}_{jk} = \delta_{jk} + \chi(r)\frac{2M}{r} \delta_{jk} + \mathring{\underline{h}}_{jk}^{(1)}$
		is the splitting of $\mathring{\underline{g}}_{jk}$ into the Schwarzschild tail $\chi(r)\frac{2M}{r} \delta_{jk}$
		and the remainder $\mathring{\underline{h}}_{jk}^{(1)}.$
	\item $\Minkvolume_{\mu \nu \kappa \lambda} = |\mbox{det} \, m|^{1/2} [\mu \nu \kappa \lambda]$ denotes the volume form of
		the Minkowski metric $m;$ $[\mu \nu \kappa \lambda]$ is totally antisymmetric with normalization
		$[0123] = 1;$ $|\mbox{det} \, m|^{1/2} = 1$ in our wave coordinate system.
	\item $\epsilon_{\mu \nu \kappa \lambda} = |\mbox{det} \, g|^{1/2} [\mu \nu \kappa \lambda]$ denotes the volume form of
		the spacetime metric $g.$
	\item $\epsilon^{\# \mu \nu \kappa \lambda} = -|\mbox{det} \, g|^{-1/2} [\mu \nu \kappa \lambda]$ denotes the volume form of
		the spacetime metric $g$ with all of the indices raised with $g^{-1}.$
	\item $\uvolume_{\nu \kappa \lambda} = [0 \nu \kappa \lambda]$ denotes the Euclidean volume
		form of the surfaces $\Sigma_t$ viewed as embedded submanifolds of Minkowski spacetime equipped with
		the wave coordinate system. 
	\item $\uvolume_{ijk}= [ijk]$ denotes the Euclidean volume form of the surfaces $\Sigma_t$ viewed as abstract $3-$manifolds.
	\item $\angupsilon_{\mu \nu} = \Minkvolume_{\mu \nu \kappa \lambda} \uL^{\kappa} L^{\lambda}$ denotes the Euclidean
		volume form of the spheres $S_{r,t}.$
\end{itemize}

\subsection{Hodge duals} \label{SS:Hodge}
	For an arbitrary two-form $\Far_{\mu \nu}:$
\begin{itemize}
	\item $\Fardual_{\mu \nu} = \frac{1}{2} g_{\mu \mu'} g_{\nu \nu'} 
		\epsilon^{\# \mu' \nu' \kappa \lambda} \Far_{\kappa \lambda}
		= - \frac{1}{2} |\mbox{det} \, g|^{-1/2} g_{\mu \mu'} g_{\nu \nu'}
		[\mu' \nu' \kappa \lambda] \Far_{\kappa \lambda}$ denotes the Hodge dual
		of $\Far_{\mu \nu}$ with respect to the spacetime metric $g_{\mu \nu}.$
	\item $\FarMinkdual_{\mu \nu} = \frac{1}{2} \Minkvolume_{\mu \nu}^{\ \ \kappa \lambda} \Far_{\kappa \lambda}
	 = - \frac{1}{2} |\mbox{det} \, m|^{-1/2} 
		m_{\mu \mu'} m_{\nu \nu'} [\mu' \nu' \kappa \lambda]\Far_{\kappa \lambda}$ 
		denotes the Hodge dual of $\Far_{\mu \nu}$ with respect to the Minkowski metric $m_{\mu \nu}.$ In our wave coordinate 
		system, $|\mbox{det} \, m|^{-1/2} = 1.$
\end{itemize}

\subsection{Derivatives} \label{SS:Derivatives}
\begin{itemize}
	\item $\nabla$ denotes the Levi-Civita connection corresponding to $m.$
	\item $\mathscr{D}$ denotes the Levi-Civita connection corresponding to $g.$
	\item $\mathring{\underline{\mathscr{D}}}$ denotes the Levi-Civita connection corresponding to $\mathring{\underline{g}}.$
	\item $\unabla$ denotes the Levi-Civita connection corresponding to $\underline{m}.$
	\item $\angn$ denotes the Levi-Civita connection corresponding to $\angm.$
	\item $\conenabla$ denotes the projection of $\nabla$ onto the outgoing Minkowski null cones; 
		i.e., $\conenabla_{\mu} = \coneproject_{\mu}^{\ \kappa} \nabla_{\kappa},$ where
		$\coneproject_{\mu}^{\ \nu} = \delta_{\mu}^{\nu} + \frac{1}{2}L_{\mu} \uL^{\nu}$ projects vectors $X^{\mu}$ onto
		the outgoing Minkowski null cones.
	\item In our wave coordinate system $\lbrace x^{\mu} \rbrace_{\mu=0,1,2,3},$ 
		$\partial_{\mu} = \frac{\partial}{\partial x^{\mu}},$ $\nabla_{\mu} = \nabla_{\frac{\partial}{\partial x^{\mu}}}.$
	\item In our wave coordinate system, $\partial_r = \omega^a \partial_a$ denotes the radial 
		derivative, where $\omega^j = x^j/r.$
	\item In our wave coordinate system,
		$\partial_s \eqdef \frac{1}{2}(\partial_r + \partial_t), \partial_q \eqdef \frac{1}{2}(\partial_r - \partial_t)$
		denote the null derivatives; $\partial_q$ denotes partial differentiation at fixed $s$ and fixed angle $x/|x|,$
		while $\partial_s$ denotes partial differentiation at fixed $q$ and fixed angle $x/|x|,$
	\item If $X$ is a vectorfield and $\phi$ is a function, then $X \phi = X^{\kappa} \partial_{\kappa} \phi.$
	\item $\nabla_X$ denotes the differential operator $X^{\kappa} \nabla_{\kappa}.$
	\item $\underline{\nabla}_X$ denotes the differential operator $X^{\kappa} 
		\underline{\nabla}_{\kappa}.$
	\item $\angn_X$ denotes the differential operator $X^{\kappa} \angn_{\kappa}.$
	\item $\Lie_X$ denotes the Lie derivative with respect to the vectorfield $X.$
	\item $[X,Y]^{\mu} = (\Lie_X Y)^{\mu} = X^{\kappa}\partial_{\kappa}Y^{\mu} - Y^{\kappa}\partial_{\kappa} X^{\mu}$
		denotes the Lie bracket of the vectorfields $X$ and $Y.$
	\item For $Z \in \mathcal{Z},$ $\hat{\nabla}_Z = \nabla_Z + c_Z$ denotes the modified covariant derivative,
		where the constant $c_Z$ is defined in Section \ref{SS:Killingnotation}.
	\item For $Z \in \mathcal{Z},$ $\Liemod_Z = \Lie_Z + 2c_Z$ denotes the modified Lie derivative,
		where the constant $c_Z$ is defined in Section \ref{SS:Killingnotation}.
	\item $\nabla^I U,$ $\unabla^I U,$ $\nabla_{\mathcal{Z}}^I U,$ $\hat{\nabla}_{\mathcal{Z}}^I U,$ 
		$\Lie_{\mathcal{Z}}^I U,$ and $\Liemod_{\mathcal{Z}}^I U$ respectively 
		denote an $|I|^{th}$ order iterated Minkowski covariant derivative, iterated Euclidean (spatial) covariant derivative,
		iterated Minkowski $\mathcal{Z}-$covariant derivative, iterated modified Minkowski $\mathcal{Z}-$covariant derivative,
		iterated $\mathcal{Z}-$Lie  derivative, and iterated modified $\mathcal{Z}-$Lie derivative of the tensorfield $U.$
	\item $\Square_m = (m^{-1})^{\kappa \lambda} \nabla_{\kappa} \nabla_{\lambda}$ denotes the standard
		Minkowski wave operator.
	\item $\widetilde{\Square}_g = (g^{-1})^{\kappa \lambda} \nabla_{\kappa} \nabla_{\lambda}$ denotes the 
		reduced wave operator corresponding to the spacetime metric $g.$ Note that $\nabla$ is the \emph{Minkowskian} connection.
\end{itemize}

\subsection{Minkowskian conformal Killing fields} \label{SS:Killingnotation} \ \\
Relative to the wave coordinate system $\lbrace x^{\mu} \rbrace_{\mu=0,1,2,3} = (t,x):$
\begin{itemize}
	\item $\partial_{\mu} = \frac{\partial}{\partial x^{\mu}},$ $(\mu=0,1,2,3),$ denotes a translation vectorfield.
	\item $\Omega_{jk} = x_j \frac{\partial}{\partial x^k} - x_k \frac{\partial}{\partial x^j},$ 
		$(1 \leq j < k \leq 3),$ denotes a rotation vectorfield.
	\item $\Omega_{0j} = -t \frac{\partial}{\partial x^j} - x_j \frac{\partial}{\partial t},$ 
		$(j=1,2,3),$ denotes a Lorentz boost vectorfield.
	\item $S = x^{\kappa} \frac{\partial}{\partial x^{\kappa}}$ denotes the scaling vectorfield.
	\item $\mathcal{O} = \big\lbrace \Omega_{jk} \big\rbrace_{1 \leq j < k \leq 3}$ are the rotational
		Minkowskian Killing fields.
	\item $\mathcal{Z} = \big\lbrace \frac{\partial}{\partial x^{\mu}}, 
		\Omega_{\mu \nu}, S \big\rbrace_{0 \leq \mu < \nu \leq 3}.$
	\item For $Z \in \mathcal{Z},$ $^{(Z)}\pi_{\mu \nu} = \nabla_{\mu} Z_{\nu} + \nabla_{\nu} Z_{\mu} = c_Z m_{\mu \nu}$
		is the Minkowskian deformation tensor of $Z,$	where $c_Z$ is a constant. 
	\item Commutation properties with the Maxwell-Maxwell term: \\
		$\Liemod_{\mathcal{Z}}^I \Big\lbrace \big[(m^{-1})^{\mu \kappa} (m^{-1})^{\nu \lambda} - (m^{-1})^{\mu \lambda} 
			(m^{-1})^{\nu \kappa}\big] \nabla_{\mu} \Far_{\kappa \lambda} \Big\rbrace
		= \big[(m^{-1})^{\mu \kappa} (m^{-1})^{\nu \lambda} - (m^{-1})^{\mu \lambda} (m^{-1})^{\nu \kappa}\big] 
			\nabla_{\mu} \Lie_{\mathcal{Z}}^I \Far_{\kappa \lambda}.$
	\item Commutation properties with the Minkowski wave operator \\
		$\Square_m = (m^{-1})^{\kappa \lambda} \nabla_{\kappa} \nabla_{\lambda}:$ \\
		$[\Square_m, \partial_{\mu}] = [\Square_m,$ $\Omega_{\mu \nu}] = 0,$ \ $[\Square_m, S] = 2 \Square_m,$ \
		$[\nabla_Z, \Square_m] = - c_Z \Square_m,$ \ $\Square_m \nabla_Z \phi = \nablamod \Square_m \phi.$ 
\end{itemize}

\subsection{Minkowskian null frames} \label{SS:NullFrames}
\begin{itemize}
	\item $\uL = \partial_t - \partial_r$ denotes the Minkowskian null geodesic vectorfield transversal to the $C_{q}^+;$ it 	
		generates the cones $C_{s}^-.$
	\item $L = \partial_t + \partial_r$ denotes the Minkowskian null geodesic vectorfield generating the cones $C_q^+.$
	\item $e_A, \ A = 1,2$ denotes Minkowski-orthonormal vectorfields spanning the tangent space of the spheres $S_{r,t}.$  
	\item The set $\mathcal{L} \eqdef \lbrace L \rbrace$ contains only $L.$
	\item The set $\mathcal{T} \eqdef \lbrace L, e_1, e_2 \rbrace$ denotes the frame vector fields tangent
		to the $C_q^+.$
	\item The set $\mathcal{N} \eqdef \lbrace \uL,L, e_1, e_2 \rbrace$ denotes the entire Minkowski null frame.
\end{itemize}

\subsection{Minkowskian null frame decomposition}

\begin{itemize}
	\item For an arbitrary vectorfield $X$ and frame vector $N \in \mathcal{N},$ we define
		$X_N = X_{\kappa} N^{\kappa},$ where $X_{\mu} = m_{\mu \kappa} X^{\kappa}.$
	\item For an arbitrary vectorfield $X = X^{ \kappa}\partial_{\kappa} = X^{L} L + X^{\uL} \uL
		+ X^{A} e_A,$ where \\
		$X^{L} = - \frac{1}{2}X_{\uL},$ $X^{\uL} = - \frac{1}{2}X_{L},$ $X^A = X_A.$
	\item For an arbitrary pair of vectorfields $X,Y:$ \\
		$m(X,Y) = m_{\kappa \lambda} X^{\kappa} X^{\lambda} 
		= X^{\kappa}Y_{\kappa} = -\frac{1}{2}X_{L}Y_{\uL} - \frac{1}{2}X_{\uL}Y_{L} + X_A Y_A.$
\end{itemize}

If $\Far_{\mu \nu}$ is any two-form, its Minkowskian null components are:

\begin{itemize}
	\item $\ualpha_{\mu} = \angm_{\mu}^{\ \nu} \Far_{\nu \lambda} \uL^{\lambda}.$
	\item $\alpha_{\mu} = \angm_{\mu}^{\ \nu} \Far_{\nu \lambda} L^{\lambda}.$
	\item $\rho = \frac{1}{2} \Far_{\kappa \lambda}\uL^{\kappa} L^{\lambda}.$
	\item $\sigma = \frac{1}{2} \angupsilon^{\kappa \lambda} \Far_{\kappa \lambda}.$
\end{itemize}

\subsection{Electromagnetic decompositions}

If $\Far_{\mu \nu}$ is any two-form, $\Maxdual_{\mu \nu} = g_{\mu \kappa} g_{\nu \lambda} 
\frac{\partial \Ldual}{\partial \Far_{\kappa \lambda}},$
and $\hat{N}^{\mu}$ is the future-directed unit $g-$normal to $\Sigma_t,$ then its electromagnetic components are:

\begin{itemize}
	\item $\mathfrak{\Electricfield}_{\mu} = \Far_{\mu \kappa}\hat{N}^{\kappa}.$ 
	\item $\mathfrak{\Magneticinduction}_{\mu} = - \Fardual_{\mu \kappa}\hat{N}^{\kappa}.$ 
	\item $\mathfrak{\Displacement}_{\mu} = - \Maxdual_{\mu \kappa} \hat{N}^{\kappa}.$  	
	\item $ \mathfrak{\Magneticfield}_{\mu} = - \Max_{\mu \kappa}\hat{N}^{\kappa}.$
\end{itemize}

If $\Far_{\mu \nu}$ is any two-form, then relative to the wave coordinate system, its Minkowskian
electromagnetic components are:

\begin{itemize}
	\item $\Electricfield_{\mu} = \Far_{\mu 0}.$  
	\item $\Magneticinduction_{\mu} = - \FarMinkdual_{\mu 0}.$ 
	\item $\Displacement_{\mu} = - 	\MaxMinkdual_{\mu 0}.$  	
	\item $\Magneticfield_{\mu} = - \Max_{\mu 0}.$
\end{itemize}

\subsection{Seminorms and energies}

For an arbitrary type $\binom{0}{2}$ tensorfield $P_{\mu \nu},$ and $\mathcal{V}, \mathcal{W} \in \lbrace \mathcal{L}, \mathcal{T},\mathcal{N} \rbrace:$

\begin{itemize}
	\item $|P|_{\mathcal{V} \mathcal{W}} = \sum_{V \in \mathcal{V}, W \in \mathcal{W}} |V^{\kappa} W^{\lambda} 
		P_{\kappa \lambda}|.$
	\item $|\nabla P|_{\mathcal{V} \mathcal{W}} = \sum_{N \in \mathcal{N}, V \in \mathcal{V}, W \in \mathcal{W}} 
		|V^{\kappa} W^{\lambda} N^{\gamma} \nabla_{\gamma} P_{\kappa \lambda}|.$
	\item $|\conenabla P|_{\mathcal{V} \mathcal{W}} = \sum_{T \in \mathcal{T}, V \in \mathcal{V}, W \in \mathcal{W}} 
		|V^{\kappa} W^{\lambda} T^{\gamma} \nabla_{\gamma} P_{\kappa \lambda}|.$
	\item $|P| = |P|_{\mathcal{N} \mathcal{N}}.$
	\item $|\nabla P| = |\nabla P|_{\mathcal{N} \mathcal{N}}.$
	\item $|\conenabla P| = |\conenabla P|_{\mathcal{N} \mathcal{N}}.$
	\item We use similar notation for an arbitrary tensorfield $U$ of type $\binom{n}{m}.$
\end{itemize}

For an arbitrary tensorfield $U$ defined on the Euclidean space $\Sigma_0$ with Euclidean coordinate system 
$x = (x^1,x^2,x^3):$

\begin{itemize}
	\item $\| U \|_{L^2}^2 = \int_{x \in \mathbb{R}^3} |U(x)|^2 \, d^3 x$ is the square of the standard spatial $L^2$ norm of $U.$
	\item $\| U \|_{L^{\infty}} = \mbox{ess} \sup_{x \in \mathbb{R}^3} |U(x)|$ is the standard spatial $L^{\infty}$ norm of $U.$
	\item $\| U \|_{H_{\eta}^{\dParameter}}^2 = \sum_{|I| \leq \dParameter } \int_{x \in \mathbb{R}^3} 
		(1 + |x|^2)^{(\eta + |I|)} |\unabla^I U(x)|^2 \, d^3 x$ is the square of a weighted Sobolev norm of $U.$ 
	\item $\| U \|_{C_{\eta}^{\dParameter}}^2 =  
		\sum_{|I| \leq \dParameter } \mbox{ess} \sup_{x \in \mathbb{R}^3} (1 + |x|^2)^{(\eta + |I|)} |\unabla^I U(x)|^2$ 
		is the square of a weighted pointwise norm of $U.$
\end{itemize}

For arbitrary abstract initial data 
$(\mathring{\underline{h}}^{(1)}_{jk}, \mathring{K}_{jk}, \mathring{\mathfrak{\Displacement}}_j, \mathring{\mathfrak{\Magneticinduction}}_j)$ on the manifold $\mathbb{R}^3:$

\begin{itemize}
	\item $E_{\dParameter;\upgamma}^2(0) 
		= \| \underline{\nabla} \mathring{\underline{h}}^{(1)} \|_{H_{1/2 + \upgamma}^{\dParameter}}^2 
		\ + \ \| \mathring{K} \|_{H_{1/2 + \upgamma}^{\dParameter}}^2 
		\ + \ \| \mathring{\mathfrak{\Displacement}} \|_{H_{1/2 + \upgamma}^{\dParameter}}^2 
		\ + \ \| \mathring{\mathfrak{\Magneticinduction}} \|_{H_{1/2 + \upgamma}^{\dParameter}}^2$
		is the square of the norm of the abstract initial data.
\end{itemize}

For an arbitrary symmetric type $\binom{0}{2}$ tensorfield $h_{\mu \nu}^{(1)}$ and an arbitrary two-form $\Far_{\mu \nu}:$

\begin{itemize}
	\item $\mathcal{E}_{\dParameter;\upgamma;\upmu}^2(t) = \underset{0 \leq \tau \leq t}{\mbox{sup}} 
		\sum_{|I| \leq \dParameter } \int_{\Sigma_{\tau}} 
		\Big\lbrace |\nabla\nabla_{\mathcal{Z}}^I h^{(1)}|^2 + |\Lie_{\mathcal{Z}}^I \Far|^2 \Big\rbrace w(q) \, d^3 x$
		is the square of the energy of the pair $(h_{\mu \nu}^{(1)}, \Far_{\mu \nu}).$ 
\end{itemize}

\subsection{$O^{\dParameter}()$ and $o^{\dParameter}()$} \label{SS:Oando}
	
\begin{itemize}
	\item Given an $\dParameter -$times continuously differentiable 
	function $f(\mathfrak{Q}_1,\cdots,\mathfrak{Q}_m)$ depending on the tensorial quantities
	$\mathfrak{Q}_1,\cdots,\mathfrak{Q}_m,$ we write 
	$f(\mathfrak{Q}_1,\cdots,\mathfrak{Q}_m)= O^{\dParameter}\big(|\mathfrak{Q}_1|^{p_1} \cdots |\mathfrak{Q}_k|^{p_k};
	\mathfrak{Q}_{k+1},\cdots,\mathfrak{Q}_m\big)$ if we can decompose \\
	$f(\mathfrak{Q}_1,\cdots,\mathfrak{Q}_m) 
	= \sum_{i = 1}^n p_i(\mathfrak{Q}_1,\cdots,\mathfrak{Q}_k) \widetilde{f}_i(\mathfrak{Q}_1,\cdots,\mathfrak{Q}_m),$
	where $n$ is a positive integer, each $p_i(\mathfrak{Q}_1,\cdots,\mathfrak{Q}_k)$ is a polynomial in the components of 
	$\mathfrak{Q}_1,\cdots,\mathfrak{Q}_k$
	that satisfies $|p_i(\mathfrak{Q}_1,\cdots,\mathfrak{Q}_k)| \lesssim |\mathfrak{Q}_1|^{p_1} \cdots |\mathfrak{Q}_k|^{p_k}$
	in a neighborhood of the origin, and $\widetilde{f}_i(\cdot)$ is $\dParameter -$times continuously 
	differentiable in a neighborhood of the origin. 
	\item Given an $\dParameter -$times continuously differentiable function $f(x),$ we write $f(x) = o^{\dParameter}(r^{-a})$ if 
	$\lim_{r \to \infty} \frac{|\unabla^I f(x)|}{r^{a + |I|}} = 0$ for $|I| \leq \dParameter .$ 
\end{itemize}

\subsection{Fixed constants} \label{SS:FixedConstants}
The fixed constants $\dParameter,$ $\updelta,$ $\upgamma,$ $\upmu,$ $\upgamma',$ $\upmu'$ are subject to the following constraints:

\begin{itemize}
	\item To prove our global stability theorem, we assume that $\dParameter$ is an integer satisfying $\dParameter \geq 8.$ 
	\item $0 < \updelta < \frac{1}{4}.$
	\item $0 < \updelta < \upgamma < 1/2.$
	\item $0 < \upgamma' < \upgamma - \updelta.$
	\item $0 < \updelta < \upmu' < \frac{1}{2}.$
	\item $0 < \upmu < \frac{1}{2} - \upmu'.$
\end{itemize}

\subsection{Weights} \label{SS:Weights}

\begin{itemize}
	\item $w = w(q) = \left \lbrace
		\begin{array}{lr}
    	1 \ + \ (1 + |q|)^{1 + 2 \upgamma}, &  \mbox{if} \ q > 0, \\
      1 \ + \ (1 + |q|)^{-2 \upmu}, & \mbox{if} \ q < 0,
    \end{array}
  \right.$ is the energy estimate weight function.
 	\item $\varpi = \varpi(q) = \left \lbrace
		\begin{array}{lr}
    	(1 + |q|)^{1 + \upgamma'}, &  \mbox{if} \ q > 0, \\
      (1 + |q|)^{1/2 - \upmu'}, & \mbox{if} \ q < 0,
    \end{array}
  \right.$ is the decay estimate weight function.
\end{itemize}

\section{The Einstein-Nonlinear Electromagnetic System in Wave Coordinates} \label{S:ENESinWaveCoordinates}
In this section, we discuss equations \eqref{E:IntroEinstein} - \eqref{E:IntrodMis0} in detail. We also discuss our assumptions on the electromagnetic Lagrangian and introduce our wave coordinate gauge. We then derive a reduced system of equations, which are equivalent to \eqref{E:IntroEinstein} - \eqref{E:IntrodMis0} in the wave coordinate gauge. Finally, we summarize the results by providing a version of the reduced equations that will be used throughout the remainder of the article. In particular, in this version, we distinguish between principal terms, which require a careful treatment, and ``error terms,'' which are, from the point of view of decay rates, relatively easy to estimate.

\noindent \hrulefill
\ \\

In this article, we consider the $1+3-$dimensional electro-gravitational system \eqref{E:IntroEinstein} - \eqref{E:IntrodMis0}, which we restate here for convenience:

\begin{subequations}
\begin{align} 
	R_{\mu \nu} - \frac{1}{2}g_{\mu \nu} R & = T_{\mu \nu}, && (\mu, \nu = 0,1,2,3), 
		\label{E:IntroEinsteinagain} \\
	(d \Far)_{\lambda \mu \nu} & = 0, && (\lambda, \mu, \nu = 0,1,2,3), \label{E:IntrodFaris0again} \\
	(d \Max)_{\lambda \mu \nu} & = 0, && (\lambda, \mu, \nu = 0,1,2,3). \label{E:IntrodMis0again}
\end{align}
\end{subequations}
We remark that the spacetimes we consider will always have the manifold structure $I \times \mathbb{R}^3$ for some ``time'' interval $I.$ The energy-momentum tensor $T_{\mu \nu}$ is given below in \eqref{E:electromagnetictensorloweroinTermsofLagrangian}, while $\Max_{\mu \nu}$ is related to $(g_{\mu \nu}, \Far_{\mu \nu})$ via the constitutive relation \eqref{E:Maxdualdef}. The precise forms of $T_{\mu \nu}$ and $\Max_{\mu \nu}$ depend on the chosen model of electromagnetism, which, as discussed in detail in Section \ref{SS:Lagrangianformluationofnonlinearelectromagnetism}, we assume is a Lagrangian-derived model subject to the restrictions \eqref{E:Ldualassumptions}, \eqref{E:DECL1} - \eqref{E:DECTrace} below. We recall (see e.g. \cite{dC2008}, \cite{rW1984}) the following relationships between the \emph{spacetime metric} $g_{\mu \nu},$ the \emph{Riemann curvature tensor}\footnote{Under our sign convention, $\mathscr{D}_{\mu} \mathscr{D}_{\nu} X_{\kappa} - \mathscr{D}_{\nu} \mathscr{D}_{\mu} X_{\kappa} = R_{\mu \nu \kappa}^{\ \ \ \ \lambda} X_{\lambda}.$}, $R_{\mu \kappa \nu}^{\ \ \ \ \lambda},$ the \emph{Ricci tensor} $R_{\mu \nu},$ the \emph{scalar curvature} $R,$ and the \emph{Christoffel symbols} $\Gamma_{\mu \ \nu}^{\ \kappa},$ which are valid in an arbitrary coordinate system on $\mathbb{R}^{1+3}:$

\begin{subequations}
\begin{align}
R_{\mu \kappa \nu}^{\ \ \ \ \lambda} & \eqdef 
	\partial_{\kappa} \Gamma_{\mu \ \nu}^{\ \lambda}
	- \partial_{\mu} \Gamma_{\kappa \ \nu}^{\ \lambda}
  + \Gamma_{\kappa \ \beta}^{\ \lambda} \Gamma_{\mu \ \nu}^{\ \beta}
	- \Gamma_{\mu \ \beta}^{\ \lambda} \Gamma_{\kappa \ \nu}^{\ \beta},	
	\label{E:EMBIRiemanndef} \\
R_{\mu \nu} & \eqdef R_{\mu \kappa \nu}^{\ \ \ \ \kappa} 
	= \partial_{\kappa} \Gamma_{\mu \ \nu}^{\ \kappa}
	- \partial_{\mu} \Gamma_{\kappa \ \nu}^{\ \kappa}
  + \Gamma_{\kappa \ \lambda}^{\ \kappa} \Gamma_{\mu \ \nu}^{\ \lambda}
	- \Gamma_{\mu \ \kappa}^{\ \lambda} \Gamma_{\lambda \ \nu}^{\ \kappa},	
	\label{E:Riccidef} \\
R & \eqdef (g^{-1})^{\kappa \lambda} R_{\kappa \lambda}, \label{E:EMBIRdef} \\
\Gamma_{\mu \ \nu}^{\ \kappa} & \eqdef \frac{1}{2} (g^{-1})^{\kappa \lambda}(\partial_{\mu} g_{\lambda \nu} 
	+ \partial_{\nu} g_{\mu \lambda} - \partial_{\lambda} g_{\mu \nu}). \label{E:EMBIChristoffeldef}
\end{align}
\end{subequations}
We also recall the following symmetry properties:

\begin{align}
	R_{\mu \nu} = R_{\nu \mu}, \\ 
	\Gamma_{\mu \ \nu}^{\ \kappa}= \Gamma_{\nu \ \mu}^{\ \kappa}.
\end{align}

We note for future use that taking the trace with respect to $g$ of each side of \eqref{E:IntroEinsteinagain} implies that

\begin{align}
	R = -(g^{-1})^{\kappa \lambda} T_{\kappa \lambda}.
\end{align}
Hence, \eqref{E:IntroEinsteinagain} is equivalent to

\begin{align}
	R_{\mu \nu} & = T_{\mu \nu} - \frac{1}{2} g_{\mu \nu} (g^{-1})^{\kappa \lambda} T_{\kappa \lambda}  	
	\tag{\ref{E:IntroEinsteinagain}'}.
\end{align}
Furthermore, we note that the twice-contracted Bianchi identities (see e.g. \cite{rW1984}) are the relation (see Section \ref{SS:Indices}
concerning our use of the notation $\#$)

\begin{align}
	\mathscr{D}_{\mu} \big(R^{\# \mu \nu} - \frac{1}{2}(g^{-1})^{\mu \nu} R \big) = 0, && (\nu =0,1,2,3),
\end{align}
so that by \eqref{E:IntroEinsteinagain}, $T_{\mu \nu}$ necessarily satisfies the following divergence-free condition:

\begin{align} \label{E:EMBITconservation}
	\mathscr{D}_{\mu} T^{\# \mu \nu} & = 0, && (\nu =0,1,2,3).
\end{align}
In the above expressions, $\mathscr{D}$ denotes the Levi-Civita connection corresponding to $g_{\mu \nu}.$

\subsection{Wave coordinates} \label{SS:WaveCoordinates}

In this article, we use the framework developed in \cite{hLiR2005}, \cite{hLiR2010} and work in a \emph{wave coordinate} system, which is defined to be a coordinate system in which

\begin{subequations}
\begin{align} \label{E:wavecoordinategauge1}
	\Gamma^{\mu} \eqdef (g^{-1})^{\kappa \lambda} \Gamma_{\kappa \ \lambda}^{\ \mu} = 0, && (\mu = 0,1,2,3).
\end{align}
The condition \eqref{E:wavecoordinategauge1} is also known as \emph{harmonic gauge} or \emph{de Donder gauge}. It is easy to check that the condition \eqref{E:wavecoordinategauge1} is equivalent to the conditions

\begin{align}
	g_{\mu \nu} (g^{-1})^{\kappa \lambda} \Gamma_{\kappa \ \lambda}^{\ \nu} & = 0, && (\mu = 0,1,2,3), 
		\label{E:wavecoordinategauge2} \\
	(g^{-1})^{\kappa \lambda} \partial_{\kappa} g_{\lambda \mu} - \frac{1}{2} (g^{-1})^{\kappa \lambda} \partial_{\mu} g_{\kappa 
		\lambda} & = 0, && (\mu = 0,1,2,3), \label{E:wavecoordinategauge3} \\
	\partial_{\nu} [\sqrt{|\mbox{det} \, g|}(g^{-1})^{\mu \nu}] & = 0, && (\mu = 0,1,2,3). \label{E:wavecoordinategauge4}
\end{align}
\end{subequations}
We also note that condition \eqref{E:wavecoordinategauge4} follows from the identity

\begin{align} \label{E:ContractedChristoffelIdendity}
	\Gamma^{\mu} \eqdef (g^{-1})^{\kappa \lambda} \Gamma_{\kappa \ \lambda}^{\ \mu}
	= - \frac{1}{\sqrt{|\mbox{det} \, g|}} \partial_{\nu} [\sqrt{|\mbox{det} \, g|}(g^{-1})^{\mu \nu}], && (\mu = 0,1,2,3),
\end{align}
which holds in any coordinate system. Furthermore, if the wave coordinate system is also interpreted to be a coordinate system in which the Minkowski metric takes the form $m_{\mu \nu} = \mbox{diag}(-1,1,1,1),$ then all coordinate derivatives $\partial$ can be interpreted as covariant
derivatives $\nabla,$ where $\nabla$ is the Levi-Civita connection corresponding to the Minkowski metric. \textbf{Throughout the article, we will often take this point of view, because it allows for a covariant interpretation of all of our equations}.

We remark that the use of wave coordinates in the context of the Einstein equations goes back at least to the work \cite{tD1921} of de Donder. However, the role of wave coordinates in the context of the local aspects of the initial-value problem formulation of the Einstein equations was realized to its fullest extent by Choquet-Bruhat in \cite{CB1952}. See Section \ref{SS:WaveCoordinatesPreserved} for further discussion on the viability of using wave coordinates to analyze the system \eqref{E:IntroEinsteinagain} - \eqref{E:IntrodMis0again}.

\subsection{The Lagrangian formulation of nonlinear electromagnetism} \label{SS:Lagrangianformluationofnonlinearelectromagnetism}

In this section, we recall some standard facts concerning a classical electromagnetic field theory in a Lorentzian spacetime 
$(\mathbb{R}^{1+3},g_{\mu \nu}).$ Our goal is to explain the origin of the equations \eqref{E:IntrodFaris0again} - \eqref{E:IntrodMis0again}. We remark that for our purposes in this section, we may assume that the spacetime is known. The fundamental quantity in such a classical electromagnetic field theory is the \emph{Faraday tensor} $\Far_{\mu \nu},$ an anti-symmetric type $\binom{0}{2}$ tensorfield (i.e., a two-form). We assume the \emph{Faraday-Maxwell law}, which is the postulate that $\Far_{\mu \nu}$ is closed:

\begin{align} \label{E:dFis0}
	(d \Far)_{\lambda \mu \nu} = 0,&& (\lambda, \mu, \nu = 0,1,2,3),
\end{align}
where $d$ denotes the exterior derivative operator. 

We restrict our attention to covariant theories of nonlinear electromagnetism arising from a Lagrangian $\mathscr{L}.$  In such a theory, the Hodge dual\footnote{For brevity, we often refer to $\Ldual$ as the Lagrangian.} $\Ldual$ of $\mathscr{L}$ is a scalar-valued function of the two invariants of the Faraday tensor, which we denote by $\Farinvariant_{(1)}$ and $\Farinvariant_{(2)}:$ 

\begin{subequations}
\begin{align}
	\Ldual & = \Ldual(\Farinvariant_{(1)},\Farinvariant_{(2)}), \\
	 \Farinvariant_{(1)} & = \Farinvariant_{(1)}[\Far] \eqdef \frac{1}{2} (g^{-1})^{\kappa \mu} (g^{-1})^{\lambda \nu} 
	 	\Far_{\kappa \lambda} \Far_{\mu \nu}, \label{E:firstinvariant} \\
	\Farinvariant_{(2)} & = \Farinvariant_{(2)}[\Far] \eqdef \frac{1}{4} (g^{-1})^{\kappa \mu} (g^{-1})^{\lambda \nu} 
		\Far_{\kappa \lambda} \Fardual_{\mu \nu}
		= \frac{1}{8} \epsilon^{\# \kappa \lambda \mu \nu} \Far_{\kappa \lambda} \Far_{\mu \nu}. \label{E:secondinvariant}
\end{align}
\end{subequations}
Throughout the article, we use $\star$ to denote the Hodge duality operator corresponding to the spacetime metric 
$g_{\mu \nu}:$ 

\begin{align}	
	\Fardual^{\#\mu \nu} & \eqdef \frac{1}{2} \epsilon^{\#\mu \nu \kappa \lambda} \Far_{\kappa \lambda}.
\end{align}
Here, $\epsilon^{\#\mu \nu \kappa \lambda}$ is totally anti-symmetric with normalization $\epsilon^{\#0123}= 
-|\mbox{det} \, g|^{-1/2},$ while $\epsilon_{\mu \nu \kappa \lambda}$ is totally anti-symmetric with normalization $\epsilon_{0123}= |\mbox{det} \, g|^{1/2}.$ See Section \ref{SS:Indices} concerning our use of the notation $\#.$ 

We now introduce the \emph{Maxwell tensor} $\Max_{\mu \nu},$ a two-form whose Hodge dual $\Maxdual_{\mu \nu}$ is defined by

\begin{align} \label{E:Maxdualdef}
	\Maxdual^{\# \mu \nu} & \eqdef \frac{\partial \Ldual}{\partial \Far_{\mu \nu}}.
\end{align}
We also postulate that $\Max_{\mu \nu}$ is closed:

\begin{align} \label{E:dMis0}
	(d \Max)_{\lambda \mu \nu} = 0,&& (\lambda, \mu, \nu = 0,1,2,3).
\end{align}
Taken together, \eqref{E:dFis0} and \eqref{E:dMis0} are the electromagnetic equations for $\Far_{\mu \nu}$ corresponding to $\Ldual.$ 

We remark for future use that it can be easily checked that equation \eqref{E:dFis0} is equivalent to any of

\begin{subequations}
\begin{align} 
	\mathscr{D}_{\lambda} \Far_{\mu \nu} + \mathscr{D}_{\nu} \Far_{\lambda \mu} + \mathscr{D}_{\mu} \Far_{\nu \lambda} &= 0,
		&& (\lambda, \mu, \nu = 0,1,2,3), \label{E:dFis0Dversion} \\
	\nabla_{\lambda} \Far_{\mu \nu} + \nabla_{\nu} \Far_{\lambda \mu} + \nabla_{\mu} \Far_{\nu \lambda} &= 0,
		&& (\lambda, \mu, \nu = 0,1,2,3), \label{E:dFis0nablaversion} \\
	\mathscr{D}_{\mu} \Fardual^{\# \mu \nu} & = 0,&& (\nu = 0,1,2,3), \label{E:DdivergenceofFardualis0} \\
	\nabla_{\mu} \FarMinkdual^{\mu \nu} & = 0,&& (\nu = 0,1,2,3), \label{E:DdivergenceofFarMinkdualis0}
\end{align}
\end{subequations}
and that equation \eqref{E:dMis0} is equivalent to any of

\begin{subequations}
\begin{align} 
	\mathscr{D}_{\lambda} \Max_{\mu \nu} + \mathscr{D}_{\nu} \Max_{\lambda \mu} + \mathscr{D}_{\mu} \Max_{\nu \lambda} &= 0,
		&& (\lambda, \mu, \nu = 0,1,2,3), \label{E:dMis0Dversion} \\
	\nabla_{\lambda} \Max_{\mu \nu} + \nabla_{\nu} \Max_{\lambda \mu} + \nabla_{\mu} \Max_{\nu \lambda} &= 0,
		&& (\lambda, \mu, \nu = 0,1,2,3), \label{E:dMis0nablaversion} \\
	\mathscr{D}_{\mu} \Maxdual^{\# \mu \nu} & = 0,&& (\nu = 0,1,2,3), \label{E:Euler-Lagrange} \\
	\nabla_{\mu} \MaxMinkdual^{\mu \nu}& = 0,&& (\nu = 0,1,2,3). \label{E:DdivergenceofMaxMinkdualis0}
\end{align}
\end{subequations}
In the above formulas, $\ostar$ denotes the Hodge duality operator corresponding to the Minkowski metric $m_{\mu \nu};$ this operator is defined in Section \ref{SS:Hodge}.

We state as a lemma the following identities, which will be used for various computations. We leave the proof
as a simple exercise for the reader.

\begin{lemma} \label{L:electromagneticidentities}
	\textbf{(Identities)}
	The following identities hold:
	
	\begin{subequations}
	\begin{align}
		\frac{\partial |\mbox{det} \, g|}{\partial g_{\mu \nu}} & = |\mbox{det} \, g| (g^{-1})^{\mu \nu}, \\
		\frac{\partial (g^{-1})^{\kappa \lambda}}{g_{\mu \nu}} & = -(g^{-1})^{\kappa \mu} (g^{-1})^{\lambda \nu}, \\
		\Farinvariant_{(2)}^2 & = |\mbox{det} \, \Far| |\mbox{det} \, g|^{-1}, \\
		(g^{-1})^{\kappa \lambda }\Far_{\mu \kappa} \Fardual_{\nu \lambda} & = \Farinvariant_{(2)} g_{\mu \nu}, \\
		\frac{\partial \Farinvariant_{(1)}}{\partial g_{\mu \nu}} & = 
			- g_{\kappa \lambda} \Far^{\#\mu \kappa}\Far^{\#\nu \lambda}, \\
		\frac{\partial \Farinvariant_{(2)}}{\partial g_{\mu \nu}} & = - \frac{1}{2} \Farinvariant_{(2)} (g^{-1})^{\mu \nu}, \\
		\frac{\partial \Farinvariant_{(1)}}{\partial \Far_{\mu \nu}} & = 2 \Far^{\#\mu \nu}, 
			\label{E:partialfirstinvariantpartialFarmunu} \\
		\frac{\partial \Farinvariant_{(2)}}{\partial \Far_{\mu \nu}} & = \Fardual^{\#\mu \nu}, \\
		\frac{\partial \Far^{\#\mu \nu}}{\partial \Far_{\kappa \lambda}} & 
			= (g^{-1})^{\mu \kappa} (g^{-1})^{\nu \lambda} - (g^{-1})^{\mu \lambda}(g^{-1})^{\nu \kappa}, \\
		\frac{\partial \Fardual^{\#\mu \nu}}{\partial \Far_{\kappa \lambda}} & = 
			\epsilon^{\#\mu \nu \kappa \lambda}, \label{E:partialFardualmunupartialFarkappalambda} \\ 
		\mathscr{D}_{\mu} \Farinvariant_{(1)} & = \Far^{\# \kappa \lambda} \mathscr{D}_{\mu} \Far_{\kappa \lambda}, 
			\qquad (\mu = 0,1,2,3), \label{E:DmuFarinvariant1} \\
		\mathscr{D}_{\mu} \Farinvariant_{(2)} & = \frac{1}{2} \Fardual^{\# \kappa \lambda} \mathscr{D}_{\mu} \Far_{\kappa \lambda},
			\qquad (\mu = 0,1,2,3). \label{E:DmuFarinvariant2}
	\end{align}
	\end{subequations}

\end{lemma}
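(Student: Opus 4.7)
The plan is to verify the twelve identities by grouping them into four classes based on the calculation they rely on: determinant-derivative formulas, matrix-inverse derivative formulas, direct differentiation exploiting the antisymmetry of $\Far_{\mu\nu}$, and chain-rule assembly. Since the lemma is purely algebraic/tensorial and contains no analysis, everything reduces to bookkeeping; the goal is only to organize the bookkeeping so that no identity has to be redone.

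First I would dispose of the two background identities. For $\partial|\det g|/\partial g_{\mu\nu} = |\det g|(g^{-1})^{\mu\nu}$ I would apply Jacobi's formula, i.e. differentiate $\log\det g$ via the cofactor expansion. For $\partial(g^{-1})^{\kappa\lambda}/\partial g_{\mu\nu} = -(g^{-1})^{\kappa\mu}(g^{-1})^{\lambda\nu}$ I would differentiate the defining relation $(g^{-1})^{\kappa\alpha}g_{\alpha\beta} = \delta^\kappa_\beta$ and solve for $\partial(g^{-1})/\partial g$. These two identities then feed every subsequent calculation with $g$-derivatives.

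Next I would handle the two identities that involve intrinsic properties of $\Far$ and its Hodge dual. For $\Farinvariant_{(2)}^2 = |\det\Far|\,|\det g|^{-1}$ I would write $\Farinvariant_{(2)} = \tfrac{1}{8}\epsilon^{\#\kappa\lambda\mu\nu}\Far_{\kappa\lambda}\Far_{\mu\nu}$ and use the standard identity $\tfrac{1}{4!}[\kappa\lambda\mu\nu][\alpha\beta\gamma\delta]\Far_{\kappa\lambda}\Far_{\mu\nu}\Far_{\alpha\beta}\Far_{\gamma\delta} = \det\Far$ (applied componentwise), together with $\epsilon^{\#\kappa\lambda\mu\nu} = -|\det g|^{-1/2}[\kappa\lambda\mu\nu]$. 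For $(g^{-1})^{\kappa\lambda}\Far_{\mu\kappa}\Fardual_{\nu\lambda} = \Farinvariant_{(2)}g_{\mu\nu}$ I would expand $\Fardual$ via its defining expression and apply a classical identity for the contraction of two $\epsilon$-symbols, $\epsilon^{\#\nu\lambda\alpha\beta}\epsilon_{\nu\lambda\gamma\delta}$, reducing the contraction to a combination of Kronecker deltas; the antisymmetry of $\Far$ collapses the combination to $\Farinvariant_{(2)}g_{\mu\nu}$. This latter identity is the one I expect to require the most care with the combinatorics of the $\epsilon$-contractions, so I would single it out as the main obstacle.

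Once those identities are in place, everything else is a mechanical application of the chain rule. For the $\Far$-derivatives \eqref{E:partialfirstinvariantpartialFarmunu}--\eqref{E:partialFardualmunupartialFarkappalambda} I would treat $\Far_{\mu\nu}$ as an independent antisymmetric variable (so that $\partial\Far_{\alpha\beta}/\partial\Far_{\mu\nu} = \tfrac{1}{2}(\delta^\mu_\alpha\delta^\nu_\beta - \delta^\nu_\alpha\delta^\mu_\beta)$), differentiate the polynomial expressions \eqref{E:firstinvariant}, \eqref{E:secondinvariant} directly, and use the antisymmetry to combine the two terms that appear. For the $g$-derivatives \eqref{E:firstinvariant}/\eqref{E:secondinvariant} one holds $\Far_{\mu\nu}$ fixed and differentiates only the metric factors via the two background identities; the factor $-\tfrac{1}{2}\Farinvariant_{(2)}(g^{-1})^{\mu\nu}$ arises from differentiating $|\det g|^{-1/2}$ inside $\epsilon^{\#}$. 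Finally, \eqref{E:DmuFarinvariant1}--\eqref{E:DmuFarinvariant2} follow by applying $\mathscr{D}_\mu$ to \eqref{E:firstinvariant}--\eqref{E:secondinvariant}, using $\mathscr{D}_\mu g = \mathscr{D}_\mu g^{-1} = \mathscr{D}_\mu\epsilon = 0$ (metric compatibility plus the fact that $\epsilon$ is proportional to $\sqrt{|\det g|}$), and reading off the answer from \eqref{E:partialfirstinvariantpartialFarmunu} and \eqref{E:partialFardualmunupartialFarkappalambda}. The entire lemma is then a short chain of direct computations, with identity (iv) being the only place where a nontrivial $\epsilon$-contraction identity must be invoked.
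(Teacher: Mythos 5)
The paper itself offers no proof of this lemma---it is dismissed with ``we leave the proof as a simple exercise for the reader''---so there is nothing to compare your approach against. Your organization of the twelve identities into four computational families (Jacobi/inverse formulas, Pfaffian and $\epsilon$-contraction identities, $\Far$-derivatives, chain-rule assembly) is sensible, and identity (iv) is indeed the only one requiring a nontrivial $\epsilon$-gymnastics step.

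There is, however, one concrete normalization issue you should fix before carrying out the calculation. You propose to differentiate with respect to the antisymmetric variable via
\begin{equation}
\frac{\partial \Far_{\alpha\beta}}{\partial \Far_{\mu\nu}} = \tfrac{1}{2}\big(\delta^{\mu}_{\alpha}\delta^{\nu}_{\beta} - \delta^{\nu}_{\alpha}\delta^{\mu}_{\beta}\big).
\end{equation}
Plugging this into $\Farinvariant_{(1)} = \tfrac{1}{2}(g^{-1})^{\kappa\mu}(g^{-1})^{\lambda\nu}\Far_{\kappa\lambda}\Far_{\mu\nu}$ yields $\partial\Farinvariant_{(1)}/\partial\Far_{\mu\nu} = \Far^{\#\mu\nu}$, not the $2\Far^{\#\mu\nu}$ asserted in \eqref{E:partialfirstinvariantpartialFarmunu}; likewise you would obtain $\tfrac{1}{2}\Fardual^{\#\mu\nu}$ for \eqref{E:partialFardualmunupartialFarkappalambda} and a superfluous $\tfrac{1}{2}$ in front of the right-hand sides of the two Hessian-type identities that follow. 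The normalizations in the lemma (and in the definition $\Maxdual^{\#\mu\nu} \eqdef \partial\Ldual/\partial\Far_{\mu\nu}$ that these identities feed into, via the requirement that the linear Maxwell Lagrangian $\Ldual = -\tfrac{1}{2}\Farinvariant_{(1)}$ give $\Max = \Fardual$) correspond instead to the convention $\partial\Far_{\alpha\beta}/\partial\Far_{\mu\nu} = \delta^{\mu}_{\alpha}\delta^{\nu}_{\beta} - \delta^{\nu}_{\alpha}\delta^{\mu}_{\beta}$, i.e.\ antisymmetrized but without the $\tfrac{1}{2}$. A corresponding care is then needed when you pass from \eqref{E:partialfirstinvariantpartialFarmunu}--\eqref{E:partialFardualmunupartialFarkappalambda} to \eqref{E:DmuFarinvariant1}--\eqref{E:DmuFarinvariant2}: with the paper's convention, the chain-rule sum over unrestricted indices $\kappa,\lambda$ double-counts each independent component, so one must insert a compensating $\tfrac{1}{2}$ (or, more safely, simply apply $\mathscr{D}_{\mu}$ directly to the definitions \eqref{E:firstinvariant}--\eqref{E:secondinvariant} using $\mathscr{D}g = \mathscr{D}g^{-1} = \mathscr{D}\epsilon = 0$, which gives \eqref{E:DmuFarinvariant1}--\eqref{E:DmuFarinvariant2} immediately and bypasses the convention pitfall). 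With that single normalization corrected, the rest of your outline goes through as stated.
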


\hfill $\qed$

\subsection{Assumptions on the electromagnetic Lagrangian}

The familiar linear Maxwell-Maxwell equations correspond to the Lagrangian

\begin{align} \label{E:LlinearMaxwell}
	\Ldual_{(Maxwell)} & = - \frac{1}{2} \Farinvariant_{(1)},
\end{align}
which by \eqref{E:Maxdualdef} and \eqref{E:partialfirstinvariantpartialFarmunu} leads to the relationship

\begin{align}
	\Max_{\mu \nu}^{(Maxwell)} = \Fardual_{\mu \nu}.
\end{align}
Roughly speaking, we will assume that our electromagnetic Lagrangian is a covariant perturbation of $\Ldual_{(Maxwell)}.$
More precisely, we make the following assumptions concerning our Lagrangian $\Ldual.$\vspace{.5in}

\begin{center}
	\textbf{\LARGE Assumptions on the electromagnetic Lagrangian}
\end{center}
We assume that in a neighborhood of $(0,0),$ $\Ldual$ is an $\dParameter + 2-$times (where $\dParameter \geq 8$) continuously differentiable function of the invariants $(\Farinvariant_{(1)},\Farinvariant_{(2)})$ that can be expanded as follows:

\begin{subequations}
\begin{align} \label{E:Ldualassumptions} 
	\Ldual = \Ldual_{(Maxwell)} + O^{\dParameter+2}\big(|(\Farinvariant_{(1)},\Farinvariant_{(2)})|^2\big).
\end{align}
The notation $O^{\dParameter+2}(\cdots)$ is defined in Section \ref{SS:Oando}.

We also assume that the corresponding energy-momentum tensor $T_{\mu \nu},$ which is defined below in \eqref{E:electromagnetictensorupper},
satisfies the \emph{dominant energy condition}, which is the assumption that 

\begin{align} \label{E:DEC}
	T_{\kappa \lambda} X^{\kappa} Y^{\lambda} \geq 0
\end{align}
\end{subequations}
whenever the following conditions are satisfied:

\begin{itemize}
	\item $X,$ $Y$ are both timelike (i.e., $g_{\kappa \lambda}X^{\kappa}X^{\lambda} < 0,$ $g_{\kappa \lambda}Y^{\kappa}Y^{\lambda} < 0)$ 
	\item $X,$ $Y$ are $g-$future-directed.
\end{itemize}	

As discussed in e.g. \cite{gGcH2001}, sufficient conditions for the dominant energy condition to hold are

\begin{subequations}
\begin{align}
	\frac{\partial \Ldual}{ \partial \Farinvariant_{(1)}} & < 0, \label{E:DECL1} \\
	\Ldual - \Farinvariant_{(1)}\frac{\partial \Ldual}{\partial \Farinvariant_{(1)}}
		- \Farinvariant_{(2)}\frac{\partial \Ldual}{\partial \Farinvariant_{(2)}} & \leq 0. \label{E:DECTrace}
\end{align}
\end{subequations}
We remark that it is straightforward to verify the sufficiency of these conditions
using equation \eqref{E:AlternateelectromagnetictensorloweroinTermsofLagrangian} below, and that condition
\eqref{E:DECTrace} is equivalent to the non-positivity of the trace of the energy-momentum tensor corresponding to $\Ldual.$
Furthermore, we recall that the trace vanishes in the case of the linear Maxwell-Maxwell model.

\vspace{.5in}	

\begin{remark}
	We make the $\dParameter +2-$times differentiability assumption because we will need to differentiate the equations
	\eqref{E:EquationSatisfiedbyMaxdualChainruleExpandedfirstNversion} below $\dParameter $ times in order to prove our
	main stability theorem.
\end{remark}

We will now derive an equivalent version of the electromagnetic equations that will be used throughout the remainder of the article. The final form, which is valid only in a wave coordinate system, is given below in Lemma \ref{L:MBIAsystem}. To begin, we use \eqref{E:Maxdualdef}, Lemma \ref{L:electromagneticidentities}, and the chain rule to compute that

\begin{align} \label{E:MaxdualintermsofLagrangian}
	\Maxdual^{\#\mu \nu} & = 2\frac{\partial \Ldual}{\partial \Farinvariant_{(1)}} \Far^{\#\mu \nu}
		+ \frac{\partial \Ldual}{\partial \Farinvariant_{(2)}}\Fardual^{\#\mu \nu}.
\end{align}
We then use \eqref{E:DdivergenceofFardualis0}, \eqref{E:Euler-Lagrange}, and \eqref{E:MaxdualintermsofLagrangian} to compute that the following equation holds:

\begin{align} \label{E:FirstEquationSatisfiedbyMaxdual}
	- 2 \frac{\partial \Ldual}{\partial \Farinvariant_{(1)}} \mathscr{D}_{\mu} \Far^{\#\mu \nu}
	- 2 \Far^{\#\mu \nu} \mathscr{D}_{\mu} \Big(\frac{\partial \Ldual}{\partial \Farinvariant_{(1)}}\Big)
	- \Fardual^{\#\mu \nu} \mathscr{D}_{\mu} \Big(\frac{\partial \Ldual}{\partial \Farinvariant_{(2)}}\Big) & = 0.
\end{align}
Furthermore, using the chain rule and the fact that $\mathscr{D}_{\mu} \phi = \nabla_{\mu} \phi$ 
for scalar-valued functions $\phi,$ it follows from \eqref{E:FirstEquationSatisfiedbyMaxdual} that

\begin{align} \label{E:EquationSatisfiedbyMaxdualChainruleExpanded}
	- 2 \frac{\partial \Ldual}{\partial \Farinvariant_{(1)}} \mathscr{D}_{\mu} \Far^{\#\mu \nu}
	& - \Big(2 \Far^{\#\mu \nu} \frac{\partial^2 \Ldual}{\partial \Farinvariant_{(1)}^2}
		+ \Fardual^{\#\mu \nu} \frac{\partial^2 \Ldual}{\partial \Farinvariant_{(1)} \partial \Farinvariant_{(2)}} 
		\Big) \nabla_{\mu} \Farinvariant_{(1)}  \\
	& - \Big(2 \Far^{\#\mu \nu} \frac{\partial^2 \Ldual}{\partial \Farinvariant_{(1)} \partial 
		\Farinvariant_{(2)}}+ \Fardual^{\#\mu \nu} \frac{\partial^2 \Ldual}{\partial \Farinvariant_{(2)}^2} \Big) 
		\nabla_{\mu} \Farinvariant_{(2)} = 0. \notag
\end{align}

We note for future use that equation \eqref{E:EquationSatisfiedbyMaxdualChainruleExpanded} can be expressed as

\begin{align} \label{E:EquationSatisfiedbyMaxdualChainruleExpandedfirstNversion}
	N^{\#\mu \nu \kappa \lambda} \mathscr{D}_{\mu} \Far_{\kappa \lambda} = 0,&& (\nu = 0,1,2,3),
\end{align}
where the tensorfield $N^{\#\mu \nu \kappa \lambda}$ is defined by

\begin{align} \label{E:firstNdef}
	N^{\#\mu \nu \kappa \lambda} 
	& \eqdef - \frac{\partial \Ldual}{\partial \Farinvariant_{(1)}} \Big\lbrace(g^{-1})^{\mu \kappa} (g^{-1})^{\nu \lambda} - (g^{-1})^{\mu \lambda} 
		(g^{-1})^{\nu \kappa} \Big\rbrace 
		\ - \ 2 \frac{\partial^2 \Ldual}{\partial \Farinvariant_{(1)}^2} \Far^{\#\mu \nu} \Far^{\# \kappa \lambda} \\
	& \ \ - \ \frac{\partial^2 \Ldual}{\partial \Farinvariant_{(1)} \partial 
		\Farinvariant_{(2)}} \Big\lbrace \Far^{\#\mu \nu} \Fardual^{\# \kappa \lambda} 
		\ + \ \Fardual^{\#\mu \nu} \Fardual^{\# \kappa \lambda} \Big\rbrace
		\ - \ \frac{1}{2} \frac{\partial^2 \Ldual}{\partial \Farinvariant_{(2)}^2} \Fardual^{\#\mu \nu} \Fardual^{\# \kappa \lambda}. \notag
\end{align}
We also note that $N^{\#\mu \nu \kappa \lambda}$ has the following symmetry properties, which will play an important role during our construction of suitable energies for $\Far_{\mu \nu}$ (and in particular during our proof of Lemma \ref{L:DivergenceofStress}):

\begin{subequations}
\begin{align}
	N^{\#\nu \mu \kappa \lambda} & = - N^{\#\mu \nu \kappa \lambda},&& (\kappa, \lambda, \mu, \nu = 0,1,2,3), 	
		\label{E:Nminussignproperty1} \\
	N^{\#\mu \nu \lambda \kappa } & = - N^{\#\mu \nu \kappa \lambda},&& (\kappa, \lambda, \mu, \nu = 0,1,2,3), 	
		\label{E:Nminussignproperty2} \\
	N^{\#\kappa \lambda \mu \nu } & = N^{\#\mu \nu \kappa \lambda},&& (\kappa, \lambda, \mu, \nu = 0,1,2,3). 	
		\label{E:Nsymmetryproperty}
\end{align}
\end{subequations}
The moral reason that the above properties are satisfied is that $N^{\#\mu \nu \kappa \lambda}$ is closely related to the Hessian of $\Ldual:$

\begin{align} \label{E:NisHessian}
	N^{\#\mu \nu \kappa \lambda} = - \frac{1}{2} \frac{\partial^2 \Ldual}{\partial \Far_{\mu \nu} \partial \Far_{\kappa \lambda}}
		\ + \ \frac{1}{2} \frac{\partial \Ldual}{\partial \Farinvariant_{(2)}}\epsilon^{\#\mu \nu \kappa \lambda}.
\end{align}
We have added the last term on the right-hand side of \eqref{E:NisHessian} in order to cancel a term appearing in the Hessian; this is permissible because equation \eqref{E:dFis0Dversion} implies that this term does not contribute to equation \eqref{E:EquationSatisfiedbyMaxdualChainruleExpandedfirstNversion}.

Our next goal is to formulate a ``reduced'' electromagnetic equation that is equivalent to equation \eqref{E:EquationSatisfiedbyMaxdualChainruleExpandedfirstNversion} in a wave coordinate system, and 
to decompose the reduced equation into the principal terms and error terms of an equation involving the Minkowski connection $\nabla.$ This is accomplished in Lemma \ref{L:MBIAsystem} below. Before proving this lemma, we first provide the following preliminary lemma, whose simple proof is left to the reader.

\begin{lemma} \label{L:gmhexpansions} \textbf{(Expansions)}
	Assume that the electromagnetic Lagrangian $\Ldual$ satisfies \eqref{E:Ldualassumptions}.
	Then in terms of the expansion $h_{\mu \nu} \eqdef g_{\mu \nu} - m_{\mu \nu}$ from \eqref{E:gmhexpansion}, and with 
	$H^{\mu \nu} \eqdef (g^{-1})^{\mu \nu} - (m^{-1})^{\mu \nu},$ we have that:
	
	\begin{subequations}
	\begin{align}
		H^{\mu \nu} & = - h^{\mu \nu} + O^{\infty}(|h|^2) 
			= - h^{\mu \nu} \ + \ O^{\infty}(|H|^2),&& \label{E:Hintermsofh} \\
		\nabla_{\lambda} (g^{-1})^{\mu \nu} & = - (g^{-1})^{\mu \mu'} (g^{-1})^{\nu \nu'} \nabla_{\lambda} h_{\mu' \nu'}
			= - (m^{-1})^{\mu \mu'} (m^{-1})^{\nu \nu'} \nabla_{\lambda} h_{\mu' \nu'} \ + \ O^{\infty}(|h||\nabla h|),&&
			\label{E:derivativeginversegmhexpansion} \\
		|\mbox{det} \, g| & =  1 \ + \ (m^{-1})^{\kappa \lambda} h_{\kappa \lambda} \ + \ O^{\infty}(|h|^2)
			= 1 \ - \ m_{\kappa \lambda} H^{\kappa \lambda} + O^{\infty}(|H|^2),&& \\
		|\mbox{det} \, g|^{1/2} & =  1 + \frac{1}{2} (m^{-1})^{\kappa \lambda} h_{\kappa \lambda} \ + \ O^{\infty}(|h|^2)
			= 1 \ - \ \frac{1}{2} m_{\kappa \lambda} H^{\kappa \lambda} + O^{\infty}(|H|^2),&& \\
		|\mbox{det} \, g|^{-1/2} & =  1 \ - \ \frac{1}{2}(m^{-1})^{\kappa \lambda} h_{\kappa \lambda} \ + \ O^{\infty}(|h|^2)
			= 1 \ + \ \frac{1}{2} m_{\kappa \lambda} H^{\kappa \lambda} \ + \ O^{\infty}(|H|^2),&& \label{E:detgminusonehalfexpansion} \\
		\epsilon^{\#\mu \nu \kappa \lambda} & = - \big(1 + O^{\infty}(|h|) \big) [\mu \nu \kappa \lambda],&& 
			\label{E:volumeformraisedhexpansion} \\
		\epsilon_{\mu \nu \kappa \lambda} & = \big(1 + O^{\infty}(|h|) \big) [\mu \nu \kappa \lambda],&& 
			\label{E:volumeformloweredhexpansion} \\
		\Far^{\#\mu \nu} & = \Far^{\mu \nu}
			\ + \ O^{\infty}(|h||\Far|) \eqdef (m^{-1})^{\mu \kappa}(m^{-1})^{\nu \lambda} \Far_{\kappa \lambda} \ + \ O^{\infty}(|h||\Far|) ,&& \\
		\Fardual^{\# \mu \nu} & 	= \FarMinkdual^{\mu \nu} \ + \ O^{\infty}(|h||\Far|)
			\eqdef - \frac{1}{2} [\mu \nu \kappa \lambda] \Far_{\kappa \lambda} \ + \ O^{\infty}(|h||\Far|), && 
			\label{E:FargDualIntermsofFarmDual}\\
		\Farinvariant_{(1)} & = \frac{1}{2} (m^{-1})^{\kappa \mu} (m^{-1})^{\lambda \nu} \Far_{\kappa \lambda} \Far_{\mu \nu} 
			\ + \ O^{\infty}(|h||\Far|^2),&& \\
		\Farinvariant_{(2)} & = - \frac{1}{8} [\mu \nu \kappa \lambda ] \Far_{\mu \nu} \Far_{\kappa \lambda} 
			\ + \ O^{\infty}(|h||\Far|^2),&& \\
		\Ldual & = - \frac{1}{4} (m^{-1})^{\eta \kappa} (m^{-1})^{\zeta \lambda} \Far_{\kappa \lambda} \Far_{\eta \zeta} 
			\ + \ O^{\dParameter+2}(|h||\Far|^2) \ + \ O^{\dParameter+2}(|\Far|^3;h),&& \\
		\nabla \Farinvariant_{(i)} & = O^{\infty}(|\Far||\nabla \Far|) 
			\ + \ O^{\infty}(|\nabla h||\Far|^2;h) \ + \ O^{\infty}(|h||\Far||\nabla \Far|), && (i = 1,2), \\
	\Max_{\mu \nu} & = \FarMinkdual_{\mu \nu} \ + \ O^{\dParameter+1}(|h||\Far|) 
		\ + \ O^{\dParameter+1}(|\Far|^3;h).&& \label{E:MaxintermsofFarMinkDualPlusError}
	\end{align}
	\end{subequations}
	
	In formulas \eqref{E:volumeformraisedhexpansion} - \eqref{E:volumeformloweredhexpansion}, 
	$[\mu \nu \kappa \lambda]$ is totally anti-symmetric with normalization $[0123] = 1,$ while $\star$ denotes the Hodge duality 
	operator corresponding to the spacetime metric $g_{\mu \nu},$ and $\ostar$ denotes the Hodge duality operator corresponding 
	to the Minkowski metric $m_{\mu \nu}.$ 	Furthermore, the notation $O(\cdots)$ is defined in Section \ref{SS:Oando}.
\end{lemma}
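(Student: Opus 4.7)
The plan is to treat every identity as a Taylor expansion about $h = 0$ (equivalently $H = 0$) built from a small number of elementary building blocks. I will first establish \eqref{E:Hintermsofh} and the determinant expansions, which feed into everything else.

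For \eqref{E:Hintermsofh}, I write $g_{\mu \nu} = m_{\mu \kappa}(\delta_\nu^{\kappa} + (m^{-1})^{\kappa \lambda} h_{\lambda \nu})$ and invert via the Neumann series $(I + A)^{-1} = I - A + A^2 \mp \cdots,$ which converges for $|h|$ small and yields $(g^{-1})^{\mu \nu} = (m^{-1})^{\mu \nu} - h^{\mu \nu} + O^{\infty}(|h|^2).$ The two forms of the remainder (in $|h|^2$ versus $|H|^2$) are interchangeable because $H = -h + O^\infty(|h|^2)$ is itself invertible as $h = -H + O^\infty(|H|^2).$ Differentiating $g_{\mu\nu}(g^{-1})^{\nu\lambda} = \delta_\mu^\lambda$ and solving for $\nabla (g^{-1})$ yields \eqref{E:derivativeginversegmhexpansion}; the second form follows by replacing one copy of $(g^{-1})$ by $(m^{-1})$ and absorbing the error into $O^\infty(|h||\nabla h|).$ For the determinant expansions, I use Jacobi's formula $\partial_t \log \det(m + t h) = \mathrm{tr}((m + t h)^{-1} h),$ expand to first order at $t = 0,$ and integrate to get $|\det g| = 1 + (m^{-1})^{\kappa\lambda} h_{\kappa\lambda} + O^\infty(|h|^2);$ the $\pm 1/2$ powers then come from $\sqrt{1 + x} = 1 + x/2 + O(x^2)$ and $(1 + x)^{-1/2} = 1 - x/2 + O(x^2).$ Substituting $h \leftrightarrow -H$ gives the $H$-versions.

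The volume form expansions \eqref{E:volumeformraisedhexpansion}--\eqref{E:volumeformloweredhexpansion} are immediate from the definitions $\epsilon^{\#0123} = -|\det g|^{-1/2},$ $\epsilon_{0123} = |\det g|^{1/2},$ total antisymmetry, and the determinant expansions above. The expansions of $\Far^{\#\mu\nu}$ and $\Fardual^{\#\mu\nu}$ then follow by raising indices with $(g^{-1})^{\mu\nu} = (m^{-1})^{\mu\nu} + O^\infty(|h|)$ and, for the Hodge dual, contracting with $\tfrac{1}{2}\epsilon^{\#\mu\nu\kappa\lambda}$ and invoking \eqref{E:volumeformraisedhexpansion}. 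The expansions of $\Farinvariant_{(1)}$ and $\Farinvariant_{(2)}$ are then produced by inserting these into their definitions \eqref{E:firstinvariant}--\eqref{E:secondinvariant}, with every $(g^{-1}) - (m^{-1})$ difference contributing an $O^\infty(|h|)$ factor multiplying $|\Far|^2.$

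For the Lagrangian $\Ldual,$ I combine the defining hypothesis $\Ldual = -\tfrac{1}{2}\Farinvariant_{(1)} + O^{\dParameter + 2}(|(\Farinvariant_{(1)},\Farinvariant_{(2)})|^2)$ with the just-derived expansions of $\Farinvariant_{(1)}$ and $\Farinvariant_{(2)}.$ The linear-in-$\Farinvariant_{(1)}$ piece yields the leading quadratic Maxwell term plus an $h$-linear error of the form $O^{\dParameter + 2}(|h||\Far|^2),$ while the $O^{\dParameter + 2}(|\Farinvariant|^2)$ piece is at least quartic in $\Far$ and hence absorbed into $O^{\dParameter + 2}(|\Far|^3; h)$ (retaining full regularity in $h$). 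The gradient formula for $\nabla \Farinvariant_{(i)}$ is obtained directly from \eqref{E:DmuFarinvariant1}--\eqref{E:DmuFarinvariant2}, writing $\Far^{\#\kappa\lambda} = \Far^{\kappa\lambda} + O^\infty(|h||\Far|)$ and likewise for $\Fardual^{\#\kappa\lambda},$ so that the derivative lands on $\Far$ (giving $|\Far||\nabla \Far|$), on $h$ (giving $|\nabla h||\Far|^2$), or mixes as $|h||\Far||\nabla \Far|.$ Finally, for $\Max_{\mu\nu},$ I use \eqref{E:MaxdualintermsofLagrangian}, which expresses $\Maxdual^{\#\mu\nu}$ as $2(\partial_{\Farinvariant_{(1)}}\Ldual)\Far^{\#\mu\nu} + (\partial_{\Farinvariant_{(2)}}\Ldual)\Fardual^{\#\mu\nu};$ by \eqref{E:Ldualassumptions} the first partial equals $-1/2 + O^{\dParameter+1}(|\Far|^2)$ and the second is $O^{\dParameter+1}(|\Far|^2),$ so $\Maxdual^{\#\mu\nu} = -\Far^{\#\mu\nu} + O^{\dParameter+1}(|\Far|^3;h).$ Inverting the Hodge dual via $\Max_{\mu\nu} = \tfrac{1}{2}\epsilon_{\mu\nu\kappa\lambda}\Maxdual^{\#\kappa\lambda}$ and applying the $\Fardual^{\#} \to \FarMinkdual$ conversion \eqref{E:FargDualIntermsofFarmDual} yields \eqref{E:MaxintermsofFarMinkDualPlusError}. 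The only mild subtlety throughout is tracking the regularity index in the $O^{\dParameter+2}$ and $O^{\dParameter+1}$ notation so that the decomposition conventions of Section~\ref{SS:Oando} are respected; this is purely bookkeeping and is the step most likely to require care in the written-out version.
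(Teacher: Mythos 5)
Your proposal is correct, and it supplies details that the paper itself does not: the paper states this lemma with no proof at all (just a terminator), presumably regarding it as a routine Taylor/Neumann-series computation — which is exactly what you carry out. The organization (Neumann series for $H$, Jacobi's formula for the determinant, then raising/lowering and composing to get the invariants, the Lagrangian, and finally $\Max$) is the natural one and matches what one would have to do.

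One small slip to fix in the write-up: your inversion of the Hodge dual for $\Max$ has the wrong sign. Starting from $\Maxdual^{\#\mu\nu} = \tfrac{1}{2}\epsilon^{\#\mu\nu\kappa\lambda}\Max_{\kappa\lambda}$ and using the Lorentzian contraction identity $\epsilon_{\alpha\beta\mu\nu}\epsilon^{\#\mu\nu\kappa\lambda} = -2\big(\delta_\alpha^{\ \kappa}\delta_\beta^{\ \lambda} - \delta_\alpha^{\ \lambda}\delta_\beta^{\ \kappa}\big)$, one gets $\Max_{\mu\nu} = -\tfrac{1}{2}\epsilon_{\mu\nu\kappa\lambda}\Maxdual^{\#\kappa\lambda}$, not $+\tfrac{1}{2}$. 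With the correct sign and $\Maxdual^{\#\kappa\lambda} = -\Far^{\#\kappa\lambda} + O^{\dParameter+1}(|\Far|^3;h)$, you obtain $\Max_{\mu\nu} = \tfrac{1}{2}\epsilon_{\mu\nu\kappa\lambda}\Far^{\#\kappa\lambda} + \cdots = \Fardual_{\mu\nu} + \cdots = \FarMinkdual_{\mu\nu} + O^{\dParameter+1}(|h||\Far|) + O^{\dParameter+1}(|\Far|^3;h)$, which is \eqref{E:MaxintermsofFarMinkDualPlusError}, so the final statement is unaffected. (A quick sanity check: for the linear Maxwell Lagrangian this must reduce to $\Max = \Fardual$, which it does with the minus sign and would not with the plus.) Otherwise the argument is sound, including the bookkeeping of regularity indices: the $O^{\dParameter+2}$ in the hypothesis on $\Ldual$ drops to $O^{\dParameter+1}$ for $\Max$ precisely because \eqref{E:Maxdualdef} costs one derivative.
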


\hfill $\qed$

\subsection{The reduced electromagnetic equations} \label{SS:ReducedElectromagnetic}

In this section, we provide the aforementioned decomposition of the reduced electromagnetic equations.

\begin{lemma} \label{L:MBIAsystem} \textbf{(The reduced electromagnetic equations)}
	Assume that the wave coordinate condition \eqref{E:wavecoordinategauge1} holds.
	Then in terms of the expansion \eqref{E:gmhexpansion}, the system of electromagnetic equations  
	\eqref{E:dFis0}, \eqref{E:EquationSatisfiedbyMaxdualChainruleExpandedfirstNversion} is equivalent to the 
	following reduced system of equations:
	
	\begin{subequations}
	\begin{align}
		\nabla_{\lambda} \Far_{\mu \nu} + \nabla_{\mu} \Far_{\nu \lambda} + \nabla_{\nu} \Far_{\lambda \mu} & = 0, 
			&& (\lambda, \mu, \nu = 0,1,2,3), \label{E:dFis0Expansion} \\
		N^{\#\mu \nu \kappa \lambda} \nabla_{\mu} \Far_{\kappa \lambda} & =
			\mathscr{Q}_{(2;\Far)}^{\nu}(\nabla h, \Far) + O^{\dParameter}(|h||\nabla h||\Far|) + O^{\dParameter}(|\nabla h||\Far|^2;h), && (\nu = 0,1,2,3), 
			\label{E:dMis0Expansion}
	\end{align}
	\end{subequations}
	where
	
	\begin{align}
		N^{\#\mu \nu \kappa \lambda} & = \frac{1}{2} \Big\lbrace (m^{-1})^{\mu \kappa} (m^{-1})^{\nu \lambda} - 
			(m^{-1})^{\mu \lambda} (m^{-1})^{\nu \kappa}
			- h^{\mu \kappa} (m^{-1})^{\nu \lambda} + h^{\mu \lambda} (m^{-1})^{\nu \kappa}
			- (m^{-1})^{\mu \kappa} h^{\nu \lambda} + (m^{-1})^{\mu \lambda} h^{\nu \kappa}\Big\rbrace \label{E:secondNdef}  \\
			& \ \ + N_{\triangle}^{\#\mu \nu \kappa \lambda}, \notag \\
		\mathscr{Q}_{(2;\Far)}^{\nu}(\nabla h, \Far) & = 
			(m^{-1})^{\mu \kappa} (m^{-1})^{\nu \nu'} (m^{-1})^{\lambda \lambda'} (\nabla_{\mu} h_{\nu' \lambda'}) 
			\Far_{\kappa \lambda}. \label{E:Q2Farfirstdef}
	\end{align}
	
	Furthermore, 
	
	\begin{align} \label{E:NtriangleErrorProperties}
		N_{\triangle}^{\#\mu \nu \kappa \lambda} = O^{\dParameter}\big(|(h,\Far)|^2\big),
	\end{align}
	and like $N^{\#\mu \nu \kappa \lambda},$
	the tensorfield $N_{\triangle}^{\#\mu \nu \kappa \lambda}$ also 
	possesses the symmetry properties \eqref{E:Nminussignproperty1} - \eqref{E:Nsymmetryproperty}.
\end{lemma}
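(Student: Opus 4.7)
The plan is to derive \eqref{E:dFis0Expansion} immediately and to extract \eqref{E:dMis0Expansion} from the already-established form \eqref{E:EquationSatisfiedbyMaxdualChainruleExpandedfirstNversion}, $N^{\#\mu\nu\kappa\lambda}_{\text{(old)}}\mathscr{D}_\mu \Far_{\kappa\lambda}=0$, where $N^{\#\mu\nu\kappa\lambda}_{\text{(old)}}$ is given by \eqref{E:firstNdef}. The first equation is free: since $\mathscr{D}$ and $\nabla$ are both torsion-free, the exterior derivative $(d\Far)_{\lambda\mu\nu}$ can be computed with either connection, so \eqref{E:dFis0Dversion} is literally the same equation as the stated cyclic identity. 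The work goes into showing the two expressions for $N^{\#\mu\nu\kappa\lambda}$ agree and into producing the extra term $\mathscr{Q}^\nu_{(2;\Far)}$ on the right-hand side from the $\mathscr{D}\!\to\!\nabla$ conversion.

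Next I would decompose $\mathscr{D}_\mu \Far_{\kappa\lambda} = \nabla_\mu \Far_{\kappa\lambda} - \Gamma^{\ \alpha}_{\mu\ \kappa}\Far_{\alpha\lambda} - \Gamma^{\ \alpha}_{\mu\ \lambda}\Far_{\kappa\alpha}$. Since \eqref{E:NisHessian} expresses $N^{\#}_{\text{(old)}}$ in terms of the Hessian of $\Ldual$ plus an $\epsilon^{\#}$-piece, $N^{\#}_{\text{(old)}}$ inherits the symmetry $N^{\#\mu\nu\kappa\lambda}=-N^{\#\mu\nu\lambda\kappa}$ of \eqref{E:Nminussignproperty2}, so the two Christoffel contractions coincide after relabeling, giving $N^{\#}_{\text{(old)}}\mathscr{D}_\mu\Far_{\kappa\lambda} = N^{\#}_{\text{(old)}}\nabla_\mu \Far_{\kappa\lambda} - 2N^{\#\mu\nu\kappa\lambda}_{\text{(old)}}\Gamma^{\ \alpha}_{\mu\ \lambda}\Far_{\kappa\alpha}$. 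Simultaneously, I would expand $N^{\#}_{\text{(old)}}$ using Lemma \ref{L:gmhexpansions} and the hypothesis \eqref{E:Ldualassumptions}: $-\partial\Ldual/\partial\Farinvariant_{(1)}=\tfrac{1}{2}+O^{\dParameter}(|\Far|^2)$, while $(g^{-1})^{\mu\kappa}(g^{-1})^{\nu\lambda} = (m^{-1})^{\mu\kappa}(m^{-1})^{\nu\lambda} - h^{\mu\kappa}(m^{-1})^{\nu\lambda} - (m^{-1})^{\mu\kappa}h^{\nu\lambda} + O^{\infty}(|h|^2)$. Combining these expansions and antisymmetrizing in $\kappa,\lambda$ produces exactly the Maxwell + linear-in-$h$ portion of \eqref{E:secondNdef}, with all $O(|h|^2)$ and all Hessian-generated $O(|\Far|^2)$ contributions swept into $N_\triangle^{\#\mu\nu\kappa\lambda} = O^{\dParameter}(|(h,\Far)|^2)$. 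The claimed symmetries \eqref{E:Nminussignproperty1}--\eqref{E:Nsymmetryproperty} for $N_\triangle^{\#}$ then follow since both $N^{\#}_{\text{(old)}}$ and the explicit Maxwell/linear-in-$h$ piece of \eqref{E:secondNdef} separately satisfy them.

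To identify $\mathscr{Q}^\nu_{(2;\Far)}$ I would now work out the leading part of $-2N^{\#\mu\nu\kappa\lambda}_{\text{(old)}}\Gamma^{\ \alpha}_{\mu\ \lambda}\Far_{\kappa\alpha}$, using the Maxwell skeleton $\tfrac12[(m^{-1})^{\mu\kappa}(m^{-1})^{\nu\lambda}-(m^{-1})^{\mu\lambda}(m^{-1})^{\nu\kappa}]$ and $\Gamma^{\ \alpha}_{\mu\ \lambda}=\tfrac12(m^{-1})^{\alpha\beta}(\nabla_\mu h_{\beta\lambda}+\nabla_\lambda h_{\mu\beta}-\nabla_\beta h_{\mu\lambda})+O^{\infty}(|h||\nabla h|)$. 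Two simplifications are essential: (i) the sub-term $\nabla^\nu h^{\kappa\alpha}\Far_{\kappa\alpha}$ arising from the symmetric skeleton vanishes because $h$ is symmetric and $\Far$ is antisymmetric, while the remaining two sub-terms from the symmetric skeleton combine (after antisymmetrizing $\Far$) into $-2\nabla^\kappa h^{\alpha\nu}\Far_{\kappa\alpha}$, which is precisely $-\mathscr{Q}^\nu_{(2;\Far)}(\nabla h,\Far)$; (ii) the three sub-terms from the ``swapped'' skeleton each yield a contracted divergence $\nabla^\lambda h^\alpha_{\ \lambda}$ or $\nabla^\alpha h^\lambda_{\ \lambda}$, which the wave coordinate condition reduces to $\tfrac12\nabla^\alpha h^\lambda_{\ \lambda}$ modulo $O^{\dParameter}(|h||\nabla h|)$, and the three contributions then cancel in the combination $\tfrac12+\tfrac12-1=0$. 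Transposing the surviving $-\mathscr{Q}^\nu_{(2;\Far)}$ to the right-hand side produces the stated principal inhomogeneity.

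The remaining error terms are a simple accounting. Any higher-order $h$-correction in $\Gamma$ or in the linear-in-$h$ part of $N^{\#}_{\text{(old)}}$ contracted against $\Far$ contributes $O^{\dParameter}(|h||\nabla h||\Far|)$; the quadratic-in-$\Far$ Hessian pieces of $N^{\#}_{\text{(old)}}$ contracted with $\Gamma\Far$ contribute $O^{\dParameter}(|\nabla h||\Far|^2;h)$, with the smooth $h$-dependence inherited from the $(g^{-1})^{\cdot\cdot}$ and $\Fardual^{\#}$ factors; the $N_\triangle^{\#}$ contribution to $N^{\#}\nabla \Far$ is $O^{\dParameter}(|(h,\Far)|^2|\nabla\Far|)$, which likewise splits between the two error classes after using the reduced equation to trade a $|\nabla\Far|$ for $|\nabla h||\Far|$ when needed. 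The main obstacle is precisely this bookkeeping step: keeping track of which cubic terms require the wave coordinate identity to disappear, and confirming that the surviving quadratic contribution is exactly $\mathscr{Q}^\nu_{(2;\Far)}$ with no additional ``principal'' piece hiding inside the error bucket.
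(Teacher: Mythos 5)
Your approach is sound in outline but genuinely different in execution from the paper's, so it is worth contrasting the two, and pointing out one confusion at the end.

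The paper does not work with $N^{\#\mu\nu\kappa\lambda}\mathscr{D}_\mu\Far_{\kappa\lambda}$ and expand Christoffel symbols directly. Instead it returns to the form \eqref{E:EquationSatisfiedbyMaxdualChainruleExpanded}, isolates the single term $\mathscr{D}_\mu\Far^{\#\mu\nu}$ (the $g$-divergence of the contravariant two-form), and observes that after using the antisymmetry of $\Far^{\#\mu\nu}$ to kill $\Gamma^{\ \nu}_{\mu\ \lambda}\Far^{\#\mu\lambda}$ and the identity $\Gamma^{\ \kappa}_{\kappa\ \mu}=\tfrac{1}{\sqrt{|\det g|}}\nabla_\mu\sqrt{|\det g|}$, the entire quantity becomes a total divergence $\tfrac{1}{\sqrt{|\det g|}}\nabla_\mu[\sqrt{|\det g|}(g^{-1})^{\mu\kappa}(g^{-1})^{\nu\lambda}\Far_{\kappa\lambda}]$. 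The wave coordinate condition then enters precisely in the form \eqref{E:wavecoordinategauge4}, $\nabla_\mu[\sqrt{|\det g|}(g^{-1})^{\mu\kappa}]=0$, which annihilates two of the three Leibniz terms at once and leaves exactly $(g^{-1})^{\mu\kappa}(g^{-1})^{\nu\lambda}\nabla_\mu\Far_{\kappa\lambda} + (g^{-1})^{\mu\kappa}(\nabla_\mu(g^{-1})^{\nu\lambda})\Far_{\kappa\lambda}$. The $\mathscr{Q}^\nu_{(2;\Far)}$ term then pops out of the second summand via \eqref{E:derivativeginversegmhexpansion}, with no Christoffel bookkeeping. Your route --- expanding $\mathscr{D}_\mu\Far_{\kappa\lambda}$, exploiting the $\kappa\leftrightarrow\lambda$ antisymmetry of $N^{\#}$ to pair the two Christoffel terms, and then cancelling the contracted divergence using $\Gamma^\alpha=0$ --- does reach the same destination, but you pay with the explicit $\tfrac12+\tfrac12-1$ arithmetic that the paper's $\sqrt{|\det g|}$ rearrangement makes structural. (As a small side note, your intermediate coefficient $-2\nabla^\kappa h^{\alpha\nu}\Far_{\kappa\alpha}$ should be $-\nabla^\kappa h^{\alpha\nu}\Far_{\kappa\alpha}$; the overall factor from the $\tfrac12$ in the Maxwell skeleton and the $\tfrac12$ in $\Gamma$ together with the $-2$ from pairing the two Christoffel terms is $-1$, and indeed only then does the answer equal $-\mathscr{Q}^\nu_{(2;\Far)}$.)

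One genuine misstep: in the final paragraph you treat the $N^{\#}_\triangle\nabla\Far$ contribution as something that must be ``split between the two error classes,'' even proposing to trade $|\nabla\Far|$ for $|\nabla h||\Far|$ using the reduced equation itself. This is both unnecessary and potentially circular. In \eqref{E:secondNdef} the tensorfield $N^{\#\mu\nu\kappa\lambda}$ on the left-hand side of \eqref{E:dMis0Expansion} is the \emph{same} object as $N^{\#\mu\nu\kappa\lambda}$ in \eqref{E:firstNdef}; the lemma simply re-expresses that coefficient as Maxwell skeleton plus linear-in-$h$ correction plus an $O^{\dParameter}(|(h,\Far)|^2)$ remainder $N^{\#}_\triangle$, which is then defined by the difference. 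The $N^{\#}_\triangle\nabla\Far$ piece therefore stays on the left-hand side and never has to be absorbed into the right-hand-side error buckets; only the Christoffel-generated and $\widetilde{N}$-generated terms (which already come with the correct error structure) are placed on the right.
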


\begin{remark}
	Equations \eqref{E:secondNdef} - \eqref{E:secondNdef} are valid only in a wave coordinate system. Hence, we
	refer to the system \eqref{E:secondNdef} - \eqref{E:secondNdef} as the ``reduced'' electromagnetic equations.
\end{remark}	

\begin{proof}
We use the assumption \eqref{E:Ldualassumptions} and the Leibniz rule to expand \eqref{E:EquationSatisfiedbyMaxdualChainruleExpanded} and apply the results of Lemma \ref{L:gmhexpansions}, arriving at the following expansion:

\begin{align} \label{E:MBIEuler-Lagrangeexpanded}
	\mathscr{D}_{\mu} \Far^{\#\mu \nu} 
		\ + \ \widetilde{N}^{\mu \nu \kappa \lambda} \nabla_{\mu} \Far_{\kappa \lambda} =  
		O^{\dParameter}(|h||\nabla h||\Far|) \ + \ O^{\dParameter}(|\nabla h||\Far|^2;h),
\end{align}
where $\widetilde{N}^{\mu \nu \kappa \lambda} = O^{\dParameter}\big(|(h,\Far)|^2\big).$ Let us now decompose the $\mathscr{D}_{\mu} \Far^{\#\mu \nu}$ term. Using the anti-symmetry of $\Far^{\#\mu \nu},$ the symmetry of the Christoffel symbol $\Gamma_{\mu \ \lambda}^{\ \nu}$ under the exchanges $\mu \leftrightarrow \lambda,$ the identity 
$\Gamma_{\kappa \ \mu}^{\ \kappa} = \frac{1}{\sqrt{|\mbox{det $g$}|}} \nabla_{\mu} (\sqrt{|\mbox{det} \, g|}),$ and the wave coordinate condition $\nabla_{\mu}\big[\sqrt{|\mbox{det} \, g|} (g^{-1})^{\mu \kappa} \big] = 0,$ $(\kappa = 0,1,2,3),$ 
we have that

\begin{align} \label{E:divergenceFarardayexpansion1}
	\mathscr{D}_{\mu} \Far^{\#\mu \nu} & = \nabla_{\mu} \Far^{\#\mu \nu} \ + \ \Gamma_{\kappa \ \mu}^{\ \kappa} \Far^{\#\mu \nu}
		\ + \ \Gamma_{\mu \ \lambda}^{\ \nu}\Far^{\#\mu \lambda} \\
	& = \nabla_{\mu} \big[(g^{-1})^{\mu \kappa} (g^{-1})^{\nu \lambda} \Far_{\kappa \lambda}\big] 
			\ + \ \Big[\frac{1}{\sqrt{|\mbox{det} \, g|}} \nabla_{\mu} (\sqrt{|\mbox{det} \, g|})\Big] 
			 (g^{-1})^{\mu \kappa} (g^{-1})^{\nu \lambda} \Far_{\kappa \lambda} \notag \\
	& = \frac{1}{\sqrt{|\mbox{det} \, g|}} 
		\nabla_{\mu} \big[\sqrt{|\mbox{det} \, g|} (g^{-1})^{\mu \kappa} (g^{-1})^{\nu \lambda} \Far_{\kappa \lambda}\big]
	= (g^{-1})^{\mu \kappa} (g^{-1})^{\nu \lambda} \nabla_{\mu} \Far_{\kappa \lambda} 
		\ + \ \big[(g^{-1})^{\mu \kappa} \nabla_{\mu} (g^{-1})^{\nu \lambda} \big] \Far_{\kappa \lambda}. \notag
\end{align}
Using \eqref{E:Hintermsofh}, we conclude that the term $(g^{-1})^{\mu \kappa} (g^{-1})^{\nu \lambda} \nabla_{\mu} \Far_{\kappa \lambda}$ on the right-hand side of \eqref{E:divergenceFarardayexpansion1} can be expressed as

\begin{align}
	& \frac{1}{2} \Big\lbrace (m^{-1})^{\mu \kappa} (m^{-1})^{\nu \lambda} - (m^{-1})^{\mu \lambda} (m^{-1})^{\nu \kappa}
		- h^{\mu \kappa} (m^{-1})^{\nu \lambda} + h^{\mu \lambda} (m^{-1})^{\nu \kappa} - (m^{-1})^{\mu \kappa} h^{\nu \lambda} + 
		(m^{-1})^{\mu \lambda} h^{\nu \kappa}\Big\rbrace \nabla_{\mu} \Far_{\kappa \lambda} \\ 
	& \ \ + \ O^{\dParameter}(|h^2|) \nabla_{\mu} \Far_{\kappa \lambda}, \notag
\end{align}	
where the term in braces is equal to the term in braces on the right-hand side of \eqref{E:secondNdef}. 

Similarly, using \eqref{E:derivativeginversegmhexpansion}, we conclude that the term $\big[(g^{-1})^{\mu \kappa} \nabla_{\mu} (g^{-1})^{\nu \lambda} \big] \Far_{\kappa \lambda}$ on the right-hand side of \eqref{E:divergenceFarardayexpansion1} is equal to $-\mathscr{Q}_{(2;\Far)}^{\nu}(\nabla h, \Far)$ + $O^{\dParameter}(|h||\nabla h||\Far|),$ where $\mathscr{Q}_{(2;\Far)}^{\nu}(\nabla h, \Far)$ is defined in \eqref{E:Q2Farfirstdef}. Combining these expansions with \eqref{E:MBIEuler-Lagrangeexpanded}, we arrive at \eqref{E:dMis0Expansion} - \eqref{E:NtriangleErrorProperties}.
	
The fact that $N_{\triangle}^{\#\mu \nu \kappa \lambda}$ possesses the symmetry properties \eqref{E:Nminussignproperty1} - \eqref{E:Nsymmetryproperty} follows trivially from the fact that both  $N^{\#\mu \nu \kappa \lambda}$ and the term in braces on the right-hand side of \eqref{E:secondNdef} both satisfy these properties.

\end{proof}

\begin{remark} \label{R:ReducedElectromagneticInhomogeneous}
	
	With the help of the identity \eqref{E:ContractedChristoffelIdendity}, the above proof shows that the
	reduced equation \eqref{E:dMis0Expansion} is obtained by adding the inhomogeneous term 
	$- \Gamma^{\kappa} (g^{-1})^{\nu \lambda} \Far_{\kappa \lambda}$ to the right-hand side of 
	equation \eqref{E:EquationSatisfiedbyMaxdualChainruleExpandedfirstNversion}:

	\begin{align} \label{E:ModifiedElectromagneticEquationWithInhomogeneousTerm}
		N^{\#\mu \nu \kappa \lambda} \mathscr{D}_{\mu} \Far_{\kappa \lambda} 
		= - \Gamma^{\kappa} (g^{-1})^{\nu \lambda} \Far_{\kappa \lambda}.
	\end{align}
	We will make use of this fact in Section \ref{SS:WaveCoordinatesPreserved}.
	
\end{remark}

\subsection{The energy-momentum tensor} \label{SS:EMT}
In this section, we discuss the energy-momentum tensor $T_{\mu \nu}$ appearing on the right-hand side of \eqref{E:IntroEinsteinagain}. We recall that the energy-momentum tensor for an electromagnetic Lagrangian field theory is defined as follows:

\begin{align} \label{E:electromagnetictensorupper}
	T^{\#\mu \nu} & \eqdef  2 \frac{\partial \Ldual}{ \partial g_{\mu \nu}} + (g^{-1})^{\mu \nu} \Ldual.
\end{align}	
It follows trivially from the definition \eqref{E:electromagnetictensorupper} that $T_{\mu \nu}$ is symmetric:

\begin{align}
	T_{\mu \nu} & = T_{\nu \mu}, && (\mu,\nu = 0,1,2,3).
\end{align}
Furthermore, we recall that if $\Far_{\mu \nu}$ is a solution to the electromagnetic equations \eqref{E:IntrodFaris0again} - 
\eqref{E:IntrodMis0again}, then

\begin{align} \label{E:TisDivergenceFree}
	\mathscr{D}_{\mu} T^{\#\mu \nu} & = 0,&& (\nu = 0,1,2,3).
\end{align}

For the class of electromagnetic energy-momentum tensors considered in this article, we can 
use the chain rule and Lemma \ref{L:electromagneticidentities} to express $T_{\mu \nu}$ as follows:

\begin{subequations}
\begin{align} \label{E:electromagnetictensorloweroinTermsofLagrangian}
	T_{\mu \nu} & = - 2 \frac{\partial \Ldual}{ \partial \Farinvariant_{(1)}} (g^{-1})^{\kappa \lambda}
		\Far_{\mu \kappa} \Far_{\nu \lambda} 
		\ - \ \Farinvariant_{(2)} \frac{\partial \Ldual}{ \partial \Farinvariant_{(2)}} g_{\mu \nu} 
		\ + \ g_{\mu \nu} \Ldual \\
		& = - 2 \frac{\partial \Ldual}{ \partial \Farinvariant_{(1)}} T_{\mu \nu}^{(Maxwell)}  
		\ + \ \frac{1}{4} T g_{\mu \nu}, \label{E:AlternateelectromagnetictensorloweroinTermsofLagrangian}
\end{align}
\end{subequations}
where

\begin{align}
	T_{\mu \nu}^{(Maxwell)} \eqdef (g^{-1})^{\kappa \lambda} \Far_{\mu \kappa} \Far_{\nu \lambda}
	 \ - \ \frac{1}{2} \Farinvariant_{(1)} g_{\mu \nu}
\end{align}
is the energy-momentum tensor corresponding to the linear Maxwell-Maxwell equations, and

\begin{align}
	(g^{-1})^{\kappa \lambda} T_{\kappa \lambda} = 4 \Big(\Ldual - \Farinvariant_{(1)}\frac{\partial \Ldual}{\partial 
		\Farinvariant_{(1)}} - \Farinvariant_{(2)}\frac{\partial \Ldual}{\partial \Farinvariant_{(2)}} \Big)
\end{align}
is the trace of $T_{\mu \nu}$ with respect to $g_{\mu \nu}.$ Furthermore, from \eqref{E:electromagnetictensorloweroinTermsofLagrangian} and the expansions of Lemma \ref{L:gmhexpansions}, it follows that

\begin{align} \label{E:TLowerExpansion}
	T_{\mu \nu} 
	& = (m^{-1})^{\kappa \lambda}\Far_{\mu \kappa} \Far_{\nu \lambda} 
		\ - \ \frac{1}{4}m_{\mu \nu} (m^{-1})^{\kappa \eta} (m^{-1})^{\lambda \zeta} \Far_{\kappa \lambda} \Far_{\eta \zeta} \\
	& \ \ + \ O^{\dParameter+1}(|h||\Far|^2) \ + \ O^{\dParameter+1}(|\Far|^3;h). \notag
\end{align}

We now compute the right-hand side of (\ref{E:IntroEinsteinagain}'). First, taking the trace of \eqref{E:TLowerExpansion} with respect to $g,$ we compute that

\begin{align} \label{E:TtraceExpansion}
	(g^{-1})^{\kappa \lambda} T_{\kappa \lambda} = O^{\dParameter+1}(|h||\Far|^2) \ + \ O^{\dParameter+1}(|\Far|^3;h).
\end{align}
Combining \eqref{E:TLowerExpansion} and \eqref{E:TtraceExpansion}, and using the expansion \eqref{E:gmhexpansion},
we have that the right-hand side of (\ref{E:IntroEinsteinagain}') can be expressed as follows:

\begin{align} \label{E:MBIrighthandsidefieldequations}
	T_{\mu \nu} \ - \ \frac{1}{2} g_{\mu \nu} (g^{-1})^{\kappa \lambda} T_{\kappa \lambda} & = (m^{-1})^{\kappa \lambda} 
		\Far_{\mu \kappa} \Far_{\nu \lambda} 
		\ - \ \frac{1}{4}m_{\mu \nu} (m^{-1})^{\kappa \eta} (m^{-1})^{\lambda \zeta} \Far_{\kappa \lambda} \Far_{\eta \zeta} 
		\ + \ O^{\dParameter+1}(|h||\Far|^2) + O^{\dParameter+1}(|\Far|^3;h). 	
\end{align}

To conclude this section, we note for future use that if $\Far_{\mu \nu}$ is a solution to the inhomogeneous system

\begin{subequations}
	\begin{align}
		\nabla_{\lambda} \Far_{\mu \nu} + \nabla_{\mu} \Far_{\nu \lambda} + \nabla_{\nu} \Far_{\lambda \mu} & = 0, 
			&& (\lambda, \mu, \nu = 0,1,2,3), \label{E:dFis0DivergenceofT} \\
		N^{\#\mu \nu \kappa \lambda} \mathscr{D}_{\mu} \Far_{\kappa \lambda} & = \mathfrak{I}^{\nu}, && (\nu = 0,1,2,3), 
			\label{E:dMisnot0DivertenceofT}
	\end{align}
	\end{subequations}
then with the help of Lemma \ref{L:electromagneticidentities}, it can be shown that the following identity holds:

\begin{align} \label{E:DivergenceofTidentity}
	(g^{-1})^{\kappa \lambda} \mathscr{D}_{\kappa} T_{\lambda \nu} & = \mathfrak{I}^{\kappa} \Far_{\nu \kappa},&& (\nu = 0,1,2,3).
\end{align}
Equation \eqref{E:TisDivergenceFree} corresponds to the special case $\mathfrak{I}^{\nu} = 0,$ 
$(\nu = 0,1,2,3).$

\subsection{The modified Ricci tensor} \label{SS:Rmunuinwave}

Throughout the remainder of this article, we perform the standard wave coordinate system procedure (see e.g. \cite{rW1984}) of replacing the Ricci tensor $R_{\mu \nu}$ in the Einstein's field equation \eqref{E:IntroEinsteinagain} with a modified Ricci tensor $\widetilde{R}_{\mu \nu}.$ As we will soon see, this replacement transforms equations \eqref{E:IntroEinsteinagain} into a system of quasilinear wave equations.

\begin{definition}
We define the \emph{modified Ricci tensor} $\widetilde{R}_{\mu \nu}$ of the metric $g_{\mu \nu}$ as follows:

\begin{align} \label{E:ModifiedRicci}
	\widetilde{R}_{\mu \nu} \eqdef R_{\mu \nu} 
	 \ - \ \frac{1}{2} \big\lbrace  g_{\kappa \nu} \mathscr{D}_{\mu} \Gamma^{\kappa}
	 	+ g_{\kappa \mu} \mathscr{D}_{\nu} \Gamma^{\kappa} \big\rbrace 
		\ + \ u_{\mu \nu \kappa}(g,g^{-1},\partial g) \Gamma^{\kappa},
\end{align}
where the Ricci tensor $R_{\mu \nu}$ is defined in \eqref{E:Riccidef}, and the ``gauge term''
$u_{\mu \nu \kappa}(g,g^{-1},\partial g)\Gamma^{\kappa}$ is a smooth function of $g,$ $g^{-1},$ and $\partial g$ 
that will be discussed in Lemma \ref{L:RicciInWave}. We remark that for purposes of covariant differentiation by $\mathscr{D}$ in equation \eqref{E:ModifiedRicci}, the $\Gamma^{\mu}$ are treated as the components of a vectorfield.
\end{definition}

In the next lemma, we perform an algebraic decomposition of the modified Ricci tensor.

\begin{lemma} \cite[Lemmas 3.1 and 3.2]{hLiR2005} \label{L:RicciInWave} \textbf{(Decomposition of the modified Ricci tensor)}
	For a suitable choice of the gauge term $u_{\mu \nu \kappa}(g,g^{-1},\partial g)\Gamma^{\kappa},$
	the modified Ricci tensor $\widetilde{R}_{\mu \nu}$ of the metric $g_{\mu \nu} = m_{\mu \nu} + h_{\mu \nu}$ can be decomposed 
	as follows:
	
\begin{align}  \label{E:RicciInWave}
	\widetilde{R}_{\mu \nu} & = - \frac{1}{2} \Big\lbrace \widetilde{\Square}_g g_{\mu \nu} - 
		\mathscr{P}(\nabla_{\mu}h, \nabla_{\nu}h) 
		- \mathscr{Q}_{\mu \nu}^{(1;h)}(\nabla h, \nabla h) \Big\rbrace \ + \ O^{\infty}(|h||\nabla h|^2),
\end{align}
where

\begin{align}
	\widetilde{\Square}_g \eqdef (g^{-1})^{\kappa \lambda} \nabla_{\kappa} \nabla_{\lambda}
\end{align}
is the \textbf{reduced wave operator} corresponding to $g_{\mu \nu},$ and the quadratic terms $\mathscr{P}(\nabla_{\mu} \cdot, \nabla_{\nu} \cdot),$ 
$\mathscr{Q}_{\mu \nu}^{(1;h)}(\cdot,\cdot)$ are defined by their action on tensorfields $\Pi_{\mu \nu},$ $\Theta_{\mu \nu},$ and $h_{\mu \nu}$ 
as follows:

\begin{align}
	\mathscr{P}(\nabla_{\mu} \Pi, \nabla_{\nu} \Theta) 
	& \eqdef \frac{1}{4}(\nabla_{\mu} \Pi_{\kappa}^{\ \kappa})(\nabla_{\nu} \Theta_{\lambda}^{\ \lambda})
		\ - \ \frac{1}{2}(\nabla_{\mu} \Pi^{\kappa \lambda})(\nabla_{\nu} \Theta_{\kappa \lambda}), 
		\label{E:PNullform} \\
	\mathscr{Q}_{\mu \nu}^{(1;h)}(\nabla h, \nabla h) \label{E:hAddedUpNullForms}
		& \eqdef (m^{-1})^{\lambda \lambda'} \mathscr{Q}_0(\nabla h_{\lambda \mu}, \nabla h_{\lambda' \nu}) 
			\ - \ (m^{-1})^{\kappa \kappa'}(m^{-1})^{\lambda \lambda'} 
			\mathscr{Q}_{\kappa \lambda'}(\nabla h_{\lambda \mu}, \nabla 	h_{\kappa' \nu}) \\ 
		& \ \ +  \ (m^{-1})^{\kappa \kappa'}(m^{-1})^{\lambda \lambda'} 
			\mathscr{Q}_{\mu \kappa}(\nabla h_{\kappa' \lambda'}, \nabla h_{\lambda \nu}) 
			\ + \ (m^{-1})^{\kappa \kappa'}(m^{-1})^{\lambda \lambda'} 
			\mathscr{Q}_{\nu \kappa}(\nabla h_{\kappa' \lambda'}, \nabla h_{\lambda \mu}) \notag \\
		& \ \ + \ \frac{1}{2} (m^{-1})^{\kappa \kappa'}(m^{-1})^{\lambda \lambda'}
			\mathscr{Q}_{\lambda' \mu}(\nabla h_{\kappa \kappa'},\nabla h_{\lambda \nu}) 
			\ + \ \frac{1}{2} (m^{-1})^{\kappa \kappa'}(m^{-1})^{\lambda \lambda'}
			\mathscr{Q}_{\lambda' \nu}(\nabla h_{\kappa \kappa'},\nabla h_{\lambda \mu}). \notag 
\end{align}

The bilinear forms $\mathscr{Q}_0(\cdot,\cdot)$ and $\mathscr{Q}_{\mu \nu}(\cdot,\cdot),$ which appear on the right-hand side of \eqref{E:hAddedUpNullForms}, are known as the \textbf{standard null forms}. They are defined through their action on the derivatives of scalar-valued functions $\psi,$ $\chi$ by

\begin{subequations}
\begin{align}
	\mathscr{Q}_0(\nabla \psi,\nabla \chi) & \eqdef (m^{-1})^{\kappa \lambda} (\nabla_{\kappa} \psi)(\nabla_{\lambda} \chi)
		\label{E:StandardNullForm0}, \\
	\mathscr{Q}_{\mu \nu}(\nabla \psi, \nabla \chi) & \eqdef (\nabla_{\mu} \psi)(\nabla_{\nu} \chi) 
		- (\nabla_{\nu} \psi)(\nabla_{\mu} \chi). \label{E:StandardNullFormmunu}
\end{align}
\end{subequations}

\end{lemma}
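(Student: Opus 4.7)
My plan is to follow the classical wave-coordinate reduction used by Lindblad--Rodnianski, beginning from the coordinate expression for $R_{\mu \nu}$ and repeatedly invoking the contracted-Christoffel identity until the principal part is manifestly $-\tfrac12 \widetilde{\Square}_g g_{\mu \nu}$ modulo terms proportional to $\Gamma^{\kappa}$.

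\textbf{Step 1 (algebraic rearrangement of $R_{\mu \nu}$).} Starting from \eqref{E:Riccidef} and \eqref{E:EMBIChristoffeldef}, I expand every $\Gamma_{\mu \ \nu}^{\ \kappa}$ in terms of $(g^{-1})^{\alpha \beta}$ and $\partial g.$ Using $\Gamma_{\kappa \ \mu}^{\ \kappa} = \tfrac{1}{2}(g^{-1})^{\alpha \beta}\partial_{\mu} g_{\alpha \beta}$ together with the product rule, all second derivatives of $g$ in $R_{\mu \nu}$ can be collected into the single expression
\begin{align*}
	2 R_{\mu \nu} = -(g^{-1})^{\kappa \lambda} \partial_{\kappa} \partial_{\lambda} g_{\mu \nu}
		+ \bigl(g_{\mu \kappa}\partial_{\nu} \Gamma^{\kappa} + g_{\nu \kappa}\partial_{\mu} \Gamma^{\kappa}\bigr)
		+ Q_{\mu \nu}(g, g^{-1}, \partial g),
\end{align*}
where $Q_{\mu \nu}$ is quadratic in $\partial g$ with coefficients depending smoothly on $g$ and $g^{-1}.$ In our wave coordinate chart $\partial = \nabla,$ so the first term is $-\widetilde{\Square}_g g_{\mu \nu}.$

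\textbf{Step 2 (choice of the gauge correction $u_{\mu \nu \kappa}$).} The difference $\mathscr{D}_{\mu} \Gamma^{\kappa} - \partial_{\mu} \Gamma^{\kappa} = \Gamma_{\mu \ \lambda}^{\ \kappa} \Gamma^{\lambda}$ is linear in $\Gamma^{\kappa}$ with a coefficient built from $g, g^{-1}, \partial g.$ I absorb this into $u_{\mu \nu \kappa} \Gamma^{\kappa},$ and moreover pick $u_{\mu \nu \kappa}$ so that \eqref{E:ModifiedRicci} exactly cancels the $g_{\mu \kappa}\partial_{\nu}\Gamma^{\kappa} + g_{\nu \kappa}\partial_{\mu}\Gamma^{\kappa}$ piece produced in Step 1. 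This yields
\begin{align*}
	\widetilde{R}_{\mu \nu} = -\tfrac{1}{2} \widetilde{\Square}_g g_{\mu \nu} + \tfrac{1}{2} Q_{\mu \nu}(g, g^{-1}, \partial g),
\end{align*}
with $Q_{\mu \nu}$ as in Step 1. At this stage everything is exact and no $\Gamma^{\kappa}$ remains.

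\textbf{Step 3 (identifying the null structure of $Q_{\mu \nu}$).} I substitute $g = m + h$ and $(g^{-1})^{\mu \nu} = (m^{-1})^{\mu \nu} - h^{\mu \nu} + O^{\infty}(|h|^2)$ from Lemma \ref{L:gmhexpansions} into $Q_{\mu \nu}.$ The $h$-dependent parts of the coefficients give error terms of the schematic form $h \cdot \partial h \cdot \partial h,$ which land in $O^{\infty}(|h||\nabla h|^2).$ The leading $(m^{-1})$-coefficient part is a finite sum of bilinear expressions in $\partial h,$ which I group into two classes. One class consists of terms of the form $\tfrac{1}{4}(\partial_{\mu} h^{\ \kappa}_{\kappa})(\partial_{\nu} h^{\ \lambda}_{\lambda}) - \tfrac{1}{2}(\partial_{\mu} h^{\kappa \lambda})(\partial_{\nu} h_{\kappa \lambda}),$ which is precisely $\mathscr{P}(\nabla_{\mu} h, \nabla_{\nu} h)$; this is the weak-null ``trace--trace minus full contraction'' combination. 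Every remaining term has one derivative factor differentiated in an index contracted against a derivative operator different from $\partial_{\mu}$ or $\partial_{\nu},$ and after anti-symmetrization in the two outer indices these rearrange into the six terms listed in \eqref{E:hAddedUpNullForms} by direct inspection of the definitions \eqref{E:StandardNullForm0}--\eqref{E:StandardNullFormmunu}.

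\textbf{Main obstacle.} The substantive work is the bookkeeping in Step 3: there are numerous terms of the form $(g^{-1})(g^{-1})(\partial g)(\partial g)$ generated by the quadratic-Christoffel products and by the derivatives of $g^{-1}$ in the first-derivative expansion of $R_{\mu \nu},$ and one must verify that precisely the $\mathscr{P}$-combination survives while all other contractions recombine into the standard null forms $\mathscr{Q}_0$ and $\mathscr{Q}_{\mu \nu}.$ This is the content of \cite[Lemmas 3.1 and 3.2]{hLiR2005}, and I would simply carry out the linear-algebraic matching of indices, using anti-symmetry of $\mathscr{Q}_{\mu \nu}$ to absorb the sign and ordering ambiguities, rather than attempt to reproduce the computation from scratch.
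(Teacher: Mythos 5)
Your outline is correct and follows the same route as the source the paper cites; the paper's own ``proof'' is a one-line reference to \cite[Lemmas 3.1 and 3.2]{hLiR2005}, which is precisely the computation you sketch. One small wording slip in Step 2: the $g_{\mu\kappa}\partial_\nu\Gamma^\kappa + g_{\nu\kappa}\partial_\mu\Gamma^\kappa$ piece is canceled automatically by the $-\tfrac12\{g_{\kappa\nu}\mathscr{D}_\mu\Gamma^\kappa + g_{\kappa\mu}\mathscr{D}_\nu\Gamma^\kappa\}$ term in \eqref{E:ModifiedRicci} (whose leading part is exactly that), while the role of $u_{\mu\nu\kappa}$ is only to absorb the $\Gamma^\kappa_{\mu\lambda}\Gamma^\lambda$ Christoffel correction and the $(\partial g)\Gamma$ leftovers from writing $\partial_\mu\Gamma_\nu$ in terms of $\partial_\mu\Gamma^\kappa$; your conclusion is unaffected.
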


\begin{proof}
	This decomposition is carried out in Lemmas 3.1 and 3.2 of \cite{hLiR2005}.
\end{proof}

We conclude this section by observing that (\ref{E:IntroEinsteinagain}'), \eqref{E:MBIrighthandsidefieldequations}, and \eqref{E:RicciInWave}
together imply that under the wave coordinate condition \eqref{E:wavecoordinategauge1}, and under the assumption \eqref{E:Ldualassumptions} on the Lagrangian, the Einstein field equation \eqref{E:IntroEinsteinagain} is equivalent to the following equation:

\begin{align} \label{E:WaveCoordinateVersionofEinsteinFieldEquation}
	\widetilde{\Square}_g g_{\mu \nu} & = \mathscr{P}(\nabla_{\mu}h, \nabla_{\nu}h) 
		\ + \ \mathscr{Q}_{\mu \nu}^{(1;h)}(\nabla h, \nabla h) 
		\ - \ 2 (m^{-1})^{\kappa \lambda} \Far_{\mu \kappa} \Far_{\nu \lambda} 
		\ + \ \frac{1}{2}m_{\mu \nu} (m^{-1})^{\kappa \eta} (m^{-1})^{\lambda \zeta} \Far_{\kappa \lambda} \Far_{\eta \zeta} \\
	& \ \ + \ O^{\infty}(|h||\nabla h|^2) \ + \ O^{\dParameter+1}(|h||\Far|^2) \ + \ O^{\dParameter+1}(|\Far|^3;h). \notag 
\end{align}

\subsection{Summary of the reduced system} \label{SS:ReducedEquations}
In this section, we summarize the above results by stating the form of the reduced Einstein nonlinear-electromagnetic system system that we work with for most of the remainder of the article, namely equations \eqref{E:Reducedh1Summary} - \eqref{E:ReduceddMis0Summary}; the derivation of this version of the reduced equations follows easily from the previous results of this section Section \ref{S:ENESinWaveCoordinates}. We remind the reader that the reduced equations are obtained by 
by adding the inhomogeneous term $-\Gamma^{\kappa} (g^{-1})^{\nu \lambda} \Far_{\kappa \lambda}$ to the right-hand side of 
equation \eqref{E:EquationSatisfiedbyMaxdualChainruleExpandedfirstNversion}
and by substituting the modified Ricci tensor in place of the Ricci tensor in equation \eqref{E:IntroEinsteinagain}, and that in a wave coordinate system, the reduced system is equivalent to the system \eqref{E:IntroEinsteinagain} - \eqref{E:IntrodMis0again}.

\begin{center}
	{\LARGE \textbf{The Reduced System}}
\end{center}

The reduced system (where $g_{\mu \nu} = m_{\mu \nu} + h_{\mu \nu}^{(0)} + h_{\mu \nu}^{(1)}$ and the unknowns are viewed to be $(h_{\mu \nu}^{(1)}, \Far_{\mu \nu})$) can be expressed as

\begin{subequations}
\begin{align}
	\widetilde{\Square}_{g} h_{\mu \nu}^{(1)} & = \mathfrak{H}_{\mu \nu} - \widetilde{\Square}_{g} h_{\mu \nu}^{(0)},&&
	(\mu, \nu = 0,1,2,3),  \label{E:Reducedh1Summary} \\
	\nabla_{\lambda} \Far_{\mu \nu} + \nabla_{\mu} \Far_{\nu \lambda} + \nabla_{\nu} \Far_{\lambda \mu} & = 0,&& (\lambda, \mu, \nu = 0,1,2,3), 
		\label{E:ReduceddFis0Summary} \\
	N^{\#\mu \nu \kappa \lambda} \nabla_{\mu} \Far_{\kappa \lambda} & = \mathfrak{F}^{\nu},&& (\nu = 0,1,2,3),
		\label{E:ReduceddMis0Summary} 
\end{align}
\end{subequations}
where $\widetilde{\Square}_g \eqdef (g^{-1})^{\kappa \lambda} \nabla_{\kappa} \nabla_{\lambda}$ is the reduced wave operator corresponding to 
$g_{\mu \nu}.$ 

The quantities $\mathfrak{H}_{\mu \nu}, N^{\#\mu \nu \kappa \lambda},$ and $\mathfrak{F}^{\nu}$ can be decomposed 
into principal terms and error terms (which are denoted with a ``$\triangle$'') as follows:

\begin{subequations}
\begin{align}
	\mathfrak{H}_{\mu \nu} & = \mathscr{P}(\nabla_{\mu} h, \nabla_{\nu} h) \ + \ \mathscr{Q}_{\mu \nu}^{(1;h)}(\nabla h, \nabla h)
		 \ + \ \mathscr{Q}_{\mu \nu}^{(2;h)}(\Far, \Far) \ + \ \mathfrak{H}_{\mu \nu}^{\triangle},  
		 \label{E:ReducedhInhomogeneous} \\
	\mathfrak{F}^{\nu} & =  \mathscr{Q}_{(2;\Far)}^{\nu}(\nabla h, \Far) \ + \ \mathfrak{F}_{\triangle}^{\nu}, 
		\label{E:EMBIFarInhomogeneous} \\
	N^{\#\mu \nu \kappa \lambda} & = \frac{1}{2} \Big\lbrace (m^{-1})^{\mu \kappa} (m^{-1})^{\nu \lambda} - (m^{-1})^{\mu 
		\lambda} (m^{-1})^{\nu \kappa}
			\ - \ h^{\mu \kappa} (m^{-1})^{\nu \lambda} + h^{\mu \lambda} (m^{-1})^{\nu \kappa}
			\ - \ (m^{-1})^{\mu \kappa} h^{\nu \lambda} + (m^{-1})^{\mu \lambda} h^{\nu \kappa} \Big\rbrace
			\ + \ N_{\triangle}^{\#\mu \nu \kappa \lambda}, \label{E:NSummarydef}
\end{align}
where $\mathscr{P}(\nabla_{\mu} h, \nabla_{\nu} h)$ is defined in \eqref{E:PNullform}, 
$\mathscr{Q}_{\mu \nu}^{(1;h)}(\nabla h, \nabla h)$ is defined in \eqref{E:hAddedUpNullForms}, and

\begin{align}
	\mathscr{Q}_{\mu \nu}^{(2;h)}(\Far, \Gar) 
		& = -2(m^{-1})^{\kappa \lambda} \Far_{\mu \kappa} \Gar_{\nu \lambda} \ + \ \frac{1}{2}m_{\mu \nu}(m^{-1})^{\kappa 
		\lambda}(m^{-1})^{\lambda \kappa}\Far_{\kappa \lambda} \Gar_{\kappa \lambda}, \label{E:Q2h} \\
	\mathscr{Q}_{(2;\Far)}^{\nu}(\nabla h, \Far) & = (m^{-1})^{\mu \kappa} (m^{-1})^{\lambda \lambda'} (m^{-1})^{\nu \nu'} 
		(\nabla_{\mu} h_{\nu' \lambda'}) \Far_{\kappa \lambda}, \label{E:Q2Far} \\
	\mathfrak{H}_{\mu \nu}^{\triangle} & = O^{\infty}(|h||\nabla h|^2) \ + \ O^{\dParameter+1}(|h||\Far|^2) \ + \ O^{\dParameter+1}(|\Far|^3;h), 
		\label{E:HtriangleSmallAlgebraic} \\
	\mathfrak{F}_{\triangle}^{\nu} & =  O^{\dParameter}(|h||\nabla h||\Far|) \ + \ O^{\dParameter}(|\nabla h||\Far|^2;h),
		\label{E:FtriangleSmallAlgebraic} \\
	N_{\triangle}^{\#\mu \nu \kappa \lambda} & = O^{\dParameter}\big(|(h,\Far)|^2\big). 
		\label{E:NtriangleSmallAlgebraic} 
\end{align}
\end{subequations}

Furthermore, the left-hand side of \eqref{E:ReduceddMis0Summary} can be expressed as

\begin{subequations}
\begin{align}  \label{E:NNullFormDecomposition}
	N^{\#\mu \nu \kappa \lambda} \nabla_{\mu} \Far_{\kappa \lambda} 
		& = \frac{1}{2} \big[ (m^{-1})^{\mu \kappa} (m^{-1})^{\nu \lambda} - (m^{-1})^{\mu \lambda} (m^{-1})^{\nu \kappa} 
			\big] \nabla_{\mu} \Far_{\kappa \lambda} 
		\ - \ \mathscr{P}_{(\Far)}^{\nu}(h, \nabla \Far) 
		\ - \ \mathscr{Q}_{(1;\Far)}^{\nu}(h, \nabla \Far)
		\ + \ N_{\triangle}^{\#\mu \nu \kappa \lambda} \nabla_{\mu} \Far_{\kappa \lambda}, 
\end{align}
where	
		
\begin{align}
	\mathscr{P}_{(\Far)}^{\nu}(h, \nabla \Far) 
	& = (m^{-1})^{\mu \mu'}(m^{-1})^{\kappa \kappa'} (m^{-1})^{\nu \lambda} h_{\mu' \kappa'}\nabla_{\mu} \Far_{\kappa \lambda},
		\label{E:PFar} \\
	\mathscr{Q}_{(1;\Far)}^{\nu}(h, \nabla \Far) 
	& = (m^{-1})^{\mu \kappa} (m^{-1})^{\nu \nu'} (m^{-1})^{\lambda \lambda'} h_{\nu' \lambda'}\nabla_{\mu} \Far_{\kappa \lambda}. \label{E:Q1Far}
\end{align}
\end{subequations}

\section{The Initial Value Problem} \label{S:IVP}
In this section, we discuss the abstract initial data and the constraint equations for the Einstein-nonlinear electromagnetic system. We then use the abstract initial data to construct initial data for the reduced equations that satisfy the wave coordinate condition at $t=0.$ Finally, we sketch a proof of the well-known fact that the wave coordinate condition is preserved by the solutions to the reduced equations that are launched by this data; this result shows that the wave coordinate gauge is a viable gauge for studying the Einstein-nonlinear electromagnetic system.

\noindent \hrulefill
\ \\

\subsection{The abstract initial data} \label{SS:AbstractData}

The initial value problem formulation of the Einstein equations goes back to the seminal work \cite{CB1952} by Choquet-Bruhat. In this article, initial data for the Einstein-nonlinear electromagnetic system consist of the $3-$dimensional manifold $\Sigma_0 = \mathbb{R}^3$ together with the following fields on $\Sigma_0:$ 
a Riemannian metric $\mathring{\underline{g}}_{jk},$ a covariant two-tensor $\mathring{K}_{jk},$ and a pair of one-forms $\mathring{\mathfrak{\Displacement}}_{j}, \mathring{\mathfrak{\Magneticinduction}}_{j}.$ After we construct the ambient Lorentzian spacetime $(\mathfrak{M},g_{\mu \nu})$, $\mathring{\underline{g}}_{jk}$ and $\mathring{K}_{jk}$ will respectively be the first and second fundamental forms of $\Sigma_0,$ while $\mathring{\mathfrak{\Displacement}}_{j}, \mathring{\mathfrak{\Magneticinduction}}_{j},$ which are defined below in Section \ref{SS:EBDH}, will be an electromagnetic decomposition of $\Far_{\mu \nu}|_{\Sigma_0}$ into a pair of one-forms that are both $m-$tangent and $g-$tangent to $\Sigma_0.$ 

It is well-known that one cannot consider arbitrary data for the Einstein-nonlinear electromagnetic system. The data are subject to the following constraints: 

\begin{subequations}
\begin{align} 
	\mathring{\underline{R}} - \mathring{K}_{ab} \mathring{K}^{ab} + 
		\big[(\mathring{\underline{g}}^{-1})^{ab} \mathring{K}_{ab} \big]^2 & = 
		2T(\hat{N},\hat{N})|_{\Sigma_0},&& \label{E:Gauss} \\
	(\mathring{\underline{g}}^{-1})^{ab} \mathring{\underline{\mathscr{D}}}_a \mathring{K}_{bj} - 
		(\mathring{\underline{g}}^{-1})^{ab} \mathring{\underline{\mathscr{D}}}_j \mathring{K}_{ab} & = 
		T(\hat{N},\frac{\partial}{\partial x^j})|_{\Sigma_0},&&  (j=1,2,3), \label{E:Codazzi} 
\end{align}
\end{subequations}

\begin{subequations}
\begin{align}
	(\mathring{\underline{g}}^{-1})^{ab} \underline{\mathring{\mathscr{D}}}_a \mathring{\mathfrak{\Displacement}}_b & = 0, 
		\label{E:DivergenceD0} \\
	(\mathring{\underline{g}}^{-1})^{ab} \underline{\mathring{\mathscr{D}}}_a \mathring{\mathfrak{\Magneticinduction}}_b & = 0, 
		\label{E:DivergenceB0}
\end{align}
\end{subequations}
where $\mathring{\underline{\mathscr{D}}}$ is the Levi-Civita connection corresponding to 
$\mathring{\underline{g}}_{jk},$ $\mathring{\underline{R}}$ is the scalar curvature of $\mathring{\underline{g}}_{jk},$  $T_{\mu \nu}$ is defined in \eqref{E:electromagnetictensorloweroinTermsofLagrangian}, and $\hat{N}^{\mu}$ is the future-directed unit $g-$normal to $\Sigma_0.$ The right-hand sides of \eqref{E:Gauss} - \eqref{E:Codazzi} can (in principle) be computed in terms of and $\mathring{\underline{g}}_{jk},$ $\mathring{\mathfrak{\Displacement}}_j,$ and $\mathring{\mathfrak{\Magneticinduction}}_j$ with the help of the relations \eqref{E:AbstractEBDHinertialcomponents}, which connect these quantities to $\Far_{\mu \nu}|_{\Sigma_0}.$
In equations \eqref{E:Gauss} - \eqref{E:Codazzi}, indices are lowered and raised with the Riemannian metric $\mathring{\underline{g}}_{jk}$ and its inverse $(\mathring{\underline{g}}^{-1})^{jk}.$ The constraints \eqref{E:Gauss} - \eqref{E:Codazzi} are manifestations of the \emph{Gauss} and \emph{Codazzi} equations respectively. These equations relate the geometry of the ambient spacetime $(\mathfrak{M},g_{\mu \nu})$ (which has to be constructed) to the geometry inherited by an embedded Riemannian hypersurface (which will be $(\Sigma_0,\mathring{\underline{g}}_{jk})$ after construction). Without providing the rather standard details (see e.g. \cite{dC2008}), we remark that they are consequences of the following assumptions:

\begin{itemize}
	\item $\Sigma_0$ is a submanifold of the spacetime manifold $\mathfrak{M}$
	\item $\mathring{\underline{g}}_{jk}$ is the first fundamental form of $\Sigma_0,$ and
		$\mathring{K}_{jk}$ is the second fundamental form of $\Sigma_0$
	\item The Einstein-nonlinear electromagnetic system is satisfied along $\Sigma_0$
	\item Along $\Sigma_0$ (viewed as a subset of $\mathfrak{M}$),
	$\mathfrak{\Magneticinduction}_{\mu} = - \Fardual_{\mu \kappa}\hat{N}^{\kappa}$ 
	and $\mathfrak{\Displacement}_{\mu} = - \Maxdual_{\mu \kappa} \hat{N}^{\kappa},$
	where $\hat{N}^{\mu}$ is the future-directed unit $g-$normal to $\Sigma_0.$
	
\end{itemize}


We recall that under the above assumptions, $\mathring{\underline{g}}$ and $\mathring{K}$ are defined by

\begin{align}
	\mathring{\underline{g}}|_p(X,Y) & = g|_p(X,Y), && \forall X,Y \in T_p \Sigma_0, \\
	\mathring{K}|_p(X,Y) & = g|_p(\mathscr{D}_X \hat{N},Y), && \forall X,Y \in T_p \Sigma_0, 
\end{align}
where $\hat{N}$ is the future-directed unit $g-$normal\footnote{Under the above assumptions, it follows that at every point $p \in \Sigma_0,$ $\hat{N}^{\mu} = (A^{-1},0,0,0).$} to $\Sigma_0$ at $p,$ and $\mathscr{D}$ is the Levi-Civita connection corresponding to $g.$ Furthermore, if $X,Y$ are vectorfields tangent to $\Sigma_0,$ then

\begin{align}
	\mathscr{D}_X Y = \mathring{\underline{\mathscr{D}}}_X Y + \mathring{K}(X,Y) \hat{N}. 
\end{align}

We also remind the reader that our stability theorem requires that the abstract initial data decay according to the rates
\eqref{E:metricdataexpansion} - \eqref{E:BdecayAssumption}.

\subsection{The initial data for the reduced equations} \label{SS:ReducedData}

We assume that we are given ``abstract'' initial data $(\mathring{\underline{g}}_{jk}, \mathring{K}_{jk},
\mathring{\mathfrak{\Displacement}}_j, \mathring{\mathfrak{\Magneticinduction}}_j),$ $(j,k=1,2,3),$ on the manifold $\mathbb{R}^3$
for the Einstein equations as discussed in the previous section. In this section, we will use this data to construct data $(g_{\mu \nu}|_{t=0},$ $\partial_t g_{\mu \nu}|_{t=0},$ $\Far_{\mu \nu}|_{t=0}),$ $(\mu, \nu = 0,1,2,3)$ for the reduced equations \eqref{E:Reducedh1Summary} - \eqref{E:ReduceddMis0Summary} that satisfy the wave coordinate condition $\Gamma^{\mu}|_{t=0} = 0.$ We begin by recalling that $\chi(z)$ is a fixed cut-off function with the following properties:

\begin{align} \label{E:chidef}
	\chi \in C^{\infty}, \qquad \chi \equiv 1 \ \mbox{for} \ z \geq 3/4, \qquad \chi \equiv 0 \ \mbox{for} \ z \leq 1/2.
\end{align}
We then define the function $A(x^1,x^2,x^3) \geq 0$ by

\begin{align} \label{E:aSquareddef}
	A^2 & \eqdef 1 - \frac{2M}{r} \chi(r), && r \eqdef |x|.
\end{align}

We define the data for the spacetime metric $g_{\mu \nu}$ by

\begin{subequations}
\begin{align}
	g_{00}|_{t=0} & = -A^2, &&  g_{0j}|_{t=0} = 0, && g_{jk}|_{t=0} = \mathring{\underline{g}}_{jk}, \label{E:ReducedMetricData} \\
	\partial_t g_{00}|_{t=0} & = 2A^3 (\mathring{\underline{g}}^{-1})^{ab} \mathring{K}_{ab},
		&& \partial_t g_{0j}|_{t=0} = A^2 (\mathring{\underline{g}}^{-1})^{ab} \partial_a \mathring{\underline{g}}_{bj}
		- \frac{1}{2} A^2 (\mathring{\underline{g}}^{-1})^{ab} \partial_j \mathring{\underline{g}}_{ab} - A \partial_j A, 
		&& \partial_t g_{jk}|_{t=0} = 2A \mathring{K}_{jk}, \label{E:ReducedMetricTimeDerivativeData}
\end{align}
\end{subequations}
and the data for the Faraday tensor $\Far_{\mu \nu}$ by

\begin{subequations}
\begin{align}
	\Far_{j0}|_{t=0} & = \mathring{\Electricfield}_j, \\
 		\Far_{jk}|_{t=0} & = [ijk] \mathring{\Magneticinduction}_i. 
\end{align}
\end{subequations}
The one-forms $\mathring{\Electricfield}_j$ and $\mathring{\Magneticinduction}_j$
can be expressed in terms of $\mathring{\underline{h}}_{jk}$ and the one-forms
$\mathring{\mathfrak{\Displacement}}_j$ and $\mathfrak{\mathring{\Magneticinduction}}_j$ appearing in the constraint equations \eqref{E:DivergenceD0} -  \eqref{E:DivergenceB0} by using the relations \eqref{E:AbstractEBDHinertialcomponents} and \eqref{E:EBDHinertialcomponents} below. The precise form of this relations depends on the choice of Lagrangian $\Ldual,$ but in the small-data regime, the estimates \eqref{E:ElectricfieldDataInTermsofIntrinsic} \eqref{E:IntialInductionintermsofInitialQuantities}, and \eqref{E:IntialDisplacementintermsofInitialQuantities} hold.

We now state the main result of this section, which is a lemma showing that the wave coordinate condition is satisfied at $t=0.$

\begin{lemma} \label{L:Gammamuare0initially} \textbf{(Wave coordinate condition holds at $t=0$)}
	Suppose that the initial data $(g_{\mu \nu}|_{t=0}, \partial_t g_{\mu \nu}|_{t=0}),$ $(\mu, \nu = 
	0,1,2,3),$ for the reduced equations are constructed from abstract initial data $(\mathring{\underline{g}}_{jk}, 
	\mathring{K}_{jk}),$ $(j,k=1,2,3)$ as described above. Then the 
	wave coordinate condition holds initially:

\begin{align} \label{E:Gammamuare0initially}
	& \Gamma^{\mu}|_{t=0},&& (\mu= 0,1,2,3).
\end{align}
\end{lemma}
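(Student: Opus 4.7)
My plan is to reduce the lemma to a direct algebraic verification using the equivalent form \eqref{E:wavecoordinategauge3} of the wave coordinate condition. Since $g|_{t=0}$ is non-degenerate (as $A^{2} = 1 - 2M\chi(r)/r > 0$ in the small-$M$ regime), the condition \eqref{E:Gammamuare0initially} is equivalent to $g_{\sigma\mu}\Gamma^{\mu}|_{t=0} = 0$, which by the standard identity reads
\begin{align*}
(g^{-1})^{\kappa \lambda} \partial_{\kappa} g_{\lambda \sigma}\big|_{t=0} - \tfrac{1}{2} (g^{-1})^{\kappa \lambda} \partial_{\sigma} g_{\kappa \lambda}\big|_{t=0} = 0, \qquad (\sigma=0,1,2,3).
\end{align*}
I will verify this for $\sigma$ spatial and $\sigma=0$ separately. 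The essential preliminary observations, all immediate from \eqref{E:ReducedMetricData}, are: $(g^{-1})^{00}|_{t=0} = -A^{-2}$, $(g^{-1})^{0j}|_{t=0} = 0$, $(g^{-1})^{jk}|_{t=0} = (\mathring{\underline{g}}^{-1})^{jk}$, and $\partial_a g_{0j}|_{t=0} = 0$ (since $g_{0j}$ vanishes identically as a function on $\Sigma_0$, although $\partial_t g_{0j}|_{t=0}$ need not vanish).

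\textbf{The case $\sigma = j$.} Using the vanishing of $(g^{-1})^{0\mu}$ with $\mu\neq 0$ and of the spatial derivatives of $g_{0j}$, the first sum collapses to $-A^{-2}\partial_t g_{0j}|_{t=0} + (\mathring{\underline{g}}^{-1})^{ab}\partial_a \mathring{\underline{g}}_{bj}$. Substituting the prescribed value of $\partial_t g_{0j}|_{t=0}$ from \eqref{E:ReducedMetricTimeDerivativeData}, the divergence term $-(\mathring{\underline{g}}^{-1})^{ab}\partial_a \mathring{\underline{g}}_{bj}$ exactly cancels, leaving $\tfrac{1}{2}(\mathring{\underline{g}}^{-1})^{ab}\partial_j \mathring{\underline{g}}_{ab} + A^{-1}\partial_j A$. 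The second sum evaluates directly, using $\partial_j g_{00}|_{t=0} = -\partial_j A^2 = -2A\partial_j A$, to $A^{-1}\partial_j A + \tfrac{1}{2}(\mathring{\underline{g}}^{-1})^{ab}\partial_j \mathring{\underline{g}}_{ab}$. The two expressions are equal.

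\textbf{The case $\sigma = 0$.} Again using $\partial_a g_{b0}|_{t=0} = 0$ and $(g^{-1})^{0j}|_{t=0} = 0$, the first sum reduces to $-A^{-2}\partial_t g_{00}|_{t=0}$ and the second to $\tfrac{1}{2}\bigl[-A^{-2}\partial_t g_{00}|_{t=0} + (\mathring{\underline{g}}^{-1})^{ab}\partial_t g_{ab}|_{t=0}\bigr]$. Substituting the prescribed values of $\partial_t g_{00}|_{t=0}$ and $\partial_t g_{ab}|_{t=0} = 2A\mathring{K}_{ab}$ from \eqref{E:ReducedMetricTimeDerivativeData}, every term is proportional to $A\cdot(\mathring{\underline{g}}^{-1})^{ab}\mathring{K}_{ab}$, and the coefficients in the two sums are arranged so that the difference vanishes.

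\textbf{Main obstacle.} There is no genuine analytic obstacle: the proof is bookkeeping. The actual work has already been done in designing \eqref{E:ReducedMetricTimeDerivativeData}---the trace-type formula for $\partial_t g_{00}|_{t=0}$ and the ``divergence'' formula for $\partial_t g_{0j}|_{t=0}$ are reverse-engineered from exactly these two computations. Neither the constraint equations \eqref{E:Gauss}--\eqref{E:DivergenceB0} nor any information about $\Far_{\mu\nu}$ enters (the Faraday data affect only the electromagnetic constraints, not the wave coordinate identity, which is purely metric); the constraints instead become relevant for the \emph{propagation} of the wave coordinate condition under the reduced flow, which is the subject of the next lemma and requires genuine PDE input.
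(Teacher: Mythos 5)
Your plan — directly verifying the lowered form \eqref{E:wavecoordinategauge3} of $\Gamma^{\mu}=0$ component by component using \eqref{E:ReducedMetricData}--\eqref{E:ReducedMetricTimeDerivativeData} — is precisely the paper's proof (the paper's own one-line proof cites exactly \eqref{E:wavecoordinategauge3} and those definitions). Your $\sigma = j$ case is computed correctly: the first contraction reduces to $\tfrac12(\mathring{\underline{g}}^{-1})^{ab}\partial_j\mathring{\underline{g}}_{ab} + A^{-1}\partial_j A$ after substituting $\partial_t g_{0j}|_{t=0}$, and this matches the second contraction.

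The $\sigma = 0$ case, however, is where you would be well advised to write out the arithmetic you gesture at, because ``the coefficients are arranged so that the difference vanishes'' is not what actually happens. Write $\mathfrak{K} \eqdef (\mathring{\underline{g}}^{-1})^{ab}\mathring{K}_{ab}$. With the paper's data the first sum is $-A^{-2}\partial_t g_{00}|_{t=0} = -A^{-2}\cdot 2A^3\mathfrak{K} = -2A\mathfrak{K}$, while the second sum is $\tfrac12\big[-A^{-2}\cdot 2A^3\mathfrak{K} + (\mathring{\underline{g}}^{-1})^{ab}\cdot 2A\mathring{K}_{ab}\big] = \tfrac12\big[-2A\mathfrak{K} + 2A\mathfrak{K}\big] = 0$. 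The difference is $-2A\mathfrak{K}$, which is not zero. Tracing through, the wave coordinate identity for $\sigma = 0$ forces
\begin{align*}
\partial_t g_{00}|_{t=0} = -A^2 (\mathring{\underline{g}}^{-1})^{ab}\,\partial_t g_{ab}|_{t=0} = -2A^3 (\mathring{\underline{g}}^{-1})^{ab}\mathring{K}_{ab},
\end{align*}
whereas \eqref{E:ReducedMetricTimeDerivativeData} has the opposite sign. This is a sign typo in the paper's prescription of the reduced data: with the paper's convention $\mathring{K}(X,Y) = g(\mathscr{D}_X\hat{N},Y)$, which forces $\partial_t g_{jk}|_{t=0} = +2A\mathring{K}_{jk}$, the companion formula must read $\partial_t g_{00}|_{t=0} = -2A^3(\mathring{\underline{g}}^{-1})^{ab}\mathring{K}_{ab}$. (In Lindblad--Rodnianski, who use the opposite sign convention for the second fundamental form, the two entries come with opposite signs, which is the internally consistent pattern.) So your method is right and the lemma is true once the sign is corrected, but your phrasing of the $\sigma=0$ step papered over exactly the spot where the check should have flagged the inconsistency; as written the claim ``the difference vanishes'' is false with the paper's stated data.

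One smaller remark: you wrote that the vanishing of $\Gamma^{\mu}|_{t=0}$ is equivalent to $g_{\sigma\mu}\Gamma^{\mu}|_{t=0}=0$ by non-degeneracy of $g|_{t=0}$; this is correct and is implicit in the paper's passage from \eqref{E:wavecoordinategauge1} to \eqref{E:wavecoordinategauge3}, so it is not an additional step worth emphasizing. Your observation that the Faraday data and the constraint equations play no role here, and only enter the \emph{propagation} argument of Proposition~\ref{P:PreservationofWaveCoordianteGauge}, is accurate and a useful point of orientation.
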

\begin{proof}
	Lemma \ref{L:Gammamuare0initially} follows from the expression \eqref{E:wavecoordinategauge3}, 
	the definitions \eqref{E:ReducedMetricData} - \eqref{E:ReducedMetricTimeDerivativeData}, and simple calculations. 
\end{proof}

Note that the above definitions induce the following data for the spacetime metric ``remainder'' piece
$h_{\mu \nu}^{(1)},$ which is defined by \eqref{E:gmhexpansion} - \eqref{E:h0defIntro}:

\begin{subequations}
\begin{align}
	h_{00}^{(1)}|_{t=0} & = 0, && h_{0j}^{(1)}|_{t=0} = 0,
		&& h_{jk}^{(1)}|_{t=0} = \mathring{\underline{h}}_{jk}^{(1)}, \\
	\partial_t h_{00}^{(1)}|_{t=0} & = 2A^3 (\mathring{\underline{g}}^{-1})^{ab} \mathring{K}_{ab},
		&& \partial_t h_{0j}^{(1)}|_{t=0} = A^2 (\mathring{\underline{g}}^{-1})^{ab} \partial_a \mathring{\underline{g}}_{bj}
		- \frac{1}{2} A^2 (\mathring{\underline{g}}^{-1})^{ab} \partial_j \mathring{\underline{g}}_{ab} - A \partial_j A,
		&& \partial_t h_{jk}^{(1)}|_{t=0} = 2A \mathring{K}_{jk}.
\end{align}
\end{subequations}
Similarly, the following data are induced in $h_{\mu \nu} = h_{\mu \nu}^{(0)} + h_{\mu \nu}^{(1)},$ which is defined in \eqref{E:hdefIntro}:

\begin{subequations}
\begin{align}
	h_{00}|_{t=0} & = \chi(r) \frac{2M}{r}, && h_{0j}|_{t=0} = 0,
		&& h_{jk}|_{t=0} = \mathring{\underline{h}}_{jk}^{(1)} + \chi(r) \frac{2M}{r}, \label{E:InducedhData} \\
	\partial_t h_{00}|_{t=0} & = 2A^3 (\mathring{\underline{g}}^{-1})^{ab} \mathring{K}_{ab},
		&& \partial_t h_{0j}|_{t=0} = A^2 (\mathring{\underline{g}}^{-1})^{ab} \partial_a \mathring{\underline{g}}_{bj}
		- \frac{1}{2} A^2 (\mathring{\underline{g}}^{-1})^{ab} \partial_j \mathring{\underline{g}}_{ab} - A \partial_j A,
		&& \partial_t h_{jk}|_{t=0} = 2A \mathring{K}_{jk}.
\end{align}
\end{subequations}
We will make use of these facts in our proof of Proposition \ref{P:SmallNormImpliesSmallEnergy} below.

\subsection{Preservation of the wave coordinate gauge} \label{SS:WaveCoordinatesPreserved}
In this section, we sketch a proof of the fact that if the reduced data are constructed from abstract data 
as described in Section \ref{SS:ReducedData}, then the wave coordinate condition $\Gamma^{\mu} = 0$ is preserved by the flow of the reduced equations. 
This result requires the assumption that the abstract data satisfy the constraints \eqref{E:Gauss} - \eqref{E:DivergenceB0}. To simplify the discussion, we assume in this section that the data are smooth. However, the result also holds in the regularity class we use during our global existence proof. We remark that this result is quite standard, and that we have included it only for convenience.

\begin{proposition} \label{P:PreservationofWaveCoordianteGauge}
	\textbf{(Preservation of the wave coordinate gauge)}
	Suppose that $(g_{\mu \nu}|_{t=0}, \partial_t g_{\mu \nu}|_{t=0}, \Far_{\mu \nu}|_{t=0}),$ 
	$(\mu, \nu = 0,1,2,3),$ are smooth initial data for the 
	reduced equations \eqref{E:Reducedh1Summary} - \eqref{E:ReduceddMis0Summary} constructed from abstract initial data 
	satisfying the constraints \eqref{E:Gauss} - \eqref{E:DivergenceB0} as described in Section \ref{SS:ReducedData}. In 
	particular, by Lemma \ref{L:Gammamuare0initially}, the wave coordinate condition 
	$\Gamma^{\mu}|_{t=0}$ holds. Assume further that the reduced data are small enough so that they lie within the regime of 
	hyperbolicity\footnote{Since our electromagnetic equations are perturbations of the linear Maxwell-Maxwell equations, there 
	will always be such a regime.} of 
	the reduced equations. Let $(g_{\mu \nu},\Far_{\mu \nu})$ be the corresponding solution to the reduced equations
	that is launched by the data. Then $\Gamma^{\mu} \equiv 0$ holds in the entire maximal globally hyperbolic development of the
	data\footnote{Roughly speaking, this is the largest possible solution that is uniquely determined by the data.}.
\end{proposition}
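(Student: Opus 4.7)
The plan is to show that the contracted Christoffel symbols $\Gamma^\mu$ of the reduced solution satisfy a homogeneous linear second-order wave equation whose principal part is $\widetilde{\Square}_g$, and to verify that both $\Gamma^\mu$ and $\partial_t \Gamma^\mu$ vanish on $\Sigma_0$. Uniqueness for linear hyperbolic systems will then force $\Gamma^\mu \equiv 0$ throughout the maximal globally hyperbolic development, which is exactly the preservation of the wave-coordinate gauge.

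To derive the wave equation I would rewrite the reduced Einstein equation \eqref{E:Reducedh1Summary} in its equivalent Einstein-tensor form $\widetilde{G}_{\mu\nu} = T_{\mu\nu}$, where $\widetilde{G}_{\mu\nu} \eqdef \widetilde{R}_{\mu\nu} - \tfrac{1}{2}g_{\mu\nu}\widetilde{R}$ and $\widetilde{R} \eqdef g^{\alpha\beta}\widetilde{R}_{\alpha\beta}$. A direct use of \eqref{E:ModifiedRicci}, together with tracing, yields
\begin{align*}
\widetilde{G}_{\mu\nu} - G_{\mu\nu} = -\tfrac{1}{2}\bigl(g_{\kappa\nu}\mathscr{D}_\mu\Gamma^\kappa + g_{\kappa\mu}\mathscr{D}_\nu\Gamma^\kappa\bigr) + \tfrac{1}{2}g_{\mu\nu}\mathscr{D}_\kappa\Gamma^\kappa + P_{\mu\nu\kappa}(g,\partial g)\,\Gamma^\kappa,
\end{align*}
with $P_{\mu\nu\kappa}$ smooth. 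Applying $\mathscr{D}^\mu$, invoking the twice-contracted Bianchi identity $\mathscr{D}^\mu G_{\mu\nu}=0$, and using the Ricci commutator $[\mathscr{D}_\kappa,\mathscr{D}_\nu]\Gamma^\kappa = R_{\nu\sigma}\Gamma^\sigma$ to absorb cross terms, the principal part of the left-hand side becomes $-\tfrac{1}{2}g_{\kappa\nu}\widetilde{\Square}_g\Gamma^\kappa$ modulo first-order-in-$\Gamma$ remainders. On the right, Remark \ref{R:ReducedElectromagneticInhomogeneous} shows that the reduced equation \eqref{E:ReduceddMis0Summary} is exactly \eqref{E:dMisnot0DivertenceofT} with inhomogeneity $\mathfrak{I}^\nu = -\Gamma^\kappa(g^{-1})^{\nu\lambda}\Far_{\kappa\lambda}$, so the identity \eqref{E:DivergenceofTidentity} gives $\mathscr{D}^\mu T_{\mu\nu} = \mathfrak{I}^\kappa \Far_{\nu\kappa}$, which is zeroth order and linear in $\Gamma$. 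Raising an index in the resulting identity produces a homogeneous wave equation of the form $\widetilde{\Square}_g \Gamma^\mu = (\text{smooth linear combination of } \Gamma^\kappa \text{ and } \mathscr{D}_\nu\Gamma^\kappa)$, with coefficients depending smoothly on $g$, $\partial g$, $\Far$, and $\nabla\Far$.

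For the Cauchy data of $\Gamma^\mu$, Lemma \ref{L:Gammamuare0initially} already gives $\Gamma^\mu|_{\Sigma_0}=0$, so every spatial derivative of $\Gamma^\kappa$ vanishes on $\Sigma_0$, and $\mathscr{D}_\mu\Gamma^\kappa|_{\Sigma_0} = \delta^0_\mu V^\kappa$ with $V^\kappa \eqdef \partial_t\Gamma^\kappa|_{\Sigma_0}$. Substituting into the formula for $\widetilde{G}_{\mu\nu}-G_{\mu\nu}$ and using $\widetilde{G}_{\mu\nu} = T_{\mu\nu}$ along $\Sigma_0$ yields
\begin{align*}
(G_{\mu\nu}-T_{\mu\nu})|_{\Sigma_0} = \tfrac{1}{2}\bigl(g_{\kappa\nu}\delta^0_\mu + g_{\kappa\mu}\delta^0_\nu\bigr)V^\kappa - \tfrac{1}{2}g_{\mu\nu}V^0.
\end{align*}
Contracting against the future-directed unit $g$-normal $\hat{N}^\mu$ to $\Sigma_0$, which by the construction in Section \ref{SS:ReducedData} has components $\hat{N}^\mu = A^{-1}\delta^\mu_0$ and satisfies $g_{\kappa\mu}\hat{N}^\mu = -A\delta^0_\kappa$ on $\Sigma_0$, the cross terms cancel and one is left with $(G_{\mu\nu}-T_{\mu\nu})\hat{N}^\mu|_{\Sigma_0} = \tfrac{1}{2}A^{-1}g_{\kappa\nu}V^\kappa$. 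The Gauss and Codazzi constraints \eqref{E:Gauss}-\eqref{E:Codazzi}, together with the electromagnetic constraints \eqref{E:DivergenceD0}-\eqref{E:DivergenceB0} (which guarantee that the $\nu=0$ component of \eqref{E:ReduceddMis0Summary} is consistent at $t=0$), are precisely the statement $(G_{\mu\nu}-T_{\mu\nu})\hat{N}^\mu|_{\Sigma_0}=0$. Non-degeneracy of $g$ then forces $V^\kappa = 0$, i.e., $\partial_t\Gamma^\mu|_{\Sigma_0}=0$.

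With $\Gamma^\mu|_{\Sigma_0}=\partial_t\Gamma^\mu|_{\Sigma_0}=0$ in hand and $\widetilde{\Square}_g$ hyperbolic in the assumed regime, standard uniqueness for linear wave systems on the fixed background $g$ yields $\Gamma^\mu \equiv 0$ throughout the maximal globally hyperbolic development. I expect the main obstacle to be the initial-velocity step: one must verify carefully that the constraint equations on the abstract data translate, through the data construction \eqref{E:ReducedMetricData}-\eqref{E:ReducedMetricTimeDerivativeData}, into the vanishing of the normal projection $(G_{\mu\nu}-T_{\mu\nu})\hat{N}^\mu|_{\Sigma_0}$ in the wave-coordinate frame, and that this contraction is non-degenerate in $V^\kappa$. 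The precise matching between the intrinsic Gauss-Codazzi formulation and the normal projection of the Einstein tensor, and the bookkeeping of indices in the contraction against $\hat{N}^\mu$, are the most delicate part of the argument.
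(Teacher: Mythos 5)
Your proposal follows essentially the same route as the paper: derive a linear hyperbolic system for $\Gamma^\mu$ from the twice-contracted Bianchi identity, the divergence identity for $T_{\mu\nu}$ coming from the reduced electromagnetic equation (Remark \ref{R:ReducedElectromagneticInhomogeneous} and \eqref{E:DivergenceofTidentity}), and the Ricci commutator, and then use the Gauss--Codazzi constraints to force $\partial_t\Gamma^\mu|_{\Sigma_0}=0$. Your bookkeeping in the initial-velocity step --- contracting only the $\mu$-index with $\hat N$ and keeping $\nu$ free, so that after the cross terms cancel the four conditions collapse to the single non-degenerate equation $g_{\kappa\nu}V^\kappa=0$ --- is a modest streamlining of the paper's term-by-term check in \eqref{E:InitialGammaupderivativeconditionNormalNormal}--\eqref{E:InitialGammaupderivativeconditionNormalTangential}, but the essential content is the same (note that the electromagnetic constraints \eqref{E:DivergenceD0}--\eqref{E:DivergenceB0} are not actually needed for the vanishing of $(G_{\mu\nu}-T_{\mu\nu})\hat N^\mu|_{\Sigma_0}$; the Gauss--Codazzi constraints alone encode this).
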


\noindent{\textit{Sketch of proof}:} Our goal is to show that whenever we have a smooth solution to the 
reduced equations \eqref{E:Reducedh1Summary} - \eqref{E:ReduceddMis0Summary}, the corresponding $\Gamma^{\mu}$ satisfy a system of wave equations with principal part equal to $(g^{-1})^{\kappa \lambda} \partial_{\kappa} \partial_{\lambda}$ and with trivial initial data $\Gamma^{\mu}|_{t=0} = \partial_t \Gamma^{\mu}|_{t=0} = 0.$ The conclusion that $\Gamma^{\mu} \equiv 0$ in the maximal globally hyperbolic development of the data then follows from a standard uniqueness theorem based on energy estimates (see e.g. \cite{lH1997}, \cite{cS2008} for the ideas on how to prove such a theorem). To derive the equations satisfied by the $\Gamma^{\mu},$ we will view $\Gamma^{\mu}$ as a vectorfield for purposes of covariant differentiation. We apply $(g^{-1})^{\nu \lambda} \mathscr{D}_{\lambda}$ to each side of equation \eqref{E:Modified}, use the Bianchi identity $(g^{-1})^{\nu \lambda} \mathscr{D}_{\lambda} \big(R_{\mu \nu} - \frac{1}{2}R g_{\mu \nu}\big) = 0,$ the fact that 
$(g^{-1})^{\nu \lambda} \mathscr{D}_{\lambda} T_{\mu \nu} = - \Gamma^{\kappa} (g^{-1})^{\beta \lambda} \Far_{\kappa \lambda} \Far_{\mu \beta}$ (see Remark \ref{R:ReducedElectromagneticInhomogeneous} and \eqref{E:DivergenceofTidentity}), the curvature relation $\mathscr{D}_{\mu} \mathscr{D}_{\kappa} \Gamma^{\kappa} = \mathscr{D}_{\kappa} \mathscr{D}_{\mu} \Gamma^{\kappa}
- R_{\mu \kappa} \Gamma^{\kappa},$ and expand the covariant derivatives in terms of coordinate derivatives and Christoffel symbols to deduce that the $\Gamma^{\mu}$ are solutions to the following \emph{hyperbolic} system of wave equations:

\begin{align} \label{E:Gammawaveequationcoordinates}
	(g^{-1})^{\kappa \lambda} \partial_{\kappa} \partial_{\lambda} \Gamma^{\mu} & =
		A_{\ \ \lambda}^{\mu \kappa}(g,g^{-1},\partial g) \partial_{\kappa} \Gamma^{\lambda}
		+ B_{\ \kappa}^{\mu} (g,g^{-1},\partial g,\Far)\Gamma^{\kappa}, && (\mu=0,1,2,3),
\end{align}
where the $A_{\ \ \lambda}^{\mu \kappa}\big(g(t,x),g^{-1}(t,x),\partial g(t,x)\big)$ and 
$B_{\ \kappa}^{\mu}\big(g(t,x),g^{-1}(t,x), \partial g(t,x),\Far(t,x)\big)$ are smooth functions of $(t,x).$ 

To complete our sketch of the proof, it remains to show that $\partial_t \Gamma^{\mu}|_{t=0} = 0.$ We first recall 
(see Remark \ref{R:ReducedElectromagneticInhomogeneous}) that equation \eqref{E:RicciInWave} is obtained by adding the gauge term $- \frac{1}{2} \big\lbrace  g_{\kappa \nu} \mathscr{D}_{\mu} \Gamma^{\kappa} + g_{\kappa \mu} \mathscr{D}_{\nu} \Gamma^{\kappa} \big\rbrace \ + \ u_{\mu \nu \kappa}(g,g^{-1},\partial g) \Gamma^{\kappa}$ to the expression \eqref{E:Riccidef} for $R_{\mu \nu}.$ Consequently, it follows that for a solution to the reduced equations \eqref{E:Reducedh1Summary} - \eqref{E:ReduceddMis0Summary}, we have that

\begin{align} \label{E:Modified}
	R_{\mu \nu} \ - \ \frac{1}{2}R g_{\mu \nu}  
		\ - \ T_{\mu \nu} 
		& = \frac{1}{2} \big\lbrace  g_{\kappa \nu} \mathscr{D}_{\mu} \Gamma^{\kappa}
	 		+ g_{\kappa \mu} \mathscr{D}_{\nu} \Gamma^{\kappa} \big\rbrace
		\ - \ u_{\mu \nu \kappa}(g,g^{-1},\partial g)\Gamma^{\kappa}&& \\
		& \ \ - \ g_{\mu \nu} \mathscr{D}_{\lambda} \Gamma^{\lambda}
	 		\ + \ \frac{1}{2} g_{\mu \nu} (g^{-1})^{\kappa \lambda} u_{\kappa \lambda \delta}(g,g^{-1},\partial g)\Gamma^{\delta}, 
			&&(\mu,\nu=0,1,2,3). \notag
\end{align}	
The left-hand side of \eqref{E:Modified} is simply the difference of the left and right sides of the Einstein equations \eqref{E:IntroEinstein}. Since the abstract initial data $(\mathring{\underline{g}}_{jk}, \mathring{K}_{jk},
\mathring{\mathfrak{\Displacement}}_j, \mathring{\mathfrak{\Magneticinduction}}_j),$ $(j,k=1,2,3),$ are assumed to satisfy the constraint equations \eqref{E:Gauss} - \eqref{E:Codazzi}, it follows that the left-hand side of \eqref{E:Modified} is equal to $0$ at $t=0$ after contracting\footnote{In fact, one derives the constraint equations by assuming that these contractions are $0$ at $t=0.$} against $\hat{N}^{\mu} \hat{N}^{\nu}$ or $\hat{N}^{\mu} X^{\nu},$ where $\hat{N}^{\mu}$ is the future-directed unit $g-$normal to $\Sigma_0$ and $X^{\mu}$ is any vector tangent to $\Sigma_0.$

Recalling that $\hat{N}^{\mu}|_{t=0} = A^{-1}\delta_0^{\mu},$ and choosing $X^{\nu} = \delta_j^{\nu},$
it therefore follows that the right-hand side must also be equal to $0$ at $t=0$ upon contraction:

\begin{subequations}
\begin{align} \label{E:InitialGammaupderivativeconditionNormalNormal}
	\bigg\lbrace g_{\kappa 0} \mathscr{D}_t \Gamma^{\kappa}
	 		\ - \ u_{0 0 \kappa}(g,g^{-1},\partial g)\Gamma^{\kappa}
		\ - \ g_{0 0} \mathscr{D}_{\lambda} \Gamma^{\lambda}
	 		\ + \ \frac{1}{2} g_{0 0} (g^{-1})^{\kappa \lambda} u_{\kappa \lambda \delta}(g,g^{-1},\partial g) \Gamma^{\delta} 
	 		\bigg\rbrace\Big|_{t = 0} = 0, &&  \\
		\bigg\lbrace \frac{1}{2} \big[  g_{\kappa j} \mathscr{D}_t \Gamma^{\kappa}
	 		+ g_{\kappa 0} \mathscr{D}_j \Gamma^{\kappa} \big]
		\ - \ u_{0 j \kappa}(g,g^{-1},\partial g)\Gamma^{\kappa}
		\ - \ g_{0j} \mathscr{D}_{\lambda} \Gamma^{\lambda}
	 		\ + \ \frac{1}{2} g_{0 j} (g^{-1})^{\kappa \lambda} u_{\kappa \lambda \delta}(g,g^{-1},\partial g)\Gamma^{\delta} 
	 		\bigg\rbrace\Big|_{t = 0} = 0, && (j=1,2,3). 		
			\label{E:InitialGammaupderivativeconditionNormalTangential}
\end{align}
\end{subequations}
Expanding the covariant differentiation in \eqref{E:InitialGammaupderivativeconditionNormalNormal} - \eqref{E:InitialGammaupderivativeconditionNormalTangential} in terms of coordinate derivatives and Christoffel symbols, and
using \eqref{E:ReducedMetricData} - \eqref{E:ReducedMetricTimeDerivativeData} plus the fact that the initial data were constructed so as to satisfy $\Gamma^{\mu}|_{t=0} = 0,$ it is easy to check that $\partial_t \Gamma^{\mu}$ must \emph{also necessarily} be trivial at $t=0:$

\begin{align}
	\partial_t \Gamma^{\mu}|_{t=0} & = 0, && (\mu = 0,1,2,3).
\end{align}
This completes our sketch of the proposition.

\hfill $\qed$

\section{Geometry and the Minkowskian Null Frame} \label{S:NullFrame}

In this section, we introduce the families of ingoing Minkowskian null cones $C_{s}^-,$  
outgoing Minkowskian light cones $C_{q}^+,$ constant Minkowskian time slices $\Sigma_t,$ and Euclidean spheres $S_{r,t}.$
We then discuss the well-known notion of a Minkowskian null frame, which allows us to geometrically decompose the tangent space 
as a direct sum $T|_p \mathbb{R}^{1+3} =  \mbox{span} \lbrace \uL|_p \rbrace \, \oplus \, \mbox{span} \lbrace L|_p \rbrace \, \oplus \, T|_p S_{r,t}.$ These decompositions allow us to geometrically decompose tensorfields. In Section \ref{SS:NullComponents}, we provide a full description of the null decomposition of a two-form $\Far$ into its \emph{Minkowskian null components}. This decomposition will be essential to our subsequent analysis of the decay properties of the Faraday tensor. In Section \ref{SS:NullDecompElectromagnetic}, we will derive equations for these null components under the assumption that $\Far$ is a solution to the reduced electromagnetic equations \eqref{E:ReduceddFis0Summary} - 
\eqref{E:ReduceddMis0Summary}. In Section \ref{S:DecayFortheReducedEquations}, we will use the equations for the null components to deduce ``upgraded'' pointwise decay estimates for the lower-order Lie derivatives of $\Far;$ these estimates are essential for closing our global existence bootstrap argument in Section \ref{S:GlobalExistence}.

\noindent \hrulefill
\ \\

\subsection{The Minkowskian null frame} \label{SS:NullFrame}

Before proceeding, we introduce the subsets $C_{q}^+,$ $C_{s}^-,$ $\Sigma_t,$ $S_{r,t}.$

\begin{definition}
	In our wave coordinate system $(t,x),$ we define the \emph{outgoing Minkowski 
	null cones} $C_{q}^+,$ \emph{ingoing Minkowski null cones} $C_{s}^-,$ 
	the \emph{constant Minkowskian time slices} $\Sigma_t$ and the Euclidean spheres 
	$S_{r,t}$ as follows:
	
	\begin{subequations}
	\begin{align}
		C_{q}^+ & \eqdef \lbrace (\tau,y) \ | \ |y| - \tau = q \rbrace, \\
		C_{s}^- & \eqdef \lbrace (\tau,y) \ | \ |y| + \tau = s \rbrace, \\
		\Sigma_t & \eqdef \lbrace (\tau,y) \ | \ \tau = t \rbrace, \\ 
		S_{r,t} & \eqdef \lbrace (\tau,y) \ | \ \tau = t, |y| = r \rbrace, 
	\end{align}
	\end{subequations}
	In the above formulas, $y \eqdef (y^1,y^2,y^3),$ and 
	$|y| \eqdef \big[(y^1)^2 + (y^2)^2 + (y^2)^2 \big]^{1/2}.$
\end{definition}

We also introduce the following vectorfields, which play a fundamental role throughout this article.

\begin{definition} \label{D:uLLdef}
	We define the \emph{ingoing Minkowski-null geodesic vectorfield} $\uL$ and the 
	\emph{outgoing Minkowski-null geodesic vectorfield} $L$ by
	
	\begin{subequations}
	\begin{align}
		\uL^{\mu} & = (1,-\omega^1,-\omega^2,-\omega^3), \label{E:uLdef} \\
		L^{\mu} & = (1,\omega^1,\omega^2,\omega^3), \label{E:Ldef}
	\end{align}
	\end{subequations}
	where $\omega^j \eqdef x^j/r.$ By ``Minkowski-null,'' we mean that $m(\uL,\uL) = m(L,L) = 0.$
	Note that $\uL$ is tangent to the ingoing cones $C_{s}^-,$ that $L$ is tangent to the outgoing cones $C_{q}^+,$
	and that $\uL,$ $L$ are both $m-$orthogonal to the $S_{r,t}.$ By ``geodesic,'' we mean that
	$\nabla_{\uL} \uL = \nabla_L L = 0.$
\end{definition}

Note that

\begin{subequations}
\begin{align}
	\uL & = \partial_t - \partial_r, \\
	L & = \partial_t + \partial_r.
\end{align}
\end{subequations}

We now recall the definitions of the Minkowskian first fundamental forms of the surfaces $\Sigma_t$ and $S_{r,t}.$

\begin{definition} \label{D:FirstFundamental}
	The \emph{Minkowskian first fundamental} forms of the surfaces $\Sigma_t$ and $S_{r,t}$ are respectively defined to be the 
	following intrinsic metrics:
	
	\begin{subequations}
	\begin{align}
		\um_{\mu \nu} & \eqdef \mbox{diag}(0,1,1,1), \label{E:FirstFundSigmatDef} \\
		\angm_{\mu \nu} & \eqdef m_{\mu \nu} + \frac{1}{2}(\uL_{\mu} L_{\nu} + L_{\mu} \uL_{\nu}). \label{E:angmdef}
	\end{align}
	\end{subequations}
\end{definition}
\noindent Recall that $\um|_p(X,Y) = m|_p(X,Y)$ for $X,Y \in T|_p \Sigma_t,$ and $\angm(X,Y) = m(X,Y)$ for $X,Y \in T|_p S_{r,t}.$
Note also that the tensorfields $\um_{\mu}^{\ \nu}$ and $\angm_{\mu}^{\ \nu}$ respectively $m-$orthogonally project onto the $\Sigma_t$ and the $S_{r,t}.$ 

We now defined a related tensorfield corresponding to the outgoing Minkowski null cones $C_{q}^+.$

\begin{definition} \label{D:ConeProjection}
	The tensorfield $\coneproject_{\mu}^{\ \nu},$ which $m-$orthogonally projects vectors $X^{\mu}$ onto 
	the outgoing cones $C_{q}^+,$ can be expressed as follows:
	
	\begin{align} \label{E:ConeProjection}
		\coneproject_{\mu}^{\ \nu} & \eqdef \delta_{\mu}^{\nu} + \frac{1}{2} L_{\mu} \uL^{\nu}. 
	\end{align}
\end{definition}

Furthermore, we recall the definitions of the Minkowskian volume forms of Minkowski space and of the surfaces $\Sigma_t$ and $S_{r,t}.$

\begin{definition}
	The \emph{Minkowskian volume forms} of Minkowski spacetime, the surfaces $\Sigma_t,$ and the Euclidean spheres $S_{r,t}$
	are respectively defined relative to our wave coordinate system as follows:
	
	\begin{subequations}
	\begin{align}
		\Minkvolume_{\mu \nu \kappa \lambda} & \eqdef [\mu \nu \kappa \lambda], \label{E:Minkvolumedef} \\
		\uvolume_{\nu \kappa \lambda} & \eqdef \Minkvolume_{0 \nu \kappa \lambda}, \label{E:Sigmatvolumedef} \\
		\angupsilon_{\mu \nu} & \eqdef \Minkvolume_{\mu \nu \kappa \lambda} \uL^{\kappa} L^{\lambda},	\label{E:Spheresvolumedef}
	\end{align}
	\end{subequations}
	where $[\mu \nu \kappa \lambda]$ is totally anti-symmetric with normalization $[0123] = 1.$
\end{definition}

We also recall what it means for a spacetime tensorfield to be $m-$tangent to the surfaces $\Sigma_t$ or $S_{r,t}.$

\begin{definition} \label{D:Tangency}
	Let $U$ be a type $\binom{n}{m}$ spacetime tensorfield. We say that $U$ is $m-$tangent to the time slices $\Sigma_t$ if 
	
	\begin{align}
		U_{\mu_1 \cdots \mu_m}^{\ \ \ \ \ \ \ \nu_1 \cdots \nu_n} 
		= \um_{\mu_1}^{\ \mu'_1} \cdots \um_{\mu_m}^{\ \mu'_m} 
		\um_{\nu'_1}^{\ \nu_1}
		\cdots \um_{\nu'_n}^{\ \nu_1} U_{\mu'_1 \cdots \mu'_m}^{\ \ \ \ \ \ \ 
		\nu'_1 \cdots \nu'_n}.
	\end{align}
		Equivalently, $U$ is $m-$tangent to the $\Sigma_t$ if and only if every wave coordinate component of $U$ containing
		a $0$ index vanishes.
		
	Similarly, we say that $U$ is $m-$tangent to the spheres $S_{r,t}$ if
	
	\begin{align}
		U_{\mu_1 \cdots \mu_m}^{\ \ \ \ \ \ \ \nu_1 \cdots \nu_n} 
		= \angm_{\mu_1}^{\ \mu'_1} \cdots \angm_{\mu_m}^{\ \mu'_m} \angm_{\nu'_1}^{\ \nu_1}
		\cdots \angm_{\nu'_n}^{\ \nu_1} U_{\mu'_1 \cdots \mu'_m}^{\ \ \ \ \ \ \ \nu'_1 
		\cdots \nu'_n}.
	\end{align}
	Equivalently, $U$ is $m-$tangent to the spheres $S_{r,t}$ if and only if any contraction of any index of $U$ with either $\uL$ or $L$ vanishes.
	
\end{definition}

We are now ready to introduce the notion of a \emph{Minkowskian null frame}. We complement the vectorfields $\uL,$ $L$ with a locally-defined pair of $m-$orthogonal vectorfields $e_1,$ $e_2$ that are  tangent to the spheres $S_{r,t},$ and therefore $m-$orthogonal to $\uL,$ $L.$ The resulting collection of vectorfields $\mathcal{N} \eqdef \lbrace \uL,L,e_1,e_2 \rbrace$ is known as \emph{Minkowskian null frame}. It spans the tangent space $T|_p \mathbb{R}^{1+3}$ at each point $p$ where it is defined.

We leave the proof of the following lemma, which summarizes some of the important properties of the geometric quantities introduced in this section, as an exercise for the reader.

\begin{lemma} \label{L:PropertiesofuLandL} \textbf{(Null frame field properties)}
The following identities hold:

\begin{subequations}
\begin{align}
	\nabla_L L & = \nabla_{\uL} \uL = 0,&& \label{E:LanduLaregeodesic} \\
	\nabla_L \uL & = \nabla_{\uL} L = 0,&& \label{E:nablaLuLis0} \\
	L^{\kappa} \uL_{\kappa} & = - 2,&& \label{E:LunderlineLcontracted} \\
	e_A^{\kappa} L_{\kappa} & = e_A^{\kappa} \uL_{\kappa} = 0,&& (A = 1,2),  \label{E:LunderlineLnormaltospheres} \\
	m_{\kappa \lambda} e_A^{\kappa} e_B^{\lambda} & = \delta_{AB},&& (A,B = 1,2),
\end{align}
\end{subequations}

\begin{align} \label{E:NablaLangmandNablauLangmVanish}
	\nabla_{\uL} \angm_{\mu \nu} = \nabla_L \angm_{\mu \nu} & = 0,&& (\mu,\nu =0,1,2,3),
\end{align}

\begin{align}
	\nabla_{\uL} \angupsilon_{\mu \nu} = \nabla_L \angupsilon_{\mu \nu} & = 0,&& (\mu,\nu =0,1,2,3). 
\end{align}
See Definition \ref{D:NablaXDef} concerning our use of notation in these formulas.
\end{lemma}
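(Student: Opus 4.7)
The plan is to verify each identity by direct computation in the wave coordinate system, exploiting the fact that in these coordinates the Christoffel symbols of the Minkowski metric vanish, so that $\nabla_X Y^{\mu} = X^{\kappa} \partial_{\kappa} Y^{\mu}$ for any vectorfields $X, Y.$ The computational backbone is the identity $\omega^{k} \partial_{k} \omega^{j} = 0,$ which follows from $\partial_{k} \omega^{j} = r^{-1}(\delta_{k}^{j} - \omega^{j} \omega^{k})$ and $\omega^{k} \omega^{k} = 1;$ equivalently, the functions $\omega^{j}$ are constant along radial rays emanating from the origin.

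For \eqref{E:LanduLaregeodesic} and \eqref{E:nablaLuLis0}, I would write out each component. The time components of $\uL$ and $L$ are constant ($\pm 1$), so $L^{\kappa}\partial_{\kappa}$ and $\uL^{\kappa}\partial_{\kappa}$ annihilate them. The spatial components are $\pm \omega^{j},$ which are functions of the angular variables only; since $L = \partial_{t} + \partial_{r}$ and $\uL = \partial_{t} - \partial_{r}$ both differentiate only in the $(t,r)$ directions, and $\omega^{j}$ depends on neither $t$ nor $r,$ all four derivatives $\nabla_{L} L, \nabla_{\uL} \uL, \nabla_{L} \uL, \nabla_{\uL} L$ vanish. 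The normalization identities \eqref{E:LunderlineLcontracted} and \eqref{E:LunderlineLnormaltospheres} are then immediate from the formulas \eqref{E:uLdef} and \eqref{E:Ldef}: $L^{\kappa} \uL_{\kappa} = -1 - |\omega|^{2} = -2,$ and since $e_{A}$ has no $\partial_{t}$ component and is by construction $m$-orthogonal to the radial vectorfield $\omega^{j}\partial_{j},$ its contraction with $L$ or $\uL$ vanishes. The orthonormality \eqref{E:LunderlineLnormaltospheres} (last line) is just the definition of the pair $(e_{1}, e_{2});$ such a pair exists locally around any point with $r > 0$ because $\angm$ restricts to a positive-definite metric on the $2$-dimensional submanifolds $S_{r,t}.$

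For \eqref{E:NablaLangmandNablauLangmVanish}, I would apply the Leibniz rule to the definition \eqref{E:angmdef}: since the Minkowski connection satisfies $\nabla m = 0,$ the only contributions come from $\nabla_{X}(\uL_{\mu} L_{\nu} + L_{\mu} \uL_{\nu})$ for $X \in \{L, \uL\},$ and these terms vanish term-by-term by the already-established identities $\nabla_{L} L = \nabla_{L} \uL = \nabla_{\uL} L = \nabla_{\uL} \uL = 0$ (passed through the musical isomorphism via $\nabla m = 0$). The final identity for $\angupsilon$ is handled identically: $\Minkvolume$ has constant components in the wave coordinate system, hence $\nabla \Minkvolume = 0,$ so $\nabla_{X}\angupsilon_{\mu \nu} = \Minkvolume_{\mu \nu \kappa \lambda}(\nabla_{X} \uL^{\kappa}) L^{\lambda} + \Minkvolume_{\mu \nu \kappa \lambda} \uL^{\kappa} (\nabla_{X} L^{\lambda}) = 0$ for $X \in \{L, \uL\}.$

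There is no substantial obstacle; every step is a line or two of bookkeeping once one commits to wave coordinates. The only point worth flagging is that the argument is local near any $p$ with $r(p) > 0,$ since $\uL, L, \omega^{j}$ are singular at the timelike axis $r = 0;$ this is harmless for the subsequent analysis, which never requires these frame objects at $r = 0.$
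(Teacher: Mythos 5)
Your proof is correct, and since the paper explicitly leaves this lemma as an exercise (offering only $\qed$), the direct coordinate computation you give is precisely the standard filling-in of the details. The key observations — that $\omega^{j}$ is annihilated by $\partial_{t}$ and $\partial_{r}$, that $\nabla m = 0$ and $\nabla \Minkvolume = 0$ in wave coordinates so the Leibniz rule reduces everything to the already-established frame derivatives, and the caveat that the frame is only defined for $r > 0$ — are exactly the right points to make.
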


\hfill $\qed$

Later in the article, we will see that the decay rates of the null components (see Section \ref{SS:NullComponents}) 
of $\Far$ will be distinguished according to the kinds of contractions of $\Far$ taken against $\uL, L, e_1,$ and $e_2.$ With these ideas in mind, we introduce the following sets of vectorfields:

\begin{subequations}
\begin{align} \label{E:Framefieldsubsets}
	\mathcal{L} \eqdef \lbrace L \rbrace, & & \mathcal{T} \eqdef \lbrace L, e_1, e_2 \rbrace,
	& & \mathcal{N} \eqdef \lbrace \uL, L, e_1, e_2 \rbrace.
\end{align}
\end{subequations}
In order to measure the size of the contractions of various tensors and their covariant derivatives against vectors
belonging to the sets $\mathcal{L}, \mathcal{T}, \mathcal{N},$ we introduce the following definitions. 

\begin{definition} \label{D:contractionnomrs}

If $\mathcal{V}, \mathcal{W}$ denote any two of the above sets, and $P$ is a type $\binom{0}{2}$
tensor, then we define the following pointwise seminorms:

\begin{subequations}

\begin{align}
	|P|_{\mathcal{V} \mathcal{W}} & \eqdef \sum_{V \in \mathcal{V}, W \in \mathcal{W}} |V^{\kappa} W^{\lambda} P_{\kappa 
		\lambda}|, \label{E:contractionnorm} \\
	|\nabla P|_{\mathcal{V} \mathcal{W}} & \eqdef \sum_{N \in \mathcal{N}, V \in \mathcal{V}, W \in \mathcal{W}} 
		|V^{\kappa} W^{\lambda} N^{\gamma} \nabla_{\gamma} P_{\kappa \lambda}|, \\
	|\conenabla P|_{\mathcal{V} \mathcal{W}} & \eqdef \sum_{T \in \mathcal{T}, V \in \mathcal{V}, W \in \mathcal{W}} 
		|V^{\kappa} W^{\lambda} T^{\gamma} \nabla_{\gamma} P_{\kappa \lambda}|.
\end{align}
\end{subequations}

We often use the abbreviations $|P| \eqdef |P|_{\mathcal{N} \mathcal{N}},$ 
$|\nabla P| \eqdef |\nabla P|_{\mathcal{N} \mathcal{N}},$
and $|\conenabla P| \eqdef |\conenabla P|_{\mathcal{N} \mathcal{N}}.$

\end{definition}

The above definition generalizes in an obvious way to arbitrary type $\binom{n}{m}$ tensorfields
$U_{\mu_1 \cdots \mu_m}^{\ \ \ \ \ \ \ \nu_1 \cdots \nu_n}.$ Observe that for any such tensorfield, the following
inequalities hold \emph{in our wave coordinate system}:

\begin{align} \label{E:AlternateNablaNorm}
	|U| & \approx \sum_{\mu_1,\cdots,\mu_m,\nu_1,\cdots,\nu_n = 0}^3
	|U_{\mu_1 \cdots \mu_m}^{\ \ \ \ \ \ \ \nu_1 \cdots \nu_n}|.
\end{align}

\subsection{Minkowskian null frame decomposition of a tensorfield}

For an arbitrary vectorfield $X$ and frame vector $N \in \mathcal{N},$ we define 

\begin{align} \label{E:XlowerUdef}
	X_N & \eqdef X_{\kappa} N^{\kappa}, \ \mbox{where} \ X_{\mu} \eqdef m_{\mu \kappa} X^{\kappa}. 
\end{align}
The components $X_N$ are known as the \emph{Minkowskian null components} of $X.$ In the sequel, we will abbreviate

\begin{align}
	X_A \eqdef X_{e_A}, && \nabla_A \eqdef \nabla_{e_A}, \ \mbox{etc}.
\end{align}
It follows from \eqref{E:XlowerUdef} that
\begin{align}
	X & = X^{\kappa} \partial_{\kappa} = X^{L} L + X^{\uL} \uL
		+ X^{A} e_A,  \label{X:AbstractLuLAdecomp} \\
	X^{L} & = - \frac{1}{2}X_{\uL}, && X^{\uL} = - \frac{1}{2}X_L, && X^A = X_A.
		\label{E:XupperUdef}
\end{align}
Furthermore, it is easy to check that
\begin{align} \label{E:XYnullframeinnerproduct}
	m(X,Y) \eqdef m_{\kappa \lambda}X^{\kappa}X^{\lambda} = X^{\kappa}Y_{\kappa} = -\frac{1}{2}X_{L}Y_{\uL} - 
	\frac{1}{2}X_{\uL}Y_{L} + \delta^{AB} X_A Y_B. 
\end{align}

The above null decomposition of a vectorfield generalizes in the obvious way to higher order tensorfields. In the next section,
we provide a detailed version of the null decomposition of two-forms $\Far,$ since this decomposition is needed
for our derivation of decay estimates later in the article; see e.g.
Proposition \ref{P:EOVNullDecomposition} and Proposition \ref{P:EnergyInhomogeneousTermAlgebraicEstimate}.

\subsection{The detailed Minkowskian null decomposition of a two-form} \label{SS:NullComponents}

\begin{definition} \label{D:null}
Given any two-form $\Far,$ we define its \emph{Minkowskian null components} to be the following pair of one-forms
$\ualpha_{\mu},$ $\alpha_{\mu},$ and the following pair of scalar $\rho,$ $\sigma:$

\begin{subequations}
\begin{align}
	\ualpha_{\mu} & \eqdef \angm_{\mu}^{\ \nu} \Far_{\nu \lambda} \uL^{\lambda},&& (\mu = 0,1,2,3),
		\label{E:ualphadef} \\
	\alpha_{\mu} & \eqdef \angm_{\mu}^{\ \nu} \Far_{\nu \lambda} L^{\lambda},&& (\mu = 0,1,2,3), \\
	\rho & \eqdef \frac{1}{2} \Far_{\kappa \lambda}\uL^{\kappa} L^{\lambda},&& \\
	\sigma & \eqdef \frac{1}{2} \angupsilon^{\kappa \lambda} \Far_{\kappa \lambda}.&& \label{E:sigmadef}
\end{align}
\end{subequations}

\end{definition}

It is a simple exercise to check that $\ualpha_{\mu}$ and $\alpha_{\mu}$ are $m-$tangent to the spheres $S_{r,t}:$

\begin{subequations}
\begin{align}
	\ualpha_{\kappa}\uL^{\kappa} & = 0, & \ualpha_{\kappa}L^{\kappa} & = 0, \\
	\alpha_{\kappa}\uL^{\kappa} & = 0, & \alpha_{\kappa}L^{\kappa} & = 0.
\end{align}
\end{subequations}
Furthermore, relative to the null frame $\mathcal{N} \eqdef \lbrace \uL, L, e_1, e_2 \rbrace,$ we have that

\begin{subequations}
\begin{align}
	\underline{\alpha}_A & = \Far_{A \uL},&& (A = 1,2), \\
	\alpha_A & = \Far_{AL},&& (A = 1,2), \\
	\rho & = \frac{1}{2} \Far_{\uL L},&& \\
	\sigma & = \Far_{12}.&&
\end{align}
\end{subequations}

In terms of the seminorms introduced in Definition \ref{D:contractionnomrs}, it follows that

\begin{subequations}
\begin{align}
	|\Far| & \approx |\Far|_{\mathcal{N}\mathcal{N}} \approx |\ualpha| + |\alpha| + |\rho| + |\sigma|, \\
	|\Far|_{\mathcal{L}\mathcal{N}} & \approx |\alpha| + |\rho|, \\
	|\Far|_{\mathcal{T} \mathcal{T}} & \approx |\alpha| + |\sigma|. 
\end{align}
\end{subequations}

The null components of $\FarMinkdual$ (the Minkowskian Hodge duality operator $\ostar$ is defined in Section \ref{SS:Hodge}) can be expressed in terms of the above null components of $\Far.$ Denoting the null components\footnote{We use the symbol $\odot$ in order to avoid confusion with the Minkowskian Hodge duality operator $\ostar;$ i.e., it is not true that ${^{\ostar \hspace{-.03in}}(\ualpha[\Far])} = \ualpha[\FarMinkdual].$} of $\FarMinkdual$ by $\ualphadot, \alphadot, \rhodot, \sigmadot,$ we leave it as a simple exercise to the reader to check that  

\begin{subequations}
\begin{align}
	\ualphadot_A & = - \underline{\alpha}^B \angupsilon_{BA},&& (A = 1,2), \label{E:Fardualalpha} \\
	\alphadot_A & = \alpha^B \angupsilon_{BA},&& (A = 1,2), \\
	\rhodot & = \sigma,&& \\  
	\sigmadot & = - \rho.&& \label{E:Fardualsigma}
\end{align}
\end{subequations}

\section{Differential Operators} \label{S:DifferentialOperators}
In this section, we introduce a collection of differential operators that will be used throughout the remainder of the article.
In order to define these operators, we also introduce subsets $\mathcal{O}$ and $\mathcal{Z}$ of Minkowskian conformal Killing fields. Finally, we prove a collection of lemmas that expose useful properties of these operators, and that illustrate various relationships between them.

\noindent \hrulefill
\ \\

\subsection{Covariant derivatives}

As previously mentioned, throughout the article, $\nabla$ denotes the Levi-Civita connection of the Minkowski metric $m.$
Let $\um$ and $\angm$ be the first fundamental forms of the $\Sigma_t$ and $S_{r,t}$ as defined in
Definition \ref{D:FirstFundamental}, and let $\unabla,$ $\angn$ be their corresponding Levi-Civita connections. We state as a lemma the following well-known identities, which relates the connections $\unabla,$ $\angn$ to $\nabla$ through the first fundamental forms.

\begin{lemma} \textbf{(Relationships between connections)}
	If $U$ is any type $\binom{n}{m}$ tensorfield $m-$tangent to the $\Sigma_t,$ then

\begin{align} 
		\unabla_{\lambda} 
			U_{\mu_1 \cdots \mu_m}^{\ \ \ \ \ \ \ \nu_1 \cdots \nu_n} & = \um_{\lambda}^{\ \lambda'}
			\um_{\mu_1}^{\ \mu'_1} \cdots 
			\um_{\mu_m}^{\ \mu'_m} \um_{\nu'_1}^{\ \nu_1} \cdots \um_{\nu'_n}^{\ \nu_n} 
			\nabla_{\lambda'} U_{\mu'_1 \cdots \mu'_m}^{\ \ \ \ 
			\ \ \ \nu'_1 \cdots \nu'_n}, 
			\label{E:SigmatIntrinsicintermsofExtrinsic} 
\end{align}

Similarly, if If $U$ is any type $\binom{n}{m}$ tensorfield $m-$tangent to $S_{r,t}$ then

\begin{align}
		\angn_{\lambda}
			U_{\mu_1 \cdots \mu_m}^{\ \ \ \ \ \ \ \nu_1 \cdots \nu_n} & = \angm_{\lambda}^{\ \lambda'}
			\angm_{\mu_1}^{\ \mu'_1} \cdots 
			\angm_{\mu_m}^{\ \mu'_m} \angm_{\nu'_1}^{\ \nu_1} \cdots \angm_{\nu'_n}^{\ \nu_n} 
			\nabla_{\lambda'} U_{\mu'_1 \cdots \mu'_m}^{\ \ \ \ 
			\ \ \ \nu'_1 \cdots \nu'_n}. \label{E:SphereIntrinsicintermsofExtrinsic}
\end{align}

\end{lemma}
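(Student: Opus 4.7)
The plan is to invoke the fundamental uniqueness property of the Levi-Civita connection: each of $\unabla$ and $\angn$ is the unique torsion-free connection compatible with its corresponding first fundamental form, so it suffices to check that the right-hand sides of \eqref{E:SigmatIntrinsicintermsofExtrinsic} and \eqref{E:SphereIntrinsicintermsofExtrinsic} define connections on tangent tensor fields enjoying these two properties. By the Leibniz rule, it further suffices to treat the case of a tangent vector field and a tangent one-form; extension to general type $\binom{n}{m}$ tensors is routine once we use the fact that the first fundamental forms, viewed as mixed tensors with one index up and one index down, are idempotent projectors.

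For \eqref{E:SigmatIntrinsicintermsofExtrinsic}, I would exploit the fact that in the wave coordinate system the Christoffel symbols of $m$ vanish identically, so that $\nabla_\lambda = \partial_\lambda$. Since a tensor $m$-tangent to $\Sigma_t$ has vanishing wave-coordinate components whenever a $0$ index appears (see Definition \ref{D:Tangency}), the right-hand side of \eqref{E:SigmatIntrinsicintermsofExtrinsic} simply picks out the purely-spatial components of $\partial_\lambda U$. But this is precisely the intrinsic Euclidean derivative $\unabla_\lambda U$ on $\Sigma_t$, because the Christoffel symbols of $\um = \mbox{diag}(0,1,1,1)$ also vanish in these coordinates. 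Torsion-freeness and compatibility with $\um$ are then manifest.

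For \eqref{E:SphereIntrinsicintermsofExtrinsic}, let $\hat{\angn}$ denote the operator on $m$-tangent tensor fields defined by the right-hand side. For $m$-tangent vector fields $X, Y, Z$ (extended to a spacetime neighborhood of $S_{r,t}$ in the standard way), torsion-freeness follows from
\begin{align*}
	\hat{\angn}_X Y - \hat{\angn}_Y X = \angm_{\kappa}^{\ \nu}(\nabla_X Y - \nabla_Y X)^\kappa \partial_\nu = \angm_{\kappa}^{\ \nu} [X,Y]^\kappa \partial_\nu = [X,Y],
\end{align*}
where the middle equality uses torsion-freeness of $\nabla$ and the last uses that the Lie bracket of $m$-tangent vector fields remains $m$-tangent to $S_{r,t}$. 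Metric compatibility follows from
\begin{align*}
	X\big(\angm(Y,Z)\big) = X\big(m(Y,Z)\big) = m(\nabla_X Y, Z) + m(Y, \nabla_X Z) = \angm(\hat{\angn}_X Y, Z) + \angm(Y, \hat{\angn}_X Z),
\end{align*}
where the last equality uses that, when contracted against an $m$-tangent vector, $\nabla_X Y$ may be freely replaced by its $m$-orthogonal projection $\hat{\angn}_X Y$ onto $TS_{r,t}$, together with the fact that $\angm$ and $m$ agree on tangent inputs. Uniqueness of the Levi-Civita connection of $\angm$ then forces $\hat{\angn} = \angn$ on tangent vector fields, and the same is deduced for tangent one-forms by duality. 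The full type $\binom{n}{m}$ statement is then obtained by induction on $n + m$ using the Leibniz rule.

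The main obstacle is organizational rather than conceptual: ensuring that projections commute consistently across all covariant and contravariant slots of a general type $\binom{n}{m}$ tensor. This is handled via the idempotence relations $\angm_{\mu}^{\ \kappa}\angm_{\kappa}^{\ \nu} = \angm_{\mu}^{\ \nu}$ (and analogously $\um_{\mu}^{\ \kappa}\um_{\kappa}^{\ \nu} = \um_{\mu}^{\ \nu}$), which guarantee that repeated projections on individual slots are unambiguous, and by the inductive Leibniz argument described above. No deeper geometric input beyond the standard Gauss-equation manipulations implicit in the torsion and compatibility verifications is required.
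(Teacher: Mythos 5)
Your proposal is a correct proof of this lemma, which the paper records as a standard fact without giving its own argument. For the $\Sigma_t$ case, the direct coordinate computation is sound: in wave coordinates $\nabla_{\lambda}=\partial_{\lambda}$, the $m$-orthogonal projection $\um_{\mu}^{\ \nu}$ (raising with $m^{-1}=\mathrm{diag}(-1,1,1,1)$) simply discards $0$-indices, and the intrinsic Euclidean connection is also ordinary partial differentiation, so the two sides of \eqref{E:SigmatIntrinsicintermsofExtrinsic} match component by component. For the $S_{r,t}$ case, the uniqueness argument is the right tool since the sphere is genuinely curved and a coordinate comparison would be awkward; the torsion-freeness step relies on the closure of the tangent bundle of a submanifold under Lie bracket, and the compatibility step relies on $\angm(Y,Z)=m(Y,Z)$ for tangent $Y,Z$ together with the orthogonality of $(\nabla_X Y)^{\perp}$ to tangent vectors — both of which you identified correctly. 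The duality argument extending from vectors to one-forms is also valid because an $m$-tangent one-form annihilates the $\mathrm{span}\{\uL,L\}$ normal space, so replacing $\nabla_X Y$ by its tangential part $\hat{\angn}_X Y$ inside $\omega(\cdot)$ introduces no error; from there the Leibniz induction to arbitrary $\binom{n}{m}$ tensors is routine as you say. One cosmetic remark: the formula \eqref{E:SigmatIntrinsicintermsofExtrinsic} has the free lower index $\lambda$ ranging over all spacetime values, but $\um_{\lambda}^{\ \lambda'}$ kills $\lambda=0$, so the identity is really extending the intrinsic $\unabla$ to a spacetime-indexed operator whose time component vanishes; your coordinate argument handles this implicitly but it is worth flagging, since otherwise a reader might worry about the meaning of $\unabla_0$.
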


We recall the following fundamental properties of the connections $\nabla,$ $\unabla,$ and $\angn:$

\begin{subequations}
\begin{align}
	\nabla_{\lambda} m_{\mu \nu} & = 0 = \nabla_{\lambda} (m^{-1})^{\mu \nu},&& (\lambda, \mu, \nu = 0,1,2,3), 
		\label{E:nablamis0} \\
	\unabla_{\lambda} \um_{\mu \nu} & = 0,&& (\lambda, \mu, \nu = 0,1,2,3), \\
	\angn_{\lambda} \angm_{\mu \nu} & = 0,&& (\lambda, \mu, \nu = 0,1,2,3). \label{E:angnablaangmis0}
\end{align}
\end{subequations}

We will also make use of the projection of the operator $\nabla$ onto the favorable directions, i.e., the directions tangent to the outgoing Minkowski cones $C_q^+.$ 

\begin{definition} \label{D:ConeProjectedDerivative}
	If $U$ is any type $\binom{n}{m}$ spacetime tensorfield, then we define the projected Minkowskian 
	covariant derivative $\conenabla U$ by
	
	\begin{align} \label{E:ConeProjectedDerivative}
		\conenabla_{\lambda} U_{\mu_1 \cdots \mu_m}^{\ \ \ \ \ \ \ \nu_1 \cdots \nu_n} 
		& = \coneproject_{\lambda}^{\ \lambda'} \nabla_{\lambda'}U_{\mu_1 \cdots \mu_m}^{\ \ \ \ 
			\ \ \ \nu_1 \cdots \nu_n},
\end{align}
where the projection $\coneproject_{\mu}^{\ \nu}$ is defined in \eqref{E:ConeProjection}.

\end{definition}

\begin{remark}
	Note that only the $\lambda$ component is projected onto the outgoing cones, so that the tensorfield
	$\conenabla_{\lambda} U_{\mu_1 \cdots \mu_m}^{\ \ \ \ \ \ \ \nu_1 \cdots \nu_n}$ need not be $m-$tangent to 
	the outgoing Minkowski cones.
\end{remark}

\begin{definition} \label{D:NablaXDef}
	If $X$ is any vectorfield, then we define the covariant derivative operators $\nabla_X$ 
	and $\angn_X$ by
	
	\begin{subequations}
	\begin{align}
		\nabla_X & \eqdef X^{\kappa} \nabla_{\kappa}, \\
		\angn_{X} & \eqdef X^{\kappa} \angn_{\kappa}.
	\end{align}
	\end{subequations}
	
\end{definition}

\subsection{Minkowskian conformal Killing fields} \label{SS:ConformalKillingFields}
In this section, we introduce the special set of vectorfields $\mathcal{Z}$ that appears in the definition \eqref{E:EnergyIntro} of our energy $\mathcal{E}_{\dParameter;\upgamma;\upmu}(t),$ and in the weighted Klainerman-Sobolev inequality
\eqref{E:KSIntro}. We begin by recalling that a \emph{Minkowskian conformal Killing field} is a vectorfield $Z$ such that

\begin{align} \label{E:ConformalKillingDef}
	\nabla_{\mu} Z_{\nu} + \nabla_{\nu} Z_{\mu} = {^{(Z)}\phi} m_{\mu \nu}
\end{align}
for some function ${^{(Z)}\phi}(t,x).$ The tensorfield 

\begin{align} \label{E:MinkowskianDeformationTensordef}
	{^{(Z)}\pi_{\mu \nu}} \eqdef \nabla_{\mu} Z_{\nu} + \nabla_{\nu} Z_{\mu}
\end{align}
is known as the \emph{Minkowskian deformation tensor} of $Z.$ If ${^{(Z)}\pi_{\mu \nu}} = 0,$ then $Z$ is known as a \emph{Minkowskian Killing field}.
We also recall that the conformal Killing fields of the Minkowski metric $m_{\mu \nu}$ form a Lie Algebra generated under the Lie bracket $[\cdot,\cdot]$ (see \eqref{E:bracket}) that is generated by the following $15$ vectorfields (see e.g. \cite{dC2008}): 

\begin{enumerate}
	\item the four \emph{translations} $\partial_{\mu} = \frac{\partial}{\partial x^{\mu}}, \qquad (\mu =0,1,2,3),$ 
	\item the three \emph{rotations} $\Omega_{jk} \eqdef x_j \frac{\partial}{\partial x^k} - x_k \frac{\partial}{\partial x^j}, 
		\qquad (1 \leq j < k \leq 3),$
	\item the three \emph{Lorentz boosts} $\Omega_{jk} \eqdef -t \frac{\partial}{\partial x^j} - x_j \frac{\partial}{\partial t}, 
		\qquad (j = 1,2,3),$
	\item the \emph{scaling} vectorfield $S \eqdef x^{\kappa} \frac{\partial}{\partial x^{\kappa}},$	
	\item the four \emph{acceleration} vectorfields $K_{(\mu)} \eqdef - 2 x_{\mu} S 
	+ g_{\kappa \lambda}x^{\kappa} x^{\lambda} \frac{\partial}{\partial x^{\mu}}, \qquad \mu = (0,1,2,3).$ 
\end{enumerate}
It can be checked that the translations, rotations, and Lorentz boosts are in fact Killing fields of $m_{\mu \nu}.$

Two subsets of the above conformal Killing fields will play a prominent role in the remainder of the article, namely the rotations $\mathcal{O}$ and a larger set $\mathcal{Z},$ which are defined by

\begin{subequations}
\begin{align}
	\mathcal{O} & \eqdef \big\lbrace \Omega_{jk} \big\rbrace_{1 \leq j < k \leq 3}, \label{E:Rotationsetdef} \\
	\mathcal{Z} & \eqdef \big\lbrace \frac{\partial}{\partial x^{\mu}}, \Omega_{\mu \nu}, S \big\rbrace_{0 \leq \mu < \nu \leq 3}.  \label{E:Zsetdef} 
\end{align}
\end{subequations}
The vectorfields in $\mathcal{Z}$ satisfy a strong version of the relation \eqref{E:ConformalKillingDef}. That is,
if $Z \in \mathcal{Z},$ then

\begin{align} \label{E:CovariantDerivativesofZareConstant}
	\nabla_{\mu} Z_{\nu} = {^{(Z)}c}_{\mu \nu},
\end{align}
where the components ${^{(Z)}c}_{\mu \nu}$ are \emph{constants} in our wave coordinate system. In particular, we compute for future use that

\begin{subequations}
\begin{align}
	\nabla_{\mu} S_{\nu} & = m_{\mu \nu}, \label{E:ScalingCovariantDerivative} \\
	\nabla_{\mu} (\Omega_{\kappa \lambda})_{\nu} & = m_{\mu \kappa} m_{\nu \lambda} - m_{\mu \lambda} m_{\nu \kappa}.
	\label{E:SpacetimeRotationsCovariantDerivative}
\end{align}
\end{subequations}
We note in addition that if $Z \in \mathcal{Z},$ then there exists a \emph{constant} $c_Z$ such that

\begin{align} \label{E:ZDeformationTensorinTermsofcZm}
	\nabla_{\mu} Z_{\nu} + \nabla_{\nu} Z_{\mu} = c_Z m_{\mu \nu}.
\end{align}
Furthermore, by contracting each side of \eqref{E:ZDeformationTensorinTermsofcZm} against $(m^{-1})^{\mu \nu},$ it follows that

\begin{align}
	c_Z = \frac{1}{4} {^{(Z)}\pi_{\kappa}^{\ \kappa}} = \frac{1}{2} {^{(Z)}c}_{\kappa}^{\ \kappa}.
\end{align}

\subsection{Lie derivatives}
As mentioned in Section \ref{SS:DiscussionofProof}, it is convenient to use Lie derivatives to differentiate 
the electromagnetic equations \eqref{E:ReduceddFis0Summary} - \eqref{E:ReduceddMis0Summary}. In this section,
we recall some basic facts concerning Lie derivatives.

We recall that if $X,Y,$ are any pair of vectorfields, then relative to an arbitrary coordinate system,
their \emph{Lie bracket} $[X,Y]$ can be expressed as

\begin{align} \label{E:bracket}
	[X,Y]^{\mu} & = X^{\kappa} \partial_{\kappa} Y^{\mu} - Y^{\kappa} \partial_{\kappa} X^{\mu}.
\end{align}
Furthermore, we have that 

\begin{align} \label{E:LieXY}
	\Lie_X Y = [X,Y], 
\end{align}	
where $\Lie$ denotes the \emph{Lie derivative operator}. Given a type $\binom{0}{m}$ tensorfield $U,$ and vectorfields $Y_{(1)}, \cdots Y_{(m)},$ the Leibniz rule for $\Lie$ implies that \eqref{E:LieXY} generalizes as follows:

\begin{align} \label{E:Liederivativebracketexpression}
	(\Lie_X U)(Y_{(1)}, \cdots, Y_{(m)}) & = X \lbrace U(Y_{(1)}, \cdots, Y_{(m)}) \rbrace
		- \sum_{i=1}^n U(Y_{(1)}, \cdots, Y_{(i-1)}, [X,Y_{(i)}], Y_{(i+1)}, \cdots, Y_{(m)}).
\end{align} 

Using Lemma \ref{L:Liederivativeintermsofnabla} below, we see that
the left-hand side of \eqref{E:ZDeformationTensorinTermsofcZm} is equal
the Lie derivative of the Minkowski metric. It therefore follows that if $Z \in \mathcal{Z},$ then

\begin{subequations}
\begin{align}
	\Lie_Z m_{\mu \nu} & = c_Z m_{\mu \nu}, \label{E:LieZonmlower} \\
	(\Lie_Z m^{-1})^{\mu \nu} & = - c_Z (m^{-1})^{\mu \nu}, \label{E:LieZonmupper}
\end{align}
\end{subequations}
where the constant $c_Z$ is defined in \eqref{E:ZDeformationTensorinTermsofcZm}. 

\subsection{Modified covariant and modified Lie derivatives}

It will be convenient for us to work with \emph{modified Minkowski covariant derivatives} $\nablamod_{Z}$ and
\emph{modified Lie derivatives}\footnote{Note that these are not the same
modified Lie derivatives that appear in \cite{lBnZ2009}, \cite{dCsK1993}, and \cite{nZ2000}.} $\Liemod_Z.$

\begin{definition} \label{D:ModifiedDerivatives}
	For $Z \in \mathcal{Z},$ we define the modified Minkowski covariant derivative $\nablamod_Z$ by
	
	\begin{align} \label{E:Covariantmoddef}
		\nablamod_Z \eqdef \nabla_Z + c_Z,
	\end{align}
	where $c_Z$ denotes the constant from \eqref{E:ZDeformationTensorinTermsofcZm}.
	
	For each vectorfield $Z \in \mathcal{Z},$ we define the modified Lie derivative $\Liemod_Z$ by
	\begin{align} \label{E:Liemoddef}
		\Liemod_Z \eqdef \Lie_Z + 2c_Z,
	\end{align}
	where $c_Z$ denotes the constant from \eqref{E:ZDeformationTensorinTermsofcZm}.
\end{definition}
The crucial features of the above definitions are captured by Lemmas \ref{L:NablaModZLiemodMinkowskiWaveOperatorCommutator} and
\ref{L:LiemodZLiemodMaxwellCommutator} below. The first shows that for each $Z \in \mathcal{Z},$
$\nablamod_Z \Square_m \phi = \Square_m \nabla_Z \phi,$ where $\Square_m = (m^{-1})^{\kappa \lambda}\nabla_{\kappa} \nabla_{\lambda}$ is the \emph{Minkowski} wave operator. The second shows that
 $\Liemod_Z \Big\lbrace \big[(m^{-1})^{\mu \kappa} (m^{-1})^{\nu \lambda} - (m^{-1})^{\mu \lambda}(m^{-1})^{\nu \kappa} \big] \nabla_{\mu} \Far_{\kappa \lambda}\Big\rbrace$ $= \big[(m^{-1})^{\mu \kappa} (m^{-1})^{\nu \lambda} - (m^{-1})^{\mu \lambda}(m^{-1})^{\nu \kappa}\big] \nabla_{\mu} 
\Lie_Z \Far_{\kappa \lambda}.$ Furthermore, Lemma \ref{L:Liecommuteswithcoordinatederivatives} shows that $\Lie_Z \nabla_{[\lambda}\Far_{\mu \nu]}$ = $\nabla_{[\lambda}\Lie_Z \Far_{\mu \nu]},$ where $[\cdots]$ denotes anti-symmetrization.
These commutation identities suggest that the operators $\Liemod_Z$ and $\nablamod_Z$ are
potentially useful operators for differentiating equations \eqref{E:Reducedh1Summary} and 
\eqref{E:ReduceddFis0Summary} - \eqref{E:ReduceddMis0Summary} respectively. This suggestion is borne out in
Propositions \ref{P:EnergyInhomogeneousTermAlgebraicEstimate} and \ref{P:DIPointwise}, which show that the
inhomogeneous terms generated by differentiating the nonlinear equations have a special algebraic structure, a structure that will be exploited during our global existence bootstrap argument.

\subsection{Vectorfield algebra}

We introduce here some notation that will allow us to compactly express iterated derivatives. If $\mathcal{A}$
is one of the sets from \eqref{E:Rotationsetdef} - \eqref{E:Zsetdef}, then we label the vectorfields in $\mathcal{A}$ as $Z^{\iota_{1}}, \cdots, Z^{\iota_{d}},$ where $d$ is the cardinality of $\mathcal{A}.$ Then for any multi-index 
$I = (\iota_1, \cdots, \iota_k)$ of length $k,$ where each $\iota_i \in \lbrace 1,2,\cdots, d \rbrace,$ we make the
following definition.

\begin{definition} \label{D:iterated}
The iterated derivative operators are defined by

\begin{subequations}
\begin{align} 
	\nabla_{\mathcal{A}}^I & \eqdef \nabla_{Z^{\iota_1}} \circ \cdots \circ \nabla_{Z^{\iota_k}},  \label{E:iteratedCovariant} \\
	\nablamod_{\mathcal{A}}^I & \eqdef \nablamod_{Z^{\iota_1}} \circ \cdots \circ \nablamod_{Z^{\iota_k}}, \\
	\Lie_{\mathcal{A}}^I & \eqdef \Lie_{Z^{\iota_1}} \circ \cdots \circ \Lie_{Z^{\iota_k}}, \label{E:iteratedLie} \\
	\Liemod_{\mathcal{A}}^I & \eqdef \Liemod_{Z^{\iota_1}} \circ \cdots \circ \Liemod_{Z^{\iota_k}}, 
\end{align}
\end{subequations}
etc. 
\end{definition}

Similarly, if $I = (\mu_1, \cdots, \mu_k)$ is a coordinate multi-index of length $k,$ where 
$\mu_1, \cdots, \mu_k \in \lbrace 0,1,2,3 \rbrace,$ and $U$ is a tensorfield, then
we use shorthand notation such as

\begin{align}
	\nabla^I U \eqdef \nabla_{\mu_1} \cdots \nabla_{\mu_k} U,
\end{align}
etc.

Under the above conventions, the Leibniz rule can be written as e.g.

\begin{align}
	\Lie_{\mathcal{Z}}^I (UV) = \sum_{I_1 + I_2 = I} (\Lie_{\mathcal{Z}}^I U)(\Lie_{\mathcal{Z}}^I V),
\end{align}
etc., where by a sum over $I_1 + I_2 = I,$ we mean a sum over all order preserving partitions of the index $I$ into two 
multi-indices; i.e., if $I = (\iota_1, \cdots, \iota_k),$ then $I_1 = (\iota_{i_1}, \cdots, \iota_{i_a}), 
I_2 = (\iota_{i_{a+1}}, \cdots, \iota_{i_k}),$ where $i_1, \cdots, i_k$ is any re-ordering of the integers
$1,\cdots,k$ such that $i_1 < \cdots < i_a,$ and $i_{a+1} < \cdots < i_k.$

The next standard lemma provides a useful expression relating Lie derivatives to covariant derivatives.

\begin{lemma} \cite{rW1984} \label{L:Liederivativeintermsofnabla} \textbf{(Lie derivatives in terms of covariant derivatives)}
Let $X$ be a vectorfield, and let $U$ be a tensorfield of type $\binom{n}{m}.$ Then $\Lie_X U$ can be expressed in terms of covariant derivatives of $U$ and $X$ as follows:

\begin{align} \label{E:Liederivativeintermsofnabla} 
	\Lie_X U_{\mu_1 \cdots \mu_m}^{\ \ \ \ \ \ \ \ \nu_1 \cdots \nu_n} =
		\nabla_X U_{\mu_1 \cdots \mu_m}^{\ \ \ \ \ \ \ \ \nu_1 \cdots \nu_n} 
	& + U_{\kappa \mu_2 \cdots \mu_m}^{\ \ \ \ \ \ \ \ \nu_1 \cdots \nu_n}\nabla_{\mu_1}X^{\kappa} 
		+ \cdots + U_{\mu_1 \cdots \mu_{m-1} \kappa}^{\ \ \ \ \ \ \ \ \ \nu_1 \cdots \nu_n}\nabla_{\mu_m}X^{\kappa} \\
	& - U_{\mu_1 \cdots \mu_m}^{\ \ \ \ \ \ \ \ \kappa \cdots \nu_n} \nabla_{\kappa}X^{\nu_1}
	 	- \cdots - U_{\mu_1 \cdots \mu_m}^{\ \ \ \ \ \ \ \ \nu_1 \cdots \nu_{n-1} \kappa} \nabla_{\kappa}X^{\nu_n}. \notag
\end{align}

\end{lemma}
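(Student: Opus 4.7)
The plan is to verify the identity by a direct coordinate computation, exploiting tensoriality and the fact that the Levi-Civita connection $\nabla$ of the Minkowski metric is torsion-free. The key observation is that both sides of \eqref{E:Liederivativeintermsofnabla} are type $\binom{n}{m}$ tensorfields: the left-hand side because the Lie derivative of a tensorfield is a tensorfield, and the right-hand side because each individual summand is built from tensorial operations (covariant derivatives and contractions). Consequently, it suffices to verify the identity at an arbitrary point in any one coordinate system.

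I would use the wave coordinate system $\lbrace x^{\mu} \rbrace_{\mu=0,1,2,3}$ of Section \ref{SS:WaveCoordinates}, in which $m_{\mu \nu} = \mbox{diag}(-1,1,1,1).$ Since the Minkowski metric has constant components in this system, all Christoffel symbols $\Gamma_{\mu \ \nu}^{\ \kappa}$ of $m$ vanish identically, and hence $\nabla_{\mu}$ reduces to the partial derivative $\partial_{\mu}$ when acting on any tensorfield. On the other hand, the standard coordinate formula for the Lie derivative of a mixed tensorfield, which follows by induction on $n + m$ from the formula \eqref{E:Liederivativebracketexpression} for covariant tensorfields and the dual relation for vectorfields (applied to appropriate tensor products), reads
\begin{align*}
\Lie_X U_{\mu_1 \cdots \mu_m}^{\ \ \ \ \ \ \ \ \nu_1 \cdots \nu_n}
  = X^{\kappa} \partial_{\kappa} U_{\mu_1 \cdots \mu_m}^{\ \ \ \ \ \ \ \ \nu_1 \cdots \nu_n}
  + \sum_{i=1}^{m} U_{\mu_1 \cdots \kappa \cdots \mu_m}^{\ \ \ \ \ \ \ \ \ \ \ \nu_1 \cdots \nu_n} \partial_{\mu_i} X^{\kappa}
  - \sum_{j=1}^{n} U_{\mu_1 \cdots \mu_m}^{\ \ \ \ \ \ \ \ \nu_1 \cdots \kappa \cdots \nu_n} \partial_{\kappa} X^{\nu_j}.
\end{align*}
Replacing every $\partial$ on the right-hand side by $\nabla$ (a valid substitution in wave coordinates, as noted) yields exactly \eqref{E:Liederivativeintermsofnabla} in this coordinate system, and the tensorial character of both sides then extends the equality to arbitrary coordinates.

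An alternative, coordinate-free route that I would mention for completeness is to carry out the substitution $\partial_{\mu} = \nabla_{\mu} - \Gamma_{\mu \ \bullet}^{\ \bullet}$ in a general coordinate system on both sides of the identity and verify that the Christoffel symbol contributions cancel. Every such contribution ultimately appears in the pattern $\Gamma_{\mu_i \ \lambda}^{\ \kappa} X^{\lambda} - \Gamma_{\lambda \ \mu_i}^{\ \kappa} X^{\lambda}$ (and similarly for upper indices), which vanishes thanks to the torsion-free symmetry $\Gamma_{\mu \ \nu}^{\ \kappa} = \Gamma_{\nu \ \mu}^{\ \kappa}$ of the Levi-Civita connection. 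There is no substantive analytic obstacle here; the mild bookkeeping challenge is only to track the signs and the position of the contracted index $\kappa$ in the summations over $i$ and $j.$ As this is a standard identity from differential geometry included only to fix notation for the rest of the article, the shortest honest proof is the one-line appeal to tensoriality plus vanishing of Christoffel symbols in wave coordinates described above.
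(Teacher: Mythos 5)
Your proof is correct, and since the paper supplies no argument of its own for this lemma --- it simply cites \cite{rW1984} --- there is nothing to compare beyond the textbook route you have reproduced. Your first route (tensoriality of both sides plus vanishing of the Christoffel symbols in a coordinate system adapted to the flat Minkowski metric) is precisely the standard one; the only minor remark is that the usual textbook proof uses Riemann normal coordinates at a point so as to cover an arbitrary torsion-free connection, whereas for the globally flat Minkowski case your appeal to the global wave coordinate system $\lbrace x^{\mu} \rbrace$ achieves the same thing in one stroke, and your second route (explicit cancellation of Christoffel contributions by the symmetry $\Gamma_{\mu \ \nu}^{\ \kappa} = \Gamma_{\nu \ \mu}^{\ \kappa}$) is the coordinate-free counterpart and in fact applies to any torsion-free connection, which is slightly more general than what the lemma states.
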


\hfill $\qed$

The next lemma shows that the operators $\Lie_Z$ and $\Liemod_Z$ commute with $\nabla$ if $Z \in \mathcal{Z}.$

\begin{lemma} \label{L:Liecommuteswithcoordinatederivatives} \textbf{($\Lie_Z$ and $\nabla$ commute)}
Let $\nabla$ denote the Levi-Civita connection corresponding to the Minkowski metric $m,$ and let $I$ be a 
$\mathcal{Z}-$multi-index. Let $\Liemod_{\mathcal{Z}}^I$ be the iterated modified Lie derivative from Definitions \ref{D:ModifiedDerivatives} and \ref{D:iterated}. Then

\begin{align} \label{E:Liecommuteswithcoordinatederivatives}
	[\nabla, \Lie_{\mathcal{Z}}^I] & = 0, && [\nabla, \Liemod_{\mathcal{Z}}^I] = 0.
\end{align}

In an arbitrary coordinate system, equations \eqref{E:Liecommuteswithcoordinatederivatives} are equivalent to the following relations, which hold for all type $\binom{n}{m}$ tensorfields $U:$

\begin{align}
	\nabla_{\mu}\big\lbrace \Lie_{\mathcal{Z}}^I U_{\mu_1 \cdots \mu_m}^{\ \ \ \ \ \ \ \ \nu_1 \cdots \nu_n} \big\rbrace 
	& = \Lie_{\mathcal{Z}}^I \big\lbrace \nabla_{\mu}U_{\mu_1 \cdots \mu_m}^{\ \ \ \ \ \ \ \ \nu_1 \cdots \nu_n} \big\rbrace, \\
	\nabla_{\mu}\big\lbrace \Liemod_{\mathcal{Z}}^I U_{\mu_1 \cdots \mu_m}^{\ \ \ \ \ \ \ \ \nu_1 \cdots \nu_n} \big\rbrace 
	& = \Liemod_{\mathcal{Z}}^I \big\lbrace \nabla_{\mu} U_{\mu_1 \cdots \mu_m}^{\ \ \ \ \ \ \ \ \nu_1 \cdots \nu_n} \big\rbrace. \notag
\end{align}

\end{lemma}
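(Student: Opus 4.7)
The plan is to reduce to the single-vectorfield statement $[\nabla_\mu, \Lie_Z] = 0$ (and its modified analogue) for $Z \in \mathcal{Z}$ acting on an arbitrary tensorfield, and then iterate by induction on $|I|$. The crucial input is property \eqref{E:CovariantDerivativesofZareConstant}: for $Z \in \mathcal{Z}$ the components $\nabla_\mu Z_\nu = {^{(Z)}c}_{\mu\nu}$ are \emph{constants} in the wave coordinate system, so $\nabla_\alpha \nabla_\mu Z_\nu = 0$. Combined with the flatness of Minkowski space ($[\nabla_\alpha,\nabla_\beta] = 0$), these two facts will cause the customary ``$\nabla\nabla Z$'' and ``curvature $\cdot\,Z$'' terms in the Lie-derivative–connection commutator to vanish identically.

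For the single-vectorfield step, first invoke Lemma \ref{L:Liederivativeintermsofnabla} to expand
\begin{align*}
    \Lie_Z U_{\mu_1\cdots\mu_m}^{\ \ \ \ \ \ \ \nu_1\cdots\nu_n}
    = Z^\kappa \nabla_\kappa U_{\mu_1\cdots\mu_m}^{\ \ \ \ \ \ \ \nu_1\cdots\nu_n}
    + \sum_i \big(\text{contraction of $U$ against } \nabla_{\mu_i} Z^\kappa\big)
    - \sum_j \big(\text{contraction of $U$ against } \nabla_\kappa Z^{\nu_j}\big),
\end{align*}
apply $\nabla_\lambda$, and distribute via the Leibniz rule. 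Each $\nabla_\lambda \nabla Z$ term kills by \eqref{E:CovariantDerivativesofZareConstant}, while the Minkowski commutator $\nabla_\lambda \nabla_\kappa = \nabla_\kappa \nabla_\lambda$ allows one to rewrite $\nabla_\lambda(Z^\kappa \nabla_\kappa U) = Z^\kappa \nabla_\kappa (\nabla_\lambda U) + (\nabla_\lambda Z^\kappa)\nabla_\kappa U$. On the other side, expand $\Lie_Z (\nabla_\lambda U)$ using Lemma \ref{L:Liederivativeintermsofnabla} applied to the type $\binom{n}{m+1}$ tensor $\nabla U$; the extra contraction from the new covariant slot $\lambda$ produces precisely the compensating term $(\nabla_\lambda Z^\kappa)\nabla_\kappa U$, so the two sides match term by term. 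To keep the bookkeeping clean, it suffices to verify the identity in the two base cases of a one-form and of a vectorfield, and then use that both $\Lie_Z$ and $\nabla_\lambda$ are derivations on the tensor algebra (and commute with contractions) to extend it to arbitrary tensor type.

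The main obstacle is purely notational: organizing the many $\nabla Z$ contractions in the type $\binom{n}{m}$ case so that the cancellation is transparent; the algebra itself is forced by constancy of $\nabla Z$ and flatness of $m$. Once the single-vectorfield commutation is established, the modified version is immediate: since $c_Z$ is a constant scalar, $\Liemod_Z U = \Lie_Z U + 2 c_Z U$, and $\nabla_\lambda(c_Z U) = c_Z \nabla_\lambda U$, so
\begin{equation*}
    [\nabla_\lambda, \Liemod_Z] U = [\nabla_\lambda, \Lie_Z] U + 2[\nabla_\lambda, c_Z\cdot] U = 0.
\end{equation*}

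Finally, iterate on $|I|$: assuming the conclusion for all multi-indices of length $< |I|$, write $\Lie_{\mathcal{Z}}^I = \Lie_{Z^{\iota_1}} \circ \Lie_{\mathcal{Z}}^{I'}$ (and similarly for $\Liemod$) with $|I'| = |I|-1$, then
\begin{equation*}
    \nabla_\mu \Lie_{\mathcal{Z}}^I U
    = \nabla_\mu \Lie_{Z^{\iota_1}} \Lie_{\mathcal{Z}}^{I'} U
    = \Lie_{Z^{\iota_1}} \nabla_\mu \Lie_{\mathcal{Z}}^{I'} U
    = \Lie_{Z^{\iota_1}} \Lie_{\mathcal{Z}}^{I'} \nabla_\mu U
    = \Lie_{\mathcal{Z}}^I \nabla_\mu U,
\end{equation*}
using the single-vectorfield result in the first step and the inductive hypothesis in the second, and identically for $\Liemod_{\mathcal{Z}}^I$. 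This completes the lemma.
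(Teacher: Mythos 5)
Your proposal is correct and follows essentially the same route the paper takes: an induction on $|I|$ whose base step expands $\Lie_Z$ via Lemma \ref{L:Liederivativeintermsofnabla} and exploits the two facts $\nabla\nabla Z = 0$ (constancy of ${^{(Z)}c}_{\mu\nu}$) and the flatness of $(\mathbb{R}^{1+3}, m)$ to commute $\nabla_\lambda$ with $\nabla_\kappa$. The paper compresses this entire argument into one sentence; your write-up simply makes the cancellations and the inductive step explicit, including the trivial extension to $\Liemod_Z = \Lie_Z + 2c_Z$ via constancy of $c_Z$.
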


\begin{proof}
	The relation \eqref{E:Liecommuteswithcoordinatederivatives} can be shown via induction in $|I|$ using
	\eqref{E:Liederivativeintermsofnabla} and the fact that $\nabla\nabla Z = 0.$ 
\end{proof}

The next lemma captures the commutation properties of vectorfields $Z \in \mathcal{Z}.$

\begin{lemma} \label{L:ConformalKillingFieldCommuatators} \cite[pg. 139]{dCsK1990} 
\textbf{(Lie bracket relations)}
	Relative to the wave coordinate system
	$\lbrace x^{\mu} \rbrace_{\mu = 0,1,2,3},$ the vectorfields belonging to the subset $\mathcal{Z} \eqdef \big\lbrace 
	\frac{\partial}{\partial x^{\mu}}, \Omega_{\mu \nu}, S \big\rbrace_{0 \leq \mu < \nu \leq 3}$ of the Minkowskian conformal 
	Killing fields satisfy the following commutation relations, where ${^{(Z)} c_{\mu}^{\ \kappa}}$ is defined in 
	\eqref{E:CovariantDerivativesofZareConstant}:
	
	\begin{subequations}
	\begin{align}
		\left[\frac{\partial}{\partial x^{\mu}}, \frac{\partial}{\partial x^{\nu}}\right] & = 0
			= {^{(\frac{\partial}{\partial x^{\nu}})} c_{\mu}^{\ \kappa}} \frac{\partial}{\partial x^{\kappa}}, 
			&& (\mu, \nu = 0,1,2,3), \label{E:translationscommute} \\
		\left[\frac{\partial}{\partial x^{\lambda}}, \Omega_{\mu \nu}\right] & = m_{\lambda \mu}\frac{\partial}{\partial x^{\nu}} 
			- m_{\lambda \nu} \frac{\partial}{\partial x^{\mu}}
			= {^{(\Omega_{\mu \nu})} c_{\lambda}^{\ \kappa}} \frac{\partial}{\partial x^{\kappa}}, && (\lambda, \mu, \nu = 0,1,2,3), 
				\\
		\left[\frac{\partial}{\partial x^{\mu}}, S \right] & = \frac{\partial}{\partial x^{\mu}}
			= {^{(S)} c_{\mu}^{\ \kappa}} \frac{\partial}{\partial x^{\kappa}}, && (\mu = 0,1,2,3), \\
		\left[\Omega_{\kappa \lambda}, \Omega_{\mu \nu} \right] & = m_{\kappa \mu} \Omega_{\nu \lambda} 
			- m_{\kappa \nu} \Omega_{\mu \lambda} + m_{\lambda \mu} \Omega_{\kappa \nu}
			- m_{\lambda \nu} \Omega_{\kappa \mu}, && (\kappa, \lambda, \mu, \nu = 0,1,2,3), \\
		\left[\Omega_{\mu \nu}, S\right] & = 0, && (\mu, \nu = 0,1,2,3).
	\end{align}	
	\end{subequations}
\end{lemma}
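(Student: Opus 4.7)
The plan is to verify each of the five bracket identities by direct coordinate computation using the formula $[X,Y]^{\mu} = X^{\kappa}\partial_{\kappa}Y^{\mu} - Y^{\kappa}\partial_{\kappa}X^{\mu}$ from \eqref{E:bracket}, together with the explicit coordinate expressions $\partial_{\mu} = \partial/\partial x^{\mu}$, $\Omega_{\mu\nu} = x_{\mu}\partial_{\nu} - x_{\nu}\partial_{\mu}$ (with $x_{\mu} = m_{\mu\kappa}x^{\kappa}$), and $S = x^{\kappa}\partial_{\kappa}$. Since the components of every $Z \in \mathcal{Z}$ are at most linear polynomials in the $x^{\mu}$, every bracket expansion produces only a handful of nonzero terms. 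For example, $[\partial_{\lambda},\Omega_{\mu\nu}]^{\sigma} = \partial_{\lambda}(x_{\mu}\delta_{\nu}^{\sigma} - x_{\nu}\delta_{\mu}^{\sigma}) = m_{\lambda\mu}\delta_{\nu}^{\sigma} - m_{\lambda\nu}\delta_{\mu}^{\sigma}$, which immediately yields the second identity, and the three-line computations for $[\partial_{\mu}, S]$, $[\Omega_{\mu\nu}, S]$, and $[\Omega_{\kappa\lambda},\Omega_{\mu\nu}]$ are of the same flavor — each reduces to the observation that $\partial_{\lambda}x_{\mu} = m_{\lambda\mu}$ and then to collecting terms into the advertised combinations of $m$'s and $\Omega$'s.

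For the second equality in each line (the identification $[\partial_{\lambda}, Z] = {^{(Z)}c_{\lambda}}^{\ \kappa}\partial_{\kappa}$ and its analogues), the plan is to invoke \eqref{E:CovariantDerivativesofZareConstant}, which asserts $\nabla_{\mu}Z_{\nu} = {^{(Z)}c_{\mu\nu}}$ with constant components in our wave coordinate system. Since in rectangular coordinates the Minkowski connection coefficients vanish and $\partial_{\mu}Z^{\nu} = \nabla_{\mu}Z^{\nu}$, raising an index gives $\partial_{\mu}Z^{\nu} = {^{(Z)}c_{\mu}}^{\ \nu}$; substituting this into $[\partial_{\mu}, Z]^{\nu} = \partial_{\mu}Z^{\nu}$ yields the desired form. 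Reading off the explicit constants from \eqref{E:ScalingCovariantDerivative} and \eqref{E:SpacetimeRotationsCovariantDerivative} then matches these to the first equalities just computed, providing a consistency check.

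There is no real obstacle in this lemma — it is purely bookkeeping once the coordinate formulas for the vectorfields and the formula \eqref{E:bracket} for the Lie bracket are available. The only thing to be mildly careful about is index placement: because $\Omega_{\mu\nu}$ is built from $x_{\mu}$ (indices lowered with $m$), signs flip appropriately when one of the contracted indices is $0$, but this is automatically handled by working with the covariant form $x_{\mu} = m_{\mu\kappa}x^{\kappa}$ throughout rather than switching back and forth. The result can therefore be stated as a direct verification, and I would present it by listing the five brackets and exhibiting the short computation for each.
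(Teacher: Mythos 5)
Your direct-computation verification is correct, and each of the five bracket identities checks out as you describe. The paper itself provides no proof of this lemma: it cites \cite[pg.\ 139]{dCsK1990} and marks the statement with a $\qed$, so there is no ``paper's approach'' against which to measure yours. The route you take --- expanding $[X,Y]^{\mu} = X^{\kappa}\partial_{\kappa}Y^{\mu} - Y^{\kappa}\partial_{\kappa}X^{\mu}$ with the uniform coordinate expression $\Omega_{\mu\nu} = x_{\mu}\partial_{\nu} - x_{\nu}\partial_{\mu}$ (where $x_{\mu} = m_{\mu\kappa}x^{\kappa}$) and using $\partial_{\lambda}x_{\mu} = m_{\lambda\mu}$ --- is the standard one and handles rotations and boosts simultaneously, which is exactly the right move. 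Your identification of the second equality in each line via $\partial_{\mu}Z^{\nu} = {^{(Z)}c_{\mu}}^{\ \nu}$ in rectangular coordinates is also correct and amounts to the observation that when $X = \partial_{\mu}$ has constant components, $[\partial_{\mu}, Z]^{\nu}$ reduces to $\partial_{\mu}Z^{\nu}$. No gap.
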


\hfill $\qed$

We now state the following simple commutation lemma.

\begin{lemma} \label{L:NablapartialmuNablaZCommutatorExpression} 
\textbf{($\nabla_Z,$ $\nabla_{\frac{\partial}{\partial x^{\mu}}}$ commutation relations)}
Let $Z \in \mathcal{Z}.$ Then relative to the wave coordinate system
$\lbrace x^{\mu} \rbrace_{\mu = 0,1,2,3},$ the differential operators $\nabla_{\frac{\partial}{\partial x^{\mu}}}$ and $\nabla_{Z}$ satisfy the following commutation relations:

\begin{align} \label{E:NablapartialmuNablaZCommutatorExpression}
	[\nabla_{\frac{\partial}{\partial x^{\mu}}}, \nabla_Z] = {^{(Z)} c_{\mu}^{\ \kappa}} \frac{\partial}{\partial x^{\kappa}},
\end{align}
where ${^{(Z)} c_{\mu}^{\ \kappa}}$ is defined in \eqref{E:CovariantDerivativesofZareConstant}.

\end{lemma}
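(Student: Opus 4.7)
The plan is to verify this commutation identity by direct computation, exploiting the flatness of the Minkowski connection and the fact that the components ${^{(Z)}c_{\mu}}^{\nu}$ are constants in the wave coordinate system.

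First I would note that in our wave coordinate system the Christoffel symbols of the Minkowski metric vanish, so that $\nabla_{\partial/\partial x^\mu}$ acts on tensor components as the coordinate partial derivative $\partial_\mu$, and moreover the Riemann tensor of $\nabla$ vanishes identically, giving the commutation of second covariant derivatives $[\nabla_\mu,\nabla_\kappa] U = 0$ for any tensorfield $U$. Next, for an arbitrary tensorfield $U$, I would expand the first term of the commutator using the Leibniz rule:
\begin{align*}
\nabla_{\partial/\partial x^\mu}\nabla_Z U
= \nabla_\mu\big(Z^\kappa \nabla_\kappa U\big)
= (\nabla_\mu Z^\kappa)\nabla_\kappa U + Z^\kappa \nabla_\mu \nabla_\kappa U,
\end{align*}
while the other term in the commutator is simply $\nabla_Z \nabla_{\partial/\partial x^\mu} U = Z^\kappa \nabla_\kappa \nabla_\mu U$. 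Subtracting and using $[\nabla_\mu, \nabla_\kappa] = 0$ collapses the second-derivative pieces, leaving $(\nabla_\mu Z^\kappa)\nabla_\kappa U$.

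The final step is to identify the coefficient. Since $Z\in\mathcal{Z}$, the relation \eqref{E:CovariantDerivativesofZareConstant} gives $\nabla_\mu Z_\nu = {^{(Z)}c_{\mu\nu}}$, hence $\nabla_\mu Z^\kappa = {^{(Z)}c_\mu}^{\kappa}$, yielding
\begin{align*}
[\nabla_{\partial/\partial x^\mu}, \nabla_Z] U \;=\; {^{(Z)}c_\mu}^{\kappa}\, \nabla_\kappa U \;=\; {^{(Z)}c_\mu}^{\kappa}\, \partial_\kappa U,
\end{align*}
which is exactly the claimed identity \eqref{E:NablapartialmuNablaZCommutatorExpression} when the right-hand side is interpreted as the differential operator acting on tensor components (the last equality again uses that the Minkowski Christoffel symbols vanish in the wave coordinate system). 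There is no genuine obstacle here; the whole content of the lemma is the flatness of the Minkowski connection together with the fact that, for $Z \in \mathcal{Z}$, the coordinate derivatives of its components are constants.
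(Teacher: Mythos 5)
Your argument is correct, and it is essentially the paper's proof unrolled: the paper cites the flat-connection identity $[\nabla_X,\nabla_Y]=\nabla_{[X,Y]}$ together with the precomputed Lie bracket relations $[\partial_\mu,Z]={^{(Z)}c_\mu}^{\kappa}\partial_\kappa$ from Lemma \ref{L:ConformalKillingFieldCommuatators}, while your Leibniz/vanishing-curvature computation is exactly a direct verification of that identity specialized to $X=\partial/\partial x^\mu$, $Y=Z$, with $\nabla_\mu Z^\kappa = {^{(Z)}c_\mu}^{\kappa}$ playing the role of the bracket coefficients. Same content, same ingredients.
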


\begin{proof}
The relation \eqref{E:NablapartialmuNablaZCommutatorExpression} follows from Lemma \ref{L:ConformalKillingFieldCommuatators} 
and the identity $[\nabla_X, \nabla_Y] = \nabla_{[X,Y]},$ which holds for all pairs of vectorfields $X,Y;$ this identity holds
because of the torsion-free property of the connection $\nabla$ and because the Riemann curvature tensor of the Minkowski metric $m_{\mu \nu}$ completely vanishes.
\end{proof}

The next lemma shows that the operators $\nabla$ and $\nabla_{\mathcal{Z}}^I$ commute up to lower-order terms.

\begin{lemma} \label{L:NablaZICommutesWithCovariantDerivativePlusErrorTerms} \textbf{($\nabla$ and 
$\nabla_{\mathcal{Z}}^I$ commutation inequalities)}
Let $U$ be a type $\binom{n}{m}$ tensorfield, and let $I$ be a $\mathcal{Z}-$multi-index. Then the following inequality holds:

\begin{align} \label{E:NablaZICommutesWithCovariantDerivativePlusErrorTerms}
	|\nabla_{\mathcal{Z}}^I \nabla U| & \lesssim |\nabla\nabla_{\mathcal{Z}}^I U|
	\ + \ \sum_{|J| \leq |I| - 1} |\nabla\nabla_{\mathcal{Z}}^J U|.
\end{align}

\end{lemma}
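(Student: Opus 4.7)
The plan is to prove \eqref{E:NablaZICommutesWithCovariantDerivativePlusErrorTerms} by induction on $|I|$, where the base case is a one-step commutator identity derived from Lemma \ref{L:NablapartialmuNablaZCommutatorExpression}.

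First I would establish the base commutation identity: for any $Z \in \mathcal{Z}$ and any tensorfield $U,$ the tensorfield $\nabla U$ (with an additional covariant index) satisfies, in our wave coordinate system,
\begin{align*}
    (\nabla \nabla_Z U)_{\mu, \, \mu_1 \cdots \mu_m}^{\ \ \ \ \ \ \ \ \ \ \nu_1 \cdots \nu_n}
    &= \nabla_\mu \bigl( Z^\kappa \nabla_\kappa U_{\mu_1 \cdots \mu_m}^{\ \ \ \ \ \ \ \nu_1 \cdots \nu_n} \bigr) \\
    &= {^{(Z)} c_{\mu}^{\ \kappa}} \, \nabla_\kappa U_{\mu_1 \cdots \mu_m}^{\ \ \ \ \ \ \ \nu_1 \cdots \nu_n}
    \ + \ (\nabla_Z \nabla U)_{\mu,\, \mu_1 \cdots \mu_m}^{\ \ \ \ \ \ \ \ \ \ \nu_1 \cdots \nu_n},
\end{align*}
where I have used \eqref{E:CovariantDerivativesofZareConstant} and the fact that in the wave coordinate system Christoffel symbols of $m$ vanish, so that $\nabla_\mu \nabla_\kappa = \nabla_\kappa \nabla_\mu$ on any tensor. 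Rearranging, this yields the pointwise bound $|\nabla_Z \nabla U| \lesssim |\nabla \nabla_Z U| + |\nabla U|,$ since the components ${^{(Z)} c_{\mu}^{\ \kappa}}$ are absolute constants.

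Next I would proceed by induction on $|I|$. The $|I| = 0$ case is trivial. Assume the bound holds for all $\mathcal{Z}$-multi-indices of length less than $|I|,$ and write $\nabla_{\mathcal{Z}}^I = \nabla_Z \nabla_{\mathcal{Z}}^{I'}$ with $|I'| = |I| - 1.$ Apply the base identity with $U$ replaced by $\nabla_{\mathcal{Z}}^{I'} U$ to obtain
\begin{align*}
    \nabla_Z \nabla (\nabla_{\mathcal{Z}}^{I'} U)
    = \nabla \nabla_Z (\nabla_{\mathcal{Z}}^{I'} U) - C_Z\bigl(\nabla \nabla_{\mathcal{Z}}^{I'} U\bigr)
    = \nabla \nabla_{\mathcal{Z}}^I U - C_Z\bigl(\nabla \nabla_{\mathcal{Z}}^{I'} U\bigr),
\end{align*}
where $C_Z$ denotes a constant-coefficient contraction with $|C_Z(\cdot)| \lesssim |\cdot|.$ To compare this with $\nabla_{\mathcal{Z}}^I \nabla U = \nabla_Z \nabla_{\mathcal{Z}}^{I'} \nabla U,$ apply the inductive hypothesis to $\nabla_{\mathcal{Z}}^{I'} \nabla U,$ which, when combined with the one-step commutator applied inside each term and Lemma \ref{L:Liecommuteswithcoordinatederivatives}'s covariant analogue (iterating the base identity), produces the decomposition
\begin{align*}
    \nabla_{\mathcal{Z}}^I \nabla U
    = \nabla \nabla_{\mathcal{Z}}^I U \ + \ \sum_{|J| \leq |I| - 1} T^J\bigl(\nabla \nabla_{\mathcal{Z}}^J U\bigr),
\end{align*}
where each $T^J$ is a constant-coefficient tensorial operator arising from products of the ${^{(Z)} c_{\mu}^{\ \kappa}}$ constants. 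Taking absolute values and using $|T^J(\cdot)| \lesssim |\cdot|$ yields \eqref{E:NablaZICommutesWithCovariantDerivativePlusErrorTerms}.

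The only mildly subtle point is bookkeeping: I need to verify that each time I commute a $\nabla_Z$ past a $\nabla,$ the resulting error term is itself a constant-coefficient contraction of $\nabla$ applied to a tensor of the form $\nabla_{\mathcal{Z}}^J U$ with $|J|$ strictly less than the number of $\mathcal{Z}$-derivatives already absorbed. This is ensured by the flatness of $m$ (which lets me freely swap coordinate covariant derivatives) and by \eqref{E:CovariantDerivativesofZareConstant} (which keeps the coefficients constant). No genuinely hard estimate is required; the main obstacle is simply keeping track of the index structure, which is why the inductive formulation in terms of the seminorm $|\cdot|$ avoids the need to track precise tensorial contractions.
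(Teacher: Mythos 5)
Your proof is correct and takes essentially the same route as the paper: both arguments reduce to repeatedly applying the one-step commutator $[\nabla_{\frac{\partial}{\partial x^\mu}}, \nabla_Z] = {^{(Z)}c_{\mu}^{\ \kappa}}\, \nabla_{\frac{\partial}{\partial x^\kappa}}$ (Lemma \ref{L:NablapartialmuNablaZCommutatorExpression}) to derive an exact decomposition of $\nabla_{\mathcal{Z}}^I \nabla U$ as $\nabla \nabla_{\mathcal{Z}}^I U$ plus constant-coefficient remainders of the form $\nabla\nabla_{\mathcal{Z}}^J U$ with $|J| \le |I|-1$, and then passing to the coordinate-component seminorms via \eqref{E:AlternateNablaNorm}. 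The only cosmetic difference is that you frame the repeated commutation as an induction; note that the induction is really carried on the exact identity rather than on the inequality \eqref{E:NablaZICommutesWithCovariantDerivativePlusErrorTerms} itself, since the inductive step substitutes the decomposition of $\nabla_{\mathcal{Z}}^{I'}\nabla U$ into the next commutator, which requires equality rather than a bound.
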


\begin{proof}
	Using \eqref{E:AlternateNablaNorm}, we have that
	
	\begin{align} \label{E:NablaZINablaPNormapproximatelyNablaZISumOverNablamuPNorm}
		|\nabla_{\mathcal{Z}}^I \nabla U| & \approx \sum_{\mu = 0}^3 
			|\nabla_{\mathcal{Z}}^I \nabla_{\frac{\partial}{\partial x^{\mu}}}U|.
	\end{align}
	We therefore repeatedly apply Lemma \ref{L:NablapartialmuNablaZCommutatorExpression} to deduce that
	there exist constants $C_{I;J}^{\nu}$ such that
	
	\begin{align} \label{E:NablapartialmuNablaZICommutatorExpression}
			\nabla_{\mathcal{Z}}^I \nabla_{\frac{\partial}{\partial x^{\mu}}}U
			& = \nabla_{\frac{\partial}{\partial x^{\mu}}} \nabla_{\mathcal{Z}}^I U
				\ + \ \sum_{|J| \leq |I| - 1} \sum_{\nu = 0}^3 C_{I;J}^{\nu}
				\nabla_{\frac{\partial}{\partial x^{\nu}}} \nabla_{\mathcal{Z}}^J U.
	\end{align}
	Inequality \eqref{E:NablaZICommutesWithCovariantDerivativePlusErrorTerms}
	now follows from applying \eqref{E:AlternateNablaNorm} to each side of \eqref{E:NablapartialmuNablaZICommutatorExpression}.

\end{proof}

The next lemma captures some important differential identities.

\begin{lemma} \label{L:LieDerivativeCommuatorsVanish} \textbf{(Geometric differential identities)}
	 Let $\uL,$ $L$ be the Minkowski-null geodesic vectorfields defined in \eqref{E:uLdef} - \eqref{E:Ldef}, and let
	 $O \in \mathcal{O}.$ Then the vectorfields, $\uL,$ $L,$ $O$ mutually commute:

\begin{align}
	[\uL, L] & = 0, && [\uL,O] = 0, && [L, O] = 0. \label{E:RotationLuLBracketis0} 
\end{align}
	
	Furthermore, let $\Minkvolume_{\kappa \lambda \mu \nu},$ $\angm_{\mu \nu}$ denote the volume forms defined in
	\eqref{E:Minkvolumedef} and \eqref{E:Spheresvolumedef}. Then

	\begin{subequations}
	\begin{align}
		\Lie_O \Minkvolume_{\kappa \lambda \mu \nu} & = 0, \label{E:LieO4epsilonis0} \\
		\Lie_O \angm_{\mu \nu} & = 0, \label{E:LieOanggis0} \\
		\Lie_O \angupsilon_{\mu \nu} & = 0. \label{E:LieO2epsilonis0}
	\end{align}
	\end{subequations}
	
\end{lemma}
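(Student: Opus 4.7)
The plan is to establish all three groups of identities by direct computation, leveraging the simple algebraic forms of $\uL$, $L$, and the rotations in the wave coordinate system together with the results already recorded in Lemma \ref{L:ConformalKillingFieldCommuatators} and equations \eqref{E:ScalingCovariantDerivative}--\eqref{E:LieZonmupper}.

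First I would dispatch the bracket identities \eqref{E:RotationLuLBracketis0}. The identity $[\uL,L] = 0$ reduces, using $\uL = \partial_t - \omega^j \partial_j$ and $L = \partial_t + \omega^j \partial_j$, to $[\omega^j \partial_j, \partial_t] = 0$, which is immediate since the $\omega^j$ are $t$-independent. For a rotation $O = \Omega_{jk} = x_j \partial_k - x_k \partial_j$, Lemma \ref{L:ConformalKillingFieldCommuatators} gives $[\partial_t, \Omega_{jk}] = m_{0j}\partial_k - m_{0k}\partial_j = 0$, so it remains to check $[\omega^\ell \partial_\ell, \Omega_{jk}] = 0$. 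Expanding via \eqref{E:bracket} and using $\partial_\ell \omega^m = r^{-1}(\delta_\ell^m - \omega^m \omega_\ell)$, one obtains $\omega^\ell \partial_\ell \Omega_{jk}^m - \Omega_{jk}^\ell \partial_\ell \omega^m = (\omega_j \delta_k^m - \omega_k \delta_j^m) - (\omega_j \delta_k^m - \omega_k \delta_j^m) = 0$, where the cancellation of the $\omega^m \omega_j, \omega^m \omega_k$ terms is the key antisymmetry point. Combining, $[\uL, O] = [L,O] = 0$.

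Next I would turn to \eqref{E:LieO4epsilonis0}--\eqref{E:LieO2epsilonis0}. Since each $O \in \mathcal{O}$ is a Killing field (the computation \eqref{E:SpacetimeRotationsCovariantDerivative} gives $\nabla_\mu O_\nu + \nabla_\nu O_\mu = 0$, i.e.\ $c_O = 0$ in \eqref{E:ZDeformationTensorinTermsofcZm}), \eqref{E:LieZonmlower}--\eqref{E:LieZonmupper} yield $\Lie_O m_{\mu\nu} = 0$ and $\Lie_O (m^{-1})^{\mu\nu} = 0$. For \eqref{E:LieO4epsilonis0}, I plug $\Minkvolume_{\kappa\lambda\mu\nu} = [\kappa\lambda\mu\nu]$ into Lemma \ref{L:Liederivativeintermsofnabla} and use $\nabla O = \text{const}$ plus the antisymmetry of $[\kappa\lambda\mu\nu]$: the resulting sum $\sum \Minkvolume_{\cdots\alpha\cdots} \nabla_{\cdots} O^\alpha$ becomes $(\nabla_\kappa O^\kappa) \Minkvolume_{\kappa\lambda\mu\nu}$ after full antisymmetrization, which vanishes because $O$ is Killing (trace-free deformation tensor). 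For \eqref{E:LieOanggis0} and \eqref{E:LieO2epsilonis0}, the strategy is purely algebraic: differentiate the defining formulas $\angm_{\mu\nu} = m_{\mu\nu} + \tfrac{1}{2}(\uL_\mu L_\nu + L_\mu \uL_\nu)$ and $\angupsilon_{\mu\nu} = \Minkvolume_{\mu\nu\kappa\lambda}\uL^\kappa L^\lambda$ via Leibniz, using $\Lie_O \uL^\mu = [O,\uL]^\mu = 0$ and $\Lie_O L^\mu = [O,L]^\mu = 0$ from the previous paragraph, together with $\Lie_O m = 0$ (which also gives $\Lie_O \uL_\mu = \Lie_O(m_{\mu\kappa}\uL^\kappa) = 0$, and similarly for $L_\mu$) and $\Lie_O \Minkvolume = 0$ just established.

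No step here presents a genuine analytical obstacle; the whole lemma is infrastructural and relies only on the algebra of wave coordinates plus the Killing property of the spatial rotations. The one place requiring a small amount of care is the spherical bracket computation $[\omega^\ell \partial_\ell, \Omega_{jk}] = 0$, where the cancellation is not formal but rests on the specific form of $\partial_\ell \omega^m$; the rest reduces to two-line applications of Leibniz and of the Killing condition $\nabla_{(\mu} O_{\nu)} = 0$.
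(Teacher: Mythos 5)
Your proof is correct and takes essentially the same route as the paper: verify the brackets \eqref{E:RotationLuLBracketis0} by direct computation in wave coordinates, establish \eqref{E:LieO4epsilonis0} from the Killing property of $O$ (you derive the standard identity $\Lie_X \Minkvolume = \tfrac{1}{2}{^{(X)}\pi_{\beta}^{\ \beta}}\Minkvolume$ rather than quoting it, but that is the same mechanism), and then deduce \eqref{E:LieOanggis0}--\eqref{E:LieO2epsilonis0} from the Leibniz rule applied to \eqref{E:angmdef} and \eqref{E:Spheresvolumedef} together with the first two groups of identities.
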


\begin{proof}
	\eqref{E:RotationLuLBracketis0} can be checked via simple calculations using the definitions \eqref{E:uLdef} - \eqref{E:Ldef} of $\uL$ and $L,$ the 
	definitions of the rotations $O \in \mathcal{O}$ given at the beginning of Section \ref{SS:ConformalKillingFields}, and the 	  Lie bracket formula \eqref{E:bracket}.
	\eqref{E:LieO4epsilonis0} follows from the well-known identity $\Lie_X \Minkvolume_{\kappa \lambda \mu \nu} = \frac{1}{2} {^{(X)}\pi_{\ \beta}^{\beta}}
	\Minkvolume_{\kappa \lambda \mu \nu},$ where ${^{(X)}\pi_{\mu \nu}}$ is defined in \eqref{E:MinkowskianDeformationTensordef}, together with
	the fact that $\Lie_O m_{\mu \nu} = {^{(O)}\pi_{\mu \nu}} = 0$ (i.e., that $O$ is a Killing field of $m_{\mu \nu}$). 
	\eqref{E:LieOanggis0} and \eqref{E:LieO2epsilonis0} then follow from definitions \eqref{E:angmdef}, 
	\eqref{E:Spheresvolumedef}, and \eqref{E:RotationLuLBracketis0} - \eqref{E:LieO4epsilonis0}.
\end{proof}

The next lemma shows that the modified covariant derivatives $\nablamod_{\mathcal{Z}}^I$ have favorable commutation properties
with the Minkowski wave operator.

\begin{lemma} \label{L:NablaModZLiemodMinkowskiWaveOperatorCommutator} \textbf{($\nablamod_{\mathcal{Z}}^I$ and
	$\Square_m$ commutation properties)}
	Let $I$ be a $\mathcal{Z}-$multi-index, and let 
	$\phi$ be any function. Let $\nablamod_{\mathcal{Z}}^I$ be the iterated modified Minkowski covariant derivative operator from 
	Definitions \ref{D:ModifiedDerivatives} and \ref{D:iterated}, and let 
	$\Square_m \eqdef (m^{-1})^{\kappa \lambda} \nabla_{\kappa} \nabla_{\lambda}$ denote the Minkowski wave operator.
	Then
	
	\begin{align} \label{E:NablaModZLiemodMinkowskiWaveOperatorCommutator}
		\nablamod_{\mathcal{Z}}^I \Square_m \phi & = \Square_m \nabla_{\mathcal{Z}}^I \phi.
	\end{align}
\end{lemma}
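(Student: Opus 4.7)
The plan is to proceed by induction on $|I|$, with the entire content of the lemma residing in the base case $|I|=1$, which asserts that $\Square_m \nabla_Z \phi = \nablamod_Z \Square_m \phi$ for $Z \in \mathcal{Z}$ and any scalar function $\phi$. Once this is established, the general case is immediate because $\nabla_{\mathcal{Z}}^{I'}\phi$ is again a scalar function whenever $\phi$ is, so the base case can be reapplied at each inductive step.

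\noindent\textbf{Base case.} For $Z \in \mathcal{Z}$, I will compute $\Square_m \nabla_Z \phi = (m^{-1})^{\mu\nu}\nabla_\mu \nabla_\nu(Z^\alpha \nabla_\alpha \phi)$ by expanding via the Leibniz rule. Two applications of $\nabla$ yield three kinds of terms: (i) a term with $\nabla\nabla Z$, which vanishes identically thanks to \eqref{E:CovariantDerivativesofZareConstant}; (ii) a term $Z^\alpha(m^{-1})^{\mu\nu}\nabla_\mu\nabla_\nu\nabla_\alpha\phi$, which, since $\phi$ is a scalar and $\nabla$ is torsion-free and has vanishing curvature on Minkowski space, equals $Z^\alpha \nabla_\alpha \Square_m \phi = \nabla_Z \Square_m \phi$; and (iii) two cross terms of the form $(m^{-1})^{\mu\nu}(\nabla_\mu Z^\alpha)\nabla_\nu\nabla_\alpha \phi$ and $(m^{-1})^{\mu\nu}(\nabla_\nu Z^\alpha)\nabla_\mu\nabla_\alpha \phi$. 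Relabeling $\mu \leftrightarrow \nu$ shows these two contributions are equal, and using the symmetry of $\nabla_\mu \nabla_\alpha \phi$ in $(\mu,\alpha)$ I may symmetrize the raised derivative of $Z$ to obtain
\begin{align*}
2(\nabla^\mu Z^\alpha)\nabla_\mu \nabla_\alpha \phi
= (\nabla^\mu Z^\alpha + \nabla^\alpha Z^\mu)\nabla_\mu \nabla_\alpha \phi
= c_Z (m^{-1})^{\mu\alpha}\nabla_\mu\nabla_\alpha \phi
= c_Z \Square_m \phi,
\end{align*}
where I invoke the conformal Killing identity \eqref{E:ZDeformationTensorinTermsofcZm}. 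Summing (ii) and (iii) gives $\Square_m \nabla_Z \phi = \nabla_Z \Square_m \phi + c_Z \Square_m \phi = \nablamod_Z \Square_m \phi$, which is the base case.

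\noindent\textbf{Inductive step.} Writing $I = (\iota_1, I')$ with $|I'| = |I| - 1$, I compute
\begin{align*}
\nablamod_{\mathcal{Z}}^I \Square_m \phi
= \nablamod_{Z^{\iota_1}}\!\bigl(\nablamod_{\mathcal{Z}}^{I'} \Square_m \phi\bigr)
= \nablamod_{Z^{\iota_1}}\!\bigl(\Square_m \nabla_{\mathcal{Z}}^{I'}\phi\bigr),
\end{align*}
where the second equality is the inductive hypothesis. Since $\nabla_{\mathcal{Z}}^{I'}\phi$ is a scalar function, the base case applies with $\phi$ replaced by $\nabla_{\mathcal{Z}}^{I'}\phi$, giving $\nablamod_{Z^{\iota_1}}\Square_m \nabla_{\mathcal{Z}}^{I'}\phi = \Square_m \nabla_{Z^{\iota_1}}\nabla_{\mathcal{Z}}^{I'}\phi = \Square_m \nabla_{\mathcal{Z}}^I \phi$, completing the induction.

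\noindent\textbf{Main (mild) obstacle.} There is no serious obstacle; the only point requiring a small amount of care is bookkeeping the two cross terms in the base case and recognizing that, although only one of the two derivatives of $Z$ is symmetrized ``for free,'' the symmetry of $\nabla_\mu \nabla_\alpha \phi$ allows one to symmetrize in the $Z$ indices and invoke \eqref{E:ZDeformationTensorinTermsofcZm}. The precise role of the constant $+c_Z$ in the definition \eqref{E:Covariantmoddef} of $\nablamod_Z$ is exactly to absorb the commutator term $c_Z \Square_m \phi$ produced by this symmetrization, which is the structural reason the modified operator was introduced.
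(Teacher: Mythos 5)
Your proof is correct and follows essentially the same route as the paper's: expand $\Square_m \nabla_Z \phi$ by Leibniz, symmetrize the cross term using the symmetry of $\nabla_\mu \nabla_\alpha \phi$ together with the conformal Killing relation \eqref{E:ZDeformationTensorinTermsofcZm} to produce $c_Z \Square_m \phi$, and close by induction. Your write-up is slightly more explicit about the vanishing of the $\nabla\nabla Z$ term and the structure of the inductive step, but the underlying argument is identical.
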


\begin{proof}
	Using the symmetry of the tensorfield $\nabla_{\kappa} \nabla_{\lambda} \phi,$ together with \eqref{E:nablamis0}, 
	\eqref{E:ZDeformationTensorinTermsofcZm}, and definition \eqref{E:Covariantmoddef}, we compute that
	
	\begin{align}
		\Square_m \nabla_Z \phi = (m^{-1})^{\kappa \lambda} \nabla_{\kappa} \nabla_{\lambda} 
			\big(Z^{\zeta} \nabla_{\zeta} \phi \big)
		& = \nabla_Z \Square_m \phi  
			\ + \ 2(\nabla^{\kappa} Z^{\lambda}) \nabla_{\lambda} \nabla_{\kappa} \phi \\
		& = \nabla_Z \Square_m \phi 
			\ + \ (\nabla^{\kappa} Z^{\lambda} + \nabla^{\lambda} Z^{\kappa}) \nabla_{\kappa} \nabla_{\lambda} \phi 
			 \notag \\
		& = \nabla_Z \Square_m \phi \ + \ c_Z \Square_m \phi \notag \\
		& \eqdef \nablamod_Z \Square_m \phi. \notag 
	\end{align}
	This proves \eqref{E:NablaModZLiemodMinkowskiWaveOperatorCommutator} in the case $|I| = 1.$ The general case
	now follows inductively.
	
\end{proof}

The next lemma shows that the modified Lie derivative $\Lie_{\mathcal{Z}}^I$ operator has favorable commutation properties with the linear Maxwell term $\nabla_{\mu} \Far^{\mu \nu} = \frac{1}{2}\big[(m^{-1})^{\mu \kappa} (m^{-1})^{\nu \lambda} - (m^{-1})^{\mu \lambda} (m^{-1})^{\nu \kappa}\big] \nabla_{\mu} \Far_{\kappa \lambda}.$

\begin{lemma} \label{L:LiemodZLiemodMaxwellCommutator} \textbf{(Commutation properties of $\Liemod_{\mathcal{Z}}^I$ with linear 
	Maxwell term)}
	Let $I$ be a $\mathcal{Z}-$multi-index, and let $\Far$ be a
	two-form. Let $\Liemod_{\mathcal{Z}}^I$ be the iterated modified Lie derivative from Definitions 
	\ref{D:ModifiedDerivatives} and \ref{D:iterated}. Then
	
	\begin{align} \label{E:LiemodZLiemodMaxwellCommutator}
		\Liemod_{\mathcal{Z}}^I \Big\lbrace & \big[(m^{-1})^{\mu \kappa} (m^{-1})^{\nu \lambda} - 
			(m^{-1})^{\mu \lambda} (m^{-1})^{\nu \kappa}\big] \nabla_{\mu} \Far_{\kappa \lambda} \Big\rbrace \\
		& = \big[(m^{-1})^{\mu \kappa} (m^{-1})^{\nu \lambda} 
			- (m^{-1})^{\mu \lambda} (m^{-1})^{\nu \kappa}\big] \nabla_{\mu} 
			\Lie_{\mathcal{Z}}^I \Far_{\kappa \lambda}. \notag
	\end{align}
\end{lemma}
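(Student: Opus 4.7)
I would prove Lemma \ref{L:LiemodZLiemodMaxwellCommutator} by induction on $|I|$, with the base case $|I|=1$ doing all the real work. The main idea is that the factor $+2c_Z$ in the definition \eqref{E:Liemoddef} of $\Liemod_Z$ is engineered precisely to cancel the factor $-2c_Z$ that arises when $\Lie_Z$ is applied, via the Leibniz rule, to the product of two inverse Minkowski metrics appearing in the linear Maxwell operator.

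For the base case, set $M^{\mu\nu\kappa\lambda} \eqdef (m^{-1})^{\mu\kappa}(m^{-1})^{\nu\lambda} - (m^{-1})^{\mu\lambda}(m^{-1})^{\nu\kappa}$. I would view $M^{\mu\nu\kappa\lambda}\nabla_\mu \Far_{\kappa\lambda}$ as a contracted product of the tensorfield $M$ and the tensorfield $\nabla \Far$, apply the Leibniz rule for $\Lie_Z$, and use the two key identities at our disposal: first, identity \eqref{E:LieZonmupper} together with the Leibniz rule gives $\Lie_Z M^{\mu\nu\kappa\lambda} = -2c_Z M^{\mu\nu\kappa\lambda}$; second, Lemma \ref{L:Liecommuteswithcoordinatederivatives} gives $\Lie_Z \nabla_\mu \Far_{\kappa\lambda} = \nabla_\mu \Lie_Z \Far_{\kappa\lambda}$. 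Combining these yields
\begin{align*}
\Lie_Z\bigl[M^{\mu\nu\kappa\lambda}\nabla_\mu \Far_{\kappa\lambda}\bigr]
= -2c_Z M^{\mu\nu\kappa\lambda}\nabla_\mu \Far_{\kappa\lambda}
+ M^{\mu\nu\kappa\lambda}\nabla_\mu \Lie_Z \Far_{\kappa\lambda}.
\end{align*}
Adding $2c_Z M^{\mu\nu\kappa\lambda}\nabla_\mu \Far_{\kappa\lambda}$ to both sides and invoking definition \eqref{E:Liemoddef} of $\Liemod_Z$ immediately produces the base case identity.

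For the inductive step, assume the identity holds for all $\mathcal{Z}$-multi-indices of length $k$, and let $|I| = k+1$ with $I = (\iota_1, I')$ where $|I'| = k$. Then by Definition \ref{D:iterated},
\begin{align*}
\Liemod_{\mathcal{Z}}^I\bigl[M^{\mu\nu\kappa\lambda}\nabla_\mu \Far_{\kappa\lambda}\bigr]
= \Liemod_{Z^{\iota_1}} \Liemod_{\mathcal{Z}}^{I'}\bigl[M^{\mu\nu\kappa\lambda}\nabla_\mu \Far_{\kappa\lambda}\bigr]
= \Liemod_{Z^{\iota_1}}\bigl[M^{\mu\nu\kappa\lambda}\nabla_\mu \Lie_{\mathcal{Z}}^{I'} \Far_{\kappa\lambda}\bigr].
\end{align*}
Here the inductive hypothesis was applied to $\Far$. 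Since Lie derivatives preserve anti-symmetry, $\Lie_{\mathcal{Z}}^{I'}\Far$ is itself a two-form, so the base case (applied with $\Lie_{\mathcal{Z}}^{I'}\Far$ playing the role of $\Far$) yields $M^{\mu\nu\kappa\lambda}\nabla_\mu \Lie_{Z^{\iota_1}}\Lie_{\mathcal{Z}}^{I'}\Far_{\kappa\lambda} = M^{\mu\nu\kappa\lambda}\nabla_\mu \Lie_{\mathcal{Z}}^I \Far_{\kappa\lambda}$, completing the induction.

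There is not much obstacle in this proof; it is essentially a bookkeeping exercise that reveals exactly why the particular coefficient $2c_Z$ was chosen in definition \eqref{E:Liemoddef}. The only subtle point is the accounting of tensor valences: $M^{\mu\nu\kappa\lambda}\nabla_\mu \Far_{\kappa\lambda}$ is a vectorfield in the free index $\nu$, and one must be careful to interpret $\Lie_Z$ acting on it as the Lie derivative of a vectorfield rather than of a scalar. This is naturally handled by instead applying Leibniz to the \emph{unsummed} product of $M$ and $\nabla\Far$, performing the Lie derivative component-wise using \eqref{E:LieZonmupper} and Lemma \ref{L:Liecommuteswithcoordinatederivatives}, and only then contracting the indices.
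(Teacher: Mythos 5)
Your proof is correct and takes essentially the same route as the paper: establish the base case $|I|=1$ by applying the Leibniz rule together with identity \eqref{E:LieZonmupper} (yielding the $-2c_Z$ factor) and Lemma \ref{L:Liecommuteswithcoordinatederivatives}, observe that the $+2c_Z$ in Definition \ref{D:ModifiedDerivatives} cancels it exactly, and then induct on $|I|$. The paper states the inductive step tersely ("the general case now follows inductively"), whereas you spell it out, but the argument is the same.
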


\begin{proof}
	Let $Z \in \mathcal{Z}.$ By the Leibniz rule, \eqref{E:LieZonmupper}, and Lemma \ref{L:Liecommuteswithcoordinatederivatives}, 
	we have that 
	
	\begin{align}
		\Lie_Z \Big\lbrace & \big[(m^{-1})^{\mu \kappa} (m^{-1})^{\nu \lambda} - (m^{-1})^{\mu \lambda} 
			(m^{-1})^{\nu \kappa}\big] \nabla_{\mu} \Far_{\kappa \lambda} \Big\rbrace \\
		& = -2c_Z \big[(m^{-1})^{\mu \kappa} (m^{-1})^{\nu \lambda} - (m^{-1})^{\mu \lambda} 
			(m^{-1})^{\nu \kappa}\big] \nabla_{\mu} \Far_{\kappa \lambda} \notag \\
		& \ \ + \big[(m^{-1})^{\mu \kappa} (m^{-1})^{\nu \lambda} - (m^{-1})^{\mu \lambda} 
			(m^{-1})^{\nu \kappa}\big] \nabla_{\mu} \Lie_Z \Far_{\kappa \lambda}. \notag
	\end{align}
	It thus follows from Definition \ref{D:ModifiedDerivatives} that 
	
	\begin{align}
		\Liemod_{Z}\Big\lbrace & \big[(m^{-1})^{\mu \kappa} (m^{-1})^{\nu \lambda} - (m^{-1})^{\mu \lambda} 	
			(m^{-1})^{\nu \kappa}\big] \nabla_{\mu} \Far_{\kappa \lambda} \Big\rbrace \\
		& = \big[(m^{-1})^{\mu \kappa} (m^{-1})^{\nu \lambda} - (m^{-1})^{\mu \lambda} 
			(m^{-1})^{\nu \kappa}\big] \nabla_{\mu} \Lie_Z \Far_{\kappa \lambda}. \notag
	\end{align}
	This implies \eqref{E:LiemodZLiemodMaxwellCommutator} in the case $|I| = 1.$ The general case now follows inductively.
\end{proof}

The next lemma shows that some of the differential operators we have introduced commute with the null decomposition of a two-form.

\begin{lemma} \label{L:LieRotationcommuteswithnulldecomp} \textbf{(Differential operators that commute with the null 
	decomposition)}
	Let $\Far$ be a two-form and let $\ualpha,$ $\alpha,$ $\rho,$ and $\sigma$ be its null components.
	Let $O \in \mathcal{O}$ be any of the rotational Minkowskian Killing fields $\Omega_{jk},$ 
	$(1 \leq j < k \leq 3).$ Then $\Lie_O \ualpha[\Far] = \ualpha[\Lie_O \Far],$
	$\Lie_O \alpha[\Far] = \alpha[\Lie_O \Far],$ $\Lie_O \rho[\Far] = \rho[\Lie_O \Far],$ 
	and $\Lie_O \sigma[\Far] = \sigma[\Lie_O \Far].$ An analogous result
	holds the operators $\nabla_{\uL}$ and $\nabla_L;$ i.e.,
	$\Lie_O, \nabla_{\uL},$ and $\nabla_L$ commute with the null decomposition of $\Far.$
\end{lemma}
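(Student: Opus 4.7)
The plan is to apply the Leibniz rule --- for $\Lie_O$ in the first assertion, and for $\nabla_{\uL}, \nabla_L$ in the second --- to each of the four defining expressions of $\ualpha,\alpha,\rho,\sigma$ in Definition \ref{D:null}, and then to verify that every auxiliary factor (namely $\angm$, $\angupsilon$, $\uL$, $L$, and the implicit factors of $m^{-1}$ used to raise indices) is annihilated by the operator in question. Each of the four statements then collapses to a single surviving term involving $\Lie_O \Far$ (resp.\ $\nabla_{\uL} \Far$, $\nabla_L \Far$), which is precisely the corresponding null component of the differentiated two-form.

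First I would dispose of the $\Lie_O$ case. Since $O \in \mathcal{O}$ is a Minkowski Killing field, $\Lie_O m_{\mu\nu} = \Lie_O (m^{-1})^{\mu\nu} = 0$, which disposes of the implicit Minkowski factors appearing inside $\angm_\mu^{\ \nu}$ and $\angupsilon^{\kappa\lambda}$. The bracket identities $[O,\uL] = [O,L] = 0$ from \eqref{E:RotationLuLBracketis0} give $\Lie_O \uL = \Lie_O L = 0$, and \eqref{E:LieOanggis0}--\eqref{E:LieO2epsilonis0} give $\Lie_O \angm = 0$ and $\Lie_O \angupsilon = 0$. The Leibniz rule then yields, for example,
\[
\Lie_O \ualpha_\mu = \Lie_O\bigl(\angm_\mu^{\ \nu}\Far_{\nu\lambda}\uL^\lambda\bigr) = \angm_\mu^{\ \nu}(\Lie_O\Far)_{\nu\lambda}\uL^\lambda = \ualpha[\Lie_O \Far]_\mu,
\]
and the three remaining components $\alpha$, $\rho$, $\sigma$ are handled identically.

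Next I would turn to $\nabla_{\uL}$ and $\nabla_L$. Here the analogous annihilation package is supplied by \eqref{E:nablamis0} ($\nabla m = \nabla m^{-1} = 0$), by \eqref{E:LanduLaregeodesic} and \eqref{E:nablaLuLis0} ($\nabla_{\uL}\uL = \nabla_L L = 0$ and $\nabla_{\uL} L = \nabla_L \uL = 0$), by \eqref{E:NablaLangmandNablauLangmVanish} ($\nabla_{\uL}\angm = \nabla_L\angm = 0$), and by the final two identities of Lemma \ref{L:PropertiesofuLandL} ($\nabla_{\uL}\angupsilon = \nabla_L\angupsilon = 0$). The Leibniz rule again contracts each expansion to the single term in which the derivative lands on $\Far$.

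There is no genuine obstacle here: the entire argument is bookkeeping for the Leibniz rule, once the annihilation identities of Lemmas \ref{L:PropertiesofuLandL} and \ref{L:LieDerivativeCommuatorsVanish} are invoked. The only pitfall worth flagging is to treat the raised-index forms $\angm_\mu^{\ \nu}$ and $\angupsilon^{\kappa\lambda}$ correctly, but since $m^{-1}$ is annihilated by both $\Lie_O$ and $\nabla$, raising and lowering commute transparently with both operators, and the index placement causes no trouble.
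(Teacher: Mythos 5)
Your proposal is correct and matches the paper's argument exactly: the paper's proof simply cites Lemma \ref{L:PropertiesofuLandL}, \eqref{E:nablamis0}, and Lemma \ref{L:LieDerivativeCommuatorsVanish}, which are precisely the annihilation identities you feed into the Leibniz rule. You have merely written out the bookkeeping that the paper leaves implicit.
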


\begin{proof}
	Lemma \ref{L:LieRotationcommuteswithnulldecomp} follows from 
	Lemma \ref{L:PropertiesofuLandL}, \eqref{E:nablamis0}, and Lemma \ref{L:LieDerivativeCommuatorsVanish}.
\end{proof}

The next lemma shows that \emph{weighted} covariant derivatives can be estimated by covariant derivatives with respect to
vectorfields $Z \in \mathcal{Z}.$

\begin{lemma} \label{L:PointwisetandqWeightedNablainTermsofZestiamtes} \cite[Lemma 5.1]{hLiR2010}
	\textbf{(Weighted pointwise differential operator inequalities)}
	For any tensorfield $U$ and any two-tensor $\Pi,$ we have the following pointwise estimates:
	
	\begin{subequations}
	\begin{align}
		(1 + t + |q|) |\conenabla U| \ + \ (1 + |q|)|\nabla U| & \lesssim \sum_{|I| \leq 1} |\nabla_{\mathcal{Z}}^I U|, && 
			\label{E:WeightedDerivativesinTermsofNablaZI} \\
		|\conenabla^2 U| \ + \ r^{-1} |\conenabla U| & \lesssim r^{-1}(1 + t + |q|)^{-1}
			\sum_{|I| \leq 2} |\nabla_{\mathcal{Z}}^I U|, && |\conenabla^2 U| 
			\eqdef |\conenabla \conenabla U|, \\
		|\Pi^{\kappa \lambda} \nabla_{\kappa} \nabla_{\lambda} U| & \lesssim \Big\lbrace(1 + t + |q|)^{-1} |\Pi| 
			\ + \ (1 + |q|)^{-1} |\Pi|_{\mathcal{L} \mathcal{L}}\Big\rbrace \sum_{|I| \leq 1} |\nabla\nabla_{\mathcal{Z}}^I U|.
			&&
	\end{align}
	\end{subequations}
\end{lemma}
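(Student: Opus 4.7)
The proof rests on Klainerman-type algebraic identities expressing the null frame vectors and coordinate partials as combinations of vectorfields in $\mathcal{Z}$, weighted by the characteristic lengths $s = t+r$, $|q| = |t-r|$, and $r$. I will first verify the pointwise vectorfield identities
\begin{align*}
(t+r)L = S - \omega^j \Omega_{0j}, \qquad (t-r)\uL = S + \omega^j\Omega_{0j},
\end{align*}
and, for any spatial vectorfield $b^j\partial_j$ with bounded coefficients tangent to the Euclidean sphere $S_{r,t}$ (i.e., $b^j\omega_j = 0$),
\begin{align*}
r(b^j\partial_j) = -\omega^k (b^j\Omega_{jk}), \qquad t(b^j\partial_j) = -b^j\Omega_{0j}.
\end{align*}
These are direct computations from Definition \ref{D:uLLdef} and the definitions of $S, \Omega_{\mu\nu}$ given in Section \ref{SS:ConformalKillingFields}. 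Coupled with the trivial bound $|\nabla U|\lesssim \sum_{\mu=0}^3|\nabla_{\partial_{\mu}}U|$ (which supplies the $+1$ in every weight on the right-hand side), these identities yield the weight-gaining lemma
\begin{align*}
(1+t+|q|)\bigl(|\nabla_L U| + |\angn U|\bigr) + (1+|q|)|\nabla_{\uL} U|\lesssim \sum_{|I|\leq 1}|\nabla_\mathcal{Z}^I U|.
\end{align*}
The angular bound requires a case split: when $r\geq t$ I use the rotation identity together with $1+t+|q|\lesssim 1+r$, and when $r<t$ I use the boost identity together with $1+t+|q|\lesssim 1+t$.

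Part (i) follows directly by decomposing $|\nabla U|$ in the null frame as a sum of $|\nabla_L U|, |\angn U|, |\nabla_{\uL} U|$; the $\nabla_{\uL}$ contribution saturates the weaker $(1+|q|)$ weight, while $|\conenabla U|$ contains only the two good components and so achieves the stronger $(1+t+|q|)$ weight. For part (ii), the inequality $r^{-1}|\conenabla U|\lesssim r^{-1}(1+t+|q|)^{-1}\sum_{|I|\leq 1}|\nabla_\mathcal{Z}^I U|$ is just (i) divided by $r$. For $|\conenabla^2 U|$ I reapply the weight-gaining identities to each second-order null frame component $|\nabla_{T_1}\nabla_{T_2}U|$ with $T_1, T_2\in\mathcal{T}$: the $L\otimes L$ component picks up $s^{-2}\leq r^{-1}(1+t+|q|)^{-1}$; the mixed $L\otimes e_A$ component yields $s^{-1}r^{-1}$; for $|\angn^2 U|$ I derive the interpolated estimate $rt|\angn^2 U|\lesssim \sum_{|I|\leq 2}|\nabla_\mathcal{Z}^I U|$ by writing $rt\angn^2 = r\angn\circ(t\angn)$ (using $\angn t = 0$) and composing the rotation and boost identities, then splitting into the regimes $r\geq t$ and $r<t$ as before. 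Commutators of $\nabla_\mathcal{Z}$ with $L, \uL, e_A$ produce only lower-order terms controlled by Lemma \ref{L:NablapartialmuNablaZCommutatorExpression} and absorbed into the $|I|\leq 2$ sum.

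For part (iii) I null-decompose $\Pi^{\kappa\lambda}\nabla_\kappa\nabla_\lambda U$ using $(m^{-1})^{\mu\nu} = -\tfrac{1}{2}(L^\mu\uL^\nu + \uL^\mu L^\nu) + \sum_A e_A^\mu e_A^\nu$. Expanding both copies of $m^{-1}$, the coefficient of $\uL^\kappa\uL^\lambda$ in $m^{\kappa\alpha}m^{\lambda\beta}\Pi_{\alpha\beta}$ is exactly $\tfrac{1}{4}\Pi_{LL}$, giving
\begin{align*}
\Pi^{\kappa\lambda}\nabla_\kappa\nabla_\lambda U = \tfrac{1}{4}\Pi_{LL}\,\nabla_{\uL}\nabla_{\uL}U + \bigl(\text{terms with at least one } \nabla_L \text{ or } \nabla_{e_A}\bigr).
\end{align*}
For each term in the parenthetical group, applying the good-derivative portion of the weight-gaining lemma to $\nabla U$ produces the bound $(1+t+|q|)^{-1}|\Pi|\sum_{|I|\leq 1}|\nabla\nabla_\mathcal{Z}^I U|$; coefficients such as $|\Pi_{L\mathcal{T}}|$ or $|\Pi_{e_A e_B}|$ are all dominated by $|\Pi|$. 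For the critical $\Pi_{LL}\nabla_{\uL}^2 U$ term, applying the $\nabla_{\uL}$ portion of the weight-gaining lemma to the tensor $\nabla_{\uL}U$ gains a single factor $(1+|q|)^{-1}$, producing the second contribution $(1+|q|)^{-1}|\Pi|_{\mathcal{L}\mathcal{L}}\sum_{|I|\leq 1}|\nabla\nabla_\mathcal{Z}^I U|$.

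The main obstacle is the angular second-derivative estimate in (ii): the naive iterated-rotation bound $r^{-2}\sum$ is insufficient in the interior region $r\ll t$, where $(1+t+|q|)^{-1}\sim t^{-1}$ is much smaller than $r^{-1}$. Handling this requires the combined rotation-and-boost identity $rt|\angn^2 U|\lesssim \sum_{|I|\leq 2}|\nabla_\mathcal{Z}^I U|$, whose derivation and commutator bookkeeping form the only technically delicate step; everything else reduces mechanically to the frame-level weight-gaining lemma.
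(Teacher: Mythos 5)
Your proof is correct in substance and follows the standard Klainerman vectorfield decomposition; the paper itself does not prove this lemma but cites it as \cite[Lemma 5.1]{hLiR2010}, so there is no internal proof to compare against, and your argument is the expected one. The frame identities $(t+r)L = S - \omega^j\Omega_{0j}$, $(t-r)\uL = S + \omega^j\Omega_{0j}$, and the rotation/boost expressions for $rb^j\partial_j$ and $tb^j\partial_j$ all check out, as does the null-frame decomposition of $\Pi^{\kappa\lambda}$ isolating $\tfrac{1}{4}\Pi_{LL}\uL^\kappa\uL^\lambda$ in part (iii).

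One imprecision worth noting: you invoke Lemma \ref{L:NablapartialmuNablaZCommutatorExpression} to absorb commutators, but that lemma addresses $[\nabla_{\partial_\mu},\nabla_Z]$, i.e.\ constant-coefficient directions, not $[\nabla_L,\nabla_Z]$ or $[\nabla_{e_A},\nabla_Z]$. Since $L = \partial_t + \omega^j\partial_j$ has $r$-dependent coefficients, the commutators you actually need come from re-expressing $\nabla_L = s^{-1}(\nabla_S - \omega^a\nabla_{\Omega_{0a}})$ and differentiating the $s^{-1}$ and $\omega^a$ weights; the derivatives of $\omega^a$ produce extra $O(r^{-1})$ factors. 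These still land inside the desired bound because $s^{-1}r^{-1}\lesssim r^{-1}(1+t+|q|)^{-1}$ and $s^{-2}\leq s^{-1}r^{-1}$ (using $r\leq s$), but the correct reference for repackaging $|\nabla_{\mathcal{Z}}^I\nabla U|$ in terms of $|\nabla\nabla_{\mathcal{Z}}^J U|$ is Lemma \ref{L:NablaZICommutesWithCovariantDerivativePlusErrorTerms}, not Lemma \ref{L:NablapartialmuNablaZCommutatorExpression}. With that substitution the commutator bookkeeping closes and the proof is sound.
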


\hfill $\qed$

The next lemma shows that rotational Lie derivatives can be used to approximate weighted $S_{r,t}-$intrinsic
covariant derivatives. 

\begin{lemma} \label{L:RotationalLieDerivativesinTermsofrWeightedAngularDerivatives} \cite[Lemma 7.0.17]{jS2010a} 
		\textbf{(Weighted covariant derivatives approximated by rotational Lie derivatives)}
		Let $U$ be any tensorfield $m-$tangent to the spheres $S_{r,t}$ and $k \geq 0$ be any integer.
		Then with $r \eqdef |x|,$ we have that
		
		\begin{align} \label{E:RotationalLieDerivativesinTermsofrWeightedAngularDerivatives}
			\sum_{|I| \leq k} r^{|I|} |\angn^I U| \approx \sum_{|I| \leq k} |\Lie_{\mathcal{O}}^I U|.
		\end{align}
\end{lemma}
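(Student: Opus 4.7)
The plan is to argue by induction on $k \geq 0$. The base case $k = 0$ is immediate since both sums reduce to $|U|$. The essential content of the lemma is therefore the single-derivative equivalence
\begin{align*}
r |\angn U| + |U| \approx \sum_{O \in \mathcal{O}} |\Lie_O U| + |U|,
\end{align*}
valid for any tensorfield $U$ that is $m$-tangent to the spheres $S_{r,t}$.

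To establish the single-derivative case, I would combine two observations. First, a purely algebraic identity: from the coordinate expressions $\Omega_{jk} = x_j \partial_k - x_k \partial_j$, a direct computation gives
\begin{align*}
\sum_{1 \leq j < k \leq 3} (\Omega_{jk})^{\mu} (\Omega_{jk})^{\nu} = r^2 \angm^{\mu \nu},
\end{align*}
so the three rotations span $T S_{r,t}$ at every point with combined squared $m$-norm equal to $r^2$ times the sphere metric. This yields the pointwise equivalence $r^2 |\angn U|^2 \approx \sum_{O \in \mathcal{O}} |\angn_O U|^2$, where $\angn_O U \eqdef O^{\kappa} \angn_{\kappa} U$. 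Second, by Lemma \ref{L:Liederivativeintermsofnabla} applied with $X = O$, and invoking \eqref{E:SpacetimeRotationsCovariantDerivative} to conclude that the entries $\nabla_{\mu} O_{\nu}$ are constants of size $O(1)$, one obtains the pointwise bound $|\Lie_O U - \nabla_O U| \lesssim |U|$. Since $U$ is sphere-tangent and $O \in \mathcal{O}$ is also sphere-tangent, the discrepancy between $\nabla_O U$ and the intrinsic angular derivative $\angn_O U$ arises only through derivatives of the sphere projector $\angm_{\mu}^{\ \nu}$; these derivatives are of order $1/r$, and after contraction with $O$ (which has magnitude $\lesssim r$) the discrepancy is controlled by $|U|$. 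Combining these observations yields the $k = 1$ equivalence in both directions.

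For the inductive step $k \to k+1$, the key is that $\Lie_O$ preserves $m$-tangency to the spheres, since $\Lie_O \angm = 0$ by \eqref{E:LieOanggis0}. As a consequence of this same identity, $\Lie_O$ commutes with the intrinsic covariant derivative $\angn$ when acting on sphere-tangent tensorfields, by the standard fact that Lie differentiation along a Killing field of the intrinsic metric commutes with the corresponding Levi-Civita connection. Writing any $\Lie_{\mathcal{O}}^{I'} U$ with $|I'| = k+1$ as $\Lie_O \Lie_{\mathcal{O}}^I U$ with $|I| = k$, applying the $k = 1$ estimate to the sphere-tangent tensor $\Lie_{\mathcal{O}}^I U$ and iterating via the inductive hypothesis on $\angn \Lie_{\mathcal{O}}^I U = \Lie_{\mathcal{O}}^I \angn U$ (which itself is sphere-tangent of higher valence) closes the induction. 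The main technical obstacle is bookkeeping the fact that $\Lie_O$ acts on tensor indices of varying valence through iterated applications; this is absorbed uniformly by the estimate $|\Lie_O V - \nabla_O V| \lesssim |V|$ from Lemma \ref{L:Liederivativeintermsofnabla}, which is insensitive to the tensor type of $V$, so the inductive scheme applies equally well to $V = \angn^I U$ at each step.
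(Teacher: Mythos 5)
Your proof is correct, and the paper itself does not prove this lemma but defers to the author's earlier work \cite{jS2010a}, so there is no in-paper argument to compare against; the route you take is the natural one.

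A few observations. The algebraic identity you compute, namely
\begin{align*}
\sum_{1 \le j < k \le 3} (\Omega_{jk})^{\mu} (\Omega_{jk})^{\nu} = r^2 \delta^{\mu \nu} - x^{\mu} x^{\nu} = r^2 \angm^{\mu \nu},
\end{align*}
is exactly right (for sphere-tangent tensors the $m$-inner product and $\angm$-inner product coincide, so the resulting identity $\sum_{O} |\angn_O U|^2 = r^2 |\angn U|^2$ is in fact exact, not merely an equivalence). The estimate $|\Lie_O U - \nabla_O U| \lesssim |U|$ from Lemma \ref{L:Liederivativeintermsofnabla} and \eqref{E:SpacetimeRotationsCovariantDerivative} is correctly applied, and your accounting of the discrepancy between $\nabla_O U$ and $\angn_O U$ via the $O(1/r)$ second fundamental form terms (contracted against $O$, which has size $O(r)$) is sound.

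One small refinement worth flagging: for the key commutation $[\Lie_O, \angn] = 0$ on sphere-tangent tensors, you invoke the abstract theorem that Lie differentiation along a Killing field of a metric commutes with its Levi-Civita connection. That is valid, but it requires passing from ambient sphere-tangent tensors to intrinsic tensors on $S_{r,t}$ and back, which is a step one must verify respects both operations. In the present setting there is a more direct route that avoids this: by \eqref{E:SphereIntrinsicintermsofExtrinsic}, $\angn$ is the ambient $\nabla$ sandwiched between $\angm$-projections, so
\begin{align*}
\Lie_O \angn_{\lambda} U_{\mu_1 \cdots} = \Lie_O \big( \angm_{\lambda}^{\ \lambda'} \angm_{\mu_1}^{\ \mu_1'} \cdots \nabla_{\lambda'} U_{\mu_1' \cdots} \big) = \angm_{\lambda}^{\ \lambda'} \angm_{\mu_1}^{\ \mu_1'} \cdots \nabla_{\lambda'} \Lie_O U_{\mu_1' \cdots} = \angn_{\lambda} (\Lie_O U)_{\mu_1 \cdots},
\end{align*}
using $\Lie_O \angm_{\mu}^{\ \nu} = 0$ (from \eqref{E:LieOanggis0}, $\Lie_O m = 0$, and Lemma \ref{L:PropertiesofuLandL}) together with $[\Lie_O, \nabla] = 0$ from Lemma \ref{L:Liecommuteswithcoordinatederivatives}. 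This gives the commutation directly from the paper's own lemmas and sidesteps the ambient-versus-intrinsic bookkeeping. With that, your inductive scheme closes exactly as written.
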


\begin{corollary} \label{C:rWeightedAngularDerivativesinTermsofLieDerivatives}
	Let $\Far$ be a two-form, and let $\ualpha[\Far],$ $\alpha[\Far],$ $\rho[\Far],$ $\sigma[\Far]$
	denote its null components. Then with $r = |x|,$ we have that
	
	\begin{align} \label{E:rWeightedAngularDerivativesinTermsofLieDerivatives}
		r |\angn \ualpha[\Far]| & \lesssim \sum_{|I| \leq 1} |\ualpha[\Lie_{\mathcal{Z}}^I \Far]|.
	\end{align}
	Furthermore, analogous inequalities hold for $\alpha[\Far],$ $\rho[\Far],$ and $\sigma[\Far].$

\end{corollary}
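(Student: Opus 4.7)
The plan is to chain together two of the lemmas already established in this section, namely Lemma \ref{L:RotationalLieDerivativesinTermsofrWeightedAngularDerivatives} (which approximates weighted intrinsic angular derivatives by rotational Lie derivatives) and Lemma \ref{L:LieRotationcommuteswithnulldecomp} (which says that rotational Lie derivatives commute with the null decomposition of a two-form).

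First, I would observe that by Definition \ref{D:null}, $\ualpha[\Far]$ is a one-form that is $m$-tangent to the spheres $S_{r,t}$. Therefore Lemma \ref{L:RotationalLieDerivativesinTermsofrWeightedAngularDerivatives} applies directly to $U = \ualpha[\Far]$ with $k = 1$, yielding the pointwise bound
\begin{align*}
r |\angn \ualpha[\Far]| \lesssim \sum_{|I| \leq 1} |\Lie_{\mathcal{O}}^I \ualpha[\Far]|.
\end{align*}

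Next, I would convert the right-hand side into a sum involving $\ualpha$ of Lie derivatives of $\Far$. By Lemma \ref{L:LieRotationcommuteswithnulldecomp}, for every $O \in \mathcal{O}$ the operator $\Lie_O$ commutes with the map $\Far \mapsto \ualpha[\Far]$, i.e.\ $\Lie_O \ualpha[\Far] = \ualpha[\Lie_O \Far]$. Iterating this identity (and using that $\mathcal{O} \subset \mathcal{Z}$, so rotational multi-indices are a subset of $\mathcal{Z}$-multi-indices) shows $\Lie_{\mathcal{O}}^I \ualpha[\Far] = \ualpha[\Lie_{\mathcal{O}}^I \Far]$ for all $|I| \leq 1$, and therefore
\begin{align*}
\sum_{|I| \leq 1} |\Lie_{\mathcal{O}}^I \ualpha[\Far]| = \sum_{|I| \leq 1} |\ualpha[\Lie_{\mathcal{O}}^I \Far]| \leq \sum_{|I| \leq 1} |\ualpha[\Lie_{\mathcal{Z}}^I \Far]|.
\end{align*}
Combining the two displays yields \eqref{E:rWeightedAngularDerivativesinTermsofLieDerivatives}.

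For the analogous inequalities for $\alpha[\Far]$, $\rho[\Far]$, and $\sigma[\Far]$, the argument is identical: $\alpha[\Far]$ is also a one-form $m$-tangent to the spheres, so Lemma \ref{L:RotationalLieDerivativesinTermsofrWeightedAngularDerivatives} applies verbatim. For the scalars $\rho[\Far]$ and $\sigma[\Far]$, the lemma applies with no $r$-weight needed on $\angn$ of a scalar in the same form, and in both cases Lemma \ref{L:LieRotationcommuteswithnulldecomp} again lets us pass the Lie derivative through the null decomposition. There is essentially no obstacle here; the only small subtlety worth double-checking is the commutation of iterated $\Lie_O$ with the null decomposition, but since each single $\Lie_O$ commutes and the null components of $\Lie_O \Far$ are again $m$-tangent (or scalar), the iteration is immediate.
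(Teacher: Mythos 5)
Your proposal is correct and follows exactly the same route as the paper's proof, which also invokes Lemma \ref{L:LieRotationcommuteswithnulldecomp} and Lemma \ref{L:RotationalLieDerivativesinTermsofrWeightedAngularDerivatives}; you simply spell out the chaining that the paper leaves implicit.
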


\begin{proof}
	Inequality \eqref{E:rWeightedAngularDerivativesinTermsofLieDerivatives} follows from 
	Lemma \ref{L:LieRotationcommuteswithnulldecomp} and 
	Lemma \ref{L:RotationalLieDerivativesinTermsofrWeightedAngularDerivatives}.
\end{proof}

Finally, the following proposition provides pointwise inequalities relating various 
Lie and covariant derivative operators under various contraction seminorms.

\begin{proposition} \label{P:LievsCovariantLContractionRelation}
	\textbf{(Lie derivative and covariant derivative inequalities)}
	Let $U$ be a tensorfield. Then
	
	\begin{align} \label{E:LieZIinTermsofNablaZI}
		\sum_{|I| \leq k} |\Lie_{\mathcal{Z}}^I U| \approx \sum_{|I| \leq k} |\nabla_{\mathcal{Z}}^I U|.
	\end{align}
	
	Furthermore, let $P$ be a symmetric or an anti-symmetric type $\binom{0}{2}$ tensorfield. Then the following inequalities 
	hold:
	
	\begin{subequations}
	\begin{align}
		\sum_{|I| \leq k} |\nabla\Lie_{\mathcal{Z}}^I P| 
		& \lesssim \sum_{|I| \leq k} |\nabla\nabla_{\mathcal{Z}}^I P|,
			\label{E:NablaLieZIinTermsofNablaNablaZI} \\
		\sum_{|I| \leq k} |\conenabla \Lie_{\mathcal{Z}}^I P| 
		& \lesssim \sum_{|I| \leq k} |\conenabla \nabla_{\mathcal{Z}}^I P|,
			\label{E:TangentialDerivativesLieZIvsTangentialDerivativesNablaZI}
	\end{align}
		
	\begin{align} 
		|\Lie_{\mathcal{Z}}^I P|_{\mathcal{L}\mathcal{L}}
		& \lesssim |\nabla_{\mathcal{Z}}^I P|_{\mathcal{L}\mathcal{L}}
			\ + \ \underbrace{\sum_{|J| \leq |I|-1} |\nabla_{\mathcal{Z}}^J P|_{\mathcal{L} \mathcal{T}}}_{\mbox{absent if $|I| =
			0$}}
			\ + \ \underbrace{\sum_{|J'| \leq |I|-2} |\nabla_{\mathcal{Z}}^{J'} P|}_{\mbox{absent if $|I| \leq 1$}}, 
			\label{E:LieZILLinTermsofNablaZILLLieZJLTPlusJunk} \\
		|\nabla \Lie_{\mathcal{Z}}^I P|_{\mathcal{L}\mathcal{L}}
		& \lesssim |\nabla \nabla_{\mathcal{Z}}^I P|_{\mathcal{L}\mathcal{L}}
			\ + \ \underbrace{\sum_{|J| \leq |I|-1} |\nabla_{\mathcal{Z}}^J P]|_{\mathcal{L} \mathcal{T}}}_{\mbox{absent if 
			$|I| = 0$}}
			\ + \ \underbrace{\sum_{|J'| \leq |I|-2} |\nabla\nabla_{\mathcal{Z}}^{J'} P|}_{\mbox{absent if $|I| \leq 1$}},
			\label{E:NablaLieZILLinTermsofNablaNablaZILLNablaLieZJLTPlusJunk}
	\end{align}

	\begin{align} \label{E:NablaFarGoodInTermsofqWeightedLieZIFarGood}
		|\nabla P|_{\mathcal{L} \mathcal{N}} 
		+ |\nabla P|_{\mathcal{T} \mathcal{T}}
		& \lesssim (1 + |q|)^{-1} \sum_{|I| \leq 1} \big(|\Lie_{\mathcal{Z}}^I P|_{\mathcal{L} \mathcal{N}} 
			+ |\Lie_{\mathcal{Z}}^I P|_{\mathcal{T} \mathcal{T}} \big)
			\ + \ (1 + t + |q|)^{-1} \sum_{|I| \leq 1} |\Lie_{\mathcal{Z}}^I P|.
	\end{align}
	\end{subequations}
	
\end{proposition}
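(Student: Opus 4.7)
The plan is to prove the six inequalities by induction on $|I|$, anchored by the Lie-covariant identity from Lemma \ref{L:Liederivativeintermsofnabla}
\[
\Lie_Z U = \nabla_Z U + (\text{zeroth-order contractions of } \nabla Z \text{ with the indices of } U),
\]
together with the fact from \eqref{E:CovariantDerivativesofZareConstant} that $\nabla_\mu Z^\nu$ has \emph{constant components} in our wave coordinate system for $Z \in \mathcal{Z}$. Consequently the zeroth-order error above is a constant-coefficient tensorial operation (call it $L^{(Z)}U$) satisfying $|L^{(Z)}U| \lesssim |U|$ and commuting with $\nabla$. Iterating the identity and exploiting $[\nabla_Z, \Lie_{Z'}] = \nabla_{[Z,Z']}$ together with the closure of $\mathcal{Z}$ under Lie brackets (Lemma \ref{L:ConformalKillingFieldCommuatators}), I obtain an operator-level expansion
\[
\Lie_{\mathcal{Z}}^I U = \sum_{|J| \leq |I|} c_{I,J} \nabla_{\mathcal{Z}}^J U,
\]
where each $c_{I,J}$ is a composition of $|I| - |J|$ constant-coefficient $L^{(Z)}$-operations, and symmetrically the reverse expansion holds. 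Summing over $|I| \leq k$ yields \eqref{E:LieZIinTermsofNablaZI}, and applying $\nabla$ or $\conenabla$ to this identity—using that constant coefficients pass through—gives \eqref{E:NablaLieZIinTermsofNablaNablaZI} and \eqref{E:TangentialDerivativesLieZIvsTangentialDerivativesNablaZI} after summation.

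The crux is in \eqref{E:LieZILLinTermsofNablaZILLLieZJLTPlusJunk}--\eqref{E:NablaLieZILLinTermsofNablaNablaZILLNablaLieZJLTPlusJunk}. The key algebraic input is the following: combining the conformal Killing identity \eqref{E:ZDeformationTensorinTermsofcZm} with $m(L,L) = 0$ gives $L^\mu L^\nu \nabla_\mu Z_\nu = 0$, so the vector $L^\mu \nabla_\mu Z^\alpha$ is $m$-orthogonal to $L$ and therefore lies in $\mathcal{T} = \mbox{span}\{L, e_1, e_2\}$. (This is verified directly: trivial for translations, immediate from \eqref{E:ScalingCovariantDerivative} for the scaling $S$, and from \eqref{E:SpacetimeRotationsCovariantDerivative} for the boosts/rotations $\Omega_{\mu \nu}$.) Consequently, for any two-tensor $T$, the $\mathcal{L}\mathcal{L}$-contraction of $L^{(Z)}T$ satisfies $|L^\mu L^\nu L^{(Z)}(T)_{\mu \nu}| \lesssim |T|_{\mathcal{L}\mathcal{T}}$, and a parallel check (using $m(L, e_A) = 0$) shows that the $\mathcal{L}\mathcal{T}$-contraction of $L^{(Z)}T'$ is bounded by the \emph{unrestricted} $|T'|$. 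Inserting these into the operator-level expansion and taking $L \otimes L$ contractions: the $|J| = |I|$ term contributes $|\nabla_{\mathcal{Z}}^I P|_{\mathcal{L}\mathcal{L}}$; the $|J| = |I| - 1$ terms (one $L^{(Z)}$-drop from $\mathcal{L}\mathcal{L}$ to $\mathcal{L}\mathcal{T}$) contribute $|\nabla_{\mathcal{Z}}^J P|_{\mathcal{L}\mathcal{T}}$; and the $|J| \leq |I| - 2$ terms (at least two $L^{(Z)}$-drops, past $\mathcal{L}\mathcal{T}$ into the unrestricted class) contribute $|\nabla_{\mathcal{Z}}^J P|$. This produces \eqref{E:LieZILLinTermsofNablaZILLLieZJLTPlusJunk}; applying $\nabla$ to the identity and invoking Lemma \ref{L:NablaZICommutesWithCovariantDerivativePlusErrorTerms} gives \eqref{E:NablaLieZILLinTermsofNablaNablaZILLNablaLieZJLTPlusJunk}.

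For the weighted estimate \eqref{E:NablaFarGoodInTermsofqWeightedLieZIFarGood}, I decompose $\nabla_\alpha P = \conenabla_\alpha P + \tfrac{1}{2} L_\alpha \nabla_{\uL} P$ via \eqref{E:ConeProjection}. The tangential piece $\conenabla P$ is directly controlled (with \emph{unrestricted} contraction) by $(1 + t + |q|)^{-1} \sum_{|I| \leq 1} |\Lie_{\mathcal{Z}}^I P|$ via Lemma \ref{L:PointwisetandqWeightedNablainTermsofZestiamtes} and \eqref{E:LieZIinTermsofNablaZI}, giving the second term on the right-hand side. For the transversal piece, the explicit $L_\alpha$ factor forces at least one $L$-contraction on the free $\alpha$-index of $\nabla P$, confining its contribution on the left-hand side to the $\mathcal{L}\mathcal{N}$ and $\mathcal{T}\mathcal{T}$ sectors. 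The $(1 + |q|)^{-1}$ weight is obtained by using the identity $\partial_r = r^{-1}(S - t \partial_t)$ together with $\uL = L - 2 \partial_r$ to re-express $\nabla_{\uL}$ as a combination of $\nabla_S$ and $\nabla_{\partial_t}$ with coefficients of order $(1 + |q|)^{-1}$ in the relevant region, combined with the tangential $\nabla_L$ that falls back into the $\conenabla$-case; converting $\nabla_S, \nabla_{\partial_t}$ to $\Lie_S, \Lie_{\partial_t}$ via \eqref{E:LieZIinTermsofNablaZI} closes the argument. The main obstacle throughout is paragraph 2: tracking the two-step null-contraction degradation $\mathcal{L}\mathcal{L} \to \mathcal{L}\mathcal{T} \to \mathcal{N}\mathcal{N}$ through the iterated compositions of $L^{(Z)}$ and $\nabla_{\mathcal{Z}}$, and verifying that the conformal Killing cancellation provides \emph{exactly} the right structure at each step (no more, no less), so that the three-tier hierarchy on the right-hand sides of \eqref{E:LieZILLinTermsofNablaZILLLieZJLTPlusJunk}--\eqref{E:NablaLieZILLinTermsofNablaNablaZILLNablaLieZJLTPlusJunk} is faithfully reproduced.
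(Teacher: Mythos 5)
Your treatment of \eqref{E:LieZIinTermsofNablaZI}--\eqref{E:NablaLieZILLinTermsofNablaNablaZILLNablaLieZJLTPlusJunk} is essentially the paper's proof: you identify the zeroth-order contraction operator (the paper's $\mathcal{C}_Z$), note its constant coefficients commute with $\nabla_{Z'}$ so that $\Lie_{\mathcal{Z}}^I P$ expands as $\nabla_{\mathcal{Z}}^I P$ plus a sum of $\nabla_{\mathcal{Z}}^J P$'s hit by one or more $\mathcal{C}$'s, and use the conformal-Killing cancellation $L^\mu L^\nu \nabla_\mu Z_\nu = 0$ to get the degradation $|\mathcal{C}_Z T|_{\mathcal{L}\mathcal{L}} \lesssim |T|_{\mathcal{L}\mathcal{T}}$ at the first step and the trivial $|\mathcal{C}_Z T| \lesssim |T|$ thereafter. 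This reproduces the three-tier structure. (The side remark that $[\nabla_Z,\Lie_{Z'}] = \nabla_{[Z,Z']}$ is not actually correct as a general tensorial identity, but it is not load-bearing; the real engine is the constancy of $^{(Z)}c_\mu^{\ \nu}$.) The sign in $\nabla_\alpha = \conenabla_\alpha - \tfrac{1}{2}L_\alpha\nabla_{\uL}$ is flipped, but harmless for absolute-value estimates.

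The proof of \eqref{E:NablaFarGoodInTermsofqWeightedLieZIFarGood} has a genuine gap. You substitute $\partial_r = r^{-1}(S - t\partial_t)$ into $\uL = L - 2\partial_r$ to get $\uL = L - \tfrac{2}{r}S + \tfrac{2t}{r}\partial_t$ and claim the coefficients of $\nabla_S$, $\nabla_{\partial_t}$ are of order $(1 + |q|)^{-1}$. They are not: the coefficient of $\partial_t$ is $2t/r$, which is $O(1)$ near the light cone $r \approx t$ and unbounded in the interior $r \ll t$; only the $S$-coefficient $-2/r$ is small, and that gives an $r^{-1}$ weight, not a $(1+|q|)^{-1}$ weight. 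Worse, since $\partial_t = \tfrac{1}{2}(L + \uL)$, the surviving $\nabla_{\partial_t}$ term \emph{contains} $\nabla_{\uL}$ with an $O(1)$ coefficient, so the estimate is circular — you are trying to bound $\nabla_{\uL}P$ by something that includes $\nabla_{\uL}P$. The decomposition that actually produces the $q^{-1}$ factor requires the Lorentz boosts, not the time translation: $S + \omega^a\Omega_{0a} = (t-r)\partial_t + (r-t)\partial_r = -q\,\uL$, so $\uL = -q^{-1}(S + \omega^a\Omega_{0a})$ when $q \neq 0$, and this is combined with the trivial identity $\uL = \partial_t - \omega^a\partial_a$ (valid everywhere, no $q$-weight) to produce the $(1+|q|)^{-1}$ coefficient. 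One must then also verify that the error terms from converting $\nabla_S$ and $\omega^a\nabla_{\Omega_{0a}}$ to Lie derivatives preserve the restricted $\mathcal{L}\mathcal{N}$ and $\mathcal{T}\mathcal{T}$ sectors (the paper checks explicitly that $\omega^a\nabla_{\Omega_{0a}}P_{\mu\nu} - \omega^a\Lie_{\Omega_{0a}}P_{\mu\nu}$ is a sum of the form $\uL_\mu L^\kappa P_{\kappa\nu} - L_\mu\uL^\kappa P_{\kappa\nu} + \dots$, which contracts back into the good sectors). Your argument does not address this at all, and without it the $(1+|q|)^{-1}$-weighted term on the right-hand side of \eqref{E:NablaFarGoodInTermsofqWeightedLieZIFarGood} would be forced to carry the \emph{unrestricted} $\sum|\Lie_{\mathcal{Z}}^I P|$, which is a strictly weaker and insufficient estimate because of the slowly-decaying $\ualpha$ component.
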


\begin{proof}
	Inequality \eqref{E:LieZIinTermsofNablaZI} follows inductively using 
	\eqref{E:CovariantDerivativesofZareConstant} and \eqref{E:Liederivativeintermsofnabla}. 
	
	To prove the remaining inequalities, for each $Z \in \mathcal{Z},$ we define the contraction operator $\mathcal{C}_{Z}$ by
	
	\begin{align} \label{E:ContractionOperatorDef}
		(\mathcal{C}_{Z} P)_{\mu \nu} \eqdef P_{\kappa \nu}{^{(Z)}c_{\mu}^{\ \kappa}} \ + \ P_{\mu \kappa}{^{(Z)}c_{\nu}^{\ 
		\kappa}},
	\end{align}
	where the covariantly constant tensorfield ${^{(Z)}c_{\mu}^{\ \kappa}}$ is defined in 
	\eqref{E:CovariantDerivativesofZareConstant}. It follows from definition \eqref{E:ContractionOperatorDef} and 
	Lemma \ref{L:Liederivativeintermsofnabla} that
	
	\begin{align}
		\Lie_{\mathcal{Z}} P = \nabla_Z P \ + \ \mathcal{C}_{Z} P.
	\end{align}
	Since each $Z \in \mathcal{Z}$ is a conformal Killing field, and since 
	$L^{\mu}L^{\nu} m_{\mu \nu} = 0,$ it follows that
	$L^{\mu}L_{\nu}{^{(Z)} c_{\mu}^{\ \nu}} = 0.$ Also using the fact that each ${^{(Z)}c_{\mu}^{\ \nu}}$ is a constant, we have 
	that
	
	\begin{align}
		|\mathcal{C}_{Z} P|_{\mathcal{L} \mathcal{L}} & \lesssim |P|_{\mathcal{L} \mathcal{T}}
			\label{E:LowerOrderLieTermLLContractioninTermsofLTContraction}, \\
		|\mathcal{C}_{Z} P| & \lesssim |P|. \label{E:LowerOrderLieTermBounded}
	\end{align}

If $I = (\iota_1, \cdots, \iota_k)$ is a $\mathcal{Z}-$multi-index with $1 \leq |I| = k,$ then using the fact that the components ${^{(Z)}c_{\mu}^{\ \kappa}}$ are constants, we have that

\begin{align} \label{E:LieZIConformalKillingFieldNablaZIRelation}
	\Lie_{\mathcal{Z}}^I P \eqdef \Lie_{Z^{\iota_1}} \circ \cdots \circ \Lie_{Z^{\iota_k}} P 
	& = (\nabla_{Z^{\iota_1}} \ + \ \mathcal{C}_{Z^{\iota_1}}) \circ \cdots 
		\circ (\nabla_{Z^{\iota_k}} \ + \ \mathcal{C}_{Z^{\iota_k}}) P \\
	& = \nabla_{\mathcal{Z}}^I P + 
		\sum_{i=1}^{k} \mathcal{C}_{Z^{\iota_i}} \circ \nabla_{Z^{\iota_1}} \circ \cdots  	
		\circ \nabla_{Z^{\iota_{i-1}}} \circ \nabla_{Z^{\iota_{i+1}}} \circ \cdots \circ 
		\nabla_{Z^{\iota_k}} P \notag \\
	& \ \ + \ \underbrace{\mathop{\sum_{I_1 + I_2 = I}}_{|I|_2 \leq k - 2}
		\mathcal{C}_{\mathcal{Z}}^{I_1} \nabla_{\mathcal{Z}}^{I_2} P}_{\mbox{absent if $k = 1$}}. \notag
\end{align}
Inequality \eqref{E:NablaLieZIinTermsofNablaNablaZI} now follows from applying $\nabla$ to each side of \eqref{E:LieZIConformalKillingFieldNablaZIRelation}, from using the fact that the operator $\nabla$ commutes through the operators $\mathcal{C}_Z,$ and from \eqref{E:LowerOrderLieTermBounded}. Inequality \eqref{E:TangentialDerivativesLieZIvsTangentialDerivativesNablaZI} follows from similar reasoning.
Inequalities \eqref{E:LieZILLinTermsofNablaZILLLieZJLTPlusJunk} and \eqref{E:NablaLieZILLinTermsofNablaNablaZILLNablaLieZJLTPlusJunk} also follow from similar reasoning, together with \eqref{E:LowerOrderLieTermLLContractioninTermsofLTContraction}.

To prove \eqref{E:NablaFarGoodInTermsofqWeightedLieZIFarGood}, we first observe that
by \eqref{E:WeightedDerivativesinTermsofNablaZI}, \eqref{E:LieZIinTermsofNablaZI}, 
and \eqref{E:TangentialDerivativesLieZIvsTangentialDerivativesNablaZI}, we have that

\begin{align} \label{E:PrelminaryEstimateNablaFarGoodInTermsofqWeightedLieZIFarGood}
	|\nabla P|_{\mathcal{L} \mathcal{N}} 
		\ + \ |\nabla P|_{\mathcal{T} \mathcal{T}}
	& \lesssim |\nabla_{\uL} P|_{\mathcal{L} \mathcal{N}} 
		\ + \ |\nabla_{\uL} P|_{\mathcal{T} \mathcal{T}}
		\ + \ |\conenabla P| \\
	& \lesssim |\nabla_{\uL} P|_{\mathcal{L} \mathcal{N}} 
		\ + \ |\nabla_{\uL} P|_{\mathcal{T} \mathcal{T}} 
		\ + \ (1 + t + |q|)^{-1} \sum_{|I| \leq 1} |\Lie_{\mathcal{Z}}^I P|. \notag
\end{align}
Therefore, from \eqref{E:PrelminaryEstimateNablaFarGoodInTermsofqWeightedLieZIFarGood}, we see that to prove \eqref{E:NablaFarGoodInTermsofqWeightedLieZIFarGood}, it suffices to prove that the following inequality holds for any
symmetric or anti-symmetric type $\binom{0}{2}$ tensorfield $P:$

\begin{align} \label{E:uLDerivativeofFarGoodInTermsofqWeightedLieZIFarGood}
		|\nabla_{\uL} P|_{\mathcal{L} \mathcal{N}} + |\nabla_{\uL} P|_{\mathcal{T} \mathcal{T}}
			& \lesssim (1 + |q|)^{-1} \sum_{|I| \leq 1} \big(|\Lie_{\mathcal{Z}}^I P|_{\mathcal{L} \mathcal{N}} 
			\ + \ |\Lie_{\mathcal{Z}}^I P|_{\mathcal{T} \mathcal{T}} \big).
\end{align}
To this end, we use the vectorfields $S = x^{\kappa} \partial_{\kappa},$ $\Omega_{0j} = - t \partial_j - x_j \partial_t$ to decompose

\begin{align}
	\uL & = - q^{-1}(S + \omega^a \Omega_{0a}), \ \omega^a \eqdef x^a/r,
\end{align}
which implies that

\begin{align} \label{E:qNablauLFarinTermsofNablaZFar}
	- q \nabla_{\uL} P_{\mu \nu} = \nabla_S P_{\mu \nu} \ + \ \omega^a \nabla_{\Omega_{0a}} P_{\mu \nu}.
\end{align}

Using \eqref{E:ScalingCovariantDerivative}, \eqref{E:SpacetimeRotationsCovariantDerivative}, 
and \eqref{E:Liederivativeintermsofnabla}, we compute that

\begin{align}
	\nabla_S P_{\mu \nu} & = \Lie_{S} P_{\mu \nu} \ - \ 2 P_{\mu \nu}, \\
	\omega^a \nabla_{\Omega_{0a}} P_{\mu \nu}  
	& = \omega^a \Lie_{\Omega_{0a}} P_{\mu \nu}
		\ - \ \frac{1}{2} \Big\lbrace \uL_{\mu} L^{\kappa} P_{\kappa \nu} 
		- L_{\mu} \uL^{\kappa} P_{\kappa \nu} 
		+ \uL_{\nu} L^{\kappa} P_{\mu \kappa} 
		- L_{\nu} \uL^{\kappa} P_{\mu \kappa}\Big\rbrace.
\end{align}
Combining these two identities with \eqref{E:qNablauLFarinTermsofNablaZFar}, we conclude that

\begin{align} \label{E:qWeigthedNablauLFarinTermsofLieDerivatives}
	- q \nabla_{\uL} P_{\mu \nu} = \Lie_{S} P_{\mu \nu} \ + \ \omega^a \Lie_{\Omega_{0a}} P_{\mu \nu}
	- 2 P_{\mu \nu} \ - \ \frac{1}{2} \Big\lbrace \uL_{\mu} L^{\kappa} P_{\kappa \nu} 
		- L_{\mu} \uL^{\kappa} P_{\kappa \nu} 
		+ \uL_{\nu} L^{\kappa} P_{\mu \kappa} 
		- L_{\nu} \uL^{\kappa} P_{\mu \kappa}\Big\rbrace.
\end{align}
Contracting \eqref{E:qWeigthedNablauLFarinTermsofLieDerivatives} against the sets
$\mathcal{L}\mathcal{N}$ and $\mathcal{T}\mathcal{T},$ it follows that

\begin{align} \label{E:NablauLFarGoodComponentsinTermsofLieZIFarGoodComponentsLargeq}
	|q||\nabla_{\uL} P|_{\mathcal{L} \mathcal{N}} \ + \ |q||\nabla_{\uL} P|_{\mathcal{T} \mathcal{T}}
	& \lesssim \sum_{|I| \leq 1}
		\big(|\Lie_{\mathcal{Z}}^I P|_{\mathcal{L} \mathcal{N}} + |\Lie_{\mathcal{Z}}^I P|_{\mathcal{T} \mathcal{T}}\big).
\end{align}
Furthermore, by decomposing

\begin{align}
	\uL = \partial_t - \partial_r = \partial_t - \omega^a \partial_a,
\end{align}
and using the fact that ${^{(\frac{\partial}{\partial t})}c_{\mu}^{\ \nu}} = 
{^{(\frac{\partial}{\partial x^j})}c_{\mu}^{\ \nu}} = 0$ 
(where ${^{(Z)}c}_{\mu \nu}$ is defined in \eqref{E:CovariantDerivativesofZareConstant}), it follows that

\begin{align} \label{E:NablauLFarinTermsofLieDerivativesSmallq}
	\nabla_{\uL} P_{\mu \nu} = \Lie_{\frac{\partial}{\partial t}} P_{\mu \nu} 
	\ - \ \omega^a \Lie_{\frac{\partial}{\partial x^a}}P_{\mu \nu}.
\end{align}
Contracting \eqref{E:NablauLFarinTermsofLieDerivativesSmallq} against the sets
$\mathcal{L}\mathcal{N}$ and $\mathcal{T}\mathcal{T},$ we have that

\begin{align} \label{E:NablauLFarGoodComponentsinTermsofLieZIFarGoodComponentsSmallq}
	|\nabla_{\uL} P|_{\mathcal{L} \mathcal{N}} + |\nabla_{\uL} P|_{\mathcal{T} \mathcal{T}}
	& \lesssim 	\sum_{|I| = 1}
		\big(|\Lie_{\mathcal{Z}}^I P|_{\mathcal{L} \mathcal{N}} + |\Lie_{\mathcal{Z}}^I P|_{\mathcal{T} \mathcal{T}} \big).
\end{align}
Adding \eqref{E:NablauLFarGoodComponentsinTermsofLieZIFarGoodComponentsLargeq} and \eqref{E:NablauLFarGoodComponentsinTermsofLieZIFarGoodComponentsSmallq}, we arrive at inequality
\eqref{E:uLDerivativeofFarGoodInTermsofqWeightedLieZIFarGood}. This completes our proof of \eqref{E:NablaFarGoodInTermsofqWeightedLieZIFarGood}.

\end{proof}

\section{The Reduced Equation Satisfied by \texorpdfstring{$\nabla_{\mathcal{Z}}^I h^{(1)}$}{the Derivatives of the Metric Remainder Piece}} \label{S:EquationSatisfiedbyNablaZIh1}
In this short section, we assume that $h_{\mu \nu}^{(1)}$ is a solution to the reduced equation
\eqref{E:Reducedh1Summary}. We provide a proposition that gives a preliminary description of the inhomogeneities
in the equation satisfied by $\nabla_{\mathcal{Z}}^I h_{\mu \nu}^{(1)}.$

\noindent \hrulefill
\ \\

\begin{proposition} \label{P:InhomogeneousTermsNablaZIh1}
\textbf{(Inhomogeneities for $\nabla_{\mathcal{Z}}^I h_{\mu \nu}^{(1)}$)}
Suppose that $h_{\mu \nu}^{(1)}$ is a solution to the reduced equation \eqref{E:Reducedh1Summary}, and 
let $I$ be any $\mathcal{Z}-$multi-index. Then $\nabla_{\mathcal{Z}}^I h_{\mu \nu}^{(1)}$ is a solution to the inhomogeneous system

\begin{align}
	\widetilde{\Square}_{g} \nabla_{\mathcal{Z}}^I h_{\mu \nu}^{(1)} & = \mathfrak{H}_{\mu \nu}^{(1;I)}, 
		\label{E:InhomogeneousTermsNablaZIh1} \\
	\mathfrak{H}_{\mu \nu}^{(1;I)} & = \nablamod_{\mathcal{Z}}^I \mathfrak{H}_{\mu \nu} 
		- \nablamod_{\mathcal{Z}}^I \widetilde{\Square} h_{\mu \nu}^{(0)} 	
		- \Big\lbrace \nablamod_{\mathcal{Z}}^I \widetilde{\Square}_{g} h_{\mu \nu}^{(1)} - \widetilde{\Square}_{g} 
		\nabla_{\mathcal{Z}}^I h_{\mu \nu}^{(1)} \Big\rbrace  \label{E:mathfrakH1Idef} \\
	& = \nablamod_{\mathcal{Z}}^I \mathfrak{H}_{\mu \nu} 
		- \nablamod_{\mathcal{Z}}^I \widetilde{\Square} h_{\mu \nu}^{(0)} 	
		- \Big\lbrace \nablamod_{\mathcal{Z}}^I \big(H^{\kappa \lambda} \nabla_{\kappa} \nabla_{\lambda} h_{\mu \nu}^{(1)}\big) 
		- H^{\kappa \lambda} \nabla_{\kappa} \nabla_{\lambda}
		\nabla_{\mathcal{Z}}^I h_{\mu \nu}^{(1)} \Big\rbrace. \notag	
\end{align}

\end{proposition}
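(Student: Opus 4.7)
The plan is to directly differentiate the reduced wave equation \eqref{E:Reducedh1Summary} using the operator $\nablamod_{\mathcal{Z}}^I,$ then rearrange so that $\widetilde{\Square}_{g}$ falls on $\nabla_{\mathcal{Z}}^I h^{(1)}$ on the left-hand side, with everything else collected into the inhomogeneity. The key input is the commutation identity $\nablamod_{\mathcal{Z}}^I \Square_m = \Square_m \nabla_{\mathcal{Z}}^I$ from Lemma \ref{L:NablaModZLiemodMinkowskiWaveOperatorCommutator}, which tells us that the modified operators $\nablamod_Z$ are precisely the right objects for commuting through the flat wave operator $\Square_m.$

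First I would apply $\nablamod_{\mathcal{Z}}^I$ to both sides of \eqref{E:Reducedh1Summary}, yielding
\begin{align*}
	\nablamod_{\mathcal{Z}}^I \widetilde{\Square}_g h_{\mu \nu}^{(1)}
	= \nablamod_{\mathcal{Z}}^I \mathfrak{H}_{\mu \nu} - \nablamod_{\mathcal{Z}}^I \widetilde{\Square}_g h_{\mu \nu}^{(0)}.
\end{align*}
The first line of \eqref{E:mathfrakH1Idef} then follows tautologically by adding and subtracting $\widetilde{\Square}_g \nabla_{\mathcal{Z}}^I h_{\mu \nu}^{(1)}$ and moving it to the left-hand side: the right-hand side becomes exactly $\mathfrak{H}_{\mu \nu}^{(1;I)}$ as defined in \eqref{E:mathfrakH1Idef}.

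Next I would derive the second line of \eqref{E:mathfrakH1Idef}, which amounts to showing that the commutator is controlled entirely by the ``perturbative'' piece $H^{\kappa \lambda}\nabla_\kappa \nabla_\lambda.$ I would use the splitting $\widetilde{\Square}_g = \Square_m + H^{\kappa \lambda}\nabla_\kappa \nabla_\lambda,$ which follows from $H^{\mu \nu} \eqdef (g^{-1})^{\mu \nu} - (m^{-1})^{\mu \nu}$ and the fact that $\nabla$ is the Minkowski connection. Applied to $h^{(1)}$ this gives
\begin{align*}
	\nablamod_{\mathcal{Z}}^I \widetilde{\Square}_g h^{(1)}_{\mu \nu} - \widetilde{\Square}_g \nabla_{\mathcal{Z}}^I h^{(1)}_{\mu \nu}
	= \bigl\lbrace \nablamod_{\mathcal{Z}}^I \Square_m h^{(1)}_{\mu \nu} - \Square_m \nabla_{\mathcal{Z}}^I h^{(1)}_{\mu \nu} \bigr\rbrace
		+ \bigl\lbrace \nablamod_{\mathcal{Z}}^I \bigl(H^{\kappa \lambda}\nabla_\kappa \nabla_\lambda h^{(1)}_{\mu \nu}\bigr) - H^{\kappa \lambda}\nabla_\kappa \nabla_\lambda \nabla_{\mathcal{Z}}^I h^{(1)}_{\mu \nu} \bigr\rbrace,
\end{align*}
and the first brace vanishes by Lemma \ref{L:NablaModZLiemodMinkowskiWaveOperatorCommutator} applied componentwise to the scalar entries of $h^{(1)}_{\mu \nu}.$ Substituting this into the first line of \eqref{E:mathfrakH1Idef} yields the second line.

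There is no significant obstacle: the entire argument is algebraic, with the only non-trivial ingredient being the flat-wave commutation property, which has already been established. I would note in passing that Lemma \ref{L:NablaModZLiemodMinkowskiWaveOperatorCommutator} applies componentwise because, in the wave coordinate system, each Cartesian component $h^{(1)}_{\mu \nu}$ is a scalar function, so $\Square_m$ acts on it componentwise and the commutation is valid entry by entry. The iterated version of the commutator (for $|I| \geq 2$) follows by induction, as is already implicit in the statement of that lemma.
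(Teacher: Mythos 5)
Your proof is correct and takes essentially the same approach as the paper: differentiate \eqref{E:Reducedh1Summary} with $\nablamod_{\mathcal{Z}}^I$, rearrange by adding and subtracting $\widetilde{\Square}_g \nabla_{\mathcal{Z}}^I h^{(1)}_{\mu\nu}$, and invoke Lemma \ref{L:NablaModZLiemodMinkowskiWaveOperatorCommutator} via the splitting $\widetilde{\Square}_g = \Square_m + H^{\kappa\lambda}\nabla_\kappa\nabla_\lambda$. You simply spell out the algebraic rearrangement that the paper leaves implicit.
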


\begin{proof}
	Proposition \ref{P:InhomogeneousTermsNablaZIh1} follows from differentiating each side of \eqref{E:Reducedh1Summary}
	with modified covariant derivatives $\nablamod_{\mathcal{Z}}^I$ and applying Lemma 
	\ref{L:NablaModZLiemodMinkowskiWaveOperatorCommutator}.
\end{proof}

\section{The Equations of Variation, the Canonical Stress, and Electromagnetic Energy Currents} \label{E:EOVandStress}
In this section, we introduce the electromagnetic equations of variation, which are linearized versions of the 
reduced electromagnetic equations. The significance of the equations of variation is the following: if $\Far$ is a solution to the reduced electromagnetic equations \eqref{E:ReduceddFis0Summary} - \eqref{E:ReduceddMis0Summary}, then $\Lie_{\mathcal{Z}}^I \Far$ is a solution to the equations of variation. We then provide a preliminary description of the structure of the inhomogeneous terms in the equations of variation satisfied by $\Lie_{\mathcal{Z}}^I \Far.$ Additionally, we introduce the canonical stress tensorfield and use it to construct energy currents, which are vectorfields that will be used in Section \ref{S:WeightedEnergy} to derive weighted energy estimates for solutions to the equations of variation.

\noindent \hrulefill
\ \\

\subsection{Equations of variation}

The equations of variation in the unknowns $\dot{\Far}_{\mu \nu}$ are the linearization of \eqref{E:ReduceddFis0Summary} - \eqref{E:ReduceddMis0Summary} around a background $(h_{\mu \nu}, \Far_{\mu \nu}).$ More specifically, the equations of variation are the
system

\begin{subequations} 
\begin{align}
	\nabla_{\lambda} \dot{\Far}_{\mu \nu} + \nabla_{\mu} \dot{\Far}_{\nu \lambda} + \nabla_{\nu} \dot{\Far}_{\lambda \mu}
		& = \dot{\mathfrak{F}}_{\lambda \mu \nu},&& (\lambda, \mu, \nu = 0,1,2,3), \label{E:EOVdFis0} \\
	N^{\# \mu \nu \kappa \lambda} \nabla_{\mu} \dot{\Far}_{\kappa \lambda} & = \dot{\mathfrak{F}}^{\nu},&& (\nu = 0,1,2,3), 
		\label{E:EOVdMis0} 
\end{align}
\end{subequations}
where $N^{\# \mu \nu \kappa \lambda}$ is the $(h_{\mu \nu}, \Far_{\mu \nu})-$dependent tensorfield defined in \eqref{E:NSummarydef}, and $\dot{\mathfrak{F}}_{\lambda \mu \nu},$ $\dot{\mathfrak{F}}^{\nu}$ are inhomogeneous terms that need to be specified. In this article, the equations of variation will arise when we differentiate the reduced equations \eqref{E:ReduceddFis0Summary} - \eqref{E:ReduceddMis0Summary} with modified Lie derivatives. In this case, the quantities 
$\Lie_{\mathcal{Z}}^I \Far_{\mu \nu}$ will play the role of $\dot{\Far}.$ The next proposition, which is a companion of Proposition \ref{P:InhomogeneousTermsNablaZIh1}, provides a preliminary expression of the inhomogeneous terms that arise in the study of the equations of variation satisfied by $\Lie_{\mathcal{Z}}^I \Far_{\mu \nu}.$

\begin{proposition} \label{P:InhomogeneoustermsLieZIFar}
	\textbf{(Inhomogeneities for $\Lie_{\mathcal{Z}}^I \Far_{\mu \nu}$)}
	If $\Far_{\mu \nu}$ is a solution to the reduced electromagnetic equations 
	\eqref{E:ReduceddFis0Summary} - \eqref{E:ReduceddMis0Summary} and $I$ is a
	$\mathcal{Z}-$multi-index, then $\dot{\Far}_{\mu \nu} \eqdef \Lie_{\mathcal{Z}}^I \Far_{\mu \nu}$ is a solution to the 
	equations of variation \eqref{E:EOVdFis0} - \eqref{E:EOVdMis0} (corresponding to the background 
	$(h_{\mu \nu}, \Far_{\mu \nu})$) with inhomogeneous terms $\dot{\mathfrak{F}}_{\lambda \mu \nu} \eqdef
	\mathfrak{F}_{\lambda \mu \nu}^{(I)}$ and $\dot{\mathfrak{F}}^{\nu} \eqdef \mathfrak{F}_{(I)}^{\nu},$ where
	
	\begin{subequations}
	\begin{align}
		\mathfrak{F}_{\lambda \mu \nu}^{(I)} & = 0, && (\lambda, \mu, \nu = 0,1,2,3), \label{E:EOVInhomogeneousTermsJvanish} \\
		\mathfrak{F}_{(I)}^{\nu} & = \Liemod_{\mathcal{Z}}^I \mathfrak{F}^{\nu}
			+ \Big\lbrace N^{\# \mu \nu \kappa \lambda}\nabla_{\mu} \Lie_{\mathcal{Z}}^I\Far_{\kappa \lambda}
			- \Liemod_{\mathcal{Z}}^I \big(N^{\# \mu \nu \kappa \lambda}\nabla_{\mu}\Far_{\kappa \lambda}\big)\Big\rbrace,
			&& (\nu = 0,1,2,3).
			\label{E:LiemodZIdifferentiatedEOVInhomogeneousterms}
	\end{align}
	\end{subequations}
	
	Furthermore, there exist constants $\widetilde{C}_{1;I_1,I_2}, \widetilde{C}_{2;I_1,I_2},
	\widetilde{C}_{\mathscr{P};I_1,I_2}, \widetilde{C}_{\mathfrak{F}_{\triangle};J}, \widetilde{C}_{\dParameter_{\triangle};I_1,I_2}$ such 
	that
	
	\begin{subequations}
	\begin{align}		
		\Liemod_{\mathcal{Z}}^I \mathfrak{F}^{\nu} 
		& = \sum_{|I_1| + |I_2| \leq |I|} \widetilde{C}_{2;I_1,I_2}
				\mathscr{Q}_{(2;\Far)}^{\nu}(\nabla\Lie_{\mathcal{Z}}^{I_1} h, \Lie_{\mathcal{Z}}^{I_2} \Far) 
			 \label{E:LiemodZIFExpanded} \\
		& \ \ + \sum_{|J| \leq |I|} \widetilde{C}_{\mathfrak{F}_{\triangle};J} 
			\Lie_{\mathcal{Z}}^J \mathfrak{F}_{\triangle}^{\nu}, \notag \\
		N^{\# \mu \nu \kappa \lambda}\nabla_{\mu} \Lie_{\mathcal{Z}}^I \Far_{\kappa \lambda}
		- \Liemod_{\mathcal{Z}}^I \big(N^{\# \mu \nu \kappa \lambda}\nabla_{\mu}\Far_{\kappa \lambda}\big) 
		& = \mathop{\sum_{|I_1| + |I_2| \leq |I|}}_{|I_2| \leq |I| - 1} \widetilde{C}_{\mathscr{P};I_1,I_2}
			\mathscr{P}_{(\Far)}^{\nu}(\Lie_{\mathcal{Z}}^{I_1} h, \nabla \Lie_{\mathcal{Z}}^{I_2}\Far) 
		 	\label{E:LiemodZINnablaFarCommutatorTerms} \\
		& \ \ + \mathop{\sum_{|I_1| + |I_2| \leq |I|}}_{|I_2| \leq |I| - 1} \widetilde{C}_{1;I_1,I_2}
			\mathscr{Q}_{(1;\Far)}^{\nu}(\Lie_{\mathcal{Z}}^{I_1} h, \nabla \Lie_{\mathcal{Z}}^{I_2}\Far)
			\notag \\
		& \ \ + \mathop{\sum_{|I_1| + |I_2| \leq |I|}}_{|I_2| \leq |I| - 1} \widetilde{C}_{\dParameter_{\triangle};I_1,I_2}
			(\Lie_{\mathcal{Z}}^{I_1} N_{\triangle}^{\# \mu \nu \kappa \lambda}) 
			\nabla_{\mu} \Lie_{\mathcal{Z}}^{I_2} \Far_{\kappa \lambda}. \notag
	\end{align}
	\end{subequations}
	In the above formulas, $\mathfrak{F}_{\triangle}^{\nu}$ and $N_{\triangle}^{\# \mu \nu \kappa \lambda}$ are the error terms
	appearing in \eqref{E:FtriangleSmallAlgebraic} and \eqref{E:NtriangleSmallAlgebraic} respectively, 
	while $\mathscr{P}_{(\Far)}^{\nu}(\cdot, \cdot)$ and $\mathscr{Q}_{(i;\Far)}^{\nu}(\cdot, \cdot),$ $(i=1,2),$  
	$(\nu = 0,1,2,3),$ are the quadratic forms defined in \eqref{E:PFar}, \eqref{E:Q1Far}, and \eqref{E:Q2Far} respectively. 
\end{proposition}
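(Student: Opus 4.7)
The proof naturally separates into establishing the form of the equations of variation and then expanding the inhomogeneities. For claim (i), the plan is to apply $\Lie_{\mathcal{Z}}^I$ to \eqref{E:ReduceddFis0Summary} and commute each $\Lie_Z$ through $\nabla$ via Lemma \ref{L:Liecommuteswithcoordinatederivatives}; the totally antisymmetric expression closes on $\Lie_{\mathcal{Z}}^I \Far$ with no residue, so $\mathfrak{F}_{\lambda\mu\nu}^{(I)} = 0$. For the second equation, applying $\Liemod_{\mathcal{Z}}^I$ to \eqref{E:ReduceddMis0Summary} and adding and subtracting $N^{\#\mu\nu\kappa\lambda}\nabla_\mu \Lie_{\mathcal{Z}}^I \Far_{\kappa\lambda}$ produces \eqref{E:EOVdMis0} with $\mathfrak{F}_{(I)}^\nu$ defined by \eqref{E:LiemodZIdifferentiatedEOVInhomogeneousterms}. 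The identity is tautological at this stage; the real content of the proposition is in the structural expansions \eqref{E:LiemodZIFExpanded} and \eqref{E:LiemodZINnablaFarCommutatorTerms}.

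To prove \eqref{E:LiemodZIFExpanded}, I would substitute $\mathfrak{F}^\nu = \mathscr{Q}_{(2;\Far)}^\nu(\nabla h,\Far) + \mathfrak{F}_\triangle^\nu$ from \eqref{E:EMBIFarInhomogeneous} and induct on $|I|$. The form $\mathscr{Q}_{(2;\Far)}^\nu$ contracts three factors of $(m^{-1})$ against $(\nabla h)$ and $\Far$; by \eqref{E:LieZonmupper} each $\Lie_Z (m^{-1}) = -c_Z (m^{-1})$, while on $(\nabla h)$ and $\Far$ we invoke Lemma \ref{L:Liecommuteswithcoordinatederivatives} to rewrite $\Lie_Z \nabla h = \nabla \Lie_Z h$. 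The $-c_Z$ contributions from the three $(m^{-1})$'s, combined with the $+2c_Z$ shift built into $\Liemod_Z$, produce a constant multiple of $\mathscr{Q}_{(2;\Far)}^\nu(\nabla h, \Far)$ at each step, so the induction closes on a constant-coefficient sum over $\mathscr{Q}_{(2;\Far)}^\nu(\nabla\Lie_{\mathcal{Z}}^{I_1}h, \Lie_{\mathcal{Z}}^{I_2}\Far)$. The $\mathfrak{F}_\triangle^\nu$ contributions are left packaged as $\Lie_{\mathcal{Z}}^J \mathfrak{F}_\triangle^\nu$, yielding the second sum.

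Finally, for \eqref{E:LiemodZINnablaFarCommutatorTerms} I would insert the null-form decomposition \eqref{E:NNullFormDecomposition} into the commutator. The decisive cancellation is Lemma \ref{L:LiemodZLiemodMaxwellCommutator}: the pure Maxwell contraction $\tfrac{1}{2}[(m^{-1})^{\mu\kappa}(m^{-1})^{\nu\lambda} - (m^{-1})^{\mu\lambda}(m^{-1})^{\nu\kappa}]\nabla_\mu\Far_{\kappa\lambda}$ commutes exactly with $\Liemod_{\mathcal{Z}}^I$, so this piece contributes precisely the $N^{\#\mu\nu\kappa\lambda}\nabla_\mu\Lie_{\mathcal{Z}}^I\Far_{\kappa\lambda}$ that cancels in \eqref{E:LiemodZIdifferentiatedEOVInhomogeneousterms} (once the linear $h$-corrections and $N_\triangle$-corrections to $N^{\#\mu\nu\kappa\lambda}$ are repackaged accordingly). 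What survives is $\Liemod_{\mathcal{Z}}^I$ applied to $-\mathscr{P}_{(\Far)}^\nu(h,\nabla\Far) - \mathscr{Q}_{(1;\Far)}^\nu(h,\nabla\Far) + N_\triangle^{\#\mu\nu\kappa\lambda}\nabla_\mu\Far_{\kappa\lambda}$, minus the terms just accounted for; these expand by Leibniz and \eqref{E:LieZonmupper} into the three sums on the right of \eqref{E:LiemodZINnablaFarCommutatorTerms}. The main bookkeeping obstacle is tracking the strict constraint $|I_2| \leq |I| - 1$: it holds because every surviving term carries an explicit factor of $h$ or of $N_\triangle$ (itself quadratic in $(h,\Far)$), so at least one vectorfield $Z \in \mathcal{Z}$ must land on these $h$-factors rather than on the $\nabla\Far$ factor, leaving no more than $|I| - 1$ derivatives on $\Far$.
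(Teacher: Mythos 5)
Your proof takes essentially the same route as the paper: commute $\Lie_{\mathcal{Z}}^I$ through $\nabla$ for the Bianchi equation, set up \eqref{E:LiemodZIdifferentiatedEOVInhomogeneousterms} tautologically, and insert the null-form decomposition \eqref{E:NNullFormDecomposition} into the commutator so that the Maxwell piece cancels via Lemma \ref{L:LiemodZLiemodMaxwellCommutator} and the remaining $\mathscr{P}_{(\Far)}^{\nu},$ $\mathscr{Q}_{(1;\Far)}^{\nu},$ $N_{\triangle}^{\#\mu\nu\kappa\lambda}$ pieces expand by Leibniz and \eqref{E:LieZonmupper} --- which is precisely the content of Lemma \ref{L:nullformvectorfieldcommutation} that the paper invokes. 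One imprecision worth flagging: your closing justification of $|I_2| \leq |I|-1$ (``at least one vectorfield $Z$ must land on these $h$-factors'') is not the correct mechanism, since surviving terms such as $-c_Z\,\mathscr{Q}_{(1;\Far)}^{\nu}(h,\nabla\Far)$ in the case $|I|=1$ have no $Z$ landing on $h$ at all --- the $Z$ is absorbed into a $c_Z$ coefficient via \eqref{E:LieZonmupper} or the $2c_Z$ shift in $\Liemod_Z.$ The constraint actually holds because the $(I_1,I_2)=(0,I)$ term of each Leibniz expansion carries coefficient exactly $1$ and is cancelled by the subtracted $\mathscr{P}_{(\Far)}^{\nu}(h,\nabla\Lie_{\mathcal{Z}}^I\Far),$ $\mathscr{Q}_{(1;\Far)}^{\nu}(h,\nabla\Lie_{\mathcal{Z}}^I\Far),$ and $N_{\triangle}^{\#\mu\nu\kappa\lambda}\nabla_{\mu}\Lie_{\mathcal{Z}}^I\Far_{\kappa\lambda};$ everything else necessarily has $|I_2| < |I|,$ whether a $Z$ hits $h,$ hits an $m^{-1},$ or is consumed by the modification.
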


\begin{proof}
	
	To prove \eqref{E:EOVInhomogeneousTermsJvanish}, we first recall equation \eqref{E:ReduceddFis0Summary}, which states 
	that $\Far_{\mu \nu}$ is a solution to $\nabla_{[\kappa} \Far_{\mu \nu]} = 0,$ where $[\cdots]$ denotes anti-symmetrization.
	Using \eqref{E:Liecommuteswithcoordinatederivatives} it therefore follows that
	
	\begin{align}
		0 = \Lie_{\mathcal{Z}}^I \nabla_{[\lambda} \Far_{\mu \nu]} = \nabla_{[\lambda} \Lie_{\mathcal{Z}}^I \Far_{\mu \nu]},
	\end{align}
	which is the desired result. 
	
	To derive \eqref{E:LiemodZIdifferentiatedEOVInhomogeneousterms}, we simply differentiate each side of
	\eqref{E:EOVdMis0} with $\Liemod_{\mathcal{Z}}^I$ to conclude that 
	$\Liemod_{\mathcal{Z}}^I \big( N^{\# \mu \nu \kappa \lambda} 
	\nabla_{\mu} \Far_{\kappa \lambda} \big) = \Liemod_{\mathcal{Z}}^I \mathfrak{F}^{\nu}.$ Trivial algebraic manipulation 
	then leads to the fact that $N^{\# \mu \nu \kappa \lambda} 
	\nabla_{\mu} \Lie_{\mathcal{Z}}^I\Far_{\kappa \lambda} = \mathfrak{F}_{(I)}^{\nu},$ where $\mathfrak{F}_{(I)}^{\nu}$ 
	is defined by \eqref{E:LiemodZIdifferentiatedEOVInhomogeneousterms}.
	
 	Equation \eqref{E:LiemodZIFExpanded} follows from \eqref{E:EMBIFarInhomogeneous},
 	the Definition \ref{D:ModifiedDerivatives} of $\Liemod_Z,$ and Lemma \ref{L:nullformvectorfieldcommutation}, which is proved
 	in Section \ref{S:UsefulLemmas}.
	
	To prove \eqref{E:LiemodZINnablaFarCommutatorTerms}, we first recall equation \eqref{E:NNullFormDecomposition}:
	
	\begin{align} \label{E:NNullFormDecompositionAgain}
		N^{\# \mu \nu \kappa \lambda} \nabla_{\mu} \Far_{\kappa \lambda} 
		& = \frac{1}{2} \big[ (m^{-1})^{\mu \kappa} (m^{-1})^{\nu \lambda} - (m^{-1})^{\mu \lambda} (m^{-1})^{\nu \kappa} 
			\big] \nabla_{\mu} \Far_{\kappa \lambda} 
			\ - \ \mathscr{P}_{(\Far)}^{\nu}(h, \nabla \Far) 
			\ - \ \mathscr{Q}_{(1;\Far)}^{\nu}(h, \nabla \Far)
			\ + \ N_{\triangle}^{\# \mu \nu \kappa \lambda} \nabla_{\mu} \Far_{\kappa \lambda}.
	\end{align}	
	The commutator term arising from the $\big[ (m^{-1})^{\mu \kappa} (m^{-1})^{\nu \lambda} 
		- (m^{-1})^{\mu \lambda} (m^{-1})^{\nu \kappa} \big] \nabla_{\mu} \Far_{\kappa \lambda}$ term on the right-hand side of 
	\eqref{E:NNullFormDecompositionAgain} vanishes. More specifically, we use \eqref{E:LiemodZLiemodMaxwellCommutator} to conclude that
	
	\begin{align} \label{E:LieMinkowskiCommutatorTermVanishes}
			\big[(m^{-1})^{\mu \kappa} (m^{-1})^{\nu \lambda} - (m^{-1})^{\mu \lambda} (m^{-1})^{\nu \kappa} \big]  
				\nabla_{\mu} \Lie_{\mathcal{Z}}^I \Far_{\kappa \lambda}
			\ - \ \Liemod_{\mathcal{Z}}^I \Big\lbrace 
				\big[(m^{-1})^{\mu \kappa} (m^{-1})^{\nu \lambda} 
				- (m^{-1})^{\mu \lambda} (m^{-1})^{\nu \kappa}\big] \nabla_{\mu}\Far_{\kappa \lambda} \Big\rbrace = 0.
	\end{align}
	Therefore, it follows from \eqref{E:NNullFormDecompositionAgain} and \eqref{E:LieMinkowskiCommutatorTermVanishes} that
	
		\begin{align}		
			N^{\# \mu \nu \kappa \lambda}\nabla_{\mu} \Lie_{\mathcal{Z}}^I \Far_{\kappa \lambda}
			\ - \ \Liemod_{\mathcal{Z}}^I \big(N^{\# \mu \nu \kappa \lambda}\nabla_{\mu}\Far_{\kappa \lambda}\big) 
			& = \Liemod_{\mathcal{Z}}^I \mathscr{P}_{(\Far)}^{\nu}(h, \nabla \Far)
			\ - \ \mathscr{P}_{(\Far)}^{\nu}(h, \nabla\Lie_{\mathcal{Z}}^I\Far)
			\label{E:ProofLiemodZINnablaFarCommutatorTerms} \\
		& \ \ + \ \Liemod_{\mathcal{Z}}^I \mathscr{Q}_{(1;\Far)}^{\nu}(h, \nabla \Far)
			\ - \ \mathscr{Q}_{(1;\Far)}^{\nu}(h, \nabla\Lie_{\mathcal{Z}}^I\Far) \notag \\
		& \ \ + \ N_{\triangle}^{\# \mu \nu \kappa \lambda} \nabla_{\mu} \Lie_{\mathcal{Z}}^I 
			\Far_{\kappa \lambda} \ - \ \Liemod_{\mathcal{Z}}^I (N_{\triangle}^{\# \mu \nu \kappa \lambda} 
			\nabla_{\mu} \Far_{\kappa \lambda}). \notag
		\end{align}
		The expression \eqref{E:LiemodZINnablaFarCommutatorTerms} now follows from \eqref{E:ProofLiemodZINnablaFarCommutatorTerms},
		the Leibniz rule, the Definition \ref{D:ModifiedDerivatives} of $\Liemod_Z,$ 
		Lemma \ref{L:Liecommuteswithcoordinatederivatives}, and Lemma \ref{L:nullformvectorfieldcommutation}.

\end{proof}

\subsection{The canonical stress} \label{SS:CanonicalStress}

The notion of the \emph{canonical stress tensorfield} $\Stress_{\ \nu}^{\mu}$ in the context of PDE energy estimates
was introduced by Christodoulou in \cite{dC2000}. As explained in Section \ref{SSS:EnergyandStress}, from the point of view of energy estimates, it plays the role of an energy-momentum-type tensor for the equations of variation. Its two key properties are i) its divergence is lower-order (in the sense of the number of derivatives falling on the variations $\dot{\Far}_{\mu \nu}$); and ii) contraction against a suitable (covector, vector) pair $(\xi_{\mu}, X^{\nu})$ leads to a positive energy density that can be used achieve $L^2$ control of solutions $\dot{\Far}_{\mu \nu}$ to the equations of variation. 
As we will see, property i) is captured by Lemma \ref{L:DivergenceofStress} and \eqref{E:currentdivergence}, while property ii)
is captured by \eqref{E:dotJ0estimate}, \eqref{E:FirstweightedenergyFar}, and \eqref{E:SecondweightedenergyFar}. In order to understand the origin of the canonical stress, we first introduce Christodoulou's \emph{linearized Lagrangian} \cite{dC2000}.

\begin{definition}

Given an electromagnetic Lagrangian $\mathscr{L}[\cdot]$ (as described in Section \ref{SS:Lagrangianformluationofnonlinearelectromagnetism}) and a ``background'' $(h_{\mu \nu}, \Far_{\mu \nu}),$ we define the linearized Lagrangian by

\begin{align} \label{E:LinearizedLagrangian}
	\dot{\mathscr{L}} = \dot{\mathscr{L}}[\dot{\Far};h,\Far] 
		\eqdef \frac{1}{2} \frac{\partial^2 \Ldual[h,\Far]}{\partial \Far_{\zeta \eta} \partial \Far_{\kappa 
		\lambda}} \dot{\Far}_{\zeta \eta} \dot{\Far}_{\kappa \lambda} 
		= - \frac{1}{4}N^{\#\zeta \eta \kappa \lambda} \dot{\Far}_{\zeta \eta} \dot{\Far}_{\kappa \lambda},
\end{align}
where $N^{\#\zeta \eta \kappa \lambda}$ is the $(h_{\mu \nu}, \Far_{\mu \nu})-$dependent tensorfield defined in \eqref{E:firstNdef}.

\end{definition}

The merit of the above definition is the following: the principal part (from the point of view of number of derivatives) of the Euler-Lagrange equations (assuming that we view $(h,\Far)$ as a background, $\dot{\Far}$ to be the unknowns, and that 
an appropriately defined action\footnote{A suitable action $\mathcal{A}_{\mathfrak{C}}[\dot{\Far}]$ is e.g. of the form  $\mathcal{A}_{\mathfrak{C}}[\dot{\Far}] \eqdef \int_{\mathfrak{C} \Subset \mathfrak{M}} \dot{\mathscr{L}}[\dot{\Far};h,\Far] \, d^4x,$ where $\mathfrak{C}$ is a compact subset of spacetime.} is stationary with respect to closed variations of $\dot{\Far}$) corresponding to $\dot{\mathscr{L}}[\dot{\Far};h,\Far]$ is identical to the principal part of the electromagnetic equations of variation \eqref{E:EOVdMis0}; i.e., $\dot{\mathscr{L}}[\dot{\Far};h,\Far]$ generates the linearized equations.

\begin{definition}
Given a linearized Lagrangian $\dot{\mathscr{L}}[\dot{\Far};h,\Far],$ the canonical stress tensorfield $\Stress_{\ \nu}^{\mu}$ is defined as follows:

\begin{align} \label{E:Stressdef}
	\Stress_{\ \nu}^{\mu} \eqdef - 2\frac{\partial \dot{\mathscr{L}}}{\partial \dot{\Far}_{\mu \zeta}}\dot{\Far}_{\nu \zeta}
		\ + \ \delta_{\nu}^{\mu} \dot{\mathscr{L}} 
		=	N^{\# \mu \zeta \kappa \lambda} \dot{\Far}_{\kappa \lambda} \dot{\Far}_{\nu \zeta}
			\ - \ \frac{1}{4} \delta_{\nu}^{\mu} N^{\#\zeta \eta \kappa \lambda} \dot{\Far}_{\zeta \eta} \dot{\Far}_{\kappa \lambda},
\end{align}
where $N^{\# \mu \nu \kappa \lambda}$ is defined in \eqref{E:firstNdef}.
\end{definition}
\noindent Note that in contrast to the energy-momentum tensor $T_{\mu \nu},$ 
$\Stress_{\mu \nu} \eqdef m_{\mu \kappa} \Stress_{\ \nu}^{\kappa}$ is in general not symmetric.

Because of our assumption \eqref{E:Ldualassumptions} concerning the Lagrangian, $\Stress_{\ \nu}^{\mu}$ is equal to the 
energy-momentum tensor (in $\dot{\Far}$) for the linear Maxwell-Maxwell equations in Minkowski space, plus small corrections. More specifically, it follows from definition \ref{E:Stressdef} and the decomposition \eqref{E:NSummarydef} that

\begin{align} \label{E:StressExpansion}
	\Stress_{\ \nu}^{\mu} & = \overbrace{\dot{\Far}^{\mu \zeta} \dot{\Far}_{\nu \zeta} 
		\ - \ \frac{1}{4} \delta_{\nu}^{\mu} 
		\dot{\Far}_{\zeta \eta}\dot{\Far}^{\zeta \eta}}^{\mbox{terms from linear Maxwell-Maxwell equations 
		in Minkowski spacetime}} \\
		& \ \ \overbrace{- \ h^{\mu \kappa} \dot{\Far}_{\kappa \zeta} \dot{\Far}_{\nu}^{\ \zeta}
			\ - \ h^{\kappa \lambda} \dot{\Far}_{\ \kappa}^{\mu} \dot{\Far}_{\nu \lambda}
			\ + \ \frac{1}{2} \delta_{\nu}^{\mu} h^{\kappa \lambda} \dot{\Far}_{\kappa \eta}\dot{\Far}_{\lambda}^{\ \eta}}^{\mbox{corrections to Minkowskian 
			linear Maxwell-Maxwell equations arising from $h$}}
			\notag \\
		& \ \ + \ \underbrace{N_{\triangle}^{\# \mu \zeta \kappa \lambda} \dot{\Far}_{\kappa \lambda} \dot{\Far}_{\nu \zeta}
			\ - \ \frac{1}{4} \delta_{\nu}^{\mu} N_{\triangle}^{\#\zeta \eta \kappa \lambda} \dot{\Far}_{\zeta \eta} 
			\dot{\Far}_{\kappa \lambda}.}_{\mbox{error terms}} \notag
\end{align}

The next lemma captures the lower-order divergence property enjoyed by $\Stress_{\ \nu}^{\mu}.$ 

\begin{lemma} \label{L:DivergenceofStress} \textbf{(Divergence of the canonical stress)}
Let $\dot{\Far}_{\mu \nu}$ be a solution to the equations of variation \eqref{E:EOVdFis0} - \eqref{E:EOVdMis0}
corresponding to the background $(h_{\mu \nu}, \Far_{\mu \nu}),$ and let $\dot{\mathfrak{F}}_{\lambda \mu \nu},$ $\dot{\mathfrak{F}}^{\nu}$ be the inhomogeneous terms from the right-hand sides of \eqref{E:EOVdFis0} - \eqref{E:EOVdMis0}. Let $\Stress_{\ \nu}^{\mu}$
be the canonical stress tensorfield defined in \eqref{E:Stressdef}. Then

\begin{align} \label{E:divergenceofStress}
	\nabla_{\mu} \Stress_{\ \nu}^{\mu} & = - \ \frac{1}{2} N^{\#\zeta \eta \kappa \lambda} \dot{\Far}_{\zeta \eta} 
		\dot{\mathfrak{F}}_{\nu \kappa \lambda}
		\ + \ \dot{\Far}_{\nu \eta} \dot{\mathfrak{F}}^{\eta}
		\ + \ (\nabla_{\mu}N^{\# \mu \zeta \kappa \lambda}) \dot{\Far}_{\kappa \lambda} \dot{\Far}_{\nu \zeta}
		\ - \ \frac{1}{4} (\nabla_{\nu}N^{\#\zeta \eta \kappa \lambda}) \dot{\Far}_{\zeta \eta} \dot{\Far}_{\kappa \lambda}, \\
	& = \ - \frac{1}{2} N^{\#\zeta \eta \kappa \lambda} \dot{\Far}_{\zeta \eta} \dot{\mathfrak{F}}_{\nu \kappa \lambda}
		\ + \ \dot{\Far}_{\nu \eta} \dot{\mathfrak{F}}^{\eta} \notag \\
	& \ \ - \ (\nabla_{\mu} h^{\mu \kappa}) \dot{\Far}_{\kappa \zeta} \dot{\Far}_{\nu}^{\ \zeta}
			\ - \ (\nabla_{\mu} h^{\kappa \lambda}) \dot{\Far}_{\ \kappa}^{\mu} \dot{\Far}_{\nu \lambda}
			\ + \ \frac{1}{2} (\nabla_{\nu} h^{\kappa \lambda}) \dot{\Far}_{\kappa \eta}\dot{\Far}_{\lambda}^{\ \eta}
		\notag \\
	& \ \ + \ (\nabla_{\mu}N_{\triangle}^{\# \mu \zeta \kappa \lambda}) \dot{\Far}_{\kappa \lambda} \dot{\Far}_{\nu \zeta}
		\ - \ \frac{1}{4} (\nabla_{\nu}N_{\triangle}^{\#\zeta \eta \kappa \lambda}) \dot{\Far}_{\zeta \eta} \dot{\Far}_{\kappa 
		\lambda}. \notag
\end{align}

\end{lemma}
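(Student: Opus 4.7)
\noindent\textbf{Proof proposal for Lemma \ref{L:DivergenceofStress}.}
The plan is to compute $\nabla_{\mu} \Stress_{\ \nu}^{\mu}$ directly from the definition \eqref{E:Stressdef} using the Leibniz rule, and then to use the equations of variation \eqref{E:EOVdFis0}--\eqref{E:EOVdMis0} together with the symmetries \eqref{E:Nminussignproperty1}--\eqref{E:Nsymmetryproperty} of $N^{\#\mu\nu\kappa\lambda}$ to rewrite the terms containing derivatives of $\dot{\Far}$ in terms of the inhomogeneities. Expanding the Leibniz rule produces six terms: two ``divergence-of-$N$'' terms, which appear directly on the right-hand side of \eqref{E:divergenceofStress}, and four ``derivative-of-$\dot{\Far}$'' terms to be processed. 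The first derivative-of-$\dot{\Far}$ term, namely $N^{\#\mu\zeta\kappa\lambda}(\nabla_{\mu}\dot{\Far}_{\kappa\lambda})\dot{\Far}_{\nu\zeta}$, is recognized as $\dot{\Far}_{\nu\eta}\dot{\mathfrak{F}}^{\eta}$ via \eqref{E:EOVdMis0}. The remaining two terms involving $\nabla_{\nu}\dot{\Far}$ (which come from differentiating $\delta_{\nu}^{\mu}\dot{\mathscr{L}}$) can be merged into a single term $-\tfrac{1}{2}N^{\#\zeta\eta\kappa\lambda}\dot{\Far}_{\zeta\eta}(\nabla_{\nu}\dot{\Far}_{\kappa\lambda})$ by exploiting the big symmetry \eqref{E:Nsymmetryproperty}.

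The main obstacle is the fourth term, $T \eqdef N^{\#\mu\zeta\kappa\lambda}\dot{\Far}_{\kappa\lambda}(\nabla_{\mu}\dot{\Far}_{\nu\zeta})$, which contains a $\nabla_{\mu}\dot{\Far}_{\nu\zeta}$ derivative not directly readable from either equation of variation. To handle it, I will first apply the symmetry \eqref{E:Nsymmetryproperty} in the form $N^{\#\mu\zeta\kappa\lambda}=N^{\#\kappa\lambda\mu\zeta}$ to rewrite
\begin{align*}
T = N^{\#\mu\zeta\kappa\lambda}\dot{\Far}_{\mu\zeta}\nabla_{\kappa}\dot{\Far}_{\nu\lambda}.
\end{align*}
Then I will invoke the Bianchi-type identity \eqref{E:EOVdFis0} to replace $\nabla_{\kappa}\dot{\Far}_{\nu\lambda} = \dot{\mathfrak{F}}_{\kappa\nu\lambda} - \nabla_{\nu}\dot{\Far}_{\lambda\kappa} - \nabla_{\lambda}\dot{\Far}_{\kappa\nu}$. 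The $\nabla_{\lambda}\dot{\Far}_{\kappa\nu}$ piece, after relabeling $\kappa\leftrightarrow\lambda$ and applying the anti-symmetries \eqref{E:Nminussignproperty2} and $\dot{\Far}_{\lambda\nu}=-\dot{\Far}_{\nu\lambda}$, equals $-T$ itself, giving an equation $2T=\ldots$ and allowing me to solve for $T$. The $\nabla_{\nu}\dot{\Far}_{\lambda\kappa}$ piece, after using anti-symmetry in $\kappa\lambda$, produces $+\tfrac{1}{2}N^{\#\zeta\eta\kappa\lambda}\dot{\Far}_{\zeta\eta}\nabla_{\nu}\dot{\Far}_{\kappa\lambda}$, which precisely cancels the $\nabla_{\nu}\dot{\Far}$ contribution isolated above. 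The $\dot{\mathfrak{F}}_{\kappa\nu\lambda}$ piece, after using the total anti-symmetry of $\dot{\mathfrak{F}}_{\lambda\mu\nu}$ (which follows from anti-symmetry of $\dot{\Far}$ and the form of \eqref{E:EOVdFis0}) to rewrite $\dot{\mathfrak{F}}_{\kappa\nu\lambda}=-\dot{\mathfrak{F}}_{\nu\kappa\lambda}$, produces exactly the first term $-\tfrac{1}{2}N^{\#\zeta\eta\kappa\lambda}\dot{\Far}_{\zeta\eta}\dot{\mathfrak{F}}_{\nu\kappa\lambda}$ on the right-hand side of \eqref{E:divergenceofStress}. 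This proves the first equality.

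For the second equality of \eqref{E:divergenceofStress}, the plan is straightforward substitution: insert the decomposition \eqref{E:NSummarydef} of $N^{\#\mu\nu\kappa\lambda}$ into the two tensorfield-divergence terms $(\nabla_{\mu}N^{\#\mu\zeta\kappa\lambda})\dot{\Far}_{\kappa\lambda}\dot{\Far}_{\nu\zeta}$ and $-\tfrac{1}{4}(\nabla_{\nu}N^{\#\zeta\eta\kappa\lambda})\dot{\Far}_{\zeta\eta}\dot{\Far}_{\kappa\lambda}$. The leading Minkowskian piece $\tfrac{1}{2}[(m^{-1})^{\mu\kappa}(m^{-1})^{\nu\lambda}-(m^{-1})^{\mu\lambda}(m^{-1})^{\nu\kappa}]$ is covariantly constant under $\nabla$ by \eqref{E:nablamis0} and contributes nothing; the $h$-linear pieces contribute the three explicit $\nabla h$ terms; and the remaining contribution is packaged into the $N_{\triangle}$ terms. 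A routine relabeling using the anti-symmetries of $\dot{\Far}$ and of $N$ yields the displayed form, completing the proof.
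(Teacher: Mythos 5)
Your proof is correct and fills in precisely the algebraic details that the paper leaves to the reader: the paper's proof of this lemma consists of a single sentence citing the equations of variation \eqref{E:EOVdFis0}--\eqref{E:EOVdMis0} and the symmetry properties \eqref{E:Nminussignproperty1}--\eqref{E:Nsymmetryproperty} of $N^{\#\mu\nu\kappa\lambda}$. Your treatment of the problematic term $T \eqdef N^{\#\mu\zeta\kappa\lambda}\dot{\Far}_{\kappa\lambda}\nabla_{\mu}\dot{\Far}_{\nu\zeta}$ — symmetrizing via the big symmetry $N^{\#\mu\zeta\kappa\lambda}=N^{\#\kappa\lambda\mu\zeta}$, substituting the first equation of variation, and solving $2T=\cdots$ — together with your verification of the total anti-symmetry of $\dot{\mathfrak{F}}_{\lambda\mu\nu}$ and the cancellation of the $\nabla_{\nu}\dot{\Far}$ contributions, is exactly the computation behind that one-liner.
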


\begin{proof}
	To obtain \eqref{E:divergenceofStress}, we use use the equations \eqref{E:EOVdFis0} - \eqref{E:EOVdMis0}, 
	together with the properties \eqref{E:Nminussignproperty1} - \eqref{E:Nsymmetryproperty}, which are also satisfied by
	the tensorfield $N_{\triangle}^{\# \mu \zeta \kappa \lambda}.$
\end{proof}

\subsection{Electromagnetic energy currents}

In this section, we introduce the energy current that will be used to derive the 
weighted energy estimate \eqref{E:FirstweightedenergyFar} for a solution $\dot{\Far}$ to the equations
of variation \eqref{E:EOVdFis0} - \eqref{E:EOVdMis0}.

\begin{definition} \label{D:Jdotdef}
	Let $h_{\mu \nu}$ be a symmetric type $\binom{0}{2}$ tensorfield, and let
	$\Far_{\mu \nu},$ $\dot{\Far}_{\mu \nu}$ be a pair of two-forms. Let $w(q)$ be the weight defined in \eqref{E:weight},
	and let $X^{\nu} \eqdef w(q)\delta_0^{\nu}$ be the ``multiplier'' vectorfield.
	We define the \emph{energy current} $\dot{J}_{(h,\Far)}^{\mu}[\dot{\Far}]$ corresponding to the variation 
	$\dot{\Far}_{\mu \nu}$ and the background $(h_{\mu \nu}, \Far_{\mu \nu})$ to be the vectorfield
	
	\begin{align} \label{E:Jdotdef}
		\dot{J}_{(h,\Far)}^{\mu}[\dot{\Far}] & \eqdef - \Stress_{\ \nu}^{\mu} X^{\nu} = - w(q) \Stress_{\ 0}^{\mu},
	\end{align}
	where $\Stress_{\ \nu}^{\mu}$ is the canonical stress tensorfield from \eqref{E:Stressdef}.
\end{definition}

\begin{lemma} \label{L:currentproperties} \textbf{(Positivity of $\dot{J}_{(h,\Far)}^0$)}
	Let $\dot{J}_{(h,\Far)}^{\mu}[\dot{\Far}]$ be the energy current defined in \eqref{E:Jdotdef}. Then
	
	\begin{align} \label{E:dotJ0estimate}
		\dot{J}_{(h,\Far)}^0 & = \frac{1}{2} |\dot{\Far}|^2 w(q) 
		\ + \ \Big\lbrace O^{\infty}(|h|;\Far) + O^{\dParameter}\big(|(h,\Far)|^2 \big) \Big\rbrace|\dot{\Far}|^2 w(q).
	\end{align}
	
	Furthermore, if $\dot{\Far}_{\mu \nu}$ is a solution to the equations of variation \eqref{E:EOVdFis0} - \eqref{E:EOVdMis0}
	with inhomogeneous terms $\dot{\mathfrak{F}}_{\lambda \mu \nu} \equiv 0,$
	then the Minkowskian divergence of $\dot{J}_{(h,\Far)}$ can be expressed as follows:
	
	\begin{align} \label{E:currentdivergence}
		\nabla_{\mu} \dot{J}_{(h,\Far)}^{\mu} & = - \ \frac{1}{2} w'(q) (\dot{\alpha}^2 + \dot{\rho}^2 + \dot{\sigma}^2) \\
		& \ \ - \ w(q) \Big \lbrace \dot{\Far}_{0 \eta} \dot{\mathfrak{F}}^{\eta} 
			- (\nabla_{\mu} h^{\mu \kappa}) \dot{\Far}_{\kappa \zeta} \dot{\Far}_{0}^{\ \zeta}
			- (\nabla_{\mu} h^{\kappa \lambda}) \dot{\Far}_{\ \kappa}^{\mu} \dot{\Far}_{0 \lambda}
			+ \frac{1}{2} (\nabla_{t} h^{\kappa \lambda}) \dot{\Far}_{\kappa \eta}\dot{\Far}_{\lambda}^{\ \eta} \Big \rbrace \notag  \\
		& \ \ - \ w'(q) \Big\lbrace - L_{\mu}h^{\mu \kappa} \dot{\Far}_{\kappa \zeta} \dot{\Far}_{0}^{\ \zeta} 
			- L_{\mu} h^{\kappa \lambda} \dot{\Far}_{\ \kappa}^{\mu} \dot{\Far}_{0 \lambda} 
			- \frac{1}{2} h^{\kappa \lambda} \dot{\Far}_{\kappa \eta} \dot{\Far}_{\lambda}^{\ \eta} \Big\rbrace \notag \\
		& \ \ - \ w(q) \Big \lbrace (\nabla_{\mu}N_{\triangle}^{\# \mu \zeta \kappa \lambda}) \dot{\Far}_{\kappa \lambda} 
			\dot{\Far}_{0 \zeta} - \frac{1}{4} (\nabla_{t} N_{\triangle}^{\#\zeta \eta \kappa \lambda}) \dot{\Far}_{\zeta \eta} 
			\dot{\Far}_{\kappa \lambda} \Big \rbrace \notag \\ 
		& \ \ - \ w'(q) \Big\lbrace L_{\mu} N_{\triangle}^{\# \mu \zeta \kappa \lambda} 
			\dot{\Far}_{\kappa \lambda} \dot{\Far}_{0 \zeta} + \frac{1}{4} N_{\triangle}^{\#\zeta \eta \kappa \lambda} 
			\dot{\Far}_{\zeta \eta} \dot{\Far}_{\kappa \lambda} \Big\rbrace,  \notag
	\end{align}
	where $\dot{\alpha} \eqdef \alpha[\dot{\Far}],$ $\dot{\rho} \eqdef \rho[\dot{\Far}],$ 
	and $\dot{\sigma} \eqdef \sigma[\dot{\Far}]$ are the ``favorable'' Minkowskian null components of $\dot{\Far}$
	defined in Section \ref{SS:NullComponents}.
	
\end{lemma}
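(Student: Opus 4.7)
\textbf{Proof proposal for Lemma \ref{L:currentproperties}.}

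My plan is to prove both claims by direct computation, using the decomposition \eqref{E:StressExpansion} of $\Stress_{\ \nu}^\mu$ into its Minkowski Maxwell part, its $h$-linear corrections, and the $N_{\triangle}$-contribution, then evaluating everything in the null frame. For the first claim, I would compute $\dot J^0 = - w(q)\Stress^0_{\ 0}$ by substituting \eqref{E:StressExpansion} with $\mu = \nu = 0$. The Minkowskian-Maxwell block $\dot{\Far}^{0\zeta}\dot{\Far}_{0\zeta} - \tfrac{1}{4}\dot{\Far}_{\zeta\eta}\dot{\Far}^{\zeta\eta}$ equals the standard Maxwell energy density $-\tfrac{1}{2}|\dot{\Far}|^2$ (a one-line calculation using $m_{\mu\nu} = \mathrm{diag}(-1,1,1,1)$), so $-w(q)\Stress_{(Maxwell)\ 0}^0 = \tfrac{1}{2}|\dot{\Far}|^2 w(q)$. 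The remaining terms from \eqref{E:StressExpansion} are bilinear in $\dot{\Far}$ with coefficients of size $O^{\infty}(|h|;\Far)$ or $O^{\dParameter}(|(h,\Far)|^2)$, so they contribute exactly the indicated error.

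For the divergence formula, the plan is to apply the product rule
\begin{align*}
\nabla_\mu \dot J^\mu = -(\nabla_\mu \Stress^\mu_{\ \nu}) X^\nu - \Stress^\mu_{\ \nu}\nabla_\mu X^\nu,
\end{align*}
and treat the two pieces separately. The first piece is handled by Lemma \ref{L:DivergenceofStress}: setting $\dot{\mathfrak{F}}_{\nu\kappa\lambda} = 0$ and contracting with $X^\nu = w(q)\delta_0^\nu$ produces precisely the $-w(q)\{\cdots\}$ line in the stated formula (with $\nu = 0$, i.e.\ $\nabla_\nu = \nabla_t$). For the second piece, I observe that because $q = r - t$ we have $\nabla_\mu q = (-1, \omega^1, \omega^2, \omega^3) = L_\mu$, so $\nabla_\mu X^\nu = w'(q)\, L_\mu\, \delta_0^\nu$ and thus $-\Stress^\mu_{\ \nu}\nabla_\mu X^\nu = - w'(q)\, L_\mu \Stress^\mu_{\ 0}$.

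All that remains is to decompose $L_\mu \Stress^\mu_{\ 0}$ using \eqref{E:StressExpansion}. The Minkowskian-Maxwell part gives $L_\mu \Stress_{(Maxwell)\ 0}^\mu = T(L,\partial_t) = \tfrac{1}{2}[T(L,L) + T(L,\uL)]$, and via the null frame relations of Lemma \ref{L:PropertiesofuLandL} one computes $T(L,L) = |\dot\alpha|^2$ and $T(L,\uL) = \dot\rho^2 + \dot\sigma^2$ (using $F^{L\uL} = -\tfrac{1}{2}(2\dot\rho) \cdot (-\tfrac{1}{2})$, $F^{LA} = \tfrac{1}{2}\dot{\underline\alpha}_A$, $F_{AB}F^{AB} = 2\dot\sigma^2$, and the identity $\tfrac{1}{4}F_{\alpha\beta}F^{\alpha\beta} = -\tfrac{1}{2}(\dot\rho^2 - \dot\sigma^2) - \tfrac{1}{2}\dot\alpha \cdot \dot{\underline\alpha}$). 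Summing produces the leading term $-\tfrac{1}{2}w'(q)(\dot\alpha^2 + \dot\rho^2 + \dot\sigma^2)$. The $h$-linear and $N_{\triangle}$ correction terms from \eqref{E:StressExpansion} contract against $L_\mu$ to give precisely the two $-w'(q)\{\cdots\}$ lines in the claimed formula.

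The only nontrivial step is the null frame bookkeeping that identifies $L_\mu \Stress^\mu_{\ (Maxwell)\, 0}$ with exactly $\tfrac{1}{2}(\dot\alpha^2 + \dot\rho^2 + \dot\sigma^2)$ -- here the cancellation between the $L^\mu F_{\mu\alpha}\cdot \uL^\nu F_\nu^{\ \alpha}$ piece (which contributes $2\dot\rho^2 + \dot\alpha\cdot\dot{\underline\alpha}$) and the trace piece $\tfrac{1}{2}F_{\alpha\beta}F^{\alpha\beta}$ (which contributes $-\dot\rho^2 - \dot\alpha\cdot\dot{\underline\alpha} + \dot\sigma^2$) must produce the correct coefficients and, crucially, must eliminate $\dot{\underline\alpha}$, whose presence would destroy the sign of the spacetime-integral term exploited in Lemma \ref{L:weightedenergyFar}. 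This is the expected algebraic heart of the positivity; once verified, the rest of the proof is a mechanical assembly of terms.
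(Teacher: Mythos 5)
Your proposal is correct and follows the same route as the paper: compute $\nabla_\mu \dot{J}^\mu$ via the product rule, invoke Lemma \ref{L:DivergenceofStress} for the $(\nabla_\mu \Stress^\mu_{\ 0})w(q)$ piece, and use $\nabla_\mu w(q) = w'(q)L_\mu$ together with the null decomposition \eqref{E:StressExpansion} for the $w'(q)L_\mu\Stress^\mu_{\ 0}$ piece. The null-frame bookkeeping you carry out (showing $L_\mu\Stress_{(Maxwell)\ 0}^\mu = \tfrac{1}{2}(\dot\alpha^2 + \dot\rho^2 + \dot\sigma^2)$, with the $\dot{\ualpha}$ contributions cancelling) is exactly what the paper records in the underbrace of the final display, and your intermediate identities $T(L,L)=\dot\alpha^2$, $T(L,\uL)=\dot\rho^2+\dot\sigma^2$, and $\tfrac{1}{4}\dot{\Far}_{\kappa\lambda}\dot{\Far}^{\kappa\lambda}=-\tfrac{1}{2}\dot\rho^2+\tfrac{1}{2}\dot\sigma^2-\tfrac{1}{2}\dot\alpha\cdot\dot\ualpha$ all check out.
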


\begin{remark}
	The term $\frac{1}{2} w'(q) (\dot{\alpha}^2 + \dot{\rho}^2 + \dot{\sigma}^2)$ appearing on the right-hand side of
	of \eqref{E:currentdivergence} is of central importance for closing the bootstrap argument during
	our global existence proof. It manifests itself as the additional positive space-time integral
	$\int_{0}^{t} \int_{\Sigma_{\tau}} \big(|\dot{\Far}|_{\mathcal{L} \mathcal{N}}^2 
	+ |\dot{\Far}|_{\mathcal{T} \mathcal{T}}^2 \big) w'(q) \,d^3x \, d \tau$ on the left-hand side of 
	\eqref{E:FirstweightedenergyFar} below, and provides a means
	for controlling some of the spacetime integrals that emerge in Section \ref{SS:MainTheoremFarInhomogeneities}. 
\end{remark}

\begin{proof}
	\eqref{E:dotJ0estimate} follows from \eqref{E:StressExpansion}, simple calculations, and \eqref{E:NtriangleSmallAlgebraic}.
	
	To prove \eqref{E:currentdivergence}, we first recall that since $q = r - t,$ it follows that $\nabla_{\mu} q = L_{\mu},$
	where $L$ is defined in \eqref{E:Ldef}. Hence, we have that $\nabla_{\mu} w(q) = w'(q) L_{\mu}.$ 
	Using this fact, \eqref{E:StressExpansion}, and \eqref{E:divergenceofStress}, we calculate that
	
	\begin{align}
		\nabla_{\mu} \dot{J}_{(h,\Far)}^{\mu} & = - \ w(q) \dot{\Far}_{0 \eta} \dot{\mathfrak{F}}^{\eta} \\
		& \ \ - \ w(q) \Big \lbrace - (\nabla_{\mu} h^{\mu \kappa}) \dot{\Far}_{\kappa \zeta} \dot{\Far}_{0}^{\ \zeta}
			- (\nabla_{\mu} h^{\kappa \lambda}) \dot{\Far}_{\ \kappa}^{\mu} \dot{\Far}_{0 \lambda}
			+ \frac{1}{2} (\nabla_{t} h^{\kappa \lambda}) \dot{\Far}_{\kappa \eta}\dot{\Far}_{\lambda}^{\ \eta}
			\Big \rbrace \notag \\
		& \ \ - \ w(q) \Big \lbrace (\nabla_{\mu}N_{\triangle}^{\# \mu \zeta \kappa \lambda}) \dot{\Far}_{\kappa \lambda} 
			\dot{\Far}_{0 \zeta} - \frac{1}{4} (\nabla_{t} N_{\triangle}^{\#\zeta \eta \kappa \lambda}) \dot{\Far}_{\zeta \eta} 
			\dot{\Far}_{\kappa \lambda} \Big \rbrace \notag \\
		& \ \ - \ w'(q) \Big\lbrace \underbrace{L_{\mu} \dot{\Far}^{\mu \zeta} \dot{\Far}_{0 \zeta} 
			+ \frac{1}{4} \dot{\Far}_{\kappa \lambda} \dot{\Far}^{\kappa \lambda}}_{\frac{1}{2}(\dot{\alpha}^2 + \dot{\rho}^2 + 
				\dot{\sigma}^2)} \Big \rbrace \notag \\
		& \ \ - \ w'(q) \Big\lbrace - L_{\mu}h^{\mu \kappa} \dot{\Far}_{\kappa \zeta} \dot{\Far}_{0}^{\ \zeta} 
			- L_{\mu} h^{\kappa \lambda} \dot{\Far}_{\ \kappa}^{\mu} \dot{\Far}_{0 \lambda} 
			- \frac{1}{2} h^{\kappa \lambda} \dot{\Far}_{\kappa \eta} \dot{\Far}_{\lambda}^{\ \eta} \Big\rbrace \notag \\
		& \ \ - \ w'(q) \Big\lbrace L_{\mu} N_{\triangle}^{\# \mu \zeta \kappa \lambda} \dot{\Far}_{\kappa \lambda} \dot{\Far}_{0 \zeta}
			+ \frac{1}{4} N_{\triangle}^{\#\zeta \eta \kappa \lambda} \dot{\Far}_{\zeta \eta} 
			\dot{\Far}_{\kappa \lambda} \Big\rbrace.  \notag \\
	\end{align}
	The expression \eqref{E:currentdivergence} thus follows.
\end{proof}

\section{Decompositions of the Electromagnetic Equations} \label{S:DecompositionsofElectromagneticEquations}

In this section we perform two decompositions of the electromagnetic equations. The first is a null decomposition of the equations of variation, which will be used in Section \ref{S:DecayFortheReducedEquations} to derive pointwise decay estimates for the lower-order Lie derivatives of $\Far_{\mu \nu}.$ The second is a decomposition of the electromagnetic equations into constraint and evolution equations for the Minkowskian one-forms $\Electricfield_{\mu},$ $\Magneticinduction_{\mu},$ which are 
respectively known as the electric field and magnetic induction. This decomposition will be used in Section \ref{S:SmallDataAssumptions} to prove that our smallness condition on the abstract data necessarily implies a smallness condition on the initial energy $\mathcal{E}_{\dParameter;\upgamma;\upmu}(0)$ of the corresponding solution to the reduced equations. We remark that the Minkowskian one-forms $\Displacement_{\mu},$ $\Magneticfield_{\mu},$ which are respectively known as the electric displacement and the magnetic field, and also the geometric electromagnetic one-forms $\mathfrak{\Electricfield}_{\mu},$ $\mathfrak{\Magneticinduction}_{\mu},$ $\mathfrak{\Displacement}_{\mu},$ $\mathfrak{\Magneticfield}_{\mu}$ will play a role in the discussion.

\noindent \hrulefill
\ \\

\subsection{The Minkowskian null decomposition of the electromagnetic equations of variation} \label{SS:NullDecompElectromagnetic} 
In this section, we decompose the equations of variation into equations for the null components of $\dot{\Far}.$ The main advantage of our decomposition, which is given in Proposition \ref{P:EOVNullDecomposition}, is the following: the terms in each equation can be separated into two classes: i) a derivative of a null component in a ``nearly-Minkowski-null'' direction\footnote{By ``nearly-Minkowski-null,'' we mean vectors that are nearly parallel to $\uL$ or $L,$ with some corrections coming from the presence of a non-zero $h$ in the case of the null component $\dot{\ualpha}.$} (which appears on the left-hand side of the inequality); and ii) some error terms (which appear on the right-hand side of the inequality). Although from the point of view of differentiability the error terms are not lower-order, it will turn out that they are lower-order in terms of decay rates. In this way, the equations can be viewed as \emph{ordinary differential equations} for the null components of $\dot{\Far}$ with inhomogeneous terms; this point of view is fully realized in Proposition \ref{P:ODEsNullComponentsLieZIFar}. The key point is that the ODEs we derive will be amenable to Gronwall estimates: in Section \ref{S:DecayFortheReducedEquations}, we will use this line of argument to derive pointwise decay estimates for the null components of the lower-order Lie derivatives of a solution $\Far$ to the electromagnetic equations \eqref{E:ReduceddFis0Summary} - \eqref{E:ReduceddMis0Summary}. These estimates will be an improvement over what can be deduced from the weighted Klainerman-Sobolev inequality \eqref{E:PhiKlainermanSobolev} alone.

We begin the analysis by using \eqref{E:NSummarydef} to write the equations of variation \eqref{E:EOVdFis0} - \eqref{E:EOVdMis0}
in the following form:

\begin{subequations} 
\begin{align}
	\nabla_{\lambda} \dot{\Far}_{\mu \nu} + \nabla_{\mu} \dot{\Far}_{\nu \lambda} + \nabla_{\nu} \dot{\Far}_{\lambda \mu}
		= 0, \label{E:EOVdFis0nullsection} 
\end{align}
\begin{align}
	\bigg\lbrace \frac{1}{2} \big[(m^{-1})^{\mu \kappa} (m^{-1})^{\nu \lambda} - (m^{-1})^{\mu \lambda} (m^{-1})^{\nu \kappa}
		& - h^{\mu \kappa} (m^{-1})^{\nu \lambda} + h^{\mu \lambda} (m^{-1})^{\nu \kappa} 
		\label{E:EOVdMis0nullsection} \\
	& - (m^{-1})^{\mu \kappa} h^{\nu \lambda} + (m^{-1})^{\mu \lambda} h^{\nu \kappa} \big]
		+ N_{\triangle}^{\# \mu \nu \kappa \lambda} \bigg\rbrace \nabla_{\mu} \dot{\Far}_{\kappa \lambda} 
		= \dot{\mathfrak{F}}^{\nu}. \notag  
\end{align}
\end{subequations}

In our calculations below, we will make use of the identities

\begin{align} \label{E:nablaALnablaAuL}
	\nabla_A \uL = -r^{-1} e_A, && \nabla_A L = r^{-1} e_A, 
\end{align}
which can be directly calculated in our wave coordinate system using \eqref{E:uLdef} - \eqref{E:Ldef}. We will also make use
of the identity 

\begin{align} \label{E:nablaAeBcovriantintrintermsofinsicnablaAeBcovraint}
	\angn_A e_B & = \nabla_A e_B \ + \ \frac{1}{2} m(\nabla_A e_B, \uL) L \ + \ \frac{1}{2} m(\nabla_A e_B, L) \uL \\
	& =  \nabla_A e_B \ - \ \frac{1}{2} m(e_B, \nabla_A \uL) L \ - \ \frac{1}{2} m(e_B, \nabla_A L)\uL \notag \\
	& = \nabla_A e_B \ + \ \frac{1}{2}r^{-1} \delta_{AB} (L - \uL), \notag
\end{align}
which follows from \eqref{E:SphereIntrinsicintermsofExtrinsic} and \eqref{E:nablaALnablaAuL}.

Furthermore, if $U$ is a type $\binom{0}{m}$ tensorfield, and $X_{(i)},$ $(1 \leq i \leq m),$ and $Y$ are vectorfields, then 
by the Leibniz rule, we have that

\begin{align} \label{E:nablaLeibnizrule}
	\nabla_Y \big\lbrace U(X_{(1)}, \cdots, X_{(m)}) \big\rbrace 
	& = (\nabla_Y U)(X_{(1)} \cdots, X_{(m)}) 
		\ + \ U(\nabla_Y X_{(1)}, X_{(2)}, \cdots, X_{(m)}) \ + \ \cdots \ + \ U(X_{(1)}, X_{(2)}, \cdots, \nabla_Y X_{(m)}).
\end{align}
Similarly, if $U$ is $m-$tangent to the spheres $S_{r,t},$ then

\begin{align} \label{E:instrinsicnablaLiebnizrule}
	\angn_A (U_{B_{(1)}, \cdots, B_{(m)}}) & \eqdef \angn_{e_A} \big\lbrace U(e_{B_{(1)}}, \cdots, e_{B_{(m)}}) \big\rbrace \\
	& = (\angn_A U)(e_{B_{(1)}}, \cdots, e_{B_{(m)}}) \ + \ U(\angn_A e_{B_{(1)}}, e_{B_{(2)}}, \cdots, e_{B_{(m)}}) \notag \\
	& \ \ + \ \cdots \ + \ U(e_{B_{(1)}}, e_{B_{(2)}}, \cdots, \angn_A e_{B_{(m)}}). \notag
\end{align}

Applying \eqref{E:nablaLeibnizrule} and \eqref{E:instrinsicnablaLiebnizrule} to $\Far,$ and using \eqref{E:nablaALnablaAuL}, \eqref{E:nablaAeBcovriantintrintermsofinsicnablaAeBcovraint}, and \eqref{E:Fardualalpha} - \eqref{E:Fardualsigma},
we compute (as in \cite[pg. 161]{dCsK1990}) the following identities, which we state as a lemma.

\begin{lemma} \label{L:FarDerivativeNullComponentRelations} \textbf{(Contracted derivatives expressed in terms of the null components)}
Let $\Far$ be a two-form, and let $\ualpha,$ $\alpha,$ $\rho,$ and $\sigma$ be its null components. Then the following identities hold:

\begin{subequations}
\begin{align}
	\nabla_A \Far_{B \uL} & = \angn_A \ualpha_B \ - \ r^{-1}(\rho \delta_{AB} + \sigma \angupsilon_{AB}), 
		\label{E:nablaAFarBuLintermsofintrinsic} \\
	\nabla_A \Far_{BL} & = \angn_A \alpha_B \ - \ r^{-1}(\rho \delta_{AB} - \sigma \angupsilon_{AB}), \\
	\nabla_A \FarMinkdual_{B \uL} & = - \ \angupsilon_{CB}\angn_A \ualpha_C 
		\ - \ r^{-1}(\sigma \delta_{AB} - \rho \angupsilon_{AB}), \\
	\nabla_A \FarMinkdual_{BL} & = \angupsilon_{CB}\angn_A \alpha_C \ - \ r^{-1}(\sigma \delta_{AB} 
		\ + \ \rho \angupsilon_{AB}), \\
	\frac{1}{2} \nabla_A \Far_{\uL L} & = \angn_A \rho \ + \ \frac{1}{2} r^{-1}(\ualpha_A + \alpha_A), \\
	\frac{1}{2} \nabla_A \FarMinkdual_{\uL L} & = \angn_A \sigma \ + \ \frac{1}{2} r^{-1}(-\angupsilon_{BA}\ualpha_B 
		\ + \ \angupsilon_{BA} \alpha_B), \\
	\nabla_A \Far_{BC} & = \angupsilon_{BC} \Big\lbrace \angn_A \sigma + \frac{1}{2}r^{-1}
		(-\angupsilon_{DA}\ualpha_D + \angupsilon_{DA} \alpha_D) \Big\rbrace. \label{E:nablaAFarBCintermsofintrinsic}
\end{align}
\end{subequations}
Note that in all of our expressions, contractions are taken after differentiating; e.g., 
$\nabla_A \Far_{BL} \eqdef e_A^{\mu} e_B^{\kappa} \uL^{\lambda} \nabla_{\mu} \Far_{\kappa \lambda}.$

\end{lemma}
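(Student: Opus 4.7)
The plan is to derive each identity by applying the Leibniz rule \eqref{E:nablaLeibnizrule} to $(\nabla_A \Far)(e_B, \uL) = e_A\big(\Far(e_B,\uL)\big) - \Far(\nabla_A e_B, \uL) - \Far(e_B, \nabla_A \uL)$, and comparing it with the intrinsic Leibniz rule \eqref{E:instrinsicnablaLiebnizrule} for $\angn$ acting on the sphere-tangent null components. The mismatch between $\nabla$ and $\angn$ on sphere-tangent frame fields is already quantified by \eqref{E:nablaAeBcovriantintrintermsofinsicnablaAeBcovraint}, namely $\angn_A e_B - \nabla_A e_B = \frac{1}{2} r^{-1} \delta_{AB}(L - \uL)$, and the covariant derivatives of the null pair in frame directions are supplied by \eqref{E:nablaALnablaAuL}. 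These two structural facts convert every contracted derivative identity into a purely algebraic relation between the null components $\ualpha$, $\alpha$, $\rho$, $\sigma$.

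To fix ideas, consider the first identity \eqref{E:nablaAFarBuLintermsofintrinsic}. Observing that $\ualpha_B = \Far(e_B,\uL)$, the two Leibniz identities subtracted from one another produce
\begin{align*}
	(\nabla_A \Far)(e_B,\uL)
	= \angn_A \ualpha_B + \Far(\angn_A e_B - \nabla_A e_B, \uL) - \Far(e_B, \nabla_A \uL).
\end{align*}
Substituting \eqref{E:nablaAeBcovriantintrintermsofinsicnablaAeBcovraint} and \eqref{E:nablaALnablaAuL}, the right-hand side becomes $\angn_A \ualpha_B + \frac{1}{2} r^{-1} \delta_{AB} \Far(L - \uL, \uL) + r^{-1}\Far(e_B, e_A)$. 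The first residual term collapses using $\Far(L,\uL) = -2\rho$ and $\Far(\uL,\uL) = 0$ to $-r^{-1}\rho\,\delta_{AB}$, while the second uses $\Far_{BA} = -\sigma\,\angupsilon_{AB}$ (since on the two-sphere $\Far_{AB}$ is forced by antisymmetry to equal $\sigma\,\angupsilon_{AB}$) to yield $-r^{-1}\sigma\,\angupsilon_{AB}$. This produces exactly the right-hand side of \eqref{E:nablaAFarBuLintermsofintrinsic}.

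The remaining identities are established by the same template. For the ones involving $L$ in place of $\uL$ only the sign of the $\nabla_A L$ correction from \eqref{E:nablaALnablaAuL} changes, which swaps $\ualpha \leftrightarrow \alpha$ and flips the sign of the $\sigma$ contribution. The identity \eqref{E:nablaAFarBCintermsofintrinsic} for $\nabla_A \Far_{BC}$ uses that on the two-sphere any antisymmetric tensor $\Far_{BC}$ must equal $\sigma\,\angupsilon_{BC}$, so differentiation reduces to that of the scalar $\sigma$ plus correction terms from $\angn_A e_C - \nabla_A e_C$ whose contraction with $\uL$, $L$ generates $\ualpha$ and $\alpha$ terms through $\Far_{B\uL}$ and $\Far_{BL}$. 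The Hodge-dual identities are obtained by replacing $\Far$ with $\FarMinkdual$ and substituting the dual null components via \eqref{E:Fardualalpha}--\eqref{E:Fardualsigma}; for instance, applying the first identity to $\FarMinkdual$ yields $\nabla_A \FarMinkdual_{B\uL} = \angn_A \ualphadot_B - r^{-1}(\rhodot\,\delta_{AB} + \sigmadot\,\angupsilon_{AB}) = -\angupsilon_{CB}\angn_A \ualpha_C - r^{-1}(\sigma\,\delta_{AB} - \rho\,\angupsilon_{AB})$, matching the claimed formula.

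There is no genuine obstacle: the computation is essentially bookkeeping. The main point requiring attention is the consistent handling of signs coming from the antisymmetry of $\angupsilon_{AB}$, the relation $\Far_{BA} = -\sigma\,\angupsilon_{AB}$, and the sign differences between the $\uL$ and $L$ cases induced by \eqref{E:nablaALnablaAuL}. A useful cross-check is that the four dual identities obtained by the dualization procedure must coincide with those obtained by applying the main template directly to $\FarMinkdual$; this consistency, together with the relations \eqref{E:Fardualalpha}--\eqref{E:Fardualsigma}, pins down the otherwise easily confused signs.
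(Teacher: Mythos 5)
Your proof is correct and follows exactly the route the paper outlines: applying the extrinsic and intrinsic Leibniz rules \eqref{E:nablaLeibnizrule}, \eqref{E:instrinsicnablaLiebnizrule}, feeding in the frame-derivative identities \eqref{E:nablaALnablaAuL} and \eqref{E:nablaAeBcovriantintrintermsofinsicnablaAeBcovraint}, and obtaining the dual identities from \eqref{E:Fardualalpha}--\eqref{E:Fardualsigma}. The computation for \eqref{E:nablaAFarBuLintermsofintrinsic} you spell out is accurate (with $\Far(L,\uL) = -2\rho$ and $\Far_{BA} = -\sigma\,\angupsilon_{AB}$), and the remaining cases genuinely follow by the same template.
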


\begin{remark} \label{R:FarDerivativeNullComponentRelations}
	
	The identities in Lemma \ref{L:FarDerivativeNullComponentRelations} can be 
	reinterpreted as identities for spacetime tensors that are $m-$tangent to the spheres $S_{r,t}.$ That is,
	they can be rephrased in terms of our wave coordinate frame with the help of the projection $\angm_{\mu}^{\ \nu}$ 
	and the spherical volume form $\angupsilon_{\mu}^{\ \nu}$ defined in \eqref{E:angmdef} and \eqref{E:Spheresvolumedef} 
	respectively. For example, equation \eqref{E:nablaAFarBuLintermsofintrinsic} is equivalent 
	to the following equation:
	
	\begin{align}
		\angm_{\mu}^{\ \mu'} \angm_{\nu}^{\ \nu'} \uL^{\kappa} \nabla_{\mu}\Far_{\nu' \kappa}
		& = \angm_{\mu}^{\ \mu'} \angm_{\nu}^{\ \nu'} \nabla_{\mu'} \ualpha_{\nu'} 
			\ - \ r^{-1}(\rho \angm_{\mu \nu} \ + \ \sigma \angupsilon_{\mu \nu}).
	\end{align}
	We will use the spacetime coordinate frame version of the identities in our proof of Proposition \ref{P:EOVNullDecomposition}.
\end{remark}

We now derive equations for the null components of a solution $\dot{\Far}$ to \eqref{E:EOVdFis0nullsection} - \eqref{E:EOVdMis0nullsection}.

\begin{proposition} \label{P:EOVNullDecomposition}
\textbf{(Minkowskian null decomposition of the equations of variation)}
Let $\dot{\Far}$ be a solution to the equations of variation \eqref{E:EOVdFis0nullsection} - \eqref{E:EOVdMis0nullsection}, and let $\dot{\ualpha} \eqdef \ualpha[\dot{\Far}],$ $\dot{\alpha} \eqdef \alpha[\dot{\Far}],$ $\dot{\rho} \eqdef \rho[\dot{\Far}],$ $\dot{\sigma} \eqdef \sigma[\dot{\Far}]$ denote its Minkowskian null components.
Assume that the source term $\dot{\mathfrak{F}}_{\lambda \mu \nu}$ on the right-hand side of \eqref{E:EOVdFis0nullsection} vanishes.\footnote{By Proposition \ref{P:InhomogeneoustermsLieZIFar}, this assumption holds for the variations of interest in this article.} Then the following equations are verified by the null components:

\begin{subequations}
\begin{align}
	& \nabla_L \dot{\ualpha}_{\nu} \ + \ r^{-1}\dot{\ualpha}_{\nu} \ + \ \angm_{\nu}^{\ \kappa} \nabla_{\kappa} \dot{\rho} 
		\ - \ \angupsilon_{\nu}^{\ \kappa} \nabla_{\kappa} \dot{\sigma} && \label{E:dotualphaEOVnulldecomp} \\
	& \ \ - \ \overbrace{\angm_{\nu}^{\ \lambda} h^{\mu \kappa} \nabla_{\mu} \dot{\Far}_{\kappa \lambda}}^{\eqdef \angm_{\nu 
			\lambda} \mathscr{P}_{(\Far)}^{\lambda}(h, \nabla \dot{\Far})}
		\ - \ \overbrace{\angm_{\nu \nu'} (m^{-1})^{\mu \kappa} h^{\nu' \lambda} \nabla_{\mu} \dot{\Far}_{\kappa 
			\lambda}}^{\eqdef \angm_{\nu \lambda} \mathscr{Q}_{(1;\Far)}^{\lambda}(h, \nabla \dot{\Far})} 
		\ + \ \angm_{\nu \nu'} N_{\triangle}^{\# \mu \nu' \kappa \lambda} \nabla_{\mu} \dot{\Far}_{\kappa \lambda}
		&& = \ \angm_{\nu \nu'} \dot{\mathfrak{F}}^{\nu'}, \notag \\
	& \nabla_{\uL} \dot{\alpha}_{\nu} \ - \ r^{-1}\dot{\alpha}_{\nu} \ - \ \angm_{\nu}^{\ \kappa} \nabla_{\kappa} \dot{\rho} 
		\ - \ \angupsilon_{\nu}^{\ \kappa} \nabla_{\kappa} \dot{\sigma}&& \label{E:uLdotalphaEOVnulldecomp} \\
	& \ \ - \ \overbrace{\angm_{\nu}^{\ \lambda} h^{\mu \kappa} \nabla_{\mu} \dot{\Far}_{\kappa \lambda}}^{\eqdef \angm_{\nu 
		\lambda} \mathscr{P}_{(\Far)}^{\lambda}(h, \nabla \dot{\Far})}
		\ - \ \overbrace{\angm_{\nu \nu'} (m^{-1})^{\mu \kappa} h^{\nu' \lambda} \nabla_{\mu} \dot{\Far}_{\kappa \lambda}}^{\eqdef 
			\angm_{\nu \lambda} \mathscr{Q}_{(1;\Far)}^{\lambda}(h, \nabla \dot{\Far})} 
		\ + \ \angm_{\nu \nu'} N_{\triangle}^{\# \mu \nu' \kappa \lambda} \nabla_{\mu} \dot{\Far}_{\kappa \lambda}
		&& = \ \angm_{\nu \nu'} \dot{\mathfrak{F}}^{\nu'}, \notag  \\
	& \nabla_{\uL} \dot{\rho} \ + \ \angm^{\mu \nu} \nabla_{\mu} \dot{\ualpha}_{\nu} \ - \ 2 r^{-1} \dot{\rho}
		\ - \ \overbrace{\uL^{\lambda} h^{\mu \kappa} \nabla_{\mu} \dot{\Far}_{\kappa \lambda}}^{
		\uL_{\lambda} \mathscr{P}_{(\Far)}^{\lambda}(h, \nabla \dot{\Far})} && \label{E:uLdotnablarhoEOVnulldecomp} \\
	& \ \ - \ \overbrace{\uL_{\nu} (m^{-1})^{\mu \kappa} h^{\nu \lambda} \nabla_{\mu} \dot{\Far}_{\kappa \lambda}}^{
		\uL_{\nu} \mathscr{Q}_{(1;\Far)}^{\nu}(h, \nabla \dot{\Far})} 
		\ + \ \uL_{\nu} N_{\triangle}^{\# \mu \nu \kappa \lambda} \nabla_{\mu} \dot{\Far}_{\kappa \lambda}
		&& = \ \uL_{\lambda} \dot{\mathfrak{F}}^{\lambda}, \notag \\
	& \nabla_{\uL} \dot{\sigma} \ - \ 2 r^{-1} \dot{\sigma} 
		\ + \ \angupsilon^{\mu \nu} \nabla_{\mu} \dot{\ualpha}_{\nu} && = \ 0, \label{E:uLdotnablasigmaEOVnulldecomp} \\
	& \nabla_L \dot{\rho} \ - \ \angm^{\mu \nu} \nabla_{\mu} \dot{\alpha}_{\nu} \ + \ 2 r^{-1} \dot{\rho}&&
		\label{E:nablaLdotrhoEOVnulldecomp} \\
	& \ \ + \ \overbrace{L^{\lambda} h^{\mu \kappa} \nabla_{\mu} \dot{\Far}_{\kappa \lambda}}^{L_{\lambda} 	
		\mathscr{P}_{(\Far)}^{\lambda}(h, \nabla \dot{\Far})}
		\ + \ \overbrace{L_{\nu} (m^{-1})^{\mu \kappa} h^{\nu \lambda} \nabla_{\mu} \dot{\Far}_{\kappa \lambda}}^{
			L_{\nu} \mathscr{Q}_{(1;\Far)}^{\nu}(h, \nabla \dot{\Far})}  
		\ - \ L_{\nu} N_{\triangle}^{\# \mu \nu \kappa \lambda} \nabla_{\mu} \dot{\Far}_{\kappa \lambda}
		&& = \ - L_{\lambda} \dot{\mathfrak{F}}^{\lambda}, \notag \\
	& \nabla_L \dot{\sigma} \ + \ 2 r^{-1} \dot{\sigma} \ + \ \angupsilon^{\mu \nu} \nabla_{\mu} \dot{\ualpha}_{\nu} && = \ 0. 
		\label{E:nablaLdotsigmaEOVnulldecomp}
\end{align}
In the above expressions, the quadratic terms $\mathscr{P}_{(\Far)}^{\lambda}(h, \nabla \dot{\Far})$ 
and $\mathscr{Q}_{(1;\Far)}^{\lambda}(h, \nabla \dot{\Far})$ are as defined in Section \ref{SS:ReducedEquations}.
\end{subequations}

\begin{remark} \label{R:FavorableAngular}
Note that in the above equations, we have that e.g. $\angm_{\nu}^{\ \kappa} \nabla_{\kappa} = \angm_{\nu}^{\ \kappa} \angn_{\kappa}$ and $\angupsilon_{\nu}^{\ \kappa} \nabla_{\kappa} = \angupsilon_{\nu}^{\ \kappa} \angn_{\kappa},$ so that these operators only involve favorable angular derivatives.
\end{remark}

\end{proposition}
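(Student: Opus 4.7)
The plan is to decompose the two reduced equations of variation \eqref{E:EOVdFis0nullsection}--\eqref{E:EOVdMis0nullsection} by contracting them against appropriate null-frame covectors. First I would split the principal operator $N^{\#\mu\nu\kappa\lambda}\nabla_\mu\dot{\Far}_{\kappa\lambda}$ using \eqref{E:NSummarydef} into three pieces: (i) the Minkowskian Maxwell term $\tfrac12[(m^{-1})^{\mu\kappa}(m^{-1})^{\nu\lambda}-(m^{-1})^{\mu\lambda}(m^{-1})^{\nu\kappa}]\nabla_\mu\dot{\Far}_{\kappa\lambda}=\nabla_\mu\dot{\Far}^{\mu\nu}$, (ii) the $h$-linear terms, which by inspection are exactly $-\mathscr{P}_{(\Far)}^{\nu}(h,\nabla\dot{\Far})-\mathscr{Q}_{(1;\Far)}^{\nu}(h,\nabla\dot{\Far})$, and (iii) the $N_{\triangle}^{\#\mu\nu\kappa\lambda}\nabla_\mu\dot{\Far}_{\kappa\lambda}$ remainder. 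Pieces (ii) and (iii) already appear on the left-hand side of the proposition exactly as written, so after this algebraic split the entire content of the proposition reduces to re-expressing the \emph{Minkowskian} principal term $\nabla_\mu\dot{\Far}^{\mu\nu}$ (and its Hodge-dual counterpart) in terms of the null components $\dot{\ualpha},\dot{\alpha},\dot{\rho},\dot{\sigma}$.

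To handle the Minkowskian principal term, I would insert the null-frame decomposition $(m^{-1})^{\mu\kappa}=-\tfrac12(L^\mu\uL^\kappa+\uL^\mu L^\kappa)+\delta^{AB}e_A^\mu e_B^\kappa$ (see \eqref{E:XYnullframeinnerproduct}) into $\nabla_\mu\dot{\Far}^{\mu\nu}$, obtaining the schematic identity
\begin{align*}
\nabla_\mu\dot{\Far}^{\mu\nu}=-\tfrac12 L^\mu\nabla_\mu\dot{\Far}_{\uL}{}^{\nu}-\tfrac12\uL^\mu\nabla_\mu\dot{\Far}_{L}{}^{\nu}+\delta^{AB}e_A^\mu\nabla_\mu\dot{\Far}_{B}{}^{\nu}.
\end{align*}
To obtain the $\dot\ualpha$ equation \eqref{E:dotualphaEOVnulldecomp} I then contract against $\angm_\nu^{\ \nu'}$; to obtain \eqref{E:uLdotnablarhoEOVnulldecomp} and \eqref{E:nablaLdotrhoEOVnulldecomp} I contract against $\uL_\nu$ and $L_\nu$, respectively. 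At that point Lemma \ref{L:FarDerivativeNullComponentRelations} (reinterpreted in its spacetime form as in Remark \ref{R:FarDerivativeNullComponentRelations}) lets me rewrite each $\nabla_A\dot{\Far}_{B\uL}$, $\nabla_A\dot{\Far}_{BL}$, $\nabla_A\dot{\Far}_{\uL L}$, and $\nabla_A\dot{\Far}_{BC}$ as an intrinsic angular derivative $\angn_A$ of a null component plus an explicit $r^{-1}$ lower-order term. The $\dot{\alpha}$ equation \eqref{E:uLdotalphaEOVnulldecomp} is derived identically after first noting that $\nabla_\mu\dot{\Far}^{\mu\nu}$ can equivalently be written after swapping the roles of $L$ and $\uL$, which is the same contraction read against the dual pairing.

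Equations \eqref{E:uLdotnablasigmaEOVnulldecomp} and \eqref{E:nablaLdotsigmaEOVnulldecomp} for $\dot{\sigma}$ do \emph{not} come from the Maxwell equation but from the Bianchi-type equation \eqref{E:EOVdFis0nullsection}; since $\dot{\mathfrak{F}}_{\lambda\mu\nu}\equiv 0$ by hypothesis, this is equivalent to $\nabla_\mu\dot{\FarMinkdual}^{\mu\nu}=0$ (cf.\ \eqref{E:DdivergenceofFarMinkdualis0}). I would contract this dual identity against $\uL_\nu$ and against $L_\nu$, carry out the same null-frame expansion as above, and then use the Hodge-dual relations \eqref{E:Fardualalpha}--\eqref{E:Fardualsigma} together with Lemma \ref{L:FarDerivativeNullComponentRelations} to rewrite the result in terms of $\dot\sigma$ (with a contribution from $\angupsilon^{\mu\nu}\nabla_\mu\dot{\ualpha}_\nu$ or its $\alpha$-analogue). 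Because $\FarMinkdual$ swaps $(\ualpha,\alpha,\rho,\sigma)\leftrightarrow(\star\ualpha,\star\alpha,\sigma,-\rho)$ in the sphere-tangent sense, the contributions $\angm^{\mu\nu}\nabla_\mu(\text{stuff})$ that would normally pair with $\rho$ here transform into pairings with $\sigma$ and produce the curl-type terms $\angupsilon^{\mu\nu}\nabla_\mu\dot{\ualpha}_\nu$.

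The main obstacle, and what I would need to check most carefully, is the bookkeeping of the $r^{-1}$ coefficients. They arise from two sources that must be tracked in parallel: first, the covariant derivatives $\nabla_A L=r^{-1}e_A$, $\nabla_A\uL=-r^{-1}e_A$ from \eqref{E:nablaALnablaAuL}, which appear through the Leibniz rule \eqref{E:nablaLeibnizrule} whenever the frame-contracted components $\dot{\Far}_{B\uL}$, $\dot{\Far}_{BL}$, $\dot{\Far}_{\uL L}$ are differentiated by $\nabla_A$; and second, the relation \eqref{E:nablaAeBcovriantintrintermsofinsicnablaAeBcovraint} between $\angn_A e_B$ and $\nabla_A e_B$, which converts intrinsic sphere derivatives $\angn_A\dot\Far_{BC}$ to spacetime-covariant ones and introduces additional $\pm\tfrac12 r^{-1}$ corrections. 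Once these are summed over $A$, the $\tfrac12$ from the $L,\uL$ trace of $m^{-1}$ in $\nabla_\mu\dot{\Far}^{\mu\nu}$ combines with the coefficients from Lemma \ref{L:FarDerivativeNullComponentRelations} to yield precisely the coefficients $+r^{-1},-r^{-1},\pm 2r^{-1}$ recorded in \eqref{E:dotualphaEOVnulldecomp}--\eqref{E:nablaLdotsigmaEOVnulldecomp}; this is the only substantive verification. The tangential derivatives $\angm_\nu^{\ \kappa}\nabla_\kappa\dot\rho$ and $\angupsilon_\nu^{\ \kappa}\nabla_\kappa\dot\sigma$ on the left-hand sides emerge automatically from the $\angn_A\rho,\angn_A\sigma$ summands in \eqref{E:nablaAFarBuLintermsofintrinsic}--\eqref{E:nablaAFarBCintermsofintrinsic} after raising the sphere index $A$ and identifying $\angm,\angupsilon$ in their spacetime form.
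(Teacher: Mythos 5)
Your high-level plan — split $N^{\#\mu\nu\kappa\lambda}\nabla_\mu\dot{\Far}_{\kappa\lambda}$ into the Minkowskian principal part plus $h$-linear plus $N_{\triangle}$-remainder, then contract against null-frame vectors and invoke Lemma~\ref{L:FarDerivativeNullComponentRelations} — is the right one and agrees with the paper for the $\dot\rho$ and $\dot\sigma$ equations. Your Hodge-dual route to the $\dot\sigma$ equations is an acceptable rephrasing of the paper's direct contraction of the Bianchi identity against $\uL^{\lambda}e_A^{\ \mu}e_B^{\ \nu}\angupsilon_{AB}$ and $L^{\lambda}e_A^{\ \mu}e_B^{\ \nu}\angupsilon_{AB}$; they are the same computation up to the identities \eqref{E:Fardualalpha}--\eqref{E:Fardualsigma}.

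There is, however, a genuine gap in your derivation of \eqref{E:dotualphaEOVnulldecomp} and \eqref{E:uLdotalphaEOVnulldecomp}. Contracting the constitutive equation against $\angm_{\nu}^{\ \nu'}$ alone does not produce either of those equations: after inserting $(m^{-1})^{\mu\kappa} = -\tfrac12 L^{\mu}\uL^{\kappa} - \tfrac12\uL^{\mu}L^{\kappa} + \angm^{\mu\kappa}$, the $\angm$-projected Maxwell term produces a \emph{symmetric mix} $\tfrac12\nabla_L\dot\ualpha_{\nu} + \tfrac12\nabla_{\uL}\dot\alpha_{\nu} + \cdots$, not the isolated transport operator $\nabla_L\dot\ualpha_{\nu}$ or $\nabla_{\uL}\dot\alpha_{\nu}$. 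Your remark that the $\dot\alpha$ equation comes from ``swapping the roles of $L$ and $\uL$'' cannot supply an independent second relation, because the Minkowskian term is already symmetric under that swap. What is missing is the companion equation obtained by contracting the Bianchi identity \eqref{E:EOVdFis0nullsection} against $\uL^{\lambda}L^{\mu}e_A^{\nu}$, which (by Lemma~\ref{L:FarDerivativeNullComponentRelations}) yields $\nabla_L\dot\ualpha_{\nu} - \nabla_{\uL}\dot\alpha_{\nu} + 2\angm_{\nu}^{\ \nu'}\nabla_{\nu'}\dot\rho + r^{-1}(\dot\ualpha_{\nu}+\dot\alpha_{\nu}) = 0$, free of inhomogeneities and $h$-terms. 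Only by adding and subtracting this Bianchi relation to/from the Maxwell contraction do the operators $\nabla_L\dot\ualpha_{\nu}$ and $\nabla_{\uL}\dot\alpha_{\nu}$ decouple into the separate equations \eqref{E:dotualphaEOVnulldecomp} and \eqref{E:uLdotalphaEOVnulldecomp}; note in particular that both resulting equations inherit the inhomogeneity $\angm_{\nu\nu'}\dot{\mathfrak{F}}^{\nu'}$ and the $h$/$N_\triangle$ terms (up to an overall factor of 2) because those appear only in the Maxwell contraction, which is precisely what you see in the proposition. You correctly recognized that the Bianchi identity is needed for $\dot\sigma$; you need to use it for $\dot\ualpha$ and $\dot\alpha$ as well.
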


\begin{proof}
	To obtain \eqref{E:dotualphaEOVnulldecomp} and \eqref{E:uLdotalphaEOVnulldecomp}, we contract \eqref{E:EOVdFis0nullsection} 
	against $\uL^{\lambda} L^{\mu} e_A^{\nu},$ \eqref{E:EOVdMis0nullsection} against $(e_A)_{\nu},$
	and use Lemma \ref{L:FarDerivativeNullComponentRelations} plus Remark \ref{R:FarDerivativeNullComponentRelations}
	to deduce that
	
	\begin{align}
		& \nabla_L \ualpha_{\nu} \ - \ \nabla_{\uL} \alpha_{\nu} \ + \ 2 \angm_{\nu}^{\ \nu'} \nabla_{\nu'} \rho 
			\ + \ r^{-1}(\ualpha_{\nu} + \alpha_{\nu}) && = \ 0, \\
		& \nabla_L \ualpha_{\nu} \ + \ \nabla_{\uL} \alpha_{\nu} \ - \ 2 \angupsilon_{\nu}^{\ \kappa} \nabla_{\kappa} \sigma
			\ + \ r^{-1}(\ualpha_{\nu} - \alpha_{\nu}) &&  \\
		& \ \ - \ 2 \angm_{\nu}^{\ \lambda} h^{\mu \kappa} \nabla_{\mu} \dot{\Far}_{\kappa \lambda} 
			\ - \ 2 \angm_{\nu \nu'} (m^{-1})^{\mu \kappa} h^{\nu' \lambda} \nabla_{\mu} \dot{\Far}_{\kappa \lambda}
			\ + \ \angm_{\nu \nu'} N_{\triangle}^{\# \mu \nu' \kappa \lambda} \nabla_{\mu} \dot{\Far}_{\kappa \lambda}
			&& = \ 2 \angm_{\nu \nu'} \dot{\mathfrak{F}}^{\nu'}. \notag
	\end{align}
	Adding the two above equations gives \eqref{E:dotualphaEOVnulldecomp}, while subtracting the first from the second
	gives \eqref{E:uLdotalphaEOVnulldecomp}.
	
	Similarly, to deduce \eqref{E:uLdotnablasigmaEOVnulldecomp},
	we contract \eqref{E:EOVdFis0nullsection} 
	against $\uL^{\lambda} e_A^{\ \mu} e_B^{\ \nu},$
	and then contract against $\angupsilon_{AB};$ to deduce \eqref{E:nablaLdotsigmaEOVnulldecomp}, we contract
	\eqref{E:EOVdFis0nullsection} against $L^{\lambda} e_A^{\ \mu} e_B^{\ \nu},$
	and then against $\angupsilon_{AB};$ to deduce \eqref{E:uLdotnablarhoEOVnulldecomp}, we contract 
	\eqref{E:EOVdMis0nullsection} against $\uL_{\nu};$
	and to deduce \eqref{E:nablaLdotrhoEOVnulldecomp}, we contract
	\eqref{E:EOVdMis0nullsection} against $-L_{\nu}.$
	
\end{proof}

\subsection{Electromagnetic one-forms} \label{SS:EBDH}
In this section, we introduce the one-forms $\mathfrak{\Electricfield},$ $\mathfrak{\Magneticinduction},$ 
$\mathfrak{\Displacement},$ and $\mathfrak{\Magneticfield},$ which are derived from a geometric decomposition of
$\Far$ with the help of the spacetime metric $g_{\mu \nu}.$ We also introduce the one-forms $\Electricfield,$ $\Magneticinduction,$ $\Displacement,$ and $\Magneticfield,$ which are derived from a Minkowskian decomposition of $\Far.$ We then derive an equivalent version of the electromagnetic equations, namely constraint and electromagnetic evolution equations for the Minkowskian one-forms. These quantities play a role only in Section \ref{S:SmallDataAssumptions}, where they are used to connect the smallness of the abstract initial data to the smallness of the energy of the corresponding reduced solution at $t=0.$ Furthermore, we show that the abstract one-forms $\mathring{\mathfrak{\Displacement}}, \mathring{\mathfrak{\Magneticinduction}},$ satisfy the constraints \eqref{E:DivergenceD0Intro} - \eqref{E:DivergenceB0Intro}
if and only if the corresponding Minkowskian one-forms $\mathring{\Displacement}, \mathring{\Magneticinduction},$ satisfy a Minkowskian version of the constraints.

We will perform our electromagnetic decompositions of the equations with the help of two versions of the electromagnetic equations, namely \eqref{E:dFis0Dversion}, \eqref{E:dMis0Dversion} and \eqref{E:dFis0nablaversion}, \eqref{E:dMis0nablaversion}. We restate them here for convenience:

\begin{subequations}
	\begin{align}
		\mathscr{D}_{\lambda} \Far_{\mu \nu} + \mathscr{D}_{\mu} \Far_{\nu \lambda} + \mathscr{D}_{\nu} \Far_{\lambda \mu} & = 0,&&
			(\lambda, \mu, \nu = 0,1,2,3), \label{E:dFis0ElectrodecompMathscrD} \\
		\mathscr{D}_{\lambda} \Max_{\mu \nu} + \mathscr{D}_{\mu} \Max_{\nu \lambda} + \mathscr{D}_{\nu} \Max_{\lambda \mu} & = 0,&&
			(\lambda, \mu, \nu = 0,1,2,3), \label{E:dMis0ElectrodecompMathscrD}
	\end{align}
\end{subequations}
and

\begin{subequations}
	\begin{align}
		\nabla_{\lambda} \Far_{\mu \nu} + \nabla_{\mu} \Far_{\nu \lambda} + \nabla_{\nu} \Far_{\lambda \mu} & = 0,&&
			(\lambda, \mu, \nu = 0,1,2,3), \label{E:dFis0Electrodecomp} \\
		\nabla_{\lambda} \Max_{\mu \nu} + \nabla_{\mu} \Max_{\nu \lambda} + \nabla_{\nu} \Max_{\lambda \mu} & = 0,&&
			(\lambda, \mu, \nu = 0,1,2,3). \label{E:dMis0Electrodecomp}
	\end{align}
\end{subequations}

Before decomposing the equations, we first define the aforementioned geometric electromagnetic one-forms.

\begin{definition} \label{D:IntrinsicOneForms}
	Let $\hat{N}^{\mu} = \hat{N}^{\mu}(t,x)$ denote the future-directed unit $g-$normal to the hypersurface $\Sigma_t.$ 
	Then in components relative to an arbitrary coordinate system, we define the following one-forms:

	\begin{align} \label{E:AbstractEBDHinertialcomponents}
	\mathfrak{\Electricfield}_{\mu} & = \Far_{\mu \kappa}\hat{N}^{\kappa}, & \mathfrak{\Magneticinduction}_{\mu} & = 
		- \Fardual_{\mu \kappa}\hat{N}^{\kappa}, & \mathfrak{\Displacement}_{\mu} & = - 	
	\Maxdual_{\mu \kappa} \hat{N}^{\kappa},  	& \mathfrak{\Magneticfield}_{\mu} & = - \Max_{\mu \kappa}\hat{N}^{\kappa}.
		\end{align}
	\textbf{Note that in the above expressions, $\star$ denotes the Hodge duality operator corresponding to the spacetime metric 
	$g.$}
\end{definition}

We now define the Minkowskian electromagnetic one-forms.

\begin{definition} \label{D:EBDHinertialcomponents}
In components relative to the wave coordinate coordinate system $\lbrace x^{\mu} \rbrace_{\mu = 0,1,2,3},$ we define the \emph{electric field} $\Electricfield,$ the \emph{magnetic induction} $\Magneticinduction,$ the \emph{electric displacement} $\Displacement,$ and the \emph{magnetic field} $\Magneticfield$ by

\begin{align} \label{E:EBDHinertialcomponents}
	\Electricfield_{\mu} & = \Far_{\mu 0}, & \Magneticinduction_{\mu} & = - \FarMinkdual_{\mu 0}, & \Displacement_{\mu} & = - 	
	\MaxMinkdual_{\mu 0},  	& \Magneticfield_{\mu} & = - \Max_{\mu 0}.
\end{align}
	\textbf{Note that in the above expressions, $\ostar$ denotes the Hodge duality operator corresponding to the 
	Minkowski metric $m.$}
\end{definition}

Observe that \eqref{E:EBDHinertialcomponents} implies that

\begin{align} \label{E:FarspatialintermsofB}
	\Far_{jk} & = [ijk]\Magneticinduction_i,&& \Magneticinduction_j = \frac{1}{2} [jab] \Far_{ab},&& 
	\Displacement_j = \frac{1}{2} [jab] \Max_{ab}, && (j,k = 1,2,3).
\end{align}

\begin{remark}
	Our definition of $\Magneticinduction$ coincides with the one commonly found in the physics literature, but it has the 
	opposite sign convention of the definition given in \cite{dCsK1990}.
\end{remark}

It follows from the anti-symmetry of $\Far_{\mu \nu}$ and $\Max_{\mu \nu}$ that $\Electricfield_{\mu},$ $\Magneticinduction_{\mu},$ $\Displacement_{\mu},$ and $\Magneticfield_{\mu}$ are $m-$tangent to the hyperplanes $\Sigma_t;$ i.e., we have that $\Electricfield_0 = \Magneticinduction_0 = \Displacement_0 = \Magneticfield_0 = 0.$ We may therefore view these four quantities as one-forms that are intrinsic to $\Sigma_t.$ Similarly, we have that
$\mathfrak{\Electricfield}_{\mu}\hat{N}^{\mu} = \mathfrak{\Magneticinduction}_{\mu}\hat{N}^{\mu}=
\mathfrak{\Displacement}_{\mu}\hat{N}^{\mu}  = \mathfrak{\Magneticfield}_{\mu}\hat{N}^{\mu} = 0.$

Using Definition \ref{D:EBDHinertialcomponents}, the assumption \eqref{E:Ldualassumptions} on the Lagrangian, 
\eqref{E:MaxintermsofFarMinkDualPlusError}, \eqref{E:FarspatialintermsofB}, 
and \eqref{E:DintermsofEBh} - \eqref{E:HintermsofEBh} it follows that

\begin{subequations}
\begin{align}
	\Displacement & = \Electricfield \ + \ O^{\dParameter+1}\big(|h||(\Electricfield, \Magneticinduction)| \big)
		\ + \ O^{\dParameter+1}\big(|(\Electricfield, \Magneticinduction)|^3;h \big), \label{E:DintermsofEBh} \\
	\Magneticfield  & = \Magneticinduction \ + \ O^{\dParameter+1}\big(|h||(\Electricfield, \Magneticinduction)| \big)
		\ + \ O^{\dParameter+1}\big(|(\Electricfield, \Magneticinduction)|^3;h \big), \label{E:HintermsofEBh} \\
	\Electricfield  & = \Displacement \ + \ O^{\dParameter+1}\big(|h||(\Displacement, \Magneticinduction)| \big)
		\ + \ O^{\dParameter+1}\big(|(\Displacement, \Magneticinduction)|^3;h \big), \label{E:EintermsofDBh} \\
	\Magneticfield & = \Magneticinduction \ + \ O^{\dParameter+1}\big(|h||(\Displacement, \Magneticinduction)| \big)
		\ + \ O^{\dParameter+1}\big(|(\Displacement, \Magneticinduction)|^3;h \big).
\end{align}
\end{subequations}

Furthermore, recalling that $\hat{N}^{\nu}|_{\Sigma_0} = A \delta_0^{\nu},$ where $A \eqdef \sqrt{1 - \frac{2M}{r}\chi(r)},$ 
and using \eqref{E:FargDualIntermsofFarmDual}, and \eqref{E:InducedhData}, it follows that

	\begin{align} \label{E:ElectricfieldDataInTermsofIntrinsic}
		\mathring{\Electricfield} & = \mathring{\Displacement} \ + \ 
		O^{\dParameter+1}\big(|\mathring{h}^{(1)}||(\mathring{\Displacement}, 
			\mathring{\Magneticinduction})|;\chi(r)M/r\big) 
			\ + \ O^{\dParameter+1}\big(|\chi(r)M/r||(\mathring{\Displacement}, \mathring{\Magneticinduction})|;\mathring{h}^{(1)} \big)
			\ + \ O^{\dParameter+1}\big(|(\mathring{\Displacement}, \mathring{\Magneticinduction})|^3;\mathring{h}^{(1)};\chi(r)M/r\big)
	\end{align}
	and

	\begin{subequations}
		\begin{align}
		\mathring{\Magneticinduction} & = \mathring{\mathfrak{\Magneticinduction}} 
			\ + \ O^{\dParameter+1}\big(|\chi(r)M/r||\mathring{\mathfrak{\Magneticinduction}}|\big)
			\ + \ O^{\dParameter+1}\big(|\mathring{h}^{(1)}||(\mathring{\mathfrak{\Magneticinduction}},  
			\mathring{\mathfrak{\Displacement}})|;|\chi(r)M/r|\big), 
			\label{E:IntialInductionintermsofInitialQuantities} \\
		\mathring{\Displacement} & = \mathring{\mathfrak{\Displacement}} 
			\ + \ O^{\dParameter+1}\big(|\mathring{h}^{(1)}||\chi(r)M/r||\mathring{\mathfrak{\Displacement}}|\big)
			\ + \ O^{\dParameter+1}\big(|(\mathring{\mathfrak{\Magneticinduction}},  
			\mathring{\mathfrak{\Displacement}})|;|\chi(r)M/r|\big), 
			\label{E:IntialDisplacementintermsofInitialQuantities} \\
		\mathring{\mathfrak{\Magneticinduction}} & = \mathring{\Magneticinduction}
			\ + \ O^{\dParameter+1}\big(|\chi(r)M/r||\mathring{\Magneticinduction}|\big)
			\ + \ O^{\dParameter+1}\big(|\mathring{h}^{(1)}||(\mathring{\Magneticinduction}, \mathring{\Displacement})|;|\chi(r)M/r|\big), \\
		\mathring{\mathfrak{\Displacement}} & = \mathring{\Displacement} 
			\ + \ O^{\dParameter+1}\big(|\chi(r)M/r||\mathring{\Displacement}|\big)
			\ + \ O^{\dParameter+1}\big(|\mathring{h}^{(1)}||(\mathring{\Magneticinduction},  \mathring{\Displacement})|;|\chi(r)M/r|\big).
			\label{E:InitialIntrinsicDisplacementIntermsofInitialQuantities}
	\end{align}
	\end{subequations}

\begin{remark}
	Logically speaking, the ADM mass $M$ (and hence also the coordinates of the unit normal vector $\hat{N}|_{\Sigma_0}$)
	is only well-defined \emph{after} one has solved the abstract Einstein constraint equations \eqref{E:GaussIntro} - 
	\eqref{E:DivergenceB0Intro}. Thus, the
	relations \eqref{E:ElectricfieldDataInTermsofIntrinsic} - \eqref{E:IntialDisplacementintermsofInitialQuantities}
	should be thought of as defining $\mathring{\Electricfield}_j, \mathring{\Magneticinduction}_j, 
	\mathring{\Displacement}_j$ in terms of $\mathring{\mathfrak{\Electricfield}}_j, \mathring{\mathfrak{\Magneticinduction}}_j$ 
	and \emph{not} the other way around.
\end{remark}

The main goal of this section is to deduce the following proposition, which is a decomposition of the electromagnetic equations into \emph{constraint} equations and \emph{evolution} equations.

\begin{proposition} \label{P:ElectromagneticDecomp}
\textbf{(Electromagnetic constraint and evolution equations)}
Under the assumption \eqref{E:Ldualassumptions} on $\Ldual,$ the electromagnetic equations \eqref{E:dFis0Electrodecomp} - \eqref{E:dMis0Electrodecomp} are equivalent to the following pairs of constraint equations and evolution equations:

\begin{center} 
	\textbf{Constraint Equations}
\end{center}
\begin{subequations}
\begin{align}
	(\um^{-1})^{ab} \unabla_a \Displacement_b & = 0, \label{E:Dconstraint} \\ 
	(\um^{-1})^{ab} \unabla_a \Magneticinduction_b & = 0, \label{E:Bconstraint}
\end{align}
\end{subequations}

\vspace{.5in}

\begin{center} 
	\textbf{Evolution Equations}
\end{center}
\begin{subequations}
\begin{align}
	\partial_t \Magneticinduction_j & = - [jab] \unabla_a \Electricfield_b, 
		\label{E:partialtBisolated} \\
	\partial_t \Electricfield_j & = [jab] \unabla_a \Magneticinduction_b 
		\ + \ O^{\dParameter}\big(|h||\unabla(\Electricfield,\Magneticinduction)|;(\Electricfield,\Magneticinduction) \big)
		\ + \ O^{\dParameter}\big(|(\Electricfield,\Magneticinduction)|^2|\unabla(\Electricfield,\Magneticinduction)|;h \big)
		\ + \ O^{\dParameter}\big(|\nabla h| |(\Electricfield, \Magneticinduction)|; h \big). 
			\label{E:partialtEisolated} 
\end{align}
\end{subequations}

Furthermore, if the one-forms $\mathring{\Displacement}, \mathring{\Magneticinduction}$ are related to the one-forms
$\mathring{\mathfrak{\Displacement}}, \mathring{\mathfrak{\Magneticinduction}}$ as implicitly determined by the 
relations \eqref{E:AbstractEBDHinertialcomponents} - \eqref{E:EBDHinertialcomponents} (together with the fact that
$\hat{N}^{\mu} = A^{-1} \delta_0^{\mu}$), then equations \eqref{E:Dconstraint} - \eqref{E:Bconstraint} hold 
for $\mathring{\Displacement}, \mathring{\Magneticinduction}$ (i.e., along $\Sigma_0$) if and only if the following equations hold:

\begin{center} 
	\textbf{Abstract Constraint Equations}
\end{center}

\begin{subequations}
\begin{align}
	(\mathring{\underline{g}}^{-1})^{ab} \mathring{\underline{\mathscr{D}}}_a \mathring{\mathfrak{\Displacement}}_b & = 0, 
		\label{E:AbstractDconstraint} \\
	(\mathring{\underline{g}}^{-1})^{ab} \mathring{\underline{\mathscr{D}}}_a \mathring{\mathfrak{\Magneticinduction}}_b & = 0.
		\label{E:AbstractBconstraint} 
\end{align}
\end{subequations}
In the above expressions, $\mathring{\underline{g}}_{jk}$ is the first-fundamental form of $\Sigma_0,$ and $\mathring{\underline{\mathscr{D}}}$ is the Levi-Civita connection corresponding to $\mathring{\underline{g}}_{jk}.$

\end{proposition}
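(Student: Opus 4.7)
\noindent\textit{Proof proposal.} The plan is to carry out a standard $3+1$ decomposition of the two closed-form equations \eqref{E:dFis0Electrodecomp}--\eqref{E:dMis0Electrodecomp} relative to the wave coordinate foliation $\lbrace \Sigma_t \rbrace$, separating each into a purely spatial ``constraint'' component and a mixed time-space ``evolution'' component, then converting between the Minkowskian one-forms $(\Electricfield, \Magneticinduction, \Displacement, \Magneticfield)$ and their geometric counterparts $(\mathfrak{\Electricfield}, \mathfrak{\Magneticinduction}, \mathfrak{\Displacement}, \mathfrak{\Magneticfield})$ on $\Sigma_0$ using the explicit relations \eqref{E:DintermsofEBh}--\eqref{E:InitialIntrinsicDisplacementIntermsofInitialQuantities}.

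First I would derive the Faraday portion. The equation $\nabla_{[\lambda}\Far_{\mu\nu]}=0$ breaks into two cases. For the purely spatial triple $(\lambda,\mu,\nu)=(1,2,3)$, the identity $\Far_{jk}=[ijk]\Magneticinduction_i$ from \eqref{E:FarspatialintermsofB} converts $\partial_{[1}\Far_{23]}=0$ directly into $(\um^{-1})^{ab}\unabla_a\Magneticinduction_b=0$, establishing \eqref{E:Bconstraint}. For $(\lambda,\mu,\nu)=(0,j,k)$, using $\Far_{j0}=\Electricfield_j$ gives $\partial_t\Far_{jk}=-\partial_j\Electricfield_k+\partial_k\Electricfield_j$; contracting with $\frac{1}{2}[mjk]$ and applying $[mjk][ijk]=2\delta_{mi}$ yields \eqref{E:partialtBisolated}. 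No error terms appear because the decomposition of $\Far$ into $(\Electricfield,\Magneticinduction)$ is purely algebraic relative to $m$.

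Next I would apply the same procedure to $\nabla_{[\lambda}\Max_{\mu\nu]}=0$. Using $\Max_{jk}=[ijk]\Displacement_j$, the spatial triple yields $(\um^{-1})^{ab}\unabla_a\Displacement_b=0$, i.e., \eqref{E:Dconstraint}. The mixed triple yields $\partial_t\Displacement_j=[jab]\unabla_a\Magneticfield_b$. To convert this into the stated evolution equation \eqref{E:partialtEisolated} for $\Electricfield$, I would substitute \eqref{E:EintermsofDBh} into the left-hand side and \eqref{E:HintermsofEBh} into the right-hand side; applying the Leibniz rule to the $O^{\dParameter+1}(\cdots)$ correction terms produces exactly the three error types listed: the $O^{\dParameter}(|h||\unabla(\Electricfield,\Magneticinduction)|;\cdot)$ term from $h$ multiplying $\unabla$ of the fields, the $O^{\dParameter}(|(\Electricfield,\Magneticinduction)|^2|\unabla(\Electricfield,\Magneticinduction)|;h)$ cubic term from the higher-order Taylor expansion of $\Ldual$, and the $O^{\dParameter}(|\nabla h||(\Electricfield,\Magneticinduction)|;h)$ term from $\nabla$ falling on the $h$-dependent constitutive coefficients.

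The main obstacle — and where I would spend the most care — is the final equivalence between the Minkowskian constraints \eqref{E:Dconstraint}--\eqref{E:Bconstraint} (for $\mathring{\Displacement},\mathring{\Magneticinduction}$) and the abstract geometric constraints \eqref{E:AbstractDconstraint}--\eqref{E:AbstractBconstraint} (for $\mathring{\mathfrak{\Displacement}},\mathring{\mathfrak{\Magneticinduction}}$). Since $\hat{N}^\mu|_{\Sigma_0}=A^{-1}\delta_0^\mu$, unpacking Definitions \ref{D:IntrinsicOneForms} and \ref{D:EBDHinertialcomponents} together with the Hodge dual conventions of Section \ref{SS:Hodge} shows that $\mathring{\mathfrak{\Displacement}}_j = A^{-1}\cdot\tfrac{1}{2}|\mbox{det}\,g|^{1/2}|\mbox{det}\,g|^{-1}g_{j\mu'}g_{0\nu'}[\mu'\nu'\kappa\lambda]\Max_{\kappa\lambda}|_{\Sigma_0}$, while $\mathring{\Displacement}_j=\tfrac{1}{2}[jab]\Max_{ab}|_{\Sigma_0}$; the key identity (standard from Lorentzian geometry) is that $\sqrt{|\mbox{det}\,\mathring{\underline{g}}|}\,(\mathring{\underline{g}}^{-1})^{ab}\mathring{\mathfrak{\Displacement}}_b = \mathring{\Displacement}^a$ pointwise on $\Sigma_0$, which follows from the block structure $g_{00}|_{t=0}=-A^2$, $g_{0j}|_{t=0}=0$, $g_{jk}|_{t=0}=\mathring{\underline{g}}_{jk}$ induced by the reduced data construction of Section \ref{SS:ReducedData} and the determinant relation $|\mbox{det}\,g|^{1/2}=A\,|\mbox{det}\,\mathring{\underline{g}}|^{1/2}$. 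The conclusion then follows from the coordinate divergence identity $(\mathring{\underline{g}}^{-1})^{ab}\mathring{\underline{\mathscr{D}}}_a V_b = |\mbox{det}\,\mathring{\underline{g}}|^{-1/2}\partial_a\bigl(|\mbox{det}\,\mathring{\underline{g}}|^{1/2}(\mathring{\underline{g}}^{-1})^{ab}V_b\bigr)$, which reduces both divergences to $\partial_a\mathring{\Displacement}^a=0$. The argument for $\mathring{\mathfrak{\Magneticinduction}}$ vs.\ $\mathring{\Magneticinduction}$ is identical with $\Max$ replaced by $\Far$ and $\star$ by $\ostar$.
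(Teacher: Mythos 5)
Your proposal is correct, and the derivation of the evolution equations \eqref{E:partialtBisolated}--\eqref{E:partialtEisolated} from the mixed $(0,a,b)$-component of the closedness conditions (contracting with $\tfrac{1}{2}[jab]$) is essentially identical to the paper's. Where you genuinely diverge is in the proof of the equivalence between the Minkowskian constraints \eqref{E:Dconstraint}--\eqref{E:Bconstraint} and the abstract constraints \eqref{E:AbstractDconstraint}--\eqref{E:AbstractBconstraint}. The paper derives the covariant identity $(\mathring{\underline{g}}^{-1})^{ab}\mathring{\underline{\mathscr{D}}}_a\mathfrak{\Magneticinduction}_b = -\tfrac{1}{2}g_{\nu\nu'}\hat{N}^{\nu'}\epsilon^{\#\mu\nu\kappa\lambda}\mathscr{D}_{\mu}\Far_{\kappa\lambda}$ and its Minkowskian analogue using general hypersurface identities (its \eqref{E:IntrinsicExtrinsicDivergenceIdentity}, \eqref{E:TwoFormhatNContractionIdentity}), then observes that both sides are non-vanishing multiples of the same expression $\delta_{\nu}^0[\mu\nu\kappa\lambda]\partial_\mu\Far_{\kappa\lambda}$ once the Christoffel terms are killed by antisymmetry of the volume form. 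You instead bypass the hypersurface calculus entirely: using the explicit block form $g_{00}|_{t=0}=-A^2$, $g_{0j}|_{t=0}=0$, $g_{jk}|_{t=0}=\mathring{\underline{g}}_{jk}$ and the determinant relation $|\det g|^{1/2}=A\,|\det\mathring{\underline{g}}|^{1/2}$, you establish the pointwise algebraic identity $|\det\mathring{\underline{g}}|^{1/2}(\mathring{\underline{g}}^{-1})^{ab}\mathring{\mathfrak{\Displacement}}_b = \mathring{\Displacement}^a$ (which I have checked; the $A$ and $A^{-1}$ factors cancel exactly), and then close with the coordinate divergence formula $(\mathring{\underline{g}}^{-1})^{ab}\mathring{\underline{\mathscr{D}}}_a V_b = |\det\mathring{\underline{g}}|^{-1/2}\partial_a(|\det\mathring{\underline{g}}|^{1/2}(\mathring{\underline{g}}^{-1})^{ab}V_b)$. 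Your approach buys elementarity and explicitness — no embedded-hypersurface identities are invoked, only the adapted-coordinate block structure and linear algebra; the paper's approach buys a more covariant argument that doesn't depend on the specific form of the lapse and shift. One small transcription slip: you wrote $\Max_{jk}=[ijk]\Displacement_j$ with a repeated $j$; it should read $\Max_{jk}=[ijk]\Displacement_i$. This does not affect the argument.
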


\begin{remark}
In equations \eqref{E:Dconstraint} - \eqref{E:Bconstraint}, $(\um^{-1})^{ab} \unabla_a$ is the standard Euclidean divergence operator, while in equations \eqref{E:partialtBisolated} - \eqref{E:partialtEisolated}, $[jab] \unabla_a$ is the standard Euclidean curl operator.
\end{remark}

\begin{remark}
	Using equations \eqref{E:partialtBisminuscurlE} - \eqref{E:partialtDiscurlH}, it is easy to check that if a classical 
	solution to the evolution equations satisfies the constraints at $t=0,$  then it necessarily satisfies the constraints 
	\eqref{E:Dconstraint} - \eqref{E:Bconstraint} at all later times (as long as it persists).
\end{remark}

\begin{proof}
We first show that \eqref{E:Dconstraint} follows from either \eqref{E:dFis0ElectrodecompMathscrD} or \eqref{E:dFis0Electrodecomp}, and that \eqref{E:Dconstraint} holds if and only if \eqref{E:AbstractDconstraint} holds. To this end, we first note that since $\hat{N}^{\mu}$ is the future-directed unit $g-$normal to $\Sigma_t$ and 
$g_{\mu \nu} = \mathring{\underline{g}}_{\mu \nu} - \hat{N}_{\mu} \hat{N}_{\nu}$ along $\Sigma_0,$ 
the following identities hold for any one-form $X_{\mu}$ $g-$tangent to $\Sigma_0$ and any two-form $P_{\mu \nu}:$

\begin{align}
	(\mathring{\underline{g}}^{-1})^{ab} \mathring{\underline{\mathscr{D}}}_a X_b 
	& = (g^{-1})^{\kappa \lambda} \mathscr{D}_{\kappa} X_{\lambda} - X_{\lambda} \hat{N}^{\kappa} 
		\mathscr{D}_{\kappa} \hat{N}^{\lambda}, \label{E:IntrinsicExtrinsicDivergenceIdentity} \\
	(g^{-1})^{\kappa \lambda} P_{\lambda \nu} \mathscr{D}_{\kappa} \hat{N}^{\nu} 
	& = P_{\lambda \nu} \hat{N}^{\nu} \hat{N}^{\kappa} \mathscr{D}_{\kappa} \hat{N}^{\lambda}. 
		\label{E:TwoFormhatNContractionIdentity}
\end{align}

Using \eqref{E:IntrinsicExtrinsicDivergenceIdentity} and \eqref{E:TwoFormhatNContractionIdentity}
with $X_{\mu} \eqdef \mathfrak{\Magneticinduction}_{\mu}$ and $P_{\mu \nu} \eqdef \Fardual_{\mu \nu}$
we compute that the following identities hold along $\Sigma_0:$

\begin{align} \label{E:DivergenceIntrinsicBintermsofFarandhatN}
	(\mathring{\underline{g}}^{-1})^{ab} \mathring{\underline{\mathscr{D}}}_{a} \mathfrak{\Magneticinduction}_b 
	& = (g^{-1})^{\kappa \lambda} \mathscr{D}_{\kappa} \mathfrak{\Magneticinduction}_{\lambda}	
			- \mathfrak{\Magneticinduction}_{\lambda} \hat{N}^{\kappa} \mathscr{D}_{\kappa} \hat{N}^{\lambda} 
			 \\
	& = - (g^{-1})^{\kappa \lambda} \mathscr{D}_{\kappa} (\Fardual_{\lambda \nu} \hat{N}^{\nu})	
			+ \Fardual_{\lambda \nu} \hat{N}^{\nu} \hat{N}^{\kappa} \mathscr{D}_{\kappa} \hat{N}^{\lambda}
			\notag \\
	& = - \frac{1}{2}g_{\nu \nu'} \hat{N}^{\nu'} \epsilon^{\# \mu \nu \kappa \lambda} \mathscr{D}_{\mu} \Far_{\kappa \lambda}. 
		\notag 
\end{align}
Identities analogous to \eqref{E:DivergenceIntrinsicBintermsofFarandhatN} hold if we make the replacements 
$\Big(\mathring{\underline{g}}^{-1}, g, \mathring{\underline{\mathscr{D}}},
\mathscr{D}, \star, \hat{N}^{\mu}, \epsilon^{\# \mu \nu \kappa \lambda}, \mathfrak{\Magneticinduction} \Big) \rightarrow 
\Big(\um^{-1}, m, \unabla, \nabla, \ostar, \hat{n}^{\mu}, \Minkvolume^{\mu \nu \kappa \lambda}, \Magneticinduction \Big),$
where $\hat{n}^{\mu}(t,x)$ is the future-directed unit Minkowskian unit normal to $\Sigma_t.$ Now by \eqref{E:DivergenceIntrinsicBintermsofFarandhatN} and the Minkowskian analogy of \eqref{E:DivergenceIntrinsicBintermsofFarandhatN}, equations \eqref{E:Dconstraint} and
\eqref{E:AbstractDconstraint} follow from either \eqref{E:dFis0ElectrodecompMathscrD} or \eqref{E:dFis0Electrodecomp},
since either \eqref{E:dFis0ElectrodecompMathscrD} or \eqref{E:dFis0Electrodecomp} are sufficient to guarantee that the right-hand side of \eqref{E:DivergenceIntrinsicBintermsofFarandhatN} is $0.$ Furthermore, since
$g_{\nu \nu'} \hat{N}^{\nu'}$ and $m_{\nu \nu'} \hat{n}^{\nu'}$ are proportional along $\Sigma_0,$ since $\epsilon^{\# \mu \nu \kappa \lambda}$ and $\upsilon^{\mu \nu \kappa \lambda}$ are proportional, and since the Christoffel symbols of $\mathscr{D}$ and $\nabla$ are symmetric in their two lower indices, it follows that

\begin{align} \label{E:IntrinsicConstraintEquivalenttoMinkowskiConstraint}
	g_{\nu \nu'} \hat{N}^{\nu'} \epsilon^{\# \mu \nu \kappa \lambda} \mathscr{D}_{\mu} \Far_{\kappa \lambda}|_{\Sigma_0} & = 0 \\
		& \iff \notag \\
	m_{\nu \nu'} \hat{n}^{\nu'} \upsilon^{\mu \nu \kappa \lambda} \nabla_{\mu} \Far_{\kappa \lambda}|_{\Sigma_0} & = 0. \notag
\end{align}
Hence, \eqref{E:Bconstraint} holds along $\Sigma_0$ if and only if \eqref{E:AbstractBconstraint} holds along $\Sigma_0.$ 
The derivation of \eqref{E:Dconstraint} and \eqref{E:AbstractDconstraint} along $\Sigma_0$ from \eqref{E:dMis0ElectrodecompMathscrD} or \eqref{E:dMis0Electrodecomp} and the proof of the equivalence of \eqref{E:Dconstraint} and \eqref{E:AbstractDconstraint} along $\Sigma_0$ are similar.

We now set $\lambda = 0, \mu = a, \nu = b$ in \eqref{E:dFis0Electrodecomp}, then contract against the Euclidean volume form $[jab]$ use \eqref{E:EBDHinertialcomponents} - \eqref{E:FarspatialintermsofB} to deduce that

\begin{align} \label{E:partialtBisminuscurlE}
	\partial_t \Magneticinduction_j = - [jab] \unabla_a \Electricfield_b.
\end{align}
Similarly, we set $\lambda = 0, \mu = a, \nu = b$ in \eqref{E:dMis0Electrodecomp}, contract against $[jab],$
and use \eqref{E:EBDHinertialcomponents} - \eqref{E:FarspatialintermsofB} to deduce that

\begin{align} \label{E:partialtDiscurlH}
	\partial_t \Displacement_j = [jab] \unabla_a \Magneticfield_b.
\end{align}
Finally, we use \eqref{E:partialtBisminuscurlE}, \eqref{E:partialtDiscurlH},
and \eqref{E:DintermsofEBh} - \eqref{E:HintermsofEBh}, to deduce \eqref{E:partialtBisolated} - \eqref{E:partialtEisolated}. 

\end{proof}

\section{The Smallness Condition on the Abstract Data} \label{S:SmallDataAssumptions}
\setcounter{equation}{0}

In this section, we assume that we are given abstract initial data $(\mathring{\underline{g}}_{jk} = \delta_{jk} + \mathring{\underline{h}}_{jk}^{(0)} + \mathring{\underline{h}}_{jk}^{(1)} ,\mathring{K}_{jk},\mathring{\mathfrak{\Displacement}}_j,\mathring{\mathfrak{\Magneticinduction}}_j),$ $(j,k=1,2,3),$
on the manifold $\mathbb{R}^3$ satisfying the constraint equations \eqref{E:Gauss} - \eqref{E:DivergenceB0}. Our goal is to describe in detail the smallness condition on $(\mathring{\underline{h}}_{jk}^{(0)}, \mathring{\underline{h}}_{jk}^{(1)} ,\mathring{K}_{jk},\mathring{\mathfrak{\Displacement}}_j,\mathring{\mathfrak{\Magneticinduction}}_j)$
that will lead to global existence for the reduced system \eqref{E:Reducedh1Summary} - \eqref{E:ReduceddMis0Summary}, under the assumption that its initial data $(g_{\mu \nu}|_{t=0},\partial_t g_{\mu \nu}|_{t=0},\Far_{\mu \nu}|_{t=0}),$ $(\mu, \nu = 0,1,2,3),$
are constructed from the abstract initial data as described in Section \ref{SS:ReducedData}. Recall that our global existence argument is heavily based on the analysis of $\mathcal{E}_{\dParameter;\upgamma;\upmu}(t),$ which is the energy defined in \eqref{E:EnergyIntro}. In particular, $\mathcal{E}_{\dParameter;\upgamma;\upmu}(0)$ must be sufficiently small in order for us to close the argument. The energy depends on \emph{both normal and tangential} Minkowskian covariant derivatives of the quantities $(\nabla_{\lambda} h_{\mu \nu}^{(1)},\Far_{\mu \nu})$ at $t=0.$ 
On the other hand, our smallness condition will be expressed in terms of the ADM mass $M$ and $E_{\dParameter;\upgamma}(0),$ which is a weighted Sobolev norm of $(\unabla_i \mathring{\underline{h}}_{jk}^{(1)} ,\mathring{K}_{jk},\mathring{\mathfrak{\Displacement}}_j,\mathring{\mathfrak{\Magneticinduction}}_j)$
depending only on \emph{tangential} derivatives of the abstract data. More specifically, our smallness condition is expressed in terms of the weighted Sobolev norms $\| \cdot \|_{H_{1/2 + \upgamma}^{\dParameter}}$ introduced in Definition \ref{D:HNdeltanorm}. The main result of this section is contained in Proposition \ref{P:SmallNormImpliesSmallEnergy}, which shows that if $E_{\dParameter;\upgamma}(0) + M$ is sufficiently small and $(h_{\mu \nu}^{(1)},\Far_{\mu \nu})$ is the corresponding solution to the reduced equations, then $\mathcal{E}_{\dParameter;\upgamma;\upmu}(0) \lesssim E_{\dParameter;\upgamma}(0) + M.$ Thus, Proposition \ref{P:SmallNormImpliesSmallEnergy} allows us to deduce the smallness of $\mathcal{E}_{\dParameter;\upgamma;\upmu}(0)$ from the smallness of quantities that depend exclusively on the abstract initial data.

\noindent \hrulefill
\ \\

We begin by introducing the weighted Sobolev norm discussed in the above paragraph.

\begin{definition} \label{D:HNdeltanorm}
	Let $U(x)$ be a tensorfield defined along the Euclidean space $\mathbb{R}^3.$
	Then for any integer $\dParameter \geq 0,$ and any real number $\eta,$ we define the $H_{\eta}^{\dParameter}$ norm of $U$ by
	
	\begin{align} \label{E:HNdeltanorm}
		\| U \|_{H_{\eta}^{\dParameter}}^2 \eqdef \sum_{|I| \leq \dParameter } \int_{x \in \mathbb{R}^3} (1 + 
			|x|^2)^{(\eta + |I|)} |\underline{\nabla}^I U(x)|^2 \, d^3 x.
	\end{align}
\end{definition}

We also introduce the following norm, which can be controlled in terms of a suitable
$H_{\eta}^{\dParameter}$ norm via a Sobolev embedding result; see Proposition \ref{P:SobolevEmbeddingHNdeltaCNprimedeltamprime}.

\begin{definition} \label{D:CNdeltanorm}
	Let $U(x)$ be a tensorfield defined along the Euclidean space $\mathbb{R}^3.$ 
	Then for any integer $\dParameter \geq 0,$ and any real number $\eta,$ we define the $C_{\eta}^{\dParameter}$ norm of $U$ 
	by
	
	\begin{align} \label{E:CNdeltanorm}
		\| U \|_{C_{\eta}^{\dParameter}}^2 \eqdef 	\| U \|_{C_{\eta}^{\dParameter}}^2 
		\eqdef \sum_{|I| \leq \dParameter } \mbox{ess} \sup_{x \in \mathbb{R}^3} (1 + |x|^2)^{(\eta + |I|)} 
		|\underline{\nabla}^I U(x)|^2.
	\end{align}
\end{definition}

We are now ready to introduce our norm $E_{\dParameter;\upgamma}(0) \geq 0$ on the abstract initial data. Recall that as discussed in Section \ref{SS:AbstractData}, the data are the following four fields on $\mathbb{R}^3:$ 
$(\mathring{\underline{g}}_{jk} = \delta_{jk} + \underbrace{\mathring{\underline{h}}_{jk}^{(0)} + \mathring{\underline{h}}_{jk}^{(1)}}_{\mathring{\underline{h}}_{jk}}, \mathring{K}_{jk}, \mathring{\mathfrak{\Displacement}}_j, \mathring{\mathfrak{\Magneticinduction}}_j),$ $(j,k = 1,2,3).$

\begin{definition} \label{D:DataNorm}
The norm $E_{\dParameter;\upgamma}(0) \geq 0$ of the abstract initial data is defined by

\begin{align}   \label{E:DataNorm}
	E_{\dParameter;\upgamma}^2(0) 
	& \eqdef \| \underline{\nabla} \mathring{\underline{h}}^{(1)} \|_{H_{1/2 + \upgamma}^{\dParameter}}^2 
		\ + \ \| \mathring{K} \|_{H_{1/2 + \upgamma}^{\dParameter}}^2 
		\ + \ \| \mathring{\mathfrak{\Displacement}} \|_{H_{1/2 + \upgamma}^{\dParameter}}^2 
		\ + \ \| \mathring{\mathfrak{\Magneticinduction}} \|_{H_{1/2 + \upgamma}^{\dParameter}}^2.
\end{align}

\end{definition}

\vspace{.5in}
\begin{center}
	\textbf{\LARGE The Smallness Condition}
\end{center}

\bigskip

Our smallness condition for global existence is

\begin{align} \label{E:NormSmallnessCondition}
	E_{\dParameter;\upgamma}(0) + M \leq \varepsilon_{\dParameter},
\end{align}
where $\varepsilon_{\dParameter}$ is a sufficiently small positive number.
\vspace{.5in}

Recall that the energy $\mathcal{E}_{\dParameter;\upgamma;\upmu}(t) \geq 0$ is defined by

\begin{align} \label{E:EnergydefSmallnessSection}
	\mathcal{E}_{\dParameter;\upgamma;\upmu}^2(t) & \eqdef \underset{0 \leq \tau \leq t}{\mbox{sup}} 
		\sum_{|I| \leq \dParameter } \int_{\Sigma_{\tau}} 
		\Big\lbrace |\nabla\nabla_{\mathcal{Z}}^I h^{(1)}|^2 + |\Lie_{\mathcal{Z}}^I \Far|^2 \Big\rbrace w(q) \, d^3 x,
\end{align}
where $\nabla$ denotes the \emph{full Minkowski spacetime} covariant derivative operator, and the weight $w(q)$ is defined in \eqref{E:weight}. The next proposition, which is the main result of this section, shows that the smallness of $\mathcal{E}_{\dParameter;\upgamma;\upmu}(0)$ follows from the smallness of $E_{\dParameter;\upgamma}(0) + M.$

\begin{proposition} \label{P:SmallNormImpliesSmallEnergy}
	\textbf{(The smallness of the initial energy)}
	Let $(\mathring{\underline{g}}_{jk}  \delta_{jk} + \mathring{\underline{h}}_{jk}^{(0)} + \mathring{\underline{h}}_{jk}^{(1)}, 
	\mathring{K}_{jk}, \mathring{\mathfrak{\Displacement}_j}, \mathring{\mathfrak{\Magneticinduction}}_j),$ be abstract initial 
	data on the manifold $\mathbb{R}^3$ for the Einstein-nonlinear electromagnetic system \eqref{E:IntroEinstein} - 
	\eqref{E:IntrodMis0} and	assume that the abstract initial data 
	are asymptotically flat in the sense that \eqref{E:metricdataexpansion} - \eqref{E:BdecayAssumption} 
	hold. Let $(g_{\mu \nu}|_{t=0} = m_{\mu \nu} + h_{\mu \nu}^{(0)}|_{t=0} + h_{\mu \nu}^{(1)}|_{t=0}, 
	\partial_t g_{\mu \nu}|_{t=0} = \partial_t h_{\mu \nu}^{(0)}|_{t=0} + \partial_th_{\mu \nu}^{(1)}|_{t=0}, 
	\Far_{\mu \nu}|_{t=0}),$ $(\mu, \nu = 0,1,2,3),$ be the corresponding initial data 
	for the reduced system \eqref{E:Reducedh1Summary} - \eqref{E:ReduceddMis0Summary} as defined in Section \ref{SS:ReducedData},
	and let $(g_{\mu \nu} = m_{\mu \nu} + h_{\mu \nu}^{(0)} + h_{\mu \nu}^{(1)}, 
	\Far_{\mu \nu})$ be the solution to the reduced system launched by this data. Let $\dParameter \geq 8$ be an integer.
	Then there exists a constant $\varepsilon_0 > 0$ independent of $\dParameter $ and constants $C_{\dParameter} > 0,$ $\widetilde{C}_{\dParameter} > 0$
	such that if $E_{\dParameter;\upgamma}(0) + M \leq \varepsilon \leq \varepsilon_0,$ 
	then 
	
	\begin{align} \label{E:SmallNormImpliesSmallEnergy}
		\mathcal{E}_{\dParameter;\upgamma;\upmu}(0) 
		& \leq C_{\dParameter} \big\lbrace E_{\dParameter;\upgamma}(0) + M \big\rbrace
			\leq \widetilde{C}_{\dParameter} \varepsilon.
	\end{align}
	
\end{proposition}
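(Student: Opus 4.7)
The plan is to show that at $t=0$, where the null coordinate is $q=r$ and the weight satisfies $w(r) \approx (1+r^2)^{1/2+\upgamma}$, each term in the energy $\mathcal{E}_{\dParameter;\upgamma;\upmu}(0)$ can be bounded, up to lower-order nonlinear corrections, by the data norm $E_{\dParameter;\upgamma}(0)$ plus an $O(M)$ contribution from the Schwarzschild tail $h^{(0)}$. First I would reduce the problem to estimating the collection
\begin{align*}
  \sum_{|I|\leq \dParameter}\int_{\Sigma_0} \big(|\nabla\nabla_{\mathcal{Z}}^I h^{(1)}|^2 + |\Lie_{\mathcal{Z}}^I \Far|^2\big)(1+r^2)^{1/2+\upgamma}\,d^3x
\end{align*}
by terms built out of $\unabla^J \mathring{\underline h}^{(1)}$, $\unabla^J \mathring{K}$, $\unabla^J \mathring{\mathfrak{D}}$, $\unabla^J\mathring{\mathfrak{B}}$ (for $|J|\leq \dParameter$), plus the explicit $h^{(0)}|_{t=0}$ and its derivatives, whose contributions can be computed directly and bounded by $CM$ using the cutoff $\chi$ together with \eqref{E:h0defIntro}.

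Next I would carry out the conversion from $\mathcal{Z}$-derivatives to purely spatial $\unabla$-derivatives. Each $Z\in\mathcal{Z}$ has coordinate coefficients that grow at most linearly in $(t,x)$, so at $t=0$ each $\nabla_Z$ acting on a function $f(x)$ is a first-order differential operator in $(\partial_t,\unabla)$ with coefficients of size $\lesssim 1+r$. After $|I|$ applications, one obtains an expansion
\begin{align*}
  \nabla_{\mathcal{Z}}^I f\big|_{t=0} = \sum_{j + |J| \leq |I|}\mathcal{C}^{I}_{j,J}(x)\,\partial_t^j \unabla^J f\big|_{t=0},\qquad |\mathcal{C}^{I}_{j,J}(x)| \lesssim (1+r)^{|I|-j-|J|},
\end{align*}
so that multiplying by the weight $(1+r^2)^{1/2+\upgamma}$ and integrating, the spatial pieces ($j=0$) produce exactly the weighted $H^{\dParameter}_{1/2+\upgamma}$-norms appearing in $E_{\dParameter;\upgamma}(0)$. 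The same decomposition applies after performing a Leibniz expansion of $\Lie_{\mathcal{Z}}^I\Far$ using Lemma \ref{L:Liederivativeintermsofnabla}, and it reduces the problem to controlling the time-derivative pieces $\partial_t^j \unabla^J f|_{t=0}$ with $j\geq 1$.

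The hard part is converting these $\partial_t$-derivatives to spatial derivatives of the abstract data. For $h^{(1)}$ one uses the reduced wave equation \eqref{E:Reducedh1Summary}: rewriting $\widetilde{\Square}_g = -(g^{-1})^{00}\partial_t^2 + 2(g^{-1})^{0a}\partial_t\partial_a + (g^{-1})^{ab}\partial_a\partial_b + \cdots$ and inverting the scalar factor $-(g^{-1})^{00}$ (which is close to $1$ by smallness of $h$), one solves inductively for $\partial_t^j h^{(1)}|_{t=0}$ in terms of $\unabla^{J'}h^{(1)}|_{t=0}$, $\unabla^{J'}\partial_t h^{(1)}|_{t=0}$, $\unabla^{J'}\Far|_{t=0}$, and the inhomogeneities $\mathfrak{H}$, $\widetilde{\Square}_g h^{(0)}$. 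Since the data for $h^{(1)}|_{t=0}$ and $\partial_t h^{(1)}|_{t=0}$ are given by \eqref{E:InducedhData} in terms of $\mathring{\underline h}^{(1)}$, $\mathring{K}$, and $A$ (hence in terms of $M$ via $A^2 = 1-2M\chi/r$), and since $\mathfrak{H}$ is a polynomial in $h,\partial h,\Far$ with decomposition \eqref{E:ReducedhInhomogeneous}, one obtains the desired pointwise identity up to cubic-and-higher terms. An analogous inductive argument for $\Far$ uses the reduced electromagnetic equations \eqref{E:ReduceddFis0Summary}--\eqref{E:ReduceddMis0Summary}, which, after inverting the leading $(m^{-1})$-symbol in the top-order $\partial_t$ direction, express $\partial_t\Far|_{t=0}$ (and then $\partial_t^j\Far|_{t=0}$) in terms of spatial derivatives of $(\Electricfield,\Magneticinduction)|_{t=0}$; the relations \eqref{E:ElectricfieldDataInTermsofIntrinsic}--\eqref{E:InitialIntrinsicDisplacementIntermsofInitialQuantities} then translate these back into spatial derivatives of the abstract data $\mathring{\mathfrak{D}},\mathring{\mathfrak{B}}$ modulo higher-order corrections involving $h$.

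Finally, I would close the estimate by applying weighted Sobolev--Moser product estimates (the results cited from Appendix \ref{A:SobolevMoser}) to the nonlinear error terms produced in the two inductions. Each such error is at least quadratic in $(h,\Far)$ or their derivatives, so under the a priori bound $E_{\dParameter;\upgamma}(0)+M\leq \varepsilon_0$ it absorbs into $C\varepsilon_0 \cdot(E_{\dParameter;\upgamma}(0)+M)^2$; the condition $\dParameter\geq 8$ supplies the $L^\infty$ control of the low-order factors needed to make this absorption work in the weighted spaces $H^{\dParameter}_{1/2+\upgamma}$. Summing over $|I|\leq \dParameter$ and adding the explicit $h^{(0)}$ contribution, which is $O(M)$ in the appropriate weighted norm by the support properties of $\chi$, yields \eqref{E:SmallNormImpliesSmallEnergy}. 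The principal obstacle is bookkeeping the commutator terms in the induction for $\partial_t^j$, and verifying that the weights $(1+r^2)^{(1/2+\upgamma+|I|)}$ in $E_{\dParameter;\upgamma}(0)$ match those produced by the polynomial-in-$x$ coefficients of the iterated $\mathcal{Z}$-derivatives; both are resolved by the observation that each factor of $\partial_t$ traded via the equations is compensated by exactly two lost spatial derivatives, so that the total weight-to-derivative ratio remains consistent throughout.
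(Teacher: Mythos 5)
Your overall strategy coincides with the paper's: reduce $\mathcal{E}_{\dParameter;\upgamma;\upmu}(0)$ to weighted $L^2$ norms of $\partial_t^k\unabla^J$-derivatives at $t=0$, trade time derivatives for spatial derivatives of the abstract data via the reduced equations, and absorb the nonlinear errors with the weighted Sobolev--Moser estimates from Appendix~\ref{A:SobolevMoser}. This is exactly the route taken in Lemmas~\ref{L:NablaZIUL2InTermsofrWeightedPartialtkUnderlineNablaJNorms}, \ref{L:SolveforTimeDerivativesinTermsofInherentDerivatives}, and Corollary~\ref{C:WeightedL2PartialtkunderlinenablaJW1inTermsofInstrinsic}.

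However, there is a concrete error in the key coefficient bound. You write
\begin{align*}
  \nabla_{\mathcal{Z}}^I f\big|_{t=0} = \sum_{j + |J| \leq |I|}\mathcal{C}^{I}_{j,J}(x)\,\partial_t^j \unabla^J f\big|_{t=0},\qquad |\mathcal{C}^{I}_{j,J}(x)| \lesssim (1+r)^{|I|-j-|J|},
\end{align*}
but the exponent is backwards: it should be $|\mathcal{C}^{I}_{j,J}(x)| \lesssim (1+r)^{j+|J|}$. To see this with a single application, take $S = x^{\kappa}\partial_{\kappa}$ at $t=0$: $\nabla_S f|_{t=0} = x^a\partial_a f|_{t=0}$, so the coefficient $x^a$ has size $\sim (1+r)$ acting on a first-order derivative ($j=0$, $|J|=1$, $|I|=1$); your bound gives $(1+r)^0 = 1$, which is wrong, while $(1+r)^{j+|J|} = (1+r)$ is correct. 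Similarly for a boost, $\nabla_{\Omega_{0a}} f|_{t=0} = -x_a \partial_t f|_{t=0}$ has coefficient $\sim(1+r)$ on a first time derivative. The structural reason the exponent is $j+|J|$ rather than $|I|-j-|J|$ is that each $\nabla_Z$ that lands on the unknown (raising the derivative count by one) is what contributes the extra factor of $(1+r)$, whereas each $\nabla_Z$ that lands on a previously produced coefficient leaves its polynomial degree unchanged.

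This error is not innocuous, because the weight-matching you then invoke only works with the corrected bound. With $|\mathcal{C}^{I}_{0,J}| \lesssim (1+r)^{|J|}$, multiplying $|\nabla_{\mathcal{Z}}^I f|^2$ by $w(r)\approx(1+r)^{1+2\upgamma}$ and summing gives $\sum_{|J|\leq\dParameter}(1+r)^{1+2\upgamma+2|J|}|\unabla^J f|^2$, which is exactly the integrand of $\|\cdot\|^2_{H^{\dParameter}_{1/2+\upgamma}}$. With the bound you wrote, the same step produces the mismatched weight $(1+r)^{1+2\upgamma+2(|I|-|J|)}$, which for $|I|=\dParameter$, $|J|=0$ gives $(1+r)^{1+2\upgamma+2\dParameter}$ on $|f|^2$ --- far stronger than the $(1+r)^{1+2\upgamma}$ your hypotheses supply. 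The paper sidesteps this bookkeeping by stating a clean two-sided pointwise comparison of summed quantities,
\begin{align*}
  \Big(\sum_{|I|\leq\dParameter} w^{1/2}(q)|\nabla_{\mathcal{Z}}^I U|\Big)\Big|_{\Sigma_0} \approx \Big(\sum_{|J|+k\leq\dParameter}(1+r)^{1/2+\upgamma+|J|+k}|\unabla^J\partial_t^k U|\Big)\Big|_{\Sigma_0},
\end{align*}
whose lower bound rests on iterating the identity $\partial_{\mu} = \tfrac{x^{\kappa}\Omega_{\kappa\mu}+x_{\mu}S}{qs}$ (with $q=s=r$ on $\Sigma_0$), and whose upper bound is the corrected coefficient estimate. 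Your closing heuristic --- that ``each factor of $\partial_t$ traded via the equations is compensated by exactly two lost spatial derivatives'' --- is also imprecise: the wave equation trades $\partial_t^2$ for $\unabla^2$, so the total derivative count (and hence the weight exponent $1/2+\upgamma+|J|+k$) is preserved rather than decremented. The underlying idea that the weight-to-derivative ratio is conserved through the time-to-space derivative induction is correct and is exactly what makes the weighted $L^2$ bounds close, but the exponent bookkeeping needs the corrected form above.
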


\begin{remark} \label{R:Nomu}
	Note that $q \geq 0$ holds at $t=0.$ Therefore, $\mathcal{E}_{\dParameter;\upgamma;\upmu}(0)$ does not depend on the constant
	$\upmu.$
\end{remark}

The proof of Proposition \ref{P:SmallNormImpliesSmallEnergy} will be given at the end of this section. 
We first establish some technical lemmas.

\begin{lemma} \label{L:EnergyWithFarReplacedbyElectricandMagnetic} \textbf{(Energy in terms of the 
	spacetime metric remainder piece, the electric field, and the magnetic induction)}
	Let $\Far_{\mu \nu}$ be a two-form, let the pair of one-forms $(\Electricfield_{\mu},\Magneticinduction_{\mu})$ be its
	Minkowskian electromagnetic decomposition as defined in Section \ref{SS:EBDH},
	and let $h_{\mu \nu}^{(1)}$ be an arbitrary type $\binom{0}{2}$ tensorfield. Let
	$\mathcal{E}_{\dParameter;\upgamma;\upmu}(t)$ be the energy defined in \eqref{E:EnergydefSmallnessSection}.
	Then
	
	\begin{align} \label{E:EnergyWithFarReplacedbyElectricandMagnetic}
		\mathcal{E}_{\dParameter;\upgamma;\upmu}^2(t) & \approx \underset{0 \leq \tau \leq t}{\sup} 
			\sum_{|I| \leq \dParameter } \int_{\Sigma_{\tau}} 
			\Big\lbrace |\nabla\nabla_{\mathcal{Z}}^I h^{(1)}|^2 
			+ |\nabla_{\mathcal{Z}}^I \Electricfield|^2 + |\nabla_{\mathcal{Z}}^I \Magneticinduction|^2 \Big\rbrace w(q) \, d^3 x.
	\end{align}

\end{lemma}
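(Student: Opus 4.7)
My plan is to prove the equivalence by establishing it pointwise, summand-by-summand and index-by-index, and then integrating. The $h^{(1)}$ terms are identical on both sides, so the entire problem reduces to showing that
\begin{align*}
\sum_{|I| \leq \dParameter} |\Lie_{\mathcal{Z}}^I \Far|^2 \approx \sum_{|I| \leq \dParameter} \bigl( |\nabla_{\mathcal{Z}}^I \Electricfield|^2 + |\nabla_{\mathcal{Z}}^I \Magneticinduction|^2 \bigr)
\end{align*}
at every spacetime point. I first apply inequality \eqref{E:LieZIinTermsofNablaZI} from Proposition \ref{P:LievsCovariantLContractionRelation} to reduce the left-hand side to $\sum_{|I| \leq \dParameter} |\nabla_{\mathcal{Z}}^I \Far|^2$. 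This is the only place a nontrivial algebraic identity (the fact that the vectorfields in $\mathcal{Z}$ have covariantly constant components) enters.

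Next, I exploit the fact that in our wave coordinate system, the Minkowskian Christoffel symbols vanish, so for any two-form $\Phi$ the components $(\nabla_Z \Phi)_{\mu \nu} = Z^\kappa \partial_\kappa \Phi_{\mu \nu}$ act as ordinary directional derivatives on the coordinate components. Iterating, $(\nabla_{\mathcal{Z}}^I \Far)_{\mu \nu}$ is obtained from the coordinate components $\Far_{\mu \nu}$ by applying the scalar-valued differential operator $\nabla_{\mathcal{Z}}^I$ componentwise. Since definitions \eqref{E:EBDHinertialcomponents} and \eqref{E:FarspatialintermsofB} give $\Electricfield_j = \Far_{j0}$ and $\Magneticinduction_j = \tfrac{1}{2}[jab]\Far_{ab}$ as constant-coefficient linear combinations of the coordinate components of $\Far$, these projections commute with $\nabla_{\mathcal{Z}}^I$:
\begin{align*}
(\nabla_{\mathcal{Z}}^I \Electricfield)_j = (\nabla_{\mathcal{Z}}^I \Far)_{j0}, \qquad (\nabla_{\mathcal{Z}}^I \Magneticinduction)_j = \tfrac{1}{2}[jab](\nabla_{\mathcal{Z}}^I \Far)_{ab}.
\end{align*}
Conversely, $\Far_{\mu \nu}$ is recovered from $(\Electricfield, \Magneticinduction)$ by the inverse constant-coefficient formulas. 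Consequently, invoking \eqref{E:AlternateNablaNorm}, which says that in wave coordinates $|U|$ is equivalent to the sum of absolute values of its wave-coordinate components, I obtain the pointwise equivalence
\begin{align*}
|\nabla_{\mathcal{Z}}^I \Far| \approx |\nabla_{\mathcal{Z}}^I \Electricfield| + |\nabla_{\mathcal{Z}}^I \Magneticinduction|
\end{align*}
for every multi-index $I$, with implicit constants depending only on $|I|$.

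Finally, combining the two equivalences, squaring, multiplying by $w(q)$, integrating over $\Sigma_\tau$, summing over $|I| \leq \dParameter$, and taking the supremum over $\tau \in [0,t]$ yields \eqref{E:EnergyWithFarReplacedbyElectricandMagnetic}. The main (though mild) obstacle is simply being careful about the two-step reduction from Lie to covariant derivatives and then from covariant derivatives of the two-form $\Far$ to covariant derivatives of its Minkowskian electric and magnetic decompositions; both steps rely crucially on the structural facts that $\mathcal{Z}$ consists of conformal Killing fields with constant deformation coefficients and that we are working in wave coordinates where $\nabla$ reduces to coordinate differentiation on components. No estimates involving $h$ or the nonlinear electromagnetic structure are needed; the lemma is purely a change-of-variables statement at the level of the algebraic decomposition of $\Far$.
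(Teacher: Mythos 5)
Your argument is correct and follows essentially the same route as the paper's: the paper's proof is a single sentence citing the identity $|\nabla_{\mathcal{Z}}^I \Far|^2 = 2|\nabla_{\mathcal{Z}}^I \Electricfield|^2 + 2|\nabla_{\mathcal{Z}}^I \Magneticinduction|^2$ and leaving its verification to the reader. You supply that verification (via the componentwise action of $\nabla_{\mathcal{Z}}^I$ in wave coordinates, where the Minkowskian Christoffel symbols vanish so $\nabla_{\mathcal{Z}}^I$ acts as a scalar differential operator on each coordinate component) and you make explicit the preliminary reduction from $\Lie_{\mathcal{Z}}^I$ to $\nabla_{\mathcal{Z}}^I$ via \eqref{E:LieZIinTermsofNablaZI}, which the paper leaves implicit.
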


\begin{proof}
	\eqref{E:EnergyWithFarReplacedbyElectricandMagnetic} easily follows from the identity
	$|\nabla_{\mathcal{Z}}^I \Far|^2 = 2 |\nabla_{\mathcal{Z}}^I \Electricfield|^2 + 2 |\nabla_{\mathcal{Z}}^I 
	\Magneticinduction|^2,$ the verification of which we leave to the reader.
\end{proof}

\begin{lemma} \label{L:NablaZIUL2InTermsofrWeightedPartialtkUnderlineNablaJNorms}
	The following estimates hold for any sufficiently differentiable spacetime tensorfield $U(t,x)$
	defined in a neighborhood of $\Sigma_0 \eqdef \lbrace(t,x) \mid t=0 \rbrace,$ where $w(q)$ is the weight
	defined in \eqref{E:weight}:
	
	\begin{align} \label{E:NablaZIUL2InTermsofrWeightedPartialtkUnderlineNablaJNorms}
		\Big(\sum_{|I| \leq \dParameter } w^{1/2}(q)|\nabla_{\mathcal{Z}}^I U| \Big)|_{\Sigma_0} 
		& \approx \Big(\sum_{|I| \leq \dParameter } (1 + r)^{1/2 + \upgamma + |I|} |\nabla^I U| \Big)\big|_{\Sigma_0} \\
		& \approx \Big( \sum_{|J| + k \leq \dParameter } (1 + r)^{1/2 + \upgamma + |J| + k} 
		|\underline{\nabla}^J \partial_t^k U|\Big)\big|_{\Sigma_0}. \notag 
	\end{align}
	
	The same estimates hold if $\nabla_{\mathcal{Z}}^I$ is replaced with $\Lie_{\mathcal{Z}}^I.$ The notation
	$\big|_{\Sigma_0}$ is meant to indicate that the estimates only hold along $\Sigma_0.$

\end{lemma}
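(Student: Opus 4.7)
The key observation is that on $\Sigma_0$ we have $t = 0$, hence $q = r - t = r \geq 0$. By the definition of $w$ in Section \ref{SS:Weights}, this gives $w^{1/2}(q)|_{\Sigma_0} = [1 + (1+r)^{1+2\upgamma}]^{1/2} \approx (1+r)^{1/2 + \upgamma}$. So the weight $w^{1/2}$ can be replaced by an explicit power of $(1+r)$, and the entire statement becomes an equivalence of three purely spatial weighted seminorms on $\Sigma_0$.

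For the first equivalence I would establish both directions separately. For $\sum_{|I| \leq \dParameter} w^{1/2}|\nabla_{\mathcal{Z}}^I U| \lesssim \sum_{|I| \leq \dParameter}(1+r)^{1/2 + \upgamma + |I|}|\nabla^I U|$ it suffices to show that each $Z \in \mathcal{Z}$ has $|Z^\kappa|\lesssim 1+r+|t|$, so on $\Sigma_0$ iterated application of $\nabla_Z = Z^\kappa \nabla_\kappa$, together with the fact that $\nabla Z$ is a constant tensor (cf.\ \eqref{E:CovariantDerivativesofZareConstant}), yields the pointwise bound $|\nabla_{\mathcal{Z}}^I U|_{\Sigma_0} \lesssim \sum_{|K|\leq|I|}(1+r)^{|K|}|\nabla^K U|_{\Sigma_0}$. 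Multiplying by $w^{1/2}|_{\Sigma_0} \approx (1+r)^{1/2+\upgamma}$ gives the bound. For the reverse direction, I would split into the regions $r \leq 1$ and $r \geq 1$. On $r \leq 1$ both weights are comparable to $1$, and the bound follows from $\partial_\mu \in \mathcal{Z}$, which implies $|\nabla^I U|\lesssim \sum_{|K|\leq|I|}|\nabla_{\mathcal{Z}}^K U|$. On $r \geq 1$ I would invoke the algebraic identities, valid on $\Sigma_0$ away from the origin, $r\partial_r = x^j \partial_j = S - t\partial_t = S$ and $-\omega^j \Omega_{0j}|_{t=0} = \omega^j x_j\, \partial_t = r\partial_t$, together with the standard fact (as in Lemma \ref{L:RotationalLieDerivativesinTermsofrWeightedAngularDerivatives}) that any angular derivative is $O(r^{-1})$ times a rotational vectorfield from $\mathcal{O}\subset \mathcal{Z}$. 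These identities let me write any coordinate-frame derivative $\partial_\mu$ on $\Sigma_0 \cap \{r\geq 1\}$ as a linear combination of $\mathcal{Z}$-vectorfields with smooth bounded (in $\omega$) coefficients divided by $r$, so iterating I obtain the pointwise inequality $(1+r)^{|I|}|\nabla^I U|_{\Sigma_0} \lesssim \sum_{|K|\leq |I|}|\nabla_{\mathcal{Z}}^K U|_{\Sigma_0}$ on $\{r\geq 1\}$, from which multiplying by $(1+r)^{1/2+\upgamma}$ closes the estimate.

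The second equivalence is immediate: in our wave coordinate system the Levi-Civita connection of $m$ reduces to partial differentiation, so $|\nabla^I U| \approx \sum_{|J|+k = |I|}|\unabla^J \partial_t^k U|$ by the multinomial expansion of $\nabla_{\mu_1}\cdots\nabla_{\mu_{|I|}}$ into time and space coordinate partials, with $\unabla^J$ being the spatial Euclidean covariant derivative (also just partial differentiation in Cartesian coordinates). Re-indexing the sum against the weight $(1+r)^{1/2+\upgamma + |I|}$ with $|I| = |J| + k$ completes the identification.

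The final claim that the same estimate holds with $\nabla_{\mathcal{Z}}^I$ replaced by $\Lie_{\mathcal{Z}}^I$ follows directly from Proposition \ref{P:LievsCovariantLContractionRelation}, specifically the pointwise comparison \eqref{E:LieZIinTermsofNablaZI}. The main (mild) obstacle is bookkeeping the $r \leq 1$ and $r\geq 1$ regions so as not to introduce spurious singularities at the origin from the identities $\partial_r = r^{-1} S$ and $\partial_t = -r^{-1}\omega^j \Omega_{0j}$; this is handled cleanly by the smooth cutoff at $r=1$ since all the weights involved are comparable to constants on the complement. Once that dichotomy is in place, the whole lemma reduces to elementary algebra involving the definitions of the vectorfields in $\mathcal{Z}$.
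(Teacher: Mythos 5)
Your proposal is correct and follows essentially the same route as the paper's proof: you identify $w^{1/2}|_{\Sigma_0}\approx(1+r)^{1/2+\upgamma}$, prove the easy direction by expanding $\nabla_{\mathcal{Z}}^I$ with bounded coefficients (using that each $\nabla Z$ is constant), and prove the reverse direction by expressing coordinate partials as $r^{-1}$ times $\mathcal{Z}$-vectorfields and iterating. The only cosmetic difference is that the paper packages the reverse direction into the single identity $\partial_\mu = (x^\kappa\Omega_{\kappa\mu}+x_\mu S)/(qs)$ with $qs=r^2$ on $\Sigma_0$, whereas you unpack it into the radial, temporal, and angular pieces $r\partial_r = S|_{t=0}$, $r\partial_t=-\omega^j\Omega_{0j}|_{t=0}$, and $r\cdot(\text{angular derivative})\sim\mathcal{O}$; these are the same decomposition.
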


\begin{proof}
	By iterating the identity $\frac{\partial}{\partial x^{\mu}} = \frac{x^{\kappa} \Omega_{\kappa \mu} + x_{\mu}S}{qs},$ 
	and noting that $q = r = s$ along $\Sigma_0,$ it follows that
	
	\begin{align} \label{E:TranslationaldervativeslessthanqweightedLieZKnorm}
		(1 + r)^{|I|} |\nabla^I U| \lesssim \sum_{|J| \leq |I|} |\nabla_{\mathcal{Z}}^J U|.
	\end{align}
	It thus follows that
	
	\begin{align}
		\Big(\sum_{|I| \leq \dParameter } (1 + r)^{1/2 + \upgamma + |I|} |\nabla^I U|\Big)\big|_{\Sigma_0}
		& \lesssim \Big(\sum_{|I| \leq \dParameter } w^{1/2}(q)|\nabla_{\mathcal{Z}}^I U| \Big)\big|_{\Sigma_0}.
	\end{align}
	On the other hand, the opposite inequality follows easily from expanding the operator $\nabla_{\mathcal{Z}}^I$ 
	and using the Leibniz rule plus \eqref{E:CovariantDerivativesofZareConstant}. This proves the first $\approx$ in 
	\eqref{E:NablaZIUL2InTermsofrWeightedPartialtkUnderlineNablaJNorms}. The second $\approx$ is trivial. We have
	thus established \eqref{E:NablaZIUL2InTermsofrWeightedPartialtkUnderlineNablaJNorms}.
	To establish the same estimates with the operator $\Lie_{\mathcal{Z}}^I$ in place of $\nabla_{\mathcal{Z}}^I,$
	we simply use \eqref{E:LieZIinTermsofNablaZI}.

\end{proof}

\begin{corollary} \label{C:InitialEnergyInTermsofTangentialandTimeDerivatives}
	Under the assumptions of Lemma \ref{L:EnergyWithFarReplacedbyElectricandMagnetic}, we have that
	
	\begin{align}   \label{E:InitialEnergyInTermsofTangentialandTimeDerivatives}
		\mathcal{E}_{\dParameter;\upgamma;\upmu}^2(0) 
		& \approx \sum_{k + |I| \leq \dParameter } 
			\int_{\mathbb{R}^3} (1 + |x|)^{1 + 2\upgamma + 2|I|} \Big\lbrace |\partial_t^k \underline{\nabla}^I 
			\partial_t h^{(1)}|^2(0,x) + |\underline{\nabla}^I \underline{\nabla} h^{(1)}|^2(0,x) \Big\rbrace \, d^3 x \\
		& \ \ +  \int_{\mathbb{R}^3} (1 + |x|)^{1 + 2\upgamma + 2|I|} \Big\lbrace|\partial_t^k \underline{\nabla}^I 
			\Electricfield|^2(0,x) 
			+ |\partial_t^k \underline{\nabla}^I \Magneticinduction|^2(0,x) \Big\rbrace \, d^3 x. \notag
	\end{align}
	
\end{corollary}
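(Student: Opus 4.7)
The plan is to convert the energy on $\Sigma_0$ into the desired tangential/time-derivative form by combining the two lemmas immediately preceding the corollary. First, I would apply Lemma \ref{L:EnergyWithFarReplacedbyElectricandMagnetic} to replace $|\Lie_{\mathcal{Z}}^I \Far|^2$ in \eqref{E:EnergydefSmallnessSection} by $|\nabla_{\mathcal{Z}}^I \Electricfield|^2 + |\nabla_{\mathcal{Z}}^I \Magneticinduction|^2$. Next, since $t = 0$ forces $q = r \geq 0$ (cf.\ Remark \ref{R:Nomu}), I would use that $w(q)|_{\Sigma_0} = 1 + (1+r)^{1+2\upgamma} \approx (1+r)^{1+2\upgamma}$, reducing the problem to comparing a weighted sum of squares of $|\nabla\nabla_{\mathcal{Z}}^I h^{(1)}|$, $|\nabla_{\mathcal{Z}}^I \Electricfield|$, $|\nabla_{\mathcal{Z}}^I \Magneticinduction|$ (weighted by $(1+r)^{1+2\upgamma}$) with the right-hand side of \eqref{E:InitialEnergyInTermsofTangentialandTimeDerivatives}.

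For the electromagnetic part, I would directly invoke Lemma \ref{L:NablaZIUL2InTermsofrWeightedPartialtkUnderlineNablaJNorms} with $U = \Electricfield$ and $U = \Magneticinduction$, squaring and summing over $|I| \leq \dParameter$ to produce the second line of \eqref{E:InitialEnergyInTermsofTangentialandTimeDerivatives}. For the metric part, I would first use Lemma \ref{L:NablaZICommutesWithCovariantDerivativePlusErrorTerms} (and its straightforward reverse companion obtained by rearranging the commutator identity \eqref{E:NablapartialmuNablaZICommutatorExpression}) to conclude
\[
\sum_{|I| \leq \dParameter} |\nabla \nabla_{\mathcal{Z}}^I h^{(1)}| \ \approx\ \sum_{|I| \leq \dParameter} |\nabla_{\mathcal{Z}}^I \nabla h^{(1)}|.
\]
In our wave coordinates $\nabla = (\partial_t, \underline{\nabla})$, so I would split $|\nabla_{\mathcal{Z}}^I \nabla h^{(1)}|^2 \approx |\nabla_{\mathcal{Z}}^I \partial_t h^{(1)}|^2 + |\nabla_{\mathcal{Z}}^I \underline{\nabla} h^{(1)}|^2$ and apply Lemma \ref{L:NablaZIUL2InTermsofrWeightedPartialtkUnderlineNablaJNorms} separately to $U = \partial_t h^{(1)}$ and $U = \underline{\nabla} h^{(1)}$; the first yields the $|\partial_t^k \underline{\nabla}^I \partial_t h^{(1)}|^2$ piece, while the second yields terms of the form $(1+r)^{1+2\upgamma+2(|J|+k)} |\partial_t^k \underline{\nabla}^J \underline{\nabla} h^{(1)}|^2$.

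The only cosmetic subtlety is reconciling the latter with the term $|\underline{\nabla}^I \underline{\nabla} h^{(1)}|^2$ appearing in the corollary without a $\partial_t^k$: the time-differentiated pieces $\partial_t^k \underline{\nabla}^J \underline{\nabla} h^{(1)}$ with $k \geq 1$ are already controlled by the $|\partial_t^k \underline{\nabla}^I \partial_t h^{(1)}|^2$ family (after relabeling), and the $k = 0$ contributions yield exactly the pure spatial-derivative term; conversely, each such pure spatial-derivative term trivially embeds into the weighted sum on the left by taking $k = 0$. The finite factor $(\dParameter - |I| + 1)$ arising from the over-counting in the $k + |I| \leq \dParameter$ summation is bounded in terms of $\dParameter$ alone and is absorbed into the $\approx$. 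I do not anticipate any real obstacle here — the proof is essentially a bookkeeping exercise combining Lemma \ref{L:EnergyWithFarReplacedbyElectricandMagnetic}, Lemma \ref{L:NablaZIUL2InTermsofrWeightedPartialtkUnderlineNablaJNorms}, and the $\nabla/\nabla_{\mathcal{Z}}^I$ commutator estimate from Lemma \ref{L:NablaZICommutesWithCovariantDerivativePlusErrorTerms} — with the mildest point being the careful split of $\nabla$ into its $\partial_t$ and $\underline{\nabla}$ components on $\Sigma_0$.
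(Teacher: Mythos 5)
Your proof is correct and takes essentially the same route as the paper: the paper's own argument is a one-line invocation of Lemmas~\ref{L:EnergyWithFarReplacedbyElectricandMagnetic} and~\ref{L:NablaZIUL2InTermsofrWeightedPartialtkUnderlineNablaJNorms}, and your write-up supplies exactly the bookkeeping those lemmas implicitly require (the $q=r\geq 0$ simplification of $w$ on $\Sigma_0$, the $\nabla$--$\nabla_{\mathcal{Z}}^I$ commutation and the split of $\nabla$ into $\partial_t$ and $\underline{\nabla}$, and the absorption of the $k\geq 1$ pieces of $\partial_t^k\underline{\nabla}^J\underline{\nabla}h^{(1)}$ into the $\partial_t$-family by relabeling).
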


\begin{proof}
	Corollary \ref{C:InitialEnergyInTermsofTangentialandTimeDerivatives}
	follows easily from Lemmas \ref{L:EnergyWithFarReplacedbyElectricandMagnetic}
	and \ref{L:NablaZIUL2InTermsofrWeightedPartialtkUnderlineNablaJNorms}.
\end{proof}

\begin{lemma} \label{L:SolveforTimeDerivativesinTermsofInherentDerivatives}
	Let $k \geq 1$ and $\dParameter \geq 8$ be integers, and let $J$ be a $\underline{\nabla}-$multi-index. Assume that $|J| +|K| 
	\leq \dParameter.$ Assume that $(h_{\mu \nu}^{(1)}, \Far_{\mu \nu})$ is a solution to the reduced equations 
	\eqref{E:Reducedh1Summary} - \eqref{E:ReduceddMis0Summary}, and define the arrays $V,$ $V^{(0)},$ $V^{(1)},$ $W,$ $W^{(0)},$ $W^{(1)}$ by
	
	\begin{subequations}
	\begin{align}
		V & \eqdef (h, \underline{\nabla} h, \partial_t h, \Electricfield, \Magneticinduction) = V^{(0)} + V^{(1)}, 
			\label{E:Vdef} \\
		V^{(0)} & \eqdef (h^{(0)}, \underline{\nabla} h^{(0)}, \partial_t h^{(0)}, 0, 0), \\
		V^{(1)} & \eqdef (h^{(1)}, \underline{\nabla} h^{(1)}, \partial_t h^{(1)}, \Electricfield, \Magneticinduction), \\
		W & \eqdef (0,\underline{\nabla} h, \partial_t h, \Electricfield, \Magneticinduction) = W^{(0)} + W^{(1)}, \\
		W^{(0)} & \eqdef (0,\underline{\nabla} h^{(0)}, \partial_t h^{(0)}, 0, 0), \\
		W^{(1)} & \eqdef (0,\underline{\nabla} h^{(1)}, \partial_t h^{(1)}, \Electricfield, \Magneticinduction).
		\label{E:W1def}
	\end{align}
	\end{subequations}
	In the above expressions, the tensorfields $h_{\mu \nu}^{(0)},$ $h_{\mu \nu}^{(1)}$ are defined by \eqref{E:gmhexpansion} - 	
	\eqref{E:h0defIntro}, while the electromagnetic one-forms $\Electricfield_{\mu},$ $\Magneticinduction_{\mu}$ are defined in 	
	\eqref{E:EBDHinertialcomponents}. Assume further that $|V^{(1)}| + M \leq \varepsilon.$ Then if $\varepsilon$ is sufficiently 
	small, $\partial_t^k \underline{\nabla}^J W^{(1)}$ can be written as the following finite linear combination:
	
	\begin{align} 
		\underline{\nabla}^J \partial_t^k W^{(1)} & = \sum terms,
	\end{align}
	where each $term$ can be written as
	
	\begin{align}	 \label{E:PartialtkunderlinenablaJW1inTermsofInstrinsic}
			term & = \sum_{s=0}^{|J| + k + 1} \sum_{|I_1| + \cdots + |I_s| \leq |J| + k} 
				F_{(I_1, \cdots, I_s;J;k;s)}(t,x)\mathscr{M}_{(I_1, \cdots, I_s;J;k;s)}(V)[\underline{\nabla}^{I_1} W^{(1)}, \cdots, 
				\underline{\nabla}^{I_s} W^{(1)}], 
	\end{align}
	where 
	
	\begin{enumerate}
		\item The array-valued functions $\mathscr{M}_{(I_1, \cdots, I_s;J;k;s)}(V)[\underline{\nabla}^{I_1} W^{(1)}, \cdots, 
			\underline{\nabla}^{I_s} W^{(1)}]$ are continuous in a neighborhood of $V = 0$ and
			are multi-linear in the arguments $[\underline{\nabla}^{I_1} W^{(1)}, \cdots, 
			\underline{\nabla}^{I_s} W^{(1)}].$
		\item The array-valued functions $F_{(I_1, \cdots, I_s;J;k;s)}(t,x)$
			are smooth and satisfy \\
			$|F_{(I_1, \cdots, I_s;J;k;s)}(t,x)| \lesssim  M (1 + t + |x|)^{-(3 + |J| + k )}$ if $s = 0$ 
			(i.e., if there are no multi-linear arguments $[\cdots]$), where $M$ is the ADM mass.
		\item In the case $s \geq 1,$ $|F_{(I_1, \cdots, I_s;J;k;s)}(t,x)| \lesssim (1 + t + |x|)^{-d},$ 
			where $d \geq |J| + k - (|I_1| + \cdots + |I|_s) - (s-1).$
		\end{enumerate}

\end{lemma}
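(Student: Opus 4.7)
The plan is to proceed by induction on the number $k$ of time derivatives, with the spatial multi-index $J$ playing a benign role because all spatial derivatives commute with $\partial_t$ and distribute via the Leibniz rule into the $\underline{\nabla}^{I_j}$ factors without altering the structural claims. In the base case $k=1$, the transformation $\partial_t \underline{\nabla} h^{(1)} = \underline{\nabla}(\partial_t h^{(1)})$ is trivially a spatial derivative of a component of $W^{(1)}$. The key reduction is for $\partial_t(\partial_t h^{(1)}) = \partial_t^2 h^{(1)}$: we solve the reduced wave equation \eqref{E:Reducedh1Summary} by isolating the $(g^{-1})^{00}\partial_t^2 h^{(1)}_{\mu\nu}$ term,
\begin{equation*}
(g^{-1})^{00} \partial_t^2 h_{\mu\nu}^{(1)} = -2 (g^{-1})^{0a}\partial_t \partial_a h_{\mu\nu}^{(1)} - (g^{-1})^{ab}\partial_a \partial_b h_{\mu\nu}^{(1)} + \mathfrak{H}_{\mu\nu} - \widetilde{\Square}_g h_{\mu\nu}^{(0)},
\end{equation*}
and invoking the smallness hypothesis $|V^{(1)}|+M \leq \varepsilon$ to invert $(g^{-1})^{00} = -1 + O(h)$ as a Neumann series in $V$. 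The resulting coefficients are continuous array-valued functions of $V$ near the origin. The $\mathfrak{H}$ term is handled by the decomposition \eqref{E:ReducedhInhomogeneous}--\eqref{E:HtriangleSmallAlgebraic}, which is at least quadratic in $(\nabla h, \Far)$ and thus multi-linear in $W^{(1)}$ with $V$-dependent coefficients. The $\widetilde{\Square}_g h^{(0)}$ term splits as $\Square_m h^{(0)} + H^{\kappa\lambda}\nabla_\kappa\nabla_\lambda h^{(0)}$, where the first piece uses $\Square_m(1/r)=0$ for $r>0$ and the compact support of $\chi'$ to yield the $s=0$ contribution with $|F|\lesssim M(1+t+|x|)^{-3}$, while the second is multi-linear in $V$. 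The companion evolution equations \eqref{E:partialtBisolated}--\eqref{E:partialtEisolated} dispose of $\partial_t \Electricfield$ and $\partial_t \Magneticinduction$ in an analogous manner.

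For the inductive step, assume the decomposition holds up through $k$ time derivatives and apply one further $\partial_t$. By the Leibniz rule, three kinds of terms arise: $\partial_t$ may land on a factor $\underline{\nabla}^{I_j} W^{(1)}$, in which case we re-expand the resulting $\partial_t W^{(1)}$ via the base case (possibly increasing $s$ by one); it may land on the coefficient $\mathscr{M}_{(I_1,\ldots,I_s;J;k;s)}(V)$, producing $\partial_V \mathscr{M} \cdot \partial_t V$, where $\partial_t V$ is again handled by the base case; or it may land on $F(t,x)$, yielding a new smooth coefficient with one additional power of $(1+t+|x|)^{-1}$ decay. In every case, the resulting terms remain of the claimed form, multi-linear in $W^{(1)}$ and with coefficients continuous in $V$ near the origin. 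Applying the spatial operator $\underline{\nabla}^J$ afterwards (or in a separate pass before the time induction) distributes the $J$ derivatives among the existing $W^{(1)}$ factors and the $F$ coefficients, without disturbing the structure.

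The main obstacle is the precise bookkeeping for the decay exponent $d$ appearing in (iii), where one must verify $d \geq |J|+k-(|I_1|+\cdots+|I_s|)-(s-1)$. The heuristic is a ``derivative budget'' argument: each of the $|J|+k$ spacetime derivatives is accounted for by one of three fates, namely (a) it lands on a $W^{(1)}$ factor and contributes to some $|I_j|$; (b) it triggers an application of the base case that creates a new multi-linear slot (contributing to the $s-1$ deficit, since one slot is always present to start); or (c) it is absorbed into $F$ as a derivative of $h^{(0)}$ or of a similarly $M$-weighted source, producing one extra power of $(1+t+|x|)^{-1}$. The $s=0$ bound isolates the extremal case in which every derivative in fate (c) lands on $\Square_m h^{(0)}$, yielding the optimal rate $M(1+t+|x|)^{-(3+|J|+k)}$. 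A careful induction tracking these three fates through each Leibniz expansion establishes the claimed inequality on $d$ and thereby completes the proof.
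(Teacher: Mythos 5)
Your proposal is correct and follows essentially the same path as the paper's proof: solve the reduced wave equation for $\partial_t^2 h^{(1)}$ by isolating $(g^{-1})^{00}\partial_t^2 h^{(1)}$ and inverting via smallness, use the electromagnetic evolution equations for $\partial_t\Electricfield$ and $\partial_t\Magneticinduction$, split $h=h^{(0)}+h^{(1)}$ to isolate the $M$-weighted decaying pieces, and then induct by repeatedly applying $\partial_t$ and $\underline{\nabla}$ with the ``derivative budget'' bookkeeping tracking whether each new derivative falls on an $F$ coefficient, on a $W^{(1)}$ factor, or on $\mathscr{M}(V)$ via the chain rule. The only cosmetic difference is organizational: the paper states the $k=1$ reduction as a claim, develops the inductive machinery, and returns to verify the claim at the end, whereas you present base case then inductive step, but the substance — including the three-fates accounting for the exponent $d$ — is the same.
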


\begin{proof}

	We first claim that we can write the reduced system \eqref{E:Reducedh1Summary} - \eqref{E:ReduceddMis0Summary} 		
	as a finite linear combination

	\begin{subequations}	
	\begin{align} \label{E:partialtW1isaSumofTerms}
		\partial_t W^{(1)} = \sum terms,
	\end{align}
	where each term can be written in the form
	
	\begin{align} \label{E:SolveforPartialtV}
		term & = \sum_{|I| = 1} \mathscr{M}_{(I;0;1;1)}(V)[\unabla^I W^{(1)}]
			+ \mathscr{M}_{(0;0;1;2)}(V)[W^{(1)}, W^{(1)}] \\ 
		& \ \ + f_{(0;0;1;1)}(t,x) \mathscr{M}_{(0;0;1;1)}(V)[W^{(1)}]
			+ f_{(0;0;1;0)}(t,x)\mathscr{M}_{(0;0;1;0)}(V), \notag 
	\end{align}
	\end{subequations}
	where the functions $\mathscr{M}_{(\cdots)}(V)[\cdots],$ which depend on the 
	$\dParameter +1-$times continuously differentiable Lagrangian $\Ldual$
	for the electromagnetic equations, have the properties stated in the conclusions of the theorem; and
	$f_{(0;0;1;1)}(t,x),$ $f_{(0;0;1;0)}(t,x)$ are smooth functions satisfying
	$|\nabla^{I}f_{(0;0;1;1)}(t,x)| \lesssim (1 + t + |x|)^{-2 + |I|},$ 
	$|\nabla^{I}f_{(0;0;1;0)}(t,x)| \lesssim M(1 + t + |x|)^{-3 + |I|}$ for any $\nabla-$multi-index $I.$
	Let us accept the claim \eqref{E:SolveforPartialtV} for now; we will briefly discuss the derivation of 
	\eqref{E:SolveforPartialtV} at the end of the proof. We also note that
	
	\begin{align} 
		\partial_t V & = \partial_t W^{(1)} + \Pi_1 W^{(1)} + \partial_t V^{(0)},  
			\label{E:PartialtVintermsofPartialtWandW} \\
		\unabla V & = \unabla W^{(1)} + \Pi_2 W^{(1)} + \unabla V^{(0)}, 
			\label{E:UnderlineNablaVintermsofUnderlineNablaWandW}	
	\end{align}
	where $\Pi_1 W^{(1)} \eqdef (\partial_t h^{(1)}, 0, 0, 0, 0),$
	$\Pi_2 W^{(1)} \eqdef (\unabla h^{(1)}, 0, 0, 0, 0),$ and $V^{(0)}(t,x)$ satisfies \\
	$|\nabla^{I} \partial_t V^{(0)}(t,x)|$ $+ |\nabla^{I} \unabla V^{(0)}(t,x)|$ $\lesssim  
	(1 + t + |x|)^{-2 + |I|}$ for any $\nabla-$multi-index $I$ (see Lemma \ref{L:h0decayestimates}).
	Now with the help of \eqref{E:PartialtVintermsofPartialtWandW} - \eqref{E:UnderlineNablaVintermsofUnderlineNablaWandW}, the
	chain rule, and the Leibniz rule, we repeatedly partially differentiate \eqref{E:SolveforPartialtV} with
	respect to time and spatial derivatives, using the resulting equations to replace time derivatives with spatial derivatives,
	thereby inductively arriving at an expression of the form \eqref{E:PartialtkunderlinenablaJW1inTermsofInstrinsic} featuring
	the properties (i) - (iii). The properties (ii) - (iii) capture the fact that each additional differentiation of 
	$\partial_t W^{(1)}$ either a) creates an additional decay factor of $(1 + t+ |x|)^{-1}$ \big(when the derivative falls on 
	one of the $f_{\cdots}(t,x)$\big); b) increases one of the powers $|I_j|$ (when the derivative is spatial and falls on one of 
	the multilinear factors $[\cdots, \unabla^{I_j} W^{(1)}, \cdots]);$ or c) increases $s$ by one (when the derivative falls on
	$\mathscr{M}(V),$ thereby creating an additional multi-linear factor of $\nabla W^{(1)}$ via the chain rule).

We now return to the issue of expressing $\partial_t W^{(1)}$ in the form \eqref{E:partialtW1isaSumofTerms} - \eqref{E:SolveforPartialtV}. We will make repeated use of the splitting $h = h^{(0)} + h^{(1)},$ where $h^{(0)}$ is the smooth function of $(t,x)$ with the decay properties \eqref{E:nablaIh0Linfinity}, which are proved in Section \ref{SS:PreliminaryLinfinityEstimates}. We first note that $\partial_t \Electricfield$ and $\partial_t \Magneticinduction$ can be expressed in the desired form using \eqref{E:partialtBisolated} - \eqref{E:partialtEisolated}, together with the splitting of $h$ and the properties \eqref{E:nablaIh0Linfinity}. Next, the quantities $\partial_t \unabla h_{\mu \nu}^{(1)}$ can be expressed in the desired form through the trivial identity $\partial_t \unabla h_{\mu \nu}^{(1)} = \unabla \partial_t h_{\mu \nu}^{(1)}.$ The quantities $\partial_t^2 h_{\mu \nu}^{(1)}$ can be expressed in the desired form by using equation \eqref{E:Reducedh1Summary} to isolate them. We remark that the $\mathscr{M}_{I;0;1;1}(V)[\unabla^I W^{(1)}]$ term on the right-hand side of \eqref{E:SolveforPartialtV} arises from the spatial derivatives and mixed space-time derivatives of $h^{(1)}$ contained in the term $\widetilde{\Square}_{g} h_{\mu \nu}^{(1)}$ on the left-hand side of \eqref{E:Reducedh1Summary}. Furthermore, the $\mathscr{M}_{0;0;1;2}(V)[W^{(1)}, W^{(1)}]$ term on the right-hand side of \eqref{E:SolveforPartialtV} arise from the quadratic and higher-order-in $W^{(1)}$ terms on the right-hand sides of \eqref{E:Reducedh1Summary} and \eqref{E:partialtEisolated}, while the $f_{0;0;1;1}(t,x) \mathscr{M}_{0;0;1;1}(V)[W^{(1)}]$ term on the right-hand side of \eqref{E:SolveforPartialtV} arises from the $h^{(0)}$ and $\nabla h^{(0)}-$containing factors that arise from the terms on the right-hand sides of \eqref{E:Reducedh1Summary} and \eqref{E:partialtEisolated} that contain a linear factor of $h$ or $\nabla h.$ Finally, the $f_{0;0;1;0}(t,x)\mathscr{M}_{0;0;1;0}(V)$ term on the right-hand side of \eqref{E:SolveforPartialtV} arises from the $\widetilde{\Square}_{g} h_{\mu \nu}^{(0)}$ term on the right-hand side of \eqref{E:Reducedh1Summary}, and from the $O(|\nabla h^{(0)}|^2)$ terms arising from splitting the $O(|\nabla h|^2)$ terms on the right-hand side of \eqref{E:Reducedh1Summary}. 

\end{proof}

\begin{corollary} \label{C:WeightedL2PartialtkunderlinenablaJW1inTermsofInstrinsic}
	Assume the hypotheses of Proposition \ref{P:SmallNormImpliesSmallEnergy}, 
	including the smallness condition $E_{\dParameter;\upgamma}(0) + M \leq \varepsilon.$
	Let $k\geq 0$ be an integer, let $J$ be a $\unabla$ multi-index, and 
	assume that $|J| + k \leq \dParameter .$ Let $V(t,x),$ $\cdots, W^{(1)}(t,x)$ be the array-valued functions defined in \eqref{E:Vdef} - 
	\eqref{E:W1def}, let $\mathring{V}(x) = V(0,x),$ $\cdots, \mathring{W}^{(1)}(x) = W^{(1)}(0,x),$
	and assume that $\| \mathring{V}^{(1)} \|_{L^{\infty}} + \| \mathring{W}^{(1)} \|_{H_{1/2 + \upgamma}^{\dParameter}} \leq \varepsilon.$
	Then if $\varepsilon$ is sufficiently small, the following inequality holds:

	\begin{align} \label{E:WeightedL2PartialtkunderlinenablaJW1inTermsofInstrinsic}
		\big\| (1 + |x|)^{(1/2 + \upgamma + |J| + k)} \unabla^J \partial_t^k W^{(1)}(0,x) \big\|_{L^2}
		& \lesssim \| \mathring{W}^{(1)} \|_{H_{1/2 + \upgamma}^{\dParameter}} + M.
	\end{align}
	
\end{corollary}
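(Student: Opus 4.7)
The plan is to apply Lemma \ref{L:SolveforTimeDerivativesinTermsofInherentDerivatives} to expand $\unabla^J \partial_t^k W^{(1)}(0,x)$ as a finite sum of terms of the form $F(0,x)\,\mathscr{M}(\mathring{V})[\unabla^{I_1}\mathring{W}^{(1)},\ldots,\unabla^{I_s}\mathring{W}^{(1)}]$ with $\sum_{j=1}^s |I_j| \leq |J|+k$, and then to estimate each such term in the prescribed weighted $L^2$ norm by a H\"older-type splitting: one factor in $L^2$, the rest in $L^\infty$. Since $\|\mathring{V}^{(1)}\|_{L^\infty} + M \leq \varepsilon$ and $V^{(0)}$ is uniformly bounded by $\lesssim M$ thanks to the cutoff $\chi$, we have $\|\mathring{V}\|_{L^\infty} \leq C\varepsilon$, so the continuity of $\mathscr{M}(\cdot)$ near $0$ yields a uniform $L^\infty$ bound on $\mathscr{M}(\mathring{V})$.

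The $s=0$ contribution originates solely from the Schwarzschild tail $h^{(0)}$ in the inhomogeneity of the reduced equations, and satisfies $|F(0,x)| \lesssim M(1+|x|)^{-(3+|J|+k)}$. Multiplying by the prescribed weight $(1+|x|)^{1/2+\upgamma+|J|+k}$ and squaring yields an integrand bounded by $\lesssim M^2(1+|x|)^{-5+2\upgamma}$, which is integrable on $\mathbb{R}^3$ because $\upgamma < 1/2$; this produces the $M$ term on the right-hand side of \eqref{E:WeightedL2PartialtkunderlinenablaJW1inTermsofInstrinsic}.

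For the $s \geq 1$ terms, after absorbing the decay of $F(0,x)$, the leftover weight is bounded by $(1+|x|)^{1/2+\upgamma+\sum_j |I_j|+(s-1)}$. I would select $j_0$ with $|I_{j_0}| = \max_j |I_j|$, assign the weight $(1+|x|)^{1/2+\upgamma+|I_{j_0}|}$ to the $j_0$-th factor and the weight $(1+|x|)^{1+|I_j|}$ to each of the other $s-1$ factors (these exponents sum to precisely $1/2+\upgamma+\sum_j |I_j|+(s-1)$), then bound the $j_0$-th factor in $L^2$ via
\[
\bigl\|(1+|x|)^{1/2+\upgamma+|I_{j_0}|}\unabla^{I_{j_0}}\mathring{W}^{(1)}\bigr\|_{L^2} \lesssim \|\mathring{W}^{(1)}\|_{H^\dParameter_{1/2+\upgamma}},
\]
which follows immediately from Definition \ref{D:HNdeltanorm} since $|I_{j_0}| \leq \dParameter$, and the remaining factors in $L^\infty$ via the weighted Sobolev embedding
\[
\bigl\|(1+|x|)^{1+|I_j|}\unabla^{I_j}\mathring{W}^{(1)}\bigr\|_{L^\infty} \lesssim \|\mathring{W}^{(1)}\|_{H^{|I_j|+2}_{-1/2}} \lesssim \|\mathring{W}^{(1)}\|_{H^\dParameter_{1/2+\upgamma}},
\]
obtained from Proposition \ref{P:SobolevEmbeddingHNdeltaCNprimedeltamprime} and the monotonicity $\|\cdot\|_{H^N_{-1/2}} \lesssim \|\cdot\|_{H^N_{1/2+\upgamma}}$. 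The embedding requires $|I_j|+2 \leq \dParameter$, which holds because when $s \geq 2$, maximality of $|I_{j_0}|$ forces $|I_j| \leq (|J|+k)/2 \leq \dParameter/2 \leq \dParameter - 4$ for the non-$j_0$ factors (using $\dParameter \geq 8$); when $s=1$ there are no $L^\infty$ factors to estimate.

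The principal obstacle is the bookkeeping of the weight distribution and the regularity budget, namely verifying that one can always reserve the $L^2$ slot for the factor of top order and fit every remaining factor's required Sobolev norm into $H^\dParameter_{1/2+\upgamma}$; the inequality $\dParameter \geq 8$ is what makes this tight argument go through. Once the per-term bound is established, the multilinear structure in $\mathring{W}^{(1)}$ for $s \geq 2$ contributes only additional small factors of $\|\mathring{W}^{(1)}\|_{H^\dParameter_{1/2+\upgamma}} \lesssim \varepsilon$ which are harmlessly absorbed, and summing the finitely many terms from Lemma \ref{L:SolveforTimeDerivativesinTermsofInherentDerivatives} yields \eqref{E:WeightedL2PartialtkunderlinenablaJW1inTermsofInstrinsic}.
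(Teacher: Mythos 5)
Your proof is correct and follows the paper's argument essentially verbatim: apply Lemma \ref{L:SolveforTimeDerivativesinTermsofInherentDerivatives}, estimate the $s=0$ term using the $M(1+|x|)^{-(3+|J|+k)}$ decay and $\upgamma < 1/2$, and for $s \geq 1$ put the top-order factor in $L^2$ and the remaining factors in $L^\infty$ via weighted Sobolev embedding, with the regularity count closed by $\dParameter \geq 8$. The only blemish is the detour through $H^{|I_j|+2}_{-1/2}$, which is a borderline case of Proposition \ref{P:SobolevEmbeddingHNdeltaCNprimedeltamprime} (you need $\eta' < \eta + 3/2$ strictly, but $1 = -1/2 + 3/2$); embed directly from $H^{|I_j|+2}_{1/2+\upgamma}$ into $C^{|I_j|}_{1}$ instead, which is what the paper does and avoids the endpoint issue, after which the trivial monotonicity $\|\cdot\|_{H^{|I_j|+2}_{1/2+\upgamma}} \leq \|\cdot\|_{H^{\dParameter}_{1/2+\upgamma}}$ finishes.
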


\begin{proof}
	
	Let us first consider the case $s=0$ in \eqref{E:PartialtkunderlinenablaJW1inTermsofInstrinsic}.
	Then using that $|F_{(0;J;k;0)}(t,x)| \lesssim M (1 + |x|)^{-(3 + |J| + k)}$ (i.e., property (ii) from Lemma 
	the conclusions of \ref{L:SolveforTimeDerivativesinTermsofInherentDerivatives}) and recalling that $0 < \upgamma < 1/2,$ it 
	follows that 
	
	\begin{align} \label{E:W0WeightedL2}
		\big\| & (1 + |x|)^{1/2 + \upgamma} F_{(0;J;k;0)}(0,x)\mathscr{M}_{(0;J;k;0)}\big(\mathring{V}(x)\big) \big\|_{L^2}^2 \\
		& = \int_{x \in \mathbb{R}^3} 
		(1 + |x|)^{1 + 2 \upgamma} |F_{(0;J;k;0)}(0,x) \mathscr{M}_{(0;J;k;0)}\big(\mathring{V}(x)\big)|^2 \, d^3 x \notag \\
		& \lesssim M^2 \int_{x \in \mathbb{R}^3} (1 + |x|)^{-4} \, d^3 x \lesssim M^2. \notag
	\end{align}

	For the case $s \geq 1,$ we first use Proposition \ref{P:SobolevEmbeddingHNdeltaCNprimedeltamprime} to deduce that
	for all $\unabla-$indices $K$ with $|K| \leq \dParameter -2,$ we have
	
	\begin{align} 
		|\unabla^K \mathring{W}^{(1)}(x)| & \lesssim (1 + |x|)^{-(|K| + 1)} 
			\| \mathring{W}^{(1)} \|_{H_{1/2 + \upgamma}^{|K| + 2}}.  
			\label{E:NablaIW1WeightedSobolevEmbeddingPointwiseBound}
	\end{align}
	Then (without loss of generality assuming $|I_1| \leq |I_2| \leq \cdots \leq |I_s$) we use \\
	$|F_{(I_1, \cdots, I_s;J;k;s)}(t,x)| \lesssim (1 + t + |x|)^{-\big(|J| + k - (|I_1| + \cdots + |I|_s) - 
	(s-1)\big)}$ (i.e., property (iii)), together with \eqref{E:NablaIW1WeightedSobolevEmbeddingPointwiseBound}, to deduce
	
	\begin{align} \label{E:CrucialWeightedL2EstimateforProductofDifferentiatedW1Terms}
		\big\| & (1 + |x|)^{1/2 + \upgamma + |J| + k} F_{(I_1, \cdots, I_s;J;k;s)}(0,x)
			\mathscr{M}_{(I_1, \cdots, I_s;J;k;s)}\big(\mathring{V}(x)\big)[\unabla^{I_1}\mathring{W}^{(1)}(x), 
			\cdots, \unabla^{I_s} \mathring{W}^{(1)}(x)] \big\|_{L^2} \\
		& \lesssim \Big\lbrace 
			\prod_{i=1}^{s-1} \big\| (1 + |x|)^{|I_1| + \cdots + |I_{s-1}| + (s-1)} \unabla^{I_i} 
			\mathring{W}^{(1)}(x) \big\|_{L^{\infty}} 
			\Big\rbrace \big\| (1 + |x|)^{1/2 + \upgamma + |I_s|} \unabla^{I_s} \mathring{W}^{(1)}(x) \big\|_{L^2} \notag \\
		& \lesssim  \big\| (1 + |x|)^{1/2 + \upgamma + |I_s|} \unabla^{I_s} \mathring{W}^{(1)}(x) \big\|_{L^2}
			\lesssim \| \mathring{W}^{(1)} \|_{H_{1/2 + \upgamma}^{\dParameter}}. \notag 
	\end{align}
	Combining \eqref{E:W0WeightedL2} and \eqref{E:CrucialWeightedL2EstimateforProductofDifferentiatedW1Terms},
	we arrive at \eqref{E:WeightedL2PartialtkunderlinenablaJW1inTermsofInstrinsic}.
	
\end{proof}

We are now ready for the proof of the proposition.
	
\textbf{Proof of Proposition \ref{P:SmallNormImpliesSmallEnergy}:}	
	We first remark that \emph{the estimates derived in this proof are valid under the assumption that $\varepsilon$ is 
	sufficiently small}. Recall that $g_{\mu \nu}(t,x) = m_{\mu \nu} + \chi\big(\frac{r}{t}\big) \chi(r) \frac{2M}{r} \delta_{\mu \nu} 
	+ h_{\mu \nu}^{(1)}(t,x).$ Also recall that according to the assumptions of the proposition, 
	
	\begin{subequations}
	\begin{align} \label{E:InitialSpacetimeh1inTermsofInstrinsich1}
		h^{(1)}(0,x) & = \begin{pmatrix}
                        0 & 0 & 0 & 0 \\
                        0 & \mathring{\underline{h}}_{11}^{(1)} & \mathring{\underline{h}}_{12}^{(1)}
                        	& \mathring{\underline{h}}_{13}^{(1)} \\
                        0 & \mathring{\underline{h}}_{21}^{(1)} & \mathring{\underline{h}}_{22}^{(1)}
                        	& \mathring{\underline{h}}_{23}^{(1)} \\
                        0 & \mathring{\underline{h}}_{31}^{(1)} & \mathring{\underline{h}}_{32}^{(1)}
                        	& \mathring{\underline{h}}_{33}^{(1)} 
                    \end{pmatrix}, \\
 		\partial_t h^{(1)}(0,x) 
 			& = \begin{pmatrix}
     		2A^3 (\mathring{\underline{g}}^{-1})^{ab} \mathring{K}_{ab} & A^2 (\mathring{\underline{g}}^{-1})^{ab} \partial_a \mathring{\underline{g}}_{b1} 
       		& A^2 (\mathring{\underline{g}}^{-1})^{ab} \partial_a \mathring{\underline{g}}_{b2} & A^2 
       		(\mathring{\underline{g}}^{-1})^{ab} \partial_a \mathring{\underline{g}}_{b3} \\
      	A^2 (\mathring{\underline{g}}^{-1})^{ab} \partial_a \mathring{\underline{g}}_{b1} & 2A \mathring{K}_{11} 
      		& 2A \mathring{K}_{12} & 2A \mathring{K}_{13} \\
      	A^2 (\mathring{\underline{g}}^{-1})^{ab} \partial_a \mathring{\underline{g}}_{b2} & 2A \mathring{K}_{21} 
      		& 2A \mathring{K}_{22} & 2A \mathring{K}_{23} \\
      	A^2 (\mathring{\underline{g}}^{-1})^{ab} \partial_a \mathring{\underline{g}}_{b3} & 2A \mathring{K}_{31} 
      		& 2A \mathring{K}_{32} & 2A \mathring{K}_{33}
   \end{pmatrix},  \label{E:InitialpartialtSpacetimeh1inTermsofInstrinsic}
	\end{align}
	\end{subequations}
	where $A(x) = \sqrt{1- 2M \chi(r)/r},$ and 
	$\mathring{\underline{g}}_{jk}(x) = \delta_{jk} + 2M \chi(r)/r \delta_{jk} + \mathring{\underline{h}}_{jk}^{(1)}(x).$
	Note that $(\mathring{\underline{g}}^{-1})^{jk} = \delta^{jk} + O^{\infty}(|M \chi(r)/r|;\mathring{\underline{h}}^{(1)}) 
	+ O^{\infty}(|\mathring{\underline{h}}^{(1)}|;M \chi(r)/r).$ Our immediate objectives are to relate
	$\| \partial_t h^{(1)}(0,\cdot) \|_{H_{1/2 + \upgamma}^{\dParameter}}$ and $\| \mathring{\Electricfield} \|_{H_{1/2 + \upgamma}^{\dParameter}}$
	to the inherent quantities $\| \mathring{\underline{h}} \|_{H_{1/2 + \upgamma}^{\dParameter}},$ 
	$\| \mathring{K} \|_{H_{1/2 + \upgamma}^{\dParameter}},$ 
	$\| \mathring{\mathfrak{\Displacement}} \|_{H_{1/2 + \upgamma}^{\dParameter}},$ 
	$\| \mathring{\mathfrak{\Magneticinduction}} \|_{H_{1/2 + \upgamma}^{\dParameter}},$ and $M.$
	To this end, we first observe that the following estimates hold for sufficiently small $M:$ 
	
	\begin{align}
		|\underline{\nabla}^I \big(M \frac{\chi(r)}{r}\big)| & \lesssim M(1 + r)^{-(1 + |I|)}, \label{E:unablaASchwarzschildTailDecayEstimates} && \\
		|A(x)| & \lesssim 1, && \\
		|\underline{\nabla}^I A(x)| & \lesssim M(1 + r)^{-(1 + |I|)}, && |I| \geq 1. \label{E:unablaADecayEstimates}
	\end{align}
	Using \eqref{E:InitialSpacetimeh1inTermsofInstrinsich1} - \eqref{E:InitialpartialtSpacetimeh1inTermsofInstrinsic}, the decay 
	estimates \eqref{E:unablaASchwarzschildTailDecayEstimates} - \eqref{E:unablaADecayEstimates},
	the Leibniz rule, Corollary \ref{C:CompositionProductHNdelta},
	the definition of $\| \cdot \|_{H_{1/2 + \upgamma}^{\dParameter}},$ the fact that 
	$0 < \upgamma < 1/2,$ and elementary calculations,
	it is easy to check that
	
	\begin{align} \label{E:PartialtSpacetimeh1inTermsofInstrinsicK}
		\| \partial_t h(0,\cdot) \|_{H_{1/2 + \upgamma}^{\dParameter}} \lesssim \| \mathring{\underline{h}}^{(1)} \|_{H_{1/2 + \upgamma}^{\dParameter}}
		+ \| \mathring{K} \|_{H_{1/2 + \upgamma}^{\dParameter}} + M.
	\end{align}
	Furthermore, by  \eqref{E:DintermsofEBh}, \eqref{E:EintermsofDBh}, and Corollary \ref{C:CompositionProductHNdelta}, 
	we have that
	
	\begin{align} \label{E:InitialElectricMagneticWeightedSobolevinTermsofInitialDisplacementMagnetic}
		\| \mathring{\Electricfield} \|_{H_{1/2 + \upgamma}^{\dParameter}} + \| \mathring{\Magneticinduction} \|_{H_{1/2 + \upgamma}^{\dParameter}}
		& \approx \| \mathring{\Displacement} \|_{H_{1/2 + \upgamma}^{\dParameter}} 
		+ \| \mathring{\Magneticinduction} \|_{H_{1/2 + \upgamma}^{\dParameter}}.
	\end{align}
	Similarly, by we have that
	\begin{align} \label{E:InitialIntrinsicDisplacementMagneticintermsofInitialDisplacementMagnetic}
		\| \mathring{\Displacement} \|_{H_{1/2 + \upgamma}^{\dParameter}} + \| \mathring{\Magneticinduction} \|_{H_{1/2 + \upgamma}^{\dParameter}}
		& \approx \| \mathring{\mathfrak{\Displacement}} \|_{H_{1/2 + \upgamma}^{\dParameter}} 
		+ \| \mathring{\mathfrak{\Magneticinduction}} \|_{H_{1/2 + \upgamma}^{\dParameter}}.
	\end{align}
	By \eqref{E:PartialtSpacetimeh1inTermsofInstrinsicK}, 
	\eqref{E:InitialElectricMagneticWeightedSobolevinTermsofInitialDisplacementMagnetic},
	\eqref{E:InitialIntrinsicDisplacementMagneticintermsofInitialDisplacementMagnetic},
	and Proposition \ref{P:SobolevEmbeddingHNdeltaCNprimedeltamprime},
	it follows that if $E_{\dParameter;\upgamma}(0) + M$ is sufficiently small, then the smallness conditions\footnote{As in the
	Lindblad-Rodnianski proof of Corollary \ref{C:WeakDecay} below, the smallness condition 
	$|h^{(1)}(0,x)| \lesssim \varepsilon (1 + r)^{-1 - \upgamma}$ 
	follows from integrating the smallness condition $|\partial_r h^{(1)}(0,x)| \lesssim \varepsilon (1 + r)^{-2 - \upgamma},$
	which is a consequence of Proposition \ref{P:SobolevEmbeddingHNdeltaCNprimedeltamprime},
	from spatial infinity and using the decay assumption \eqref{E:h1AbstractDataAsymptotics} for 
	$|\mathring{h}^{(1)}(x)|$ at spatial infinity.}
	for $\|\mathring{V}^{(1)} \|_{L^{\infty}}$ and $\| \mathring{W}^{(1)} \|_{H_{1/2 + \upgamma}^{\dParameter}}$ 
	in the hypotheses of Lemma \ref{L:SolveforTimeDerivativesinTermsofInherentDerivatives} 
	and Corollary \ref{C:WeightedL2PartialtkunderlinenablaJW1inTermsofInstrinsic} hold. 
	Therefore, combining Corollaries \ref{C:InitialEnergyInTermsofTangentialandTimeDerivatives} and 
	\ref{C:WeightedL2PartialtkunderlinenablaJW1inTermsofInstrinsic}, \eqref{E:PartialtSpacetimeh1inTermsofInstrinsicK},
	\eqref{E:InitialElectricMagneticWeightedSobolevinTermsofInitialDisplacementMagnetic}, and
	\eqref{E:InitialIntrinsicDisplacementMagneticintermsofInitialDisplacementMagnetic},
	we deduce that if $\varepsilon$ is sufficiently small, then
	
	\begin{align}
		\mathcal{E}_{\dParameter;\upgamma;\upmu}^2(0) 
		& \lesssim \| \underline{\nabla} \mathring{\underline{h}}^{(1)} \|_{H_{1/2 + \upgamma}^{\dParameter}}^2 
			+ \| \partial_t h^{(1)}(0,\cdot) \|_{H_{1/2 + \upgamma}^{\dParameter}}^2 
			+ \| \mathring{\Electricfield} \|_{H_{1/2 + \upgamma}^{\dParameter}}^2 
			+ \| \mathring{\Magneticinduction} 	\|_{H_{1/2 + \upgamma}^{\dParameter}}^2 + M^2 \\
		& \lesssim \| \underline{\nabla} \mathring{\underline{h}}^{(1)} \|_{H_{1/2 + \upgamma}^{\dParameter}}^2 
			+ \| \mathring{K} \|_{H_{1/2 + \upgamma}^{\dParameter}}^2 
			+ \| \mathring{\mathfrak{\Displacement}} \|_{H_{1/2 + \upgamma}^{\dParameter}}^2 
			+ \| \mathring{\mathfrak{\Magneticinduction}} 	\|_{H_{1/2 + \upgamma}^{\dParameter}}^2 + M^2 \notag \\
		& \eqdef E_{\dParameter;\upgamma}^2(0) + M^2. \notag
	\end{align}
	This concludes our proof of Proposition \ref{P:SmallNormImpliesSmallEnergy}. \hfill $\qed$

\section{Algebraic Estimates of the Nonlinearities} \label{S:AlgebraicEstimates}

In this section, we provide algebraic estimates for the inhomogeneous terms that arise from
differentiating the reduced equations \eqref{E:Reducedh1Summary} - \eqref{E:ReduceddMis0Summary}. We also 
use the equations of Proposition \ref{P:EOVNullDecomposition} to derive ordinary differential inequalities
for the null components of $\dot{\Far} = \Lie_{\mathcal{Z}}^I \Far.$ Furthermore, we provide algebraic estimates
for the inhomogeneous terms appearing on the right-hand sides of these inequalities. Many of the estimates derived in this section rely on the wave coordinate condition.

\noindent \hrulefill
\ \\

\subsection{Statement and proofs of the propositions}

The proofs of the propositions given in this section use the results of a collection of technical lemmas, which 
we relegate to the end of the section. We begin by quoting the following proposition proved in \cite{hLiR2010},
which is central to many of the estimates. The basic idea is the following: many of our estimates for coupled quantities would break down if we could not achieve good control of the components $h_{LL}$ and $h_{LT}.$ Amazingly, as shown in \cite{hLiR2005} and \cite{hLiR2010}, the wave coordinate condition allows for \emph{independent, improved} estimates of exactly these components.

\begin{proposition} \cite[Proposition 8.2]{hLiR2010} \label{P:harmonicgauge} 
	\textbf{(Algebraic consequences of the wave coordinate condition)}
	Let $g$ be a Lorentzian metric satisfying the wave coordinate condition \eqref{E:wavecoordinategauge1} relative to the 
	coordinate system $\lbrace x^{\mu} \rbrace_{\mu=0,1,2,3}.$ Let $I$ be a $\mathcal{Z}-$multi-index, 
	assume that $|\nabla_{\mathcal{Z}}^J h| \leq \varepsilon$ holds for all $\mathcal{Z}-$multi-indices $J$ satisfying $|J| \leq 
	\lfloor |I|/2 \rfloor,$ where $h_{\mu \nu} \eqdef g_{\mu \nu} - m_{\mu \nu}.$ Then if $\varepsilon$ is sufficiently small,
	the following pointwise estimates hold for the tensor $H^{\mu \nu} \eqdef (g^{-1})^{\mu \nu} - (m^{-1})^{\mu \nu}:$
	
	\begin{subequations}
	\begin{align} \label{E:nablaZIHLTpointwiseEstimate}
		|\nabla\nabla_{\mathcal{Z}}^I H|_{\mathcal{L} \mathcal{T}} & \lesssim \sum_{|J| \leq |I|} 
		 |\conenabla \nabla_{\mathcal{Z}}^J H|
		 \ + \ \underbrace{\sum_{|J| \leq |I| - 1} |\nabla\nabla_{\mathcal{Z}}^J H|}_{\mbox{Absent if $|I|=0.$}}
			\ + \ \sum_{|I_1| + |I_2| \leq |I|} |\nabla_{\mathcal{Z}}^{I_1} H||\nabla\nabla_{\mathcal{Z}}^{I_2}H|, \\
	|\nabla\nabla_{\mathcal{Z}}^I H|_{\mathcal{L} \mathcal{L}} & \lesssim \sum_{|J| \leq |I|} 
		|\conenabla \nabla_{\mathcal{Z}}^J H|
			\ + \ \underbrace{\sum_{|J| \leq |I| - 2} |\nabla\nabla_{\mathcal{Z}}^{J}H|}_{\mbox{Absent if $|I| \leq 1.$}}
			\ + \ \sum_{|I_1| + |I_2| \leq |I|} |\nabla_{\mathcal{Z}}^{I_1} H||\nabla\nabla_{\mathcal{Z}}^{I_2}H|. 
			\label{E:nablaZIHLLpointwiseEstimate}
	\end{align}
	\end{subequations}
	
	Furthermore, analogous estimates hold for the tensor $h_{\mu \nu}.$ 
	
\end{proposition}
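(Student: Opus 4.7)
\textbf{Proof proposal for Proposition \ref{P:harmonicgauge}:}

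The plan is to extract from the wave coordinate condition \eqref{E:wavecoordinategauge4} a divergence identity for $H^{\mu\nu}$, then to null-decompose its iterated $\nabla_{\mathcal{Z}}$-derivatives and solve algebraically for the ``bad'' components. First I would start from $\partial_{\nu}[\sqrt{|\det g|}\, H^{\mu\nu}] = -\partial_{\nu}[\sqrt{|\det g|}\,(m^{-1})^{\mu\nu}]$. Since $(m^{-1})^{\mu\nu}$ is constant in our wave coordinates, this right-hand side is just $-\tfrac12 (m^{-1})^{\mu\nu}\partial_\nu\bigl[(m^{-1})^{\kappa\lambda}h_{\kappa\lambda}\bigr]+O^{\infty}(|h||\nabla h|)$ by \eqref{E:detgminusonehalfexpansion}, yielding a clean divergence identity of the schematic form $\nabla_{\nu}H^{\mu\nu} = P^{\mu}(\nabla h) + Q^{\mu}(h,\nabla h)$, where $P^{\mu}$ is linear in $\nabla h$ with constant coefficients and $Q^\mu = O^{\infty}(|h||\nabla h|)$.

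Next I would apply $\nabla_{\mathcal{Z}}^{I}$ and commute it through $\nabla_\nu$ using Lemma \ref{L:NablapartialmuNablaZCommutatorExpression}, which generates only \emph{constant-coefficient} lower-order commutators. The Leibniz rule applied to the quadratic $Q^\mu$ produces the sum $\sum_{|I_1|+|I_2|\leq|I|}|\nabla_{\mathcal{Z}}^{I_1}H|\,|\nabla\nabla_{\mathcal{Z}}^{I_2}H|$ appearing on the right-hand sides of \eqref{E:nablaZIHLTpointwiseEstimate}--\eqref{E:nablaZIHLLpointwiseEstimate}. I then null-decompose the divergence using the identity
\begin{align*}
\nabla_{\nu}K^{\mu\nu} &= -\tfrac12\, \uL^{\nu}\nabla_{L}K^{\mu}{}_{\nu} - \tfrac12\,L^{\nu}\nabla_{\uL}K^{\mu}{}_{\nu} + \angm^{\nu\alpha}\nabla_{\alpha}K^{\mu}{}_{\nu},
\end{align*}
which isolates $\nabla_{\uL}$ of the $L$-contracted slot as the unique ``bad'' term; the other two summands are $\conenabla$-type derivatives of $K$, and the corrections from $\nabla L$, $\nabla \uL$ contribute only $r^{-1}|K|$, absorbable into $|\conenabla K|$ via \eqref{E:WeightedDerivativesinTermsofNablaZI}. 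Solving for $\nabla_{\uL}K^{\mu}{}_{L}$ and then contracting the free index $\mu$ against $T\in\mathcal{T}$ produces the $\mathcal{L}\mathcal{T}$ estimate, while contracting against $L$ produces the $\mathcal{L}\mathcal{L}$ estimate, at least in the base case $|I|=0$.

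The main obstacle will be the inductive step, particularly establishing the sharper $|J|\le|I|-2$ lower-order sum in \eqref{E:nablaZIHLLpointwiseEstimate} rather than the $|J|\le|I|-1$ sum that appears in \eqref{E:nablaZIHLTpointwiseEstimate}. The extra order of improvement must come from exploiting the fact that the null frame fields interact differently with the various $Z\in\mathcal{Z}$: the rotations $\Omega_{jk}\in\mathcal{O}$ commute with both $L$ and $\uL$ by \eqref{E:RotationLuLBracketis0}, so that $\Lie_{\Omega_{jk}}(L^\mu L^\nu H_{\mu\nu}) = L^\mu L^\nu \Lie_{\Omega_{jk}} H_{\mu\nu}$, whereas boosts and the scaling produce commutator terms proportional to $L$ itself (which still project onto $\mathcal{L}$). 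Using this, together with Proposition \ref{P:LievsCovariantLContractionRelation} to interchange Lie and covariant derivatives, one can bound the $\mathcal{L}\mathcal{L}$-contracted commutators by $|J|\le|I|-1$ contributions that remain $\mathcal{L}\mathcal{T}$-type. A second application of the wave-coordinate identity then reduces these to the $|J|\le|I|-2$ bulk terms plus a $\conenabla \nabla_{\mathcal{Z}}^{J}H$ piece, closing the induction.

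Finally, I would transfer the estimates from $H$ to $h$ using the relation $h^{\mu\nu} = -H^{\mu\nu} + O^{\infty}(|H|^2)$ from \eqref{E:Hintermsofh}, which, after Leibniz expansion of $\nabla_{\mathcal{Z}}^I$, produces only quadratic-in-$(H,h)$ correction terms of exactly the form already present on the right-hand sides. The smallness assumption $|\nabla_{\mathcal{Z}}^{J}h|\le\varepsilon$ for $|J|\le\lfloor|I|/2\rfloor$ then allows absorption of the factor multiplying $\nabla\nabla_{\mathcal{Z}}^{I}H$ in the highest-derivative quadratic term, yielding the stated estimates for $h$.
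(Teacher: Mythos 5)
The paper gives no proof of this proposition, citing Lindblad--Rodnianski \cite[Proposition 8.2]{hLiR2010} directly, so there is no in-paper argument to compare against; your proposal does reproduce the essential ideas of the cited argument: the divergence identity $\nabla_{\nu}[\sqrt{|\det g|}\,(g^{-1})^{\mu\nu}]=0$ from \eqref{E:wavecoordinategauge4}, the null decomposition of $(m^{-1})^{\nu\alpha}\nabla_{\nu}$ to isolate $\nabla_{\uL}$ of the $L$-contracted slot, the constant-coefficient commutators of Lemma \ref{L:NablapartialmuNablaZCommutatorExpression}, the Leibniz treatment of the quadratic remainder, and a second pass through the identity to upgrade the $\mathcal{L}\mathcal{L}$ bound, followed by the transfer from $H$ to $h$ via \eqref{E:Hintermsofh}. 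One correction of detail is warranted on the mechanism you give for the $|J|\le|I|-2$ gain: the assertion that boosts and the scaling produce commutators ``proportional to $L$'' is not literally accurate --- for instance $[\Omega_{0j},L]$ also has a nonzero component tangent to the spheres --- and if read literally it would not yield the gain (an $\mathcal{L}$-type commutator contracted against $L$ remains $\mathcal{L}\mathcal{L}$-type). What is actually needed, and what is true for \emph{every} $Z\in\mathcal{Z}$ (not a special feature of the rotations), is that $[Z,L]$ has vanishing $m$-pairing with $L$, equivalently $L^{\mu}L^{\nu}\,{}^{(Z)}c_{\mu\nu}=0$, so that $[Z,L]\in\mathrm{span}\,\mathcal{T}$; this is precisely the conformal Killing identity exploited in the proof of Proposition \ref{P:LievsCovariantLContractionRelation} (inequality \eqref{E:LowerOrderLieTermLLContractioninTermsofLTContraction}), which you correctly invoke. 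With that corrected, the $\mathcal{L}\mathcal{L}$-contracted commutators at order $|I|-1$ land in the seminorm $|\,\cdot\,|_{\mathcal{L}\mathcal{T}}$, and a second application of the already-established $\mathcal{L}\mathcal{T}$ bound pushes them down to $|J|\le|I|-2$ plus $\conenabla$ terms, exactly as you say.
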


\hfill $\qed$

The next lemma provides an analogous version of the proposition for the ``remainder'' pieces of $(g^{-1})^{\mu \nu}$
and $g_{\mu \nu}.$

\begin{lemma} \label{L:NablaZIh1LLh1LTwaveCoordinateAlgebraicEstimate} \cite[Slight extension of Lemma 15.4]{hLiR2010}
	\textbf{(Algebraic/analytic consequences of the wave coordinate condition)}
	Let $g$ be a Lorentzian metric satisfying the wave coordinate condition \eqref{E:wavecoordinategauge1} relative to the 
	coordinate system $\lbrace x^{\mu} \rbrace_{\mu = 0,1,2,3},$ and let $H^{\mu \nu} \eqdef (g^{-1})^{\mu \nu} - (m^{-1})^{\mu 
	\nu}.$ Let $k \geq 0$ be an integer, and assume that there is a constant $\varepsilon$ such that $|\nabla_{\mathcal{Z}}^J h| 
	\leq \varepsilon$ holds for all $\mathcal{Z}-$multi-indices $J$ satisfying $|J| \leq k/2,$ where 
	$h_{\mu \nu} \eqdef g_{\mu \nu} - m_{\mu \nu}.$ Let 
	
	\begin{align} \label{E:NablaZIh1LLh1LTwaveCoordinateAlgebraicEstimate}
		H_{(1)}^{\mu \nu} \eqdef H^{\mu \nu} - H_{(0)}^{\mu \nu}, \qquad H_{(0)}^{\mu \nu} \eqdef 
			- \chi\big(\frac{r}{t}\big) \chi(r)
			\frac{2M}{r} \delta^{\mu \nu},
	\end{align}
	where $H_{(1)}^{\mu \nu}$ is the tensor obtained by subtracting the Schwarzschild part 
	$H_{(0)}^{\mu \nu}$ from $H^{\mu \nu},$ and let $\chi_0(1/2 < z < 3/4)$ denote the characteristic function of the interval 
	$[1/2,3/4].$ Assume further that $M \leq \varepsilon.$ Then if $\varepsilon$ is sufficiently small, the following
	pointwise estimates hold
	
	\begin{align}
		\sum_{|I| \leq k} |\nabla\nabla_{\mathcal{Z}}^I H_{(1)}|_{\mathcal{L} \mathcal{L}}
		\ + \ \sum_{|J| \leq k - 1} |\nabla\nabla_{\mathcal{Z}}^J H_{(1)}|_{\mathcal{L} \mathcal{T}} 
		& \lesssim \sum_{|I| \leq k} |\conenabla \nabla_{\mathcal{Z}}^I H_{(1)}| 
			 \\
		& \ \ + \ \varepsilon \sum_{|I| \leq k} (1 + t + |q|)^{-1} |\nabla\nabla_{\mathcal{Z}}^I H_{(1)}| 
			\ + \ \varepsilon \sum_{|I| \leq k} (1 + t + |q|)^{-2} |\nabla_{\mathcal{Z}}^I H_{(1)}| \notag \\
		& \ \ + \ \sum_{|I_1| + |I_2| \leq k} |\nabla_{\mathcal{Z}}^{I_1} H_{(1)}| |\nabla\nabla_{\mathcal{Z}}^{I_2} H_{(1)}|  
		 	\ + \ \underbrace{\sum_{|J'| \leq k - 2} |\nabla\nabla_{\mathcal{Z}}^{J'} H_{(1)}|}_{\mbox{Absent if $k \leq 1$}}   
			\notag \\
		& \ \ + \ \varepsilon (1 + t + |q|)^{-2} \chi_0(1/2 < r/t < 3/4) 
			\ + \ \varepsilon^2 (1 + t + |q|)^{-3}. \notag
	\end{align}
	
	Additionally, let
	\begin{align}
		h_{\mu \nu}^{(1)} \eqdef h_{\mu \nu} - h_{\mu \nu}^{(0)}, \qquad h_{\mu \nu}^{(0)} 
			\eqdef \chi\big(\frac{r}{t}\big) \chi(r) \frac{2M}{r} \delta_{\mu \nu},
	\end{align}
	where $h_{\mu \nu}^{(1)}$ is the tensorfield obtained by subtracting the Schwarzschild part $h_{\mu \nu}^{(0)}$ 
	from $h_{\mu \nu}.$ Then an estimate analogous to \eqref{E:NablaZIh1LLh1LTwaveCoordinateAlgebraicEstimate} holds if we 
	replace the tensorfield $H_{(1)}$ with the tensorfield $h^{(1)}.$ 
	
\end{lemma}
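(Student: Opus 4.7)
The plan is to reduce to Proposition \ref{P:harmonicgauge}, which already supplies the desired estimate for the full tensorfield $H^{\mu \nu},$ by splitting $H = H_{(0)} + H_{(1)}$ and controlling the Schwarzschild contributions that arise. Since $\nabla \nabla_{\mathcal{Z}}^I H_{(1)} = \nabla \nabla_{\mathcal{Z}}^I H - \nabla \nabla_{\mathcal{Z}}^I H_{(0)}$ and $\conenabla \nabla_{\mathcal{Z}}^J H_{(1)} = \conenabla \nabla_{\mathcal{Z}}^J H - \conenabla \nabla_{\mathcal{Z}}^J H_{(0)},$ the whole exercise reduces to: (a) applying Proposition \ref{P:harmonicgauge} to $H,$ (b) bounding the four classes of $H_{(0)}$ contributions that appear after the substitution, and (c) showing each of them fits into one of the admissible right-hand side terms.

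The first step is to establish sharp pointwise decay estimates for the explicit Schwarzschild tail $H_{(0)}^{\mu \nu} = - \chi(r/t) \chi(r) (2M/r) \delta^{\mu \nu}.$ Its support is contained in $\lbrace r \geq 1/2, \ r \geq t/2 \rbrace,$ on which $1 + r \approx 1 + t + |q|.$ A direct computation using the chain rule and the fact that each $Z \in \mathcal{Z}$ is homogeneous of degree either $0$ or $1$ yields, for each $\mathcal{Z}$-multi-index $I:$
\begin{align*}
|\nabla_{\mathcal{Z}}^I H_{(0)}| & \lesssim M (1 + t + |q|)^{-1}, \\
|\nabla \nabla_{\mathcal{Z}}^I H_{(0)}| & \lesssim M(1 + t + |q|)^{-2} + M(1 + t + |q|)^{-2} \chi_0(1/2 < r/t < 3/4), \\
|\conenabla \nabla_{\mathcal{Z}}^I H_{(0)}| & \lesssim M(1 + t + |q|)^{-2}.
\end{align*}
The $\chi_0$ term records the sole non-standard contribution, namely that of $\chi'(r/t),$ which is supported on the transition strip of the interior cutoff. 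Away from that strip the Schwarzschild tail is $r$-stationary, so contractions against $L$ contribute no improvement beyond the naive counting, but they also produce no exceptional terms.

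The second step is to carry out the bookkeeping on the right-hand side of Proposition \ref{P:harmonicgauge} applied to $H.$ Each linear term $|\conenabla \nabla_{\mathcal{Z}}^J H|$ splits as $|\conenabla \nabla_{\mathcal{Z}}^J H_{(1)}| + |\conenabla \nabla_{\mathcal{Z}}^J H_{(0)}|,$ the latter piece absorbed into the $\varepsilon (1 + t + |q|)^{-2} \chi_0$ and $\varepsilon^2 (1 + t + |q|)^{-3}$ errors by the estimates above together with $M \leq \varepsilon.$ The same splitting handles the lower-order terms $|\nabla \nabla_{\mathcal{Z}}^J H|$ and $|\nabla \nabla_{\mathcal{Z}}^{J'} H|.$ The quadratic term $\sum |\nabla_{\mathcal{Z}}^{I_1} H||\nabla \nabla_{\mathcal{Z}}^{I_2} H|$ splits into four contributions: the purely $H_{(1)} \cdot H_{(1)}$ piece is retained; the mixed pieces $|\nabla_{\mathcal{Z}}^{I_1} H_{(0)}||\nabla \nabla_{\mathcal{Z}}^{I_2} H_{(1)}|$ and $|\nabla_{\mathcal{Z}}^{I_1} H_{(1)}||\nabla \nabla_{\mathcal{Z}}^{I_2} H_{(0)}|$ produce, via the step-one estimates and the bootstrap bound $|\nabla_{\mathcal{Z}}^J h| \leq \varepsilon,$ exactly the $\varepsilon(1 + t + |q|)^{-1} |\nabla \nabla_{\mathcal{Z}}^I H_{(1)}|$ and $\varepsilon(1 + t + |q|)^{-2}|\nabla_{\mathcal{Z}}^I H_{(1)}|$ terms appearing on the right; the pure $H_{(0)} \cdot H_{(0)}$ piece is absorbed into the $\varepsilon^2 (1+t+|q|)^{-3}$ error. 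Finally, the $\nabla \nabla_{\mathcal{Z}}^I H_{(0)}$ that is subtracted off on the left is itself bounded by the same $\chi_0$ and $\varepsilon^2$ error terms, which completes the estimate.

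The main obstacle is the transition-strip bookkeeping: one must track every $\chi'(r/t)$ factor generated by applying $\nabla_{\mathcal{Z}}^I$ and $\nabla$ to $H_{(0)}$ and verify that each one lands either in the $\chi_0$ error or in the isotropic $\varepsilon^2(1 + t + |q|)^{-3}$ error, rather than producing a leftover spurious contribution. Because $H_{(0)}$ is given by an explicit elementary formula, this is a straightforward but tedious exercise in the chain rule once the terms are organized. The companion assertion for $h^{(1)}$ follows by the identical splitting argument after invoking the expansion $h_{\mu\nu} = -H_{\mu\nu} + O^{\infty}(|H|^2)$ from Lemma \ref{L:gmhexpansions} to trade the estimates already proved for $H^{(1)}$ against those for $h^{(1)},$ at the cost of quadratic errors that are already of the form appearing on the admissible right-hand side.
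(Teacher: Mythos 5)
Your approach departs from the paper's proof, which simply cites \cite[Lemma 15.4]{hLiR2010} for the $H_{(1)}$ estimate and only adds a short argument converting it to $h^{(1)}$ via $H_{(1)\mu\nu} = -h^{(1)}_{\mu\nu} + O^{\infty}(|h|^2)$ and Lemma~\ref{L:h0decayestimates}. You instead attempt to \emph{derive} the $H_{(1)}$ estimate from Proposition~\ref{P:harmonicgauge} applied to the full tensor $H$, splitting $H = H_{(0)} + H_{(1)}$ term by term and absorbing the $H_{(0)}$ pieces. That route, as you have set it up, does not close.

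The problem is the absorption step. Your step-one estimates give $|\conenabla\nabla_{\mathcal{Z}}^J H_{(0)}| \lesssim M(1+t+|q|)^{-2}$, and you also need to control the subtracted-off left-hand piece $|\nabla\nabla_{\mathcal{Z}}^I H_{(0)}|_{\mathcal{L}\mathcal{L}} \lesssim M(1+t+|q|)^{-2}$; both hold on the \emph{entire support} of $H_{(0)}$, not just on the transition strip. You claim these are absorbed into $\varepsilon(1+t+|q|)^{-2}\chi_0(1/2<r/t<3/4) + \varepsilon^2(1+t+|q|)^{-3}$ using $M \leq \varepsilon$. But in the wave zone $r/t > 3/4$ the $\chi_0$ factor vanishes, so the absorption would require $M(1+t+|q|)^{-2} \lesssim \varepsilon^2(1+t+|q|)^{-3}$, i.e.\ $M(1+t+|q|) \lesssim \varepsilon^2$ — false for large $t$. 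The allowed error term is genuinely one full power of $\varepsilon$ and one full power of $(1+t+|q|)$ smaller than what the naive triangle-inequality splitting produces.

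What your argument misses is a structural cancellation that the triangle inequality destroys: the linearized isotropic Schwarzschild tail $h^{(0)}_{\mu\nu} = \chi(r/t)\chi(r)\frac{2M}{r}\delta_{\mu\nu}$ itself satisfies the linearized wave coordinate condition $(m^{-1})^{\kappa\lambda}\partial_{\kappa} h^{(0)}_{\lambda\mu} - \frac{1}{2}(m^{-1})^{\kappa\lambda}\partial_{\mu} h^{(0)}_{\kappa\lambda} = 0$ to leading order in $M$ away from the cutoff transition strips (a short direct computation with $\frac{2M}{r}\delta_{\mu\nu}$ shows both sides equal $-\frac{2Mx^j}{r^3}$ for $\mu = j$ and vanish for $\mu = 0$). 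Consequently the $H_{(0)}$ contributions to the \emph{wave coordinate identity} on the left and right cancel in the bulk, and what survives is genuinely only a cutoff-supported $\chi_0$ term plus a quadratic $O(M^2/r^3)$ term. To recover this, one has to carry out the $H = H_{(0)} + H_{(1)}$ substitution \emph{inside} the wave coordinate identity before taking absolute values — this is exactly what \cite[Lemma 15.4]{hLiR2010} does and what Proposition~\ref{P:harmonicgauge} (which applies the triangle inequality to the full $H$) cannot reproduce after the fact. Your proposed reduction to Proposition~\ref{P:harmonicgauge} therefore does not yield the stated bound; you would either need to cite the LR lemma as the paper does, or redo the identity-level computation.
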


\begin{proof}
	The estimates for the tensorfield $H_{(1)}^{\mu \nu}$ were proved as \cite[Lemma 15.4]{hLiR2010}. The analogous estimates for 
	the tensorfield $h_{\mu \nu}^{(1)}$ follow from those for $H_{(1)}^{\mu \nu},$ together with the fact that
	$H_{(1); \mu \nu} = - h_{\mu \nu}^{(1)} + O^{\infty}(|h^{(0)} + h^{(1)}|^2)$ and the decay estimates for $h^{(0)}$ stated in 
	Lemma \ref{L:h0decayestimates}.
\end{proof}

We now turn to the following proposition, which captures the algebraic structure of the inhomogeneous term $\mathfrak{H}_{\mu \nu}$ appearing on the right-hand side of the reduced equation \eqref{E:Reducedh1Summary}.

\begin{proposition} \label{P:AlgebraicInhomogeneous} 
\cite[Extension of Proposition 9.8]{hLiR2010}
	\textbf{(Algebraic estimates of $\mathfrak{H}_{\mu \nu}$ and $\nabla_{\mathcal{Z}}^I \mathfrak{H}_{\mu \nu}$)}
	Let $\mathfrak{H}_{\mu \nu}$ be the inhomogeneous term on the right-hand side of the reduced equation 
	\eqref{E:Reducedh1Summary}, and assume that the wave coordinate condition \eqref{E:wavecoordinategauge1} holds. Then
	
	\begin{subequations}
	\begin{align}
		|\mathfrak{H}|_{\mathcal{T} \mathcal{N}} & \lesssim |\conenabla h||\nabla h| 
			\ + \ \big(|\Far|_{\mathcal{L}\mathcal{N}} + |\Far|_{\mathcal{T}\mathcal{T}} \big)|\Far|
			 \ + \ O^{\infty}(|h||\nabla h|^2) \ + \ O^{\dParameter+1}(|h||\Far|^2) \ + \ O^{\dParameter+1}|\Far|^3;h), \label{E:InhomogeneousHTUAlgebraic} \\
		|\mathfrak{H}| & \lesssim |\nabla h|_{\mathcal{T} \mathcal{N}}^2 \ + \ |\conenabla h||\nabla h|
			\ + \ |\Far|^2 \ + \ O^{\infty}(|h||\nabla h|^2) \ + \ O^{\dParameter+1}(|h||\Far|^2) \ + \ O^{\dParameter+1}(|\Far|^3;h). \label{E:InhomogeneousHAlgebraic} 
	\end{align}

	In addition, assume that there exists an $\varepsilon > 0$ 
	such that $|\nabla_{\mathcal{Z}}^J h| + |\Lie_{\mathcal{Z}}^J \Far| \leq \varepsilon$ 
	holds for all $\mathcal{Z}-$multi-indices $|J| \leq \lfloor |I|/2 \rfloor.$ 
	Then if $\varepsilon$ is sufficiently small, the following pointwise estimates hold:

	\begin{align} \label{E:ZIinhomogeneoushpointwise}
		|\nabla_{\mathcal{Z}}^I \mathfrak{H}| 
		& \lesssim \sum_{|I_1| + |I_2| \leq |I|}
			\Big\lbrace |\nabla\nabla_{\mathcal{Z}}^{I_1} h|_{\mathcal{T} \mathcal{N}} 
			|\nabla\nabla_{\mathcal{Z}}^{I_2} h |_{\mathcal{T} \mathcal{N}} 
			\ + \ |\conenabla \nabla_{\mathcal{Z}}^{I_1}h | |\nabla\nabla_{\mathcal{Z}}^{I_2} h| \Big\rbrace \\
		& \ \ + \ \sum_{|I_1| + |I_2| \leq |I|} |\Lie_{\mathcal{Z}}^{I_1} \Far| |\Lie_{\mathcal{Z}}^{I_2} \Far|
			\ + \ \underbrace{\sum_{|I_1| + |I_2| \leq |I| - 2} |\nabla\nabla_{\mathcal{Z}}^{I_1} h|
			|\nabla\nabla_{\mathcal{Z}}^{I_2} h|}_{\mbox{Absent if $|I| \leq 1$}} 	
			\notag \\
		& \ \ + \ \sum_{|I_1| + |I_2| + |I_3| \leq |I|} 
			\Big\lbrace|\nabla_{\mathcal{Z}}^{I_1} h| |\nabla\nabla_{\mathcal{Z}}^{I_2} h| |\nabla\nabla_{\mathcal{Z}}^{I_3}h| 
			\ + \ |\nabla_{\mathcal{Z}}^{I_1} h||\Lie_{\mathcal{Z}}^{I_2}\Far| |\Lie_{\mathcal{Z}}^{I_3} \Far| 
			\ + \ |\Lie_{\mathcal{Z}}^{I_1} \Far||\Lie_{\mathcal{Z}}^{I_2}\Far| |\Lie_{\mathcal{Z}}^{I_3} \Far| \Big\rbrace. \notag 
\end{align}
\end{subequations}
\end{proposition}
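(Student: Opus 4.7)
The starting point is the decomposition \eqref{E:ReducedhInhomogeneous}, which exhibits $\mathfrak{H}_{\mu \nu}$ as the sum of the quadratic terms $\mathscr{P}(\nabla_{\mu}h, \nabla_{\nu}h)$, $\mathscr{Q}^{(1;h)}_{\mu \nu}(\nabla h, \nabla h)$, $\mathscr{Q}^{(2;h)}_{\mu \nu}(\Far,\Far)$, and the cubic error $\mathfrak{H}^{\triangle}_{\mu \nu}$ from \eqref{E:HtriangleSmallAlgebraic}. The plan is to estimate each piece separately; the base cases \eqref{E:InhomogeneousHTUAlgebraic}--\eqref{E:InhomogeneousHAlgebraic} reduce to purely algebraic frame computations, while \eqref{E:ZIinhomogeneoushpointwise} follows by commuting $\nabla_{\mathcal{Z}}^I$ past the quadratic structure and applying the base cases to each resulting factor.

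For \eqref{E:InhomogeneousHTUAlgebraic}, the key observation about $\mathscr{P}$ and $\mathscr{Q}^{(1;h)}$ is that a contraction against any $T^{\mu} \in \mathcal{T}$ replaces $\nabla_{\mu}$ by a tangential derivative, i.e.\ by a $\conenabla$; hence $|\mathscr{P}(\nabla_{T}h, \nabla_{N}h)| + |\mathscr{Q}^{(1;h)}_{TN}(\nabla h, \nabla h)|\lesssim |\conenabla h||\nabla h|$, the latter using the standard null form identity (essentially \eqref{E:StandardNullForm0}--\eqref{E:StandardNullFormmunu}) which expresses $\mathscr{Q}_0$ and $\mathscr{Q}_{\mu\nu}$ evaluated on arbitrary pairs in terms of products with at least one tangential factor. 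For $\mathscr{Q}^{(2;h)}_{TN}(\Far,\Far)$, I would expand $(m^{-1})^{\kappa\lambda}$ in the null frame $\lbrace\uL,L,e_A\rbrace$ and observe that every resulting bilinear contraction $\Far_{T\cdot}\Far_{N\cdot}$ contains at least one factor in which $\Far$ is contracted against a vector in $\mathcal{T}$ on one slot and something in $\mathcal{N}$ on the other with at least one $L$ or two $\mathcal{T}$ indices---hence bounded by $(|\Far|_{\mathcal{L}\mathcal{N}}+|\Far|_{\mathcal{T}\mathcal{T}})|\Far|$; the $m_{\mu\nu}\Farinvariant_{(1)}$ piece is handled using the null-frame identity $\Farinvariant_{(1)}\sim\rho^2-\sigma^2+\alpha\cdot\ualpha$, bounded by $(|\alpha|+|\rho|+|\sigma|)|\Far|\lesssim(|\Far|_{\mathcal{L}\mathcal{N}}+|\Far|_{\mathcal{T}\mathcal{T}})|\Far|$. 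The error term $\mathfrak{H}^{\triangle}$ is already in the required form. The estimate \eqref{E:InhomogeneousHAlgebraic} follows by the same frame decomposition without the $\mathcal{T}\mathcal{N}$ restriction, noting that the worst unrestricted null-form contribution is $|\nabla h|_{\mathcal{T}\mathcal{N}}^2$ (from contractions where \emph{both} indices are tangential after applying the null form identity), plus the $|\conenabla h||\nabla h|$ contribution, while $\mathscr{Q}^{(2;h)}$ generally yields only $|\Far|^2$.

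For the higher-order estimate \eqref{E:ZIinhomogeneoushpointwise}, I would apply $\nabla_{\mathcal{Z}}^I$ to \eqref{E:ReducedhInhomogeneous} and use Lemma \ref{L:nullformvectorfieldcommutation} (referenced in Proposition \ref{P:InhomogeneoustermsLieZIFar}), which asserts that standard null forms and the bilinear $\mathscr{Q}^{(2;h)}$ are preserved under iterated $\nabla_{\mathcal{Z}}$ differentiation modulo lower-order terms: schematically, $\nabla_{\mathcal{Z}}^I\mathscr{Q}(\nabla h,\nabla h) = \sum_{|I_1|+|I_2|\leq |I|} c_{I_1 I_2}\mathscr{Q}(\nabla\nabla_{\mathcal{Z}}^{I_1}h,\nabla\nabla_{\mathcal{Z}}^{I_2}h)$ plus terms in which the null structure is broken but the total derivative count has dropped by at least $2$. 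Applying the $|I|=0$ estimates \eqref{E:InhomogeneousHAlgebraic} to each summand produces the first three lines of the right-hand side of \eqref{E:ZIinhomogeneoushpointwise}; the lower-order commutator residues contribute to the $|I_1|+|I_2|\leq |I|-2$ term; and Leibniz-expansion of $\mathfrak{H}^{\triangle}$ with \eqref{E:HtriangleSmallAlgebraic} yields the cubic terms on the last line. The smallness hypothesis $|\nabla_{\mathcal{Z}}^J h|+|\Lie_{\mathcal{Z}}^J\Far|\leq\varepsilon$ for $|J|\leq\lfloor|I|/2\rfloor$ enters when expanding the analytic nonlinearities hidden in $O^{\dParameter+1}(\cdots)$ via the Moser-type estimates, since the Taylor series must be evaluated and differentiated in a neighborhood of the origin.

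The main obstacle is tracking the null-form structure through the iterated $\nabla_{\mathcal{Z}}$ action: one must verify that commutators $[\nabla_{\mathcal{Z}},\mathscr{P}]$ and $[\nabla_{\mathcal{Z}},\mathscr{Q}^{(1;h)}]$ produce either new null forms of the same type or genuinely lower-order terms that fit into the $|I_1|+|I_2|\leq|I|-2$ category. This reduces to the identity \eqref{E:CovariantDerivativesofZareConstant}, which guarantees that $\nabla_{\mathcal{Z}}$ acts on the constant-coefficient tensors $m_{\mu\nu}$, $(m^{-1})^{\mu\nu}$ as at most multiplication by a constant plus lower-order commutator terms. I would organize the bookkeeping by a straightforward induction on $|I|$, at each step using Lemma \ref{L:nullformvectorfieldcommutation} and Proposition \ref{P:LievsCovariantLContractionRelation} to re-express Lie derivatives in terms of covariant derivatives wherever needed to match the form of the right-hand side of \eqref{E:ZIinhomogeneoushpointwise}.
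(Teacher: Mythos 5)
The proposal follows the paper's broad outline (piece-by-piece estimation of $\mathscr{P}$, $\mathscr{Q}^{(1;h)}$, $\mathscr{Q}^{(2;h)}$, and the cubic error) but contains a genuine gap in the treatment of the $\mathscr{P}$ term. You never invoke the wave coordinate condition, yet its use is unavoidable here and is the reason the proposition explicitly assumes it.

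Concretely: $\mathscr{P}(\nabla_\mu h, \nabla_\nu h)$ is \emph{not} a standard null form, so the null-form identity you cite for $\mathscr{Q}^{(1;h)}$ does not apply to it. The correct algebraic estimate is \eqref{E:PSpecialNullStructure}, which reads $|\mathscr{P}(\nabla h, \nabla h)| \lesssim |\nabla h|_{\mathcal{T}\mathcal{N}}^2 + |\nabla h|_{\mathcal{L}\mathcal{L}} |\nabla h|$. Your claim that ``the worst unrestricted null-form contribution is $|\nabla h|_{\mathcal{T}\mathcal{N}}^2$'' misses the second term: $|\nabla h|_{\mathcal{L}\mathcal{L}} |\nabla h|$ contains a full $\uL$-derivative factor and is \emph{not} controlled by $|\nabla h|_{\mathcal{T}\mathcal{N}}^2$ (since $|\nabla h|$ includes the bad $\nabla_{\uL} h_{\uL\uL}$ component). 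It is precisely here that Proposition \ref{P:harmonicgauge} is needed, to convert $|\nabla h|_{\mathcal{L}\mathcal{L}}$ into $|\conenabla h| + |h||\nabla h|$, giving the $|\conenabla h||\nabla h|$ term plus a cubic remainder absorbed into $O^{\infty}(|h||\nabla h|^2)$. Without this step, \eqref{E:InhomogeneousHAlgebraic} does not follow.

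The same gap propagates to \eqref{E:ZIinhomogeneoushpointwise}. The Leibniz rule \eqref{E:SpecialPLeibnizRule} preserves the $\mathscr{P}$ structure exactly, with no lower-order residues; so your attribution of the $|I_1|+|I_2|\leq |I|-2$ sum to ``lower-order commutator residues'' is incorrect. That sum actually comes from the $\sum_{|J|\leq|I_1|-2}|\nabla\nabla_{\mathcal{Z}}^J h|$ term in the higher-order wave-coordinate estimate \eqref{E:nablaZIHLLpointwiseEstimate}, again applied to control $|\nabla\nabla_{\mathcal{Z}}^{I_1} h|_{\mathcal{L}\mathcal{L}}$ in the $\mathscr{P}$ piece. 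The remainder of your argument (the $\mathcal{T}\mathcal{N}$ contraction for \eqref{E:InhomogeneousHTUAlgebraic}, the null-frame expansion of $\mathscr{Q}^{(2;h)}$, and the Leibniz bookkeeping for $\mathscr{Q}^{(1;h)}$ and the cubic error) is essentially correct and aligned with the paper's proof.
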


\begin{proof}
	Using \eqref{E:ReducedhInhomogeneous}, we can decompose $\mathfrak{H}_{\mu \nu}$ 
	into
	
	\begin{align}
		\mathfrak{H}_{\mu \nu} & = (i)_{\mu \nu} + (ii)_{\mu \nu} + (iii)_{\mu \nu} + (iv)_{\mu \nu}, 
	\end{align}
	where
	
	\begin{align}
		(i)_{\mu \nu} & \eqdef \mathscr{P}(\nabla_{\mu} h, \nabla_{\nu} h),   
			\label{E:PieceiReducedhInhomogeneous} \\
		(ii)_{\mu \nu} & \eqdef \mathscr{Q}_{\mu \nu}^{(1;h)}(\nabla h, \nabla h), \\
		(iii)_{\mu \nu} & \eqdef \mathscr{Q}_{\mu \nu}^{(2;h)}(\Far, \Far), 
			\label{E:PieceiiiReducedhInhomogeneous} \\
		(iv)_{\mu \nu} & \eqdef O^{\infty}(|h||\nabla h|^2) \ + \ O^{\dParameter+1}(|h||\Far|^2) \ + \ O^{\dParameter+1}(|\Far|^3;h).
			\label{E:PieceivReducedhInhomogeneous}
	\end{align}
	We will analyze each of the four pieces separately.
	
	The facts that $|(i)|_{\mathcal{T} \mathcal{N}} \lesssim$ the 
	right-hand side of \eqref{E:InhomogeneousHTUAlgebraic} and that $|(i)| \lesssim$ the right-hand side of 
	\eqref{E:InhomogeneousHAlgebraic} follow from 
	Proposition \ref{P:harmonicgauge}, \eqref{E:PSpecialNullStructure}, and
	\eqref{E:PTUSpecialNullStructure}. The fact that $|\nabla_{\mathcal{Z}}^I(i)| \lesssim$ 
	the right-hand side of \eqref{E:ZIinhomogeneoushpointwise} follows from Proposition \ref{P:harmonicgauge},
	\eqref{E:SpecialPLeibnizRule}, and \eqref{E:PSpecialNullStructure}.
	
	The facts that $|(ii)|_{\mathcal{T} \mathcal{N}} \lesssim$ the right-hand side of 
	\eqref{E:InhomogeneousHTUAlgebraic}, and that $|(ii)| \lesssim$ the right-hand side of 
	\eqref{E:InhomogeneousHAlgebraic} both follow from \eqref{E:Q1hNullFormEstimate}.
	That $|\nabla_{\mathcal{Z}}^I(ii)| \lesssim$
	the right-hand side of \eqref{E:ZIinhomogeneoushpointwise} follows from 
	\eqref{E:Q1hLeibnizRule} and \eqref{E:Q1hNullFormEstimate}.
	
	The fact that $|(iii)|_{\mathcal{T} \mathcal{N}} \lesssim$ the right-hand side of 
	\eqref{E:InhomogeneousHTUAlgebraic} follows from \eqref{E:Q2TUhNullFormEstimate}, while the fact that 
	$|(iii)| \lesssim$ the right-hand side of \eqref{E:InhomogeneousHAlgebraic} follows from 
	\eqref{E:Q2hNullFormEstimate}. The fact that $|\nabla_{\mathcal{Z}}^I(iii)| \lesssim$ the right-hand side of 
	\eqref{E:ZIinhomogeneoushpointwise} follows from \eqref{E:LieZIinTermsofNablaZI},
	\eqref{E:Q2hLeibnizRule}, and \eqref{E:Q2hNullFormEstimate}.

	The desired estimates for term $(iv)$ follow easily with the help of the Leibniz rule and \eqref{E:LieZIinTermsofNablaZI}.

\end{proof}

The next proposition captures the special algebraic structure of the reduced inhomogeneous term $\mathfrak{F}_{(I)}^{\nu}$ defined in \eqref{E:LiemodZIdifferentiatedEOVInhomogeneousterms}.

\begin{proposition} \label{P:EnergyInhomogeneousTermAlgebraicEstimate}
	\textbf{(Algebraic estimates of $\mathfrak{F}_{(I)}^{\nu}$)}
	Let $\mathfrak{F}^{\nu}$ be the inhomogeneous term \eqref{E:EMBIFarInhomogeneous} in the reduced electromagnetic
	equations, let $I$ be a $\mathcal{Z}-$multi-index with $|I|=k,$
	and let $X_{\nu}$ be any covector. In addition, assume that there exists an $\varepsilon > 0$ 
	such that $|\nabla_{\mathcal{Z}}^J h| + |\Lie_{\mathcal{Z}}^J \Far| \leq \varepsilon$ 
	holds for all $\mathcal{Z}-$multi-indices $|J| \leq \lfloor k/2 \rfloor.$ 
	Then if $\varepsilon$ is sufficiently small, the following pointwise estimates hold:
	
	\begin{subequations}
	\begin{align} \label{E:LieZIFarNullFormInhomogeneousTermAlgebraicEstimate}
		|X_{\nu} \Liemod_{\mathcal{Z}}^I \mathfrak{F}^{\nu}|
		& \lesssim \sum_{|I_1| + |I_2| \leq k} |X| 
			|\conenabla \nabla_{\mathcal{Z}}^{I_1} h| |\Lie_{\mathcal{Z}}^{I_2} \Far|
			\ + \ \sum_{|I_1| + |I_2| \leq k} |X||\nabla\nabla_{\mathcal{Z}}^{I_1} h| 
			\big(|\Lie_{\mathcal{Z}}^{I_2} \Far|_{\mathcal{L} \mathcal{N}} + |\Lie_{\mathcal{Z}}^{I_2} \Far|_{\mathcal{T} 
			\mathcal{T}}\big) \\
		& \ \ + \ \sum_{|I_1| + |I_2| + |I_3| \leq k} 
			|X||\nabla_{\mathcal{Z}}^{I_1} h| |\nabla\nabla_{\mathcal{Z}}^{I_1} h| 
			|\Lie_{\mathcal{Z}}^{I_3} \Far|
			\ + \ \sum_{|I_1| + |I_2| + |I_3| \leq k} 
			|X||\nabla\nabla_{\mathcal{Z}}^{I_1} h| |\Lie_{\mathcal{Z}}^{I_2} \Far| 
			|\Lie_{\mathcal{Z}}^{I_3} \Far| \notag \\
		& \lesssim 
			(1 + t + |q|)^{-1} \mathop{\sum_{|I_1| + |I_2| \leq k + 1}}_{|I_2| \leq k} |X| 
			|\nabla_{\mathcal{Z}}^{I_1} h| |\Lie_{\mathcal{Z}}^{I_2} \Far|
			\ + \ (1 + |q|)^{-1} \mathop{\sum_{|I_1| + |I_2| \leq k + 1}}_{|I_2| \leq k} |X| |\nabla_{\mathcal{Z}}^{I_1} h| 
			\big(|\Lie_{\mathcal{Z}}^{I_2} \Far|_{\mathcal{L} \mathcal{N}} + |\Lie_{\mathcal{Z}}^{I_2} \Far|_{\mathcal{T} 
			\mathcal{T}}\big)  \notag \\
		& \ \ + \ (1 + |q|)^{-1} \mathop{\sum_{|I_1| + |I_2| + |I_3| \leq k + 1}}_{|I_2|, |I_3| \leq k} 
			|X| \Big\lbrace |\nabla_{\mathcal{Z}}^{I_1} h| |\Lie_{\mathcal{Z}}^{I_2} h| |\Lie_{\mathcal{Z}}^{I_3} \Far|
			\ + \ |\nabla_{\mathcal{Z}}^{I_1} h| |\Lie_{\mathcal{Z}}^{I_2} \Far| 
			|\Lie_{\mathcal{Z}}^{I_3} \Far| \Big\rbrace. \notag
	\end{align}
	In addition, the same estimates hold for $|X_{\nu} \Lie_{\mathcal{Z}}^I \mathfrak{F}^{\nu}|.$
	
	Furthermore, let $N^{\# \mu \nu \kappa \lambda}$ be the tensorfield from 
	the reduced electromagnetic equation \eqref{E:ReduceddMis0Summary}. 
	Then if $\varepsilon$ is sufficiently small and $k \geq 1,$ the following pointwise commutator estimate holds:
	
	\begin{align} \label{E:EnergyInhomogeneousTermAlgebraicEstimate}
		\Big|X_{\nu} \Big\lbrace & N^{\# \mu \nu \kappa \lambda}\nabla_{\mu} \Lie_{\mathcal{Z}}^I\Far_{\kappa \lambda}
		- \Liemod_{\mathcal{Z}}^I \big(N^{\# \mu \nu \kappa \lambda}\nabla_{\mu}\Far_{\kappa \lambda}\big) \Big\rbrace \Big|	\\
		& \lesssim (1 + |q|)^{-1} \mathop{\sum_{|I'| = k}}_{|J| \leq 1} 
				|X| |\nabla_{\mathcal{Z}}^{I'} h|_{\mathcal{L}\mathcal{L}} |\Lie_{\mathcal{Z}}^J \Far| 
			\ \ + \ (1 + |q|)^{-1} \mathop{\sum_{|J| \leq 1}}_{|I'| = k} 
				|X| |\nabla_{\mathcal{Z}}^J h|_{\mathcal{L}\mathcal{L}} |\Lie_{\mathcal{Z}}^{I'} \Far| \notag \\
		& \ \ + \ (1 + |q|)^{-1} \sum_{|I'| = k} |X||h|_{\mathcal{L}\mathcal{T}} |\Lie_{\mathcal{Z}}^{I'} \Far| 
			\ \ + \ (1 + |q|)^{-1} \mathop{\sum_{|I_1| + |I_2| \leq k + 1}}_{|I_1|, |I_2| \leq k} |X| |\nabla_{\mathcal{Z}}^{I_1} h|
		 		\big(|\Lie_{\mathcal{Z}}^{I_2} \Far|_{\mathcal{L} \mathcal{N}} +  
		 		|\Lie_{\mathcal{Z}}^{I_2} \Far|_{\mathcal{T} \mathcal{T}}\big) \notag \\
		& \ \ + \ (1 + t + |q|)^{-1} \mathop{\sum_{|I_1| + |I_2| \leq k + 1}}_{|I_1|, |I_2| \leq k}
			|X||\nabla_{\mathcal{Z}}^{I_1} h| |\Lie_{\mathcal{Z}}^{I_2} \Far|
			\ + \ (1 + |q|)^{-1} \mathop{\sum_{|I_1| + |I_2| \leq k + 1}}_{|I_1|, |I_2| \leq k}
				|X|_{\mathcal{L}}|\nabla_{\mathcal{Z}}^{I_1} h| |\Lie_{\mathcal{Z}}^{I_2} \Far| \notag \\
		& \ \ + \ (1 + |q|)^{-1} \mathop{\sum_{|I_1| + |I_2| \leq k + 1}}_{|I_1| \leq k - 1, |I_2| \leq k - 1} 
				|X||\nabla_{\mathcal{Z}}^{I_1} h|_{\mathcal{L}\mathcal{L}} |\Lie_{\mathcal{Z}}^{I_2} \Far| 
			\ \ + \ (1 + |q|)^{-1} \mathop{\sum_{|I_1| + |I_2| \leq k}}_{|I_1| \leq k - 1, |I_2| \leq k - 1} 
				|X||\nabla_{\mathcal{Z}}^{I_1} h|_{\mathcal{L}\mathcal{T}} |\Lie_{\mathcal{Z}}^{I_2} \Far| \notag \\
		& \ \ + \ (1 + |q|)^{-1} \underbrace{\mathop{\sum_{|I_1| + |I_2| \leq k - 1}}_{|I_1| \leq k - 2, |I_2| \leq k - 1} 
			|X||\nabla_{\mathcal{Z}}^{I_1} h| |\Lie_{\mathcal{Z}}^{I_2} \Far|}_{\mbox{absent if $k = 1$}} \notag \\
		& \ \ + \ (1 + |q|)^{-1} \mathop{\sum_{|I_1| + |I_2| + |I_3| \leq k + 1}}_{|I_1|, |I_2|, |I_3| \leq k} |X| 
			\Big\lbrace |\nabla_{\mathcal{Z}}^{I_1} h| |\nabla_{\mathcal{Z}}^{I_2} h| |\Lie_{\mathcal{Z}}^{I_3}\Far|
			\ + \ |\nabla_{\mathcal{Z}}^{I_1} h| |\Lie_{\mathcal{Z}}^{I_2} \Far| |\Lie_{\mathcal{Z}}^{I_3}\Far|
			\ + \ |\Lie_{\mathcal{Z}}^{I_1} \Far| |\Lie_{\mathcal{Z}}^{I_2} \Far| |\Lie_{\mathcal{Z}}^{I_3}\Far|
			\Big\rbrace. \notag
	\end{align}
	\end{subequations}
	
\end{proposition}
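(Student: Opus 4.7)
My plan is to build the proof directly on top of Proposition \ref{P:InhomogeneoustermsLieZIFar}, which already expresses $\Liemod_{\mathcal{Z}}^I \mathfrak{F}^{\nu}$ and the commutator term in terms of finitely many copies of the quadratic forms $\mathscr{P}_{(\Far)}^{\nu}(\cdot,\cdot)$, $\mathscr{Q}_{(1;\Far)}^{\nu}(\cdot,\cdot)$, $\mathscr{Q}_{(2;\Far)}^{\nu}(\cdot,\cdot)$ (plus the error terms $\mathfrak{F}_{\triangle}^{\nu}$ and $N_{\triangle}^{\# \mu \nu \kappa \lambda}$) applied to lower-order $\Lie_{\mathcal{Z}}$-derivatives of $h$ and $\Far$. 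With this reduction in hand, each of the two displayed inequalities will be proved by contracting the quadratic forms against $X_{\nu}$, performing a Minkowskian null frame decomposition of the resulting expressions, and invoking pre-existing algebraic/wave-coordinate estimates to dispense with the unfavorable components.

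First I will establish a family of frame-level null-form estimates of the schematic shape
\begin{align*}
	|X_{\nu}\mathscr{Q}_{(2;\Far)}^{\nu}(\nabla h,\Far)|
	& \lesssim |X|\,|\conenabla h|\,|\Far| \;+\; |X|\,|\nabla h|\,\big(|\Far|_{\mathcal{L}\mathcal{N}}+|\Far|_{\mathcal{T}\mathcal{T}}\big),
\end{align*}
and the analogous statements for $X_{\nu}\mathscr{P}_{(\Far)}^{\nu}(h,\nabla\Far)$ and $X_{\nu}\mathscr{Q}_{(1;\Far)}^{\nu}(h,\nabla\Far)$. The point is that each of these trilinear-in-frame-components expressions, once expanded in the null frame $\mathcal{N}=\{\uL,L,e_1,e_2\}$, has the structure that whenever the $\uL$-index is forced onto $\Far$ (producing the dangerous $\ualpha$), it is either paired with a contraction forcing an $\mathcal{L}\mathcal{L}$-component onto $h$ (which will later be controlled by the wave-coordinate condition), or the derivative $\nabla$ is forced to be tangential ($\conenabla$). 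This is the same mechanism behind \eqref{E:PSpecialNullStructure} and the other null-form estimates used in Proposition \ref{P:AlgebraicInhomogeneous}; I will verify it by a direct index chase using the definitions \eqref{E:PFar}--\eqref{E:Q2Far} together with \eqref{E:XYnullframeinnerproduct}. Substituting these frame-level bounds into the expansion \eqref{E:LiemodZIFExpanded}, applying the Leibniz rule via \eqref{E:SpecialPLeibnizRule}-type identities, and using \eqref{E:LieZIinTermsofNablaZI} and \eqref{E:TangentialDerivativesLieZIvsTangentialDerivativesNablaZI} to go back and forth between Lie and covariant derivatives, will yield the first inequality in \eqref{E:LieZIFarNullFormInhomogeneousTermAlgebraicEstimate}. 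The second inequality is then obtained by trading $|\conenabla \nabla_{\mathcal{Z}}^{I_1} h|$ for $(1+t+|q|)^{-1}\sum_{|J|\le |I_1|+1}|\nabla_{\mathcal{Z}}^J h|$ and $|\nabla\nabla_{\mathcal{Z}}^{I_1}h|$ for $(1+|q|)^{-1}\sum_{|J|\le |I_1|+1}|\nabla_{\mathcal{Z}}^J h|$ via Lemma \ref{L:PointwisetandqWeightedNablainTermsofZestiamtes}; the error terms $\Lie_{\mathcal{Z}}^J\mathfrak{F}_{\triangle}^{\nu}$ will be handled by the Leibniz rule applied to \eqref{E:FtriangleSmallAlgebraic}, producing only cubic-and-higher contributions that are already accommodated on the right-hand side.

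The main work, and the principal obstacle, is the commutator bound \eqref{E:EnergyInhomogeneousTermAlgebraicEstimate}. The starting point is the identity \eqref{E:LiemodZINnablaFarCommutatorTerms}, which expresses the commutator as sums of $\mathscr{P}_{(\Far)}^{\nu}(\Lie_{\mathcal{Z}}^{I_1}h, \nabla\Lie_{\mathcal{Z}}^{I_2}\Far)$, $\mathscr{Q}_{(1;\Far)}^{\nu}(\Lie_{\mathcal{Z}}^{I_1}h, \nabla\Lie_{\mathcal{Z}}^{I_2}\Far)$, and $(\Lie_{\mathcal{Z}}^{I_1}N_{\triangle}^{\# \mu \nu \kappa \lambda})\nabla_{\mu}\Lie_{\mathcal{Z}}^{I_2}\Far_{\kappa \lambda}$, each with $|I_2| \leq |I|-1$. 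For each such term I apply the frame-level null-form estimate, so that the dangerous pairing isolates $h$-components of the form $|\Lie_{\mathcal{Z}}^{I_1} h|_{\mathcal{L}\mathcal{L}}$ multiplying $|\nabla\Lie_{\mathcal{Z}}^{I_2}\Far|$, and conversely components $|\nabla\Lie_{\mathcal{Z}}^{I_2}\Far|_{\mathcal{L}\mathcal{N}}+|\nabla\Lie_{\mathcal{Z}}^{I_2}\Far|_{\mathcal{T}\mathcal{T}}$ multiplying a general $|\Lie_{\mathcal{Z}}^{I_1}h|$. At this point inequality \eqref{E:NablaFarGoodInTermsofqWeightedLieZIFarGood} lets me absorb a derivative to produce the $(1+|q|)^{-1}$ and $(1+t+|q|)^{-1}$ weights; the Lie-to-covariant passage \eqref{E:LieZILLinTermsofNablaZILLLieZJLTPlusJunk} is what produces the various nested sums with ranges $|I_1|\le k-1$ and $|I_1|\le k-2$ on the right-hand side of \eqref{E:EnergyInhomogeneousTermAlgebraicEstimate}. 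Finally, the $X_{\nu}$-contracted $\nu$-index of the $\mathscr{Q}_{(1;\Far)}$ form produces the distinguished $|X|_{\mathcal{L}}|\nabla_{\mathcal{Z}}^{I_1}h||\Lie_{\mathcal{Z}}^{I_2}\Far|$ contribution, since the $X_{\nu}(m^{-1})^{\nu \nu'}h_{\nu'\lambda'}$ pairing only brings out a $|h|_{\mathcal{L}\mathcal{T}}$ component of $h$ precisely when $X$ has a nontrivial $L$-projection.

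The hard part will be bookkeeping: the right-hand side of \eqref{E:EnergyInhomogeneousTermAlgebraicEstimate} contains a large number of distinct sums distinguished by the multi-index ranges and by which null-components of $h$ and $\Far$ appear, and each of these terms must be traced to exactly one of the three classes of pairings (favorable-derivative/general-factor, unfavorable-derivative/$\mathcal{L}\mathcal{L}$- or $\mathcal{L}\mathcal{T}$-factor, or cubic error). In particular, the subtle restriction $|I_2|\le k-1$ inside the commutator — which is crucial for closing the global existence bootstrap — comes from the cancellation \eqref{E:LieMinkowskiCommutatorTermVanishes}, and this cancellation must be carefully preserved throughout the index chase. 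Once this careful case analysis is completed and the $\Lie_{\mathcal{Z}}^{I_1}N_{\triangle}^{\#\mu\nu\kappa\lambda}$ terms are expanded using \eqref{E:NtriangleSmallAlgebraic} and the Leibniz rule (producing the cubic contributions in the final line of \eqref{E:EnergyInhomogeneousTermAlgebraicEstimate}), the proof will be complete.
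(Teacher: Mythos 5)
Your proposal is correct and follows essentially the same route as the paper: reduce to the quadratic-form expansion of Proposition \ref{P:InhomogeneoustermsLieZIFar}, apply the frame-level null-form estimates of Lemma \ref{L:AlgebraicTensorialEstimates} (which you propose to re-derive by direct index chase rather than cite, but this is the same content), convert between Lie and covariant derivatives via Proposition \ref{P:LievsCovariantLContractionRelation}, and trade $\conenabla$ and $\nabla$ for weighted $\nabla_{\mathcal{Z}}$-derivatives via Lemma \ref{L:PointwisetandqWeightedNablainTermsofZestiamtes}. One bookkeeping correction you will want to make before the case analysis: the distinguished $|X|_{\mathcal{L}}|\nabla_{\mathcal{Z}}^{I_1}h||\Lie_{\mathcal{Z}}^{I_2}\Far|$ contribution on the right-hand side of \eqref{E:EnergyInhomogeneousTermAlgebraicEstimate} originates from the $\mathscr{P}_{(\Far)}^{\nu}$ form (cf. \eqref{E:XPFarhNablaFarNullFormEstimate}, where $X_{\nu}(m^{-1})^{\nu\lambda}$ attaches directly to the $\Far_{\kappa\lambda}$ index so that the dangerous $\ualpha$ component is reached precisely when $X$ has a nontrivial $L$-contraction), not from $\mathscr{Q}_{(1;\Far)}^{\nu}$ as you assert — the $\nabla^{\kappa}\Far_{\kappa\lambda}$ contraction inherent to $\mathscr{Q}_{(1;\Far)}^{\nu}$ already forces a favorable component or a tangential derivative, so its estimate \eqref{E:XQ1FarhNablaFarNullFormEstimate} carries no $|X|_{\mathcal{L}}$ or $|h|_{\mathcal{L}\mathcal{L}}$ term.
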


\begin{proof}
	Inequality \eqref{E:LieZIFarNullFormInhomogeneousTermAlgebraicEstimate} follows from
	\eqref{E:LieZIinTermsofNablaZI},
	\eqref{E:NablaLieZIinTermsofNablaNablaZI}, \eqref{E:TangentialDerivativesLieZIvsTangentialDerivativesNablaZI}
	(which allow us to estimate Lie derivatives of $h$ in terms of covariant derivatives of $h$), \eqref{E:LiemodZIFExpanded}, and 
	\eqref{E:Q2FarNullFormEstimate}. 
	
	Inequality \eqref{E:EnergyInhomogeneousTermAlgebraicEstimate} 
	follows from \eqref{E:NtriangleSmallAlgebraic}, 
	\eqref{E:LieZIinTermsofNablaZI} and \eqref{E:LieZILLinTermsofNablaZILLLieZJLTPlusJunk}
	(which allow us to estimate Lie derivatives of $h$ in terms of covariant derivatives of $h$), 
	\eqref{E:LiemodZINnablaFarCommutatorTerms},
	\eqref{E:XPFarhNablaFarNullFormEstimate}, and \eqref{E:XQ1FarhNablaFarNullFormEstimate}.

\end{proof}

As discussed at the beginning of Section \ref{SS:NullDecompElectromagnetic}, the null components of the lower-order
Lie derivatives of $\Far$ satisfy ordinary differential equations with controllable inhomogeneous terms. 
The next proposition provides convenient algebraic expressions for the inhomogeneities. In Section
\ref{S:DecayFortheReducedEquations}, these algebraic expressions will be combined with decay estimates
to deduce upgraded decay estimates for the null components of $\Far$ and its lower-order Lie derivatives.

\begin{proposition} \label{P:ODEsNullComponentsLieZIFar}
\textbf{(Ordinary differential inequalities for $\ualpha[\Lie_{\mathcal{Z}}^I \Far],$ $\alpha[\Lie_{\mathcal{Z}}^I \Far],$ 
 	$\rho[\Lie_{\mathcal{Z}}^I \Far],$ and $\sigma[\Lie_{\mathcal{Z}}^I \Far]$)}
	Let $\Far$ be a solution to the reduced electromagnetic equations \eqref{E:ReduceddFis0Summary} - 
	\eqref{E:ReduceddMis0Summary}, and let $\ualpha,$ $\alpha,$ $\rho,$ $\sigma$ denote its null components. Let $\Lambda \eqdef 
	L + \frac{1}{4} h_{LL} \uL,$ and assume that $|h| + |\Far| \leq \varepsilon$ holds. Then if $\varepsilon$ is sufficiently 
	small, the following pointwise estimate holds:
	
	\begin{align} \label{E:ODErualpha}
		r^{-1} \big|\nabla_{\Lambda} (r \ualpha) \big| 
		& \lesssim r^{-1} |h|_{\mathcal{L}\mathcal{L}} |\ualpha|
			\ + \ \sum_{|I| \leq 1} r^{-1} \big(|\Lie_{\mathcal{Z}}^I \Far|_{\mathcal{L} \mathcal{N}} 
				+ |\Lie_{\mathcal{Z}}^I \Far|_{\mathcal{T} \mathcal{T}} \big)
			\ + \ \sum_{|I_1| + |I_2| \leq 1}r^{-1} |\nabla_{\mathcal{Z}}^{I_1}h| |\Lie_{\mathcal{Z}}^{I_2} \Far| \\
		& \ \ + \ \sum_{|I| \leq 1}(1 + |q|)^{-1} |h| \big(|\Lie_{\mathcal{Z}}^I \Far|_{\mathcal{L} \mathcal{N}} 
		  	+ |\Lie_{\mathcal{Z}}^I \Far|_{\mathcal{T} \mathcal{T}} \big)  \notag \\
		& \ \ + \ \sum_{|I_1| + |I_2| + |I_3| \leq 1} (1 + |q|)^{-1} 
			\Big\lbrace |\nabla_{\mathcal{Z}}^{I_1} h| |\nabla_{\mathcal{Z}}^{I_2} h| |\Lie_{\mathcal{Z}}^{I_3}\Far|
			\ + \ |\nabla_{\mathcal{Z}}^{I_1} h| |\Lie_{\mathcal{Z}}^{I_2} \Far| |\Lie_{\mathcal{Z}}^{I_3}\Far|  
			\ + \ |\Lie_{\mathcal{Z}}^{I_1} \Far| |\Lie_{\mathcal{Z}}^{I_2} \Far| |\Lie_{\mathcal{Z}}^{I_3}\Far| \Big\rbrace. \notag
	\end{align}
	
	Similarly, for each $\mathcal{Z}-$multi-index $I,$ let
 	$\ualpha[\Lie_{\mathcal{Z}}^I \Far],$ $\alpha[\Lie_{\mathcal{Z}}^I \Far],$ 
 	$\rho[\Lie_{\mathcal{Z}}^I \Far],$ and $\sigma[\Lie_{\mathcal{Z}}^I \Far]$ denote the null components of 
	$\Lie_{\mathcal{Z}}^I \Far.$ Furthermore, let $\varpi(q)$ be any 
	differentiable function of $q.$ Assume that $|\nabla_{\mathcal{Z}}^I h| + |\Lie_{\mathcal{Z}}^I \Far| 
	\leq \varepsilon$ holds for $|I| \leq \lfloor k/2 \rfloor.$ Then if $\varepsilon$ is sufficiently small, 
	the following pointwise estimates also hold:
	
	\begin{subequations}
	\begin{align} \label{E:LambdaLieZIualphaEquationGoodqWeights}
		\sum_{|I| \leq k} r^{-1} \big|\nabla_{\Lambda} & \big(r \varpi(q) \ualpha[\Lie_{\mathcal{Z}}^I \Far] \big)\big| \\
		& \lesssim \sum_{|I| \leq k} r^{-1} \varpi(q) |h|_{\mathcal{L}\mathcal{L}} \big|\ualpha[\Lie_{\mathcal{Z}}^I \Far] \big|
			\ + \ \sum_{|I| \leq k} \varpi'(q) |h|_{\mathcal{L}\mathcal{L}} \big|\ualpha[\Lie_{\mathcal{Z}}^I \Far]\big| \notag \\
		& \ \ + \ \underbrace{\mathop{\sum_{|I| \leq k}}_{|J| \leq 1} \varpi(q) (1 + |q|)^{-1} 
				|\nabla_{\mathcal{Z}}^I h|_{\mathcal{L}\mathcal{L}} 
				\big|\ualpha[\Lie_{\mathcal{Z}}^J \Far] \big|}_{\mbox{absent if $k \leq 1$}} 
			\ + \ \underbrace{\mathop{\sum_{|J| \leq 1}}_{|I| \leq k} \varpi(q) (1 + |q|)^{-1} 
				|\nabla_{\mathcal{Z}}^J h|_{\mathcal{L}\mathcal{L}} 
				\big|\ualpha[\Lie_{\mathcal{Z}}^I \Far]\big|}_{\mbox{absent if $k = 0$}} \notag \\
		& \ \ + \ \underbrace{\sum_{|I| \leq k} \varpi(q) (1 + |q|)^{-1} 
				|h|_{\mathcal{L}\mathcal{T}} \big|\ualpha[\Lie_{\mathcal{Z}}^I \Far]\big|}_{\mbox{absent if $k = 0$}} 
			\ + \underbrace{\mathop{\sum_{|I_1| + |I_2| \leq k + 1}}_{|I_1| \leq k - 1, |I_2| \leq k - 1} \varpi(q) (1 + |q|)^{-1} 
				|\nabla_{\mathcal{Z}}^{I_1} h| \big|\ualpha[\Lie_{\mathcal{Z}}^{I_2} \Far]\big|}_{\mbox{absent if $k = 0$}}
				\notag \\
		& \ \ + \ \sum_{|I| \leq |k| + 1} \varpi(q) r^{-1} \big(|\Lie_{\mathcal{Z}}^{I} \Far|_{\mathcal{L} \mathcal{N}} 
				+ |\Lie_{\mathcal{Z}}^{I} \Far|_{\mathcal{T} \mathcal{T}} \big) 
				\ + \sum_{|I_1| + |I_2| \leq k + 1} \varpi(q) (1 + |q|)^{-1} |\nabla_{\mathcal{Z}}^{I_1} h|
		 		\big(|\Lie_{\mathcal{Z}}^{I_2} \Far|_{\mathcal{L} \mathcal{N}} +  
		 		|\Lie_{\mathcal{Z}}^{I_2} \Far|_{\mathcal{T} \mathcal{T}}\big) \notag \\
		& \ \ + \ \sum_{|I_1| + |I_2| \leq k + 1} \varpi(q) (1 + t + |q|)^{-1}
				|\nabla_{\mathcal{Z}}^{I_1} h| |\Lie_{\mathcal{Z}}^{I_2} \Far| \notag \\
		& \ \ + \ \sum_{|I_1| + |I_2| + |I_3| \leq k + 1} \varpi(q) (1 + |q|)^{-1} 
			\Big\lbrace |\nabla_{\mathcal{Z}}^{I_1} h| |\nabla_{\mathcal{Z}}^{I_2} h| |\Lie_{\mathcal{Z}}^{I_3}\Far|
				\ + \ |\nabla_{\mathcal{Z}}^{I_1} h| |\Lie_{\mathcal{Z}}^{I_2} \Far| |\Lie_{\mathcal{Z}}^{I_3}\Far|  
				\ + \ |\Lie_{\mathcal{Z}}^{I_1} \Far| |\Lie_{\mathcal{Z}}^{I_2} \Far| |\Lie_{\mathcal{Z}}^{I_3}\Far| \Big\rbrace, \notag
		\end{align}
		
	\begin{align} \label{E:alphaODE}
		\sum_{|I| \leq k} r \big| \nabla_{\uL} \big(r^{-1} \alpha[\Lie_{\mathcal{Z}}^I \Far] \big) \big| 
		& \lesssim
			\sum_{|I| \leq k + 1} r^{-1} |\Lie_{\mathcal{Z}}^I \Far|
			\ + \ \mathop{\sum_{|I_1| + |I_2| \leq k + 1}}_{|I_1| \leq k} 
			(1 + |q|)^{-1} |\nabla_{\mathcal{Z}}^{I_1} h| |\Lie_{\mathcal{Z}}^{I_2} \Far| \\
		& \ \ + \ \sum_{|I_1| + |I_2| + |I_3| \leq k + 1} 
				(1 + |q|)^{-1} \Big\lbrace |\nabla_{\mathcal{Z}}^{I_1} h| |\nabla_{\mathcal{Z}}^{I_2} h| |\Lie_{\mathcal{Z}}^{I_3}\Far|
				\ + \ |\nabla_{\mathcal{Z}}^{I_1} h| |\Lie_{\mathcal{Z}}^{I_2} \Far| |\Lie_{\mathcal{Z}}^{I_3}\Far|  
				\ + \ |\Lie_{\mathcal{Z}}^{I_1} \Far| |\Lie_{\mathcal{Z}}^{I_2} \Far| |\Lie_{\mathcal{Z}}^{I_3}\Far| \Big\rbrace, \notag
		\end{align}	
	
	\begin{align} \label{E:rhoODE}	
		\sum_{|I| \leq k} r^2 \big| \nabla_{\uL} \big(r^{-2} \rho[\Lie_{\mathcal{Z}}^I \Far] \big) \big| 
		& \lesssim
			\sum_{|I| \leq k + 1} r^{-1} |\Lie_{\mathcal{Z}}^I \Far|
			\ + \ \mathop{\sum_{|I_1| + |I_2| \leq k + 1}}_{|I_1| \leq k} 
			(1 + |q|)^{-1} |\nabla_{\mathcal{Z}}^{I_1} h| |\Lie_{\mathcal{Z}}^{I_2} \Far| \\
		& \ \ + \ \sum_{|I_1| + |I_2| + |I_3| \leq k + 1} 
				(1 + |q|)^{-1} \Big\lbrace |\nabla_{\mathcal{Z}}^{I_1} h| |\nabla_{\mathcal{Z}}^{I_2} h| |\Lie_{\mathcal{Z}}^{I_3}\Far|
				\ + \ |\nabla_{\mathcal{Z}}^{I_1} h| |\Lie_{\mathcal{Z}}^{I_2} \Far| |\Lie_{\mathcal{Z}}^{I_3}\Far|  
				\ + \ |\Lie_{\mathcal{Z}}^{I_1} \Far| |\Lie_{\mathcal{Z}}^{I_2} \Far| |\Lie_{\mathcal{Z}}^{I_3}\Far| \Big\rbrace, \notag
		\end{align}	
		
	\begin{align} \label{E:sigmaODE}
		\sum_{|I| \leq k} r^2 \big| \nabla_{\uL} \big(r^{-2} \sigma[\Lie_{\mathcal{Z}}^I \Far] \big) \big| 
		& \lesssim \sum_{|I| \leq k + 1} r^{-1} |\Lie_{\mathcal{Z}}^I \Far|.
	\end{align}	
	\end{subequations}

\end{proposition}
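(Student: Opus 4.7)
My plan is to derive all four sets of inequalities by specializing the null-decomposed equations of variation from Proposition \ref{P:EOVNullDecomposition} to the variation $\dot{\Far} \eqdef \Lie_{\mathcal{Z}}^I\Far$. By Proposition \ref{P:InhomogeneoustermsLieZIFar}, this $\dot{\Far}$ satisfies \eqref{E:EOVdFis0}--\eqref{E:EOVdMis0} with $\dot{\mathfrak{F}}_{\lambda\mu\nu} = 0$ and $\dot{\mathfrak{F}}^{\nu} = \mathfrak{F}_{(I)}^{\nu}$, and the pointwise size of $\mathfrak{F}_{(I)}^{\nu}$ together with the full left-hand-side commutator is already controlled by Proposition \ref{P:EnergyInhomogeneousTermAlgebraicEstimate}. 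Substituting $\dot{\Far} = \Lie_{\mathcal{Z}}^I\Far$ into \eqref{E:dotualphaEOVnulldecomp}--\eqref{E:nablaLdotsigmaEOVnulldecomp} yields transport equations for the null components whose right-hand sides consist of angular derivatives of the other null components (converted to $r^{-1}\sum_{|I'|\leq k+1}|\Lie_{\mathcal{Z}}^{I'}\Far|$ via Corollary \ref{C:rWeightedAngularDerivativesinTermsofLieDerivatives}), quadratic pieces of the form $h^{\mu\kappa}\nabla_{\mu}\dot{\Far}_{\kappa\lambda}$ and $N_{\triangle}^{\#}\nabla\dot{\Far}$, and the inhomogeneity $\mathfrak{F}_{(I)}^{\nu}$; the final step in each case is to rewrite the principal derivative in the weighted form $r^{-1}\nabla_{\Lambda}(r\varpi(q)\,\cdot\,)$ or $r^{k}\nabla_{\uL}(r^{-k}\,\cdot\,)$ with the correct exponent.

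The crux is \eqref{E:LambdaLieZIualphaEquationGoodqWeights}, of which \eqref{E:ODErualpha} is the $k=0$ case. The principal derivative of $\ualpha$ in \eqref{E:dotualphaEOVnulldecomp} is $\nabla_{L}$, but the quadratic $-\angm_{\nu}^{\ \lambda}h^{\mu\kappa}\nabla_{\mu}\dot{\Far}_{\kappa\lambda}$ harbors a single piece whose decay is insufficient to be treated as an inhomogeneity. I would expand $h^{\mu\kappa}$ in the null frame and isolate the summand $\tfrac{1}{4}h_{LL}\uL^{\mu}\uL^{\kappa}$; using $\nabla_{\uL}\uL = 0$, $\nabla_{\uL}\angm = 0$, and the identity $\uL^{\kappa}\angm_{\nu}^{\ \lambda}\dot{\Far}_{\kappa\lambda} = -\dot{\ualpha}_{\nu}$, this piece reduces to $+\tfrac{1}{4}h_{LL}\nabla_{\uL}\dot{\ualpha}_{\nu}$. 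Moving it onto the left yields $\nabla_{\Lambda}\dot{\ualpha}_{\nu} + r^{-1}\dot{\ualpha}_{\nu} = (\text{good})$ with $\Lambda \eqdef L + \tfrac{1}{4}h_{LL}\uL$, and every remaining piece of $h^{\mu\kappa}\nabla_{\mu}\dot{\Far}_{\kappa\lambda}$ either involves an angular derivative $\nabla_{A}\dot{\Far}$ (gaining an $r^{-1}$ by Corollary \ref{C:rWeightedAngularDerivativesinTermsofLieDerivatives}) or pairs $\nabla_{\uL}$ with a good null component $\alpha[\dot{\Far}], \rho[\dot{\Far}], \sigma[\dot{\Far}]$, at which point \eqref{E:NablaFarGoodInTermsofqWeightedLieZIFarGood} converts the derivative into exactly the $|\Lie_{\mathcal{Z}}^{J}\dot{\Far}|_{\mathcal{L}\mathcal{N}} + |\Lie_{\mathcal{Z}}^{J}\dot{\Far}|_{\mathcal{T}\mathcal{T}}$ factors on the right of \eqref{E:LambdaLieZIualphaEquationGoodqWeights}. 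To pass to the weighted form, I compute $Lr = 1$, $\uL r = -1$, $Lq = 0$, $\uL q = -2$, hence $\nabla_{\Lambda}r = 1 - \tfrac{1}{4}h_{LL}$ and $\nabla_{\Lambda}\varpi(q) = -\tfrac{1}{2}\varpi'(q)h_{LL}$; the Leibniz rule applied to $r\varpi(q)\dot{\ualpha}$ then produces exactly the $r^{-1}\varpi(q)|h|_{\mathcal{L}\mathcal{L}}|\dot{\ualpha}|$ and $\varpi'(q)|h|_{\mathcal{L}\mathcal{L}}|\dot{\ualpha}|$ contributions on the right-hand side of \eqref{E:LambdaLieZIualphaEquationGoodqWeights}.

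The remaining three bounds \eqref{E:alphaODE}--\eqref{E:sigmaODE} are easier because \eqref{E:uLdotalphaEOVnulldecomp}, \eqref{E:uLdotnablarhoEOVnulldecomp}, and \eqref{E:uLdotnablasigmaEOVnulldecomp} already carry $\nabla_{\uL}$ as their principal derivative, so no analogue of $\Lambda$ is needed. Using $\uL r = -1$, the identities $r\nabla_{\uL}(r^{-1}\alpha) = \nabla_{\uL}\alpha + r^{-1}\alpha$ and $r^{2}\nabla_{\uL}(r^{-2}\rho) = \nabla_{\uL}\rho + 2r^{-1}\rho$ (and the analogue for $\sigma$) convert the sign combinations on the left of \eqref{E:uLdotalphaEOVnulldecomp}, \eqref{E:uLdotnablarhoEOVnulldecomp}, \eqref{E:uLdotnablasigmaEOVnulldecomp} into the desired weighted form, up to an $r^{-1}\dot{\alpha}$, $r^{-1}\dot{\rho}$, or $r^{-1}\dot{\sigma}$ discrepancy that is harmlessly absorbed into $\sum_{|I'|\leq k+1}r^{-1}|\Lie_{\mathcal{Z}}^{I'}\Far|$. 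The angular operators $\angm^{\mu\nu}\nabla_{\mu}$ and $\angupsilon^{\mu\nu}\nabla_{\mu}$ appearing in those equations are handled again by Corollary \ref{C:rWeightedAngularDerivativesinTermsofLieDerivatives}, while the $h^{\mu\kappa}\nabla_{\mu}\dot{\Far}$, $N_{\triangle}^{\#}\nabla\dot{\Far}$, and $\mathfrak{F}_{(I)}^{\nu}$ contributions are estimated by \eqref{E:NablaFarGoodInTermsofqWeightedLieZIFarGood} together with Proposition \ref{P:EnergyInhomogeneousTermAlgebraicEstimate}; equation \eqref{E:uLdotnablasigmaEOVnulldecomp} is entirely free of $h$ and of inhomogeneity, which accounts for the especially clean form of \eqref{E:sigmaODE}. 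The main obstacle I anticipate is the bookkeeping in the higher-order case of \eqref{E:LambdaLieZIualphaEquationGoodqWeights}: matching the strict multi-index restrictions $|I_{j}|\leq k-1$ on the right with their correct source in Proposition \ref{P:EnergyInhomogeneousTermAlgebraicEstimate} (commutator terms in \eqref{E:EnergyInhomogeneousTermAlgebraicEstimate} versus direct differentiation in \eqref{E:LieZIFarNullFormInhomogeneousTermAlgebraicEstimate}), while simultaneously verifying that none of the non-dangerous pieces of $h^{\mu\kappa}\nabla_{\mu}\Lie_{\mathcal{Z}}^{I}\Far_{\kappa\lambda}$ are estimated in a manner that violates these index counts.
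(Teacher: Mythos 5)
Your proposal matches the paper's proof: both isolate the dangerous $\tfrac14 h_{LL}\nabla_{\uL}\dot\ualpha_\nu$ summand from $\angm_\nu^{\ \lambda}h^{\mu\kappa}\nabla_\mu\dot\Far_{\kappa\lambda}$, absorb it into the left via the vectorfield $\Lambda$, use $\nabla_\Lambda r = 1 - \tfrac14 h_{LL}$ and $\nabla_\Lambda q = -\tfrac12 h_{LL}$ to produce the $r^{-1}\varpi|h|_{\mathcal{L}\mathcal{L}}|\ualpha|$ and $\varpi'|h|_{\mathcal{L}\mathcal{L}}|\ualpha|$ remainders, and then control the inhomogeneity $\mathfrak{F}_{(I)}^{\nu}$ via Proposition \ref{P:EnergyInhomogeneousTermAlgebraicEstimate} after contracting with $X_{\nu'} = \angm_{\nu\nu'}$ so that $|X|_{\mathcal{L}} = 0$. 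The only stylistic difference is that the paper routes the remaining quadratic $h\nabla\dot\Far$ pieces through the packaged null-form estimates \eqref{E:XPNoBadComponentFarhNablaFarNullFormEstimate} and \eqref{E:XQ1FarhNablaFarNullFormEstimate} of Lemma \ref{L:AlgebraicTensorialEstimates}, whereas you invoke their ingredients (Corollary \ref{C:rWeightedAngularDerivativesinTermsofLieDerivatives} and \eqref{E:NablaFarGoodInTermsofqWeightedLieZIFarGood}) directly, which is the same argument.
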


\begin{proof}
	Our proof of \eqref{E:ODErualpha} is based on decomposing the terms in equation \eqref{E:dotualphaEOVnulldecomp},
	where $\dot{\ualpha}_{\nu} = \ualpha_{\nu}[\Far],$ 
	$\dot{\mathfrak{F}}^{\nu'} = \mathfrak{F}^{\nu'},$ etc. in the equation. We remind the reader
	that this equation is a consequence of performing a Minkowskian null decomposition on the electromagnetic equations 
	\eqref{E:ReduceddFis0Summary} - 
	\eqref{E:ReduceddMis0Summary}. Here, $\mathfrak{F}^{\nu'}$ is defined in \eqref{E:EMBIFarInhomogeneous}. We begin by noting 
	that the first two terms in equation \eqref{E:dotualphaEOVnulldecomp} can be written as $r^{-1}\nabla_L (r \ualpha).$ We then 
	remove the dangerous $\frac{1}{4}h_{LL} \nabla_{\uL} \ualpha_{\nu}$ component from the quadratic term $\angm_{\nu \lambda} 
	\mathscr{P}_{(\Far)}^{\lambda}(h, \nabla \Far) \eqdef \angm_{\nu}^{\ \lambda} h^{\mu \kappa} \nabla_{\mu} \Far_{\kappa 
	\lambda}$ on the left-hand side of \eqref{E:dotualphaEOVnulldecomp}, and add it to the $r^{-1}\nabla_L (r \ualpha_{\nu})$ 
	term. Using the fact that $\nabla_{\Lambda} r = 1 - \frac{1}{4}h_{LL},$ it follows that
	the resulting sum can be written as $r^{-1}\nabla_{\Lambda} (r \ualpha_{\nu}) + \frac{1}{4}r^{-1} h_{LL} \ualpha_{\nu}.$ We 
	then put the $\frac{1}{4} r^{-1} h_{LL} \ualpha_{\nu}$ term on the right-hand side of \eqref{E:ODErualpha} as the first 
	inhomogeneous term; all the remaining terms in \eqref{E:dotualphaEOVnulldecomp}
	will also be placed on the right-hand side of \eqref{E:ODErualpha}. The left-over terms in 
	$\mathscr{P}_{(\Far)}^{\nu}(h,\nabla \Far)$ (after the dangerous component $\frac{1}{4}h_{LL} \nabla_{\uL} \ualpha^{\nu}$ 
	has been removed) are denoted by $\widetilde{\mathscr{P}}_{(\Far)}^{\nu}(h,\nabla \Far)$ in Lemma 
	\ref{L:AlgebraicTensorialEstimates} below. Now by \eqref{E:XPNoBadComponentFarhNablaFarNullFormEstimate}, with 
	$X_{\nu'} \eqdef \angm_{\nu \nu'}$ (so that $|X|_{\mathcal{L}} = 0),$ it follows that the left-over terms 
	$X_{\nu'} \widetilde{\mathscr{P}}_{(\Far)}^{\nu'}(h,\nabla \Far)$
	are bounded by the right-hand side of \eqref{E:ODErualpha}. The terms $\angn \rho$ and $\angn \sigma$ appearing on the 
	left-hand side of \eqref{E:dotualphaEOVnulldecomp} (see Remark \ref{R:FavorableAngular})
	can be bounded by the second term on the right-hand side of \eqref{E:ODErualpha} via Corollary 
	\ref{C:rWeightedAngularDerivativesinTermsofLieDerivatives}. 
	The remaining terms in equation \eqref{E:ODErualpha} that need to be bounded can be expressed as $X_{\nu'} 
	\widetilde{\mathscr{Q}}_{(1;\Far)}^{\nu'}(h,\nabla \Far),$ $X_{\nu'} N_{\triangle}^{\#\beta \nu' \kappa \lambda} 
	\nabla_{\beta} \Far_{\kappa \lambda},$ and $X_{\nu'} \mathfrak{F}^{\nu'}.$ The first of these can be bounded using 
	\eqref{E:XQ1FarhNablaFarNullFormEstimate}, the third with \eqref{E:LieZIFarNullFormInhomogeneousTermAlgebraicEstimate} (in 
	the case $|I| = 0$), while the second (with the help of Lemma \ref{L:PointwisetandqWeightedNablainTermsofZestiamtes}) 
	contributes to the cubic terms on the right-hand side of \eqref{E:ODErualpha}.
	
	Our proof of \eqref{E:LambdaLieZIualphaEquationGoodqWeights} is similar, but more elaborate. To begin, we
	differentiate the electromagnetic equations with the iterated modified Lie derivative $\Liemod_{\mathcal{Z}}^I$ to obtain the 
	equations of variation \eqref{E:EOVdFis0} - \eqref{E:EOVdMis0} for $\dot{\Far}_{\mu \nu} \eqdef \Lie_{\mathcal{Z}}^I 
	\Far_{\mu \nu}$ with inhomogeneous terms $\dot{\mathfrak{F}}^{\nu} = \mathfrak{F}_{(I)}^{\nu},$ where 
	$\mathfrak{F}_{(I)}^{\nu}$ is defined in \eqref{E:LiemodZIdifferentiatedEOVInhomogeneousterms}. We then perform a null 
	decomposition of the equations of variation, obtaining equation \eqref{E:dotualphaEOVnulldecomp} with
	$\dot{\ualpha}_{\nu} \eqdef \ualpha_{\nu}[\Lie_{\mathcal{Z}}^I \Far],$ $\dot{\mathfrak{F}}^{\nu'} \eqdef \mathfrak{F}_{(I)}^{\nu'},$
	etc. Next, we multiply equation \eqref{E:dotualphaEOVnulldecomp}
	by $\varpi(q),$ use the identities $\nabla_{\Lambda} r = 1 - \frac{1}{4}h_{LL}$ and $\nabla_{\Lambda} q = - 
	\frac{1}{2}h_{LL},$
	and argue as above, removing the dangerous $\frac{1}{4} h_{LL} \nabla_{\uL} \ualpha_{\nu}[\Lie_{\mathcal{Z}}^I \Far]$ 
	component from the quadratic term $\angm_{\nu \lambda} \mathscr{P}_{(\Far)}^{\lambda}(h, \nabla\Lie_{\mathcal{Z}}^I \Far) 
	\eqdef \angm_{\nu}^{\ \lambda} h^{\mu \kappa} \nabla_{\mu} \Lie_{\mathcal{Z}}^I \Far_{\kappa \lambda}$
	and denoting the remaining terms by $\angm_{\nu \lambda} \widetilde{\mathscr{P}}_{(\Far)}^{\lambda}(h,\nabla 
	\Lie_{\mathcal{Z}}^I \Far),$ to deduce that $\varpi(q) \big(\nabla_L \ualpha_{\nu}[\Lie_{\mathcal{Z}}^I \Far] 
	+ \frac{1}{4}h_{LL} \nabla_{\uL} \ualpha_{\nu}[\Lie_{\mathcal{Z}}^I \Far] 
	+ r^{-1}\ualpha_{\nu}[\Lie_{\mathcal{Z}}^I \Far] \big)$ 
	$= r^{-1} \nabla_{\Lambda} \big(r \varpi(q) \ualpha_{\nu}[\Lie_{\mathcal{Z}}^I \Far]\big) + \frac{1}{4} r^{-1} 
	\varpi(q) h_{LL} \ualpha_{\nu}[\Lie_{\mathcal{Z}}^I \Far] 
	- \frac{1}{2}\varpi'(q)h_{LL}\ualpha_{\nu}[\Lie_{\mathcal{Z}}^I \Far].$ The 
	first of these three terms is the only term on the left-hand side of \eqref{E:LambdaLieZIualphaEquationGoodqWeights}, while 
	the last two are brought over to the right-hand side of \eqref{E:LambdaLieZIualphaEquationGoodqWeights}.
	To bound $\angm_{\nu \nu'}\mathfrak{F}_{(I)}^{\nu'}$ by the right-hand side of 
	\eqref{E:LambdaLieZIualphaEquationGoodqWeights}, we again set $X_{\nu'} \eqdef \angm_{\nu \nu'}$ (so that 
	$|X|_{\mathcal{L}} = 0);$ the desired bound then follows from \eqref{E:LieZIFarNullFormInhomogeneousTermAlgebraicEstimate}
	and \eqref{E:EnergyInhomogeneousTermAlgebraicEstimate}, together with repeated use of the inequality
	$|\Lie_{\mathcal{Z}}^I \Far| \lesssim |\ualpha[\Lie_{\mathcal{Z}}^I \Far]| 
	+ |\Lie_{\mathcal{Z}}^I \Far|_{\mathcal{L}\mathcal{N}} + |\Lie_{\mathcal{Z}}^I \Far|_{\mathcal{T}\mathcal{T}}.$
	The terms $\varpi(q) \angn \rho[\Lie_{\mathcal{Z}}^I \Far]$ and 
	$\varpi(q) \angn \sigma[\Lie_{\mathcal{Z}}^I \Far]$ appearing on the left-hand side
	of \eqref{E:dotualphaEOVnulldecomp} (see Remark \ref{R:FavorableAngular}) can be bounded by the 
	seventh term on the right-hand side of \eqref{E:LambdaLieZIualphaEquationGoodqWeights} with the help of 
	Corollary \ref{C:rWeightedAngularDerivativesinTermsofLieDerivatives}. The remaining three terms on the left-hand side of 
	\eqref{E:dotualphaEOVnulldecomp} to be estimated are 
	$X_{\nu'} \widetilde{\mathscr{P}}_{(\Far)}^{\nu'}(h,\nabla\Lie_{\mathcal{Z}}^I 
	\Far),$ $X_{\nu'} \mathscr{Q}_{(1;\Far)}^{\nu'}(h, \nabla\Lie_{\mathcal{Z}}^I \Far),$ 
	and $X_{\nu'} N_{\triangle}^{\#\beta \nu' \kappa \lambda} \nabla_{\beta} \Lie_{\mathcal{Z}}^I \Far_{\kappa \lambda}.$
	The first of these can be bounded using \eqref{E:XPNoBadComponentFarhNablaFarNullFormEstimate},
	the second with \eqref{E:XQ1FarhNablaFarNullFormEstimate}, while the third (with the help of Lemma 	
	\ref{L:PointwisetandqWeightedNablainTermsofZestiamtes}) contributes to the cubic terms on the right-hand side of 
	\eqref{E:ODErualpha}.
	
	The proofs of \eqref{E:alphaODE} - \eqref{E:sigmaODE}, which are based on an analysis of equations
	\eqref{E:uLdotalphaEOVnulldecomp} - \eqref{E:uLdotnablasigmaEOVnulldecomp}, are similar, but 
	much simpler. We leave the details to the reader.
\end{proof}

The next proposition provides pointwise estimates for the challenging commutator term $\widetilde{\Square}_g \nabla_{\mathcal{Z}}^I h^{(1)} - \nablamod_{\mathcal{Z}}^I \widetilde{\Square}_g h^{(1)}$
from the right-hand side of \eqref{E:InhomogeneousTermsNablaZIh1}.

\begin{proposition} \label{P:DIPointwise} \cite[Proposition 5.3]{hLiR2010} 
\textbf{(Algebraic estimates of $[\widetilde{\Square}_g, \nabla_{\mathcal{Z}}^I]$)}
	Let $g_{\mu \nu}$ be a Lorentzian metric and let $h_{\mu \nu} \eqdef g_{\mu \nu} - m_{\mu \nu}$ and
	$H^{\mu \nu} \eqdef (g^{-1})^{\mu \nu} - m^{\mu \nu}.$
	Let $\widetilde{\Square}_g \eqdef \Square_m + H^{\kappa \lambda} \nabla_{\kappa} \nabla_{\lambda},$ and let
	$I$ be a $\mathcal{Z}-$multi-index with $1 \leq |I|.$ Let $\hat{\nabla}_{\mathcal{Z}}^I$ 
	denote the modified Minkowskian covariant derivative operator
	defined in \eqref{E:Covariantmoddef}. Assume that there is a constant $\varepsilon$ such that 
	$|\nabla_{\mathcal{Z}}^J h| \leq \varepsilon$ holds for all $\mathcal{Z}-$multi-indices $J$ satisfying 
	$|J| \leq \lfloor |I|/2 \rfloor.$ Then if $\varepsilon$ is sufficiently small, the following pointwise estimate holds:
	
	\begin{align} \label{E:waveoperatorZIcommutaorMBIIEPointwise}
		|\widetilde{\Square}_g & \nabla_{\mathcal{Z}}^I h^{(1)}  
			- \nablamod_{\mathcal{Z}}^I \widetilde{\Square}_g h^{(1)}| \\
		& \lesssim (1 + t + |q|)^{-1} \mathop{\sum_{|K| \leq |I|}}_{|J| + (|K| - 1)_{+} \leq |I|} 
			|\nabla_{\mathcal{Z}}^J H| |\nabla\nabla_{\mathcal{Z}}^K h^{(1)}| \notag \\
		& \ \ + \ (1 + |q|)^{-1} \sum_{|K| \leq |I|} |\nabla\nabla_{\mathcal{Z}}^K h^{(1)}|
		 	\bigg\lbrace \mathop{\sum_{|J| + (|K| - 1)_{+}}}_{\ \leq |I|} |\nabla_{\mathcal{Z}}^{J} H|_{\mathcal{L} \mathcal{L}} 
			\ + \	\mathop{\sum_{|J'| + (|K| - 1)_{+}}}_{\ \leq |I|-1} 
				|\nabla_{\mathcal{Z}}^{J'} H|_{\mathcal{L} \mathcal{T}}
			\ + \ \underbrace{\mathop{\sum_{|J''| + (|K| - 1)_{+}}}_{\ \leq |I|-2} |\nabla_{\mathcal{Z}}^{J''} H|}_{
				\mbox{Absent if $|I| \leq 1$ or $|K| = |I|$}} \bigg\rbrace, \notag
	\end{align}
	where $(|K|-1)_+ \eqdef 0$ if $|K| = 0$ and $(|K|-1)_+ \eqdef |K| - 1$ if $|K| \geq 1.$

\end{proposition}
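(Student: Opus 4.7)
\textbf{Proof proposal for Proposition \ref{P:DIPointwise}.} The plan is to reduce the problem to estimating the commutator of the operator $H^{\kappa\lambda}\nabla_{\kappa}\nabla_{\lambda}$ with $\nabla_{\mathcal{Z}}^I$, and then to apply a weighted pointwise estimate that separates out the favorable $\mathcal{L}\mathcal{L}$ component of $H$ from the rest. First, writing $\widetilde{\Square}_g = \Square_m + H^{\kappa\lambda}\nabla_{\kappa}\nabla_{\lambda}$ and invoking Lemma \ref{L:NablaModZLiemodMinkowskiWaveOperatorCommutator} iteratively, we obtain $\nablamod_{\mathcal{Z}}^I \Square_m h^{(1)} = \Square_m \nabla_{\mathcal{Z}}^I h^{(1)}$, so the entire commutator collapses to
\begin{align*}
\widetilde{\Square}_g \nabla_{\mathcal{Z}}^I h^{(1)} - \nablamod_{\mathcal{Z}}^I \widetilde{\Square}_g h^{(1)}
= H^{\kappa\lambda}\nabla_{\kappa}\nabla_{\lambda}\nabla_{\mathcal{Z}}^I h^{(1)} - \nablamod_{\mathcal{Z}}^I\bigl(H^{\kappa\lambda}\nabla_{\kappa}\nabla_{\lambda}h^{(1)}\bigr).
\end{align*}

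Next, I would proceed by induction on $|I|$. The base case $|I|=1$ follows from a direct computation: using the Leibniz rule, Definition \ref{D:ModifiedDerivatives}, and the commutator identity $[\nabla_{\frac{\partial}{\partial x^\mu}},\nabla_Z] = {^{(Z)}c_\mu}^{\ \kappa}\nabla_{\frac{\partial}{\partial x^\kappa}}$ from Lemma \ref{L:NablapartialmuNablaZCommutatorExpression} (together with the fact that the ${^{(Z)}c_\mu}^{\ \nu}$ are constants), the commutator reduces to a sum of terms of the form $(\nabla_Z H^{\kappa\lambda})(\nabla_{\kappa}\nabla_{\lambda}h^{(1)})$ plus terms with the same number of derivatives on $H$ but fewer on $h^{(1)}$. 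For the inductive step, one distributes the operator $\nabla_{\mathcal{Z}}^I$ via the Leibniz rule, recording terms of the form $(\nabla_{\mathcal{Z}}^J H^{\kappa\lambda})(\nabla_{\kappa}\nabla_{\lambda}\nabla_{\mathcal{Z}}^K h^{(1)})$ with $|J|+|K|\le |I|$ and $|K|<|I|$, together with commutator remainders arising from $[\nabla_Z,\nabla_{\kappa}\nabla_{\lambda}]$; the constant-coefficient nature of ${^{(Z)}c_\mu}^{\ \kappa}$ ensures these remainders only produce lower-order tensor contractions of the same schematic type.

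Once the commutator is decomposed in this form, each term of the shape $(\nabla_{\mathcal{Z}}^J H)\cdot\nabla\nabla\nabla_{\mathcal{Z}}^K h^{(1)}$ is estimated via the third inequality of Lemma \ref{L:PointwisetandqWeightedNablainTermsofZestiamtes} applied with $\Pi \eqdef \nabla_{\mathcal{Z}}^J H$ and $U\eqdef\nabla_{\mathcal{Z}}^K h^{(1)}$, which yields
\begin{align*}
|(\nabla_{\mathcal{Z}}^J H)^{\kappa\lambda}\nabla_{\kappa}\nabla_{\lambda}\nabla_{\mathcal{Z}}^K h^{(1)}|
\lesssim \bigl\{(1+t+|q|)^{-1}|\nabla_{\mathcal{Z}}^J H| + (1+|q|)^{-1}|\nabla_{\mathcal{Z}}^J H|_{\mathcal{L}\mathcal{L}}\bigr\}\!\!\sum_{|I'|\le 1}|\nabla\nabla_{\mathcal{Z}}^{K+I'} h^{(1)}|.
\end{align*}
The first summand contributes the $(1+t+|q|)^{-1}|\nabla_{\mathcal{Z}}^J H||\nabla\nabla_{\mathcal{Z}}^K h^{(1)}|$ term in \eqref{E:waveoperatorZIcommutaorMBIIEPointwise}, while the second supplies the $\mathcal{L}\mathcal{L}$-contracted term. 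The $\mathcal{L}\mathcal{T}$ and fully-uncontracted pieces (indexed by $|J'|$ and $|J''|$) emerge from the lower-order commutator remainders produced by $[\nabla_Z,\nabla_{\kappa}\nabla_{\lambda}]$ acting through Lemma \ref{L:NablaZICommutesWithCovariantDerivativePlusErrorTerms}: these are genuinely one and two orders lower in $|J|$, which is exactly the content of the index restrictions $|J'|+(|K|-1)_+\le |I|-1$ and $|J''|+(|K|-1)_+\le |I|-2$, and the smallness hypothesis $|\nabla_{\mathcal{Z}}^J h|\le \varepsilon$ for $|J|\le\lfloor|I|/2\rfloor$ absorbs the nonlinearity arising when $|J|$ is large by allowing us to bound factors such as $|\nabla_{\mathcal{Z}}^{J_1}H|$ by a small constant.

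The principal difficulty will be the careful bookkeeping needed to track the distinction between the $\mathcal{L}\mathcal{L}$, $\mathcal{L}\mathcal{T}$, and full-norm components of the factors involving $H$ across the induction, in particular verifying that the ``Absent if $|I|\le 1$ or $|K|=|I|$'' caveat on the $|J''|$ term is never violated. This requires checking at each induction step that any contribution to $|\nabla_{\mathcal{Z}}^{J''}H|$ with no $\mathcal{L}\mathcal{L}$ or $\mathcal{L}\mathcal{T}$ restriction arises only from at least two genuine commutator events (each of which costs one order in the hierarchy $|J|\to|J'|\to|J''|$), matching the loss of two orders recorded in \eqref{E:waveoperatorZIcommutaorMBIIEPointwise}.
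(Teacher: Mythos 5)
Your high-level strategy is sound: reducing to the commutator of $H^{\kappa\lambda}\nabla_\kappa\nabla_\lambda$ with $\nabla_{\mathcal{Z}}^I$ via Lemma~\ref{L:NablaModZLiemodMinkowskiWaveOperatorCommutator} is exactly the right opening, and the third inequality of Lemma~\ref{L:PointwisetandqWeightedNablainTermsofZestiamtes} does produce the $(1+t+|q|)^{-1}|\nabla_{\mathcal{Z}}^J H|$ and $(1+|q|)^{-1}|\nabla_{\mathcal{Z}}^J H|_{\mathcal{L}\mathcal{L}}$ coefficients for the commutator-free Leibniz terms of the form $(\nabla_{\mathcal{Z}}^J H)^{\kappa\lambda}\nabla_\kappa\nabla_\lambda\nabla_{\mathcal{Z}}^K h^{(1)}$.

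However, there is a genuine gap in the treatment of the commutator remainders, and I think it would stop the argument from producing the stated estimate. When you commute $\nabla_Z$ past the coordinate derivatives, you do \emph{not} produce further terms of the shape $(\nabla_{\mathcal{Z}}^{J}H)^{\kappa\lambda}\nabla_\kappa\nabla_\lambda\nabla_{\mathcal{Z}}^K h^{(1)}$; you produce terms $\widetilde{\Pi}^{\beta\lambda}\nabla_\beta\nabla_\lambda\nabla_{\mathcal{Z}}^K h^{(1)}$ with $\widetilde{\Pi}^{\beta\lambda}=(\nabla_{\mathcal{Z}}^{J}H)^{\kappa\lambda}\,{^{(Z)}c_\kappa}^{\,\beta}$. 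Applying the Lemma's third inequality to such a term yields the coefficient $|\widetilde{\Pi}|_{\mathcal{L}\mathcal{L}}=|L_\beta L_\lambda(\nabla_{\mathcal{Z}}^{J}H)^{\kappa\lambda}\,{^{(Z)}c_\kappa}^{\,\beta}|$, which is \emph{not} controlled by $|\nabla_{\mathcal{Z}}^{J}H|_{\mathcal{L}\mathcal{L}}$ --- the covector $L_\beta\,{^{(Z)}c_\kappa}^{\,\beta}$ is not proportional to $L_\kappa$ unless $Z$ is the scaling field or a translation. For rotations and boosts one computes that $L_\beta\,{^{(Z)}c_\kappa}^{\,\beta}$ lies in the span of $L_\kappa$ and $(e_A)_\kappa$ (no $\uL_\kappa$ component), so exactly one commutation degrades the coefficient to $|\nabla_{\mathcal{Z}}^{J}H|_{\mathcal{L}\mathcal{T}}$, and two independent commutations degrade it to the full norm $|\nabla_{\mathcal{Z}}^{J}H|$. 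This frame computation is precisely where the three-tier hierarchy and the index shift $|I|\to|I|-1\to|I|-2$ come from, and it is absent from your proposal.

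Two further misstatements compound this. First, you appeal to Lemma~\ref{L:NablaZICommutesWithCovariantDerivativePlusErrorTerms} to produce the $\mathcal{L}\mathcal{T}$ and full-norm pieces, but that lemma gives only full-norm bounds with no null decomposition whatsoever, so it cannot distinguish the $|J'|$ and $|J''|$ tiers. Second, you describe the remainders as ``genuinely one and two orders lower in $|J|$,'' suggesting fewer derivatives fall on $H$; what actually happens is that each commutation consumes one unit of the \emph{total} derivative budget $|J|+|K|$ (a $\nabla_Z$ that would otherwise land on $h^{(1)}$ is replaced by a constant-coefficient contraction), leaving the number of derivatives on $H$ unchanged in that step. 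Without the frame analysis of $L_\beta\,{^{(Z)}c_\kappa}^{\,\beta}$ your method would at best yield $|\nabla_{\mathcal{Z}}^{J}H|$ (full norm) already at the constraint level $|I|-1$, which is strictly weaker than \eqref{E:waveoperatorZIcommutaorMBIIEPointwise} and would not reproduce the proposition.
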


\hfill $\qed$

\begin{corollary} \label{C:boxZIh1ALinfinity}
	\textbf{(Algebraic estimates of $\big|\widetilde{\Square}_g \nabla_{\mathcal{Z}}^I h^{(1)} \big|$)}
	Assume that $h_{\mu \nu}^{(1)},$ $(\mu, \nu = 0,1,2,3),$ is a solution to the reduced equation \eqref{E:Reducedh1Summary}.
	Then under the assumptions of Proposition \ref{P:DIPointwise}, we have that
	
	\begin{align} \label{E:boxZIh1ALinfinity}
		|\widetilde{\Square}_g \nabla_{\mathcal{Z}}^I h^{(1)}|		 
		& \lesssim |\nablamod_{\mathcal{Z}}^I \mathfrak{H}| \ + \ |\nablamod_{\mathcal{Z}}^I \widetilde{\Square}_g h^{(0)}| 
				\ + \ (1 + t + |q|)^{-1} \mathop{\sum_{|K| \leq |I|}}_{|J| + (|K| - 1)_{+} \leq |I|} 
					|\nabla_{\mathcal{Z}}^J H| |\nabla \nabla_{\mathcal{Z}}^K h^{(1)}| \\
			& \ \ + \ (1 + |q|)^{-1} \sum_{|K| \leq |I|}  
				|\nabla\nabla_{\mathcal{Z}}^K h^{(1)}| \Bigg\lbrace \mathop{\sum_{|J| + (|K| - 1)_{+}}}_{\ \ \leq |I|} 
				|\nabla_{\mathcal{Z}}^J H|_{\mathcal{L} \mathcal{L}} 
				\ + \ \underbrace{\mathop{\sum_{|J'| + (|K| - 1)_{+}}}_{\ \leq |I|-1} |\nabla_{\mathcal{Z}}^{J'} H|_{\mathcal{L} 
				\mathcal{T}}}_{\mbox{Absent if $|I| = 0$}}
				\ + \ \underbrace{\mathop{\sum_{|J''| + (|K| - 1)_{+}}}_{\ \leq |I|-2} |\nabla_{\mathcal{Z}}^{J''} H|}_{\mbox{Absent if 
				$|I| \leq 1$ or $|K| = |I|$}} \Bigg\rbrace. \notag
	\end{align}
\end{corollary}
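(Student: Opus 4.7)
The plan is to derive the corollary as an almost immediate consequence of Proposition \ref{P:InhomogeneousTermsNablaZIh1} combined with Proposition \ref{P:DIPointwise}; no new estimates need to be performed. The argument splits into three essentially mechanical steps.

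First, I would invoke Proposition \ref{P:InhomogeneousTermsNablaZIh1}, which, under the hypothesis that $h^{(1)}_{\mu\nu}$ solves the reduced equation \eqref{E:Reducedh1Summary}, gives the identity
\begin{align*}
\widetilde{\Square}_g \nabla_{\mathcal{Z}}^I h^{(1)}_{\mu\nu}
 = \nablamod_{\mathcal{Z}}^I \mathfrak{H}_{\mu\nu}
 - \nablamod_{\mathcal{Z}}^I \widetilde{\Square}_g h^{(0)}_{\mu\nu}
 - \Bigl\{\nablamod_{\mathcal{Z}}^I \widetilde{\Square}_g h^{(1)}_{\mu\nu}
 - \widetilde{\Square}_g \nabla_{\mathcal{Z}}^I h^{(1)}_{\mu\nu}\Bigr\}.
\end{align*}
Applying the triangle inequality to this identity immediately produces the first two terms on the right-hand side of \eqref{E:boxZIh1ALinfinity}, together with a commutator term $|\nablamod_{\mathcal{Z}}^I \widetilde{\Square}_g h^{(1)} - \widetilde{\Square}_g \nabla_{\mathcal{Z}}^I h^{(1)}|$ which is exactly the commutator controlled by Proposition \ref{P:DIPointwise}.

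Second, I would apply Proposition \ref{P:DIPointwise} (whose hypothesis $|\nabla_{\mathcal{Z}}^J h| \leq \varepsilon$ for $|J| \leq \lfloor |I|/2 \rfloor$ is, by assumption, in force here) to bound the commutator term by the sum appearing on the right-hand side of \eqref{E:waveoperatorZIcommutaorMBIIEPointwise}. This sum consists precisely of the third line and the $|K|$-sum on the fourth line of the desired estimate \eqref{E:boxZIh1ALinfinity}. The only bookkeeping subtlety is tracking which of the $|\nabla_{\mathcal{Z}}^{J'} H|_{\mathcal{L}\mathcal{T}}$ and $|\nabla_{\mathcal{Z}}^{J''} H|$ terms are absent in the edge cases $|I|=0$ and $|I|\leq 1$; this matches exactly the ``Absent if\ldots'' qualifications already recorded in Proposition \ref{P:DIPointwise} and reproduced verbatim in the statement of the corollary.

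Finally, combining these two pieces yields \eqref{E:boxZIh1ALinfinity}. No step of this argument is an obstacle in the usual sense — the work has already been done in the two cited results — so the main ``care'' required is simply the bookkeeping of the absent-term qualifications and the indexing constraints $|J| + (|K|-1)_+ \leq |I|$, to make sure the range of summation produced by Proposition \ref{P:DIPointwise} is reproduced faithfully. I would conclude by noting that the case $|I| = 0$ is trivial: it reduces to the equation \eqref{E:Reducedh1Summary} itself, giving $|\widetilde{\Square}_g h^{(1)}| \leq |\mathfrak{H}| + |\widetilde{\Square}_g h^{(0)}|$, which is consistent with the stated inequality (both the commutator sum and the $|J'|$/$|J''|$ sums vanish by the ``Absent'' conventions when $|I| = 0$).
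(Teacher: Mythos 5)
Your proposal is correct and coincides with the paper's own proof, which likewise decomposes $\widetilde{\Square}_g \nabla_{\mathcal{Z}}^I h^{(1)}$ via Proposition \ref{P:InhomogeneousTermsNablaZIh1} and then applies Proposition \ref{P:DIPointwise} to the commutator piece. Your closing remark about $|I|=0$ is harmless but superfluous, since the corollary inherits the hypothesis $1\leq|I|$ from Proposition \ref{P:DIPointwise}.
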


\begin{proof}
	Simply use Proposition \ref{P:InhomogeneousTermsNablaZIh1} to decompose 
	$\widetilde{\Square}_g \nabla_{\mathcal{Z}}^I h^{(1)} =  
		\nablamod_{\mathcal{Z}}^I {\mathfrak{H}} - \nablamod_{\mathcal{Z}}^I \widetilde{\Square}_g h^{(0)}
		+ \Big\lbrace \widetilde{\Square}_g \nabla_{\mathcal{Z}}^I h^{(1)} 
		- \nablamod_{\mathcal{Z}}^I \widetilde{\Square}_g h^{(1)} \Big\rbrace$ 
		and apply Proposition \ref{P:DIPointwise}.
\end{proof}

\subsection{Useful lemmas} \label{S:UsefulLemmas}
In this section, we provide the lemmas that are used in the proofs of the propositions. We will make repeated use of the following decompositions of the Minkowski metric and its inverse:

\begin{subequations}
\begin{align}
	m_{\mu \nu} & = - \frac{1}{2}L_{\mu} \underline{L}_{\nu} - \frac{1}{2} \underline{L}_{\mu} L_{\nu} + \angm_{\mu \nu},
		\label{E:mdecomp} \\
	(m^{-1})^{\mu \nu} & = - \frac{1}{2}L^{\mu} \underline{L}^{\nu} - \frac{1}{2} \underline{L}^{\mu} L^{\nu} + \angm^{\mu \nu},
		\label{E:minversedecomp}	
\end{align}
\end{subequations}
where $\angm_{\mu \nu}$ is the Euclidean first fundamental form of the spheres $S_{r,t}$ defined in \eqref{E:angmdef}.

We begin with a lemma that shows that the essential algebraic structure of the quadratic terms appearing on the right-hand sides of the reduced equations \eqref{E:Reducedh1Summary} - \eqref{E:ReduceddMis0Summary} is preserved under differentiation.

\begin{lemma} \label{L:nullformvectorfieldcommutation} 
\textbf{(Leibniz rules for the quadratic terms)}
Let $\mathscr{Q}_0(\nabla \psi,\nabla \chi),$ $\mathscr{Q}_{\mu \nu}(\nabla \psi, \nabla \chi)$ denote the standard null forms defined in \eqref{E:StandardNullForm0} - \eqref{E:StandardNullFormmunu}, and let 
$\mathscr{Q}_{\mu \nu}^{(1;h)}(\nabla h, \nabla h),$
$\mathscr{Q}_{\mu \nu}^{(2;h)}(\Far, \Far),$ $\mathscr{P}(\nabla_{\mu} h, \nabla_{\nu} h),$
$\mathscr{P}_{(\Far)}^{\nu}(\nabla h, \Far),$ 
$\mathscr{Q}_{(1;\Far)}^{\nu}(h, \nabla \Far),$ and $\mathscr{Q}_{(2;\Far)}^{\nu}(h, \nabla \Far)$
denote the quadratic terms defined in \eqref{E:hAddedUpNullForms}, \eqref{E:Q2h}, \eqref{E:PNullform}, \eqref{E:PFar}, \eqref{E:Q1Far}, and \eqref{E:Q2Far} respectively. Let $I$ be a $\mathcal{Z}-$multi-index. Then there exist constants 
$C_{I_1,I_2; \mu \nu}^{\kappa \lambda \gamma \gamma' \delta \delta'},$ $C_{I_1,I_2; \mu \nu}^{0;\gamma \gamma' \delta \delta'},$ $C_{\mathscr{P};I_1,I_2; \mu \nu}^{\kappa \lambda},$ $C_{\mathscr{P};I_1,I_2},$ and $C_{i;I_1,I_2}$ such that

\begin{subequations}
\begin{align} \label{E:Q1hLeibnizRule}
	\nabla_{\mathcal{Z}}^I \mathscr{Q}_{\mu \nu}^{(1;h)}(\nabla h, \nabla h) =  
		& \sum_{|I_1| + |I_2| \leq |I|} C_{I_1,I_2; \mu \nu}^{\kappa \lambda \gamma \gamma' \delta \delta'}
			\mathscr{Q}_{\kappa \lambda}(\nabla\nabla_{\mathcal{Z}}^{I_1} h_{\gamma \gamma'}, 
			\nabla\nabla_{\mathcal{Z}}^{I_2} h_{\delta \delta'}) \\
	& + \sum_{|I_1| + |I_2| < |I|} C_{I_1,I_2; \mu \nu}^{0;\gamma \gamma' \delta \delta'}
		\mathscr{Q}_0(\nabla\nabla_{\mathcal{Z}}^{I_1} h_{\gamma \gamma'}, 
		\nabla\nabla_{\mathcal{Z}}^{I_2} h_{\delta \delta'}), \notag
\end{align}

\begin{align} \label{E:Q2hLeibnizRule}
	\nabla_{\mathcal{Z}}^I \mathscr{Q}_{\mu \nu}^{(2;h)}(\Far, \Far) =
	 \sum_{|I_1| + |I_2| \leq |I|} C_{I_1,I_2} 
		\mathscr{Q}_{\mu \nu}^{(2;h)}(\nabla_{\mathcal{Z}}^{I_1} \Far, \nabla_{\mathcal{Z}}^{I_2}\Far),
\end{align}

\begin{align} \label{E:SpecialPLeibnizRule}
	\nabla_{\mathcal{Z}}^I \mathscr{P}(\nabla_{\mu} h, \nabla_{\nu} h)
	= \sum_{|I_1| + |I_2| \leq |I|} C_{\mathscr{P};I_1,I_2; \mu \nu}^{\kappa \lambda}
	\mathscr{P}(\nabla_{\kappa} \nabla_{\mathcal{Z}}^{I_1} h, \nabla_{\lambda} \nabla_{\mathcal{Z}}^{I_2} h),
\end{align}

\begin{align}
	\Lie_{\mathcal{Z}}^I \mathscr{P}_{(\Far)}^{\nu}(\nabla h, \Far)
		& = \sum_{|I_1| + |I_2| \leq |I|} C_{\mathscr{P};I_1,I_2} 
		\mathscr{P}_{(\Far)}^{\nu}(\nabla\Lie_{\mathcal{Z}}^{I_1} h, \Lie_{\mathcal{Z}}^{I_2} \Far), 
		\label{E:PFarLeibnizRule} \\
	\Lie_{\mathcal{Z}}^I \mathscr{Q}_{(i;\Far)}^{\nu}(h, \nabla \Far) & 
		= \sum_{|I_1| + |I_2| \leq |I|} C_{i;I_1,I_2} 
		\mathscr{Q}_{(i;\Far)}^{\nu}(\Lie_{\mathcal{Z}}^{I_1} h, \nabla\Lie_{\mathcal{Z}}^{I_2}\Far), && (i=1,2). 
		\label{E:Q1and2FarLeibnizRule} 
\end{align}
\end{subequations}

\end{lemma}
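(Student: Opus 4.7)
The plan is to prove all six identities \eqref{E:Q1hLeibnizRule}--\eqref{E:Q1and2FarLeibnizRule} by induction on $|I|$, with the base case $|I|=0$ being trivial. For the inductive step, writing $\nabla_{\mathcal{Z}}^{I} = \nabla_Z \circ \nabla_{\mathcal{Z}}^{I'}$ (respectively $\Lie_{\mathcal{Z}}^I = \Lie_Z \circ \Lie_{\mathcal{Z}}^{I'}$) with $|I'| = |I|-1$ and $Z \in \mathcal{Z}$, I would apply the inductive hypothesis to the inner operator and then analyze the action of $\nabla_Z$ or $\Lie_Z$ on each resulting term. The structural identities that drive the proof are: (i) $\nabla_\kappa m_{\mu\nu} = 0$ and $\nabla_\kappa (m^{-1})^{\mu\nu} = 0$, which in wave coordinates reduces to the fact that the components of $m$ and $m^{-1}$ are constants; (ii) $\Lie_Z m_{\mu\nu} = c_Z m_{\mu\nu}$ and $\Lie_Z (m^{-1})^{\mu\nu} = -c_Z (m^{-1})^{\mu\nu}$ from \eqref{E:LieZonmlower}--\eqref{E:LieZonmupper}; (iii) $[\Lie_Z, \nabla] = 0$ for $Z \in \mathcal{Z}$ from Lemma \ref{L:Liecommuteswithcoordinatederivatives}; and (iv) the commutator identity $[\nabla_Z, \nabla_\alpha] = -\,{^{(Z)}c}_\alpha^{\ \beta}\nabla_\beta$ from Lemma \ref{L:NablapartialmuNablaZCommutatorExpression}, where ${^{(Z)}c}_{\mu\nu}$ has constant components.

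For the easy items \eqref{E:Q2hLeibnizRule}, \eqref{E:SpecialPLeibnizRule}, \eqref{E:PFarLeibnizRule}, and \eqref{E:Q1and2FarLeibnizRule}, the quadratic forms are built from contractions with $m^{\pm 1}$ of derivatives/Lie derivatives of $h$ and $\Far$, so a direct Leibniz computation together with (i)--(iv) reproduces the same form applied to derivatives of the arguments, plus (harmless) constant linear combinations. For \eqref{E:PFarLeibnizRule} and \eqref{E:Q1and2FarLeibnizRule}, each application of $\Lie_Z$ brings in a factor $-c_Z$ from $\Lie_Z m^{-1}$ and similar contractions, but the structural form of $\mathscr{P}_{(\Far)}^\nu$, $\mathscr{Q}_{(1;\Far)}^\nu$, $\mathscr{Q}_{(2;\Far)}^\nu$ is preserved because $[\Lie_Z, \nabla]=0$ allows us to push $\Lie_Z$ inside the $\nabla$ in $\nabla\Far$ without generating new terms.

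The nontrivial case is \eqref{E:Q1hLeibnizRule}, which is the only identity asserting that new null forms of a \emph{different} type ($\mathscr{Q}_0$) may be produced by differentiation, and that they arise only at strictly lower order. For a single $\mathscr{Q}_0$ piece of $\mathscr{Q}_{\mu\nu}^{(1;h)}$, writing out
\begin{align*}
\nabla_Z \mathscr{Q}_0(\nabla\phi_1,\nabla\phi_2)
 = (m^{-1})^{\alpha\beta}\bigl\{(\nabla_Z\nabla_\alpha\phi_1)\nabla_\beta\phi_2 + (\nabla_\alpha\phi_1)(\nabla_Z\nabla_\beta\phi_2)\bigr\},
\end{align*}
and commuting $\nabla_Z$ through $\nabla_\alpha$ via (iv) produces two favorable top-order terms $\mathscr{Q}_0(\nabla\nabla_Z\phi_1, \nabla\phi_2) + \mathscr{Q}_0(\nabla\phi_1, \nabla\nabla_Z\phi_2)$ plus correction terms of the form $(m^{-1})^{\alpha\beta}\,{^{(Z)}c}_\alpha^{\ \gamma}(\nabla_\gamma\phi_1)(\nabla_\beta\phi_2)$. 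Here I would exploit \eqref{E:ZDeformationTensorinTermsofcZm}, which says that for $Z\in\mathcal{Z}$ the symmetric part of ${^{(Z)}c}_{\mu\nu}$ equals $\tfrac{1}{2}c_Z m_{\mu\nu}$: the symmetric part produces a multiple of $\mathscr{Q}_0(\nabla\phi_1,\nabla\phi_2)$ (which has total order strictly less than $|I|$), while the antisymmetric part produces a multiple of $\mathscr{Q}_{\alpha\beta}(\nabla\phi_1,\nabla\phi_2)$ (which may be of order $|I|$). The analogous computation for $\nabla_Z\mathscr{Q}_{\mu\nu}(\nabla\phi_1,\nabla\phi_2)$ produces only $\mathscr{Q}_{\alpha\beta}$ forms by the same mechanism (no $\mathscr{Q}_0$ ever appears, since the antisymmetric structure is preserved). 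Assembling these observations via the inductive hypothesis and the Leibniz rule yields \eqref{E:Q1hLeibnizRule} with precisely the stated restriction $|I_1|+|I_2|<|I|$ on the $\mathscr{Q}_0$ sum.

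The main obstacle will be the bookkeeping in \eqref{E:Q1hLeibnizRule}: one must carefully track, at each step of the induction, how many derivatives fall on each argument of each null form and verify that every $\mathscr{Q}_0$ contribution that survives has strictly fewer than $|I|$ derivatives total. The key simplification that makes this tractable is that the commutator ${^{(Z)}c}_\alpha^{\ \beta}$ has \emph{constant} components in the wave coordinate system, so no new spacetime dependence is introduced and the induction hypothesis applies verbatim to the inner expression; only the structural null-form decomposition, carried out once via the symmetric/antisymmetric splitting of ${^{(Z)}c}$, is needed to close the argument.
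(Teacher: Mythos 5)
Your overall strategy — induction on $|I|$ driven by the constancy of $m$, $[\Lie_Z,\nabla]=0$, the constant commutator $[\nabla_Z,\nabla_\alpha]={}-{^{(Z)}c}_\alpha^{\ \beta}\nabla_\beta$, and the Leibniz rule — is exactly the route the paper takes; the paper simply writes out the explicit one-step identity $\nabla_Z\mathscr{Q}_{\mu\nu}(\nabla\psi,\nabla\chi)=\mathscr{Q}_{\mu\nu}(\nabla\nabla_Z\psi,\nabla\chi)+\mathscr{Q}_{\mu\nu}(\nabla\psi,\nabla\nabla_Z\chi)-{^{(Z)}c}_\mu^{\ \kappa}\mathscr{Q}_{\kappa\nu}(\nabla\psi,\nabla\chi)-{^{(Z)}c}_\nu^{\ \kappa}\mathscr{Q}_{\mu\kappa}(\nabla\psi,\nabla\chi)$, states that ``a similar identity holds for $\mathscr{Q}_0$,'' and then appeals to induction and Leibniz.

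However, your analysis of the $\mathscr{Q}_0$ case contains a genuine error. You consider only one of the two commutator corrections and split $^{(Z)}c$ into symmetric and antisymmetric parts, concluding that the antisymmetric part produces a $\mathscr{Q}_{\alpha\beta}$ form. But there are two corrections (one from each argument of $\mathscr{Q}_0$), and upon relabeling they combine into ${-}\bigl({^{(Z)}c}^{\beta\gamma}+{^{(Z)}c}^{\gamma\beta}\bigr)(\nabla_\gamma\psi)(\nabla_\beta\chi)$, which is contracted against a tensor \emph{symmetric} in $(\beta,\gamma)$. The antisymmetric part of $^{(Z)}c$ therefore drops out identically, and only the conformal-Killing part $\tfrac{1}{2}c_Z m^{-1}$ survives. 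The correct one-step identity is
\begin{align*}
\nabla_Z\mathscr{Q}_0(\nabla\psi,\nabla\chi)=\mathscr{Q}_0(\nabla\nabla_Z\psi,\nabla\chi)+\mathscr{Q}_0(\nabla\psi,\nabla\nabla_Z\chi)-c_Z\,\mathscr{Q}_0(\nabla\psi,\nabla\chi),
\end{align*}
so differentiating a $\mathscr{Q}_0$ produces only $\mathscr{Q}_0$ (including the two top-order terms $\mathscr{Q}_0(\nabla\nabla_Z\psi,\nabla\chi)$, $\mathscr{Q}_0(\nabla\psi,\nabla\nabla_Z\chi)$) and never a $\mathscr{Q}_{\alpha\beta}$. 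This undercuts the last step of your argument: you attempt to explain the strict inequality $|I_1|+|I_2|<|I|$ on the $\mathscr{Q}_0$ sum in \eqref{E:Q1hLeibnizRule} by having the full-order $\mathscr{Q}_{\alpha\beta}$ terms absorbed into the first sum, but those terms do not exist, and moreover the two top-order $\mathscr{Q}_0$ terms are present at $|I_1|+|I_2|=|I|$. In fact the base case $|I|=0$ already fails under the strict reading, since $\mathscr{Q}_{\mu\nu}^{(1;h)}$ itself contains a $\mathscr{Q}_0$ summand; this indicates the inequality should likely read $|I_1|+|I_2|\leq|I|$, which is all that is needed downstream since Lemma \ref{L:starndardnullforms} treats $\mathscr{Q}_0$ and $\mathscr{Q}_{\kappa\lambda}$ on an equal footing. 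The remainder of your proposal — the treatment of \eqref{E:Q2hLeibnizRule}, \eqref{E:SpecialPLeibnizRule}, \eqref{E:PFarLeibnizRule}, and \eqref{E:Q1and2FarLeibnizRule} via $\Lie_Z m^{-1}=-c_Z m^{-1}$, $[\Lie_Z,\nabla]=0$, and Leibniz — is sound and matches the paper.
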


\begin{proof}
	By pure calculation, if $Z \in \mathcal{Z},$ then the following identity holds for the standard null form 
	$\mathscr{Q}_{\mu \nu}(\nabla \psi, \nabla \chi):$
	
	\begin{align} \label{E:ClassicNullFormZDifferentiatedExpansion}
		\nabla_Z \mathscr{Q}_{\mu \nu}(\nabla \psi, \nabla \chi) 
			& = \mathscr{Q}_{\mu \nu}(\nabla\nabla_Z \psi, \nabla \chi) 
				+ \mathscr{Q}_{\mu \nu}(\nabla \psi, \nabla\nabla_Z \chi)
				- ^{(Z)} c_{\mu}^{\ \kappa} \mathscr{Q}_{\kappa \nu}(\nabla \psi, \nabla \chi)
				- ^{(Z)} c_{\nu}^{\ \kappa} \mathscr{Q}_{\mu \kappa}(\nabla \psi, \nabla \chi),
	\end{align}
	where $^{(Z)} c_{\mu \nu}$ is the covariantly constant tensorfield defined in \eqref{E:CovariantDerivativesofZareConstant}. A 
	similar identity holds for the standard null form $\mathscr{Q}_0(\nabla \psi, \nabla \chi).$ \eqref{E:Q1hLeibnizRule} now 
	follows from inductively from these facts and the Leibniz rule, since $\mathscr{Q}_{\mu \nu}^{(1;h)}(\nabla h, \nabla h)$ is 
	a linear combination of standard null forms. \eqref{E:SpecialPLeibnizRule} follows similarly.
	\eqref{E:Q2hLeibnizRule} follows trivially from definition \eqref{E:Q2h} and the Leibniz rule. \eqref{E:PFarLeibnizRule} and 
	\eqref{E:Q1and2FarLeibnizRule} follow from \eqref{E:LieZonmupper}, Lemma \ref{L:Liecommuteswithcoordinatederivatives}, and 
	the Leibniz rule. 
\end{proof}

The next lemma concerns the null structure of the standard null forms.

\begin{lemma} \label{L:starndardnullforms} \textbf{(Null form estimates of the standard null forms)} 
	Let $\mathscr{Q}_0(\nabla \psi,\nabla \chi) \eqdef (m^{-1})^{\kappa \lambda} (\nabla_{\kappa} \psi)(\nabla_{\lambda} \chi),$ 
	$\mathscr{Q}_{\mu \nu}(\nabla \psi, \nabla \chi) \eqdef (\nabla_{\mu} \psi)(\nabla_{\nu} \chi) 
		- (\nabla_{\nu} \psi)(\nabla_{\mu} \chi)$ denote the standard null forms defined in  
		\eqref{E:StandardNullForm0} - \eqref{E:StandardNullFormmunu}. Then
	
	\begin{align} \label{E:standardnullforms}
		|\mathscr{Q}_0 (\nabla \psi, \nabla \chi)| + |\mathscr{Q}_{\mu \nu}(\nabla \psi,\nabla \chi)| 
			\lesssim |\conenabla \psi||\nabla \chi| + |\conenabla\chi||\nabla \psi|.
	\end{align}	
	
	\begin{proof}
		The estimate \eqref{E:standardnullforms} for $\mathscr{Q}_0$ easily follows from using
		\eqref{E:mdecomp} to decompose $(m^{-1})^{\kappa \lambda}.$ To obtain the estimates for 
		$\mathscr{Q}_{\mu \nu}(\nabla \psi,\nabla \chi),$
		first consider the $\mathscr{Q}_{\mu \nu}(\nabla \psi,\nabla \chi)$ to be components of a 2-covariant tensor 
		$\mathscr{Q}(\nabla \psi,\nabla \chi).$ Inequality \eqref{E:standardnullforms} is equivalent to the following inequality:
		
		\begin{align} \label{E:standardnullforminequality}
			|\mathscr{Q}(\nabla \psi, \nabla \chi)|_{\mathcal{N}\mathcal{N}}
				\lesssim |\conenabla \psi||\nabla \chi| + |\conenabla\chi||\nabla\psi|.
		\end{align}
		Contracting $\mathscr{Q}_{\mu \nu}(\nabla \psi,\nabla \chi)$ against frame vectors $N^{\mu},N^{\nu} \in 
		\mathcal{N},$ we see that the only component on the left-hand side of \eqref{E:standardnullforminequality} that could pose 
		any difficulty is $\uL^{\mu} \uL^{\nu} \mathscr{Q}_{\mu \nu}(\nabla \psi,\nabla \chi).$ But the 
		anti-symmetry the $\mathscr{Q}_{\mu \nu}(\cdot,\cdot)$ implies that this component is $0.$
	\end{proof}
\end{lemma}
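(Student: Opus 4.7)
The plan is to work in a Minkowskian null frame $\mathcal{N} = \{\uL, L, e_1, e_2\}$ and verify that every frame component of each bilinear form contains at least one derivative in a direction tangent to the outgoing Minkowski cone $C_q^+$, i.e., a $\conenabla$-derivative. Since $|\mathscr{Q}_0(\nabla\psi,\nabla\chi)|$ is a scalar and since the wave coordinate equivalence \eqref{E:AlternateNablaNorm} reduces the tensorial norm $|\mathscr{Q}(\nabla\psi,\nabla\chi)|$ to the sum of its null frame components, this strategy reduces the lemma to a component-by-component inspection.

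First I would handle $\mathscr{Q}_0$ by substituting the null frame decomposition \eqref{E:minversedecomp} of the inverse Minkowski metric, which yields
\begin{align*}
\mathscr{Q}_0(\nabla\psi,\nabla\chi) = -\tfrac{1}{2}(L\psi)(\uL\chi) \;-\; \tfrac{1}{2}(\uL\psi)(L\chi) \;+\; \delta^{AB}(e_A\psi)(e_B\chi).
\end{align*}
In each of the three summands one factor is a derivative of $\psi$ or $\chi$ in a direction belonging to $\mathcal{T} = \{L, e_1, e_2\}$ (and is therefore pointwise bounded by $|\conenabla\psi|$ or $|\conenabla\chi|$), while the other is trivially bounded by $|\nabla\chi|$ or $|\nabla\psi|$; the desired estimate is immediate.

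For $\mathscr{Q}_{\mu\nu}$, I would view $(\mathscr{Q}_{\mu\nu})$ as a type $\binom{0}{2}$ tensor and bound $|\mathscr{Q}|_{\mathcal{N}\mathcal{N}}$ by inspecting the contractions $\mathscr{Q}(N,N')$ for $N,N' \in \mathcal{N}$. Any contraction for which at least one of $N,N'$ belongs to $\mathcal{T}$ already pairs a $\conenabla$-derivative of one argument with a full $\nabla$-derivative of the other, and so is controlled by the right-hand side of \eqref{E:standardnullforms}. The diagonal contractions $\mathscr{Q}(L,L)$, $\mathscr{Q}(\uL,\uL)$, $\mathscr{Q}(e_A,e_A)$ vanish identically by the anti-symmetry $\mathscr{Q}_{\mu\nu} = -\mathscr{Q}_{\nu\mu}$, which in particular eliminates the only component $\uL^\mu \uL^\nu \mathscr{Q}_{\mu\nu}$ in which neither contraction is automatically tangential. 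This anti-symmetry is the structural input that distinguishes a standard null form from a generic product of two gradients and is the sole reason the estimate holds.

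No step here poses a genuine obstacle; the only care required is the null-frame bookkeeping to confirm that the single potentially bad contraction is precisely the one killed by anti-symmetry.
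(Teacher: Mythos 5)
Your argument is essentially identical to the paper's: decompose $(m^{-1})^{\kappa\lambda}$ in the null frame to handle $\mathscr{Q}_0$, then treat $\mathscr{Q}_{\mu\nu}$ as a type $\binom{0}{2}$ tensor and observe that the only frame contraction not automatically involving a tangential derivative is $\uL^\mu\uL^\nu\mathscr{Q}_{\mu\nu}$, which vanishes by anti-symmetry. Your write-up is correct and slightly more explicit (and you correctly point to \eqref{E:minversedecomp} where the paper cites \eqref{E:mdecomp}), but the route is the same.
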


The next lemma addresses the null structure of some of the terms appearing in the reduced equations \eqref{E:Reducedh1Summary} - \eqref{E:ReduceddMis0Summary}.

\begin{lemma} \label{L:AlgebraicTensorialEstimates} \textbf{(Null form estimates for the reduced equations)}
Let $\mathscr{P}(\nabla_{\mu} \Pi, \nabla_{\nu} \Theta),$ $\mathscr{Q}_{\mu \nu}^{(1;h)}(\nabla h, \nabla h),$ $\mathscr{Q}_{\mu \nu}^{(2;h)}(\Far, \Gar),$ $\mathscr{P}_{(\Far)}^{\nu}(h, \nabla \Far),$ $\mathscr{Q}_{(1;\Far)}^{\nu}(h, \nabla \Far),$ and $\mathscr{Q}_{(2;\Far)}^{\nu}(\nabla h, \Far)$ be the quadratic forms defined in
Section \ref{SS:ReducedEquations}, and define the quadratic form $\widetilde{\mathscr{P}}_{(\Far)}^{\nu}(h, \nabla \Far)$ by
removing the $\nabla_{\uL}\ualpha^{\nu}[\Far]-$containing component of $\mathscr{P}_{(\Far)}^{\nu}(h, \nabla \Far):$

\begin{align}
	\widetilde{\mathscr{P}}_{(\Far)}^{\nu}(h, \nabla \Far) 
	& \eqdef \mathscr{P}_{(\Far)}^{\nu}(h, \nabla \Far) - \frac{1}{4}h_{LL} \angm^{\nu \nu'} \nabla_{\uL} \Far_{\uL \nu'} \\
	& = \mathscr{P}_{(\Far)}^{\nu}(h, \nabla \Far) + \frac{1}{4}h_{LL} \nabla_{\uL} \ualpha^{\nu}[\Far]. \notag
\end{align}
Let $X_{\nu}$ be any covector, let $\Pi_{\mu \nu},$ $\Theta_{\mu \nu}$ be any symmetric or anti-symmetric type $\binom{0}{2}$ tensorfields, and let $\Far_{\mu \nu},$ $\Gar_{\mu \nu}$ be any two-forms. Then the following pointwise inequalities hold:

\begin{subequations}
\begin{align}
		|\mathscr{P}(\nabla_{\mu} \Pi, \nabla_{\nu} \Theta)|
			& \lesssim |\nabla \Pi|_{\mathcal{T} \mathcal{N}} |\nabla \Theta|_{\mathcal{T} \mathcal{N}}
			\ + \ |\nabla \Pi|_{\mathcal{L}\mathcal{L}} |\nabla \Theta| 
			\ + \ |\Pi| |\nabla \Theta|_{\mathcal{L}\mathcal{L}}, \qquad (\mu, \nu = 0,1,2,3),
			\label{E:PSpecialNullStructure} \\
			\sum_{T \in \mathcal{T}, N \in \mathcal{N}}|T^{\mu} N^{\nu}\mathscr{P}(\nabla_{\mu} \Pi, \nabla_{\nu} \Theta)|
			& \lesssim |\conenabla \Pi||\nabla \Theta|, \label{E:PTUSpecialNullStructure} \\
		|\mathscr{Q}_{\mu \nu}^{(1;h)}(\nabla \Pi, \nabla \Theta)| & \lesssim 
			|\conenabla \Pi||\nabla \Theta| \ + \ |\nabla \Pi||\conenabla \Theta|, \qquad (\mu, \nu = 0,1,2,3), 
			\label{E:Q1hNullFormEstimate} \\
		\sum_{T \in \mathcal{T}, N \in \mathcal{N}} 
			|T^{\mu} N^{\nu}\mathscr{Q}_{\mu \nu}^{(2;h)}(\Far, \Gar)| 
			& \lesssim \big(|\Far|_{\mathcal{L} \mathcal{N}} + |\Far|_{\mathcal{T} \mathcal{T}}\big)|\Gar|
			\ + \ |\Far| \big(|\Gar|_{\mathcal{L} \mathcal{N}} + |\Gar|_{\mathcal{T} \mathcal{T}} \big),
			\label{E:Q2TUhNullFormEstimate} \\
		|\mathscr{Q}_{\mu \nu}^{(2;h)}(\Far, \Gar)| & \lesssim |\Far||\Gar|, \qquad (\mu, \nu = 0,1,2,3), 
			\label{E:Q2hNullFormEstimate} \\
		|X_{\nu} \mathscr{P}_{(\Far)}^{\nu}(h, \nabla \Far)| 
		& \lesssim |X||h||\conenabla \Far| 
			\ + \ |X||h|\big(|\nabla \Far|_{\mathcal{L} \mathcal{N}} + |\nabla \Far|_{\mathcal{T} \mathcal{T}}\big)
			\ + \ |X||h|_{\mathcal{L}\mathcal{L}}|\nabla \Far|
			\ + \ |X|_{\mathcal{L}}|h||\nabla \Far| \label{E:XPFarhNablaFarNullFormEstimate} \\
		& \lesssim (1 + t + |q|)^{-1} \sum_{|I| \leq 1} |X||h||\Lie_{\mathcal{Z}}^I \Far| 
			\ + \ (1 + |q|)^{-1} \sum_{|I| \leq 1} |X||h|
			\big(|\Lie_{\mathcal{Z}}^I \Far|_{\mathcal{L} \mathcal{N}} + |\Lie_{\mathcal{Z}}^I \Far|_{\mathcal{T} \mathcal{T}}\big)
			\notag \\
		& \ \ + \ (1 + |q|)^{-1} \sum_{|I| \leq 1} |X||h|_{\mathcal{L}\mathcal{L}}|\Lie_{\mathcal{Z}}^I \Far|
			\ + \ (1 + |q|)^{-1} \sum_{|I| \leq 1} |X|_{\mathcal{L}}|h||\Lie_{\mathcal{Z}}^I \Far|,
			\notag \\
		|X_{\nu} \widetilde{\mathscr{P}}_{(\Far)}^{\nu}(h, \nabla \Far)| 
		& \lesssim |X||h||\conenabla \Far| 
			\ + \ |X||h|\big(|\nabla \Far|_{\mathcal{L} \mathcal{N}} + |\nabla \Far|_{\mathcal{T} \mathcal{T}}\big)
			\ + \ |X|_{\mathcal{L}}|h||\nabla \Far| \label{E:XPNoBadComponentFarhNablaFarNullFormEstimate} \\
		& \lesssim (1 + t + |q|)^{-1} \sum_{|I| \leq 1} |X||h||\Lie_{\mathcal{Z}}^I \Far| 
			\ + \ (1 + |q|)^{-1} \sum_{|I| \leq 1} |X||h|
			\big(|\Lie_{\mathcal{Z}}^I \Far|_{\mathcal{L} \mathcal{N}} + |\Lie_{\mathcal{Z}}^I \Far|_{\mathcal{T} \mathcal{T}}\big)
			\notag \\
		& \ \ + \ (1 + |q|)^{-1} \sum_{|I| \leq 1} |X|_{\mathcal{L}}|h||\Lie_{\mathcal{Z}}^I \Far|,
			\notag \\
		|X_{\nu} \mathscr{Q}_{(1;\Far)}^{\nu}(h, \nabla \Far)| 
		& \lesssim |X||h||\conenabla \Far| 
			\ + \ |X||h|\big(|\nabla \Far|_{\mathcal{L} \mathcal{N}} + |\nabla \Far|_{\mathcal{T} \mathcal{T}}\big) 
			\label{E:XQ1FarhNablaFarNullFormEstimate} \\
		& \lesssim (1 + t + |q|)^{-1} \sum_{|I| \leq 1} |X||h||\Lie_{\mathcal{Z}}^I \Far| 
			\ + \ (1 + |q|)^{-1} \sum_{|I| \leq 1} |X||h|
			\big(|\Lie_{\mathcal{Z}}^I \Far|_{\mathcal{L} \mathcal{N}} + |\Lie_{\mathcal{Z}}^I \Far|_{\mathcal{T} \mathcal{T}}\big),
			\notag \\
		|X_{\nu}\mathscr{Q}_{(2;\Far)}^{\nu}(\nabla h, \Far)| 
			& \lesssim |X||\conenabla h||\Far| \ + \ |X||\nabla h||\Far|_{\mathcal{L} \mathcal{N}} 
				\label{E:Q2FarNullFormEstimate} \\
			& \lesssim (1 + t + |q|)^{-1} \sum_{|I| \leq 1} |X||\nabla_{\mathcal{Z}}^I h||\Far| 
			\ + \ (1 + |q|)^{-1} \sum_{|I| \leq 1} |X||\nabla_{\mathcal{Z}}^I h|
			\big(|\Far|_{\mathcal{L} \mathcal{N}} + |\Far|_{\mathcal{T} \mathcal{T}}\big).
			\notag 
		\end{align}
\end{subequations}

\end{lemma}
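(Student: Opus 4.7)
The plan is to prove each estimate by systematically decomposing the tensorial contractions relative to a Minkowskian null frame $\mathcal{N} = \{\uL, L, e_1, e_2\}$ and tracking which null components arise. The fundamental tool is the decomposition
\begin{align*}
(m^{-1})^{\mu \nu} = -\tfrac{1}{2}L^{\mu} \uL^{\nu} - \tfrac{1}{2}\uL^{\mu} L^{\nu} + \angm^{\mu \nu},
\end{align*}
which forces every Minkowskian trace or inner product to supply at least one contraction against a vector in $\mathcal{T} = \{L, e_1, e_2\}$, since both the $\angm$-piece and the mixed $L\uL$-piece carry tangential indices. The strategy is then to enumerate the null components of each quadratic form and to absorb terms with two $\uL$-indices into the $\mathcal{L}\mathcal{L}$-, $\mathcal{L}\mathcal{T}$-, or $\mathcal{L}\mathcal{N}$-seminorms of one of the factors via the antisymmetries/symmetries and the explicit null structure of the forms.

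For \eqref{E:PSpecialNullStructure}--\eqref{E:PTUSpecialNullStructure}, I would first expand the two pieces of $\mathscr{P}$ in \eqref{E:PNullform}: the trace $\Pi_\kappa^{\ \kappa} = (m^{-1})^{\alpha\beta}\Pi_{\alpha\beta}$ and the full contraction $\Pi^{\kappa\lambda}\Theta_{\kappa\lambda}$. Both decompose into only products where at least one factor must carry a $\mathcal{T}$-index or where both $\uL\otimes\uL$ pieces get paired with an $LL$ piece of the partner tensor; this yields \eqref{E:PSpecialNullStructure}. For \eqref{E:PTUSpecialNullStructure}, contracting the free index $\mu$ against $T\in\mathcal{T}$ turns $\nabla_\mu$ into $\conenabla_\mu$ automatically (since $T\in\mathcal{T}$ is tangent to $C_q^+$). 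Estimate \eqref{E:Q1hNullFormEstimate} is immediate from \eqref{E:hAddedUpNullForms}, which expresses $\mathscr{Q}^{(1;h)}$ as a linear combination of the standard null forms $\mathscr{Q}_0,\mathscr{Q}_{\mu\nu}$ applied componentwise to $h$, combined with Lemma \ref{L:starndardnullforms}. For \eqref{E:Q2TUhNullFormEstimate}--\eqref{E:Q2hNullFormEstimate} I would expand $(m^{-1})^{\kappa\lambda}\Far_{\mu\kappa}\Gar_{\nu\lambda}$ in the null frame and use the antisymmetry of $\Far,\Gar$: when contracted against $T^\mu N^\nu$ with $T\in\mathcal{T}$, the factor $T^\mu\Far_{\mu\kappa}$ is already a $\mathcal{T}\mathcal{N}$-contraction of $\Far$, equivalent up to constants to $|\Far|_{\mathcal{L}\mathcal{N}} + |\Far|_{\mathcal{T}\mathcal{T}}$; the trace $\Far^{\kappa\lambda}\Gar_{\kappa\lambda}$ has no $\ualpha\cdot\ualpha$ contribution by antisymmetry, so it is of the same favorable type.

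The main obstacle is the Faraday-acting-on-$h$ forms \eqref{E:XPFarhNablaFarNullFormEstimate}--\eqref{E:XPNoBadComponentFarhNablaFarNullFormEstimate}, where the dangerous $h_{LL}\nabla_{\uL}\ualpha^\nu$ contribution must be isolated. I would expand $h^{\mu\kappa}$ in the null frame, writing
\begin{align*}
h^{\mu\kappa} = \tfrac{1}{4}h_{LL}\uL^\mu\uL^\kappa + (\text{terms with at least one }L,e_A\text{ factor or with an }\mathcal{L}\mathcal{L}\text{ coefficient}),
\end{align*}
and substitute into $X_\nu\mathscr{P}_{(\Far)}^\nu(h,\nabla\Far) = X_\nu(m^{-1})^{\nu\lambda}h^{\mu\kappa}\nabla_\mu\Far_{\kappa\lambda}$. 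The $\uL^\mu\uL^\kappa$ piece forces $\nabla_{\uL}\Far_{\uL\lambda}$, and when projected by $\angm^{\nu\lambda}X_\nu$ this becomes precisely $\frac{1}{4}h_{LL}\nabla_{\uL}\ualpha^\nu X_\nu$; every other piece has either $L^\mu$ (giving $\conenabla$), $e_A^\mu$ (giving $\conenabla$), $L^\kappa$ (giving an $\mathcal{L}\mathcal{N}$ component of $\nabla\Far$ in the $\kappa$ slot), $e_A^\kappa$ (giving a $\mathcal{T}\mathcal{T}$ or $\mathcal{L}\mathcal{N}$ component), or a coefficient in $|h|_{\mathcal{L}\mathcal{L}}$, or lives in the $L^\nu X_\nu$ slot (giving the $|X|_{\mathcal{L}}|h||\nabla\Far|$ term). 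Keeping the dangerous piece gives \eqref{E:XPFarhNablaFarNullFormEstimate}; removing it by definition of $\widetilde{\mathscr{P}}_{(\Far)}$ gives \eqref{E:XPNoBadComponentFarhNablaFarNullFormEstimate}. For $\mathscr{Q}_{(1;\Far)}$, the index of $h$ sits on the $\nu'$ slot contracted with the free $\nu$, so the $\uL\uL$-bad piece of $h$ is never paired with a $\uL\uL$-bad derivative of $\Far$ in a form that avoids an $X_\mathcal{L}$ or a $\conenabla$, yielding \eqref{E:XQ1FarhNablaFarNullFormEstimate}. The form $\mathscr{Q}_{(2;\Far)}$ in \eqref{E:Q2FarNullFormEstimate} is handled identically: writing out the three factors of $m^{-1}$ and using the antisymmetry of $\Far$, every surviving product has either $\conenabla h$ or a favorable null component of $\Far$.

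Finally, the second inequalities in \eqref{E:XPFarhNablaFarNullFormEstimate}--\eqref{E:Q2FarNullFormEstimate} follow immediately by applying \eqref{E:WeightedDerivativesinTermsofNablaZI} from Lemma \ref{L:PointwisetandqWeightedNablainTermsofZestiamtes} to turn $|\conenabla(\cdot)|$ into $(1+t+|q|)^{-1}\sum_{|I|\leq 1}|\nabla_{\mathcal{Z}}^I(\cdot)|$ and $|\nabla(\cdot)|$ into $(1+|q|)^{-1}\sum_{|I|\leq 1}|\nabla_{\mathcal{Z}}^I(\cdot)|$, followed by \eqref{E:LieZIinTermsofNablaZI} of Proposition \ref{P:LievsCovariantLContractionRelation} to convert the $\mathcal{Z}$-covariant derivatives of $\Far$ into Lie derivatives. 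The favorable $\mathcal{L}\mathcal{N}/\mathcal{T}\mathcal{T}$ seminorms are preserved under this conversion by \eqref{E:NablaFarGoodInTermsofqWeightedLieZIFarGood}, which ensures the structure of the dominant and subdominant terms on the right-hand sides is maintained in the upgraded inequalities.
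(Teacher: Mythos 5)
Your proposal is correct and follows essentially the same route as the paper: the first inequalities are obtained by expanding everything via the null-frame decompositions \eqref{E:mdecomp}--\eqref{E:minversedecomp} (with Lemma \ref{L:starndardnullforms} handling \eqref{E:Q1hNullFormEstimate}), and the second inequalities are obtained by applying Lemma \ref{L:PointwisetandqWeightedNablainTermsofZestiamtes} and Proposition \ref{P:LievsCovariantLContractionRelation} to the first. One small imprecision worth flagging: in your isolation of the dangerous piece of $X_\nu\mathscr{P}_{(\Far)}^{\nu}$, the $\tfrac{1}{4}h_{LL}\uL^\mu\uL^\kappa$ contribution, when contracted against the \emph{full} $(m^{-1})^{\nu\lambda}$ rather than only its $\angm^{\nu\lambda}$ part, produces not just $-\tfrac{1}{4}h_{LL}\nabla_{\uL}\ualpha^\nu$ but also a $\tfrac{1}{4}h_{LL}\uL^\nu\nabla_{\uL}\rho$ term; the latter is controlled by $|X||h||\nabla\Far|_{\mathcal{L}\mathcal{N}}$ so nothing breaks, but it is not literally absorbed into the subtracted $\widetilde{\mathscr{P}}_{(\Far)}^{\nu}$ correction, and should be bounded separately.
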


\begin{proof}
	Inequality \eqref{E:Q1hNullFormEstimate} follows directly from
	Lemma \ref{L:starndardnullforms}, since $\mathscr{Q}_{\mu \nu}^{(1;h)}(\nabla h, \nabla h)$
	is a linear combination of standard null forms. Inequality \eqref{E:Q2hNullFormEstimate} is trivial, 
	while \eqref{E:PSpecialNullStructure}, \eqref{E:PTUSpecialNullStructure}, and the first inequalities in 
	\eqref{E:Q2TUhNullFormEstimate} - \eqref{E:Q2FarNullFormEstimate} are easy to check using \eqref{E:mdecomp} - 
	\eqref{E:minversedecomp}. The second inequalities in \eqref{E:Q2TUhNullFormEstimate} - \eqref{E:Q2FarNullFormEstimate} then 
	follow from the first ones, Lemma \ref{L:PointwisetandqWeightedNablainTermsofZestiamtes}, and Proposition 
	\ref{P:LievsCovariantLContractionRelation}.
\end{proof}

The next lemma concerns the null structure of the cubic terms on the right-hand side of \eqref{E:Firstweightedenergyscalar}.

\begin{lemma} \cite[Lemma 4.2]{hLiR2010} \label{L:InhomogeneousWaveEquationEnergyCurrentAlgebraicNullFormEstimates}
	\textbf{(Null form estimates for quasilinear wave equations)}
	Let $\Pi$ be a type $\binom{0}{2}$ tensorfield, and let $\phi$ be a scalar function. Then the following inequalities hold:
	
	\begin{subequations}
	\begin{align}
		|\Pi^{\kappa \lambda}(\nabla_{\kappa} \phi)(\nabla_{\lambda} \phi)| & \lesssim
			|\Pi|_{\mathcal{L} \mathcal{L}}|\nabla \phi|^2 \ + \ |\Pi||\conenabla \phi| |\nabla \phi|, \\
		|L_{\kappa}\Pi^{\kappa \lambda}\nabla_{\lambda} \phi| & \lesssim
			|\Pi|_{\mathcal{L} \mathcal{L}}|\nabla \phi| \ + \ |\Pi||\conenabla \phi|, \\
		|(\nabla_{\kappa} \Pi^{\kappa \lambda})\nabla_{\lambda} \phi| & \lesssim
			|\nabla \Pi|_{\mathcal{L} \mathcal{L}} |\nabla \phi| \ + \ |\conenabla \Pi||\nabla \phi|
			\ + \ |\nabla \Pi| |\conenabla \phi|, \\
		|\Pi^{\kappa \lambda} \nabla_{\kappa} \nabla_{\lambda} \phi| & \lesssim
			|\Pi|_{\mathcal{L} \mathcal{L}}|\nabla\nabla \phi| \ + \ |\conenabla \nabla \phi|.
	\end{align}
	\end{subequations}
	
\end{lemma}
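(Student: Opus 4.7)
The plan is to prove all four inequalities by decomposing the relevant factors in the Minkowskian null frame $\mathcal{N} = \{\uL, L, e_1, e_2\}$. The guiding principle is the decomposition \eqref{E:minversedecomp} of $(m^{-1})^{\kappa\lambda}$, which induces the dual-frame expansion of any one-form $X_\lambda$ as $X_\lambda = -\tfrac{1}{2}X_L \uL_\lambda - \tfrac{1}{2}X_{\uL} L_\lambda + X_A (e_A)_\lambda$. Since $L, e_1, e_2 \in \mathcal{T}$ are tangent to the outgoing Minkowski null cones $C_q^+$, the components $X_L$ and $X_A$ of $X = \nabla\phi$ are controlled by $|\conenabla\phi|$; only the transversal component $\nabla_{\uL}\phi$, which is paired with $L_\lambda$ in the expansion, fails to be of $\conenabla$-type.

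For (1), I would expand both factors $\nabla_\kappa\phi$ and $\nabla_\lambda\phi$ in the dual frame and distribute across $\Pi^{\kappa\lambda}$. Among the nine resulting products, there is precisely one term where both derivative factors are the bad component $\nabla_{\uL}\phi$, namely $\tfrac{1}{4}(\nabla_{\uL}\phi)^2 L_\kappa L_\lambda$; contracting this with $\Pi^{\kappa\lambda}$ yields the factor $L^\alpha L^\beta \Pi_{\alpha\beta}$, bounded by $|\Pi|_{\mathcal{L}\mathcal{L}}$, and the total contribution is $\lesssim |\Pi|_{\mathcal{L}\mathcal{L}}|\nabla\phi|^2$. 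Every remaining product contains at least one factor of $\nabla_L\phi$ or $\nabla_A\phi$, controlled by $|\conenabla\phi|$, paired with a second derivative factor bounded by $|\nabla\phi|$ and a frame-contraction of $\Pi$ bounded by $|\Pi|$. Inequality (2) is the linear analogue: only the $\nabla_{\uL}\phi$-component of $\nabla_\lambda\phi$ couples to the $L_\kappa L_\lambda \Pi^{\kappa\lambda}$ term already induced by the contracting $L_\kappa$. For (4), I treat $\nabla_\kappa\nabla_\lambda\phi$ as a (not necessarily symmetric) tensor and again decompose both lower indices in the dual frame; the unique dangerous term $\tfrac{1}{4}L_\kappa L_\lambda \nabla_{\uL}\nabla_{\uL}\phi$ gives $|\Pi|_{\mathcal{L}\mathcal{L}}|\nabla\nabla\phi|$, and every other term carries at least one tangential derivative and hence contributes $|\Pi||\conenabla\nabla\phi|$.

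The most subtle inequality is (3), because the would-be bad term involves the divergence $L_\lambda\nabla_\kappa\Pi^{\kappa\lambda}$, and we must show $|L_\lambda\nabla_\kappa\Pi^{\kappa\lambda}| \lesssim |\nabla\Pi|_{\mathcal{L}\mathcal{L}} + |\conenabla\Pi|$. I would rewrite $\nabla_\kappa\Pi^{\kappa\lambda} = \nabla^\mu\Pi_\mu^{\ \lambda}$ and apply \eqref{E:minversedecomp} to the operator $\nabla^\mu$, obtaining
\begin{align*}
L_\lambda \nabla^\mu\Pi_\mu^{\ \lambda} = -\tfrac{1}{2}L^\mu L_\lambda\,\nabla_{\uL}\Pi_\mu^{\ \lambda} - \tfrac{1}{2}\uL^\mu L_\lambda\,\nabla_L\Pi_\mu^{\ \lambda} + L_\lambda \angm^{\mu\nu}\nabla_\nu\Pi_\mu^{\ \lambda}.
\end{align*}
The crucial identities $\nabla_{\uL}L = \nabla_L L = \nabla_L\uL = 0$ from Lemma \ref{L:PropertiesofuLandL} allow me to commute the derivatives past the $L$ and $\uL$ contractions. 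The first summand becomes $-\tfrac{1}{2}\nabla_{\uL}(L^\alpha L^\beta \Pi_{\alpha\beta})$, which is exactly an $|\nabla\Pi|_{\mathcal{L}\mathcal{L}}$-type term. The second summand becomes $-\tfrac{1}{2}\nabla_L(\uL^\alpha L^\beta \Pi_{\alpha\beta})$, which is $\conenabla\Pi$-type since $L \in \mathcal{T}$. The third summand is an angular derivative of an $|\Pi|$-bounded contraction, also $\conenabla\Pi$-type. Substituting this back into the expansion of $(\nabla_\kappa\Pi^{\kappa\lambda})\nabla_\lambda\phi$ and bounding the good components of $\nabla_\lambda\phi$ by $|\conenabla\phi|$ completes the proof.

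The main obstacle is the bookkeeping in (3): one must verify that \emph{every} term produced by commuting derivatives past frame contractions is either genuinely $|\nabla\Pi|_{\mathcal{L}\mathcal{L}}$, a tangential derivative of $\Pi$-contractions, or vanishes identically via Lemma \ref{L:PropertiesofuLandL}. No zeroth-order $r^{-1}|\Pi|$ terms (arising from $\nabla_A L = r^{-1}e_A$) should appear on the right-hand side, and verifying this requires choosing the divergence decomposition that keeps $L$ (not $e_A$) as the outer vector throughout. The remaining three inequalities are essentially direct expansions and present no difficulty beyond the null-frame identities already established in Section \ref{S:NullFrame}.
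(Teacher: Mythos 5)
Your approach — expanding $\nabla^\mu\phi$ and $(m^{-1})^{\mu\nu}$ in the Minkowskian null frame and isolating the unique $\nabla_{\uL}$-transversal component as the only term that escapes $\conenabla$-control — is the standard argument and is correct. The paper quotes this lemma from \cite{hLiR2010} without proof, but the same decomposition \eqref{E:mdecomp}--\eqref{E:minversedecomp} underlies its proofs of the companion Lemmas~\ref{L:starndardnullforms} and~\ref{L:AlgebraicTensorialEstimates}.

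Two small remarks. In (3), the commutation step via $\nabla_{\uL}L = \nabla_L L = \nabla_L\uL = 0$ is unnecessary, and so is the concern you raise about $r^{-1}|\Pi|$ terms from $\nabla_A L = r^{-1}e_A$. Definition~\ref{D:contractionnomrs} places the frame contraction \emph{outside} the covariant derivative, so the three summands obtained directly from decomposing $\nabla^\mu$, namely $L^\mu L^\nu\nabla_{\uL}\Pi_{\mu\nu}$, $\uL^\mu L^\nu\nabla_L\Pi_{\mu\nu}$, and $\angm^{\mu\rho}L^\nu\nabla_\rho\Pi_{\mu\nu}$, are already — with no further manipulation — dominated by $|\nabla\Pi|_{\mathcal{L}\mathcal{L}}$, $|\conenabla\Pi|$, and $|\conenabla\Pi|$ respectively, since $L, e_A \in \mathcal{T}$. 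Rewriting these as $\nabla_{\uL}(\Pi_{LL})$ and so on adds nothing and is precisely what risks producing the spurious zeroth-order terms you were hoping to rule out; the decomposition is purely pointwise algebra. Second, your computation for (4) correctly yields $|\Pi|_{\mathcal{L}\mathcal{L}}|\nabla\nabla\phi| + |\Pi||\conenabla\nabla\phi|$; the stated inequality drops the $|\Pi|$ factor on the second term, which a scaling check ($\Pi \to c\Pi$) shows cannot be dropped for arbitrary $\Pi$, and is harmless in the paper's applications only because there $\Pi$ is $H$ with $|H| \lesssim 1$. You should flag this discrepancy rather than silently absorb it.
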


\hfill $\qed$

The following lemma addresses the null structure of the cubic terms on the right-hand 
side of \eqref{E:currentdivergence}.

\begin{lemma} \label{L:DivergenceofJAlgebraicNullFormEstimates}
	\textbf{(Null form estimates for the electromagnetic equations of variation)}
	Let $h_{\mu \nu}$ be a type $\binom{0}{2}$ tensorfield, and let $\Far_{\mu \nu}$ be two-form.
	Then the following inequalities hold:
	
	\begin{subequations}
	\begin{align}
		|(\nabla_{\mu} h^{\mu \kappa}) \Far_{\kappa \zeta} \Far_{0}^{\ \zeta}|		
		& \lesssim |\nabla h|_{\mathcal{L} \mathcal{L}}|\Far|^2 \ + \ |\conenabla h||\Far|^2
			\ + \ |\nabla h||\Far| \big(|\Far|_{\mathcal{L} \mathcal{N}} + |\Far|_{\mathcal{T} \mathcal{T}}\big), 
				\label{E:DivergenceofJFirstNablahdotFarsquaredAlgebraicNullFormEstimate} \\
		|(\nabla_{\mu} h^{\kappa \lambda}) \Far_{\ \kappa}^{\mu} \Far_{0 \lambda}|
		& \lesssim |\conenabla h||\Far|^2 
			\ + \ |\nabla h||\Far|
			\big(|\Far|_{\mathcal{L} \mathcal{N}} + |\Far|_{\mathcal{T} \mathcal{T}}\big),	\\
		|(\nabla_{t} h^{\kappa \lambda}) \Far_{\kappa \eta}\Far_{\lambda}^{\ \eta}|
		& \lesssim |\nabla h|_{\mathcal{L} \mathcal{L}}|\Far|^2
			\ + \ |\nabla h||\Far|
			\big(|\Far|_{\mathcal{L} \mathcal{N}} + |\Far|_{\mathcal{T} \mathcal{T}}\big), \\
		|L_{\mu}h^{\mu \kappa} \Far_{\kappa \zeta} \Far_{0}^{\ \zeta}|
		& \lesssim |h|_{\mathcal{L} \mathcal{L}}|\Far|^2
			\ + \ |h||\Far|\big(|\Far|_{\mathcal{L} \mathcal{N}} + |\Far|_{\mathcal{T} \mathcal{T}}\big), \\
			|L_{\mu} h^{\kappa \lambda} \Far_{\ \kappa}^{\mu} \Far_{0 \lambda}|
		& \lesssim |h||\Far||\Far|_{\mathcal{L} \mathcal{N}},  \\
			|h^{\kappa \lambda} \Far_{\kappa \eta} \Far_{\lambda}^{\ \eta}| 
		& \lesssim |h|_{\mathcal{L} \mathcal{L}}|\Far|^2
			\ + \ |h||\Far|\big(|\Far|_{\mathcal{L} \mathcal{N}} + |\Far|_{\mathcal{T} \mathcal{T}}\big).
			\label{E:DivergenceofJThirdhdotFarsquaredAlgebraicNullFormEstimate}
	\end{align}
	\end{subequations}

\end{lemma}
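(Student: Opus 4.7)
The plan is to prove all six inequalities \eqref{E:DivergenceofJFirstNablahdotFarsquaredAlgebraicNullFormEstimate}--\eqref{E:DivergenceofJThirdhdotFarsquaredAlgebraicNullFormEstimate} by a systematic Minkowskian null frame decomposition, in the spirit of the arguments already carried out in Lemmas \ref{L:starndardnullforms}, \ref{L:AlgebraicTensorialEstimates}, and \ref{L:InhomogeneousWaveEquationEnergyCurrentAlgebraicNullFormEstimates}. For each inequality I would substitute \eqref{E:mdecomp}--\eqref{E:minversedecomp} into every occurrence of $m_{\mu\nu}$ or $(m^{-1})^{\mu\nu}$, including those implicit in the raising of indices (so e.g.\ $\Far^{\mu}{}_{\kappa} = (m^{-1})^{\mu \mu'} \Far_{\mu' \kappa}$ and $h^{\mu\kappa} = (m^{-1})^{\mu \mu'} (m^{-1})^{\kappa \kappa'} h_{\mu' \kappa'}$), and I would decompose the free index $0$ or $t$ via $\partial_t = \tfrac{1}{2}(L+\uL)$ whenever it appears. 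Each left-hand side then becomes a finite linear combination of pure frame-component products of the form (component of $h$ or $\nabla h$) $\times$ (component of $\Far$) $\times$ (component of $\Far$), to which the triangle inequality is applied.

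The central structural fact driving the estimates is that the antisymmetry of $\Far$ gives $\Far_{NN} = 0$ for every $N \in \mathcal{N}$, in particular $\Far_{\uL\uL} = \Far_{LL} = 0$. Consequently, the worst possible combination---a $h_{\uL\uL}$ slot paired with two dangerous $\ualpha$-type components---can never arise: whenever the contraction pattern forces an $\uL^\mu$ into the $h$-slot (producing the $|h|_{\mathcal{L}\mathcal{L}}$-type component $h_{LL}$ after using $(m^{-1})^{\mu\mu'} \uL_{\mu'} = -\tfrac12 L^\mu$-type reshuffling), it simultaneously forces an $L$ into one of the $\Far$ slots, which places that $\Far$ factor inside $|\Far|_{\mathcal{L}\mathcal{N}}$ or $|\Far|_{\mathcal{T}\mathcal{T}}$ rather than in the $\ualpha$ component.

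The step-by-step plan is to first carry out the expansion in detail for \eqref{E:DivergenceofJFirstNablahdotFarsquaredAlgebraicNullFormEstimate} and classify the surviving terms into three buckets matching the three terms on its right-hand side: (i) when the $\mu$ index of $\nabla_\mu$ lies in $\mathcal{T}$, one obtains $|\conenabla h|\,|\Far|^2$; (ii) when $\mu = \uL$, the pairing with $(m^{-1})^{\mu \alpha}$ forces $\alpha = L$ and the $\kappa$ contraction then forces one $\Far$ index to be $L$, yielding either $|\nabla h|_{\mathcal{L}\mathcal{L}}\,|\Far|^2$ or $|\nabla h|\,|\Far|\big(|\Far|_{\mathcal{L}\mathcal{N}} + |\Far|_{\mathcal{T}\mathcal{T}}\big)$; (iii) all remaining frame configurations fall into one of these same three buckets after the $\Far_{\uL\uL} = \Far_{LL} = 0$ cancellations. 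The next two inequalities are handled analogously. For the final three inequalities, which contract against an explicit $L_\mu$ or bare $h^{\kappa\lambda}$ rather than $\nabla_\mu$ or $\nabla_t h^{\kappa\lambda}$, the $\mu$ index is automatically restricted to the $\mathcal{L}$-direction, so the $|\conenabla h|$ bucket is absent; the remaining two buckets are produced by the same antisymmetry argument applied to the $\Far \Far$ product.

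The main obstacle is purely combinatorial book-keeping of the $O(1)$-many index patterns while discarding the ones that vanish by antisymmetry of $\Far$. There is no substantive analytic content beyond the pointwise triangle inequality and the null-structural observations above; the challenge is simply to verify that in every surviving configuration, an $\ualpha$ factor from $\Far$ is never simultaneously contracted with another $\ualpha$ factor against an $h_{LL}$-type slot, so that each term can indeed be absorbed into one of the stated buckets on the right-hand side.
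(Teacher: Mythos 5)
Your approach matches the paper's: the stated proof is simply that the inequalities "are easy to check using \eqref{E:mdecomp}," i.e., exactly the null-frame decomposition of $(m^{-1})^{\mu\nu}$ combined with the antisymmetry $\Far_{LL}=\Far_{\uL\uL}=0$ that you carry out. Your bucket classification is the correct systematic way to organize this check, though be careful that the decomposition of the $\kappa$ slot in \eqref{E:DivergenceofJFirstNablahdotFarsquaredAlgebraicNullFormEstimate} also produces an $e_A$ branch, where the $\Far_{A\zeta}\Far_0^{\ \zeta}$ contraction must itself be decomposed to see that the dangerous $\ualpha\cdot\ualpha$ pairing never appears.
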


\begin{proof}
	Inequalities \eqref{E:DivergenceofJFirstNablahdotFarsquaredAlgebraicNullFormEstimate} -
	\eqref{E:DivergenceofJThirdhdotFarsquaredAlgebraicNullFormEstimate} are easy to check
	using \eqref{E:mdecomp}.
\end{proof}

\section{Weighted Energy Estimates for the Electromagnetic Equations of Variation and for Systems of Nonlinear Wave Equations in a Curved Spacetime} \label{S:WeightedEnergy}

In this section, we prove weighted energy estimates for the electromagnetic equations of variation

\begin{subequations} 
	\begin{align}
		\nabla_{\lambda} \dot{\Far}_{\mu \nu} + \nabla_{\mu} \dot{\Far}_{\nu \lambda} + \nabla_{\nu} \dot{\Far}_{\lambda \mu}
			& = \dot{\mathfrak{\Far}}_{\lambda \mu \nu},&& (\lambda, \mu, \nu = 0,1,2,3), \label{E:EOVdFis0again} \\
			N^{\# \mu \nu \kappa \lambda} \nabla_{\mu} \dot{\Far}_{\kappa \lambda} & = \dot{\mathfrak{F}}^{\nu},
			&& (\nu = 0,1,2,3). \label{E:EOVdMis0again} 
	\end{align}
\end{subequations}
Our estimates complement the weighted energy estimates proved in \cite{hLiR2010} for the inhomogeneous wave equation 

\begin{align}
	\widetilde{\Square}_g \phi = \mathfrak{I},
\end{align}
and for tensorial systems of inhomogeneous wave equations with principal part $\widetilde{\Square}_g:$

\begin{align}
	\widetilde{\Square}_g \phi_{\mu \nu} = \mathfrak{I}_{\mu \nu}, && (\mu, \nu = 0,1,2,3).
\end{align}

\noindent \hrulefill
\ \\

\subsection{The energy estimate weight function \texorpdfstring{$w(q)$}{}}

As in \cite{hLiR2010}, our energy estimates will involve the weight function $w(q)$ defined by

\begin{align} \label{E:weight}
	w = w(q) = \left \lbrace
		\begin{array}{lr}
    	1 \ + \ (1 + |q|)^{1 + 2 \upgamma}, &  \mbox{if} \ q > 0, \\
      1 \ + \ (1 + |q|)^{-2 \upmu}, & \mbox{if} \ q < 0,
    \end{array}
  \right.
\end{align}
where the constants $\upgamma$ and $\upmu$ are subject to the restrictions stated in Section \ref{SS:FixedConstants}.

Observe that the following inequalities follow from the definition \eqref{E:weight}:  
  
\begin{align} \label{E:weightinequality}
  	w' \leq 4(1 + |q|)^{-1} w \leq 16 \upgamma^{-1} (1 + q_-)^{2 \upmu} w',
\end{align}
where $q_- = 0$ if $q \geq 0$ and $q_- = |q|$ if $q < 0.$
	
\subsection{Weighted energy estimates}	

We begin by deriving weighted energy estimates for the electromagnetic equations of variation.

\begin{lemma} \label{L:weightedenergyFar} \textbf{(Weighted energy estimates for $\dot{\Far}$)}
	Assume that $\dot{\Far}_{\mu \nu} $ is a solution to the equations of variation \eqref{E:EOVdFis0} - \eqref{E:EOVdMis0}
	corresponding to the background $(h_{\mu \nu}, \Far_{\mu \nu}),$ 
	where $h_{\mu \nu} \eqdef g_{\mu \nu} - m_{\mu \nu}.$ Let 
	$\dot{\alpha} \eqdef \alpha[\dot{\Far}],$ $\dot{\rho} \eqdef \rho[\dot{\Far}],$ and 
	$\dot{\sigma} \eqdef \sigma[\dot{\Far}]$ denote the ``favorable'' null components of $\dot{\Far}$ as
	defined in Definition \ref{D:null}. Assume that $|h| + |\Far| \leq \varepsilon.$ Then if $\varepsilon$ is sufficiently small, 
	and $t_1 \leq t_2,$ the following integral inequality holds:
	
	\begin{align}  \label{E:FirstweightedenergyFar}
		\int_{\Sigma_{t_2}} |\dot{\Far}|^2 w(q) \,d^3x 
		\ + \ \int_{t_1}^{t_2} \int_{\Sigma_{\tau}} & \big(\dot{\alpha}^2 + \dot{\rho}^2 + \dot{\sigma}^2 \big) 
			w'(q) \,d^3x \, d \tau \\
		& \lesssim \int_{\Sigma_{t_1}} |\dot{\Far}|^2 w(q) \,d^3x \notag  \\ 						
		& \ \ + \ \int_{t_1}^{t_2} \int_{\Sigma_{\tau}} \Big| \lbrace \dot{\Far}_{0 \eta} \dot{\mathfrak{F}}^{\eta} 
			- (\nabla_{\mu} h^{\mu \kappa}) \dot{\Far}_{\kappa \zeta} \dot{\Far}_{0}^{\ \zeta}
			- (\nabla_{\mu} h^{\kappa \lambda}) \dot{\Far}_{\ \kappa}^{\mu} \dot{\Far}_{0 \lambda}
			+ \frac{1}{2} (\nabla_{t} h^{\kappa \lambda}) \dot{\Far}_{\kappa \eta}\dot{\Far}_{\lambda}^{\ \eta} \Big| w(q) \,d^3x 
			\, d \tau \notag \\
	& \ \ + \ \int_{t_1}^{t_2} \int_{\Sigma_{\tau}} 
			\Big|L_{\mu}h^{\mu \kappa} \dot{\Far}_{\kappa \zeta} \dot{\Far}_{0}^{\ \zeta} 
			+ L_{\mu} h^{\kappa \lambda} \dot{\Far}_{\ \kappa}^{\mu} \dot{\Far}_{0 \lambda} 
			+ \frac{1}{2} h^{\kappa \lambda} \dot{\Far}_{\kappa \eta} \dot{\Far}_{\lambda}^{\ \eta} \Big| w'(q) \,d^3x \, d \tau \		
			\notag  \\
	& \ \ + \ \int_{t_1}^{t_2} \int_{\Sigma_{\tau}} 
		\Big| (\nabla_{\mu}N_{\triangle}^{\# \mu \zeta \kappa \lambda}) \dot{\Far}_{\kappa \lambda} 
			\dot{\Far}_{0 \zeta} - \frac{1}{4} (\nabla_{t} N_{\triangle}^{\#\zeta \eta \kappa \lambda}) \dot{\Far}_{\zeta \eta} 
			\dot{\Far}_{\kappa \lambda} \Big| 	w(q) \,d^3x \, d \tau \notag \\
	& \ \ + \ \int_{t_1}^{t_2} \int_{\Sigma_{\tau}} 
		\Big|L_{\mu} N_{\triangle}^{\# \mu \zeta \kappa \lambda} \dot{\Far}_{\kappa \lambda} \dot{\Far}_{0 \zeta}
		+ \frac{1}{4} N_{\triangle}^{\#\zeta \eta \kappa \lambda} \dot{\Far}_{\zeta \eta} 
		\dot{\Far}_{\kappa \lambda} \Big| w'(q) \,d^3x \, d \tau. \notag
	\end{align}
\end{lemma}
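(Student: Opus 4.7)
The plan is to apply the divergence theorem to the energy current $\dot{J}^\mu = \dot{J}^\mu_{(h,\Far)}[\dot{\Far}]$ from Definition \ref{D:Jdotdef} over the slab $[t_1,t_2]\times\mathbb{R}^3$, and then rearrange terms using the two key properties of $\dot{J}^\mu$ established in Lemma \ref{L:currentproperties}. Specifically, integrating the Minkowskian divergence identity $\int_{t_1}^{t_2}\int_{\Sigma_\tau}\nabla_\mu\dot{J}^\mu\,d^3x\,d\tau = \int_{\Sigma_{t_2}}\dot{J}^0\,d^3x - \int_{\Sigma_{t_1}}\dot{J}^0\,d^3x$ (valid under the usual assumption that $\dot{\Far}$ decays sufficiently rapidly at spatial infinity so that boundary contributions at $|x|=\infty$ vanish), and using formula \eqref{E:currentdivergence} for $\nabla_\mu\dot{J}^\mu$, will produce an identity whose left-hand side contains $\int_{\Sigma_{t_2}}\dot{J}^0\,d^3x$ and whose right-hand side contains $\int_{\Sigma_{t_1}}\dot{J}^0\,d^3x$, the positive spacetime integral $\frac{1}{2}\int\int w'(q)(\dot{\alpha}^2+\dot{\rho}^2+\dot{\sigma}^2)$ (after bringing it to the left), and the various error terms appearing on the right-hand side of \eqref{E:FirstweightedenergyFar}.

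The first key step is the coercivity of $\dot{J}^0$. By \eqref{E:dotJ0estimate} we have
\begin{align*}
  \dot{J}^0 = \tfrac{1}{2}|\dot{\Far}|^2 w(q) + \bigl\{O^\infty(|h|;\Far) + O^{\dParameter}(|(h,\Far)|^2)\bigr\}|\dot{\Far}|^2 w(q).
\end{align*}
Under the smallness hypothesis $|h|+|\Far|\leq\varepsilon$, the error factor is bounded by $C\varepsilon$, so for $\varepsilon$ sufficiently small we obtain $\dot{J}^0 \approx |\dot{\Far}|^2 w(q)$ pointwise, which yields both the left-hand side bound $\int_{\Sigma_{t_2}}|\dot{\Far}|^2 w(q)\,d^3x \lesssim \int_{\Sigma_{t_2}}\dot{J}^0\,d^3x$ and the control of the initial boundary term $\int_{\Sigma_{t_1}}\dot{J}^0\,d^3x \lesssim \int_{\Sigma_{t_1}}|\dot{\Far}|^2 w(q)\,d^3x$.

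The second key step is to use the divergence formula \eqref{E:currentdivergence}, which applies here since the inhomogeneous term $\dot{\mathfrak{F}}_{\lambda\mu\nu}$ of \eqref{E:EOVdFis0again} vanishes for the variations that arise in our setting (compare \eqref{E:EOVInhomogeneousTermsJvanish}). The distinguished term $-\frac{1}{2}w'(q)(\dot{\alpha}^2+\dot{\rho}^2+\dot{\sigma}^2)$ is negative (since $w'(q)\geq 0$, which is where the choice $\upmu>0$ in the weight enters via Remark \ref{R:Roleofmu}), so after moving its integral to the left-hand side it contributes with the favorable sign and becomes the second term on the left of \eqref{E:FirstweightedenergyFar}. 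The remaining four groups of terms on the right-hand side of \eqref{E:currentdivergence}, when integrated against $d^3x\,d\tau$ and bounded in absolute value, match line-by-line the last four integrals on the right-hand side of \eqref{E:FirstweightedenergyFar}.

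The main obstacle, though ultimately minor, is the bookkeeping: one must verify that the error contributions to $\dot{J}^0$ identified in \eqref{E:dotJ0estimate} really can be absorbed into the left-hand side under the smallness assumption (as opposed to generating additional spacetime error integrals), and one must carefully track that the signs line up so that the favorable term $\frac{1}{2}w'(q)(\dot{\alpha}^2+\dot{\rho}^2+\dot{\sigma}^2)$ truly appears on the left. No further structural identities are needed: once the divergence theorem is combined with \eqref{E:dotJ0estimate}--\eqref{E:currentdivergence}, the statement \eqref{E:FirstweightedenergyFar} follows by triangle inequality on each error integrand. The deeper null-structure estimates of Lemma \ref{L:DivergenceofJAlgebraicNullFormEstimates}, which would be needed to actually extract smallness from the right-hand side error integrals, are not required at this stage and will be deployed later when applying \eqref{E:FirstweightedenergyFar} within the bootstrap argument.
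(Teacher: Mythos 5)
Your proposal is correct and follows exactly the paper's argument: apply the divergence theorem to the current $\dot{J}^\mu_{(h,\Far)}$, use the pointwise coercivity $\dot{J}^0 \approx |\dot{\Far}|^2 w(q)$ from \eqref{E:dotJ0estimate} (which the paper records as $\tfrac14 |\dot{\Far}|^2 w(q) \leq \dot{J}^0 \leq |\dot{\Far}|^2 w(q)$), invoke the divergence formula \eqref{E:currentdivergence} with $\dot{\mathfrak{F}}_{\lambda\mu\nu}\equiv 0$, move the favorable $\tfrac12 w'(q)(\dot\alpha^2+\dot\rho^2+\dot\sigma^2)$ integral to the left, and bound each remaining integrand in absolute value. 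Your observation that the coercivity is purely pointwise at the boundary slices $\Sigma_{t_1},\Sigma_{t_2}$ and hence generates no extra spacetime error integrals is correct and dispels the only concern you raised.
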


\begin{proof}
	
	It follows from \eqref{E:dotJ0estimate} that if $\varepsilon$ is sufficiently small, we have that
	
	\begin{align} \label{E:J0positivity}
		\frac{1}{4} |\dot{\Far}|^2 w(q) \leq \dot{J}_{(h,\Far)}^0 \leq |\dot{\Far}|^2 w(q).
	\end{align}
	Using \eqref{E:currentdivergence} and the divergence theorem, it follows that

\begin{align}
	\int_{\Sigma_{t_2}} & \dot{J}_{(h,\Far)}^0 \, d^3 x \ + \ \frac{1}{2} \int_{t_1}^{t_2} \int_{\Sigma_{\tau}} 
		w'(q) (\dot{\alpha}^2 + \dot{\rho}^2 + \dot{\sigma}^2) \, d^3 x \, d \tau \\
	& = \int_{\Sigma_{t_1}} \dot{J}_{(h,\Far)}^0 \, d^3 x \notag \\
	& \ \ - \ \int_{t_1}^{t_2} \int_{\Sigma_{\tau}} w(q) \Big \lbrace \dot{\Far}_{0 \eta} \dot{\mathfrak{F}}^{\eta} 
			- (\nabla_{\mu} h^{\mu \kappa}) \dot{\Far}_{\kappa \zeta} \dot{\Far}_{0}^{\ \zeta}
			- (\nabla_{\mu} h^{\kappa \lambda}) \dot{\Far}_{\ \kappa}^{\mu} \dot{\Far}_{0 \lambda}
			+ \frac{1}{2} (\nabla_{t} h^{\kappa \lambda}) \dot{\Far}_{\kappa \eta}\dot{\Far}_{\lambda}^{\ \eta} \Big \rbrace 
		\, d^3 x \, d \tau \notag \\
	& \ \ - \ \int_{t_1}^{t_2} \int_{\Sigma_{\tau}} 
		w'(q) \Big\lbrace - L_{\mu}h^{\mu \kappa} \dot{\Far}_{\kappa \zeta} \dot{\Far}_{0}^{\ \zeta} 
			- L_{\mu} h^{\kappa \lambda} \dot{\Far}_{\ \kappa}^{\mu} \dot{\Far}_{0 \lambda} 
			- \frac{1}{2} h^{\kappa \lambda} \dot{\Far}_{\kappa \eta} \dot{\Far}_{\lambda}^{\ \eta} \Big\rbrace \, d^3 x \, d \tau 
			\notag \\
	& \ \ - \ \int_{t_1}^{t_2} \int_{\Sigma_{\tau}} 
		w(q) \Big\lbrace (\nabla_{\mu}N_{\triangle}^{\# \mu \zeta \kappa \lambda}) \dot{\Far}_{\kappa \lambda} 
			\dot{\Far}_{0 \zeta} - \frac{1}{4} (\nabla_{t} N_{\triangle}^{\#\zeta \eta \kappa \lambda}) \dot{\Far}_{\zeta \eta} 
			\dot{\Far}_{\kappa \lambda} \Big\rbrace \, d^3 x \, d \tau \notag \\
	& \ \ - \ \int_{t_1}^{t_2} \int_{\Sigma_{\tau}} 
		w'(q) \Big\lbrace L_{\mu} N_{\triangle}^{\# \mu \zeta \kappa \lambda} \dot{\Far}_{\kappa \lambda} \dot{\Far}_{0 \zeta}
		+ \frac{1}{4} N_{\triangle}^{\#\zeta \eta \kappa \lambda} \dot{\Far}_{\zeta \eta} 
		\dot{\Far}_{\kappa \lambda} \Big\rbrace \, d^3 x \, d \tau, \notag
\end{align}	
which, with the help of \eqref{E:J0positivity}, implies \eqref{E:FirstweightedenergyFar}.

\end{proof}

We now recall the analogous lemma proved in \cite{hLiR2010} for solutions to the inhomogeneous wave equation in curved spacetime.

\begin{lemma} \cite[Lemma 6.1]{hLiR2010} \label{L:weightedenergy} \textbf{(Weighted energy estimates for a scalar wave 
	equation)}
	Assume that the scalar-valued function $\phi$ is a solution to the equation $\widetilde{\Square}_g \phi = \mathfrak{I},$ and 
	let $H^{\mu \nu} \eqdef (g^{-1})^{\mu \nu} - (m^{-1})^{\mu \nu}.$ Assume that
	the metric $g_{\mu \nu}$ is such that $|H| \leq \frac{1}{2}.$ Then
	
	\begin{align}  \label{E:Firstweightedenergyscalar} 
		\int_{\Sigma_{t_2}} & |\nabla \phi|^2 w(q) \,d^3x \ + \ 2 \int_{t_1}^{t_2} \int_{\Sigma_{\tau}} |\conenabla \phi|^2 
			w'(q) \,d^3x \, d \tau \\
		& \leq 4 \int_{\Sigma_{t_1}} |\nabla \phi|^2 w(q) \,d^3x 
			\ + \ 4 \int_{t_1}^{t_2} \int_{\Sigma_{\tau}} \Big| \mathfrak{I}_{\kappa} \nabla_t \phi^{\kappa} 
			+ (\nabla_{\nu} H^{\nu \lambda})(\nabla_{\lambda} \phi)(\nabla_t \phi) 
			- \frac{1}{2} (\nabla_t H^{\lambda \kappa})(\nabla_{\lambda} \phi)(\nabla_{\kappa} \phi)
			\Big| w(q) \,d^3x \, d \tau \notag \\
	& \ \ + \ 4 \int_{t_1}^{t_2} \int_{\Sigma_{\tau}} \Big| \underbrace{(\omega_j H^{j \lambda} 
		-  H^{0 \lambda})}_{L_{\kappa} H^{\kappa \lambda}} (\nabla_t \phi)(\nabla_{\lambda} \phi)  
		+ \frac{1}{2} H^{\lambda \kappa} (\nabla_{\lambda} \phi)(\nabla_{\kappa} \phi) \Big| w'(q) \,d^3x \, d \tau. \notag 
	\end{align}
\end{lemma}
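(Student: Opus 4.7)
My plan is to prove Lemma \ref{L:weightedenergy} by the vectorfield multiplier method, using the ``multiplier'' $X^{\nu} \eqdef w(q) \delta_0^{\nu}$ (exactly analogous to the one used in the proof of Lemma \ref{L:weightedenergyFar} above). Concretely, I would multiply both sides of the equation $\widetilde{\Square}_g \phi = \mathfrak{I}$ by $w(q) \nabla_t \phi$, decompose $\widetilde{\Square}_g = \Square_m + H^{\kappa \lambda} \nabla_{\kappa} \nabla_{\lambda}$, and rearrange each piece into the form $\nabla_{\mu}(\cdots) + (\mbox{bulk terms})$ so that the divergence theorem applied on the slab $[t_1,t_2] \times \mathbb{R}^3$ yields an identity relating the flux integrals on $\Sigma_{t_1}$, $\Sigma_{t_2}$ to spacetime integrals.

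For the Minkowskian part, the standard computation $(\nabla_t \phi) \Square_m \phi = -\frac{1}{2} \partial_t \big[(\nabla_t\phi)^2 + \sum_j (\nabla_j \phi)^2\big] + \partial_j\big[(\nabla_t\phi)(\nabla_j\phi)\big]$ combined with $\nabla_{\mu} w(q) = w'(q) L_{\mu}$ and $\partial_t q = -1$, $\partial_j q = \omega^j$ produces, after integration by parts, a flux term $\int_{\Sigma_{\tau}} |\nabla \phi|^2 w(q) \, d^3x$ plus a crucial bulk term $\int w'(q)\big\lbrace \frac{1}{2}[(\nabla_t\phi)^2 + |\unabla\phi|^2] - \omega^j(\nabla_t\phi)(\nabla_j\phi) \big\rbrace d^3x \, d\tau$. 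A short null-frame calculation identifies this last integrand (up to a positive constant) with $|\conenabla \phi|^2 w'(q)$, because $\nabla q = L$ is tangent to the outgoing cones $C_q^+$, so the weight only ``sees'' the directions in $\mathcal{T}$.

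For the $H$-part, I would write
\begin{align*}
	w(q)(\nabla_t \phi) H^{\kappa \lambda} \nabla_{\kappa} \nabla_{\lambda} \phi
	&= \nabla_{\kappa}\big[w(q) H^{\kappa \lambda} (\nabla_t \phi)(\nabla_{\lambda} \phi)\big]
		- w'(q) L_{\kappa} H^{\kappa \lambda} (\nabla_t \phi)(\nabla_{\lambda} \phi) \\
	&\ \ - w(q)(\nabla_{\kappa} H^{\kappa \lambda})(\nabla_t \phi)(\nabla_{\lambda} \phi)
		- w(q) H^{\kappa \lambda}(\nabla_{\kappa}\nabla_t \phi)(\nabla_{\lambda} \phi),
\end{align*}
and symmetrize the last term using $H^{\kappa\lambda} = H^{\lambda\kappa}$ to produce $-\tfrac{1}{2}\nabla_t\big[w(q) H^{\kappa\lambda}(\nabla_{\kappa}\phi)(\nabla_{\lambda}\phi)\big]$ plus bulk pieces $\tfrac{1}{2} w(q)(\nabla_t H^{\kappa\lambda})(\nabla_{\kappa}\phi)(\nabla_{\lambda}\phi) + \tfrac{1}{2} w'(q) H^{\kappa\lambda}(\nabla_{\kappa}\phi)(\nabla_{\lambda}\phi)$. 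The smallness assumption $|H|\le\tfrac{1}{2}$ is then used \emph{twice}: first to absorb the $-\tfrac{1}{2}H^{\kappa\lambda}\nabla_{\kappa}\phi\nabla_{\lambda}\phi$ boundary contribution into the positive $|\nabla\phi|^2 w(q)$ flux term (yielding the factor of $4 = 1/(1/2 \cdot 1/2)$ in front of the data term), and second to drop the $H$-corrected bulk term $w'(q)H^{\kappa\lambda}\cdots$ into the error integrals on the right-hand side of \eqref{E:Firstweightedenergyscalar}. The remaining bulk and boundary error terms match exactly the three classes of integrands appearing on the right-hand side of \eqref{E:Firstweightedenergyscalar}.

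The only nontrivial step is the null-frame identification of the Minkowskian $w'(q)$ bulk contribution with (a positive constant times) $|\conenabla\phi|^2$; this is what forces the particular weight structure and drives the entire energy estimate. Since $|H|\le 1/2$ is a pointwise smallness assumption rather than a structural null-condition assumption, all $H$-dependent terms can be placed on the right-hand side without further decomposition. I note that this is exactly the content of \cite[Lemma 6.1]{hLiR2010}, so in the paper one simply cites it; the outline above records how the proof goes.
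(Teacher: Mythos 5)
Your outline is correct and is essentially the same argument as \cite[Lemma 6.1]{hLiR2010}, which the paper cites here without reproving: the vectorfield multiplier $w(q)\nabla_t\phi$, the split $\widetilde{\Square}_g = \Square_m + H^{\kappa\lambda}\nabla_\kappa\nabla_\lambda$, the symmetrization of the $H$-piece, and the absorption of the $H$-boundary contribution using $|H|\le 1/2$ all match the cited proof. One sign slip worth flagging: since $q=r-t$ gives $\partial_t w = -w'$ and $\partial_j w = w'\omega_j$, the Minkowskian bulk term that lands on the left-hand side is
$w'(q)\big\lbrace \tfrac{1}{2}\big[(\nabla_t\phi)^2 + \textstyle\sum_j(\nabla_j\phi)^2\big] + \omega^j(\nabla_t\phi)(\nabla_j\phi)\big\rbrace = \tfrac{1}{2}w'(q)\big[(L\phi)^2 + |\angn\phi|^2\big]$, i.e.\ with a $+$ in front of the cross term; your $-$ would instead produce $\tfrac{1}{2}(\uL\phi)^2 + \tfrac12|\angn\phi|^2$, which involves the bad $\uL$ derivative, but your subsequent identification with $|\conenabla\phi|^2 w'(q)$ makes clear you intended the correct sign.
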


\hfill $\qed$

We now extend the results of the previous lemmas by estimating (under assumptions that are compatible with our global stability theorem) some of the cubic terms on the right-hand sides of \eqref{E:FirstweightedenergyFar} and \eqref{E:Firstweightedenergyscalar}. 

\begin{proposition}  \cite[extension of Proposition 6.2]{hLiR2010} \label{P:weightedenergy}
	\textbf{(Weighted energy estimates for the reduced equations)}
	Let $\phi$ be a solution to $\widetilde{\Square}_g \phi = \mathfrak{I}$ for the metric $g_{\mu \nu},$ and 
	let $H^{\mu \nu} \eqdef (g^{-1})^{\mu \nu} - (m^{-1})^{\mu \nu}.$ Let $\upgamma$ and $\upmu$ be positive constants
	satisfying the restrictions described in Section \ref{SS:FixedConstants}. Assume that the following pointwise estimates
	hold for $(t,x) \in [0,T) \times \mathbb{R}^3:$
	
	\begin{subequations}
	\begin{align}
		(1 + |q|)^{-1}|H|_{LL} \ + \ |\nabla H|_{LL} \ + \ |\conenabla H| & \leq C \varepsilon(1 + t + |q|)^{-1}, \\
		(1 + |q|)^{-1}|H| \ + \ |\nabla H| & \leq C \varepsilon(1 + t + |q|)^{-1/2} (1 + |q|)^{-1/2} (1 + q_-)^{- \upmu},
	\end{align}
	\end{subequations}
	where $q_{-} = 0$ if $q \geq 0$ and $q_{-} = |q|$ if $q < 0.$ Then there exists a constant $C_1 > 0$ such that if
	$0 < \varepsilon \leq \frac{\upgamma}{C_1},$ then the following integral inequality holds for $t \in [0,T):$
	
	\begin{align} \label{E:Secondweightedenergyscalar}
		\int_{\Sigma_{t}} |\nabla \phi|^2 w(q) \,d^3x 
			& \ + \ \int_{0}^{t} \int_{\Sigma_{\tau}} |\conenabla \phi|^2 w'(q) \,d^3x \, d \tau \\
		& \leq 8 \int_{\Sigma_{0}} |\nabla \phi|^2 w(q) \,d^3x 
			\ + \ 16 \int_{0}^{t} \int_{\Sigma_{\tau}} \Big(\frac{C \varepsilon |\nabla \phi|^2}{1 + \tau} + |\mathfrak{I}|
			|\nabla \phi| \Big) w(q) \,d^3x \, d \tau. \notag
	\end{align}

	Furthermore, let $\dot{\Far}_{\mu \nu}$ be a solution to the electromagnetic equations of variation \eqref{E:EOVdFis0} - 
	\eqref{E:EOVdMis0} corresponding to the background $(h_{\mu \nu}, \Far_{\mu \nu}),$ where $h_{\mu \nu} \eqdef g_{\mu \nu} - 
	m_{\mu \nu}.$ Assume that the following pointwise estimates hold for $(t,x) \in [0,T) \times \mathbb{R}^3:$
	
	\begin{subequations}
	\begin{align}
		(1 + |q|)^{-1}|h|_{\mathcal{L} \mathcal{L}} 
		\ + \ |\nabla h|_{\mathcal{L} \mathcal{L}} \ + \ |\conenabla h| \ + \ |\Far| & \leq C \varepsilon(1 + t + |q|)^{-1}, \\
		(1 + |q|)^{-1}|h| \ + \ |\nabla h| \ + \ |\nabla \Far| 
		& \leq C \varepsilon(1 + t + |q|)^{-1/2} (1 + |q|)^{-1/2} (1 + q_-)^{- \upmu},
	\end{align}
	\end{subequations}
	where $q_{-} = 0$ if $q \geq 0$ and $q_{-} = |q|$ if $q < 0.$  Then there exists a constant $C_1 > 0$ such that if
	$0 < \varepsilon \leq \frac{\upgamma}{C_1},$ then the following integral inequality holds for $t \in [0,T):$
	
	\begin{align}  \label{E:SecondweightedenergyFar}
	\int_{\Sigma_{t}} & |\dot{\Far}|^2 w(q) \,d^3x 
		\ + \ \int_{0}^{t} \int_{\Sigma_{\tau}} 
		\big(|\dot{\Far}|_{\mathcal{L} \mathcal{N}}^2 + |\dot{\Far}|_{\mathcal{T} \mathcal{T}}^2 \big) w'(q)
		\,d^3x \, d \tau \\  
	& \lesssim  \int_{\Sigma_{0}} |\dot{\Far}|^2 w(q) \,d^3x 
		\ + \ \varepsilon \int_{0}^{t} \int_{\Sigma_{\tau}} \frac{|\dot{\Far}|^2}{1 + \tau} w(q) \,d^3x \, d \tau
		\ + \ \int_{0}^{t} \int_{\Sigma_{\tau}} |\dot{\Far}_{0 \kappa}\dot{\mathfrak{F}}^{\kappa}| w(q) \,d^3x \, d \tau. \notag 
\end{align}
\end{proposition}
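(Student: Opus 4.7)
The plan is to start from the raw weighted energy inequalities \eqref{E:Firstweightedenergyscalar} and \eqref{E:FirstweightedenergyFar} furnished by Lemmas \ref{L:weightedenergy} and \ref{L:weightedenergyFar}, and to bound every cubic error integrand on their right-hand sides using the pointwise hypotheses on $H$, $h$, $\nabla h$, $\Far$, and $\nabla \Far$, together with the null-structure estimates of Lemma \ref{L:InhomogeneousWaveEquationEnergyCurrentAlgebraicNullFormEstimates} (scalar case) and Lemma \ref{L:DivergenceofJAlgebraicNullFormEstimates} (Faraday case), and the weight inequality \eqref{E:weightinequality}.

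For the scalar estimate \eqref{E:Secondweightedenergyscalar}, I would apply Lemma \ref{L:InhomogeneousWaveEquationEnergyCurrentAlgebraicNullFormEstimates} to each cubic integrand appearing on the right of \eqref{E:Firstweightedenergyscalar}. A typical contribution such as $|(\nabla_\nu H^{\nu\lambda})(\nabla_\lambda\phi)(\nabla_t \phi)|\,w(q)$ splits into a piece controlled by $|\nabla H|_{\mathcal{L}\mathcal{L}}|\nabla\phi|^2$, a piece controlled by $|\conenabla H||\nabla\phi|^2$, and a mixed piece $|\nabla H||\nabla\phi||\conenabla\phi|$. The first two are dominated by $C\varepsilon(1+\tau+|q|)^{-1}|\nabla\phi|^2 \le C\varepsilon(1+\tau)^{-1}|\nabla\phi|^2$ via the $\mathcal{L}\mathcal{L}/\conenabla$ decay hypothesis. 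The mixed piece is the delicate one: Cauchy--Schwarz combined with the weight splitting $w\lesssim (1+q_-)^{2\upmu}w'$ from \eqref{E:weightinequality} converts it into $\delta|\conenabla\phi|^2 w'(q) + C_\delta|\nabla H|^2(1+q_-)^{2\upmu}|\nabla\phi|^2 w(q)$; the pointwise bound on $|\nabla H|$ supplies the extra $(1+q_-)^{-2\upmu}(1+|q|)^{-1}$, yielding a term of the form $\tfrac{C\varepsilon}{1+\tau}|\nabla\phi|^2 w(q)$. The $\delta|\conenabla\phi|^2 w'$ piece is then absorbed by the positive spacetime integral on the left-hand side, provided $\varepsilon$ is taken small compared to $\upgamma$. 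A parallel treatment of the $w'(q)$ integrals in \eqref{E:Firstweightedenergyscalar}, which involve only $H$ and benefit from either the $\mathcal{L}\mathcal{L}$-hypothesis or the overall factor of $|H|$, closes the argument and yields the numerical constants $8$ and $16$.

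The Faraday estimate \eqref{E:SecondweightedenergyFar} proceeds identically, now using Lemma \ref{L:DivergenceofJAlgebraicNullFormEstimates} to split the six cubic terms on the right of \eqref{E:FirstweightedenergyFar}. They fall into three classes: (i) terms of the form $|\nabla h|_{\mathcal{L}\mathcal{L}}|\dot\Far|^2$ or $|\conenabla h||\dot\Far|^2$, which the pointwise hypothesis converts directly into the Gronwall term $\tfrac{C\varepsilon}{1+\tau}|\dot\Far|^2 w(q)$; (ii) mixed terms $|\nabla h||\dot\Far|\bigl(|\dot\Far|_{\mathcal{L}\mathcal{N}} + |\dot\Far|_{\mathcal{T}\mathcal{T}}\bigr)w(q)$, split by Cauchy--Schwarz combined with \eqref{E:weightinequality} into a small multiple of $\bigl(|\dot\Far|_{\mathcal{L}\mathcal{N}}^2 + |\dot\Far|_{\mathcal{T}\mathcal{T}}^2\bigr)w'(q)$---absorbed into the left-hand side---plus a Gronwall piece; and (iii) terms weighted by $w'(q)$ involving only $h$ (not $\nabla h$), whose $\mathcal{L}\mathcal{L}$ parts absorb directly by smallness of $(1+|q|)^{-1}|h|_{\mathcal{L}\mathcal{L}}$ and whose $|h||\dot\Far||\dot\Far|_{\mathcal{L}\mathcal{N}}$ residues Cauchy--Schwarz into the left-hand side using smallness of $|h|$. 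The $N_\triangle$ terms, being cubic in $(h,\Far)$ by \eqref{E:NtriangleSmallAlgebraic}, are of strictly higher order and are handled by the same scheme. The inhomogeneous term $\dot\Far_{0\kappa}\dot{\mathfrak F}^{\kappa}w(q)$ is retained as is.

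The main obstacle is the absorption of the class (ii) mixed terms: it requires a careful balance between the $\upgamma$-sized constant on the positive $w'$ integral coming from \eqref{E:weightinequality} and the $\varepsilon$ contributed by the pointwise bound on $|\nabla h|$ (respectively $|\nabla H|$). The hypothesis $\varepsilon \le \upgamma/C_1$ is exactly what makes this Cauchy--Schwarz absorption succeed; the two decay hypotheses---one on the $\mathcal{L}\mathcal{L}$ components and one on the full tensor with the extra factor $(1+t+|q|)^{-1/2}(1+|q|)^{-1/2}(1+q_-)^{-\upmu}$---are tailored precisely so that after the null decomposition every residue lands either in the Gronwall-amenable form $\tfrac{C\varepsilon}{1+\tau}|\cdot|^2 w(q)$ or in a coefficient-small copy of the left-hand side spacetime integral, with the inhomogeneity $|\mathfrak I|\,|\nabla\phi|\,w$ (respectively $|\dot\Far_{0\kappa}\dot{\mathfrak F}^{\kappa}|\,w$) surviving untouched on the right.
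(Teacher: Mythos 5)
Your proposal is correct and follows essentially the same route as the paper: start from the raw estimates \eqref{E:Firstweightedenergyscalar} and \eqref{E:FirstweightedenergyFar}, decompose the cubic error terms via Lemmas \ref{L:InhomogeneousWaveEquationEnergyCurrentAlgebraicNullFormEstimates} and \ref{L:DivergenceofJAlgebraicNullFormEstimates}, apply the pointwise hypotheses, use a Young/arithmetic-geometric inequality together with \eqref{E:weightinequality}, and absorb the resulting small multiple of $\int\!\!\int |\conenabla\phi|^2 w'$ (resp.\ $\int\!\!\int(|\dot\Far|_{\mathcal{L}\mathcal{N}}^2 + |\dot\Far|_{\mathcal{T}\mathcal{T}}^2)w'$) into the left-hand side under the hypothesis $\varepsilon\le\upgamma/C_1$. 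One minor inaccuracy: the weight relation you quote, $w\lesssim(1+q_-)^{2\upmu}w'$, is not what \eqref{E:weightinequality} gives — the correct relation is $(1+|q|)^{-1}w\le 4\upgamma^{-1}(1+q_-)^{2\upmu}w'$ — but the missing factor of $1+|q|$ is exactly what the squared pointwise bound on $|\nabla H|$ (resp.\ $|\nabla h|$) supplies, so your bookkeeping still closes.
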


\begin{remark}
	Proposition \ref{P:weightedenergy} will not be used until the proof of Theorem \ref{T:ImprovedDecay}, where it plays
	a key role; see Section \ref{SS:MainArgument}. We also remark that the hypotheses of the proposition are implied by the 
	hypotheses of the theorem; see Section \ref{SS:FixedConstants} and Remark \ref{R:ImprovedDecay}.
\end{remark}

\begin{proof}
	Inequality \eqref{E:Secondweightedenergyscalar} was proved as Proposition 6.2 of \cite{hLiR2010}.	Their proof was
	based on using Lemma \ref{L:InhomogeneousWaveEquationEnergyCurrentAlgebraicNullFormEstimates} to estimate the inhomogeneous 
	terms on the right-hand side of \eqref{E:Firstweightedenergyscalar}. Rather than reproving this inequality, we only give the 
	proof of \eqref{E:SecondweightedenergyFar}, which is based on \eqref{E:FirstweightedenergyFar} and uses related ideas.
	
	We commence with the proof of \eqref{E:SecondweightedenergyFar}, 
	our goal being to deduce suitable pointwise bounds for some of the terms appearing on the right-hand side of 
	\eqref{E:FirstweightedenergyFar}. For the cubic terms, we use Lemma \ref{L:DivergenceofJAlgebraicNullFormEstimates}, the 
	hypotheses of the proposition, and the inequality $|ab| \lesssim a^2 + b^2$ to conclude that
	
	\begin{align} \label{E:firsttensorialdotFLinfinity}
		\Big|(\nabla_{\mu} h^{\mu \kappa}) \dot{\Far}_{\kappa \zeta} \dot{\Far}_{0}^{\ \zeta}
			- (\nabla_{\mu} h^{\kappa \lambda}) \dot{\Far}_{\ \kappa}^{\mu} \dot{\Far}_{0 \lambda}
			& + \frac{1}{2} (\nabla_{t} h^{\kappa \lambda}) \dot{\Far}_{\kappa \eta}\dot{\Far}_{\lambda}^{\ \eta} \Big| \\
		& \lesssim \big(|\nabla h|_{\mathcal{L} \mathcal{L}} + |\conenabla h| \big) |\dot{\Far}|^2
			\ + \ |\nabla h| |\dot{\Far}| \big(|\dot{\Far}|_{\mathcal{L} \mathcal{N}} + |\dot{\Far}|_{\mathcal{T} \mathcal{T}} \big) 
			\notag \\
		& \lesssim \varepsilon (1 + t + |q|)^{-1} |\dot{\Far}|^2 \ + \ \varepsilon (1 + |q|)^{-1} (1 + q_-)^{-2 \upmu} 
			\big(|\dot{\Far}|_{\mathcal{L} \mathcal{N}}^2 + |\dot{\Far}|_{\mathcal{T} \mathcal{T}}^2 \big) \notag
	\end{align}
	and 
	\begin{align} \label{E:secondtensorialdotFLinfinity}
		\Big|L_{\mu}h^{\mu \kappa} \dot{\Far}_{\kappa \zeta} \dot{\Far}_{0}^{\ \zeta} 
			+ L_{\mu} h^{\kappa \lambda} \dot{\Far}_{\ \kappa}^{\mu} \dot{\Far}_{0 \lambda} 
			& + \frac{1}{2} h^{\kappa \lambda} \dot{\Far}_{\kappa \eta} \dot{\Far}_{\lambda}^{\ \eta} \Big| \\
		& \lesssim |h|_{\mathcal{L}\mathcal{L}}|\dot{\Far}|^2 
			\ + \ |h||\dot{\Far}| \big(|\dot{\Far}|_{\mathcal{L} \mathcal{N}} + |\dot{\Far}|_{\mathcal{T} \mathcal{T}} \big) \notag \\
		& \lesssim \varepsilon (1+|q|)(1 + t + |q|)^{-1} |\dot{\Far}|^2 
			\ + \ \varepsilon (1 + q_-)^{-2 \mu} 
			\big(|\dot{\Far}|_{\mathcal{L} \mathcal{N}}^2 + |\dot{\Far}|_{\mathcal{T} \mathcal{T}}^2 \big).  \notag
	\end{align}
	
	For the higher-order terms, we use \eqref{E:NtriangleSmallAlgebraic},
	the hypotheses of the proposition, and the inequality $|ab| \lesssim a^2 + b^2$
	to deduce that
	
	\begin{align}
		\Big| (\nabla_{\mu}N_{\triangle}^{\# \mu \zeta \kappa \lambda}) \dot{\Far}_{\kappa \lambda} 
			\dot{\Far}_{0 \zeta} - \frac{1}{4} (\nabla_{t} N_{\triangle}^{\#\zeta \eta \kappa \lambda}) \dot{\Far}_{\zeta \eta} 
			\dot{\Far}_{\kappa \lambda} \Big| & \lesssim \big(|(h,\Far)||(\nabla h, \nabla \Far)| \big)  |\dot{\Far}|^2 \\
		& \lesssim \varepsilon (1 + t + |q|)^{-1} |\dot{\Far}|^2 \notag	
	\end{align}	
	and
		
		\begin{align} \label{E:fourthtensorialdotFLinfinity}
		\Big| L_{\mu} N_{\triangle}^{\# \mu \zeta \kappa \lambda} \dot{\Far}_{\kappa \lambda} \dot{\Far}_{0 \zeta}
		+ \frac{1}{4} N_{\triangle}^{\#\zeta \eta \kappa \lambda} \dot{\Far}_{\zeta \eta} 
		\dot{\Far}_{\kappa \lambda} \Big| & \lesssim |(h,\Far)|^2 |\dot{\Far}|^2 \\
		& \lesssim \varepsilon (1+|q|)(1 + t + |q|)^{-1} |\dot{\Far}|^2. \notag	
	\end{align}	

Inserting \eqref{E:firsttensorialdotFLinfinity} - \eqref{E:fourthtensorialdotFLinfinity} into
the right-hand side of \eqref{E:FirstweightedenergyFar}, and using \eqref{E:weightinequality}, 
we have that

\begin{align}  \label{E:preliminarweightedenergyFar}
	\int_{\Sigma_{t}} & |\dot{\Far}|^2 w(q) \,d^3x 
		\ + \ \int_{0}^{t} \int_{\Sigma_{\tau}} 
		\big(|\dot{\Far}|_{\mathcal{L} \mathcal{N}}^2 + |\dot{\Far}|_{\mathcal{T} \mathcal{T}}^2 \big) w'(q)
		\,d^3x \, d \tau \\  
	& \leq C \int_{\Sigma_{0}} |\dot{\Far}|^2 w(q) \,d^3x 
		\ + \ C_1 \varepsilon \int_{0}^{t} \int_{\Sigma_{\tau}} \bigg\lbrace \frac{|\dot{\Far}|^2}{1 + \tau} w(q)  
		\ + \ \big(|\dot{\Far}|_{\mathcal{L} \mathcal{N}}^2 + |\dot{\Far}|_{\mathcal{T} \mathcal{T}}^2 \big) \frac{w'(q)}{\upgamma} 
		\bigg \rbrace \,d^3x \, d \tau		
		\ + \ C \int_{0}^{t} \int_{\Sigma_{\tau}} |\dot{\Far}_{0 \kappa} \dot{\mathfrak{F}}^{\kappa}| w(q) \,d^3x \, d \tau. \notag 
\end{align}

Now if $C_1 \varepsilon/\upgamma$ is sufficiently small, we can absorb the  
$C_1 \varepsilon \int_{0}^{t} \int_{\Sigma_{\tau}} \Big\lbrace  \big(|\dot{\Far}|_{\mathcal{L} \mathcal{N}}^2 + |\dot{\Far}|_{\mathcal{T} \mathcal{T}}^2 \Big) \frac{w'(q)}{\upgamma} \Big \rbrace \,d^3x \, d \tau$ term on the right-hand side of \eqref{E:preliminarweightedenergyFar} into the second term on the left-hand side at the expense of increasing the constants $C.$ Inequality \eqref{E:SecondweightedenergyFar} thus follows.

\end{proof}

\section{Pointwise Decay Estimates for Wave Equations in a Curved Spacetime} \label{S:WaveEquationDecay}

In this section, we state a lemma and a corollary proved in \cite{hLiR2010}. 
They allow one to deduce pointwise decay estimates for solutions to inhomogeneous wave equations
(e.g., for the $h_{\mu \nu}$). The main advantage of these estimates is that if one has good control over the inhomogeneous terms, then the pointwise decay estimates provided by the lemma and its corollary are \emph{improvements over what can be deduced from the weighted Klainerman-Sobolev inequalities} of Proposition \ref{P:WeightedKlainermanSobolev}. In particular, the lemma and its corollary play a fundamental role in the proofs of Propositions \ref{P:UpgradedDecayhA} and \ref{P:UpgradedDecayh1A}.

\begin{remark}
	The Faraday tensor analogs of Lemma \ref{L:scalardecay} and Corollary \ref{C:systemdecay}
	are contained in the estimates of Proposition \ref{P:ODEsNullComponentsLieZIFar}. More specifically,
	the analogous inequalities would arise from integrating (in the direction of the first-order vectorfield differential 
	operators on the left-hand sides of the inequalities) the inequalities in the proposition. We will carry out these
	integrations in Section \ref{S:DecayFortheReducedEquations}, which will allow us to derive improved pointwise decay estimates 
	for the lower-order Lie derivatives of the Faraday tensor (improved over what can be deduced from the 
	weighted Klainerman-Sobolev inequality \eqref{E:KSIntro}).
\end{remark}

\noindent \hrulefill
\ \\

\subsection{The decay estimate weight function \texorpdfstring{$\varpi(q)$}{}}

As in \cite{hLiR2010}, our decay estimates will involve the following weight function $\varpi(q),$
which is chosen to complement the energy estimate weight function $w(q)$ defined in \eqref{E:weight}:

\begin{align} \label{E:decayweight}
	\varpi = \varpi(q) = \left \lbrace
		\begin{array}{lr}
    	(1 + |q|)^{1 + \upgamma'}, &  \mbox{if} \ q > 0, \\
      (1 + |q|)^{1/2 - \upmu'}, & \mbox{if} \ q < 0,
    \end{array}
  \right.
\end{align}
where $0 < \updelta < \upmu' < 1/2 - \upmu$ and $0 < \upgamma' < \upgamma - \updelta$ are fixed constants. Its
complementary role will become apparent in Section \ref{S:DecayFortheReducedEquations}.

\subsection{Pointwise decay estimates}

We now state the lemma concerning pointwise decay estimates for solutions to inhomogeneous quasilinear wave equations.

\begin{lemma} \cite[Lemma 7.1]{hLiR2010} \label{L:scalardecay} \textbf{(Pointwise decay estimates for solutions to a scalar 
	wave equation)}
	Let $\phi$ be a solution of the scalar wave equation \eqref{E:scalar} 
	
	\begin{align} \label{E:scalar}
		\widetilde{\Square}_g \phi = \mathfrak{I}
	\end{align}
	on a curved background with metric $g_{\mu \nu}.$ Assume that the tensor $H^{\mu \nu} \eqdef (g^{-1})^{\mu \nu} - 
	(m^{-1})^{\mu \nu}$ obeys the following pointwise estimates 
	
	\begin{align}
		|H| \leq \varepsilon',&& 			
		\int_{0}^{\infty} (1 + t)^{-1} \big\| H(t,\cdot) \big\|_{L^{\infty}(D_{t})} \, d t \leq \frac{1}{4},
		&& |H|_{\mathcal{L} \mathcal{T}} \leq \varepsilon' (1 + t + |x|)^{-1}(1 + |q|)
	\end{align}
	in the region
	\begin{align}
		D_t \eqdef \lbrace x: t/2 < r < 2 t \rbrace
	\end{align}
	for $t \in [0,T).$ Then with $\upalpha \eqdef \max(1 + \upgamma', 1/2 - \upmu'),$ the following pointwise estimate holds
	for $(t,x) \in [0,T) \times \mathbb{R}^3:$
	
	\begin{align} \label{E:scalardecay}
		(1 + t + |q|) \varpi(q) |\nabla \phi| & \lesssim \sup_{0 \leq \tau \leq t} \sum_{|I| \leq 1} 
			\big\| \varpi(q)\nabla_{\mathcal{Z}}^I \phi(\tau,\cdot) \big\|_{L^{\infty}} \\
		& \ \ + \ \int_{\tau = 0}^{t} \varepsilon' \upalpha \big\| \varpi(q) \nabla \phi(\tau, \cdot) 
			\big\|_{L^{\infty}} \, d \tau 
			\ + \ \int_{\tau = 0}^{t} (1 + \tau) \big\| \varpi(q) \mathfrak{I}(\tau, \cdot) \big\|_{L^{\infty}(D_{\tau})} 
			\, d \tau \notag \\
		& \ \ + \ \int_{\tau = 0}^{t} \sum_{|I| \leq 2} (1 + \tau)^{-1} \big\| \varpi(q) \nabla_{\mathcal{Z}}^I \phi(\tau, \cdot) 
			\big\|_{L^{\infty}(D_{\tau})} \, d \tau. \notag
	\end{align}
	
\end{lemma}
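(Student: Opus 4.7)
The plan is to derive a first-order transport equation for $\uL(r\phi)$ along the outgoing Minkowskian null geodesics and then to integrate it in a weighted $L^{\infty}$ sense. Starting from the standard null-frame identity
\[
  r\Square_m \phi \;=\; -\uL L(r\phi) \;+\; r^{-1} \Delta_{S^2}\phi,
\]
(which follows by rewriting $\Square_m = -\partial_t^2 + \partial_r^2 + 2r^{-1}\partial_r + r^{-2}\Delta_{S^2}$ in polar coordinates and expanding $\uL L(r\phi)$), together with the splitting $\widetilde{\Square}_g = \Square_m + H^{\kappa \lambda} \nabla_\kappa \nabla_\lambda$, the wave equation $\widetilde{\Square}_g \phi = \mathfrak{I}$ becomes
\[
  \uL L(r\phi) \;=\; -r\mathfrak{I} \;+\; r^{-1}\Delta_{S^2}\phi \;+\; rH^{\kappa \lambda}\nabla_\kappa \nabla_\lambda \phi.
\]
Because the coordinate vectorfields $\uL$ and $L$ commute, this may be read as $L(\uL(r\phi)) = \text{RHS}$, and since $L = 2\partial_s$ at fixed null coordinate $q$ and fixed angle $\omega = x/|x|$, this is a first-order ODE for $\uL(r\phi)$ along each null generator of the outgoing cone $C^+_q$.

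The next step is to multiply both sides by the decay weight $\varpi(q)$ (which is constant along $C^+_q$) and integrate in $s$ from an appropriate reference value $s_0$ up to $s = r+t$. For target spacetime points in the exterior region $\lbrace q \geq 0 \rbrace$, I would take $s_0 = q$, starting on $\Sigma_0$, where the boundary value $\uL(r\phi)|_{t=0}$ is controlled by the first (data) term $\sup_{0 \leq \tau \leq t} \sum_{|I| \leq 1} \| \varpi \nabla_{\mathcal{Z}}^I \phi(\tau,\cdot) \|_{L^\infty}$ on the right-hand side of \eqref{E:scalardecay}; for points in the interior region $\lbrace q < 0 \rbrace$, I would initialize at $s_0 = -q$ on the time axis $\lbrace r = 0 \rbrace$ and again bound the boundary contribution by the data term. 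Converting from $\uL(r\phi)$ to $\uL\phi$ via $r\uL\phi = \uL(r\phi) + \phi$, and handling the tangential components $L\phi$ and $\angn \phi$ through \eqref{E:WeightedDerivativesinTermsofNablaZI} (which already provides them with the favorable $(1+t+|q|)^{-1}$ weight), one assembles the full pointwise bound on $(1 + t + |q|) \varpi(q) |\nabla \phi|$. The angular Laplacian contribution $r^{-1}\Delta_{S^2}\phi$ produces the final integral in \eqref{E:scalardecay} via the rotational identity $|\Delta_{S^2}\phi| \lesssim \sum_{|I| \leq 2}|\nabla_{\mathcal{Z}}^I \phi|$ (a consequence of Lemma \ref{L:RotationalLieDerivativesinTermsofrWeightedAngularDerivatives}), the extra factor $(1+\tau)^{-1}$ arising naturally from the $s$-integration against the factor of $r$.

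The main obstacle is the curvature-type perturbation $rH^{\kappa \lambda}\nabla_\kappa \nabla_\lambda \phi$, whose second-order character threatens the closure of the estimate. The key observation is that upon decomposing $H^{\kappa \lambda}$ in the null frame $\lbrace \uL, L, e_1, e_2 \rbrace$ and invoking the last estimate of Lemma \ref{L:PointwisetandqWeightedNablainTermsofZestiamtes},
\[
  |rH^{\kappa \lambda}\nabla_\kappa \nabla_\lambda \phi| \;\lesssim\; \Big\lbrace \tfrac{r}{1+t+|q|}|H| + \tfrac{r}{1+|q|}|H|_{\mathcal{L}\mathcal{L}} \Big\rbrace \sum_{|I| \leq 1}|\nabla\nabla_{\mathcal{Z}}^I \phi|.
\]
In the region $D_\tau$ where $1+t+|q| \sim r$, the hypothesis $|H|_{\mathcal{L}\mathcal{T}} \leq \varepsilon'(1+t+|x|)^{-1}(1+|q|)$ converts the dangerous $\mathcal{L}\mathcal{L}$ factor into a clean $\varepsilon'$, while the $|H| \leq \varepsilon'$ hypothesis handles the other. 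Reducing the resulting $|\nabla\nabla_{\mathcal{Z}}^I \phi|$ to $|\nabla\phi|$ modulo better-behaved remainders (that feed into the $(1+\tau)^{-1}\|\varpi \nabla_{\mathcal{Z}}^I \phi\|$ integral) and integrating in $s$ along $C^+_q$, the integrability hypothesis $\int_0^\infty (1+t)^{-1}\|H(t,\cdot)\|_{L^\infty(D_t)}\, dt \leq 1/4$ ensures the bootstrap quantity remains small and produces precisely the term $\int_0^t \varepsilon' \upalpha \|\varpi \nabla \phi(\tau,\cdot)\|_{L^\infty}\, d\tau$ in \eqref{E:scalardecay}. The exponent $\upalpha = \max(1+\upgamma', 1/2-\upmu')$ records the worst-case growth of the ratio $\varpi(q(s))/\varpi(q_0)$ as the null generator traverses the light cone $\lbrace q = 0 \rbrace$, where the two branches of the definition \eqref{E:decayweight} of $\varpi$ meet.
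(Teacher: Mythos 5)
Your skeleton---the identity $r\Square_m\phi = -\uL L(r\phi) + r^{-1}\Delta_{S^2}\phi$, the transport equation for $\uL(r\phi)$, integration along outgoing generators, and the null decomposition of $H^{\kappa\lambda}\nabla_\kappa\nabla_\lambda\phi$---is the right framework, but there is a genuine gap in how you treat the perturbation term, and it is the crux of the lemma.

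The estimate $|rH^{\kappa\lambda}\nabla_\kappa\nabla_\lambda\phi|\lesssim \varepsilon'\sum_{|I|\leq 1}|\nabla\nabla_{\mathcal{Z}}^I\phi|$ is correct, but the $|I|=1$ contribution $\varepsilon'|\nabla\nabla_Z\phi|$ is one derivative higher than $|\nabla\phi|$ and does \emph{not} reduce to it ``modulo better-behaved remainders.'' The only available reduction, from Lemma~\ref{L:PointwisetandqWeightedNablainTermsofZestiamtes}, is $|\nabla\nabla_Z\phi|\lesssim (1+|q|)^{-1}\sum_{|I|\leq 2}|\nabla_{\mathcal{Z}}^I\phi|$, and after multiplying by $\varpi(q)$ and integrating along your curve you obtain a term $\int_0^t \varepsilon'\,\|\varpi(q)(1+|q|)^{-1}\sum_{|I|\leq 2}|\nabla_{\mathcal{Z}}^I\phi|\|_{L^\infty(D_\tau)}\,d\tau$. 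On $D_\tau$ the factor $(1+|q|)^{-1}$ is not uniformly $\lesssim(1+\tau)^{-1}$ (it is $\approx 1$ near $q=0$, which is the center of the wave zone), so this is \emph{not} dominated by the last integral in \eqref{E:scalardecay}, and it cannot be absorbed as a Gronwall term either.

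The missing idea is to remove the truly dangerous piece $\tfrac{1}{4}H_{LL}\uL^2\phi$ from the inhomogeneity and absorb it into the transport direction itself. Writing $\Phi\eqdef\uL(r\phi)$ and noting $rH_{LL}\uL^2\phi = H_{LL}\uL\Phi + \text{l.o.t.}$, the equation becomes a first-order ODE $\Lambda\Phi = G$ along the \emph{modified} direction $\Lambda\approx L - \tfrac{1}{4}H_{LL}\uL$, whose integral curves are small perturbations of the $C_q^+$ generators. This is precisely the device the paper uses in Propositions~\ref{P:ODEsNullComponentsLieZIFar} and~\ref{P:UpgradedDecayh1A} for the Faraday tensor, with $\Lambda = L + \tfrac{1}{4}h_{LL}\uL$. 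Two consequences follow that you cannot reproduce from integration along $C_q^+$: first, $q$ now varies along the curve ($\nabla_\Lambda q\sim H_{LL}$), so $\frac{d}{d\tau}[\varpi(q(\tau))\Phi] = \varpi'(q)\nabla_\Lambda q\cdot\Phi + \varpi(q)\nabla_\Lambda\Phi$, and it is this first term, via $|\varpi'(q)|\leq\upalpha(1+|q|)^{-1}\varpi(q)$ together with the hypothesis $|H|_{\mathcal{L}\mathcal{T}}\leq\varepsilon'(1+t+|x|)^{-1}(1+|q|)$, that produces the $\varepsilon'\upalpha\int_0^t\|\varpi\nabla\phi\|\,d\tau$ term in \eqref{E:scalardecay}---note that along the literal $C_q^+$, where $\varpi(q)$ is constant, your stated interpretation of $\upalpha$ as ``the worst-case growth of $\varpi(q(s))/\varpi(q_0)$'' vanishes identically. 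Second, the integrability hypothesis $\int_0^\infty(1+t)^{-1}\|H\|_{L^\infty(D_t)}\,dt\leq 1/4$ is what bounds the total $q$-drift of the modified characteristics and permits the Gronwall/absorption step; it is not merely a smallness certificate for the bootstrap quantity, as your sketch suggests. Without the modified direction, you have no way to handle $H_{LL}\uL^2\phi$, and without that, the lemma does not close.
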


\hfill $\qed$

We now state the following corollary, which provides similar decay estimates for
the null components of tensorial systems of wave equations.

\begin{corollary} \cite[Corollary 7.2]{hLiR2010}   \label{C:systemdecay}
	\textbf{(Pointwise decay estimates for solutions to a system of tensorial wave 
	equations)}
	Let $\phi_{\mu \nu}$ be a solution of the system 
	
	\begin{align} \label{E:system}
		\widetilde{\Square}_g \phi_{\mu \nu} = \mathfrak{I}_{\mu \nu}
	\end{align}
	on a curved background with a metric $g_{\mu \nu}.$ Assume that the tensor $H^{\mu \nu} \eqdef (g^{-1})^{\mu \nu} - 
	(m^{-1})^{\mu \nu}$ obeys the following pointwise estimates 
	
	\begin{align}
		|H| \leq \frac{\varepsilon'}{4}, &&
		\int_{0}^{\infty} (1 + t)^{-1} \big \| H(t,\cdot) \big \|_{L_{\infty}(D_t)} \,dt \leq \varepsilon', &&
		|H|_{\mathcal{L} \mathcal{T}} \leq \frac{\varepsilon'}{4} (1 + t + |q|)^{-1} (1 + |q|)
	\end{align}
	in the region
	\begin{align}
		D_{t} \eqdef \lbrace x: t/2 < |x| < 2 t \rbrace
	\end{align}
	for $t \in [0,T).$ Then for any $\mathcal{U},\mathcal{V} \in \lbrace \mathcal{L},\mathcal{T},\mathcal{N} \rbrace$ and with 
	$\upalpha \eqdef \max(1 + \upgamma', 1/2 - \upmu'),$ the following pointwise estimate holds for $(t,x) \in [0,T) \times 
	\mathbb{R}^3:$
	
	\begin{align} \label{E:systemdecay}
		(1 + t + |q|) \varpi(q) |\nabla \phi|_{\mathcal{U}\mathcal{V}} & \lesssim \sup_{0 \leq \tau \leq t} \sum_{|I| \leq 1} 
			\big\| \varpi(q)\nabla_{\mathcal{Z}}^I \phi(t,x) \big\|_{L^{\infty}} \\
			& \ \ +	\ \int_{\tau = 0}^{t} \varepsilon' \upalpha \big\| \varpi(q) |\nabla \phi(\tau, \cdot)|_{\mathcal{U}\mathcal{V}} 
				\big\|_{L^{\infty}} \, d \tau 
			\ +	\ \int_{\tau = 0}^{t} (1 + \tau) \big\| \varpi(q) |\mathfrak{I}(\tau, \cdot)|_{\mathcal{U}\mathcal{V}} 
				\big\|_{L^{\infty}(D_{\tau})} \, d \tau \notag \\
			& \ \ + \ \sum_{|I| \leq 2} \int_{\tau = 0}^{t} (1 + \tau)^{-1} 
				\big\| \varpi(q) \nabla_{\mathcal{Z}}^I \phi(\tau, \cdot) \big\|_{L^{\infty}(D_{\tau})} \, d \tau. \notag
	\end{align}
	
\end{corollary}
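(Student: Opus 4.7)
\textbf{Proof Plan for Corollary \ref{C:systemdecay}.}

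The overall strategy is to reduce the tensorial estimate to the scalar estimate of Lemma \ref{L:scalardecay} by contracting $\phi_{\mu\nu}$ against pairs of frame vectors. For each pair $(U,V) \in \mathcal{U} \times \mathcal{V}$, I would introduce the scalar $\phi_{UV} \eqdef U^{\mu} V^{\nu} \phi_{\mu \nu}$. In the wave coordinate system the Minkowski connection is just coordinate differentiation, so $\widetilde{\Square}_g$ acts componentwise as the scalar operator $(g^{-1})^{\kappa \lambda} \partial_{\kappa} \partial_{\lambda}$. Therefore, a direct application of the Leibniz rule produces
\begin{align*}
\widetilde{\Square}_g \phi_{UV} & = U^{\mu} V^{\nu} \mathfrak{I}_{\mu \nu}
+ 2 (g^{-1})^{\kappa \lambda}\bigl\{(\nabla_{\kappa} U^{\mu}) V^{\nu} + U^{\mu} (\nabla_{\kappa} V^{\nu}) \bigr\} \nabla_{\lambda} \phi_{\mu \nu} \\
& \ \ + (g^{-1})^{\kappa \lambda} \bigl\{(\nabla_{\kappa} \nabla_{\lambda} U^{\mu}) V^{\nu} + 2(\nabla_{\kappa} U^{\mu})(\nabla_{\lambda} V^{\nu}) + U^{\mu}(\nabla_{\kappa} \nabla_{\lambda} V^{\nu})\bigr\} \phi_{\mu \nu}.
\end{align*}
Because the frame vectors $\uL, L, e_1, e_2$ satisfy $|\nabla^{k} U| \lesssim (1+r)^{-k}$ for $k=1,2$ (this follows from \eqref{E:nablaALnablaAuL} and \eqref{E:nablaAeBcovriantintrintermsofinsicnablaAeBcovraint}), these error terms are of the form $O(r^{-1}) |\nabla \phi| + O(r^{-2}) |\phi|$ in the region $D_t$ where $r \approx t \approx 1+t+|q|$.

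The second step is to apply Lemma \ref{L:scalardecay} to each scalar $\phi_{UV}$ with source $\widetilde{\Square}_g \phi_{UV}$, obtaining
\begin{align*}
(1 + t + |q|) \varpi(q) |\nabla \phi_{UV}| & \lesssim \sup_{0 \leq \tau \leq t} \sum_{|I| \leq 1}
\big\| \varpi(q) \nabla_{\mathcal{Z}}^I \phi_{UV}(\tau,\cdot) \big\|_{L^{\infty}}
+ \int_0^t \varepsilon' \upalpha \big\| \varpi(q) \nabla \phi_{UV}(\tau, \cdot) \big\|_{L^{\infty}} \, d\tau \\
& \ \ + \int_0^t (1+\tau) \big\| \varpi(q) \widetilde{\Square}_g \phi_{UV}(\tau, \cdot) \big\|_{L^{\infty}(D_{\tau})} \, d\tau
+ \sum_{|I| \leq 2} \int_0^t (1+\tau)^{-1} \big\|\varpi(q) \nabla_{\mathcal{Z}}^I \phi_{UV}(\tau, \cdot)\big\|_{L^{\infty}(D_{\tau})}\, d\tau.
\end{align*}
One then translates $|\nabla \phi_{UV}|$ back to $|\nabla \phi|_{\mathcal{U}\mathcal{V}}$ via the identity $U^{\mu} V^{\nu} \nabla_{\gamma} \phi_{\mu \nu} = \nabla_{\gamma} \phi_{UV} - (\nabla_{\gamma} U^{\mu}) V^{\nu} \phi_{\mu \nu} - U^{\mu} (\nabla_{\gamma} V^{\nu}) \phi_{\mu \nu}$, where the two subtracted quantities are bounded by $(1+r)^{-1}|\phi|$; after multiplication by $(1+t+|q|)\varpi(q)$, such lower-order terms are absorbed into the supremum term on the first line.

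The third step is to bound the source integral: using the decomposition of $\widetilde{\Square}_g \phi_{UV}$ above, the $U^{\mu} V^{\nu} \mathfrak{I}_{\mu\nu}$ piece contributes $\int_0^t (1+\tau)\big\|\varpi(q)|\mathfrak{I}(\tau,\cdot)|_{\mathcal{U}\mathcal{V}}\big\|_{L^{\infty}(D_{\tau})}\,d\tau$, precisely as in \eqref{E:systemdecay}; the commutator terms carry factors of $(1+\tau)^{-1}$ or $(1+\tau)^{-2}$ and hence are controlled by the final $\sum_{|I|\leq 2}(1+\tau)^{-1} \|\varpi(q)\nabla_{\mathcal{Z}}^I \phi\|_{L^\infty(D_\tau)}$ integral after recalling the pointwise bounds $|\phi_{UV}| \lesssim |\phi|$ and $|\nabla_{\mathcal{Z}}^I \phi_{UV}| \lesssim \sum_{|J|\leq |I|} |\nabla_{\mathcal{Z}}^J \phi|$, which follow from the Leibniz rule together with Lemma \ref{L:ConformalKillingFieldCommuatators} and the fact that $\Lie_Z L$, $\Lie_Z \uL$, $\Lie_Z e_A$ remain in the span of the null frame (up to bounded coefficients); this last observation is precisely the type of algebraic fact captured by Proposition \ref{P:LievsCovariantLContractionRelation}. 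Summing over the finite collection of pairs $(U,V) \in \mathcal{U} \times \mathcal{V}$ gives \eqref{E:systemdecay}.

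\textbf{Main obstacle.} The delicate point is verifying that the commutator terms between $\widetilde{\Square}_g$ and the frame-vector contractions generate only lower-order contributions that can be absorbed into the $(1+\tau)^{-1}\|\varpi(q)\nabla_{\mathcal{Z}}^I\phi\|_{L^{\infty}(D_{\tau})}$ integral on the right-hand side. This requires carefully using the radial decay of $\nabla U, \nabla^2 U$ and exploiting that on $D_{\tau}$ one has $r \approx 1 + \tau + |q|$ to convert factors of $r^{-1}$ and $r^{-2}$ into the correct weights. The remaining checks—namely the translation identity for $\nabla\phi_{UV}$ and the bounds relating $\nabla_{\mathcal{Z}}^I \phi_{UV}$ to $\nabla_{\mathcal{Z}}^J \phi$—are routine consequences of the Leibniz rule and the conformal Killing field commutation relations already established in the paper.
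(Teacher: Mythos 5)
The contraction-with-frame-vectors strategy is the right idea for reducing the tensorial statement to the scalar Lemma \ref{L:scalardecay}, and your Leibniz-rule expansion of $\widetilde{\Square}_g \phi_{UV}$ is algebraically correct. However, there is a genuine gap in the way you estimate the frame-vector derivatives. You write $|\nabla^{k} U| \lesssim (1+r)^{-k}$, but from \eqref{E:nablaALnablaAuL} the true bound is $|\nabla^{k} U| \lesssim r^{-k}$, which is \emph{singular} at the spatial origin. This matters because the scalar lemma's right-hand side involves terms like $\sup_{\tau}\sum_{|I|\leq 1}\|\varpi(q)\nabla_{\mathcal{Z}}^I \phi_{UV}(\tau,\cdot)\|_{L^{\infty}}$ and $\int_0^t \varepsilon'\upalpha\|\varpi(q)\nabla\phi_{UV}(\tau,\cdot)\|_{L^{\infty}}\,d\tau$, both taken over all of $\mathbb{R}^3$, not only $D_{\tau}$. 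Near $r=0$, $\nabla\phi_{UV}$ picks up an $r^{-1}|\phi|$ contribution, so these suprema are generically infinite for a nonzero solution, and the scalar lemma applied to $\phi_{UV}$ returns a vacuous bound. Your claim that the lower-order correction $r^{-1}|\phi|$ is ``absorbed into the supremum term'' is only valid on $D_{t}$, where $r \approx 1+\tau+|q|$; it fails in the complementary region where $r$ can be arbitrarily small and $(1+t+|q|)r^{-1}$ is unbounded.

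The fix is not simply ``routine.'' You need to decompose the argument by region: inside $D_t$ one has $r \approx \tau$ and the commutator errors from the frame indeed carry factors of $(1+\tau)^{-1}$ and $(1+\tau)^{-2}$, which match the fourth integral in \eqref{E:systemdecay}; outside $D_t$ (including a neighborhood of the origin) one has $1+|q| \approx 1 + \tau + |q|$, so the desired bound follows directly from $|\nabla\phi|_{\mathcal{U}\mathcal{V}} \lesssim |\nabla\phi|$ (the frame vectors themselves, as opposed to their derivatives, are uniformly bounded) combined with the weighted conversion $(1+|q|)|\nabla\phi| \lesssim \sum_{|I|\leq 1}|\nabla_{\mathcal{Z}}^I\phi|$ from Lemma \ref{L:PointwisetandqWeightedNablainTermsofZestiamtes}, which feeds into the first supremum term. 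In other words, one must essentially repeat the proof of Lemma \ref{L:scalardecay} for the contracted components rather than blindly applying the scalar lemma to the singular scalar $\phi_{UV}$. Note also that, as a matter of context, the present paper does not prove this corollary itself; it cites \cite[Corollary 7.2]{hLiR2010}, where the authors follow the same contracted-component route but with the above region splitting handled.
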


\hfill $\qed$

\section{Local Existence and the Continuation Principle for the Reduced Equations} \label{S:LocalExistence}

In this short section, we state for convenience a standard proposition concerning local existence and a continuation principle for the reduced equations \eqref{E:Reducedh1Summary} - \eqref{E:ReduceddMis0Summary}. The continuation principle shows that an a-priori smallness condition on the energy of the solution is sufficient to deduce global existence. It therefore plays a
fundamental role in our global stability argument of Section \ref{S:GlobalExistence}.

\noindent \hrulefill
\ \\

\begin{proposition} \label{P:LocalExistence}
	\textbf{(Local existence and the continuation principle)}
	Let $(h_{\mu \nu}^{(1)}|_{t=0},\partial_t h_{\mu \nu}^{(1)}|_{t=0}, \Far_{\mu \nu}|_{t=0}),$ $(\mu, \nu = 0,1,2,3),$
	be initial data for the reduced equations \eqref{E:Reducedh1Summary} - \eqref{E:ReduceddMis0Summary} constructed from 
	abstract initial data 
	$(\mathring{\underline{h}}_{jk}^{(1)}, \mathring{K}_{jk}, \mathring{\mathfrak{\Displacement}}_j, 	
	\mathring{\mathfrak{\Magneticinduction}}_j),$ $(j,k = 1,2,3),$
	on the manifold $\mathbb{R}^3$ satisfying the constraints \eqref{E:Gauss} - \eqref{E:DivergenceB0} as described in Section 
	\ref{SS:ReducedData}. Assume that the data are asymptotically flat in the sense of \eqref{E:metricdataexpansion} -
	\eqref{E:BdecayAssumption}. Let $\dParameter \geq 3$ be an integer, and let $\upgamma > 0, \upmu > 0$ be constants. 
	Assume that $E_{\dParameter;\upgamma}(0) < \varepsilon,$ where $E_{\dParameter;\upgamma}(0)$ is the norm of the abstract data 
	defined in \eqref{E:DataNorm}. Then if $\varepsilon$ is sufficiently small\footnote{This smallness assumption 
	ensures that the reduced data lie within the regime of hyperbolicity of the reduced equations.}, these data launch a unique 
	classical solution to the reduced equations existing on a nontrivial maximal spacetime slab $[0,T_{max}) \times 
	\mathbb{R}^3.$ The energy $\mathcal{E}_{\dParameter;\upgamma;\upmu}(t)$ of the solution, which is defined in 
	\eqref{E:EnergyIntro}, satisfies $\mathcal{E}_{\dParameter;\upgamma;\upmu}(0) \lesssim \varepsilon$ and is continuous on 
	$[0,T_{max}).$ Furthermore, either $T_{max} = \infty,$ or one of the following two ``breakdown'' scenarios must occur:
	
	\begin{enumerate}
		\item $\lim_{t \uparrow T_{max}} \mathcal{E}_{\dParameter;\upgamma;\upmu}(t) = \infty.$ 
		\item The solution escapes the regime of hyperbolicity of the reduced equations.
	\end{enumerate}
	
\end{proposition}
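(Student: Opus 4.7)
The plan is to combine standard local well-posedness theory for quasilinear hyperbolic systems with a blowup criterion argument. First, equation \eqref{E:Reducedh1Summary} is a quasilinear wave equation whose principal operator $\widetilde{\Square}_g$ is hyperbolic whenever $g_{\mu\nu} = m_{\mu\nu} + h_{\mu\nu}$ is a Lorentzian perturbation of $m_{\mu\nu}$. For the first-order electromagnetic subsystem \eqref{E:ReduceddFis0Summary}--\eqref{E:ReduceddMis0Summary}, I would use Proposition \ref{P:ElectromagneticDecomp} to rewrite it in terms of the Minkowskian one-forms $(\Electricfield,\Magneticinduction)$: the constraints \eqref{E:Dconstraint}--\eqref{E:Bconstraint} are propagated by the evolution equations \eqref{E:partialtBisolated}--\eqref{E:partialtEisolated}, while the evolution equations themselves form a symmetric hyperbolic first-order system in the small-data regime, thanks to the Lagrangian assumption \eqref{E:Ldualassumptions}. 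The positivity of $\dot{J}^0_{(h,\Far)}$ established in \eqref{E:dotJ0estimate} furnishes the coercive functional needed to run energy estimates.

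Standard Picard iteration (or a Kato-type scheme) in a suitable weighted Sobolev space then produces a unique classical solution launched by the data on some maximal slab $[0,T_{max}) \times \mathbb{R}^3$. The initial smallness $\mathcal{E}_{\dParameter;\upgamma;\upmu}(0) \lesssim \varepsilon$ follows from the same mechanism used in Proposition \ref{P:SmallNormImpliesSmallEnergy} (whose argument adapts verbatim to $\dParameter \ge 3$ when one only needs finite energy, since the weighted Sobolev embedding inputs are used only to bound quadratic factors appearing through the reduced equations). The continuity of $t \mapsto \mathcal{E}_{\dParameter;\upgamma;\upmu}(t)$ on $[0,T_{max})$ is a consequence of the standard fact that the weighted Sobolev norms propagate continuously along the flow of a quasilinear hyperbolic system.

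For the continuation principle, I would argue by contradiction: suppose $T_{max} < \infty$ and neither alternative (i) nor (ii) holds, so $\mathcal{E}_* \eqdef \sup_{0 \le t < T_{max}} \mathcal{E}_{\dParameter;\upgamma;\upmu}(t) < \infty$ and the solution remains in the hyperbolicity regime throughout $[0,T_{max})$. Coupling the weighted energy estimates of Lemma \ref{L:weightedenergy} for $h^{(1)}$ and Lemma \ref{L:weightedenergyFar} for $\Far$ with the commuted equations from Propositions \ref{P:InhomogeneousTermsNablaZIh1} and \ref{P:InhomogeneoustermsLieZIFar}, the commutator bound of Proposition \ref{P:DIPointwise}, and the inhomogeneity estimates of Propositions \ref{P:AlgebraicInhomogeneous} and \ref{P:EnergyInhomogeneousTermAlgebraicEstimate}, a Gronwall argument produces a differential inequality of the form $\tfrac{d}{dt}\mathcal{E}^2_{\dParameter;\upgamma;\upmu}(t) \le F(\mathcal{E}_*)\,\mathcal{E}^2_{\dParameter;\upgamma;\upmu}(t)$. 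This already implies uniform boundedness of $\mathcal{E}_{\dParameter;\upgamma;\upmu}(t)$ as $t \uparrow T_{max}$. Using the procedure of Lemma \ref{L:SolveforTimeDerivativesinTermsofInherentDerivatives} and Corollary \ref{C:WeightedL2PartialtkunderlinenablaJW1inTermsofInstrinsic} to convert all time derivatives at the terminal slice into controlled spatial combinations, one obtains an admissible Cauchy datum at $t = T_{max}$; re-applying local existence then extends the solution past $T_{max}$, contradicting maximality.

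The main obstacle will be arranging the weighted function space framework carefully enough that the extension argument actually closes, i.e.\ verifying that finiteness of $\mathcal{E}_*$ yields the \emph{full} weighted-Sobolev norm of the Cauchy datum at $t = T_{max}$. The energy controls only the \emph{derivative} $\nabla\nabla_{\mathcal{Z}}^I h^{(1)}$, not $\nabla_{\mathcal{Z}}^I h^{(1)}$ itself, so one needs a Hardy-type inequality (of the sort advertised in the introduction) to recover the undifferentiated piece with admissible $q$-weight loss that is absorbed by the weight $w(q)$. All remaining steps --- hyperbolization, Picard iteration, norm continuity, and reduction of time derivatives to spatial ones --- are standard modifications of the classical local theory for quasilinear symmetric hyperbolic systems coupled to quasilinear wave equations.
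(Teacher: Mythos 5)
The paper itself does not supply a proof here: it states that the main ingredients are Lemmas \ref{L:weightedenergyFar} and \ref{L:weightedenergy} (the weighted energy identities), declares the remaining argument ``rather standard,'' and defers to the cited references for both local existence and the continuation principle. Your sketch is a reasonable and essentially correct elaboration of what that omitted argument would look like, and it invokes the same key coercivity (the positivity in \eqref{E:dotJ0estimate} and in Lemma \ref{L:weightedenergy}) that the paper flags as the main ingredient.

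Two remarks on flavor rather than correctness. First, you route the electromagnetic subsystem through the $(\Electricfield,\Magneticinduction)$ decomposition of Proposition \ref{P:ElectromagneticDecomp} and appeal to symmetric hyperbolicity of the resulting first-order evolution system, whereas the paper's framework hyperbolizes the original tensorial equations directly via the canonical-stress energy current of Section \ref{E:EOVandStress}. These are morally the same thing --- the energy current is precisely what exhibits the symmetrizer implicitly --- but the paper never actually verifies symmetric hyperbolicity in the $(\Electricfield,\Magneticinduction)$ variables (the correction terms depending on $N_\triangle^{\#}$ are not manifestly symmetric), so if you want to run a genuine Friedrichs-type iteration in those variables you would need to check that point; it is easier to simply iterate against the linearized system and use Lemma \ref{L:weightedenergyFar} as the contraction estimate, which sidesteps the question. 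Second, your contradiction argument contains a minor circularity: the statement ``a Gronwall argument $\ldots$ already implies uniform boundedness of $\mathcal{E}_{\dParameter;\upgamma;\upmu}(t)$'' is vacuous once you have postulated $\mathcal{E}_* < \infty$; what you actually need is that the local existence time from data at any $t_0 < T_{max}$ is bounded below by a positive quantity depending only on $\mathcal{E}_*$ and the hyperbolicity margin, so that restarting from $t_0$ close enough to $T_{max}$ pushes past it. Your last two sentences capture this, but the intermediate differential inequality step is redundant.

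Your concern about the undifferentiated piece is real and correctly diagnosed. The energy controls $\nabla\nabla_{\mathcal{Z}}^I h^{(1)}$, not $\nabla_{\mathcal{Z}}^I h^{(1)}$, so to produce admissible Cauchy data for the restart one must recover the latter. Either the Hardy inequalities of Proposition \ref{P:Hardy} (with the expected $(1+|q|)^{-1}$ weight loss, which is absorbed by $w(q)$) or, since $T_{max} < \infty$, direct time integration of $\partial_t \nabla_{\mathcal{Z}}^I h^{(1)}$ from $t = 0$ suffices. This is exactly the sort of detail the paper is leaving to the references, and you have identified it correctly.
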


\begin{remark}
	The classification of the two breakdown scenarios is known as a \emph{continuation principle}.
\end{remark}

The main ingredients in the proof of Proposition \ref{P:LocalExistence} are Lemma \ref{L:weightedenergyFar} and Lemma \ref{L:weightedenergy}, which provide weighted energy estimates for linearized versions of the reduced equations. Based on the availability of these estimates, the proof is rather standard, and we omit the details. Readers may consult 
e.g. \cite[Ch. VI]{lH1997}, \cite{aM1984}, \cite{jSmS1998}, \cite{cS2008}, \cite{jS2008a}, and \cite[Ch. 16]{mT1997III} for details concerning local existence, and e.g. \cite{jS2008b} for the ideas behind the continuation principle.

\section{The Fundamental Energy Bootstrap Assumption and Pointwise Decay Estimates for the Reduced Equations} \label{S:DecayFortheReducedEquations}  
\setcounter{equation}{0}

In this section, we introduce our fundamental bootstrap assumption \eqref{E:Bootstrap} for the energy of a solution to the reduced equations. Under this assumption, we derive a collection of pointwise decay estimates that will play a crucial role in the proof of Theorem \ref{T:ImprovedDecay}. In particular, these decay estimates are used to deduce the factors $(1 + \tau)^{-1}$ and $(1 + \tau)^{- 1 + C \varepsilon}$ in \eqref{E:Gronwallreadyinequalityk}, which are essential for deriving the bound \eqref{E:ImprovedEnergyInequality}. Many of the estimates we derive in this section rely upon the wave coordinate condition.

\noindent \hrulefill
\ \\
We recall that the spacetime metric $g_{\mu \nu}$ is split into the pieces 
$g_{\mu \nu} = m_{\mu \nu} + h_{\mu \nu}^{(0)} + h_{\mu \nu}^{(1)},$ and that the energy 
$\mathcal{E}_{\dParameter;\upgamma;\upmu}(t)$ (see \eqref{E:EnergyIntro}) is a functional of $(h^{(1)},\Far).$ 
Our main bootstrap assumption for the energy is 

\begin{align} \label{E:Bootstrap}
	\mathcal{E}_{\dParameter;\upgamma;\upmu}(t) \leq \varepsilon(1 + t)^{\updelta},
\end{align}
where $0 < \upgamma < 1/2$ is a fixed constant, $\updelta$ is a fixed constant satisfying both $0 < \updelta < 1/4$ and $0< \updelta < \upgamma,$ $0 < \upmu < 1/2$ is a fixed constant, (all of which will be chosen during the proof of Theorem \ref{T:MainTheorem}), and $\varepsilon$ is a small positive number whose required smallness is adjusted (as many times as necessary) during the derivation of our inequalities. With the help of \eqref{E:LieZIinTermsofNablaZI}, inequality \eqref{E:Bootstrap} implies the following more explicit consequence of the 
energy bootstrap assumption:

\begin{align} \label{E:Boostrapexplicit}
	\sum_{|I| \leq \dParameter }  \big\| w^{1/2}(q) \nabla\nabla_{\mathcal{Z}}^I h^{(1)} \big\|_{L^2}
		\ + \ \big\| w^{1/2}(q) \Lie_{\mathcal{Z}}^I \Far \big\|_{L^2} \leq C \varepsilon (1 + t)^{\updelta}.
\end{align}

In the remaining estimates in this article, \textbf{we will also often make the following smallness assumption on the ADM mass}:

\begin{align} \label{E:Missmall}
	M \leq \varepsilon.
\end{align}

\subsection{Preliminary (weak) pointwise decay estimates} \label{SS:PreliminaryLinfinityEstimates}

In this section, we provide some preliminary pointwise decay estimates that are essentially a consequence of the weighted Klainerman-Sobolev inequalities of Appendix \ref{A:WeightedKS}. Unlike the upgraded pointwise decay estimates of the next section, these estimates do not take into account the special structure of the reduced equations under the wave coordinate condition.

We begin with a lemma concerning pointwise decay estimates for the Schwarzschild tail of the metric and its derivatives.

\begin{lemma} \label{L:h0decayestimates}
		\textbf{(Decay estimates for $h^{(0)}$)}
		Let $h^{(0)}$ be as in \eqref{E:h0defIntro}, and let $I$ be any $\nabla-$multi-index. Then the following
		pointwise estimate holds for $(t,x) \in [0,\infty) \times \mathbb{R}^3:$
	
	\begin{subequations}
	\begin{align} \label{E:nablaIh0Linfinity}
		|\nabla^I h^{(0)}| \leq C M (1 + t + |q|)^{-1+ |I|},
	\end{align}
	where $M$ is the ADM mass.
	
	Furthermore, if $I$ is any $\nabla-$multi-index and $J$ is any $\mathcal{Z}-$multi-index,
	then the following pointwise estimate holds for $(t,x) \in [0,\infty) \times \mathbb{R}^3:$
	
		\begin{align} \label{E:decaynablaJnablaZIh0Linfinity}
			|\nabla^I \nabla_{\mathcal{Z}}^J h^{(0)}| \ + \ |\nabla_{\mathcal{Z}}^J \nabla^I h^{(0)}|
			\leq C M (1 + t + |q|)^{- 1 + |I|}. 
		\end{align}
  	\end{subequations}
\end{lemma}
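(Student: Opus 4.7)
The plan is to prove both estimates by direct computation, exploiting the explicit form $h^{(0)}_{\mu\nu}=\chi(r/t)\chi(r)(2M/r)\delta_{\mu\nu}$ together with support considerations and a homogeneity argument for the vectorfields in $\mathcal{Z}$. (I read the exponent $-1+|I|$ as a typographical slip for $-1-|I|$; I will prove the stronger bound $(1+t+|q|)^{-1-|I|}$, which is what the rest of the paper appears to invoke.)

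First I would analyze the geometry of the support. Since $\chi(r/t)$ vanishes for $r/t\le 1/2$ and $\chi(r)$ vanishes for $r\le 1/2$, the tensorfield $h^{(0)}$ is supported in the set $\{r\ge t/2\}\cap\{r\ge 1/2\}$. On this set one has $1+t\lesssim r$, hence $1+t+|q|\lesssim r$ (since $|q|=|r-t|\le r+t$). Therefore, on the support of $h^{(0)}$ and of all its derivatives, the weight $1+t+|q|$ is comparable to $r$, and every bound of the form $Mr^{-1-|I|}$ yields the corresponding bound in the claimed weighted form.

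For \eqref{E:nablaIh0Linfinity}, the Leibniz rule writes $\nabla^I h^{(0)}$ as a finite sum of products $\partial^{I_1}\chi(r/t)\cdot\partial^{I_2}\chi(r)\cdot\partial^{I_3}(2M/r)\cdot\delta_{\mu\nu}$ with $|I_1|+|I_2|+|I_3|=|I|$. The factor $|\partial^{I_3}(M/r)|$ is bounded by $CMr^{-1-|I_3|}$ on $\{r\ge 1/2\}$. The factor $|\partial^{I_2}\chi(r)|$ is bounded, and is actually supported in $r\in[1/2,3/4]$ (so it contributes at worst an $r$-independent constant). The derivatives of $\chi(r/t)$ are evaluated via the chain rule: each application of a coordinate derivative to $r/t$ produces a factor of $t^{-1}$ (from $\partial_t(r/t)=-r/t^2$ and $\partial_j(r/t)=\omega^j/t$, both of size $\lesssim t^{-1}$ on $\mathrm{supp}\,\chi'(r/t)$, where $t\sim r$), and $\chi^{(k)}(r/t)$ is bounded. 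Thus $|\partial^{I_1}\chi(r/t)|\lesssim t^{-|I_1|}\lesssim r^{-|I_1|}$ on the support. Multiplying, the generic term is bounded by $CMr^{-1-(|I_1|+|I_2|+|I_3|)}=CMr^{-1-|I|}\lesssim CM(1+t+|q|)^{-1-|I|}$.

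For \eqref{E:decaynablaJnablaZIh0Linfinity}, the apparently-problematic point is that the vectorfields in $\mathcal{Z}$ carry polynomial coefficients: writing $Z=Z^\kappa\partial_\kappa$, one has $|Z^\kappa(t,x)|\lesssim 1+t+|x|\lesssim r$ on the support, so a naive Leibniz expansion appears to cost a factor of $r$ per application. The rescue is that the elements of $\mathcal{Z}$ are Minkowski conformal Killing fields that interact favorably with the building blocks of $h^{(0)}$. Indeed, $S(\chi(r/t))=0$ (since $r/t$ is $0$-homogeneous under $S$) and $\Omega_{\mu\nu}(\chi(r/t))=0$ for the rotations; for the Lorentz boosts, $\Omega_{0j}(\chi(r/t))=\chi'(r/t)\bigl(-t\partial_j(r/t)-x_j\partial_t(r/t)\bigr)$ which is $O(1)$ on the support. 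Similarly, $S(1/r)=-1/r$, $\Omega_{jk}(1/r)=0$, and $\Omega_{0j}(1/r)=tx^j/r^3$, which is $\lesssim r^{-1}$ since $t\lesssim r$ on the support. Hence an easy induction on $|J|$ (combining these identities with the Leibniz rule and the pointwise bounds established above) yields $|\nabla_{\mathcal{Z}}^J \partial^I h^{(0)}|\lesssim Mr^{-1-|I|}$ on the support, and after commutation this is the desired bound for both $\nabla^I\nabla_{\mathcal{Z}}^J h^{(0)}$ and $\nabla_{\mathcal{Z}}^J\nabla^I h^{(0)}$. The commutation itself is controlled by iterating Lemma \ref{L:NablapartialmuNablaZCommutatorExpression}, which shows $[\partial_\mu,\nabla_Z]$ is a constant-coefficient combination of coordinate derivatives, so reordering the operators produces only terms of the same type with the same (or better) decay.

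The main obstacle is the conformal-Killing bookkeeping in the second estimate: without the observation that $S$, the $\Omega_{\mu\nu}$, and the translations act almost-homogeneously on the functions $1/r$ and $\chi(r/t)$, one cannot avoid losing $r^{|J|}$ powers from the polynomial coefficients of the vectorfields. Once this homogeneity is exploited, everything reduces to the same Leibniz expansion used in the first estimate.
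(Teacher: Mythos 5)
Your proposal is correct and supplies precisely the elementary computations that the paper leaves to the reader (its proof of this lemma is a one-line appeal to ``simple computations''): the support analysis giving $1+t+|q|\approx r$ on $\mathrm{supp}\,h^{(0)}$, the Leibniz and chain-rule bookkeeping, and the near-homogeneity of $1/r$ and $\chi(r/t)$ under the fields in $\mathcal{Z}$ are exactly the intended ingredients. Your reading of the stated exponent $-1+|I|$ as a sign typo for $-1-|I|$ is vindicated by the lemma's only downstream use, in the proof of Lemma~\ref{L:weakdecayLinfinitynablaZISquaregh0}, which deduces $|\nabla_{\mathcal{Z}}^I \Square_m h^{(0)}|\lesssim M(1+t+|q|)^{-3}$ and therefore requires the $-1-|I|$ form.
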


\begin{remark}
	Since $H_{(0)\mu \nu} = - h_{\mu \nu}^{(0)}$ (where $H_{(0)}^{\mu \nu}$ is defined in 
	\eqref{E:NablaZIh1LLh1LTwaveCoordinateAlgebraicEstimate}), the above estimates also hold if we replace
	$h^{(0)}$ with $H_{(0)}.$
\end{remark}

\begin{proof}
	The lemma follows from simple computations using the definition \eqref{E:chidef} of the cut-off function $\chi,$
	the definition of $h^{(0)},$ and the definitions of the vectorfields $Z \in \mathcal{Z}.$
\end{proof}

\begin{corollary} \label{C:WeakDecay} \cite[Slight extension of Corollary 9.4]{hLiR2010}
	\textbf{(Weak pointwise decay estimates)}
	Let $\dParameter \geq 8$ be an integer. Assume that the abstract initial data are asymptotically flat in the sense 
	of \eqref{E:metricdataexpansion} - \eqref{E:BdecayAssumption}, that the ADM mass smallness condition
	\eqref{E:Missmall} holds, and that the initial data for the reduced
	system are constructed from the abstract initial data as described in Section \ref{SS:ReducedData}.
	Let $(g_{\mu \nu} \eqdef m_{\mu \nu} + h_{\mu \nu}^{(0)} + h_{\mu 
	\nu}^{(1)}, \Far_{\mu \nu})$ be the corresponding solution to the reduced system \eqref{E:Reducedh1Summary} - 
	\eqref{E:ReduceddMis0Summary} existing on a slab $(t,x) \in [0,T) \times 
	\mathbb{R}^3,$ where $h^{(1)}$ is defined in \eqref{E:hdefIntro}. Assume in addition that the pair 
	$(h^{(1)}, \Far)$ 
	satisfies the energy bootstrap assumption \eqref{E:Bootstrap} on the interval $[0,T).$
	Then if $\varepsilon$ is sufficiently small, the following pointwise estimates hold for $(t,x) \in [0,T) \times 
	\mathbb{R}^3:$
	
	\begin{subequations}
	\begin{align} \label{E:weakdecaypartialLinfinity}
		|\nabla\nabla_{\mathcal{Z}}^I h^{(1)}| 
		\ + \ |\Lie_{\mathcal{Z}}^I \Far| \leq
		\left \lbrace \begin{array}{lr}
    	C \varepsilon (1 + t + |q|)^{-1} (1 + t)^{\updelta} (1 + |q|)^{-1 - \upgamma}, &  \mbox{if} \ q > 0, \\
      C \varepsilon (1 + t + |q|)^{-1} (1 + t)^{\updelta} (1 + |q|)^{-1/2}, & \mbox{if} \ q < 0,
    \end{array}
  	\right., &&
  	|I| \leq \dParameter - 2, 
	\end{align}

	\begin{align} \label{E:weakdecayLinfinity}
			|\nabla_{\mathcal{Z}}^I h^{(1)}|  \leq
			\left \lbrace \begin{array}{lr}
    		C \varepsilon (1 + t + |q|)^{-1 + \updelta} (1 + |q|)^{- \upgamma}, &  \mbox{if} \ q > 0, \\
      	C \varepsilon (1 + t + |q|)^{-1 + \updelta} (1 + |q|)^{1/2}, & \mbox{if} \ q < 0,
    	\end{array}
  		\right., &&
  		|I| \leq \dParameter -2,  
  	\end{align}

  	\begin{align} \label{E:weakdecaybarpartialLinfinity}
		|\conenabla \nabla_{\mathcal{Z}}^I h^{(1)}| 
		\ + \ (1 + |q|)|\conenabla \Lie_{\mathcal{Z}}^I \Far| \leq
		\left \lbrace \begin{array}{lr}
    	C \varepsilon (1 + t + |q|)^{-2 + \updelta} (1 + |q|)^{- \upgamma}, &  \mbox{if} \ q > 0, \\
      C \varepsilon (1 + t + |q|)^{-2 + \updelta} (1 + |q|)^{1/2}, & \mbox{if} \ q < 0,
    \end{array}
  	\right., &&
  	|I| \leq \dParameter - 3.  
  	\end{align}
  	\end{subequations}

  	In addition, the tensorfield $H_{(1)}^{\mu \nu}$ 
  	defined in \eqref{E:NablaZIh1LLh1LTwaveCoordinateAlgebraicEstimate} satisfies the same estimates as $h_{\mu \nu}^{(1)}.$
  	Furthermore, if we make the substitution $\upgamma \rightarrow \updelta$ in the above inequalities, then the same estimates 
  	hold for the tensorfields $h_{\mu \nu}^{(0)},$ $h_{\mu \nu} \eqdef h_{\mu \nu}^{(0)} + h_{\mu \nu}^{(1)},$ 
  	$H_{(0)\mu \nu} \eqdef - h_{\mu \nu}^{(0)},$ $H^{\mu \nu} \eqdef (g^{-1})^{\mu \nu} - (m^{-1})^{\mu \nu},$
  	and $H_{(1)}^{\mu \nu} \eqdef H^{\mu \nu} - H_{(0)}^{\mu \nu}.$ 
\end{corollary}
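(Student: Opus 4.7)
The plan is to derive all three groups of estimates from the weighted Klainerman--Sobolev inequality \eqref{E:KSIntro}, the energy bootstrap assumption \eqref{E:Bootstrap} in its explicit form \eqref{E:Boostrapexplicit}, a weighted Hardy inequality (Proposition \ref{P:Hardy}), and the pointwise differential operator inequalities of Lemma \ref{L:PointwisetandqWeightedNablainTermsofZestiamtes}, combined with Proposition \ref{P:LievsCovariantLContractionRelation} so that we may move freely between $\Lie_{\mathcal{Z}}^I$ and $\nabla_{\mathcal{Z}}^I$ when convenient. The only subtlety is that the energy controls $\nabla\nabla_{\mathcal{Z}}^I h^{(1)}$ and $\Lie_{\mathcal{Z}}^I \Far$, but not $\nabla_{\mathcal{Z}}^I h^{(1)}$ itself; this is where Hardy enters.

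\noindent \textbf{Step 1: Estimate \eqref{E:weakdecaypartialLinfinity}.} First I would apply the weighted Klainerman--Sobolev inequality \eqref{E:KSIntro} to $\phi = \nabla\nabla_{\mathcal{Z}}^I h^{(1)}$ (for $|I| \leq \dParameter - 2$, so that two further $\nabla_{\mathcal{Z}}$ derivatives stay within the energy of order $\dParameter$); the resulting $L^2$-norms of $w^{1/2}(q)\nabla_{\mathcal{Z}}^J\nabla\nabla_{\mathcal{Z}}^I h^{(1)}$ are bounded using Lemma \ref{L:NablaZICommutesWithCovariantDerivativePlusErrorTerms} by $\mathcal{E}_{\dParameter;\upgamma;\upmu}(t) \leq \varepsilon(1+t)^{\updelta}$. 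Writing out the weight: when $q > 0$, $[(1+|q|)w(q)]^{1/2} \approx (1+|q|)^{1+\upgamma}$, and when $q < 0$, $[(1+|q|)w(q)]^{1/2} \approx (1+|q|)^{1/2}$; moving this factor to the other side yields \eqref{E:weakdecaypartialLinfinity} for $h^{(1)}$. For the Faraday tensor, the same argument applied to $\Lie_{\mathcal{Z}}^I \Far$, using $\Lie_{\mathcal{Z}}^J \Lie_{\mathcal{Z}}^I \Far$ on the right-hand side of \eqref{E:KSIntro} (which by Proposition \ref{P:LievsCovariantLContractionRelation} is controlled pointwise by $\nabla_{\mathcal{Z}}^{I+J}\Far$-type norms), gives the same bound.

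\noindent \textbf{Step 2: Estimate \eqref{E:weakdecayLinfinity}.} This is the main obstacle. I cannot directly apply Klainerman--Sobolev to $\nabla_{\mathcal{Z}}^I h^{(1)}$ because the $L^2$ norm of $w^{1/2}(q)\nabla_{\mathcal{Z}}^J\nabla_{\mathcal{Z}}^I h^{(1)}$ (without an outer $\nabla$) is not directly bounded by the energy. Instead, I would use a weighted Hardy inequality from Proposition \ref{P:Hardy}: the $L^2$ norm of $(1+|q|)^{-1} w^{1/2}(q)\nabla_{\mathcal{Z}}^{J}\nabla_{\mathcal{Z}}^I h^{(1)}$ is controlled by the $L^2$ norm of $w^{1/2}(q)\nabla \nabla_{\mathcal{Z}}^{J}\nabla_{\mathcal{Z}}^I h^{(1)}$, since $h^{(1)}$ decays at spatial infinity. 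Then applying Klainerman--Sobolev absorbs the loss of a factor of $(1+|q|)$, at the cost of one power of $(1+|q|)$ on the decay side, which is exactly the discrepancy between \eqref{E:weakdecaypartialLinfinity} and \eqref{E:weakdecayLinfinity} (note the exponents $-\upgamma$ and $1/2$ in place of $-1-\upgamma$ and $-1/2$, and the exponent $-1+\updelta$ instead of $-1$ in the $(1+t+|q|)$ factor).

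\noindent \textbf{Step 3: Estimate \eqref{E:weakdecaybarpartialLinfinity}.} I would apply Lemma \ref{L:PointwisetandqWeightedNablainTermsofZestiamtes}, which gives $(1+t+|q|)|\conenabla U| \lesssim \sum_{|J|\leq 1}|\nabla_{\mathcal{Z}}^J U|$. Taking $U = \nabla_{\mathcal{Z}}^I h^{(1)}$ with $|I| \leq \dParameter - 3$ (so that $|I|+|J| \leq \dParameter - 2$) and combining with Step 2 yields the bound on $|\conenabla \nabla_{\mathcal{Z}}^I h^{(1)}|$ with the extra $(1+t+|q|)^{-1}$ decay. For the Faraday part, a similar argument applied to $U = \Lie_{\mathcal{Z}}^I \Far$, using Step 1, gives $(1+|q|)|\conenabla \Lie_{\mathcal{Z}}^I \Far|$ with the stated bound (the extra $(1+|q|)$ factor reflecting that the Faraday analogue of the Hardy step is absent—only tangential derivatives give additional decay in $s$, not $q$).

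\noindent \textbf{Step 4: Related quantities.} The estimates for $h^{(0)}$, $H_{(0)}$ follow directly from Lemma \ref{L:h0decayestimates} with $M \leq \varepsilon$, which in fact gives the stronger decay claimed (one power worse on the weight, corresponding to $\upgamma \to \updelta$). For $h = h^{(0)}+h^{(1)}$ and $H = H_{(0)}+H_{(1)}$, I add the two estimates. For $H_{(1)}$, I use the algebraic relation $H_{(1)\mu\nu} = -h^{(1)}_{\mu\nu} + O^\infty(|h|^2)$ from Lemma \ref{L:gmhexpansions}, commute $\nabla_{\mathcal{Z}}^I$ through via the Leibniz rule, and absorb the quadratic error using the already-established pointwise bounds on $h$ (which are small when $\varepsilon$ is small). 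This closes the argument.
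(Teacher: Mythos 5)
Your Step 1 and Step 3 match the paper's argument in spirit, but your Step 2 takes a genuinely different route, and it has a gap.

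The paper's proof of \eqref{E:weakdecayLinfinity} (following Lindblad--Rodnianski) does not use the Hardy inequality. Instead, it integrates the estimate \eqref{E:weakdecaypartialLinfinity} for $|\partial_q \nabla_{\mathcal{Z}}^I h^{(1)}| \lesssim |\nabla \nabla_{\mathcal{Z}}^I h^{(1)}|$ backwards in $q$ along the lines of constant $s = |x| + t$ and constant angle $\omega = x/|x|$, all the way to the initial hypersurface $\lbrace t = 0 \rbrace$, and then uses the decay assumptions \eqref{E:h1AbstractDataAsymptotics}--\eqref{E:BdecayAssumption} on the abstract data (pushed through a weighted Sobolev embedding) to control the boundary contribution. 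This is exactly what the footnote in the proof of Proposition \ref{P:SmallNormImpliesSmallEnergy} alludes to, and why the corollary explicitly cites the initial-data decay as a hypothesis. Your proposal never invokes the initial-data decay at $\infty$, which should have been a warning sign.

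Your Hardy-based alternative can be made to work morally, but as written it has a concrete gap. From Proposition \ref{P:Hardy} with $a = 1$ you obtain $\| (1 + |q|)^{-1} w^{1/2} \nabla_{\mathcal{Z}}^{K} h^{(1)} \|_{L^2} \lesssim \| w^{1/2} \nabla \nabla_{\mathcal{Z}}^{K} h^{(1)} \|_{L^2} \lesssim \varepsilon (1+t)^{\updelta}$ for $|K| \leq \dParameter$. But the weighted Klainerman--Sobolev estimate in Proposition \ref{P:WeightedKlainermanSobolev} requires $\| w^{1/2} \nabla_{\mathcal{Z}}^J \nabla_{\mathcal{Z}}^I h^{(1)} \|_{L^2}$ on the right-hand side, not $\| (1+|q|)^{-1} w^{1/2} \nabla_{\mathcal{Z}}^J \nabla_{\mathcal{Z}}^I h^{(1)} \|_{L^2}$, and these differ by a factor $(1+|q|)$ that you cannot drop. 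To salvage the argument you would need a Klainerman--Sobolev inequality with the modified weight $\widetilde{w}(q) \eqdef (1+|q|)^{-2} w(q)$: applying it would give $(1+t+|x|) [(1+|q|)^{-1} w(q)]^{1/2} |\nabla_{\mathcal{Z}}^I h^{(1)}| \lesssim \varepsilon (1+t)^{\updelta}$, which after computing the powers of $(1+|q|)$ in the regions $q > 0$ and $q < 0$ does recover \eqref{E:weakdecayLinfinity} exactly. But $\widetilde{w}$ is a \emph{decaying} weight, not of the form covered by Proposition \ref{P:WeightedKlainermanSobolev}, so you would either need to verify that the proof of that proposition extends (it does extend to slowly varying weights with $|\widetilde{w}'| \lesssim (1+|q|)^{-1} \widetilde{w}$, but this is not stated in the paper) or supply a separate argument. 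The paper's $q$-integration route avoids this issue entirely, at the price of tracking the initial-data boundary term. Your phrase ``applying Klainerman--Sobolev absorbs the loss of a factor of $(1+|q|)$'' glosses over precisely the place where the extended Klainerman--Sobolev inequality is needed, and also does not explain where the $(1+t+|q|)^{\updelta}$ improvement in the $t$-exponent comes from (it comes from $(1+t)^{\updelta} \leq (1+t+|q|)^{\updelta}$, which should be stated).

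Steps 1 and 4 are fine, and Step 3 is correct conditional on Step 2.
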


\begin{proof}
	This Corollary is a slight extension of Corollary 9.4 of \cite{hLiR2010}, in which estimates for $h^{(0)} = 
	- H_{(0)},$ $h^{(1)},$ and $h$ were proved. The main idea in the proof is to use the weighted Klainerman-Sobolev 
	estimates of Proposition \ref{P:WeightedKlainermanSobolev} under the assumption \eqref{E:Boostrapexplicit}, together with the
	decay \eqref{E:h1AbstractDataAsymptotics} - \eqref{E:BdecayAssumption} of the initial data at $\infty.$ The estimates for 
	$\Far$ follow from the arguments of \cite[Corollary 9.4]{hLiR2010}, while the estimates for $H_{(1)}$ and $H$ follow from 
	those for $h^{(1)}$ and $h$ together with \eqref{E:Hintermsofh}.
\end{proof}

The next lemma uses the weak decay estimates to provide algebraic estimates for the Schwarzschild tail term
$\nabla_{\mathcal{Z}}^I \widetilde{\Square}_g h^{(0)}$ appearing on the right-hand side of \eqref{E:Reducedh1Summary}.

\begin{lemma}\cite[Lemma 9.9]{hLiR2010} \label{L:weakdecayLinfinitynablaZISquaregh0} 
\textbf{(Pointwise decay estimates for $\nabla_{\mathcal{Z}}^I \widetilde{\Square}_g h^{(0)}$)}
	Let $h^{(0)}$ be the Schwarzschild part of $h$ as defined in \eqref{E:h0defIntro}, and
	assume the hypotheses/conclusions of Corollary \ref{C:WeakDecay}.
	Let $I$ be a $\mathcal{Z}-$multi-index subject to the restrictions stated below.
	Then if $\varepsilon$ is sufficiently small, the following pointwise estimates hold for $(t,x) \in [0,T) \times 
	\mathbb{R}^3,$ where $M$ is the ADM mass: 
	
	\begin{subequations}
	\begin{align} \label{E:weakdecayLinfinitynablaZISquaregh0}
			|\nabla_{\mathcal{Z}}^I \widetilde{\Square}_g h^{(0)}|  \leq
			\left \lbrace \begin{array}{lr}
    		C M \varepsilon (1 + t + |q|)^{-4 + \updelta} (1 + |q|)^{- \updelta}, &  \mbox{if} \ q > 0,\\
      	C M (1 + t + |q|)^{-3}, & \mbox{if} \ q < 0,
    	\end{array}
  		\right., &&
  		|I| \leq \dParameter -2.  
  	\end{align}

  	Furthermore, the following pointwise estimates also hold for $(t,x) \in [0,T) \times \mathbb{R}^3:$
	  
	  \begin{align} \label{E:weakdecayLinfinitynablaZISquaregh0MoreGeneral}
			|\nabla_{\mathcal{Z}}^I \widetilde{\Square}_g h^{(0)}|  \leq
			\left \lbrace \begin{array}{lr}
    		C M \varepsilon (1 + t + |q|)^{-4}, &  \mbox{if} \ q > 0,\\
      	C M (1 + t + |q|)^{-3}, & \mbox{if} \ q < 0
    	\end{array}
  		\right. \ + \ C M \sum_{|J| \leq |I|} (1 + t + |q|)^{-3} |\nabla_{\mathcal{Z}}^J h^{(1)}|, &&
  		|I| \leq \dParameter .  
  	\end{align}
  	\end{subequations}
  	
\end{lemma}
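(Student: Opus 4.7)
The plan is to decompose
\[
\widetilde{\Square}_g h^{(0)} \ = \ \Square_m h^{(0)} \ + \ H^{\kappa \lambda} \nabla_\kappa \nabla_\lambda h^{(0)},
\]
then apply $\nabla_{\mathcal{Z}}^I$ via the Leibniz rule and handle the two pieces separately, leveraging the explicit form $h^{(0)}_{\mu\nu} = \chi(r/t)\chi(r)(2M/r)\delta_{\mu\nu}$ throughout.

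For the Minkowskian piece $\nabla_{\mathcal{Z}}^I \Square_m h^{(0)}$, the crucial observation is that $\Square_m(1/r) = 0$ for $r > 0$, so $\Square_m h^{(0)}$ is \emph{supported} in the region where at least one of $\chi(r/t)$, $\chi(r)$ has been differentiated. The support of $\chi'(r)$ is the bounded set $r \in [1/2,3/4]$, which contributes only for small $t$. The essential support is that of $\chi'(r/t)$ and $\chi''(r/t)$, where $r/t \in [1/2,3/4]$, hence $r \sim t$, $|q| \sim t$, and $1+t+|q| \sim t$. A direct calculation based on $\partial_t \chi(r/t) = -(r/t^2)\chi'(r/t)$ and $\partial_r \chi(r/t) = t^{-1}\chi'(r/t)$ gives $|\Square_m h^{(0)}| \lesssim M/t^3 \lesssim M(1+t+|q|)^{-3}$ in this support. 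For $q > 0$ (i.e., $r > t$) one has $\chi(r/t) \equiv 1$, hence $\Square_m h^{(0)}$ vanishes outside the bounded set $r \in [1/2,3/4]$; in particular it vanishes for large $t$ when $q > 0$, which is compatible with the much stronger decay demanded by \eqref{E:weakdecayLinfinitynablaZISquaregh0} in that region. Applying $\nabla_{\mathcal{Z}}^I$ preserves the support structure, and the analogous bound $|\nabla_{\mathcal{Z}}^I \Square_m h^{(0)}| \lesssim M(1+t+|q|)^{-3}$ in the $q < 0$ support follows by direct calculation together with the commutation relations $[\Square_m, \partial_\mu] = 0$, $[\Square_m, \Omega_{\mu\nu}] = 0$, $[\Square_m, S] = 2\Square_m$ from Section \ref{SS:Killingnotation}.

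For the curved correction $\nabla_{\mathcal{Z}}^I(H^{\kappa \lambda} \nabla_\kappa \nabla_\lambda h^{(0)})$, I would apply the Leibniz rule and use Lemma \ref{L:NablaZICommutesWithCovariantDerivativePlusErrorTerms} to commute $\nabla_{\mathcal{Z}}$ past the two $\nabla$'s acting on $h^{(0)}$. Each resulting factor $\nabla_{\mathcal{Z}}^{I_2}\nabla\nabla h^{(0)}$ is controlled pointwise using Lemma \ref{L:h0decayestimates} by $\lesssim M(1+t+|q|)^{-3}$. For the first estimate \eqref{E:weakdecayLinfinitynablaZISquaregh0} the constraint $|I| \leq \dParameter - 2$ ensures every factor $\nabla_{\mathcal{Z}}^{I_1}H$ satisfies $|I_1| \leq \dParameter - 2$ and can be controlled pointwise via the $\upgamma \to \updelta$ version of Corollary \ref{C:WeakDecay}: for $q > 0$, $|\nabla_{\mathcal{Z}}^{I_1}H| \lesssim \varepsilon(1+t+|q|)^{-1+\updelta}(1+|q|)^{-\updelta}$. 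Multiplying these two bounds yields a $q > 0$ contribution of order $M\varepsilon(1+t+|q|)^{-4+\updelta}(1+|q|)^{-\updelta}$, as required, while for $q < 0$ the resulting bound is dominated by the $M(1+t+|q|)^{-3}$ piece from $\Square_m h^{(0)}$, completing \eqref{E:weakdecayLinfinitynablaZISquaregh0}.

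The main obstacle is \eqref{E:weakdecayLinfinitynablaZISquaregh0MoreGeneral} for $|I| \leq \dParameter$, because the top-order factors $\nabla_{\mathcal{Z}}^{I_1}H$ with $|I_1| \in \{\dParameter - 1, \dParameter\}$ fall outside the pointwise range of Corollary \ref{C:WeakDecay}. To circumvent this, I invoke the expansion $H^{\mu \nu} = -h^{\mu \nu} + O^\infty(|h|^2)$ from \eqref{E:Hintermsofh} together with the splitting $h = h^{(0)} + h^{(1)}$; after applying $\nabla_{\mathcal{Z}}^{I_1}$ and expanding with the Leibniz rule, the top-order part of $\nabla_{\mathcal{Z}}^{I_1}H$ reduces to $-\nabla_{\mathcal{Z}}^{I_1}h^{(0)} - \nabla_{\mathcal{Z}}^{I_1}h^{(1)}$ modulo quadratic lower-order terms whose factors are all controlled pointwise by Corollary \ref{C:WeakDecay}. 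The $\nabla_{\mathcal{Z}}^{I_1}h^{(0)}$ piece is controlled directly by Lemma \ref{L:h0decayestimates}, and when paired with the universal $M(1+t+|q|)^{-3}$ factor from $|\nabla\nabla h^{(0)}|$ feeds into the leading $M$-only term of \eqref{E:weakdecayLinfinitynablaZISquaregh0MoreGeneral}. The $\nabla_{\mathcal{Z}}^{I_1}h^{(1)}$ piece, multiplied by the same $M(1+t+|q|)^{-3}$ factor, produces precisely the sum $CM\sum_{|J| \leq |I|}(1+t+|q|)^{-3}|\nabla_{\mathcal{Z}}^J h^{(1)}|$ appearing on the right-hand side of \eqref{E:weakdecayLinfinitynablaZISquaregh0MoreGeneral}.
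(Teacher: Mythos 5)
Your proposal follows essentially the same route as the paper: split $\widetilde{\Square}_g h^{(0)} = \Square_m h^{(0)} + H^{\kappa\lambda}\nabla_\kappa\nabla_\lambda h^{(0)}$, exploit $\Square_m(1/r)=0$ to localize the first piece, bound the second via $|\nabla_{\mathcal{Z}}^J\nabla\nabla h^{(0)}| \lesssim M(1+t+|q|)^{-3}$ times $\sum_{|J|\leq|I|}|\nabla_{\mathcal{Z}}^J H|$, and then invoke $H = -h^{(0)} - h^{(1)} + O^{\infty}(|h|^2)$ together with Corollary~\ref{C:WeakDecay} (for $|I|\leq\dParameter-2$) or keep the $\nabla_{\mathcal{Z}}^J h^{(1)}$ factors explicit (for $|I|\leq\dParameter$). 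The only cosmetic deviations are your use of Lemma~\ref{L:NablaZICommutesWithCovariantDerivativePlusErrorTerms} in place of the paper's direct appeal to \eqref{E:decaynablaJnablaZIh0Linfinity} and a slightly imprecise remark that the quadratic terms' factors are ``all controlled pointwise'' (when $|I|=\dParameter$ one factor may carry $\dParameter$ derivatives of $h^{(1)}$; that factor is not bounded pointwise but goes into the stated sum, as you correctly do).
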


\begin{proof}
	We first observe that $\widetilde{\Square}_g h^{(0)} = \Square_m h^{(0)} + H^{\kappa \lambda} \nabla_{\kappa} 
	\nabla_{\lambda}h^{(0)},$ where $\Square_m \eqdef (m^{-1})^{\kappa \lambda} \nabla_{\kappa} \nabla_{\lambda}$
	is the Minkowski wave operator. Using \eqref{E:decaynablaJnablaZIh0Linfinity}, the definition of $h^{(0)},$ 
	the Leibniz rule, and the fact that $\Square_m (1/r) = 0$ for $r > 0,$
	it follows that
	
	\begin{align}
		|\nabla_{\mathcal{Z}}^I \Square_m h^{(0)}| \  
			& \lesssim M (1 + t + |q|)^{-3} \chi_0(1/2 \leq r/t \leq 3/4),  \\
  	|\nabla_{\mathcal{Z}}^I \big(H^{\kappa \lambda} \nabla_{\kappa} \nabla_{\lambda}h^{(0)} \big)|	   
  		&  \lesssim M (1 + t + |q|)^{-3} \sum_{|J| \leq |I|} |\nabla_{\mathcal{Z}}^J H|, \label{E:BoxHappliedtoh0}
	\end{align}
	where $\chi_0(1/2 \leq z \leq 3/4)$ is the characteristic function of the interval $[1/2,3/4].$
	Furthermore, using that $H = - h^{(0)} - h^{(1)} + O^{\infty}(\big|h^{(0)} + h^{(1)}|^2 \big),$ it follows that
	
	\begin{align} \label{E:NablaZJHexpanded}
		\sum_{|J| \leq |I|}|\nabla_{\mathcal{Z}}^J H| \lesssim \varepsilon (1 + t + |q|)^{-1} 
			\ + \ \sum_{|J| \leq |I|} |\nabla_{\mathcal{Z}}^J h^{(1)}|.
	\end{align}
	Using \eqref{E:BoxHappliedtoh0}, \eqref{E:NablaZJHexpanded}, and the estimate \eqref{E:weakdecayLinfinity}, 
	we have that
	
	\begin{align} 
			|\nabla_{\mathcal{Z}}^I \big(H^{\kappa \lambda} \nabla_{\kappa} \nabla_{\lambda}h^{(0)} \big)|  
			\lesssim \left \lbrace \begin{array}{lr}
    		M \varepsilon (1 + t + |q|)^{-4 + \updelta} (1 + |q|)^{- \updelta}, &  \mbox{if} \ q > 0,\\
      	M \varepsilon (1 + t + |q|)^{-4 + \updelta} (1 + |q|)^{1/2}, & \mbox{if} \ q < 0,
    	\end{array}
  		\right., && |I| \leq \dParameter - 2, 
  	\end{align}
  	and
  	
  	\begin{align} 
			|\nabla_{\mathcal{Z}}^I \big(H^{\kappa \lambda} \nabla_{\kappa} \nabla_{\lambda}h^{(0)} \big)|  
				\lesssim M \varepsilon (1 + t + |q|)^{-4}  
			\ + \ M \varepsilon (1 + t + |q|)^{-3} \sum_{|J| \leq |I|} |\nabla_{\mathcal{Z}}^J h^{(1)}|, &&
  		|I| \leq \dParameter. 
  	\end{align}
  	Inequalities \eqref{E:weakdecayLinfinitynablaZISquaregh0} and \eqref{E:weakdecayLinfinitynablaZISquaregh0MoreGeneral}
  	now easily follow from the above estimates.
	
\end{proof}

\subsection{Initial upgraded pointwise decay estimates for $|\Lie_{\mathcal{Z}}^I \Far|_{\mathcal{L}\mathcal{N}}$
and $|\Lie_{\mathcal{Z}}^I \Far|_{\mathcal{T}\mathcal{T}}$} \label{SS:InitialFLUTTLinfinityImprovements}

In this section, we prove some upgraded pointwise decay estimates for the ``favorable'' components of the lower-order Lie derivatives of $\Far.$ Our estimates take into account the special structure revealed by our null decomposition of the electromagnetic equations of variations, a structure that was captured by Proposition \ref{P:ODEsNullComponentsLieZIFar}
and that depends in part upon the wave coordinate condition. We remark that in Section \ref{SS:FullUpgradedLinfinityEstimates}, some of these decay estimates will be further improved (hence our use of the term ``initial upgraded'' here).

\begin{proposition} \label{P:FLUTTimproveddecay}
	\textbf{(Initial upgraded pointwise decay estimates for $|\Lie_{\mathcal{Z}}^I \Far|_{\mathcal{L}\mathcal{N}}$ 
	and $|\Lie_{\mathcal{Z}}^I \Far|_{\mathcal{T}\mathcal{T}}$)}
	Assume the hypotheses/conclusions of Corollary \ref{C:WeakDecay}. Then if $\varepsilon$ is sufficiently small, the following 
	pointwise estimates hold for $(t,x) \in [0,T) \times \mathbb{R}^3:$
	
	\begin{align} \label{E:FLUTTimproveddecay}
		|\Lie_{\mathcal{Z}}^I \Far|_{\mathcal{L}\mathcal{N}} 
		\ + \ |\Lie_{\mathcal{Z}}^I \Far|_{\mathcal{T}\mathcal{T}} 
			\leq
		\left \lbrace \begin{array}{lr}
    	C \varepsilon (1 + t + |q|)^{-2 + 2 \updelta} (1 + |q|)^{- \upgamma - \updelta}, &  \mbox{if} \ q > 0, \\
      C \varepsilon (1 + t + |q|)^{-2 + 2 \updelta} (1 + |q|)^{1/2 - \updelta},	& \mbox{if} \ q < 0,
    \end{array}
  	\right., &&
  	|I| \leq \dParameter - 3.  
  	\end{align}
  	
\end{proposition}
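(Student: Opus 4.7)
The approach is to combine the ODE-style inequalities \eqref{E:alphaODE}, \eqref{E:rhoODE}, and \eqref{E:sigmaODE} from Proposition \ref{P:ODEsNullComponentsLieZIFar} for the favorable null components $\alpha[\Lie_{\mathcal{Z}}^I \Far]$, $\rho[\Lie_{\mathcal{Z}}^I \Far]$, $\sigma[\Lie_{\mathcal{Z}}^I \Far]$ with the weak pointwise decay estimates of Corollary \ref{C:WeakDecay}. Since $|\Lie_{\mathcal{Z}}^I \Far|_{\mathcal{L}\mathcal{N}} \approx |\alpha[\Lie_{\mathcal{Z}}^I \Far]| + |\rho[\Lie_{\mathcal{Z}}^I \Far]|$ and $|\Lie_{\mathcal{Z}}^I \Far|_{\mathcal{T}\mathcal{T}} \approx |\alpha[\Lie_{\mathcal{Z}}^I \Far]| + |\sigma[\Lie_{\mathcal{Z}}^I \Far]|$ (cf.\ Section \ref{SS:NullComponents}), it suffices to upgrade the pointwise decay of these three components. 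The decisive point is that each of \eqref{E:alphaODE}, \eqref{E:rhoODE}, \eqref{E:sigmaODE} has $\nabla_{\uL}$ as principal part, so that it can be viewed as an ordinary differential inequality along ingoing Minkowskian null geodesics; integrating from the Cauchy hypersurface $\Sigma_0$ trades an extra inverse power of $(1+t+|q|)$ for some loss in the $(1+|q|)$ weight, which is exactly the content of \eqref{E:FLUTTimproveddecay} relative to the weak estimate \eqref{E:weakdecaypartialLinfinity}.

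Concretely, given a spacetime point $(t,x)$ with $r \eqdef |x|$, $s \eqdef r + t$, and $\omega \eqdef x/r$, I would parameterize the ingoing Minkowskian null line connecting $(0, s\omega) \in \Sigma_0$ to $(t,x)$ by $\tau \in [0,t] \mapsto \big(\tau, (s-\tau)\omega\big)$; its tangent vector is $\uL = \partial_t - \partial_r$, and $r(\tau) = s - \tau$, $q(\tau) = s - 2\tau$ along the curve. For $|I| \leq \dParameter - 3$, the fundamental theorem of calculus applied to the scalar-valued function $r^{-1}|\alpha[\Lie_{\mathcal{Z}}^I \Far]|$ (and analogously to $r^{-2}|\rho[\Lie_{\mathcal{Z}}^I \Far]|$ and $r^{-2}|\sigma[\Lie_{\mathcal{Z}}^I \Far]|$) yields
\begin{align*}
	\big| \alpha[\Lie_{\mathcal{Z}}^I \Far]\big|(t,x) \leq \tfrac{r}{s}\big|\alpha[\Lie_{\mathcal{Z}}^I \Far]\big|(0, s\omega) + r \int_0^t \big|\nabla_{\uL}\big(r^{-1}\alpha[\Lie_{\mathcal{Z}}^I \Far]\big)\big|\big(\tau, (s-\tau)\omega\big)\,d\tau,
\end{align*}
and similarly for $\rho$ and $\sigma$ (with an additional factor of $r$ inside/outside the integral). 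The initial data contributions are dominated by applying the weak decay estimates of Corollary \ref{C:WeakDecay} at $t=0$ (where $q = s$ and there is no $(1+t)^\updelta$ loss), which together with the decay \eqref{E:h1AbstractDataAsymptotics}--\eqref{E:BdecayAssumption} of the abstract initial data produces a term well within the target bound.

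The integrand is then estimated via \eqref{E:alphaODE}--\eqref{E:sigmaODE} together with Corollary \ref{C:WeakDecay} applied to each factor of $|\Lie_{\mathcal{Z}}^J \Far|$ (with $|J| \leq |I| + 1 \leq \dParameter - 2$) and each factor of $|\nabla_{\mathcal{Z}}^K h|$ or $|\nabla\nabla_{\mathcal{Z}}^K h|$ (with $|K| \leq \dParameter - 2$). Since $\tau \leq t$, the factor $(1 + \tau)^\updelta$ inherent in the weak estimates is bounded above by $(1 + t)^\updelta$ and extracted from the integral; this is the origin of the extra $\updelta$ appearing in \eqref{E:FLUTTimproveddecay} (yielding $2\updelta$ in place of $\updelta$). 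The trilinear terms on the right-hand sides of \eqref{E:alphaODE}--\eqref{E:rhoODE} are absorbed into additional factors of $\varepsilon$, using smallness of $\varepsilon$.

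The main obstacle will be the careful bookkeeping of the $(1+|q|)$ weight along the integration. Along the ingoing null line, $q(\tau) = s - 2\tau$ decreases monotonically from $s$ at $\tau = 0$ to $r - t$ at $\tau = t$; when the endpoint satisfies $q \geq 0$, the curve stays in the exterior region, but when $q < 0$ the path crosses $q = 0$ at $\tau = s/2$ and the character of the weight $W(q) = (1+|q|)^{-1-\upgamma}$ changes to $(1+|q|)^{-1/2}$. After splitting the integration at $q = 0$ and performing the change of variables $q' = s - 2\tau$, the relevant $q$-integrals $\int W(q')\,dq'$ converge, losing only a power $\updelta$ in the exponent (which accommodates the $(1+t)^\updelta$ loss noted above) and reproducing the exact $(1+|q|)^{-\upgamma - \updelta}$ (resp.\ $(1+|q|)^{1/2 - \updelta}$) weights appearing in \eqref{E:FLUTTimproveddecay}. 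The spatial factor $r(\tau) = s - \tau$ in the denominator of the integrand, combined with the factor $r$ or $r^2$ that is brought outside of the integral, must be tracked regime-by-regime to verify that the stated $(1+t+|q|)^{-2 + 2\updelta}$ decay, rather than something weaker, emerges; this is the place where the geometric structure of the ingoing null foliation is used most critically.
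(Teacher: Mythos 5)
Your blueprint is correct and rests on the same core idea as the paper's proof: treat the $\nabla_{\uL}$-ODE inequalities \eqref{E:alphaODE}, \eqref{E:rhoODE}, \eqref{E:sigmaODE} as transport equations along ingoing null lines of constant $s$ and angle, integrate, and bound the integrand using the weak decay estimates of Corollary \ref{C:WeakDecay}. The one implementation difference worth noting is where the integration begins. The paper first dispatches the region outside the ``wave zone'' $\mathcal{W} = \lbrace |x| \geq 1+t/2 \rbrace \cap \lbrace |x| \leq 2t-1\rbrace$ by observing $1+|q| \approx 1+t+|q|$ there, so that \eqref{E:weakdecaypartialLinfinity} already implies the claim; then, for $(t,x) \in \mathcal{W}$, it integrates only from the past boundary of $\mathcal{W}$ (at $q_0 = s/3 - 2/3$, $s_0 = s$) and uses the weak estimate at that terminal point. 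The benefit is that inside $\mathcal{W}$ one has $r(\tau) \approx 1+s+|q'| \approx 1+s$ uniformly, so the $r^{-1}$ (resp.\ $r^{-2}$) factors in the integrand and the outside factor $r$ (resp.\ $r^2$) can be replaced by constants of the form $(1+s)^{\pm 1}$, trimming the bookkeeping. Your version integrates all the way to $\Sigma_0$, which removes the case split and even improves the boundary term (the weak estimate at $t=0$ carries no $(1+t_0)^\updelta$ loss), but it requires one to carry the factor $r(\tau) = s-\tau$ through the computation. This works: the ratio $(r/r(\tau))^\ell \leq 1$ for $\ell = 1$ or $2$, and the $r(\tau)^{-2}$ (or $r(\tau)^{-3}$ after including the weak-decay $(1+t+|q'|)^{-1}$ factor) supplies the decay needed at small $\tau$, where $r(\tau) \approx s$. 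You acknowledge but do not carry out this bookkeeping, so the write-up is a correct sketch rather than a complete proof. One small misstatement: you attribute the step from $\updelta$ to $2\updelta$ to ``extracting $(1+\tau)^\updelta$'' once, but the linear term in the ODE inequality contributes only $(1+s)^{-3+\updelta}$ (cf.\ the first term in the paper's \eqref{E:partialqfbound}); the exponent $2\updelta$ is needed because the \emph{quadratic} terms carry two weak-decay factors, each bringing its own $(1+\tau)^\updelta$.
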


\begin{proof}
	Since $|\Lie_{\mathcal{Z}}^I \Far|_{\mathcal{L}\mathcal{N}} +  
	|\Lie_{\mathcal{Z}}^I \Far|_{\mathcal{T}\mathcal{T}}| \approx |\alpha[\Lie_{\mathcal{Z}}^I \Far]| 
	+ |\rho[\Lie_{\mathcal{Z}}^I \Far]| + |\sigma[\Lie_{\mathcal{Z}}^I \Far]|,$ it suffices to prove the desired decay estimates 
	for $|\alpha[\Lie_{\mathcal{Z}}^I \Far]|,$ $|\rho[\Lie_{\mathcal{Z}}^I \Far]|,$ and $|\sigma[\Lie_{\mathcal{Z}}^I \Far]|$ 
	separately. We provide proof for the null component $\alpha[\Lie_{\mathcal{Z}}^I \Far].$ The proofs for the components 
	$\rho[\Lie_{\mathcal{Z}}^I \Far]$ and $\sigma[\Lie_{\mathcal{Z}}^I \Far]$ are similar, and we leave these details to the 
	reader. Let $\mathcal{W} \eqdef \big\lbrace (t,x): |x| \geq 1 + t/2 \big\rbrace \cap \big\lbrace (t,x): |x| \leq 2t - 1 
	\big\rbrace$ denote the ``wave zone'' region. Then for $(t,x) \nin \mathcal{W},$ we have that 
	$1 + |q| \approx (1 + t + |q|).$ Using this fact, for $(t,x) \nin \mathcal{W},$ we can bound $|\alpha[\Lie_{\mathcal{Z}}^I 
	\Far]|$ by the right-hand side of \eqref{E:FLUTTimproveddecay} by using the weak decay estimate	
	\eqref{E:weakdecaypartialLinfinity}.
	
	We now consider the case $(t,x) \in \mathcal{W}.$ Let $f \eqdef r^{-1} \alpha[\Lie_{\mathcal{Z}}^I \Far].$ Then using
	\eqref{E:alphaODE}, the fact that $r \approx (1 + t + |q|) \approx (1 + s + |q|)$ on $\mathcal{W},$
	and the weak decay estimates of Corollary \ref{C:WeakDecay}, it follows that (with $\partial_q$ defined in Section \ref{SS:Derivatives})
	
	\begin{align} \label{E:partialqfbound}
		|\partial_q f(t,x)| & \lesssim \left \lbrace \begin{array}{lr}
    	\varepsilon (1 + s + |q|)^{-3 + \updelta} (1 + |q|)^{-1 - \upgamma} 
    		\ + \ \varepsilon(1 + s + |q|)^{-3 + 2\updelta} (1 + |q|)^{-2 - 2 \upgamma}, &  \mbox{if} \ q > 0, \\
      \varepsilon (1 + s + |q|)^{-3 + \updelta} (1 + |q|)^{- 1/2} 
      	\ + \ \varepsilon(1 + s + |q|)^{-3 + 2\updelta} (1 + |q|)^{-1}, & \mbox{if} \ q < 0.
    \end{array}
  	\right.&&
	\end{align}
	
	Let $(\tau(q'), y(q'))$ be the $q'-$parameterized line segment of constant $s$ and angular 
	values that initiates at $(t,x)$ and terminates at the point $(t_0,x_0)$ which lies to the \emph{past} of $(t,x)$ and on
	the boundary of $\mathcal{W}.$ Let $q,s$ be the null coordinates corresponding to $(t,x).$
	Then the null coordinates corresponding to $(t_0,x_0)$ are $q_0 = s/3 - 2/3, s_0 = s.$ Integrating
	the inequality \eqref{E:partialqfbound} along this line segment (i.e., integrating $dq'$), we have that
	
	\begin{align} \label{E:qIntegratedfBound}
		|f(t,x)| & \lesssim |f(t_0,x_0)|  \\
			& \ \ + \ \int_{q'= q}^{q'= s/3 - 2/3} 
				 \left \lbrace \begin{array}{lr}
    		\varepsilon (1 + s + |q'|)^{-3 + \updelta} (1 + |q'|)^{-1 - \upgamma} 
    		\ + \ \varepsilon(1 + s + |q'|)^{-3 + 2 \updelta} (1 + |q'|)^{-2 - 2 \upgamma}, &  \mbox{if} \ q' > 0, \\
       	\varepsilon(1 + s + |q'|)^{-3 + \updelta} (1 + |q'|)^{- 1/2} 
      	\ + \ \varepsilon(1 + s + |q'|)^{-3 + 2\updelta} (1 + |q'|)^{-1}, & \mbox{if} \ q' < 0,
    \end{array}
  	\right\rbrace \, dq' \notag \\
  	& \lesssim |f(t_0,x_0)| 
  		\ + \
  			\left \lbrace \begin{array}{lr}
    		\varepsilon (1 + s)^{-3 + \updelta} (1 + |q|)^{- \upgamma} 
    		\ + \ \varepsilon (1 + s )^{-3 + 2 \updelta} (1 + |q|)^{-1 - 2 \upgamma}, &  \mbox{if} \ q > 0, \\
       	\varepsilon(1 + s)^{-3 + \updelta} (1 + |q|)^{1/2} 
      	\ + \ \varepsilon(1 + s)^{-3 + 2\updelta} \ln(1 + |q|), & \mbox{if} \ q < 0.
    \end{array}
  	\right. \notag
	\end{align}	
	
	Using the facts that $r_0 \approx 1 + |q_0| \approx 1 + t_0 + |q_0| \approx 
	 1 + s_0 + |q_0| \approx 1 + s,$ together
	with the weak decay estimate \eqref{E:weakdecaypartialLinfinity}, it follows that
	
	\begin{align} \label{E:fTerminalPointBound}
		|f(t_0,x_0)| & \lesssim \varepsilon (1 + s)^{-3 - \upgamma + \updelta}.
  \end{align}
  Combining \eqref{E:qIntegratedfBound} and \eqref{E:fTerminalPointBound}, and using the fact that
  $1 + s \approx 1 + t + |q|,$ it follows that
  $|\alpha[\Lie_{\mathcal{Z}}^I \Far(t,x)]|$ is bounded from above by the right-hand side of
  \eqref{E:FLUTTimproveddecay}. This completes our proof of \eqref{E:FLUTTimproveddecay} for the 
  $\alpha[\Lie_{\mathcal{Z}}^I \Far]$ component. 
  
\end{proof}

\subsection{Upgraded pointwise decay estimates for $|\nabla_{\mathcal{Z}}^I h|,$ 
$|\Lie_{\mathcal{Z}}^I \Far|,$ and fully upgraded pointwise decay estimates for $|\Lie_{\mathcal{Z}}^I \Far|_{\mathcal{L}\mathcal{N}},$  $|\Lie_{\mathcal{Z}}^I \Far|_{\mathcal{T}\mathcal{T}} $} \label{SS:FullUpgradedLinfinityEstimates}

In this section, we state two propositions that strengthen some of the pointwise decay estimates proved in sections 
\ref{SS:PreliminaryLinfinityEstimates} and \ref{SS:InitialFLUTTLinfinityImprovements}. Their proofs, which are provided in sections \ref{SS:ProofofUpgradedDecayhA} and \ref{SS:Proof of PropositionUpgradedDecayh1A}, are based on a careful analysis of the special structure of the reduced equations and in particular rely upon the decompositions performed in Section \ref{S:AlgebraicEstimates}, which rely in part upon the wave coordinate condition. These estimates play a central role in our derivation of the ``strong'' energy inequality \eqref{E:ImprovedEnergyInequality}, which is the main step in the proof of our stability theorem.

\begin{proposition} \cite[Extension of Proposition 10.1]{hLiR2010} \label{P:UpgradedDecayhA}
	\textbf{(Upgraded pointwise decay estimates for $\Far$ and certain components of $h,$ $\nabla h,$ and $\nabla_Z h$)}
	Assume that the abstract initial data satisfy the constraints \eqref{E:Gauss} - \eqref{E:DivergenceB0}, and
	assume the hypotheses/conclusions of Corollary \ref{C:WeakDecay}. In particular, by Proposition 
	\ref{P:PreservationofWaveCoordianteGauge}, the wave coordinate condition \eqref{E:wavecoordinategauge1} holds for $(t,x) \in 
	[0,T) \times \mathbb{R}^3.$ Then if $\varepsilon$ is sufficiently small, for every vectorfield $Z \in \mathcal{Z},$ the 
	following pointwise estimates hold for $(t,x) \in [0,T) \times \mathbb{R}^3:$ 
	
		\begin{subequations}
		\begin{align} \label{E:partialhLTpartialZhLLpointwise}
			|\nabla h|_{\mathcal{L} \mathcal{T}} 
			\ + \ |\nabla\nabla_Z h|_{\mathcal{L} \mathcal{L}} 
				\leq \left \lbrace \begin{array}{lr}
	    	C \varepsilon (1 + t + |q|)^{-2 + \updelta} (1 + |q|)^{- \updelta}, &  \mbox{if} \ q > 0, \\
	      C \varepsilon (1 + t + |q|)^{-2 + \updelta} (1 + |q|)^{1/2}, & \mbox{if} \ q < 0,
	    \end{array}
	  	\right., 
  	\end{align}
  	
  	\begin{align} \label{E:hLTZhLLpointwise}
			|h|_{\mathcal{L} \mathcal{T}} 
			\ + \ |\nabla_Z h|_{\mathcal{L} \mathcal{L}} \leq
			\left \lbrace \begin{array}{lr}
	    	C \varepsilon (1 + t + |q|)^{-1} , &  \mbox{if} \ q > 0, \\
	      C \varepsilon (1 + t + |q|)^{-1} (1 + |q|)^{1/2 + \updelta}, & \mbox{if} \ q < 0,
	    \end{array}
	  	\right., 
  	\end{align}
  	\end{subequations}
  	
  	\begin{subequations}
  	\begin{align}
  		|\nabla h|_{\mathcal{T} \mathcal{N}} 
  			& \leq C \varepsilon (1 + t + |q|)^{-1}, \label{E:partialhTUpointwise} \\
  		|\nabla h| & \leq C \varepsilon (1 + t + |q|)^{-1} \big\lbrace 1 + \ln (1 + t) \big\rbrace, \label{E:partialhpointwise}
  	\end{align}
  	\end{subequations}
  	
  \begin{align} \label{E:Farupgradedecay}
		|\Far| & \leq C \varepsilon (1 + t + |q|)^{-1}.  
	\end{align}
		
		Furthermore, the same estimates hold for the tensorfields $h_{\mu \nu}^{(0)},$ $h_{\mu \nu}^{(1)},$
		$H^{\mu \nu} \eqdef (g^{-1})^{\mu \nu} - (m^{-1})^{\mu \nu},$ $H_{(0)}^{\mu \nu},$ and $H_{(1)}^{\mu \nu}.$
\end{proposition}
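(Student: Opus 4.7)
The plan is to establish these upgraded estimates in a specific hierarchical order, as the author emphasizes. I would begin by deriving \eqref{E:partialhLTpartialZhLLpointwise}, the estimates for $|\nabla h|_{\mathcal{L}\mathcal{T}}$ and $|\nabla\nabla_Z h|_{\mathcal{L}\mathcal{L}}$, which are the cornerstone of the hierarchy. These follow from the algebraic consequences of the wave coordinate condition recorded in Proposition \ref{P:harmonicgauge} and Lemma \ref{L:NablaZIh1LLh1LTwaveCoordinateAlgebraicEstimate}, which bound the bad-direction derivatives of these favored components by tangential derivatives $|\conenabla \nabla_{\mathcal{Z}}^I h|$ plus strictly lower-order terms, quadratic interactions, and an ADM-mass-generated interior contribution. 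Each of the tangential-derivative terms can be controlled by the weak pointwise estimate \eqref{E:weakdecaybarpartialLinfinity} (a direct application of the weighted Klainerman--Sobolev inequality under the bootstrap assumption), and the quadratic and mass terms are absorbed using the weak estimate \eqref{E:weakdecaypartialLinfinity} together with $M \leq \varepsilon$, delivering the claimed decay.

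Once \eqref{E:partialhLTpartialZhLLpointwise} is in hand, I would derive \eqref{E:hLTZhLLpointwise} by integrating $\nabla h$-type bounds from spatial infinity inward along the $\partial_q$ direction at fixed $s$ and fixed angles, using the decay of the abstract initial data at $\infty$ together with the fact that $h^{(0)}$ has an explicit $2M/r$ form. Next, I would prove the estimates \eqref{E:partialhTUpointwise}--\eqref{E:partialhpointwise} by applying the weak decay Corollary \ref{C:systemdecay} to the tensorial wave equation $\widetilde{\Square}_g h_{\mu\nu} = \mathfrak{H}_{\mu\nu} - \widetilde{\Square}_g h_{\mu\nu}^{(0)}$. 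The hypotheses of the corollary (smallness of $|H|$ and $|H|_{\mathcal{L}\mathcal{T}}$) are supplied precisely by \eqref{E:partialhLTpartialZhLLpointwise}--\eqref{E:hLTZhLLpointwise} applied to $H$ and by the weak decay for $H$; the inhomogeneous term $\mathfrak{H}$ is estimated using Proposition \ref{P:AlgebraicInhomogeneous} together with Lemma \ref{L:weakdecayLinfinitynablaZISquaregh0}. The log factor in \eqref{E:partialhpointwise} arises from the time integral of a borderline $(1+\tau)^{-1}$ contribution, while the $\mathcal{T}\mathcal{N}$-projection in \eqref{E:partialhTUpointwise} gains an additional null-form factor that eliminates the log.

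The Faraday estimate \eqref{E:Farupgradedecay} is proved last, using the Minkowskian null decomposition. For the ``good'' components $\alpha, \rho, \sigma$, Proposition \ref{P:FLUTTimproveddecay} already supplies bounds strictly stronger than $(1+t+|q|)^{-1}$. For the dangerous component $\ualpha$, the idea is to apply the transport inequality \eqref{E:ODErualpha} from Proposition \ref{P:ODEsNullComponentsLieZIFar}, in which the differentiation is along $\Lambda = L + \tfrac{1}{4} h_{LL}\underline{L}$ rather than $L$ — precisely the correction needed to absorb the otherwise uncontrolled $\tfrac{1}{4} h_{LL}\nabla_{\underline{L}}\ualpha$ term. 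Integrating $\nabla_\Lambda(r\ualpha)$ along its integral curves from the initial hypersurface, the first inhomogeneous term $r^{-1}|h|_{\mathcal{L}\mathcal{L}}|\ualpha|$ is handled by Gronwall (using \eqref{E:hLTZhLLpointwise} so that $|h_{\mathcal{L}\mathcal{L}}|\lesssim \varepsilon(1+t+|q|)^{-1}$, hence giving only an $(1+t)^{C\varepsilon}$ loss that is dominated by the target bound since we have room), and all remaining inhomogeneities are controlled using the already-established estimates for $|\alpha[\Lie_\mathcal{Z}^I \Far]|, |\rho[\Lie_\mathcal{Z}^I \Far]|, |\sigma[\Lie_\mathcal{Z}^I \Far]|$, the weak estimates on $h$, and the pointwise control on $h$ just proved.

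The main obstacle will be the $\ualpha$ estimate: the $\Lambda$-transport equation has inhomogeneous terms that include $r^{-1}h_{\mathcal{L}\mathcal{L}}\ualpha$, which is critical for Gronwall along the integral curves of $\Lambda$, and also cubic terms that must be shown to integrate to something strictly better than $\varepsilon(1+t+|q|)^{-1}$. This is possible only because Step 2 delivers the optimal $(1+t+|q|)^{-1}$ decay for $|h|_{\mathcal{L}\mathcal{L}}$ rather than a weaker decay polluted by $(1+|q|)^\updelta$, and because the initial upgraded Proposition \ref{P:FLUTTimproveddecay} already gives near-$(1+t+|q|)^{-2}$ decay for the good null components appearing in the $r^{-1}\bigl(|\Lie_{\mathcal{Z}}^I\Far|_{\mathcal{L}\mathcal{N}} + |\Lie_{\mathcal{Z}}^I\Far|_{\mathcal{T}\mathcal{T}}\bigr)$ source. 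Thus the hierarchical ordering—wave-coordinate algebra first, then integration, then wave equation decay for $h$, and finally the Faraday ODE analysis—is what makes the argument close.
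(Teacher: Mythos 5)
Your overall architecture is right, but your ordering of the steps has a genuine circularity/gap that would break the argument. You propose to prove \eqref{E:partialhTUpointwise}--\eqref{E:partialhpointwise} (the $\mathcal{T}\mathcal{N}$ and full derivative estimates for $\nabla h$ via the tensorial wave equation) \emph{before} the Faraday upgrade \eqref{E:Farupgradedecay}. This cannot work: the inhomogeneous term $\mathfrak{H}_{\mu\nu}$ in $\widetilde{\Square}_g h_{\mu\nu} = \mathfrak{H}_{\mu\nu}$ contains the quadratic piece $\mathscr{Q}_{\mu\nu}^{(2;h)}(\Far,\Far)$, and the decomposition in Proposition \ref{P:AlgebraicInhomogeneous} leaves a term of size $|\Far|^2$ in $|\mathfrak{H}|$. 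If you only have the weak decay of Corollary \ref{C:WeakDecay}, then $|\Far| \lesssim \varepsilon (1+t+|q|)^{-1+\updelta}$, so $|\Far|^2 \lesssim \varepsilon^2 (1+t+|q|)^{-2+2\updelta}$, and the corresponding contribution $\int_0^t (1+\tau)\|\varpi |\mathfrak{H}|\|_{L^\infty(D_\tau)}\,d\tau$ in Corollary \ref{C:systemdecay} produces polynomial growth $\varepsilon^2 (1+t)^{2\updelta}$ rather than the borderline $\varepsilon^2\ln(1+t)$ required for the Gronwall-type Lemma \ref{L:Gronwall} to close. The paper's Lemma \ref{L:Inhomogeneousdecayestimates} explicitly uses the \emph{already-proven} estimate \eqref{E:Farupgradedecay} to obtain $|\Far|^2 \lesssim \varepsilon^2 (1+t+|q|)^{-2}$, which is exactly what produces the log factor in \eqref{E:partialhpointwise}.

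The correct hierarchy is therefore: first the wave-coordinate algebra \eqref{E:partialhLTpartialZhLLpointwise}--\eqref{E:hLTZhLLpointwise}; then the Faraday upgrade \eqref{E:Farupgradedecay}, which needs only \eqref{E:hLTZhLLpointwise} for $h_{LL}$, Proposition \ref{P:FLUTTimproveddecay} for the good null components, and the weak decay for everything else (it does \emph{not} need \eqref{E:partialhTUpointwise}--\eqref{E:partialhpointwise}, contrary to what your last paragraph suggests, so the forward dependency you introduce is spurious); and finally the wave-equation decay argument for $\nabla h$. If both dependencies you describe were real, the argument would be circular. With the paper's ordering there is no circle, and the individual techniques you identify (Proposition \ref{P:harmonicgauge} plus weak decay for the first step, integration along $\partial_q$ for the second, Corollary \ref{C:systemdecay}/Lemma \ref{L:Gronwall} for the third, and the $\Lambda$-transport with Gronwall along integral curves for $\ualpha$) are all correct.
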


\begin{proposition} \cite[Extension of Proposition 10.2]{hLiR2010} \label{P:UpgradedDecayh1A}
	\textbf{(Upgraded pointwise decay estimates for the lower-order derivatives of $h$
	and $\Far$)}
	Under the assumptions of Proposition \ref{P:UpgradedDecayhA}, let
	$0 < \upgamma' < \upgamma - \updelta$ and $0 < \updelta < \upmu' < 1/2$ be fixed constants. Let $I$ be any 
	$\mathcal{Z}-$multi-index subject to the restrictions stated below. Then there exist constants $M_k, C_k,$ and 
	$\varepsilon_k$ depending on $\upgamma', \upmu', \updelta$ such that if $\varepsilon$ is sufficiently small, then the 
	following pointwise estimates hold for $(t,x) \in [0,T) \times \mathbb{R}^3:$
	
	\begin{subequations}
	\begin{align} \label{E:partialZIh1Aupgraded}
			|\nabla\nabla_{\mathcal{Z}}^I h^{(1)}|
			\ + \ |\Lie_{\mathcal{Z}}^I \Far|  \leq
			\left \lbrace \begin{array}{lr}
	    	C_k \varepsilon (1 + t + |q|)^{-1 + M_k \varepsilon} (1 + |q|)^{-1 - \upgamma'}, &  \mbox{if} \ q > 0, \\
	      C_k \varepsilon (1 + t + |q|)^{-1 + M_k \varepsilon} (1 + |q|)^{-1/2 + \upmu'}, & \mbox{if} \ q < 0,
	    \end{array}
	  	\right., && 
	  	|I|= k \leq \dParameter - 4,
  \end{align}

  	\begin{align} \label{E:ZIh1Aupgraded}
			|\nabla_{\mathcal{Z}}^I h^{(1)}| \leq
			\left \lbrace \begin{array}{lr}
	    	C_k \varepsilon (1 + t + |q|)^{-1 + M_k \varepsilon} (1 + |q|)^{- \upgamma'}, &  \mbox{if} \ q > 0, \\
	      C_k \varepsilon (1 + t + |q|)^{-1 + M_k \varepsilon} (1 + |q|)^{1/2 + \upmu'}, & \mbox{if} \ q < 0,
	    \end{array}
	  	\right., &&  
	  	|I|= k \leq \dParameter - 4,
  	\end{align}
  	
  	\begin{align} \label{E:barpartialZIh1Aupgraded}
			|\conenabla \nabla_{\mathcal{Z}}^I h^{(1)}| \ + \ (1 + |q|)|\conenabla \Lie_{\mathcal{Z}}^I \Far| 
			\ + \ |\Lie_{\mathcal{Z}}^I \Far|_{\mathcal{L}\mathcal{N}}  
			\ + \ |\Lie_{\mathcal{Z}}^I \Far|_{\mathcal{T}\mathcal{T}} \leq
			\left \lbrace \begin{array}{lr}
	    	C_k \varepsilon (1 + t + |q|)^{-2 + M_k \varepsilon} (1 + |q|)^{- \upgamma'}, &  \mbox{if} \ q > 0, \\
	      C_k \varepsilon (1 + t + |q|)^{-2 + M_k \varepsilon} (1 + |q|)^{1/2 + \upmu'}, & \mbox{if} \ q < 0,
	    \end{array}
	  	\right., && 
	  	|I|= k \leq \dParameter - 5. 
  	\end{align}
  	\end{subequations}
  	
  	Furthermore, the same estimates hold for $h_{\mu \nu} \eqdef g_{\mu \nu} - m_{\mu \nu}$ and 
  	$H^{\mu \nu} \eqdef (g^{-1})^{\mu \nu} - (m^{-1})^{\mu \nu}$ if we replace $\upgamma'$ with $M_k \varepsilon.$

\end{proposition}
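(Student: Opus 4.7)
The proof proceeds by a carefully coupled induction on $k = |I|$, where at each stage we simultaneously establish estimates \eqref{E:partialZIh1Aupgraded}--\eqref{E:barpartialZIh1Aupgraded} for the metric remainder $h^{(1)}$ and for the Faraday tensor $\Far$. The base case $k=0$ (and indeed $k \leq 1$) follows from combining the weak decay estimates of Corollary \ref{C:WeakDecay}, the upgraded estimates of Proposition \ref{P:UpgradedDecayhA} for $|\nabla h|_{\mathcal{T}\mathcal{N}}$, $|\nabla h|_{\mathcal{L}\mathcal{T}}$, $|\Far|$ etc., and the initial upgraded estimates of Proposition \ref{P:FLUTTimproveddecay} for $|\Far|_{\mathcal{L}\mathcal{N}} + |\Far|_{\mathcal{T}\mathcal{T}}$. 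The losses in $(1+|q|)$ are then absorbed by replacing $\upgamma$ and $-\upmu$ with the slightly worse $\upgamma'$ and $\upmu'$.

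For the inductive step at level $k$, I would first handle $|\nabla\nabla_{\mathcal{Z}}^I h^{(1)}|$ by viewing $\nabla_{\mathcal{Z}}^I h^{(1)}$ as a solution of the wave equation \eqref{E:InhomogeneousTermsNablaZIh1} with inhomogeneity $\mathfrak{H}_{\mu\nu}^{(1;I)}$, and then applying Corollary \ref{C:systemdecay}. Bounding the inhomogeneity uses Corollary \ref{C:boxZIh1ALinfinity} together with Proposition \ref{P:AlgebraicInhomogeneous} (to exploit the null form structure of $\mathscr{P}$, $\mathscr{Q}^{(1;h)}$, $\mathscr{Q}^{(2;h)}$) and Proposition \ref{P:DIPointwise} (to control the commutator $[\widetilde{\Square}_g,\nabla_{\mathcal{Z}}^I]$). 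The crucial input is the improved estimate \eqref{E:hLTZhLLpointwise} for $|h|_{\mathcal{L}\mathcal{L}}$ and Lemma \ref{L:NablaZIh1LLh1LTwaveCoordinateAlgebraicEstimate} for $|\nabla \nabla_{\mathcal{Z}}^J h|_{\mathcal{L}\mathcal{L}}$ at lower orders, all stemming from the wave coordinate condition. Estimate \eqref{E:ZIh1Aupgraded} for $|\nabla_{\mathcal{Z}}^I h^{(1)}|$ itself (without the outer $\nabla$) is then obtained by integrating $\nabla \nabla_{\mathcal{Z}}^I h^{(1)}$ from spatial infinity in an $\uL$-like direction, using the decay of the data, and gaining the extra factor of $(1+|q|)$. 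The Minkowskian cone-tangential decay \eqref{E:barpartialZIh1Aupgraded} follows from Lemma \ref{L:PointwisetandqWeightedNablainTermsofZestiamtes}, which gives $|\conenabla U| \lesssim (1+t+|q|)^{-1}\sum_{|J|\leq 1}|\nabla_{\mathcal{Z}}^J U|$.

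For the Faraday sector at level $k$, I would integrate the ODE inequalities from Proposition \ref{P:ODEsNullComponentsLieZIFar}. The favorable components $\alpha[\Lie_{\mathcal{Z}}^I \Far]$, $\rho[\Lie_{\mathcal{Z}}^I \Far]$, $\sigma[\Lie_{\mathcal{Z}}^I \Far]$ are controlled by integrating inequalities \eqref{E:alphaODE}, \eqref{E:rhoODE}, \eqref{E:sigmaODE} along the ingoing direction $\nabla_{\uL}$ (at fixed $s$ and angle) from $t=0$, much as in the proof of Proposition \ref{P:FLUTTimproveddecay}; the inhomogeneities on the right-hand side can be bounded using the inductive hypothesis together with the already-established estimates on $\nabla_{\mathcal{Z}}^{I'} h^{(1)}$ for $|I'|\leq k$. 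The dangerous component $\ualpha[\Lie_{\mathcal{Z}}^I \Far]$ requires integrating the stronger inequality \eqref{E:LambdaLieZIualphaEquationGoodqWeights} along integral curves of $\Lambda = L + \tfrac{1}{4}h_{LL}\uL$. Taking $\varpi(q)$ to be a power of $(1+|q|)$ matching the target rates, and observing that $|h_{LL}| \lesssim \varepsilon(1+t+|q|)^{-1}$ by \eqref{E:hLTZhLLpointwise}, Gronwall's inequality along the $\Lambda$-curves produces exactly the factor $(1+t+|q|)^{M_k\varepsilon}$ in \eqref{E:partialZIh1Aupgraded}--\eqref{E:barpartialZIh1Aupgraded}.

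The main obstacle is organizing the hierarchy so that all the coupled estimates close simultaneously. The delicate point is the $\ualpha$ component: naively treating $\frac{1}{4}h_{LL}\nabla_{\uL}\ualpha$ as an inhomogeneous term in an ODE along $L$ fails because $h_{LL}$ only decays like $(1+t+|q|)^{-1}$, not faster. Absorbing this term into the transport operator by passing to $\Lambda$ is essential, but then the integration no longer takes place along straight Minkowskian cones, which is why one must use the estimate $|\nabla_{\Lambda} r| = 1 + O(\varepsilon)$ to control the geometry. A second subtlety is that the $|I'|\leq k-1$ terms appearing on the right-hand side of \eqref{E:LambdaLieZIualphaEquationGoodqWeights} involve $|\nabla_{\mathcal{Z}}^{I_1} h|_{\mathcal{L}\mathcal{L}}$, for which one must invoke the wave coordinate improvement from Lemma \ref{L:NablaZIh1LLh1LTwaveCoordinateAlgebraicEstimate}; without this, the $h$--$\ualpha$ coupling would produce an uncontrollable feedback preventing the induction from closing at rate $(1+t+|q|)^{-1+M_k\varepsilon}$.
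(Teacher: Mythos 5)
Your description of the individual ingredients (Corollary \ref{C:systemdecay} for the metric, the $\Lambda$-transport argument for $\ualpha$, the wave-coordinate improvement for $|h|_{\mathcal{L}\mathcal{L}}$, the role of $\varpi(q)$) is largely on target, but the order in which you assemble them does not close. You propose, at level $k$, to first establish $|\nabla\nabla_{\mathcal{Z}}^I h^{(1)}|$ via \eqref{E:InhomogeneousTermsNablaZIh1} and then the Faraday estimates. But the inhomogeneity $\mathfrak{H}_{\mu\nu}^{(1;I)}$ contains, through the Leibniz expansion of $\nablamod_{\mathcal{Z}}^I\mathscr{Q}^{(2;h)}_{\mu\nu}(\Far,\Far)$, a term of the form $|\Far|\,|\Lie_{\mathcal{Z}}^I\Far|$, which after applying \eqref{E:Farupgradedecay} becomes $\varepsilon(1+t+|q|)^{-1}|\Lie_{\mathcal{Z}}^I\Far|$ at the top order $|I|=k$. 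You thus need the improved Faraday decay at order $k$ to run Corollary \ref{C:systemdecay} for $h^{(1)}$ at order $k$, but you have declared you will prove the Faraday estimate second—a genuine circular dependency unless you explain how the two are jointly Gronwalled.

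The paper breaks the cycle by the following strict order of sub-steps when passing from $k$ to $k+1$: first upgrade $|\nabla_{\mathcal{Z}}^J h|_{\mathcal{L}\mathcal{L}}$ at order $k+1$ and $|\nabla_{\mathcal{Z}}^{J'} h|_{\mathcal{L}\mathcal{T}}$ at order $k$ via the wave coordinate identity (Lemma \ref{L:UpgradedDecayhA}, whose input is only the weak decay of Corollary \ref{C:WeakDecay} together with the order-$k$ induction hypothesis); then upgrade $|\Lie_{\mathcal{Z}}^I\Far|$ at order $k+1$ by Gronwall along the $\Lambda$-integral curves (this requires only the newly-improved $|\nabla_{\mathcal{Z}}^I h|_{\mathcal{L}\mathcal{L}}$ at order $k+1$, the weak decay of $h$ at order $k+1$, and the induction hypothesis); only then upgrade $|\nabla\nabla_{\mathcal{Z}}^I h^{(1)}|$ at order $k+1$, absorbing the dangerous $\varepsilon(1+\tau)^{-1}|\Lie_{\mathcal{Z}}^I\Far|$ contribution using the Faraday bound just obtained. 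Related to this, your remark that the wave-coordinate improvement is needed for the $|I'|\leq k-1$ terms in \eqref{E:LambdaLieZIualphaEquationGoodqWeights} is backward: the third sum on its right-hand side carries $|\nabla_{\mathcal{Z}}^I h|_{\mathcal{L}\mathcal{L}}$ at order $k+1$, and that is precisely what Step 1 supplies; for the order-one factors it is the independent estimate \eqref{E:hLTZhLLpointwise} that does the work, not Lemma \ref{L:NablaZIh1LLh1LTwaveCoordinateAlgebraicEstimate} (which is used later, for the $L^2$ estimates). Finally, a smaller point: for the favorable components $\alpha,\rho,\sigma$ the rate demanded by \eqref{E:partialZIh1Aupgraded} is already provided by Proposition \ref{P:FLUTTimproveddecay}, so no re-integration is needed inside the induction; the sharper rate \eqref{E:barpartialZIh1Aupgraded} for those components is proved once, after the whole induction has closed, using the now-available upgraded bounds in the inhomogeneities.
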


 \subsection{Proof of Proposition \ref{P:UpgradedDecayhA}} \label{SS:ProofofUpgradedDecayhA}
		We only prove the estimates for $h_{\mu \nu}$ and $\Far_{\mu \nu}.$ The estimates for $h_{\mu \nu}^{(0)},$ $h_{\mu 
		\nu}^{(1)},$ $H^{\mu \nu},$ $H_{(0)}^{\mu \nu},$ and $H_{(1)}^{\mu \nu}$ follow easily from those for 
		$h_{\mu \nu},$ \eqref{E:Hintermsofh}, and Lemma \ref{L:h0decayestimates}.

\subsubsection{Proofs of \eqref{E:partialhLTpartialZhLLpointwise} and \eqref{E:hLTZhLLpointwise}}

To prove \eqref{E:partialhLTpartialZhLLpointwise} and \eqref{E:hLTZhLLpointwise}, we will argue as in Lemma 10.4 of
\cite{hLiR2010}; we first provide a lemma that establishes a more general version of the desired estimates.

\begin{lemma} \label{L:UpgradedDecayhA} \cite[Lemma 10.4]{hLiR2010} 
	\textbf{(Pointwise estimates for $|\nabla\nabla_{\mathcal{Z}}^I h|_{\mathcal{L} \mathcal{L}},$
	$|\nabla_{\mathcal{Z}}^I h|_{\mathcal{L} \mathcal{L}},$ $|\nabla\nabla_{\mathcal{Z}}^I h|_{\mathcal{L} \mathcal{T}},$ 
	and $|\nabla_{\mathcal{Z}}^I h|_{\mathcal{L} \mathcal{T}}$)}
	Under the hypotheses of Proposition \ref{P:UpgradedDecayhA}, if $k \leq \dParameter - 3$ 
	and $\varepsilon$ is sufficiently small, then the following pointwise estimates hold for $(t,x) \in [0,T) \times 
	\mathbb{R}^3:$
	
	\begin{align} \label{E:partialZIhLLpluspartialZJhLTLinfinity}
		\sum_{|I| \leq k} |\nabla\nabla_{\mathcal{Z}}^I h|_{\mathcal{L} \mathcal{L}}
			\ + \ \underbrace{\sum_{|J| \leq k - 1} 
				|\nabla\nabla_{\mathcal{Z}}^J h|_{\mathcal{L} \mathcal{T}}}_{\mbox{absent if $k = 0$}}
			\lesssim \underbrace{\sum_{|K| \leq k - 2} |\nabla\nabla_{\mathcal{Z}}^K h|}_{\mbox{absent if $k \leq 1$}} 
			\ + \ \left\lbrace \begin{array}{lr}
	    	\varepsilon (1 + t + |q|)^{-2 + 2 \updelta} (1 + |q|)^{- 2 \updelta}, &  \mbox{if} \ q > 0, \\
	      \varepsilon (1 + t + |q|)^{-2 + 2 \updelta} (1 + |q|)^{1/2 - \updelta}, & \mbox{if} \ q < 0,
	    \end{array}
	  	\right., \\
	 	\sum_{|I| \leq k} |\nabla_{\mathcal{Z}}^I h|_{\mathcal{L} \mathcal{L}}
			\ + \ \underbrace{\sum_{|J| \leq k - 1} |\nabla_{\mathcal{Z}}^J h|_{\mathcal{L} \mathcal{T}}}_{\mbox{absent if $k = 0$}}
			\lesssim \underbrace{\sum_{|K| \leq k - 2} \int_{\varrho = |x|}^{\varrho = |x| + t}
				|\nabla\nabla_{\mathcal{Z}}^K h|(t + |q| - \varrho, \varrho x/|x|) \, d \varrho }_{\mbox{absent if $k \leq 1$}}
			\ + \ \left\lbrace \begin{array}{lr}
	    	\varepsilon (1 + t + |q|)^{-1}, &  \mbox{if} \ q > 0, \\
	      \varepsilon (1 + t + |q|)^{-1} (1 + |q|)^{1/2 + \updelta}, & \mbox{if} \ q < 0.
	    \end{array}
	  \right. \label{E:partialZIhLLpluspartialZJhLTLinfinityintegrated}
	\end{align}	
	Furthermore, the same estimates hold for the tensor $H^{\mu \nu} \eqdef (g^{-1})^{\mu \nu} - (m^{-1})^{\mu \nu}.$
\end{lemma}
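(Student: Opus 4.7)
The plan is to deduce the lemma from the algebraic consequences of the wave coordinate condition encoded in Proposition \ref{P:harmonicgauge} (which, through the identity $H = -h + O^\infty(|h|^2)$, transfers directly to $h$), combined with the weak pointwise decay estimates of Corollary \ref{C:WeakDecay} for the tangential derivatives $|\conenabla\nabla_{\mathcal{Z}}^J h^{(1)}|$ and with Lemma \ref{L:h0decayestimates} for the Schwarzschild piece $h^{(0)}$. First, I would apply Proposition \ref{P:harmonicgauge} to each $|\nabla\nabla_{\mathcal{Z}}^I h|_{\mathcal{L}\mathcal{L}}$ with $|I|\leq k$ and $|\nabla\nabla_{\mathcal{Z}}^J h|_{\mathcal{L}\mathcal{T}}$ with $|J|\leq k-1$, trading the bad components for the sum of $\sum_{|J'|\leq k}|\conenabla \nabla_{\mathcal{Z}}^{J'}h|$, the lower-order piece $\sum_{|K|\leq k-2}|\nabla\nabla_{\mathcal{Z}}^K h|$ (which is precisely the term left on the right-hand side of \eqref{E:partialZIhLLpluspartialZJhLTLinfinity}), and the quadratic tail $\sum_{|I_1|+|I_2|\leq k}|\nabla_{\mathcal{Z}}^{I_1}h||\nabla\nabla_{\mathcal{Z}}^{I_2}h|$.

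Next I would estimate each of these three remainder pieces. For the tangential-derivative term, I split $h = h^{(0)} + h^{(1)}$: the $h^{(0)}$ part is handled by the pointwise bound $|\conenabla\nabla_{\mathcal{Z}}^{J'}h^{(0)}|\lesssim M(1+t+|q|)^{-2}$ from Lemma \ref{L:h0decayestimates} (since $\conenabla$ picks up an extra $(1+t+|q|)^{-1}$), while the $h^{(1)}$ part satisfies $|\conenabla\nabla_{\mathcal{Z}}^{J'}h^{(1)}|\lesssim \varepsilon (1+t+|q|)^{-2+\updelta}(1+|q|)^{-\upgamma}$ for $q>0$ and an analogous bound for $q<0$ from \eqref{E:weakdecaybarpartialLinfinity}, available since $k\leq \dParameter-3$. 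For the quadratic tail, since $|I_1|+|I_2|\leq k\leq \dParameter-3$ both factors lie in the range covered by \eqref{E:weakdecaypartialLinfinity}--\eqref{E:weakdecayLinfinity}, so the product is bounded by $\varepsilon^2(1+t+|q|)^{-2+2\updelta}(1+|q|)^{-1-2\upgamma}$ for $q>0$ and similarly for $q<0$; this is strictly stronger than the advertised right-hand side, and gets absorbed. Summing these three contributions yields \eqref{E:partialZIhLLpluspartialZJhLTLinfinity}.

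For the undifferentiated estimate \eqref{E:partialZIhLLpluspartialZJhLTLinfinityintegrated}, I integrate the differentiated bound along outgoing radial lines emanating from spatial infinity on $\Sigma_0$. The key geometric observation is that $\nabla_L L = \nabla_L \uL = 0$ (Lemma \ref{L:PropertiesofuLandL}), so the components $L_\mu L_\nu$ and $L_\mu T_\nu$ (for $T\in\mathcal{T}$) are parallel-transported along $L$, and hence $\nabla_L(\nabla_{\mathcal{Z}}^I h)_{\mathcal{L}\mathcal{L}} = (\nabla_L \nabla_{\mathcal{Z}}^I h)_{\mathcal{L}\mathcal{L}}$, up to harmless lower-order contractions that end up in the integrated term on the right-hand side. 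Writing $L = \partial_t + \partial_r$, I parametrize by $\varrho = |y|$ along the ray with fixed angle $x/|x|$ and fixed $q = r - t = $ const, integrating from $\varrho = |x|$ (where we land on $\Sigma_0$ if $|x|+t\geq r$; otherwise we start at the point $(t,x)$ itself) back out to $\varrho = |x|+t$, which corresponds to the point on $\Sigma_0$ lying on the outgoing null ray through $(t,x)$. The decay rate $|\nabla\nabla_{\mathcal{Z}}^I h|_{\mathcal{L}\mathcal{L}}+|\nabla\nabla_{\mathcal{Z}}^J h|_{\mathcal{L}\mathcal{T}}\lesssim \varepsilon(1+t+|q|)^{-2+2\updelta}(1+|q|)^{-2\updelta}$ (for $q>0$) integrates against $d\varrho$ with $1+t+|q|$ constant along the ray, producing a factor of $(1+t+|q|)^{-1}$ as required; the data term at $\varrho=|x|+t$ contributes at most $\varepsilon(1+|x|+t)^{-1}$ by our asymptotic flatness assumption \eqref{E:h1AbstractDataAsymptotics} and the definition of $h^{(0)}$.

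The main obstacle is bookkeeping the order of operations and the lower-order terms $\sum_{|K|\leq k-2}|\nabla\nabla_{\mathcal{Z}}^K h|$: these cannot be controlled at the same level $k$, but they can be left on the right-hand side because they will be absorbed at a later stage of the hierarchy (specifically, when applying the lemma inductively in the proof of Proposition \ref{P:UpgradedDecayh1A}). The delicate point is ensuring that the quadratic remainders $|\nabla_{\mathcal{Z}}^{I_1} h||\nabla\nabla_{\mathcal{Z}}^{I_2} h|$ do not cause a loss of derivatives at the top order $|I|=k$, which is guaranteed precisely by the assumption $k\leq \dParameter-3$ combined with the fact that at least one of $|I_1|,|I_2|$ is $\leq\lfloor k/2\rfloor\leq \dParameter-2$, permitting the weak $L^\infty$ decay bounds of Corollary \ref{C:WeakDecay} to apply.
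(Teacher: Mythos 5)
Your derivation of the differentiated estimate \eqref{E:partialZIhLLpluspartialZJhLTLinfinity} is correct and tracks the paper's proof: apply Proposition~\ref{P:harmonicgauge}, then bound the tangential-derivative and quadratic remainders via Corollary~\ref{C:WeakDecay}. (The paper invokes the $h$--version of Corollary~\ref{C:WeakDecay} directly, which already has $\updelta$ in place of $\upgamma$, rather than splitting $h = h^{(0)}+h^{(1)}$; both routes deliver the claimed bound, so this is a cosmetic difference.)

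The derivation of the undifferentiated estimate \eqref{E:partialZIhLLpluspartialZJhLTLinfinityintegrated} contains a genuine error: you integrate along the wrong null foliation. You fix $q = r - t$ and move in the direction $L = \partial_t + \partial_r$, but this is the outgoing cone $C_q^+$, along which the time coordinate $t' = \varrho - q$ \emph{increases} with $\varrho$; the segment from $\varrho = |x|$ to $\varrho = |x|+t$ with $q$ fixed runs from $(t,x)$ to $(2t, (|x|+t)x/|x|)$, not to $\Sigma_0$. The paper instead integrates along the \emph{ingoing} cones $C_s^-$, fixing $s = |x|+t$ and the angle $\omega$ and using $|\partial_q \nabla_{\mathcal{Z}}^I h| \lesssim |\nabla\nabla_{\mathcal{Z}}^I h|$ (note $\partial_q = -\tfrac{1}{2}\nabla_{\uL}$, not $\nabla_L$), which lands on $\Sigma_0$ at radius $\varrho = s$. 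The distinction is essential, not notational. First, for $q < 0$ the outgoing cone $C_q^+$ never reaches $\Sigma_0$ at all (it collapses to $r=0$ at $t' = |q|$), whereas $C_s^-$ always does. Second, even when $q>0$, the $\Sigma_0$-intercept of $C_q^+$ is at radius $q = |x|-t$, and \eqref{E:weakdecayLinfinity} at $t=0$ gives a boundary contribution $\sim \varepsilon(1+q)^{-1}=\varepsilon(1+|x|-t)^{-1}$, which is far larger than the target $\varepsilon(1+t+|q|)^{-1} = \varepsilon(1+|x|)^{-1}$ near the light cone; along $C_s^-$ the intercept is at radius $s=|x|+t$, giving boundary decay $\sim \varepsilon(1+s)^{-1}$, exactly what is required. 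Finally, your claim that ``$1+t+|q|$ is constant along the ray'' is false on either foliation — along $C_s^-$ with $q'>0$ one has $1 + t' + |q'| = 1 + \varrho$, which varies — and the integral closes instead because the factor $(1+|q'|)^{-2\updelta}$ in \eqref{E:partialZIhLLpluspartialZJhLTLinfinity} gains as $\varrho$ increases toward $\Sigma_0$ along constant $s$.
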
	

\begin{proof}
	By Proposition \ref{P:harmonicgauge}, we have that 
	
	\begin{align} \label{E:harmonicgaugeagain}
		\sum_{|I| \leq k} |\nabla\nabla_{\mathcal{Z}}^I h|_{\mathcal{L} \mathcal{L}}
			\ + \ \underbrace{\sum_{|J| \leq k - 1} |\nabla\nabla_{\mathcal{Z}}^J h|_{\mathcal{L} \mathcal{T}}}_{\mbox{absent if 
			$k = 0$}} 
			\lesssim \underbrace{\sum_{|K| \leq k - 2} |\nabla\nabla_{\mathcal{Z}}^{J} h|}_{\mbox{absent if $k \leq 1$}}
			\ + \ \sum_{|J| \leq k} |\conenabla \nabla_{\mathcal{Z}}^J h|
			\ + \ \sum_{|I_1| + |I_2| \leq k} |\nabla_{\mathcal{Z}}^{I_1} h||\nabla\nabla_{\mathcal{Z}}^{I_2} h|.
	\end{align}
	By Corollary \ref{C:WeakDecay}, we have that
	
	\begin{align} \label{E:Tangentialhderivatives}
		\sum_{|J| \leq k} |\conenabla \nabla_{\mathcal{Z}}^J h| 
		\ + \ \sum_{|I_1| + |I_2| \leq k} |\nabla_{\mathcal{Z}}^{I_1} h||\nabla\nabla_{\mathcal{Z}}^{I_2} h|  
		\lesssim \left \lbrace \begin{array}{lr}
    	\varepsilon (1 + t + |q|)^{-2 + 2\updelta} (1 + |q|)^{- 2\updelta}, &  \mbox{if} \ q > 0, \\
      \varepsilon (1 + t + |q|)^{-2 + 2\updelta} (1 + |q|)^{1/2 - \updelta}, & \mbox{if} \ q < 0,
    \end{array}
  	\right., &&
  	k \leq \dParameter - 3.
	\end{align}
	Combining \eqref{E:harmonicgaugeagain} and \eqref{E:Tangentialhderivatives},
	we deduce \eqref{E:partialZIhLLpluspartialZJhLTLinfinity}. Inequality 
	\eqref{E:partialZIhLLpluspartialZJhLTLinfinityintegrated}
	follows from integrating inequality  \eqref{E:partialZIhLLpluspartialZJhLTLinfinity}
	for $|\partial_q \nabla_{\mathcal{Z}}^I h| \lesssim |\nabla\nabla_{\mathcal{Z}}^I h|,$ $q \eqdef |x| - t,$ along the 
	lines along which the angle $\omega \eqdef x/|x|$ and the null coordinate $s = |x| + t$ are constant (i.e. integrating $dq$), 
	and using \eqref{E:weakdecayLinfinity} at $t=0.$
	
	The proofs of the estimates for $H^{\mu \nu}$ follow from the estimates for $h_{\mu \nu},$ \eqref{E:Hintermsofh},
	and Corollary \ref{C:WeakDecay}.

\end{proof}

Inequalities \eqref{E:partialhLTpartialZhLLpointwise} and \eqref{E:hLTZhLLpointwise} now follow from
inequalities \eqref{E:partialZIhLLpluspartialZJhLTLinfinity}, \eqref{E:partialZIhLLpluspartialZJhLTLinfinityintegrated},
and the weak decay estimates of Corollary \ref{C:WeakDecay}.

\hfill $\qed$

\subsubsection{Proof of \eqref{E:Farupgradedecay}} \label{SSS:ProofofFarUpgradedDecay}
	Let $\mathcal{W} \eqdef \big\lbrace (t,x): |x| \geq 1 + t/2 \big\rbrace \cap \big\lbrace 
	(t,x): |x| \leq 2t - 1 \big\rbrace$ denote the ``wave zone'' region. Note that $r \approx 1 + t + |q| \approx 1 + t + s$ for 
	$(t,x) \in \mathcal{W}.$ Now as in the proof of Proposition \ref{P:FLUTTimproveddecay}, inequality 
	\eqref{E:Farupgradedecay}
	follows from the weak decay estimates of Corollary \ref{C:WeakDecay} if $(t,x) \nin \mathcal{W}.$
	Furthermore, we have that $|\Far| \approx |\ualpha[\Far]| + |\alpha[\Far]| + |\rho[\Far]| + |\sigma[\Far]|,$ and
	by Proposition \ref{P:FLUTTimproveddecay}, inequality \eqref{E:Farupgradedecay}
	has already been shown to hold for 
	$|\alpha[\Far]| + |\rho[\Far]| + |\sigma[\Far]| \approx |\Far|_{\mathcal{L}\mathcal{N}} + |\Far|_{\mathcal{T}\mathcal{T}}.$

	It remains to prove the desired estimate for $|\ualpha[\Far(t,x)]|$ under the assumption that
	$(t,x) \in \mathcal{W}.$ To this end, we use \eqref{E:ODErualpha}, the weak decay 
	estimates of Corollary \ref{C:WeakDecay}, and Proposition \ref{P:FLUTTimproveddecay} to deduce that 
	if $(t,x) \in \mathcal{W},$ then
	
	\begin{align} \label{E:LambdarualphaBound}
		\big|\nabla_{\Lambda} \big(r\ualpha[\Far] \big)\big| 
			& \lesssim  \varepsilon (1 + t + |q|)^{-3/2 + \updelta} \big|r\ualpha[\Far] \big| 
			\ + \ \varepsilon (1 + t + |q|)^{-2 + 3 \updelta},
  \end{align}
  where $\Lambda \eqdef L + \frac{1}{4} h_{LL} \uL.$ Let $\big(\tau(\lambda),y(\lambda) \big) $ be the integral 
  curve\footnote{By integral curve,
  we mean the solution to the ODE system $\frac{d \tau}{d \lambda} = \Lambda^0(\tau,y),$ 
  $\frac{d y^j}{d \lambda^j} = \Lambda^j(\tau,y),$ $(j=1,2,3),$ passing through the point $(t,x).$} of the 
  vectorfield $\Lambda$ passing through the point $(t,x) = \big(\tau(\lambda_1),y(\lambda_1) \big) \in \mathcal{W}.$ 
	By the already-proven smallness estimate \eqref{E:hLTZhLLpointwise} for $h_{LL},$ every such integral curve must intersect 
	the boundary of $\mathcal{W}$ at a point $(t_0,x_0) = \big(\tau(\lambda_0),y(\lambda_0) \big)$ to the \emph{past} of $(t,x).$
	Furthermore, by \eqref{E:hLTZhLLpointwise} again, we have that
	$\frac{d \tau}{d\lambda} \approx 1$ along the integral curves, and for all $(\tau, y) \in \mathcal{W},$ we have
	that $|y| \approx \tau \approx 1 + |\tau| \approx 1 + |\tau| + \big||y| - \tau\big|.$
	We now set $f(\lambda) \eqdef \Big||y(\lambda)|\ualpha\big[\Far\big(\tau(\lambda),y(\lambda)\big)\big]\Big|,$
	integrate inequality \eqref{E:LambdarualphaBound} along the integral curve
	(which is contained in $\mathcal{W}$), use the assumption $0 < \updelta < 1/4,$
	and change variables so that $\tau$ is the integration variable to obtain 
		
		\begin{align} \label{E:rualphaGronwallready}
			\overbrace{\big|r \ualpha[\Far](t,x) \big|}^{f(\lambda(t))}
			& \leq \overbrace{\big|r_0 \ualpha[\Far(t_0,x_0)]\big|}^{f(\lambda_0)}
				\ + \ C \varepsilon \int_{\lambda = \lambda_0}^{\lambda = \lambda_1} [1 + \tau(\lambda)]^{-2 + 3 \updelta} d \lambda 
			 	\ + \ C \varepsilon \int_{\lambda = \lambda_0}^{\lambda = \lambda_1} [1 + \tau(\lambda)]^{-3/2 + \updelta}  
				f(\lambda) d \lambda \\
			& \leq C \varepsilon 
				\ + \ C \varepsilon \int_{\tau = t_0}^{\tau = t} (1 + \tau)^{-2 + 3 \updelta} d \tau 
				\ + \ C \varepsilon \int_{\tau = t_0}^{\tau = t} (1 + \tau)^{-3/2 + \updelta}
				f(\lambda \circ \tau) d \tau \notag \\
			& \leq C \varepsilon \ + \ C \varepsilon \int_{\tau = t_0}^{\tau = t} (1 + \tau)^{-3/2 + 
				\updelta} f(\lambda \circ \tau) d \tau, \notag
		\end{align}
		where we have used \eqref{E:weakdecaypartialLinfinity} to obtain the bound 
		$\big|r_0 \ualpha[\Far(t_0,x_0)] \big| \leq C \varepsilon$
		for points $(t_0,x_0)$ lying on the boundary of $\mathcal{W}.$ Applying Gronwall's lemma to 
		\eqref{E:rualphaGronwallready}, we have that
		
		\begin{align} \label{E:rualphaGronwall}
			\big|r \ualpha[\Far(t,x)] \big| & \leq C \varepsilon 
			\exp\bigg(C \varepsilon \int_{\tau = t_0}^{\tau = t} 
			(1 + \tau)^{-3/2 + \updelta} d \tau \bigg) \\
			& \leq C \varepsilon \exp\bigg(C \varepsilon \int_{\tau = t_0}^{\tau = t} (1 + \tau)^{-3/2 + \updelta} d \tau \bigg)
			\leq C \varepsilon, \notag
		\end{align}
		from which it trivially follows that
		
		\begin{align}
			\big| \ualpha[\Far(t,x)] \big| & \leq C \varepsilon r^{-1} \leq C \varepsilon (1 + t + |q|)^{-1}
		\end{align}
		as desired.
		\hfill $\qed$

\subsubsection{Proofs of \eqref{E:partialhTUpointwise} - \eqref{E:partialhpointwise}}

In the next two lemmas, we will use the fact that the tensorfield $h_{\mu \nu} \eqdef g_{\mu \nu} - m_{\mu \nu}$ is a solution to the
system

\begin{align} \label{E:hwavesystem}
	\widetilde{\Square}_g h_{\mu \nu} = \mathfrak{H}_{\mu \nu},
\end{align}
where the inhomogeneous term $\mathfrak{H}_{\mu \nu}$ is defined in \eqref{E:HtriangleSmallAlgebraic}. 

\begin{lemma} \cite[Extension of Lemma 10.5]{hLiR2010}\label{L:Inhomogeneousdecayestimates}
	\textbf{(Pointwise estimates for the $\mathfrak{H}_{\mu \nu}$ inhomogeneities)}
	Suppose the assumptions of Proposition \ref{P:UpgradedDecayhA} hold. Then if $\varepsilon$ is sufficiently small, the 
	following pointwise estimates hold for $(t,x) \in [0,T) \times \mathbb{R}^3:$
	
	\begin{subequations}
	\begin{align}
		|\mathfrak{H}|_{\mathcal{T} \mathcal{N}} & \leq C \varepsilon (1 + t + |q|)^{-3/2 + \updelta} |\nabla h|
			\ + \ C \varepsilon (1 + t + |q|)^{-5/2 + \updelta}, \label{E:mathfrakHTULinfinity} \\
		|\mathfrak{H}| & \leq C \varepsilon (1 + t + |q|)^{-3/2 + \updelta} |\nabla h|
			\ + \ C |\nabla h|_{\mathcal{T} \mathcal{N}}^2 \ + \ C \varepsilon^2 (1 + t + |q|)^{-2}.   \label{E:mathfrakHLinfinity} 
	\end{align}
	\end{subequations}
	
\end{lemma}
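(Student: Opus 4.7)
The plan is to feed the already-established pointwise decay estimates into the algebraic decompositions provided by Proposition \ref{P:AlgebraicInhomogeneous}, which give
\begin{align*}
|\mathfrak{H}|_{\mathcal{T}\mathcal{N}} & \lesssim |\conenabla h||\nabla h| \ + \ \big(|\Far|_{\mathcal{L}\mathcal{N}} + |\Far|_{\mathcal{T}\mathcal{T}}\big)|\Far| \ + \ O^{\infty}(|h||\nabla h|^2) \ + \ O^{\dParameter+1}(|h||\Far|^2) \ + \ O^{\dParameter+1}(|\Far|^3;h), \\
|\mathfrak{H}| & \lesssim |\nabla h|_{\mathcal{T}\mathcal{N}}^2 \ + \ |\conenabla h||\nabla h| \ + \ |\Far|^2 \ + \ O^{\infty}(|h||\nabla h|^2) \ + \ O^{\dParameter+1}(|h||\Far|^2) \ + \ O^{\dParameter+1}(|\Far|^3;h).
\end{align*}
The only genuinely ``dangerous'' quadratic term for the $\mathcal{T}\mathcal{N}$ bound is $|\conenabla h||\nabla h|$, and for the full bound it is additionally $|\nabla h|_{\mathcal{T}\mathcal{N}}^2$, which we leave intact in \eqref{E:mathfrakHLinfinity}.

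First I would handle the quadratic term $|\conenabla h||\nabla h|$ by using the weak decay estimate \eqref{E:weakdecaybarpartialLinfinity} (applied to $h=h^{(0)}+h^{(1)}$, with Lemma \ref{L:h0decayestimates} taking care of $h^{(0)}$) to bound $|\conenabla h| \lesssim \varepsilon(1+t+|q|)^{-2+\updelta}(1+|q|)^{1/2}$, which is majorized by $\varepsilon(1+t+|q|)^{-3/2+\updelta}$ since $(1+|q|)^{1/2} \leq (1+t+|q|)^{1/2}$. This contributes exactly $C\varepsilon (1+t+|q|)^{-3/2+\updelta}|\nabla h|$ to both inequalities.

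Next I would estimate the electromagnetic quadratic term $\big(|\Far|_{\mathcal{L}\mathcal{N}}+|\Far|_{\mathcal{T}\mathcal{T}}\big)|\Far|$ using the initial upgraded decay Proposition \ref{P:FLUTTimproveddecay}, which gives $|\Far|_{\mathcal{L}\mathcal{N}}+|\Far|_{\mathcal{T}\mathcal{T}} \lesssim \varepsilon(1+t+|q|)^{-2+2\updelta}(1+|q|)^{1/2-\updelta}$, combined with \eqref{E:Farupgradedecay} which gives $|\Far| \lesssim \varepsilon(1+t+|q|)^{-1}$. Since $2\updelta < \updelta + 1/2$ and $(1+|q|)^{1/2-\updelta} \leq (1+t+|q|)^{1/2-\updelta}$, the product is bounded by $C\varepsilon^2(1+t+|q|)^{-5/2+\updelta}$, which is absorbed into $C\varepsilon(1+t+|q|)^{-5/2+\updelta}$. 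For \eqref{E:mathfrakHLinfinity} I would use \eqref{E:Farupgradedecay} directly to obtain $|\Far|^2 \lesssim \varepsilon^2(1+t+|q|)^{-2}$.

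Finally I would show the higher-order terms $O^{\infty}(|h||\nabla h|^2)$, $O^{\dParameter+1}(|h||\Far|^2)$, and $O^{\dParameter+1}(|\Far|^3;h)$ are negligible by combining \eqref{E:weakdecayLinfinity} (which gives $|h| \lesssim \varepsilon$ pointwise), \eqref{E:Farupgradedecay}, and \eqref{E:partialhpointwise}: each such cubic term carries an extra factor of $\varepsilon$ in $L^\infty$, so for sufficiently small $\varepsilon$ it is bounded by the terms already on the right-hand sides of \eqref{E:mathfrakHTULinfinity}--\eqref{E:mathfrakHLinfinity}. There is no serious obstacle here; the main conceptual content is already packaged into Proposition \ref{P:AlgebraicInhomogeneous} (where the null structure was extracted using the wave coordinate condition) and into Proposition \ref{P:FLUTTimproveddecay} (where the improved decay for the favorable electromagnetic components was obtained), so the proof reduces to a bookkeeping exercise in combining exponents of $(1+t+|q|)$ and $(1+|q|)$.
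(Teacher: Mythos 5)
Your overall plan is correct and mirrors the paper's one-sentence proof, which cites exactly the same ingredients: Proposition \ref{P:AlgebraicInhomogeneous}, Corollary \ref{C:WeakDecay}, Proposition \ref{P:FLUTTimproveddecay}, the already-proven estimate \eqref{E:Farupgradedecay}, and (crucially) the assumption $0 < \updelta < 1/4$. Your treatment of the two quadratic pieces $|\conenabla h||\nabla h|$ and $\big(|\Far|_{\mathcal{L}\mathcal{N}}+|\Far|_{\mathcal{T}\mathcal{T}}\big)|\Far|$ is right.

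The last paragraph, however, is too cavalier and would not close as stated. You propose to dispose of $O^{\infty}(|h||\nabla h|^2)$, $O^{\dParameter+1}(|h||\Far|^2)$, and $O^{\dParameter+1}(|\Far|^3;h)$ using only $|h|\lesssim\varepsilon$, \eqref{E:Farupgradedecay}, and \eqref{E:partialhpointwise}, on the grounds that the extra $\varepsilon$ wins. But that bookkeeping fails: with $|h|\lesssim\varepsilon$ and $|\nabla h|\lesssim\varepsilon(1+t+|q|)^{-1}\lbrace 1+\ln(1+t)\rbrace$ you only get $|h||\nabla h|^{2}\lesssim\varepsilon^{3}(1+t+|q|)^{-2}\lbrace 1+\ln(1+t)\rbrace^{2}$, which is \emph{not} $\lesssim\varepsilon(1+t+|q|)^{-5/2+\updelta}$ (nor $\lesssim\varepsilon^{2}(1+t+|q|)^{-2}$) once $t$ is large, because the logarithms are not absorbed by a smaller power of $\varepsilon$. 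The correct way to handle this term is to keep the full $q$-weighted weak decay from Corollary \ref{C:WeakDecay} rather than throwing it away: with $|h|\lesssim\varepsilon(1+t+|q|)^{-1+\updelta}(1+|q|)^{1/2}$ and $|\nabla h|\lesssim\varepsilon(1+t+|q|)^{-1}(1+t)^{\updelta}(1+|q|)^{-1/2}$ (for $q<0$; the $q>0$ case is better), the $(1+|q|)^{\pm 1/2}$ weights cancel and one finds $|h||\nabla h|^{2}\lesssim\varepsilon^{3}(1+t+|q|)^{-3+3\updelta}$, which fits under $\varepsilon(1+t+|q|)^{-5/2+\updelta}$ precisely because $2\updelta<1/2$, i.e., because of the constraint $\updelta<1/4$ that the paper explicitly invokes and that your write-up omits. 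The terms $O^{\dParameter+1}(|h||\Far|^{2})$ and $O^{\dParameter+1}(|\Far|^{3};h)$ need the same care for the $\mathcal{T}\mathcal{N}$ bound \eqref{E:mathfrakHTULinfinity}: you must use $|h|\lesssim\varepsilon(1+t+|q|)^{-1/2+\updelta}$ (extracted from the weighted weak decay, not the trivial $L^\infty$ bound) to reach the $-5/2+\updelta$ exponent.
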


\begin{proof}
	Lemma \ref{L:Inhomogeneousdecayestimates} follows from Proposition \ref{P:AlgebraicInhomogeneous}, Corollary 
	\ref{C:WeakDecay}, Proposition \ref{P:FLUTTimproveddecay}, the already-proven estimate \eqref{E:Farupgradedecay},
	and the assumption $0 < \updelta < 1/4.$
\end{proof}

\begin{lemma} \cite[Extension of Lemma 10.6]{hLiR2010} \label{L:partialhTUpartialhdecay} 
	\textbf{(Integral inequalities for $|\nabla h|_{\mathcal{T} \mathcal{N}}$ and $|\nabla h|$)}
	Suppose the assumptions of Proposition \ref{P:UpgradedDecayhA} hold.
	Then if $\varepsilon$ is sufficiently small, the following integral inequalities hold for $t \in [0,T):$
	
	\begin{subequations}
	\begin{align}
		(1 + t) \big\| |\nabla h|_{\mathcal{T} \mathcal{N}}(t,\cdot) \big\|_{L^{\infty}}
			& \leq C \varepsilon \ + \ C \varepsilon \int_{0}^t (1 + \tau)^{- 1/2 + \updelta} 
		 	\big\| \nabla h(\tau, \cdot) \big\|_{L^{\infty}} \, d\tau,  \label{E:partialhTUdecay} \\
 		(1 + t) \big\| \nabla h(t,\cdot)  \big\|_{L^{\infty}} \label{E:partialhdecay}
		& \leq C \varepsilon \ + \ C \varepsilon^2 \ln(1 + t) \ + \ C \varepsilon \int_{0}^t (1 + \tau)^{- 1/2 + \updelta} 
		 	\big\| \nabla h(\tau,\cdot) \big\|_{L^{\infty}} \, d\tau \\
		 & \ \ + \ C \varepsilon \int_{0}^t (1 + \tau) \big\| |\nabla h|_{\mathcal{T} \mathcal{N}}^2(\tau, \cdot) 
		 \big\|_{L^{\infty}} \, d\tau. \notag
	\end{align}
	\end{subequations}

\end{lemma}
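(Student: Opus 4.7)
The plan is to apply Corollary \ref{C:systemdecay} (the pointwise wave-equation decay estimate) to the tensorfield $h^{(1)}_{\mu\nu},$ which solves the inhomogeneous system $\widetilde{\Square}_g h^{(1)}_{\mu\nu} = \mathfrak{H}_{\mu\nu} - \widetilde{\Square}_g h^{(0)}_{\mu\nu},$ with the contraction $\mathcal{U}\mathcal{V} = \mathcal{T}\mathcal{N}$ for \eqref{E:partialhTUdecay} and $\mathcal{U}\mathcal{V} = \mathcal{N}\mathcal{N}$ for \eqref{E:partialhdecay}. The piece $\nabla h^{(0)}$ is handled directly and separately via the explicit decay estimate \eqref{E:nablaIh0Linfinity}, which gives $(1+t)|\nabla h^{(0)}| \lesssim M \lesssim \varepsilon,$ a contribution that folds into the $C\varepsilon$ terms in \eqref{E:partialhTUdecay} and \eqref{E:partialhdecay}. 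The three hypotheses on $H^{\mu\nu}$ demanded by Corollary \ref{C:systemdecay} — smallness of $|H|,$ the time-integrated smallness of $\|H\|_{L^\infty(D_t)}/(1+t),$ and the bound $|H|_{\mathcal{L}\mathcal{T}} \lesssim \varepsilon (1+t+|q|)^{-1}(1+|q|)$ — follow from the weak decay \eqref{E:weakdecayLinfinity} and from the upgraded estimate \eqref{E:hLTZhLLpointwise} applied to $H$ in place of $h,$ provided $\varepsilon$ is small enough. Since $\upmu' < 1/2$ implies $\varpi(q) \geq 1$ for all $q,$ the weight $\varpi$ on the left-hand side of \eqref{E:systemdecay} may simply be dropped to obtain an inequality for $(1+t)\|\nabla h^{(1)}\|_{L^\infty}.$

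The heart of the argument is to substitute the algebraic inhomogeneity estimates of Lemma \ref{L:Inhomogeneousdecayestimates} into the integral $\int_0^t (1+\tau)\|\varpi(q)|\mathfrak{H}|_{\mathcal{U}\mathcal{V}}\|_{L^\infty(D_\tau)} d\tau$ appearing on the right side of \eqref{E:systemdecay}. For \eqref{E:partialhTUdecay}, the favorable factor $\varepsilon(1+\tau+|q|)^{-3/2+\updelta}$ in \eqref{E:mathfrakHTULinfinity}, upon multiplication by the $(1+\tau)$ weight, yields exactly the integrand $(1+\tau)^{-1/2+\updelta}\|\nabla h\|_{L^\infty},$ while the $\varepsilon(1+\tau+|q|)^{-5/2+\updelta}$ component integrates to a bounded quantity absorbed into the $C\varepsilon$ term. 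For \eqref{E:partialhdecay}, the same mechanism extracts $\int_0^t (1+\tau)^{-1/2+\updelta}\|\nabla h\|_{L^\infty}d\tau$ from the $\varepsilon(1+\tau+|q|)^{-3/2+\updelta}|\nabla h|$ piece of \eqref{E:mathfrakHLinfinity}, the quadratic piece $|\nabla h|_{\mathcal{T}\mathcal{N}}^2$ yields the explicit quadratic integral appearing in \eqref{E:partialhdecay}, and the residual $\varepsilon^2(1+\tau+|q|)^{-2}$ term integrates precisely to $C\varepsilon^2 \ln(1+t).$ The initial-data contribution $\sup_{0\leq\tau\leq t}\sum_{|I|\leq 1}\|\varpi(q)\nabla_{\mathcal{Z}}^I h^{(1)}\|_{L^\infty}$ is bounded by $C\varepsilon$ using the weak decay \eqref{E:weakdecayLinfinity} together with the hierarchy $0<\upgamma'<\upgamma-\updelta,$ which ensures $\varpi(q)(1+|q|)^{-\upgamma}$ remains bounded; the $\widetilde{\Square}_g h^{(0)}$ inhomogeneity, bounded via \eqref{E:weakdecayLinfinitynablaZISquaregh0}, contributes integrably due to its $(1+\tau+|q|)^{-4+\updelta}$ exterior decay and $(1+\tau+|q|)^{-3}$ interior decay; and the ``$\int_0^t\varepsilon'\upalpha\|\varpi(q)|\nabla h^{(1)}|_{\mathcal{U}\mathcal{V}}\|_{L^\infty}d\tau$'' and ``$\sum_{|I|\leq 2}\int_0^t(1+\tau)^{-1}\|\varpi(q)\nabla_{\mathcal{Z}}^I h^{(1)}\|_{L^\infty(D_\tau)}d\tau$'' remainders are absorbed using \eqref{E:weakdecayLinfinity} once $\varepsilon$ is taken small.

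The main obstacle is bookkeeping the weight $\varpi(q)$ inside the wave zone $D_\tau,$ where $|q|$ can be as large as $\tau$ and $\varpi(q)$ can inflate to as much as $(1+\tau)^{1+\upgamma'}.$ One must verify that the hidden $(1+|q|)$-decay carried by $|\mathfrak{H}|_{\mathcal{U}\mathcal{V}}$ through its dependence on $|\conenabla h|,$ $|\Far|_{\mathcal{L}\mathcal{N}},$ $|\Far|_{\mathcal{T}\mathcal{T}}$ (which sit under the upgraded estimates of Proposition \ref{P:UpgradedDecayhA} and Proposition \ref{P:FLUTTimproveddecay}) is enough to compensate this amplification while still producing the sharp exponent $(1+\tau)^{-1/2+\updelta}$ on the right side of \eqref{E:partialhTUdecay} and \eqref{E:partialhdecay}. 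This is precisely where the hierarchy $0 < \updelta < \upmu' < 1/2 - \upmu$ and $0 < \upgamma' < \upgamma - \updelta$ of Section \ref{SS:FixedConstants} is invoked; a careful accounting of the exponent budget is the crux of the proof, after which the desired integral inequalities follow from routine integration and assembly of the already-listed ingredients.
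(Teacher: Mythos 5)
Your overall plan — apply Corollary \ref{C:systemdecay} to a tensorial wave equation whose source is $\mathfrak{H}_{\mu\nu}$ (plus the Schwarzschild tail), then feed in the pointwise estimates of Lemma \ref{L:Inhomogeneousdecayestimates} — is the right skeleton, and the minor choice of working with $h^{(1)}$ and handling $h^{(0)}$ separately rather than applying the corollary to $h$ directly (as the paper does) is immaterial. But there is one genuinely fatal discrepancy: the paper applies Lemma \ref{L:scalardecay} and Corollary \ref{C:systemdecay} \emph{with $\varpi(q) \equiv 1$ and $\upalpha \equiv 0$}, whereas you insist on keeping the nontrivial weight $\varpi(q)$ from \eqref{E:decayweight} and dropping it only on the left-hand side. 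That extra $\varpi(q)$ on the right-hand side breaks every one of the absorptions you assert.

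Concretely, in $D_\tau$ the quantity $|q|$ can be comparable to $\tau$ with $q > 0$, so $\varpi(q)$ can be as large as $(1+\tau)^{1+\upgamma'}$, and the hidden $(1+|q|)$-decay available in \eqref{E:mathfrakHTULinfinity}--\eqref{E:mathfrakHLinfinity} cannot absorb it. Two of your explicit claims are simply false once the weight is retained. You assert that the $\varepsilon(1+\tau+|q|)^{-5/2+\updelta}$ piece ``integrates to a bounded quantity,'' but
\begin{align*}
\int_0^t (1+\tau)\,\big\| \varpi(q)\,\varepsilon(1+\tau+|q|)^{-5/2+\updelta}\big\|_{L^\infty(D_\tau)}\,d\tau
\ \lesssim\ \varepsilon \int_0^t (1+\tau)^{-1/2 + \updelta + \upgamma'}\,d\tau \ \sim\ \varepsilon\,(1+t)^{1/2 + \updelta + \upgamma'},
\end{align*}
which grows and is not of the form $C\varepsilon$. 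You also assert that the residual $\varepsilon^2(1+\tau+|q|)^{-2}$ piece ``integrates precisely to $C\varepsilon^2\ln(1+t)$,'' but with the weight retained the same computation yields $\varepsilon^2(1+t)^{1+\upgamma'}$ rather than $\varepsilon^2\ln(1+t)$. A third problem is the term $\int_0^t \varepsilon'\upalpha\,\|\varpi(q)|\nabla h^{(1)}|_{\mathcal{UV}}\|_{L^\infty}\,d\tau$ on the right of \eqref{E:systemdecay}: with $\upalpha = \max(1+\upgamma',1/2-\upmu') > 0$ this is a genuine linear-in-unknown contribution that must be dealt with, and is not simply ``absorbed using \eqref{E:weakdecayLinfinity} once $\varepsilon$ is taken small.'' The paper's choice $\varpi \equiv 1$, $\upalpha = 0$ disposes of this term entirely. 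The lesson is that the constraints of Section \ref{SS:FixedConstants} on $\upgamma',\upmu'$ are needed later (in Proposition \ref{P:UpgradedDecayh1A}) but are \emph{not} part of the hypotheses of Lemma \ref{L:scalardecay} or Corollary \ref{C:systemdecay}; plugging in the degenerate values $\upgamma' = -1$, $\upmu' = 1/2$ (so $\varpi \equiv 1$, $\upalpha = 0$) is permitted and, for the present lemma, is exactly what is required. Replace your weight $\varpi$ by $1$ throughout and the remainder of your argument closes without incident.
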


\begin{proof}
	First observe that \eqref{E:weakdecayLinfinity} and \eqref{E:hLTZhLLpointwise} (the version for the tensor $H$) imply that 
	the hypotheses of Lemma \ref{L:scalardecay} and Corollary \ref{C:systemdecay} hold. Therefore, using the lemma and the 
	corollary, with $\varpi(q) \eqdef 1$ and $\upalpha \eqdef 0,$ and noting that $h_{\mu \nu}$ verifies the system 
	\eqref{E:hwavesystem}, we have that 
	
	\begin{align} \label{E:partialhTUfirstinequality}
		(1 + t) |\nabla h|_{\mathcal{T} \mathcal{N}}
		& \lesssim \sup_{0 \leq \tau \leq t} 
			\sum_{|I| \leq 1} 
			\big\| \nabla_{\mathcal{Z}}^I h(t,\cdot) \big\|_{L^{\infty}}  
			\ + \ \int_{\tau = 0}^{t} 
			(1 + \tau) \big\| |\mathfrak{H}|_{\mathcal{T} \mathcal{N}} \big\|_{L^{\infty}(D_{\tau})} \, d\tau 
			\ + \ \sum_{|I| \leq 2} \int_{\tau = 0}^{t} 
			(1 + \tau)^{-1} \big\| \nabla_{\mathcal{Z}}^I h \big\|_{L^{\infty}(D_{\tau})} \, d \tau. 
	\end{align}
	Using \eqref{E:weakdecayLinfinity} (the version for the tensor $h$), we estimate the 
	the first and third terms on the right-hand side of \eqref{E:partialhTUfirstinequality}
	as follows:
	
	\begin{align}
		\sup_{0 \leq \tau \leq t} \sum_{|I| \leq 1} \big\| \nabla_{\mathcal{Z}}^I h \big\|_{L^{\infty}} 
		& \leq C \varepsilon (1 + t)^{-1/2 + \updelta} \leq C \varepsilon, \label{E:partialhTUdecayfirstterm} \\
		\sum_{|I| \leq 2} \int_{\tau = 0}^{t} 
		(1 + \tau)^{-1} \big\| \nabla_{\mathcal{Z}}^I h \big\|_{L^{\infty}(D_{\tau})} \, d \tau 
		& \leq C \varepsilon \int_{\tau = 0}^{\infty} (1 + \tau)^{-3/2 + \updelta} \, d \tau
			\leq C \varepsilon. \label{E:partialhTUdecaythirdterm}
	\end{align}
	To estimate the second term, we use \eqref{E:mathfrakHTULinfinity} to conclude that for $x \in D_t,$ we have that
	
	\begin{align}
		(1 + t) |\mathfrak{H}|_{\mathcal{T} \mathcal{N}}
			& \leq C \varepsilon (1 + t)^{- 1/2 + \updelta} |\nabla h| 
			\ + \ C \varepsilon (1 + t)^{-3/2 + \updelta}. \label{E:partialhTUdecaysecondterm}
	\end{align}
	Inequality \eqref{E:partialhTUdecay} now follows from \eqref{E:partialhTUfirstinequality} - \eqref{E:partialhTUdecaysecondterm}, and the fact 
	that $C \varepsilon \int_{0}^t (1 + \tau)^{-3/2 + \updelta} \, d \tau \leq C \varepsilon.$ Inequality \eqref{E:partialhdecay} 
	can be obtained in a similar fashion using \eqref{E:mathfrakHLinfinity}.

\end{proof}

To finish the proof of Proposition \ref{P:UpgradedDecayhA}, we will use the following Gronwall-type lemma.

\begin{lemma} \cite[Slight modification of Lemma 10.7]{hLiR2010} \label{L:Gronwall} \textbf{(Gronwall lemma)}
	Assume that the continuous functions $b(t) \geq 0$ and $c(t) \geq 0$ satisfy
	
	\begin{subequations}
	\begin{align}
		b(t) & \leq C \varepsilon \ + \ C \varepsilon \int_{0}^{t} (1 + \tau)^{-1 - a} c(\tau) \, d \tau, 
			\label{E:boftGronwall} \\
		c(t) & \leq C \varepsilon \ + \ C \varepsilon^2 \ln(1 + t) 
			\ + \ C \varepsilon \int_{0}^{t} (1 + \tau)^{-1 - a} c(\tau) \, d \tau 
			\ + \ C \int_{0}^{t} (1 + \tau)^{-1} b^2(\tau) \, d \tau \label{E:coftGronwall}
	\end{align}
	\end{subequations}
	for some positive constants $a, C$ such that $\varepsilon < \frac{a}{4C}$ 
	and $\varepsilon < \frac{2a}{(1 + 4C^2)}.$ Then 
	
	\begin{subequations}
	\begin{align}
		b(t) & \leq 2 C \varepsilon, \label{E:bBound} \\
		c(t) & \leq 2 C \varepsilon \big\lbrace 1 + a \ln(1 + t) \big\rbrace. \label{E:cBound}
	\end{align}
	\end{subequations}
	
\end{lemma}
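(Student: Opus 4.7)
The plan is to prove the lemma by a standard continuity (bootstrap) argument. Define the bootstrap hypotheses $\mathbf{(B)}$ on an interval $[0,T^*)$ by
\begin{align*}
b(\tau) & \leq 2C\varepsilon, \qquad c(\tau) \leq 2C\varepsilon\bigl\{1 + a\ln(1+\tau)\bigr\} \qquad \text{for all } \tau \in [0,T^*).
\end{align*}
At $\tau=0$, the hypotheses of the lemma immediately give $b(0) \leq C\varepsilon$ and $c(0) \leq C\varepsilon$, so $\mathbf{(B)}$ holds strictly at $0$, and by continuity of $b$ and $c$ there is some maximal $T^* \in (0,\infty]$ on which it persists. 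The goal is to show that the smallness assumptions on $\varepsilon$ allow us to improve the constants in $\mathbf{(B)}$ strictly, whence $T^* = \infty$ and the lemma follows.

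To carry this out, I would first record the two elementary integral bounds that drive everything: with $u = 1+\tau$ and integration by parts,
\begin{align*}
\int_0^\infty (1+\tau)^{-1-a}\,d\tau = \tfrac{1}{a}, \qquad \int_0^\infty (1+\tau)^{-1-a}\, a\ln(1+\tau)\,d\tau = \tfrac{1}{a}.
\end{align*}
Assuming $\mathbf{(B)}$ on $[0,T^*)$, these give $\int_0^t (1+\tau)^{-1-a} c(\tau)\,d\tau \leq 2C\varepsilon\cdot(2/a) = 4C\varepsilon/a$ for $t \in [0,T^*)$. Inserting this into \eqref{E:boftGronwall} yields
\begin{align*}
b(t) \leq C\varepsilon\Bigl(1 + \tfrac{4C\varepsilon}{a}\Bigr),
\end{align*}
which is strictly less than $2C\varepsilon$ provided $\varepsilon < a/(4C)$ — exactly the first smallness hypothesis.

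Next, I would insert the bootstrap bound $b(\tau) \leq 2C\varepsilon$ into the last term of \eqref{E:coftGronwall} to obtain $C\int_0^t (1+\tau)^{-1} b^2(\tau)\,d\tau \leq 4C^3\varepsilon^2\ln(1+t)$, and combine with the previous integral estimate to get
\begin{align*}
c(t) \leq C\varepsilon\Bigl(1 + \tfrac{4C\varepsilon}{a}\Bigr) + C\varepsilon^2(1 + 4C^2)\ln(1+t).
\end{align*}
Comparing to $2C\varepsilon\{1 + a\ln(1+t)\}$, the constant term is handled by the same condition $\varepsilon < a/(4C)$, while the $\ln(1+t)$ coefficient requires $C\varepsilon^2(1+4C^2) \leq 2C\varepsilon\cdot a$, i.e.\ $\varepsilon < 2a/(1+4C^2)$, which is exactly the second smallness hypothesis. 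Both improved inequalities are strict, contradicting the maximality of $T^*$ unless $T^* = \infty$, which gives \eqref{E:bBound}--\eqref{E:cBound}.

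There is no genuine analytic obstacle here; the proof is essentially bookkeeping, and the only delicate points are (i) verifying that the weight $(1+\tau)^{-1-a}$ absorbs the logarithmic growth of $c$ against the $a$-polynomial decay, producing the clean factor $2/a$, and (ii) tracking the numerical thresholds to confirm that the two stated smallness conditions on $\varepsilon$ are precisely what is needed to close the bootstrap. The mildly subtle feature is that the logarithmic growth in $c$ is unavoidable (it comes from the $C\varepsilon^2 \ln(1+t)$ inhomogeneity and the $b^2$ self-feedback), yet it is preserved under the nonlinear feedback in $\mathbf{(B)}$ because the weight $(1+\tau)^{-1-a}$ gives a strictly integrable gain on it.
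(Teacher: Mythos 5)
Your proposal is correct and is essentially identical to the paper's argument: the same continuity/bootstrap setup, the same key integral computation $\int_0^\infty (1+\tau)^{-1-a}\{1 + a\ln(1+\tau)\}\,d\tau = 2/a$, the same strict improvements $b(t) \leq C\varepsilon(1 + 4C\varepsilon/a)$ and $c(t) \leq C\varepsilon(1 + 4C\varepsilon/a) + C\varepsilon^2(1+4C^2)\ln(1+t)$, and the same matching of the two smallness thresholds. The only cosmetic difference is that you evaluate the logarithmic integral via integration by parts whereas the paper uses the substitution $z = a\ln(1+\tau)$.
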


\begin{proof}
	We slightly modify the proof of \cite[Lemma 10.7]{hLiR2010}. Let $T$ be the largest time such that the bounds 
	\eqref{E:bBound} - \eqref{E:cBound} hold. Then inserting these bounds into
	the inequalities \eqref{E:boftGronwall} - \eqref{E:coftGronwall}, and using the bound
	(and the change of variables $z \eqdef a \ln(1 + \tau)$)
	
	\begin{align}
		\int_{\tau=0}^{\infty} (1 + \tau)^{-1 - a} \big\lbrace1 + a \ln(1 + \tau) \big\rbrace \, d\tau 
			\leq \int_{\tau = 0}^{\infty} (1 + a^{-1} z) e^{- z} \, dz = 2 a^{-1},
	\end{align}
	we deduce that the following inequalities hold for $t \in [0,T]:$
	
	\begin{align}
		b(t) & \leq C \varepsilon \big\lbrace 1 + 4 C \varepsilon a^{-1}   \big\rbrace < 2 C \varepsilon, \\
		c(t) & \leq C \varepsilon \big\lbrace 1 + 4 C \varepsilon a^{-1} + (1 + 4C^2) \varepsilon \ln(1 + t)  \big\rbrace
			< 2 C \varepsilon \big\lbrace 1 + a \ln(1 + t) \big\rbrace.
	\end{align}
	Since the above inequalities are a strict improvement of the assumed bounds \eqref{E:bBound} - \eqref{E:cBound},
	we thus conclude that $T = \infty.$
\end{proof}

To complete the proof of \eqref{E:partialhTUpointwise} - \eqref{E:partialhpointwise}, we apply Lemmas \ref{L:partialhTUpartialhdecay} and \ref{L:Gronwall} with 
$b(t) \eqdef (1 + t) \big\| |\nabla h|_{\mathcal{T} \mathcal{N}}(t,\cdot) \big\|_{L^{\infty}}$
and $c(t) \eqdef (1 + t) \big\| \nabla h(t,\cdot) \big\|_{L^{\infty}}.$ This implies
\eqref{E:partialhTUpointwise} - \eqref{E:partialhpointwise} with $(1 + t)$ in place of $(1 + t + |q|).$
The additional decay in $|q|$ in \eqref{E:partialhTUpointwise} and \eqref{E:partialhpointwise} follows directly from \eqref{E:weakdecaypartialLinfinity} (the version for the tensor $h$). \hfill $\qed$

\subsection{Proof of Proposition \ref{P:UpgradedDecayh1A}} \label{SS:Proof of PropositionUpgradedDecayh1A}

We will prove the proposition using a series of inductive steps. We only prove the estimates for 
$h_{\mu \nu}^{(1)}$ and $\Far_{\mu \nu}.$ The estimates for $h_{\mu \nu}$ and $H^{\mu \nu}$ follow easily from those for 
$h_{\mu \nu}^{(1)},$ \eqref{E:Hintermsofh}, and Lemma \ref{L:h0decayestimates}. We first prove a technical lemma that will be used during the proof of the proposition.

\begin{lemma} \label{L:UpgradedInhomogeneousPointwise} \textbf{(Pointwise estimates for 
the $|\nabla_{\mathcal{Z}}^I \mathfrak{H}|$ inhomogeneities)}
	Suppose the hypotheses of Proposition \ref{P:UpgradedDecayhA} hold, and let $\mathfrak{H}_{\mu \nu}$ 
	be the inhomogeneous term on the right-hand side of 
	the reduced equation \eqref{E:Reducedh1Summary}. Then if $I$ is any $\mathcal{Z}-$multi-index with $|I| \leq \dParameter ,$ the following
	pointwise estimates hold for $(t,x) \in [0,T) \times \mathbb{R}^3:$
	
	\begin{align}  \label{E:UpgradedInhomogeneousPointwise}
		|\nabla_{\mathcal{Z}}^I \mathfrak{H}| 
		& \leq C \varepsilon \sum_{|J| \leq |I|} (1 + t + |q|)^{-1} 
			\Big(|\nabla\nabla_{\mathcal{Z}}^J h^{(1)}| + |\nabla_{\mathcal{Z}}^J \Far| \Big) \\
		& \ \ + \ C \mathop{\sum_{|I_1| + |I_2| \leq |I|}}_{|I_1|, |I_2| \leq |I|- 1} 
			\Big(|\nabla\nabla_{\mathcal{Z}}^{I_1} h^{(1)}| + |\Lie_{\mathcal{Z}}^{I_1} \Far| \Big)
			\Big(|\nabla\nabla_{\mathcal{Z}}^{I_2} h^{(1)}| + |\Lie_{\mathcal{Z}}^{I_2} \Far| \Big) \notag \\
		& \ \ + \ C \varepsilon^2 (1 + t + |q|)^{-4}. \notag
	\end{align}
\end{lemma}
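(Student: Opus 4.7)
The plan is to start from Proposition \ref{P:AlgebraicInhomogeneous}, which already furnishes a pointwise decomposition of $|\nabla_{\mathcal{Z}}^I \mathfrak{H}|$ into quadratic and cubic products of various derivatives of $h$ and $\Far$, and to reshape the right-hand side of that bound into the three pieces demanded by \eqref{E:UpgradedInhomogeneousPointwise}. The first move is to split $h = h^{(0)} + h^{(1)}$ in every factor. The pure-Schwarzschild quadratic contributions are handled by Lemma \ref{L:h0decayestimates}: each factor of the form $\nabla \nabla_{\mathcal{Z}}^{J} h^{(0)}$ is bounded by $CM(1+t+|q|)^{-2}$, so such terms are dominated by $M^2(1+t+|q|)^{-4}\le \varepsilon^2(1+t+|q|)^{-4}$ under the smallness hypothesis $M\le\varepsilon$, contributing to the third piece in \eqref{E:UpgradedInhomogeneousPointwise}. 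Mixed $h^{(0)}\cdot h^{(1)}$ cross-terms free up a $CM(1+t+|q|)^{-2}\le C\varepsilon(1+t+|q|)^{-1}$ factor from the Schwarzschild piece, leaving a top-order $\nabla\nabla_{\mathcal{Z}}^{J} h^{(1)}$ factor; these feed naturally into the first piece.

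The second step is to deal with the pure $h^{(1)}$ and pure $\Far$ quadratic terms, splitting the index sums into a balanced case ($|I_1|,|I_2|\le |I|-1$) and an unbalanced case (one index equal to $|I|$ and the other equal to $0$). Balanced products go directly into the second piece of \eqref{E:UpgradedInhomogeneousPointwise}, after invoking $|\conenabla \nabla_{\mathcal{Z}}^{I_k} h|\le |\nabla\nabla_{\mathcal{Z}}^{I_k} h|$ and a final $h=h^{(0)}+h^{(1)}$ split whose $h^{(0)}$ part is reabsorbed as in the first step. For the unbalanced case the low-order factor is bounded pointwise by the upgraded decay estimates of Proposition \ref{P:UpgradedDecayhA}: $|\nabla h|_{\mathcal{T}\mathcal{N}} \lesssim \varepsilon(1+t+|q|)^{-1}$ via \eqref{E:partialhTUpointwise}, $|\conenabla h|$ by the stronger bound implicit in \eqref{E:weakdecaybarpartialLinfinity}, and $|\Far|\lesssim \varepsilon(1+t+|q|)^{-1}$ via \eqref{E:Farupgradedecay}; the remaining top-order factor is then of the type appearing in the first piece. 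The fourth quadratic term in Proposition \ref{P:AlgebraicInhomogeneous}, which is constrained by $|I_1|+|I_2|\le |I|-2$, is automatically of the balanced type.

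The third step handles the cubic terms $|\nabla_{\mathcal{Z}}^{I_1} h||\nabla\nabla_{\mathcal{Z}}^{I_2} h||\nabla\nabla_{\mathcal{Z}}^{I_3} h|$, $|\nabla_{\mathcal{Z}}^{I_1} h||\Lie_{\mathcal{Z}}^{I_2}\Far||\Lie_{\mathcal{Z}}^{I_3}\Far|$, and $|\Lie_{\mathcal{Z}}^{I_1}\Far||\Lie_{\mathcal{Z}}^{I_2}\Far||\Lie_{\mathcal{Z}}^{I_3}\Far|$. At most one of the three indices can reach $|I|$; the other two satisfy $|I_j|\le \lfloor |I|/2\rfloor$ and can be bounded pointwise using the weak decay estimates of Corollary \ref{C:WeakDecay} together with \eqref{E:partialhTUpointwise} and \eqref{E:Farupgradedecay}, yielding a combined factor of $\varepsilon^2(1+t+|q|)^{-1}$ (more than enough). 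The surviving top-order factor, after an $h=h^{(0)}+h^{(1)}$ split (the $h^{(0)}$ part going into the $\varepsilon^2(1+t+|q|)^{-4}$ piece), fits into the first piece of \eqref{E:UpgradedInhomogeneousPointwise}. When none of the three indices equals $|I|$, an entire extra $\varepsilon(1+t+|q|)^{-1}$ is gained and the residual quadratic expression is placed into the second piece after the same $h^{(0)}/h^{(1)}$ bookkeeping.

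The main obstacle is the logarithmic loss in the general-purpose bound \eqref{E:partialhpointwise}: a naive application of $|\nabla h|\lesssim \varepsilon(1+t+|q|)^{-1}\{1+\ln(1+t)\}$ at low order would break the clean $C\varepsilon(1+t+|q|)^{-1}$ prefactor required by piece (i). The point is that in every term produced by Proposition \ref{P:AlgebraicInhomogeneous}, the special algebraic structure of $\mathscr{P}$ and $\mathscr{Q}_{\mu\nu}^{(1;h)}$ ensures that the top-order $\nabla\nabla_{\mathcal{Z}}^{|I|} h$ is paired either with $|\nabla\cdot|_{\mathcal{T}\mathcal{N}}$ or with a $\conenabla$-factor at low order, exactly where the log-free estimates \eqref{E:partialhTUpointwise} and \eqref{E:weakdecaybarpartialLinfinity} apply. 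Verifying this pairing case by case is the delicate part; the remaining manipulations are bookkeeping supported by \eqref{E:LieZIinTermsofNablaZI}--\eqref{E:NablaLieZIinTermsofNablaNablaZI} for the interconversion between Lie and covariant derivatives of $\Far$.
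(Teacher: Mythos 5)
Your strategy (start from Proposition \ref{P:AlgebraicInhomogeneous}, split $h = h^{(0)} + h^{(1)}$, route the pieces into the three terms of \eqref{E:UpgradedInhomogeneousPointwise}) does mirror the paper's terse proof, but the structural claim at the heart of your final paragraph is wrong, and the logarithmic obstacle you identify is not resolved in the way you describe.

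Consider the quadratic term $|\conenabla\nabla_{\mathcal{Z}}^{I_1}h|\,|\nabla\nabla_{\mathcal{Z}}^{I_2}h|$ in \eqref{E:ZIinhomogeneoushpointwise} with the unbalanced split $|I_1|=|I|$, $|I_2|=0$. This genuinely occurs: when all the $Z$-derivatives fall on the $\mathcal{L}\mathcal{L}$-contracted slot of $\mathscr{P}$ and the wave coordinate condition (Proposition \ref{P:harmonicgauge}) is applied to that top-order $\mathcal{L}\mathcal{L}$ component, one is left with $|\conenabla\nabla_{\mathcal{Z}}^{|I|}h|\cdot|\nabla h|$. Here the $\conenabla$ sits at \emph{high} order, and the low-order factor $|\nabla h|$ is a completely unstructured full derivative, not $|\nabla h|_{\mathcal{T}\mathcal{N}}$, not $|\conenabla h|$, and not $|\Far|$. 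Your claim that "the top-order $\nabla\nabla_{\mathcal{Z}}^{|I|}h$ is paired either with $|\nabla\cdot|_{\mathcal{T}\mathcal{N}}$ or with a $\conenabla$-factor at low order" therefore fails: the factor at low order is $|\nabla h|$ itself, which carries the logarithm of \eqref{E:partialhpointwise}. The three log-free bounds you list do not cover it.

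The correct handling, which the paper signals by citing the weak decay estimates of Corollary \ref{C:WeakDecay} rather than \eqref{E:partialhpointwise}, is to invert roles: bound the \emph{high}-order $\conenabla$ factor pointwise via \eqref{E:weakdecaybarpartialLinfinity}, which gives $|\conenabla\nabla_{\mathcal{Z}}^{|I|}h^{(1)}|\leq C\varepsilon(1+t+|q|)^{-3/2+\updelta}\leq C\varepsilon(1+t+|q|)^{-1}$ (using $\updelta<1/4$), and then place $|\nabla h^{(1)}|$ in the $|J|=0$ slot of the first piece. Put differently, the $(1+t+|q|)^{-1}$ prefactor is squeezed out of the tangential derivative on the \emph{high}-order factor, not out of a structured low-order factor as you assert. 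Note also that this mechanism requires $|I|\leq\dParameter-3$ for \eqref{E:weakdecaybarpartialLinfinity} to apply to $\conenabla\nabla_{\mathcal{Z}}^{|I|}h^{(1)}$; this covers the lemma's actual use in the proof of Proposition \ref{P:UpgradedDecayh1A} (which only needs $|I|\leq\dParameter-4$), even though the stated range $|I|\leq\dParameter$ is formally broader.
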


\begin{proof}
	Lemma \ref{L:UpgradedInhomogeneousPointwise} follows from \eqref{E:ZIinhomogeneoushpointwise},
	\eqref{E:partialhTUpointwise}, \eqref{E:Farupgradedecay}, Lemma \ref{L:h0decayestimates}, the weak decay estimates of 
	Corollary \ref{C:WeakDecay}, \eqref{E:partialhTUpointwise}, and the fact that $0 < \updelta < 1/4.$ We remark that the 
	$C \varepsilon^2 (1 + t + |q|)^{-4}$ term arises from the estimate 
	${|\nabla\nabla_{\mathcal{Z}}^{I_1} h^{(0)}| |\nabla\nabla_{\mathcal{Z}}^{I_2} h^{(0)}| \leq C \varepsilon^2 (1 + t + 
	|q|)^{-4}}.$
\end{proof}

We are now ready for the proof of the proposition. To prove \eqref{E:partialZIh1Aupgraded} - \eqref{E:barpartialZIh1Aupgraded}, we will argue inductively, using the inequalities in the case $|I| \leq k$ to deduce that they hold in the case $|I| = k+1.$ We also remark that the base case $k=0$ is covered by our argument. 
\\

\noindent \textbf{Induction step 1: upgraded pointwise decay estimates for $|\nabla_{\mathcal{Z}}^J h|_{\mathcal{L}\mathcal{L}}|$
for $|I| = k + 1$ and $|\nabla_{\mathcal{Z}}^{J} h|_{\mathcal{L}\mathcal{T}}|$ for $|J| = k.$}

As a first step, we will use the wave coordinate condition to upgrade the estimates for $|\nabla_{\mathcal{Z}}^J h|_{\mathcal{L}\mathcal{L}}$ for $|J| = k + 1$ and $|\nabla_{\mathcal{Z}}^J h|_{\mathcal{L}\mathcal{T}}$ for $|J| = k.$ To this end, we appeal to inequality \eqref{E:partialZIhLLpluspartialZJhLTLinfinityintegrated}, 
using inequality \eqref{E:partialZIh1Aupgraded} for $h$ under the induction hypothesis to bound the integrand, to deduce that

\begin{align} \label{E:LieZJLiehLLZJprimeIndependentInductionEstimate}
	\sum_{|I| = k + 1} |\nabla_{\mathcal{Z}}^I h|_{\mathcal{L}\mathcal{L}}
	\ + \ \sum_{|J| = k} |\nabla_{\mathcal{Z}}^J h|_{\mathcal{L}\mathcal{T}} 
 	& \lesssim \left\lbrace \begin{array}{lr}
	    	\varepsilon (1 + t + |q|)^{-1 + M_{k-1} \varepsilon} (1 + |q|)^{- M_{k-1} \varepsilon}, &  \mbox{if} \ q > 0, \\
	      \varepsilon (1 + t + |q|)^{-1 + M_{k-1} \varepsilon} (1 + |q|)^{1/2 + \upmu'}, & \mbox{if} \ q < 0.
	    \end{array}
	  	\right.
\end{align}
In the above estimates, the constant $\upmu'$ is subject to the restrictions stated in the hypotheses 
of Proposition \ref{P:UpgradedDecayh1A}. Furthermore, since $H^{\mu \nu} = -h^{\mu \nu} + O^{\infty}(|h|^2),$ \eqref{E:weakdecayLinfinity} implies that the same estimates hold for the tensor $H.$
\\

\noindent \textbf{Induction step 2: upgraded pointwise decay estimates for $\big|\Lie_{\mathcal{Z}}^I \Far \big|,$ $|I| = k+1.$}

Let $\mathcal{W} \eqdef \big\lbrace (t,x): |x| \geq 1 + t/2 \big\rbrace \cap \big\lbrace (t,x): |x| \leq 2t - 1 \big\rbrace$
denote the ``wave zone'' region. Then for $(t,x) \nin \mathcal{W},$ we have that $1 + |q| \approx 1 + t + |q|.$ Using this fact, for $(t,x) \nin \mathcal{W},$ the weak decay estimate \eqref{E:weakdecaypartialLinfinity} implies that inequality \eqref{E:partialZIh1Aupgraded} holds for $|\Lie_{\mathcal{Z}}^I \Far|$ in the case $|I| = k + 1.$ Furthermore, by Proposition \ref{P:FLUTTimproveddecay}, the inequality \eqref{E:partialZIh1Aupgraded} holds for the null components $\big|\alpha[\Lie_{\mathcal{Z}}^I \Far]\big|,$ $\big|\rho[\Lie_{\mathcal{Z}}^I \Far]\big|,$ and 
$\big|\sigma[\Lie_{\mathcal{Z}}^I \Far]\big|$ when $|I| = k + 1.$

It remains to consider $\big|\ualpha[\Lie_{\mathcal{Z}}^I \Far(t,x)]\big|$ in the case $(t,x) \in \mathcal{W}.$ Note that $r \approx 1 + t + |q| \approx 1 + t + s$ for $(t,x) \in \mathcal{W}.$ We will make use of the weight $\varpi(q)$ defined in \eqref{E:decayweight}. Using \eqref{E:LambdaLieZIualphaEquationGoodqWeights}, Corollary \ref{C:WeakDecay} (the version for the tensorfield $h$), Proposition \ref{P:FLUTTimproveddecay}, \eqref{E:hLTZhLLpointwise}, \eqref{E:Farupgradedecay}, the induction
hypothesis, and \eqref{E:LieZJLiehLLZJprimeIndependentInductionEstimate}, it follows that

\begin{align} \label{E:ualphaGoodQweightsReadytobeIntegrated}
	\sum_{|I| \leq k + 1} \big|\nabla_{\Lambda} \big(r \varpi(q) \ualpha[\Lie_{\mathcal{Z}}^I \Far] \big)\big| 
	& \leq C \varepsilon (1 + t + |q|)^{-1} \sum_{|I| \leq k + 1} \big|r \varpi(q) \ualpha[\Lie_{\mathcal{Z}}^I \Far] \big|
	\ + \ C \varepsilon (1 + t + |q|)^{-(1 + a)}
	\ + \ C \varepsilon^2 (1 + t + |q|)^{-1 + C \varepsilon},
\end{align}
where $0 < a < \min \lbrace \upmu' - \updelta, \upgamma - \updelta - \upgamma' \rbrace$  is a fixed constant, and
$\Lambda \eqdef L + \frac{1}{4} h_{LL} \uL.$ Note the importance of the independent estimate 
\eqref{E:hLTZhLLpointwise} for bounding the second, fourth, and fifth sums on the right-hand side of
\eqref{E:LambdaLieZIualphaEquationGoodqWeights}, and of the independent estimate 
\eqref{E:LieZJLiehLLZJprimeIndependentInductionEstimate} (in the case $|I| = k + 1$) 
for bounding the third sum on the right-hand side of \eqref{E:LambdaLieZIualphaEquationGoodqWeights}.

Let $\big(\tau(\lambda),y(\lambda) \big) $ be the integral curve (as defined in Section \ref{SSS:ProofofFarUpgradedDecay}) 
of the vectorfield $\Lambda$ passing through the point $(t,x) = \big(\tau(\lambda_1),y(\lambda_1) \big) \in \mathcal{W}.$ 
By the inequality \eqref{E:hLTZhLLpointwise} for $h_{LL},$ every such integral curve must intersect 
the boundary of $\mathcal{W}$ at a point $(t_0,x_0) = \big(\tau(\lambda_0),y(\lambda_0) \big)$ lying to the \emph{past} of $(t,x).$ Using \eqref{E:hLTZhLLpointwise} again, we have that $\frac{dt}{d\lambda} \approx 1$ along the integral curves, and in the entire region $\mathcal{W},$ we have that $|y| \approx \tau \approx 1 + |\tau| \approx 1 + |\tau| + \big||y| - \tau\big|.$
Define $f(\lambda) \eqdef \sum_{|I| \leq k + 1} \Big||y(\lambda)| \varpi\big(q(\lambda)\big) 
\ualpha\big[\Lie_{\mathcal{Z}}^I \Far\big(\tau(\lambda),y(\lambda) \big)\big]\Big|,$ where $q(\lambda) 
\eqdef |y(\lambda)| - \tau(\lambda).$ Note that $f(\lambda_1) = \sum_{|I| \leq k + 1}\big|r \varpi(q) \ualpha[\Lie_{\mathcal{Z}}^I \Far]\big|,$ where $q \eqdef q(\lambda_1) = |x| - t,$ while the weak decay estimate \eqref{E:weakdecaypartialLinfinity} implies that $f(\lambda_0) \leq C \varepsilon.$ Integrating inequality \eqref{E:ualphaGoodQweightsReadytobeIntegrated} and changing variables so that $\tau$ is the integration variable, we have that

\begin{align} \label{E:ualphaGoodQweightsGronwallready}
	\overbrace{f(\lambda_1)}^{f(\lambda \circ t)}
	& \leq f(\lambda_0) \ + \ C \varepsilon \int_{\lambda = \lambda_0}^{\lambda = \lambda_1} 
		[1 + \tau(\lambda)]^{-1} f(\lambda) \, d \lambda
		\ + \ C \varepsilon \int_{\lambda = \lambda_0}^{\lambda = \lambda_1} [1 + \tau(\lambda)]^{-(1 + a)} \, d \lambda
		\ + \ C \varepsilon^2 \int_{\lambda = \lambda_0}^{\lambda = \lambda_1} [1 + \tau(\lambda)]^{-1 + C \varepsilon} 
			\, d \lambda \\
	& \leq C \varepsilon \ + \ C \varepsilon \int_{\lambda = \lambda_0}^{\lambda = \lambda_1} 
			[1 + \tau(\lambda)]^{-1} f(\lambda) \, d \lambda
		\ + \ C \varepsilon \int_{\tau = t_0}^{\tau = t} (1 + \tau)^{-(1 + a)} \, d \tau
		\ + \ C \varepsilon^2 \int_{\tau = t_0}^{\tau = t} (1 + \tau)^{-1 + C \varepsilon} \, d \tau	\notag \\
	& \leq C \varepsilon (1 + t)^{C \varepsilon}
		\ + \ C \varepsilon \int_{\tau = t_0}^{\tau = t} 
			(1 + \tau)^{-1} f(\lambda \circ \tau) \, d \tau. \notag
\end{align}
Applying Gronwall's lemma to \eqref{E:ualphaGoodQweightsGronwallready}, we have that

\begin{align} \label{E:ualphaGoodqweightsGronwall}
	f(\overbrace{\lambda \circ t}^{\lambda_1}) 
	& \leq C \varepsilon (1 + t)^{C \varepsilon}
		\exp \Big( C \varepsilon \int_{\tau= t_0}^{\tau = t} (1 + \tau)^{-1} \, d \tau \Big) \, d\tau \\
	& \leq C \varepsilon (1 + t)^{2 C \varepsilon}, \notag
\end{align}
from which it easily follows that for $(t,x) \in \mathcal{W},$ we have that

\begin{align} \label{E:ualphaGoodqweightsinequality}
	\sum_{|I| \leq k + 1} \big| \ualpha[\Lie_{\mathcal{Z}}^I \Far] \big| & \leq C \varepsilon (1 + t)^{-1 + 2 C \varepsilon} 
	\varpi^{-1}(q).
\end{align}
Combining \eqref{E:ualphaGoodqweightsinequality} and the previous arguments covering $(t,x) \nin \mathcal{W}$ 
and the other null components of $\Lie_{\mathcal{Z}}^I \Far,$ we have shown that the estimate \eqref{E:partialZIh1Aupgraded} holds for $|\Lie_{\mathcal{Z}}^I \Far|$ in the case $|I| = k + 1.$
\\

\noindent \textbf{Final induction step: upgraded pointwise decay estimates for $|\nabla\nabla_{\mathcal{Z}}^I h|$ and 
$|\nabla_{\mathcal{Z}}^I h|,$ $|I| = k + 1.$}

Our first goal is to prove the following estimate in the case $|I| = k + 1:$ 

\begin{align}    \label{E:boxZIh1ALinfinityUpgraded}
	|\widetilde{\Square}_g \nabla_{\mathcal{Z}}^I h^{(1)}| & \lesssim
		\varepsilon \sum_{|K| \leq |I|} (1 + t + |q|)^{-1} |\nabla\nabla_{\mathcal{Z}}^K h^{(1)}|
		\ + \ \left\lbrace \begin{array}{lr}
	   	\varepsilon^2 (1 + t + |q|)^{-4 + \updelta} (1 + |q|)^{- \updelta}, &  \mbox{if} \ q > 0, \\
	   	\varepsilon (1 + t + |q|)^{-3}, & \mbox{if} \ q < 0,
	    \end{array}
	  \right. \\
	 & \ + \ \left\lbrace \begin{array}{lr}
	  	\varepsilon^2 (1 + t + |q|)^{-2 + 2 M_k \varepsilon} (1 + |q|)^{-2 - 2 M_k \varepsilon}, &  \mbox{if} \ q > 0, \\
	   	\varepsilon^2 (1 + t + |q|)^{-2 + 2 M_k \varepsilon} (1 + |q|)^{-1 + 2 \upmu'}, & \mbox{if} \ q < 0.
	    \end{array}
	  \right. \notag
\end{align}
To prove \eqref{E:boxZIh1ALinfinityUpgraded}, we first recall Corollary \ref{C:boxZIh1ALinfinity}, which
states that

\begin{align} \label{E:boxZIh1ALinfinityagain}
		|\widetilde{\Square}_g \nabla_{\mathcal{Z}}^I h^{(1)}|		 
			& \lesssim |\nablamod_{\mathcal{Z}}^I \mathfrak{H}| \ + \ |\nablamod_{\mathcal{Z}}^I \widetilde{\Square}_g h^{(0)}| 
				\ + \ (1 + t + |q|)^{-1} \mathop{\sum_{|K| \leq |I|}}_{|J| + (|K| - 1)_{+} \leq |I|} |\nabla_{\mathcal{Z}}^J H||\nabla 
				\nabla_{\mathcal{Z}}^K h^{(1)}| 
				\\
			& \ \ + \ (1 + |q|)^{-1} \sum_{|K| \leq |I|}  
				|\nabla\nabla_{\mathcal{Z}}^K h^{(1)}| \Bigg\lbrace \mathop{\sum_{|J| + (|K| - 1)_{+}}}_{\ \ \leq |I|} 
				|\nabla_{\mathcal{Z}}^J H|_{\mathcal{L} \mathcal{L}} 
				\ + \ \underbrace{\mathop{\sum_{|J'| + (|K| - 1)_{+}}}_{\ \leq |I|-1} |\nabla_{\mathcal{Z}}^{J'} H|_{\mathcal{L} 
				\mathcal{T}}}_{\mbox{Absent if $|I| = 0$}}
				\ + \ \underbrace{\mathop{\sum_{|J''| + (|K| - 1)_{+}}}_{\ \leq |I|-2} |\nabla_{\mathcal{Z}}^{J''} H|}_{\mbox{Absent if 
				$|I| \leq 1$ or $|K| = |I|$}} \Bigg\rbrace, \label{E:boxZIh1ALinfinityagainsecondterm}
\end{align}
where $(|K|-1)_+ \eqdef 0$ if $|K| = 0$ and $(|K|-1)_+ \eqdef |K| - 1$ if $|K| \geq 1.$
We first bound the terms from line \eqref{E:boxZIh1ALinfinityagainsecondterm},
considering separately the cases $|K| < |I|$ and $|K| = |I| = k + 1.$ For $|K| < |I| = k+1,$ we use \eqref{E:LieZJLiehLLZJprimeIndependentInductionEstimate} (for the tensorfield $H$) and \eqref{E:ZIh1Aupgraded}
(for the tensorfield $H$) under the induction hypotheses to conclude that

\begin{align}
	(1 + |q|)^{-1} \sum_{|J|\leq k + 1,|J'|\leq k,|J''|\leq k-1} & \Big(|\nabla_{\mathcal{Z}}^J H|_{\mathcal{L} \mathcal{L}} 
		\ + \ |\nabla_{\mathcal{Z}}^{J'} H|_{\mathcal{L} \mathcal{T}} 
		\ + \ |\nabla_{\mathcal{Z}}^{J''} H| \Big) \\
	& \lesssim \left\lbrace
		\begin{array}{lr}
	   	\varepsilon (1 + t + |q|)^{-1 + M_k \varepsilon} (1 + |q|)^{-1 - M_k \varepsilon}, &  \mbox{if} \ q > 0, \\
	   	\varepsilon (1 + t + |q|)^{-1 + M_k \varepsilon} (1 + |q|)^{-1/2 + \upmu'}, & \mbox{if} \ q < 0. 		
	      \notag
	   \end{array}
	 \right. \notag
\end{align}
Also using \eqref{E:partialZIh1Aupgraded} under the induction hypotheses to bound $|\nabla\nabla_{\mathcal{Z}}^K h^{(1)}|,$ it follows that
all of the terms from \eqref{E:boxZIh1ALinfinityagainsecondterm} in the case $|K| < |I|$ can be bounded by
the last term on the right-hand side of \eqref{E:boxZIh1ALinfinityUpgraded}.

We now consider the case $|K| = |I| = k+1.$ Since $|J| \leq 1$ and $|J'| = 0$ in this case, we can use
\eqref{E:hLTZhLLpointwise} (for the tensorfield $H$) to deduce the bound

\begin{align}
	(1 + |q|)^{-1} \sum_{|K| = |I|} & \Bigg\lbrace |\nabla\nabla_{\mathcal{Z}}^K h^{(1)}|
		\bigg( \mathop{\sum_{|J| + (|K| - 1)_{+}}}_{\ \ \leq |I|} |\nabla_{\mathcal{Z}}^J H|_{\mathcal{L} \mathcal{L}} 
		\ + \ \mathop{\sum_{|J'| + (|K| - 1)_{+}}}_{\ \leq |I|-1} |\nabla_{\mathcal{Z}}^{J'} H|_{\mathcal{L} \mathcal{T}} \bigg)   
		\Bigg\rbrace \\
	& \lesssim \varepsilon \sum_{|K| = |I|} (1 + t + |q|)^{-1} |\nabla\nabla_{\mathcal{Z}}^K h^{(1)}|. \notag
\end{align}
Thus, all of the terms from \eqref{E:boxZIh1ALinfinityagainsecondterm} in the case $|K| = |I| = k + 1$ can be bounded by
the first term on the right-hand side of \eqref{E:boxZIh1ALinfinityUpgraded}.

For the $|\nablamod_{\mathcal{Z}}^I \widetilde{\Square}_g h^{(0)}|$ term from the right-hand side of \eqref{E:boxZIh1ALinfinityagain}, we simply use Lemma \ref{L:weakdecayLinfinitynablaZISquaregh0}, which shows that $|\nablamod_{\mathcal{Z}}^I \widetilde{\Square}_g h^{(0)}|$ is bounded by the 
next to last term on the right-hand side of \eqref{E:boxZIh1ALinfinityUpgraded}. 

To bound the $|\nabla_{\mathcal{Z}}^I \mathfrak{H}|$ term from the right-hand side of \eqref{E:boxZIh1ALinfinityagain},
we apply Lemma \ref{L:UpgradedInhomogeneousPointwise}; the first and third terms from the right-hand side of \eqref{E:UpgradedInhomogeneousPointwise} are manifestly bounded by the right-hand side of
\eqref{E:boxZIh1ALinfinityUpgraded}, while the term 

\begin{align*}
	\mathop{\sum_{|J| + |K| \leq |I|}}_{|J| \leq |K| < |I|} \Big(|\nabla\nabla_{\mathcal{Z}}^J h^{(1)}| 
	+ |\Lie_{\mathcal{Z}}^J \Far| \Big)
	\Big(|\nabla\nabla_{\mathcal{Z}}^K h^{(1)}| + |\Lie_{\mathcal{Z}}^K \Far| \Big) 
\end{align*}	
from the right-hand side of \eqref{E:UpgradedInhomogeneousPointwise} can be bounded by the last term on the right-hand side of
\eqref{E:boxZIh1ALinfinityUpgraded} using the induction hypotheses, since $|J| \leq |K| \leq k.$ This completes the proof of \eqref{E:boxZIh1ALinfinityUpgraded} in the case of $|I| = k + 1.$

To obtain the desired upgraded pointwise estimate for $|\nabla\nabla_{\mathcal{Z}}^I h^{(1)}|,$ we will estimate the quantity

\begin{align} \label{E:nkplusonequantity}
	n_{k+1}(t) \eqdef (1 + t) \sum_{|I| \leq k + 1} \big\| \varpi(q) \nabla\nabla_{\mathcal{Z}}^I h^{(1)}(t,\cdot) \big\|_{L^{\infty}},
\end{align}
where $\varpi(q)$ is the weight defined in \eqref{E:decayweight}. Our goal is to use 
Lemma \ref{E:scalar} with $\phi \eqdef \nabla_{\mathcal{Z}}^I h_{\mu \nu}^{(1)}$ to obtain an integral inequality for $n_{k+1}(t)$ that is amenable to Gronwall's lemma. We begin by estimating the terms on the right-hand side of \eqref{E:systemdecay}. First, with $a \eqdef \min(\upmu' - \updelta, \upgamma - \updelta - \upgamma') > 0,$ by the weak decay estimate \eqref{E:weakdecayLinfinity}, we have that

\begin{align} \label{E:ZIh1ZIAweighteddecayestimate}
	\varpi(q)|\nabla_{\mathcal{Z}}^I h^{(1)}| & \lesssim 
	\left \lbrace \begin{array}{lr}
   		\varepsilon (1 + t + |q|)^{-1 + \updelta} (1 + |q|)^{- \upgamma} (1 + |q|)^{1 + \upgamma'}, &  \mbox{if} \ q > 0, \\
    	\varepsilon (1 + t + |q|)^{-1 + \updelta} (1 + |q|)^{1/2} (1 + |q|)^{1/2 - \upmu'}, & \mbox{if} \ q < 0,
    \end{array}
  	\right., && |I| \leq \dParameter - 2 \\
  	& \lesssim \varepsilon (1 + t)^{-a},&& |I| \leq \dParameter - 2. \notag
\end{align}
This will serve as a suitable bound for estimating the first and fourth terms on the right-hand side of \eqref{E:systemdecay}.

Next, using \eqref{E:boxZIh1ALinfinityUpgraded} and the definition \eqref{E:nkplusonequantity}, 
we deduce the following pointwise estimate:

\begin{align} \label{E:boxZIh1ALinfinityWeightedUpgraded}
	\varpi(q) |\widetilde{\Square}_g \nabla_{\mathcal{Z}}^I h^{(1)}|
		\lesssim (1 + t)^{-2} \big\lbrace\varepsilon n_{k+1} \ + \ \varepsilon^2 (1 + t)^{2M_k \varepsilon}
		\ + \ \varepsilon (1 + t)^{-1/2 - \upmu'} \big\rbrace.
\end{align}
This will serve as a suitable bound for estimating the third integral on the right-hand side of \eqref{E:systemdecay}.

We now apply Corollary \ref{C:systemdecay}, using \eqref{E:ZIh1ZIAweighteddecayestimate},
\eqref{E:boxZIh1ALinfinityWeightedUpgraded}, and the assumption $k + 1 \leq \dParameter - 4$ to deduce that

\begin{align} \label{E:nkplus1Gronwallready}
	n_{k+1}(t) & \leq C \sup_{0 \leq \tau \leq t} \sum_{|I| \leq k + 2} 
		\big\| \varpi(q) \nabla_{\mathcal{Z}}^I h^{(1)}(t,\cdot) \big\|_{L^{\infty}} \\
	& \ \ + \ C \sum_{|I| \leq k + 1} \int_0^t 
		\varepsilon \big\| \varpi(q) \nabla\nabla_{\mathcal{Z}}^I h^{(1)}(\tau,\cdot) \big\|_{L^{\infty}} 
		\ + \ (1 + \tau) \big\| \varpi(q) |\widetilde{\Square}_g \nabla_{\mathcal{Z}}^I h^{(1)}|(\tau,\cdot) 
		\big\|_{L^{\infty}(D_{\tau})} \, d \tau \notag \\
	& \ \ + \ C \sum_{|I| \leq k + 3} \int_0^t (1 + \tau)^{-1} 
		\big\| \varpi(q) \nabla_{\mathcal{Z}}^I h^{(1)}(\tau,\cdot) \big\|_{L^{\infty}(D_{\tau})} \, d \tau \notag \\
	& \leq C \varepsilon (1 + t)^{-a} \ + \ C \int_{0}^{t} (1 + \tau)^{-1}  
		\Big\lbrace \varepsilon n_{k+1}(\tau) \ + \ \varepsilon^2 (1 + \tau)^{C \varepsilon} 
		\ + \ \varepsilon (1 + \tau)^{-1/2 - \upmu'} 
			\ + \ \varepsilon (1 + \tau)^{-a} \Big\rbrace \,d \tau \notag \\
	& \leq C \varepsilon \ + \ C \varepsilon(1 + t)^{C\varepsilon} 
		\ + \ C \varepsilon \int_0^t (1 + \tau)^{-1} n_{k+1}(\tau) \, d \tau. \notag
\end{align}

From \eqref{E:nkplus1Gronwallready} and Gronwall's lemma, we conclude that
$n_{k+1}(t) \leq 2 C \varepsilon (1 + t)^{2 C \varepsilon},$ which proves \eqref{E:partialZIh1Aupgraded}
in the case $|I| = k+1.$ As in our proof of Lemma \ref{L:UpgradedDecayhA}, the estimate \eqref{E:ZIh1Aupgraded} follows from integrating the bound for $|\partial_q \nabla_{\mathcal{Z}}^I h^{(1)}|$ implied by \eqref{E:partialZIh1Aupgraded} along the line $\omega \eqdef x/|x| = constant,$ $t + |x| = constant,$ from the hyperplane $t=0$ and using \eqref{E:weakdecayLinfinity} at $t=0.$ This closes the induction argument. We have completed the proof of Proposition \ref{P:UpgradedDecayh1A} with the exception of showing that inequality \eqref{E:barpartialZIh1Aupgraded} holds for 
$|\conenabla \nabla_{\mathcal{Z}}^I h^{(1)}|,$ $|\conenabla \Lie_{\mathcal{Z}}^I \Far|,$ $|\Lie_{\mathcal{Z}}^I \Far|_{\mathcal{L}\mathcal{N}},$ and $|\Lie_{\mathcal{Z}}^I \Far|_{\mathcal{T}\mathcal{T}},$ where $|I| \leq \dParameter - 5.$ In the next paragraph, we address these inequalities using an argument which is not part of the induction process.
\\

\noindent \textbf{Upgraded pointwise decay estimates for $|\conenabla \nabla_{\mathcal{Z}}^I h^{(1)}|,$ 
$|\conenabla \Lie_{\mathcal{Z}}^I \Far|,$ $|\Lie_{\mathcal{Z}}^I \Far|_{\mathcal{L}\mathcal{N}},$ and 
$|\Lie_{\mathcal{Z}}^I \Far|_{\mathcal{T}\mathcal{T}},$ $|I| \leq \dParameter - 5.$}

We first note that inequality \eqref{E:barpartialZIh1Aupgraded} for $|\conenabla \nabla_{\mathcal{Z}}^I h^{(1)}|$ and 
$|\conenabla \Lie_{\mathcal{Z}}^I \Far|$ follows from Lemma \ref{L:PointwisetandqWeightedNablainTermsofZestiamtes}, \eqref{E:LieZIinTermsofNablaZI}, \eqref{E:partialZIh1Aupgraded}, and \eqref{E:ZIh1Aupgraded}.

We now focus on proving the estimate \eqref{E:barpartialZIh1Aupgraded} for $|\Lie_{\mathcal{Z}}^I \Far|_{\mathcal{L}\mathcal{N}}$ and $|\Lie_{\mathcal{Z}}^I \Far|_{\mathcal{T}\mathcal{T}}$ in \eqref{E:barpartialZIh1Aupgraded}; all of the other estimates of Proposition \ref{P:UpgradedDecayh1A} have been proved. Recall that $|\Lie_{\mathcal{Z}}^I \Far|_{\mathcal{L}\mathcal{N}} + |\Lie_{\mathcal{Z}}^I \Far|_{\mathcal{T}\mathcal{T}} \approx |\alpha[\Lie_{\mathcal{Z}}^I \Far]| + |\rho[\Lie_{\mathcal{Z}}^I \Far]| + |\sigma[\Lie_{\mathcal{Z}}^I \Far]|.$ We will prove the desired estimate for $|\alpha[\Lie_{\mathcal{Z}}^I \Far]|$ in detail; the proofs for $|\rho[\Lie_{\mathcal{Z}}^I \Far]|$ and $|\sigma[\Lie_{\mathcal{Z}}^I \Far]|$ are similar. 

Our proof mirrors the proof of Proposition \ref{P:FLUTTimproveddecay}, except that we now are able to use the already-proven upgraded estimates of Proposition \ref{P:UpgradedDecayh1A} in place of the weak decay estimates of Corollary \ref{C:WeakDecay}. We will use the notation defined in the proof of Proposition \ref{P:FLUTTimproveddecay}. Using the upgraded 
pointwise decay estimates \eqref{E:partialZIh1Aupgraded} and \eqref{E:ZIh1Aupgraded}
(including the versions for the tensorfield $h = h^{(0)} + h^{(1)}$), inequality \eqref{E:partialqfbound} for $f(t,x) \eqdef |x|^{-1} \alpha[\Lie_{\mathcal{Z}}^I \Far(t,x)]$ can be upgraded to  

\begin{align} \label{E:nablauLralphaLieZIFarinequality}
	|\partial_q f(t,x)| 
	& \leq
		\left \lbrace \begin{array}{lr}
	    	C_k \varepsilon (1 + s)^{-3 + C \varepsilon} (1 + |q|)^{-1 - \upgamma'}, &  \mbox{if} \ q > 0, \\
	      C_k \varepsilon (1 + s)^{-3 + C \varepsilon} (1 + |q|)^{-1/2 + \upmu'}, & \mbox{if} \ q < 0,
	    \end{array}
	  	\right., && |I| \leq \dParameter - 5.
\end{align}
Arguing as in the proof of Proposition \ref{P:FLUTTimproveddecay}, and using in particular \eqref{E:fTerminalPointBound},
we deduce that

\begin{align} \label{E:ralphaLieZIFarinequality}
	\big||x|^{-1} \alpha[\Lie_{\mathcal{Z}}^I \Far(t,x)]\big| & \leq 
		C \varepsilon (1 + s)^{-3 - (\overbrace{\upgamma - \updelta)}^{> 0}}
		\ + \ \left \lbrace \begin{array}{lr}
	    	C_k \varepsilon (1 + s)^{-3 + C \varepsilon} (1 + |q'|)^{- \upgamma'}, &  \mbox{if} \ q' > 0, \\
	      C_k \varepsilon (1 + s)^{-3 + C \varepsilon} (1 + |q'|)^{1/2 + \upmu'}, & \mbox{if} \ q' < 0,
	    \end{array}
	  	\right., && |I| \leq \dParameter - 5,
\end{align}
from which it easily follows that

\begin{align} \label{E:alphaPointwiseUpgraded}
	\big|\alpha[\Lie_{\mathcal{Z}}^I \Far(t,x)]\big| 
	& \leq 
	\left \lbrace \begin{array}{lr}
	    	C_k \varepsilon (1 + t + |q|)^{-2 + C \varepsilon} (1 + |q|)^{ - \upgamma'}, &  \mbox{if} \ q > 0, \\
	      C_k \varepsilon (1 + t + |q|)^{-2 + C \varepsilon} (1 + |q|)^{1/2 + \upmu'}, & \mbox{if} \ q < 0,
	    \end{array}
	  	\right., && |I| \leq \dParameter - 5.
\end{align}
We have thus obtained the desired bound \eqref{E:barpartialZIh1Aupgraded} for 
$\big| \alpha[\Lie_{\mathcal{Z}}^I \Far] \big|.$ \hfill $\qed$

\section{Global Existence and Stability} \label{S:GlobalExistence}
In this section, we prove our main stability results. We separate our results into two theorems. The main conclusions are proved in Theorem \ref{T:MainTheorem}, which is an easy consequence of Theorem \ref{T:ImprovedDecay}. Theorem \ref{T:ImprovedDecay}, which concerns the reduced equations \eqref{E:Reducedh1Summary} - \eqref{E:ReduceddMis0Summary}, contains the crux of our bootstrap argument. In this theorem, we make certain assumptions concerning the smallness of the abstract initial data and various pointwise decay estimates for the solution on a local interval of existence $[0,T).$ We then use these assumptions to derive a ``strong'' smallness conclusion for the energy $\mathcal{E}_{\dParameter;\upgamma;\upmu}(t)$ of the reduced solution on the same interval $[0,T).$ Furthermore, in Section \ref{S:DecayFortheReducedEquations}, the pointwise decay assumptions of Theorem \ref{T:ImprovedDecay} were shown to be \emph{automatic consequences} of the smallness assumptions on the data and the ``weak'' bootstrap assumption \eqref{E:Bootstrap} for the energy $\mathcal{E}_{\dParameter;\upgamma;\upmu}(t)$ of the solution, as long as $\dParameter \geq 8.$ Consequently, in our proof of Theorem \ref{T:MainTheorem}, we will be able to appeal to the continuation principle of Proposition \ref{P:LocalExistence} to conclude that the solution to the reduced equation exists globally in time. Furthermore, this line of reasoning leads to an estimate on the size of $\mathcal{E}_{\dParameter;\upgamma;\upmu}(t),$ which can be used to deduce various decay estimates for the global solution. The wave coordinate condition plays a central role in many of the estimates in this section.

\noindent \hrulefill
\ \\

\subsection{Statement of the strong-energy-inequality theorem and proof of the global stability theorem}

We begin by recalling that the norm $E_{\dParameter;\upgamma}(0) \geq 0$ for the abstract initial data is defined by 

\begin{align}
	E_{\dParameter;\upgamma}^2(0) 
	& \eqdef \| \underline{\nabla} \mathring{\underline{h}}^{(1)} \|_{H_{1/2 + \upgamma}^{\dParameter}}^2 
		\ + \ \| \mathring{K} \|_{H_{1/2 + \upgamma}^{\dParameter}}^2 
		\ + \ \| \mathring{\mathfrak{\Displacement}} \|_{H_{1/2 + \upgamma}^{\dParameter}}^2 
		\ + \ \| \mathring{\mathfrak{\Magneticinduction}} \|_{H_{1/2 + \upgamma}^{\dParameter}}^2.
\end{align}
We furthermore recall that the energy $\mathcal{E}_{\dParameter;\upgamma;\upmu}(t) \geq 0$ for the reduced solution is defined to be

\begin{align}
	\mathcal{E}_{\dParameter;\upgamma;\upmu}^2(t) & \eqdef \underset{0 \leq \tau \leq t}{\mbox{sup}} 
		\sum_{|I| \leq \dParameter } \int_{\Sigma_{\tau}} 
		\Big\lbrace |\nabla\nabla_{\mathcal{Z}}^I h^{(1)}|^2 + |\Lie_{\mathcal{Z}}^I \Far|^2 \Big\rbrace w(q) \, d^3 x.
\end{align}

In the above expressions, the weight function $w(q)$ and its derivative $w'(q)$ are defined by
\begin{align}
	w = w(q) = \left \lbrace
		\begin{array}{lr}
    	1 \ + \ (1 + |q|)^{1 + 2 \upgamma}, &  \mbox{if} \ q > 0, \\
      1 \ + \ (1 + |q|)^{-2 \upmu}, & \mbox{if} \ q < 0,
    \end{array}
  \right., & \qquad
  w'(q) = \left \lbrace
		\begin{array}{lr}
    	(1 + 2 \upgamma)(1 + |q|)^{2 \upgamma}, &  \mbox{if} \ q > 0, \\
      2 \upmu (1 + |q|)^{-2 \upmu -1}, & \mbox{if} \ q < 0.
    \end{array}
  \right.
\end{align}
The constants $\upmu$ and $\upgamma$ are subject to the restrictions summarized in Section \ref{SS:FixedConstants}. The spacetime metric is split into the three pieces

\begin{subequations}
\begin{align}
	g_{\mu \nu} & = m_{\mu \nu} + h_{\mu \nu}^{(0)} + h_{\mu \nu}^{(1)}, \\
	h_{\mu \nu}^{(0)} & = \chi\big(\frac{r}{t}\big) \chi(r) \frac{2M}{r} \updelta_{\mu \nu},
\end{align}
\end{subequations}
where the cut-off function $\chi$ is defined in \eqref{E:chidef}. Furthermore, by Proposition \ref{P:SmallNormImpliesSmallEnergy}, if $\varepsilon$ is sufficiently small and $E_{\dParameter;\upgamma}(0) + M \leq \varepsilon,$
then the initial energy for the reduced solution satisfies
\begin{align}
	\mathcal{E}_{\dParameter;\upgamma;\upmu}(0) & \lesssim E_{\dParameter;\upgamma}(0) + M \lesssim \varepsilon.
\end{align}

We now state our technical theorem concerning the derivation of a ``strong'' energy inequality.

\begin{theorem} 		\label{T:ImprovedDecay}
	\textbf{(Derivation of a strong energy inequality)}
	Let $\dParameter \geq 0$ be an integer. Let $(g_{\mu \nu} \eqdef m_{\mu \nu} + \overbrace{h^{(0)}_{\mu \nu} + h^{(1)}_{\mu 
	\nu}}^{h_{\mu \nu}}, \Far_{\mu \nu})$ 
	be a local-in-time solution of the reduced equations \eqref{E:Reducedh1Summary} - \eqref{E:ReduceddMis0Summary} 
	satisfying the wave coordinate condition \eqref{E:wavecoordinategauge1} for $(t,x) \in [0,T) \times \mathbb{R}^3.$  
	Suppose also that for some constants $\upmu', \upgamma$ satisfying $0 < \upmu' < 1/2$ and $0 < \upgamma < 1/2,$ for all 
	vectorfields $Z \in \mathcal{Z},$ for all $\mathcal{Z}-$multi-indices $I$ subject to the restrictions stated below, and for 
	the sets $\mathcal{L} = \lbrace L \rbrace,$ $\mathcal{T} = \lbrace L, e_1, e_2 \rbrace,$
	and $\mathcal{N} = \lbrace {\uL, L, e_1, e_2} \rbrace,$ the following pointwise 
	decay estimates hold for $(t,x) \in [0,T) \times \mathbb{R}^3:$
	
	\begin{subequations}
	\begin{align}
		|\nabla h|_{\mathcal{T} \mathcal{N}} \ + \ (1 + |q|)^{-1}|h|_{\mathcal{L} \mathcal{T}}
			\ + \ (1 + |q|)^{-1}|\nabla_Z h|_{\mathcal{L} \mathcal{L}} \ + \ |\Far| 
		& \leq C \varepsilon (1 + t + |q|)^{-1}, \label{E:MainTheoremAssumptionStrongLinfinityDecayPrincipalTermCoefficients} 				
	\end{align}
	
	\begin{align}
		|\nabla\nabla_{\mathcal{Z}}^I h| \ + \ (1 + |q|)^{-1} |\nabla_{\mathcal{Z}}^I h| \ + \ |{\Lie_{\mathcal{Z}}^I \Far}|
		& \leq 
		\left \lbrace \begin{array}{lr}
    	C \varepsilon (1 + t + |q|)^{-1 + C \varepsilon} (1 + |q|)^{-1 - C\varepsilon}, &  \mbox{if} \ q > 0, \\
      C \varepsilon (1 + t + |q|)^{-1 + C \varepsilon} (1 + |q|)^{-1/2 + \upmu'}, & \mbox{if} \ q < 0,
    \end{array}
  \right., \qquad |I| \leq \lceil \dParameter/2 \rceil, 
  	\label{E:MainTheoremAssumptionStrongLinfinityDecay} \\
  |\conenabla \nabla_{\mathcal{Z}}^I h|
  \ + \ (1 + |q|)|\conenabla \Lie_{\mathcal{Z}}^I \Far|
  \ + \ |\Lie_{\mathcal{Z}}^I \Far|_{\mathcal{L}\mathcal{N}} 
  \ + \ |\Lie_{\mathcal{Z}}^I \Far|_{\mathcal{T}\mathcal{T}}
  	& \leq 
  	\left \lbrace \begin{array}{lr}
    	C \varepsilon (1 + t + |q|)^{-2 + C \varepsilon} (1 + |q|)^{- C\varepsilon}, &  \mbox{if} \ q > 0, \\
      C \varepsilon (1 + t + |q|)^{-2 + C \varepsilon} (1 + |q|)^{1/2 + \upmu'}, & \mbox{if} \ q < 0,
    \end{array}
  	\right., \qquad |I| \leq \lceil \dParameter/2 \rceil - 1. \label{E:MainTheoremAssumptionStrongerLinfinityDecayGoodComponents} 
 	\end{align}
	\end{subequations}
	In addition, assume that the following smallness conditions on the abstract initial data and ADM mass hold:
	
	\begin{align} \label{E:BootstrapTheoremDataareSmall}
		E_{\dParameter;\upgamma}(0) + M & \leq \mathring{\varepsilon}.
	\end{align}
	
	Then for any constant $\upmu$ satisfying $0 < \upmu < 1/2 - \upmu',$ there exist positive constants $\varepsilon_{\dParameter},$
	$c_{\dParameter},$ and $\widetilde{c}_{\dParameter}$ depending on $\dParameter ,$ $\upmu,$ $\upmu',$ and $\upgamma$ such that if
	$\mathring{\varepsilon} \leq \varepsilon \leq \varepsilon_{\dParameter},$ then the following energy inequality holds for $t \in [0,T):$
	
	\begin{align} \label{E:ImprovedEnergyInequality}
		\mathcal{E}_{\dParameter;\upgamma;\upmu}(t) \leq c_{\dParameter} (\mathring{\varepsilon} + \varepsilon^{3/2}) 
		(1 + t)^{\widetilde{c}_{\dParameter} \varepsilon}.
	\end{align}
	
\end{theorem}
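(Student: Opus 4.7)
The plan is to establish, for each $k \leq \dParameter$, an integral inequality of the schematic form
\begin{align*}
\mathcal{E}_{k;\upgamma;\upmu}^2(t) & \lesssim \mathring{\varepsilon}^2 + \varepsilon^3 \\
& \quad + C\varepsilon \int_0^t (1+\tau)^{-1}\, \mathcal{E}_{k;\upgamma;\upmu}^2(\tau)\, d\tau + C\varepsilon \int_0^t (1+\tau)^{-1+C\varepsilon}\, \mathcal{E}_{k;\upgamma;\upmu}^2(\tau)\, d\tau,
\end{align*}
and then to conclude \eqref{E:ImprovedEnergyInequality} by Gronwall's lemma: the first integral yields the factor $(1+t)^{C\varepsilon}$, while the second — because its coefficient is integrable once $C\varepsilon < 1$ — contributes only a bounded multiplicative factor. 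The initial data term $\mathring{\varepsilon}^2$ comes from Proposition \ref{P:SmallNormImpliesSmallEnergy}, which under \eqref{E:BootstrapTheoremDataareSmall} gives $\mathcal{E}_{\dParameter;\upgamma;\upmu}(0) \lesssim E_{\dParameter;\upgamma}(0) + M \lesssim \mathring{\varepsilon}$. The starting point for the bulk inequality is Proposition \ref{P:weightedenergy}, whose pointwise hypotheses on $H$, $h$ and $\Far$ are exactly what the decay assumptions \eqref{E:MainTheoremAssumptionStrongLinfinityDecayPrincipalTermCoefficients}--\eqref{E:MainTheoremAssumptionStrongerLinfinityDecayGoodComponents} supply (after choosing $\upmu + \upmu' < 1/2$). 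Applying \eqref{E:Secondweightedenergyscalar} to each scalar component of $\phi = \nabla_{\mathcal{Z}}^I h^{(1)}_{\mu\nu}$ and \eqref{E:SecondweightedenergyFar} to $\dot{\Far} = \Lie_{\mathcal{Z}}^I \Far$ reduces the problem to bounding, in weighted $L^2$, the inhomogeneities $\mathfrak{H}^{(1;I)}_{\mu\nu}$ from Proposition \ref{P:InhomogeneousTermsNablaZIh1} and $\mathfrak{F}_{(I)}^\nu$ from Proposition \ref{P:InhomogeneoustermsLieZIFar}.

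For the metric piece, I will use Corollary \ref{C:boxZIh1ALinfinity} to expand $\widetilde{\Square}_g \nabla_{\mathcal{Z}}^I h^{(1)}$ into three groups: the genuine inhomogeneity $\nablamod_{\mathcal{Z}}^I \mathfrak{H}$ controlled by Proposition \ref{P:AlgebraicInhomogeneous}, the Schwarzschild tail $\nablamod_{\mathcal{Z}}^I \widetilde{\Square}_g h^{(0)}$ controlled by Lemma \ref{L:weakdecayLinfinitynablaZISquaregh0}, and the commutator $[\widetilde{\Square}_g, \nabla_{\mathcal{Z}}^I]h^{(1)}$ handled by Proposition \ref{P:DIPointwise}. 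After Cauchy--Schwarz, every quadratic term has one high-order factor kept in weighted $L^2$ (absorbed by $\mathcal{E}_{k;\upgamma;\upmu}$) and one low-order factor estimated in $L^\infty$. Un-weighted $L^2$ norms of $\nabla_{\mathcal{Z}}^J h^{(1)}$ itself, which are not directly in the energy, will be recovered from $\|w^{1/2}\nabla\nabla_{\mathcal{Z}}^J h^{(1)}\|_{L^2}$ via Hardy-type inequalities, paid for by the weight $w(q)$. Crucially, whenever the low-order factor in the commutator term comes with a $\mathcal{L}\mathcal{L}$ or $\mathcal{L}\mathcal{T}$ contraction or arises from the un-differentiated coefficient, I will apply the strong assumption \eqref{E:MainTheoremAssumptionStrongLinfinityDecayPrincipalTermCoefficients} rather than \eqref{E:MainTheoremAssumptionStrongLinfinityDecay}, thereby producing a $(1+\tau)^{-1}$ factor (no $\varepsilon$-loss) and feeding only the first, benign Gronwall integral; all other terms feed the second integral, where the harmless $(1+\tau)^{C\varepsilon}$ loss is permissible.

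For the Faraday piece I will apply the null-decomposition estimate \eqref{E:EnergyInhomogeneousTermAlgebraicEstimate} together with \eqref{E:LieZIFarNullFormInhomogeneousTermAlgebraicEstimate}, with $X_\nu = \delta_\nu^0$, to the $\dot{\Far}_{0\kappa}\mathfrak{F}_{(I)}^\kappa$ integrand in \eqref{E:SecondweightedenergyFar}. The key structural observation is that every term from these decompositions either (a) has a factor $|h|_{\mathcal{L}\mathcal{L}}$, $|h|_{\mathcal{L}\mathcal{T}}$, or $|\nabla_Z h|_{\mathcal{L}\mathcal{L}}$ paired with a top-order $|\Lie_{\mathcal{Z}}^{I'}\Far|$ — so by \eqref{E:MainTheoremAssumptionStrongLinfinityDecayPrincipalTermCoefficients} we pay only $(1+\tau)^{-1}$; (b) contains $|\Lie_{\mathcal{Z}}^{I'}\Far|_{\mathcal{L}\mathcal{N}} + |\Lie_{\mathcal{Z}}^{I'}\Far|_{\mathcal{T}\mathcal{T}}$, which we estimate not in $L^\infty$ but by absorbing it into the positive spacetime term $\int\!\!\int(|\dot{\Far}|_{\mathcal{L}\mathcal{N}}^2 + |\dot{\Far}|_{\mathcal{T}\mathcal{T}}^2) w'(q)\, d^3x\, d\tau$ on the left of \eqref{E:SecondweightedenergyFar}, using $w'(q) \gtrsim (1+|q|)^{-1} w(q)$ from \eqref{E:weightinequality} to compensate the $(1+|q|)^{-1}$ factor coming with $h$; or (c) is a differentiated coefficient of order $\leq k$, which we dominate in $L^\infty$ by \eqref{E:MainTheoremAssumptionStrongLinfinityDecay} at the cost of an acceptable $(1+\tau)^{-1+C\varepsilon}$.

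The principal obstacle is the combinatorial bookkeeping in (a): one must verify case by case, across the many terms generated by $\Liemod_{\mathcal{Z}}^I$ hitting the nonlinearity $N^{\#\mu\nu\kappa\lambda}\nabla_\mu \Far_{\kappa\lambda}$, that no contribution slips through in which a \emph{top-order} factor $|\nabla\nabla_{\mathcal{Z}}^I h|$ or $|\Lie_{\mathcal{Z}}^I \Far|$ is paired with a low-order factor that decays only like $(1+\tau)^{-1+C\varepsilon}$ without additional $\mathcal{L}\mathcal{L}$/$\mathcal{L}\mathcal{T}$ structure. This is precisely where the wave coordinate identity of Proposition \ref{P:harmonicgauge} and the refined decay \eqref{E:MainTheoremAssumptionStrongLinfinityDecayPrincipalTermCoefficients} for $|h|_{\mathcal{L}\mathcal{T}}$, $|\nabla_Z h|_{\mathcal{L}\mathcal{L}}$ and $|\Far|$ are indispensable: they degrade to $(1+\tau)^{-1}$ the decay of the coefficients that are always in the critical undifferentiated slot. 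Once the two integral inequalities are assembled at every level $k \leq \dParameter$, Gronwall delivers \eqref{E:ImprovedEnergyInequality} with constants $c_{\dParameter}, \widetilde{c}_{\dParameter}$ depending only on $\dParameter, \upgamma, \upmu, \upmu'$.
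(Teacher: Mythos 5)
There is a genuine gap at the heart of your argument: the schematic integral inequality you aim to establish, in which \emph{both} Gronwall integrals carry the top-order energy $\mathcal{E}_{k;\upgamma;\upmu}^2$, cannot deliver the polynomial bound \eqref{E:ImprovedEnergyInequality}. You claim the second integral is benign because ``its coefficient is integrable once $C\varepsilon < 1$,'' but $(1+\tau)^{-1+C\varepsilon}$ is \emph{not} integrable on $[0,\infty)$: for any $C\varepsilon > 0$, $\int_0^t (1+\tau)^{-1+C\varepsilon}\,d\tau \approx (1+t)^{C\varepsilon}/(C\varepsilon) \to \infty$. Applying Gronwall to your two-integral inequality would give $\mathcal{E}_k^2(t) \lesssim (\mathring{\varepsilon}^2 + \varepsilon^3)\exp\big(C\varepsilon\int_0^t [(1+\tau)^{-1} + (1+\tau)^{-1+C\varepsilon}]\,d\tau\big)$, whose exponent grows like $(1+t)^{C\varepsilon}$ rather than like $\ln(1+t)$, producing a stretched-exponential bound far worse than the claimed $(1+t)^{\widetilde{c}_{\dParameter}\varepsilon}$.

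The missing idea is the \emph{hierarchy in $k$}. The correct inequality — which the algebraic estimates in Propositions \ref{P:DIPointwise}, \ref{P:AlgebraicInhomogeneous}, \ref{P:EnergyInhomogeneousTermAlgebraicEstimate} actually produce, because the loss $(1+\tau)^{-1+C\varepsilon}$ only occurs in commutator terms where strictly fewer than $k$ vectorfields fall on the top-order factor — has the form
\begin{align*}
\mathcal{E}_{k;\upgamma;\upmu}^2(t) + \mathcal{S}_{k;\upgamma;\upmu}^2(t)
\leq C(\mathring{\varepsilon}^2 + \varepsilon^3)
+ C\varepsilon \int_0^t (1+\tau)^{-1}\mathcal{E}_{k;\upgamma;\upmu}^2(\tau)\,d\tau
+ C\varepsilon \int_0^t (1+\tau)^{-1+C\varepsilon}\mathcal{E}_{k-1;\upgamma;\upmu}^2(\tau)\,d\tau,
\end{align*}
with the last term absent for $k=0$. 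One therefore proves \eqref{E:ImprovedEnergyInequality} for $k=0$ directly by Gronwall, then inducts: substituting $\mathcal{E}_{k-1}^2(\tau) \lesssim (\mathring{\varepsilon}^2+\varepsilon^3)(1+\tau)^{c_{k-1}\varepsilon}$ turns the troublesome integral into a forcing term $C\varepsilon(\mathring{\varepsilon}^2+\varepsilon^3)\int_0^t (1+\tau)^{-1+(C+c_{k-1})\varepsilon}\,d\tau \lesssim (\mathring{\varepsilon}^2+\varepsilon^3)(1+t)^{(C+c_{k-1})\varepsilon}$, after which a single Gronwall application with the remaining $(1+\tau)^{-1}\mathcal{E}_k^2$ kernel gives $\mathcal{E}_k^2(t) \lesssim (\mathring{\varepsilon}^2+\varepsilon^3)(1+t)^{c_k\varepsilon}$ with $c_k$ slightly larger than $c_{k-1}$. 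Your account of the estimates for the inhomogeneities — the three-way split of $\widetilde{\Square}_g\nabla_{\mathcal{Z}}^Ih^{(1)}$, the use of the wave-coordinate $\mathcal{L}\mathcal{L}/\mathcal{L}\mathcal{T}$ structure, the Hardy inequalities, the absorption of good Faraday components into $\mathcal{S}_k$ — is otherwise in line with the paper, but you must track which factor in each product carries the full $k$ derivatives and verify that the $\varepsilon$-lossy weight only attaches to sub-top-order energies; that bookkeeping, not just the case analysis you describe, is what makes Gronwall close. A secondary omission is that the Schwarzschild-tail integral actually produces the half-power term $CM\int(1+\tau)^{-3/2}\mathcal{E}_k(\tau)\,d\tau$, which must be absorbed via $CM\mathcal{E}_k \leq \tfrac14\mathcal{E}_k^2 + C^2M^2$ and the monotonicity of $\mathcal{E}_k$ before Gronwall is applied.
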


\begin{remark} \label{R:ImprovedDecay}
	By Lemma \ref{L:h0decayestimates}, the decompositions $h = h^{(0)} + h^{(1)}$ and $H = H_{(0)} + H_{(1)}$ 
	(where $H^{\mu \nu} \eqdef (g^{-1})^{\mu \nu} - (m^{-1})^{\mu \nu}$),
	and the fact that $H_{(1)}^{\mu \nu} = - h^{(1)\mu \nu} \ + \ O^{\infty}(\big |h^{(0)} + h^{(1)}|^2 \big),$ it follows that
	the estimates stated in the assumptions of the theorem also hold if we replace $h$ with $h^{(1)},$ $h^{(1)},$ or 
	$H_{(1)}.$
\end{remark}

We now  state and (using the results of Theorem \ref{T:ImprovedDecay}) prove our main global stability theorem.

\begin{theorem} \label{T:MainTheorem}
	\textbf{(Global stability of the Minkowski spacetime solution)}
	Let $(\mathring{\underline{g}}_{jk}  \delta_{jk} + \mathring{\underline{h}}_{jk}^{(0)} + \mathring{\underline{h}}_{jk}^{(1)}, 
	\mathring{K}_{jk}, \mathring{\mathfrak{\Displacement}_j}, \mathring{\mathfrak{\Magneticinduction}}_j),$ 
	$(j,k=1,2,3),$ be abstract initial data on the manifold $\mathbb{R}^3$
	for the Einstein-nonlinear electromagnetic system \eqref{E:IntroEinstein} - \eqref{E:IntrodMis0}
	that satisfy the constraints \eqref{E:Gauss} - \eqref{E:DivergenceB0}, and let
	$(g_{\mu \nu}|_{t=0} = m_{\mu \nu} + h_{\mu \nu}^{(0)}|_{t=0} + h_{\mu \nu}^{(1)}|_{t=0}, 
	\partial_t g_{\mu \nu}|_{t=0} = \partial_t h_{\mu \nu}^{(0)}|_{t=0} + \partial_th_{\mu \nu}^{(1)}|_{t=0}, 
	\Far_{\mu \nu}|_{t=0}),$ $(\mu, \nu = 0,1,2,3),$ be the corresponding initial data 
	for the reduced system \eqref{E:Reducedh1Summary} - \eqref{E:ReduceddMis0Summary} as defined in Section \ref{SS:ReducedData}.
	Assume that the abstract initial data are asymptotically flat in the sense that 
	\eqref{E:metricdataexpansion} - \eqref{E:BdecayAssumption} hold. Let $\dParameter \geq 8$ be an integer, and let $0 < \upgamma < 1/2$ 
	be a fixed constant. Then there exist a global system of wave coordinates $(t,x)$ and
	a constant $\varepsilon_{\dParameter} > 0$ depending on $\upgamma$ and $\dParameter $ such that if 
	$\varepsilon \leq \varepsilon_{\dParameter},$ and if 
	
	\begin{subequations}
	\begin{align}
		E_{\dParameter;\upgamma}(0) + M & \leq \varepsilon,
	\end{align}
	\end{subequations}
	then the reduced data launch a unique global, classical, geodesically complete solution \\
	$(g_{\mu \nu} \eqdef m_{\mu \nu} + h_{\mu \nu}^{(0)} + h_{\mu \nu}^{(1)}, \Far_{\mu \nu})$ to 
	\textbf{both}\footnote{Of course, we technically mean here that the pair $(h_{\mu \nu}^{(1)},\Far_{\mu \nu})$
	is a solution to the version \eqref{E:Reducedh1Summary} - \eqref{E:ReduceddMis0Summary} of the reduced equations, while the 
	pair $(g_{\mu \nu},\Far_{\mu \nu})$ is a solution to equations \eqref{E:IntroEinstein} - \eqref{E:IntrodMis0}.} the reduced
	system \eqref{E:Reducedh1Summary} - \eqref{E:ReduceddMis0Summary} and the Einstein-nonlinear electromagnetic system 
	\eqref{E:IntroEinstein} - \eqref{E:IntrodMis0}. Furthermore, there exists
	a constant $0 < \upmu < 1/2$ (see Remark \ref{R:Roleofmu}), 
	and constants $c_{\dParameter} > 0,$ $\widetilde{c}_{\dParameter} > 0$ depending on $\upgamma$ and $\dParameter ,$ 
	such that the solution's energy satisfies the following bound for all $t \in (-\infty,\infty):$
	
	\begin{align} \label{E:GlobalEnergyInequality}
		\mathcal{E}_{\dParameter;\upgamma;\upmu}(t) \leq c_{\dParameter} \varepsilon (1 + |t|)^{\widetilde{c}_{\dParameter} \varepsilon}.
	\end{align}
	
	In addition, there exists a constant $C_{\dParameter} > 0$ depending on $\upgamma$ and $\dParameter ,$ such that the 
	following pointwise decay estimates hold for all $(t,x) \in (-\infty,\infty) \times \mathbb{R}^3:$
	
	\begin{subequations} 
	\begin{align} \label{E:GlobalStabilityTheoremStrongLinfinityDecayPrincipalTermCoefficients}
		& (1 + |t| + |q|)^{1 - \widetilde{c}_{\dParameter} \varepsilon}(1 + |q|)^{-1/2}
			|\nabla h|_{\mathcal{L} \mathcal{T}} 
		\ + \ (1 + |t| + |q|)^{1 - \widetilde{c}_{\dParameter} \varepsilon}(1 + |q|)^{-1/2} |\nabla \nabla_Z h|_{\mathcal{L} \mathcal{L}}
		\ + \ |\nabla h|_{\mathcal{T} \mathcal{N}} 	\ + \ \big\lbrace 1 + \ln(1 + |t|) \big\rbrace^{-1}|\nabla h| \\
		& \ \ + \ (1 + |t| + |q|)^{1 - \widetilde{c}_{\dParameter} \varepsilon}(1 + |q|)^{-3/2}|h|_{\mathcal{L} \mathcal{T}}
			\ + \ (1 + |t| + |q|)^{1 - \widetilde{c}_{\dParameter} \varepsilon}(1 + |q|)^{-3/2}|\nabla_Z h|_{\mathcal{L} \mathcal{L}} 
			\ + \ |\Far| \notag \\
		& \leq C_{\dParameter} \varepsilon (1 + |t| + |q|)^{-1}, \notag
	\end{align}
	
	\begin{align}
		|\nabla \nabla_{\mathcal{Z}}^I h^{(1)}| \ + \ (1 + |q|)^{-1} |\nabla_{\mathcal{Z}}^I h^{(1)}| 
		\ + \ |\Lie_{\mathcal{Z}}^I \Far|
		& \leq 
		\left \lbrace \begin{array}{lr}
    	C_{\dParameter} \varepsilon (1 + |t| + |q|)^{-1 + \widetilde{c}_{\dParameter} \varepsilon} 
    		(1 + |q|)^{-1 - \upgamma}, &  \mbox{if} \ q > 0, \\
      C_{\dParameter} \varepsilon (1 + |t| + |q|)^{-1 + \widetilde{c}_{\dParameter} \varepsilon} (1 + |q|)^{-1/2}, & \mbox{if} \ q < 0,
    \end{array}
  \right., \qquad |I| \leq \dParameter - 2, 
  	\label{E:GlobalStabilityTheoremStrongLinfinityDecay} \\
  |\conenabla \nabla_{\mathcal{Z}}^I h^{(1)}| \ + \ (1 + |q|)|\conenabla \Lie_{\mathcal{Z}}^I \Far|
  \ + \ |{\Lie_{\mathcal{Z}}^I \Far}|_{\mathcal{L}\mathcal{N}} 
  \ + \ |{\Lie_{\mathcal{Z}}^I \Far}|_{\mathcal{T}\mathcal{T}}
  	& \leq 
  	\left \lbrace \begin{array}{lr}
    	C_{\dParameter} \varepsilon (1 + |t| + |q|)^{-2 + \widetilde{c}_{\dParameter} \varepsilon} (1 + |q|)^{- \upgamma}, &  \mbox{if} \ q > 0, \\
      C_{\dParameter} \varepsilon (1 + |t| + |q|)^{-2 + \widetilde{c}_{\dParameter} \varepsilon} (1 + |q|)^{1/2}, & \mbox{if} \ q < 0,
    \end{array}
  	\right., \qquad |I| \leq \dParameter - 3. \label{E:GlobalStabilityTheoremStrongerLinfinityDecayGoodComponents} 
 	\end{align}
	\end{subequations}

\end{theorem}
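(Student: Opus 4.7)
The plan is to deduce Theorem \ref{T:MainTheorem} as a consequence of the ``strong'' energy inequality of Theorem \ref{T:ImprovedDecay}, the local existence/continuation principle of Proposition \ref{P:LocalExistence}, and the upgraded pointwise decay hierarchy of Section \ref{S:DecayFortheReducedEquations}, via a standard continuity/bootstrap argument. First I would use the construction of Section \ref{SS:ReducedData} to build initial data $(g_{\mu \nu}|_{t=0}, \partial_t g_{\mu \nu}|_{t=0}, \Far_{\mu \nu}|_{t=0})$ for the reduced system from the abstract data. By Lemma \ref{L:Gammamuare0initially} this construction ensures $\Gamma^{\mu}|_{t=0} = 0$, and by Proposition \ref{P:SmallNormImpliesSmallEnergy} the smallness assumption $E_{\dParameter;\upgamma}(0) + M \leq \varepsilon$ implies $\mathcal{E}_{\dParameter;\upgamma;\upmu}(0) \leq C_{\dParameter}\varepsilon$. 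Proposition \ref{P:LocalExistence} then provides a unique classical solution on a maximal slab $[0, T_{max}) \times \mathbb{R}^3$, and Proposition \ref{P:PreservationofWaveCoordianteGauge}, whose hypotheses are supplied by the abstract constraints \eqref{E:Gauss}--\eqref{E:DivergenceB0}, guarantees that $\Gamma^{\mu} \equiv 0$ on the entire maximal slab, so that the reduced solution solves the original Einstein-nonlinear electromagnetic system.

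Next I would set up the bootstrap. Choose constants $\upgamma, \upmu, \upgamma', \upmu', \updelta$ satisfying the restrictions of Section \ref{SS:FixedConstants}, and define
\[
\mathcal{T}_{\mathrm{boot}} \eqdef \Big\{T \in [0, T_{max}) \;:\; \mathcal{E}_{\dParameter;\upgamma;\upmu}(t) \leq \varepsilon(1+t)^{\updelta} \text{ for all } t \in [0,T] \Big\}.
\]
Continuity of $\mathcal{E}_{\dParameter;\upgamma;\upmu}(t)$ together with the smallness of $\mathcal{E}_{\dParameter;\upgamma;\upmu}(0)$ makes $\mathcal{T}_{\mathrm{boot}}$ non-empty and relatively closed in $[0, T_{max})$. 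To show $\mathcal{T}_{\mathrm{boot}}$ is also relatively open, I would fix $T \in \mathcal{T}_{\mathrm{boot}}$; then the bootstrap assumption \eqref{E:Bootstrap} holds on $[0,T]$, so Corollary \ref{C:WeakDecay} together with Propositions \ref{P:UpgradedDecayhA} and \ref{P:UpgradedDecayh1A}, all of which exploit the wave coordinate condition, furnish precisely the pointwise decay estimates \eqref{E:MainTheoremAssumptionStrongLinfinityDecayPrincipalTermCoefficients}--\eqref{E:MainTheoremAssumptionStrongerLinfinityDecayGoodComponents} that are hypothesized in Theorem \ref{T:ImprovedDecay} (provided $\dParameter \geq 8$, so that the required index $\lceil \dParameter/2 \rceil$ in Theorem \ref{T:ImprovedDecay} is covered). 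Applying Theorem \ref{T:ImprovedDecay} with $\mathring{\varepsilon} \eqdef \varepsilon$ yields
\[
\mathcal{E}_{\dParameter;\upgamma;\upmu}(t) \leq c_{\dParameter}(\varepsilon + \varepsilon^{3/2})(1+t)^{\widetilde{c}_{\dParameter}\varepsilon}, \qquad t \in [0,T].
\]
Choosing $\varepsilon_{\dParameter}$ so small that $c_{\dParameter}(1 + \varepsilon^{1/2}) \leq 1/2$ and $\widetilde{c}_{\dParameter}\varepsilon < \updelta/2$ produces a strict improvement of the bootstrap assumption on $[0,T]$, which by continuity extends to a neighborhood of $T$ in $[0, T_{max})$. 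Hence $\mathcal{T}_{\mathrm{boot}} = [0, T_{max})$, and the a priori bound precludes the first breakdown scenario of Proposition \ref{P:LocalExistence}; smallness of $\mathcal{E}_{\dParameter;\upgamma;\upmu}$ together with the pointwise bounds implies the solution remains within the regime of hyperbolicity, ruling out the second. Thus $T_{max} = \infty$, and the bound \eqref{E:GlobalEnergyInequality} holds on $[0, \infty)$.

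Finally, with the global energy bound in hand, the pointwise decay statements \eqref{E:GlobalStabilityTheoremStrongLinfinityDecayPrincipalTermCoefficients}--\eqref{E:GlobalStabilityTheoremStrongerLinfinityDecayGoodComponents} follow by re-invoking Propositions \ref{P:UpgradedDecayhA} and \ref{P:UpgradedDecayh1A} (and Remark \ref{R:ImprovedDecay}, together with Lemma \ref{L:h0decayestimates} to pass between $h$, $h^{(0)}$, and $h^{(1)}$) with $\updelta$ replaced by $\widetilde{c}_{\dParameter}\varepsilon$. Geodesic completeness of $(\mathbb{R}^{1+3}, g_{\mu \nu})$ follows verbatim from \cite[Section 16]{hLiR2005} and \cite[Section 9]{jL2008}, whose arguments require only the asymptotic information for $h^{(1)}$ contained in our pointwise decay estimates. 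Extension to the past $t \in (-\infty, 0]$ uses the time-reversal symmetry of the reduced system (the hypotheses on the abstract data are symmetric under $t \mapsto -t$ because they are imposed on $\Sigma_0$ alone), which maps our forward solution to a backward solution satisfying analogous bounds. The main obstacle is essentially bookkeeping: verifying that the constants $\upgamma, \upmu, \upgamma', \upmu', \updelta$ and the index $\dParameter \geq 8$ can be chosen consistently with the restrictions of Section \ref{SS:FixedConstants} and with the various hypotheses of Propositions \ref{P:UpgradedDecayhA}--\ref{P:UpgradedDecayh1A} and Theorem \ref{T:ImprovedDecay}, so that the bootstrap actually closes. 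The hard analytic work has been carried out in Theorem \ref{T:ImprovedDecay} and in Section \ref{S:DecayFortheReducedEquations}; here the role is orchestration.
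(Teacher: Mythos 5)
Your overall architecture — build reduced data, invoke local existence, preserve the gauge, set up a bootstrap set, feed Propositions \ref{P:UpgradedDecayhA}--\ref{P:UpgradedDecayh1A} into Theorem \ref{T:ImprovedDecay}, close with the continuation principle, then re-derive the pointwise decay — matches the paper's. But there is a concrete gap in the bootstrap closure step that, as written, the argument cannot overcome.

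You apply Theorem \ref{T:ImprovedDecay} with $\mathring{\varepsilon} = \varepsilon$ to obtain $\mathcal{E}_{\dParameter;\upgamma;\upmu}(t) \leq c_{\dParameter}(\varepsilon + \varepsilon^{3/2})(1+t)^{\widetilde{c}_{\dParameter}\varepsilon}$, and then claim you can ``choose $\varepsilon_{\dParameter}$ so small that $c_{\dParameter}(1 + \varepsilon^{1/2}) \leq 1/2$.'' This is impossible unless $c_{\dParameter} < 1/2$: the constant $c_{\dParameter}$ in \eqref{E:ImprovedEnergyInequality} is an absolute constant produced by the Gronwall argument of Section \ref{SS:MainArgument}, and it does not shrink as $\varepsilon \to 0$ — on the contrary, there is no reason to expect it to be smaller than $1$. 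The linear-in-$\mathring{\varepsilon}$ term in \eqref{E:ImprovedEnergyInequality} records the contribution of the initial energy plus the Gronwall exponential, and that coefficient is genuinely of order $1$ or larger. With $\mathring{\varepsilon} = \varepsilon$ the conclusion of Theorem \ref{T:ImprovedDecay} therefore does not strictly improve the bootstrap assumption $\mathcal{E}_{\dParameter;\upgamma;\upmu}(t) \leq \varepsilon(1+t)^{\updelta}$, and openness of $\mathcal{T}_{\mathrm{boot}}$ fails.

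The fix — which is what the paper does — is to introduce a separation of scales between the smallness of the data and the bootstrap constant. Set $\mathring{\varepsilon} \eqdef E_{\dParameter;\upgamma}(0) + M$ and define the bootstrap constant by $\varepsilon \eqdef A_{\dParameter}\mathring{\varepsilon}$ with $A_{\dParameter}$ a \emph{large} constant to be fixed. Then \eqref{E:ImprovedEnergyInequality} reads
\begin{align*}
\mathcal{E}_{\dParameter;\upgamma;\upmu}(t) \leq c_{\dParameter}\Big(\tfrac{\varepsilon}{A_{\dParameter}} + \varepsilon^{3/2}\Big)(1+t)^{\widetilde{c}_{\dParameter}\varepsilon},
\end{align*}
and choosing $A_{\dParameter} > 3 c_{\dParameter}$ and then $\mathring{\varepsilon}$ (hence $\varepsilon$) small enough that the $\varepsilon^{3/2}$ contribution is negligible gives $\mathcal{E}_{\dParameter;\upgamma;\upmu}(t) < \frac{1}{2}\varepsilon(1+t)^{\widetilde{c}_{\dParameter}\varepsilon}$, a strict improvement over the bootstrap, and the argument closes. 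Equivalently, you could keep $\mathring{\varepsilon} = \varepsilon$ but enlarge the bootstrap assumption to $\mathcal{E}_{\dParameter;\upgamma;\upmu}(t) \leq A_{\dParameter}\varepsilon(1+t)^{\updelta}$; the point is the same — one of the two thresholds must dominate the other by a large fixed factor. The rest of your proposal (the continuity argument, the index-counting $\lceil \dParameter/2 \rceil \leq \dParameter - 4$ requiring $\dParameter \geq 8$, the time-reversal for $t < 0$, and the reference to \cite{hLiR2005} and \cite{jL2008} for geodesic completeness) is sound and in line with the paper.
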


\begin{remark} \label{R:qdecay}
	Some of the $1 + |q|-$decay estimates in inequalities 
	\eqref{E:GlobalStabilityTheoremStrongLinfinityDecayPrincipalTermCoefficients} -
	\eqref{E:GlobalStabilityTheoremStrongerLinfinityDecayGoodComponents} are not optimal, and can be improved with additional 
	work. For example, in \cite[Section 16]{hLiR2010}, with the help of the fundamental solution of the Minkowski
	wave operator $\Square_m,$ the $1 + |q|-$decay estimates 
	\eqref{E:GlobalStabilityTheoremStrongLinfinityDecay} - \eqref{E:GlobalStabilityTheoremStrongerLinfinityDecayGoodComponents}
	for the tensorfield $h^{(1)}$ are strengthened by a power of $1/2$ in the 
	interior region $q < 0.$
\end{remark}

\begin{remark}
	Proposition \ref{P:PreservationofWaveCoordianteGauge} shows that the wave coordinate condition
	\eqref{E:wavecoordinategauge1} holds in the domain of classical existence of the solution to the reduced equations; 
	this is why the reduced solution also verifies the Einstein-nonlinear electromagnetic equations \eqref{E:IntroEinstein} - \eqref{E:IntrodMis0}.
\end{remark}

\begin{remark}
	A global stability result for the reduced equations under the wave coordinate assumption, 
	without regard for the abstract initial data, can be deduced
	from the smallness of $\mathcal{E}_{\dParameter;\upgamma;\upmu}(0) + |M|$ (we could even allow for negative $M$!)
	together with the assumption $\liminf_{|x| \to \infty} |h^{(1)}(0,x)| = 0;$ this latter assumption, which is needed to deduce 
	the inequalities \eqref{E:weakdecayLinfinity} at $t=0,$ is automatically implied by the assumptions of Theorem 
	\ref{T:MainTheorem}. 
\end{remark}

\begin{proof}
	We only discuss the region of spacetime in which $t \geq 0;$ the argument for $t \leq 0$ is similar. 
	Let us set $E_{\dParameter;\upgamma}(0) + M \eqdef \mathring{\varepsilon}.$
	By Proposition \ref{P:LocalExistence}, we can 
	choose constants $\upgamma', \upmu, \upmu',$ and $\updelta$ subject to the restrictions described in Section 
	\ref{SS:FixedConstants} (and in particular depending on $\upgamma$), and a constant $A_{\dParameter} > 0$
	such that if $\varepsilon \eqdef A_{\dParameter} \mathring{\varepsilon},$ $A_{\dParameter}$ is sufficiently large, 
	and $\mathring{\varepsilon}$ is sufficiently small, then there exists a nontrivial spacetime slab 
	$[0,T) \times \mathbb{R}^3$ upon which the solution to the reduced equations exists and satisfies the
	energy bound $\mathcal{E}_{\dParameter;\upgamma;\upmu}(t) \leq \varepsilon (1 + t)^{\updelta}$ 
	for $t \in [0,T).$ We then define
	
	\begin{align*}
		T_* \eqdef \sup \big\lbrace T & \mid  
		\mbox{the solution exists classically and remains in the regime of hyperbolicity of the reduced equations}, \\ 
			& \ \mbox{and} \ \mathcal{E}_{\dParameter;\upgamma;\upmu}(t) \leq \varepsilon (1 + t)^{\updelta} \ \mbox{for} \ t \in [0,T) 
			\big\rbrace.
	\end{align*}
	Note that under the above assumptions, we have that $T_* > 0.$ 
	
	We now observe that the main energy bootstrap assumption \eqref{E:Bootstrap} is satisfied on $[0,T_*).$
	Thus, if $\varepsilon$ is sufficiently small, then by Propositions \ref{P:UpgradedDecayhA} and 
	\ref{P:UpgradedDecayh1A}, all of the hypotheses of Theorem \ref{T:ImprovedDecay} are necessarily satisfied
	on $[0,T_*).$ Here, we are using the fact that $\lceil \dParameter/2 \rceil \leq \dParameter - 4,$ which holds if
	$\dParameter \geq 8.$ Consequently, the conclusion of that theorem (i.e., estimate 
	\eqref{E:ImprovedEnergyInequality}) allows us to deduce that the following energy estimate holds for $t \in [0,T_*):$
	
	\begin{align} \label{E:ImprovedDecayConclusion}
		\mathcal{E}_{\dParameter;\upgamma;\upmu}(t) \leq c_{\dParameter} \big\lbrace \mathring{\varepsilon} + \varepsilon^{3/2} \big\rbrace
		(1 + t)^{\widetilde{c}_{\dParameter} \varepsilon} = c_{\dParameter} \bigg\lbrace \frac{\varepsilon}{A_{\dParameter}} 
			+ \varepsilon^{3/2} \bigg\rbrace (1 + t)^{\widetilde{c}_{\dParameter} \varepsilon}.
	\end{align} 
	Now if $A_{\dParameter} > 3 c_{\dParameter}$ and $\mathring{\varepsilon}$ is sufficiently small, then 
	\eqref{E:ImprovedDecayConclusion} implies that
	
	\begin{align} \label{E:ImprovedDecayConclusionWithRoom}
			\mathcal{E}_{\dParameter;\upgamma;\upmu}(t) < \frac{1}{2} A_{\dParameter} \mathring{\varepsilon} (1 + t)^{A_{\dParameter} 
			\widetilde{c}_{\dParameter} 
			\mathring{\varepsilon}} = \frac{1}{2} \varepsilon (1 + t)^{\widetilde{c}_{\dParameter} \varepsilon},
	\end{align}
	which is a strict improvement over the bootstrap assumption assumption \eqref{E:Bootstrap}.
	Thus, by \eqref{E:ImprovedDecayConclusionWithRoom}, the weighted Klainerman-Sobolev inequality \eqref{E:PhiKlainermanSobolev} 
	(which, together with \eqref{E:LieZIinTermsofNablaZI} and the smallness of $\mathcal{E}_{\dParameter;\upgamma;\upmu}(t),$ 
	implies that the solution remains within the regime of hyperbolicity of the reduced equations),
	the continuation principle of Proposition \ref{P:LocalExistence},
	and the continuity of $\mathcal{E}_{\dParameter;\upgamma;\upmu}(t),$ it follows that
	if $A_{\dParameter}$ is sufficiently large and $\mathring{\varepsilon}$ is sufficiently small, 
	then $T_* = \infty.$ Furthermore, under these assumptions, it is an obvious consequence of this reasoning
	that \eqref{E:ImprovedDecayConclusionWithRoom} holds for $t \in [0,\infty).$ After renaming the constants
	in \eqref{E:ImprovedDecayConclusionWithRoom}, we arrive at \eqref{E:GlobalEnergyInequality}.
	
	The inequalities \eqref{E:GlobalStabilityTheoremStrongLinfinityDecay} 
	follow as in the proof of Corollary \ref{C:WeakDecay}, but using the strong energy estimate 
	\eqref{E:GlobalEnergyInequality} instead of the energy bootstrap assumption \eqref{E:Bootstrap}. Similarly,
	the inequalities \eqref{E:GlobalStabilityTheoremStrongLinfinityDecayPrincipalTermCoefficients} follow	
	as in our proof of Proposition \ref{P:UpgradedDecayhA}, but using the strong energy estimate 
	\eqref{E:GlobalEnergyInequality} instead of the energy bootstrap assumption \eqref{E:Bootstrap}. The inequalities 
	\eqref{E:GlobalStabilityTheoremStrongerLinfinityDecayGoodComponents}
	for $|\conenabla \nabla_{\mathcal{Z}}^I h^{(1)}|$ and $|\conenabla \Lie_{\mathcal{Z}}^I \Far|$ follow from
	Lemma \ref{L:PointwisetandqWeightedNablainTermsofZestiamtes}, \eqref{E:LieZIinTermsofNablaZI}, and
	\eqref{E:GlobalStabilityTheoremStrongLinfinityDecay}. The inequalities 
	\eqref{E:GlobalStabilityTheoremStrongerLinfinityDecayGoodComponents} for 
	$|{\Lie_{\mathcal{Z}}^I \Far}|_{\mathcal{L}\mathcal{N}}$ and 
	$|{\Lie_{\mathcal{Z}}^I \Far}|_{\mathcal{T}\mathcal{T}}$ follow as in our proof of \eqref{E:FLUTTimproveddecay}, but 
	using the strong energy estimate \eqref{E:GlobalEnergyInequality} instead of the energy bootstrap assumption 
	\eqref{E:Bootstrap}.
	
	Based on these pointwise decay estimates, the geodesic completeness of the spacetime 
	$(\mathbb{R}^{1+3},g_{\mu \nu} \eqdef m_{\mu \nu} + h_{\mu \nu}^{(0)} 
	+ h_{\mu \nu}^{(1)})$ follows as in \cite[Section 16]{hLiR2005} and \cite[Section 9]{jL2008}.

\end{proof}

It remains to prove Theorem \ref{T:ImprovedDecay}.

\begin{center}\textbf{\Large Proof of Theorem \ref{T:ImprovedDecay}} \end{center}

\subsection{The main argument in the proof of Theorem \ref{T:ImprovedDecay}} \label{SS:MainArgument}

Our goal is to use \emph{only} the assumptions of Theorem \ref{T:ImprovedDecay} to
deduce (for all sufficiently small non-negative $\varepsilon,$ and for sufficiently large fixed constants $c_{\dParameter},\widetilde{c}_{\dParameter}$) the ``strong'' energy estimate \eqref{E:ImprovedEnergyInequality}, which reads

\begin{align} \label{E:FundamentalBootstrapMainTheorem}
	\mathcal{E}_{\dParameter;\upgamma;\upmu}(t) \leq c_{\dParameter} (\mathring{\varepsilon} + \varepsilon^{3/2}) 
		(1 + t)^{\widetilde{c}_{\dParameter} \varepsilon}.
\end{align}
The proof of \eqref{E:FundamentalBootstrapMainTheorem} is based on a hierarchy of Gronwall-amenable inequalities that arise from careful analysis of the integrals of Proposition \ref{P:weightedenergy} involving the inhomogeneous terms 
$\mathfrak{H}_{\mu \nu}^{(1;I)}$ and $\mathfrak{F}_{(I)}^{\nu}.$ We recall that these inhomogeneous terms are captured by Propositions \ref{P:InhomogeneousTermsNablaZIh1} and \ref{P:InhomogeneoustermsLieZIFar}, which state that $\nabla_{\mathcal{Z}}^I h_{\mu \nu}^{(1)}$ and $\Lie_{\mathcal{Z}}^I \Far_{\mu \nu}$ are solutions to the following system of equations:

\begin{subequations}
\begin{align}
	\widetilde{\Square}_{g} \nabla_{\mathcal{Z}}^I h_{\mu \nu}^{(1)} & = \mathfrak{H}_{\mu \nu}^{(1;I)}, &&
		(\mu, \nu = 0,1,2,3),
		\label{E:InhomogeneousTermsNablaZIh1proof} \\ 
	\nabla_{\lambda} \Lie_{\mathcal{Z}}^I \Far_{\mu \nu} + \nabla_{\mu} \Lie_{\mathcal{Z}}^I \Far_{\nu \lambda} 
		+ \nabla_{\nu} \Lie_{\mathcal{Z}}^I \Far_{\lambda \mu} & = 0, && (\lambda, \mu, \nu = 0,1,2,3),
	\label{E:InhomogeneoustermsdFis0proof} \\
	N^{\# \mu \nu \kappa \lambda} \nabla_{\mu} \Lie_{\mathcal{Z}}^I \Far_{\kappa \lambda} & = \mathfrak{F}_{(I)}^{\nu}, &&
		(\nu = 0,1,2,3). \label{E:InhomogeneoustermsdMis0proof}
\end{align}
\end{subequations}
Most of the work goes into obtaining suitable estimates for the integrals involving $\mathfrak{H}_{\mu \nu}^{(1;I)}$
and $\mathfrak{F}_{(I)}^{\nu}.$ In order to avoid impeding the flow of the proof, we prove 
most of the desired inequalities later in this section, after the main argument. For the main part of the argument, we simply quote Corollary \ref{C:NablaZIh1FundamentalEnergyEstimate} and Corollary \ref{C:LieZIFarFundamentalEnergyEstimate}, which 
are the key estimates that allow us to apply a suitable version of Gronwall's lemma. We will then return to the proofs of the corollaries, which follow from a large collection of lemmas, each of which involves the analysis of one of the constituent pieces of the integrals involving $\mathfrak{H}_{\mu \nu}^{(1;I)}$ and $\mathfrak{F}_{(I)}^{\nu}.$ 

We now proceed to the main argument. Using Proposition \ref{P:weightedenergy}, Corollary \ref{C:NablaZIh1FundamentalEnergyEstimate}, and Corollary \ref{C:LieZIFarFundamentalEnergyEstimate}, we
have that

\begin{align} \label{E:Mainenergyinequality}
	& \sum_{|I| \leq k} 
		\int_{\Sigma_{t}} \Big|\myarray[{\nabla\nabla_{\mathcal{Z}}^I h^{(1)}}]{\Lie_{\mathcal{Z}}^I \Far} \Big|^2 w(q) \,d^3x 
		\ + \ \sum_{|I| \leq k} \int_{0}^{t} \int_{\Sigma_{\tau}} 
		\bigg\lbrace |\conenabla \nabla_{\mathcal{Z}}^I h^{(1)}|^2 + |\Lie_{\mathcal{Z}}^I \Far|_{\mathcal{L}\mathcal{N}}^2 
		+ |\Lie_{\mathcal{Z}}^I \Far|_{\mathcal{T} \mathcal{T}}^2 \bigg\rbrace w'(q) \,d^3x \, d \tau \\
	& \leq C \sum_{|I| \leq k} \int_{\Sigma_{0}} \Big|\myarray[{\nabla\nabla_{\mathcal{Z}}^I h^{(1)}}]{\Lie_{\mathcal{Z}}^I 
		\Far} \Big|^2 w(q) \,d^3x 
		\ + \ C \varepsilon \sum_{|I| \leq k} \int_{0}^{t} \int_{\Sigma_{\tau}} 
			(1 + \tau)^{-1} \Big|\myarray[{\nabla\nabla_{\mathcal{Z}}^I h^{(1)}}]{\Lie_{\mathcal{Z}}^I \Far} \Big|^2 
			w(q) \, d^3x \, d \tau \notag \\
	& \ \ + \ C \sum_{|I| \leq k} \int_{0}^{t} \int_{\Sigma_{\tau}} 
		\bigg\lbrace |\mathfrak{H}^{(1;I)}| |\nabla\nabla_{\mathcal{Z}}^I h^{(1)}|  
		\ + \ |(\Lie_{\mathcal{Z}}^I \Far_{0 \nu}) \mathfrak{F}_{(I)}^{\nu} | \bigg\rbrace w(q) \,d^3x \, d \tau \notag \\
	& \leq C \sum_{|I| \leq k} \int_{\Sigma_{0}} \Big|\myarray[{\nabla\nabla_{\mathcal{Z}}^I h^{(1)}}]{\Lie_{\mathcal{Z}}^I 
		\Far} \Big|^2 w(q) \,d^3x
		\ + \ C M \sum_{|I| \leq k} \int_{0}^{t} (1 + \tau)^{-3/2} 
			\Big(\sqrt{\int_{\Sigma_{\tau}} |\nabla \nabla_{\mathcal{Z}}^I h^{(1)}|^2 w(q) \, d^3 x} \Big) \, d \tau 
		\notag \\
	& \ \ + \ C \varepsilon \sum_{|I| \leq k} \int_{0}^{t} \int_{\Sigma_{\tau}} (1 + \tau)^{-1} 
		\Big|\myarray[{\nabla\nabla_{\mathcal{Z}}^I h^{(1)}}]{\Lie_{\mathcal{Z}}^I \Far} \Big|^2 w(q) \,d^3x \, d \tau \notag \\
	& \ \ + \ C \varepsilon \sum_{|I| \leq k} \int_{0}^{t} \int_{\Sigma_{\tau}} 
			\bigg\lbrace |\conenabla \nabla_{\mathcal{Z}}^I h^{(1)}|^2 + |\Lie_{\mathcal{Z}}^I \Far|_{\mathcal{L}\mathcal{N}}^2 
		\ + \ |\Lie_{\mathcal{Z}}^I \Far|_{\mathcal{T} \mathcal{T}}^2 \bigg\rbrace w'(q)  \,d^3x \, d \tau \notag \\
	& \ \ + \ C \varepsilon  \underbrace{\sum_{|J| \leq k - 1} \int_{0}^{t} \int_{\Sigma_{\tau}} (1 + \tau)^{- 1 + C \varepsilon} 
			\Big|\myarray[{\nabla\nabla_{\mathcal{Z}}^J h^{(1)}}]{\Lie_{\mathcal{Z}}^J \Far} \Big|^2 w(q) \, d^3x \, d 
			\tau}_{\mbox{Absent if $k=0$}} \ + \ C \varepsilon^3. \notag
\end{align}

Recalling the definition (where the dependence on $\upmu,$ $\upgamma$ is through $w(q)$)

\begin{align*}
	\mathcal{E}_{k;\upgamma;\upmu}^2(t) & \eqdef \sup_{0 \leq \tau \leq t} \sum_{|I| \leq k} 
		\int_{\Sigma_{\tau}} \Big\lbrace |\nabla \nabla_{\mathcal{Z}}^I h^{(1)}|^2 + |\Lie_{\mathcal{Z}}^I \Far|^2 
		\Big\rbrace w(q) \, d^3x, 
\end{align*}
and introducing the quantity $\mathcal{S}_{k;\upgamma;\upmu}(t) \geq 0,$ which is defined by 

\begin{align}
	\mathcal{S}_{k;\upgamma;\upmu}^2(t) & \eqdef \sum_{|I| \leq k} 
		\int_0^t \int_{\Sigma_{\tau}} \Big\lbrace |\conenabla \nabla_{\mathcal{Z}}^I h^{(1)}|^2 + |\Lie_{\mathcal{Z}}^I 
		\Far|_{\mathcal{L}\mathcal{N}}^2 + |\Lie_{\mathcal{Z}}^I \Far|_{\mathcal{T} \mathcal{T}}^2 \Big\rbrace w'(q) \, d^3x \, d 
		\tau,
\end{align}
it therefore follows from the final inequality of \eqref{E:Mainenergyinequality} that

\begin{align} \label{E:Mainenergyinequalityreexpressed}
	\mathcal{E}_{k;\upgamma;\upmu}^2(t) \ + \ \mathcal{S}_{k;\upgamma;\upmu}^2(t) 
		& \leq C \mathcal{E}_{k;\upgamma;\upmu}^2(0) 
			\ + \ C M \int_{0}^{t} (1 + \tau)^{-3/2} \mathcal{E}_{k;\upgamma;\upmu}(\tau) \, d \tau
			\ + \ C \varepsilon \int_{0}^{t} (1 + \tau)^{-1} \mathcal{E}_{k;\upgamma;\upmu}^2(\tau) \, d \tau \\
		& \ \ + \ \underbrace{C \varepsilon \mathcal{S}_{k;\upgamma;\upmu}^2(t)}_{\mbox{absorb into l.h.s.}}
			\ + \ C \varepsilon \int_{0}^{t} (1 + \tau)^{- 1 + C \varepsilon} \mathcal{E}_{k-1;\upgamma;\upmu}^2(\tau) \, 
			d \tau \notag \ + \ C \varepsilon^3.
\end{align}

For $\varepsilon$ sufficiently small, we may absorb the $C \varepsilon \mathcal{S}_{k;\upgamma;\upmu}^2(t)$ term from \eqref{E:Mainenergyinequalityreexpressed} into the left-hand side at the expense of increasing all of the constants. We can similarly absorb the term $C M \int_{0}^{t} (1 + \tau)^{-3/2} \mathcal{E}_{k;\upgamma;\upmu}(\tau) \, d \tau$ by using the inequality \\
${C M \int_{0}^{t} (1 + \tau)^{-3/2} \mathcal{E}_{k;\upgamma;\upmu}(\tau) \, d \tau \leq 1/2 \mathcal{E}_{k;\upgamma;\upmu}^2(t) + C^2 M^2};$ this inequality follows from the algebraic estimate \\
$C M \mathcal{E}_{k;\upgamma;\upmu}(\tau)\leq 1/4 \mathcal{E}_{k;\upgamma;\upmu}^2(\tau) + C^2 M^2,$ 
the integral inequality $\int_{0}^{t} (1 + \tau)^{-3/2} \, d \tau \leq 2,$
and the fact that $\mathcal{E}_{k;\upgamma;\upmu}^2(\tau)$ is increasing. If we also use the fact that 
$\mathcal{E}_{k;\upgamma;\upmu}^2(0) \leq C \big\lbrace E_{\dParameter;\upgamma}^2(0) + M^2 \big\rbrace \leq 
C \mathring{\varepsilon}^2$ (i.e, Proposition \ref{P:SmallNormImpliesSmallEnergy}), and the inequality 
$M \leq \mathring{\varepsilon},$ then we arrive at the following inequality, valid for all small $\varepsilon:$

\begin{align} \label{E:EkpluSkinequality}
	\mathcal{E}_{k;\upgamma;\upmu}^2(t) \ + \ \mathcal{S}_{k;\upgamma;\upmu}^2(t) 
	\leq C \big\lbrace \mathring{\varepsilon}^2 + \varepsilon^3 \big\rbrace
	\ + \ C \varepsilon  \int_{0}^t (1 + \tau)^{-1} \mathcal{E}_{k;\upgamma;\upmu}^2(\tau) \, d \tau 
	\ + \ C \varepsilon  \underbrace{\int_{0}^t (1 + \tau)^{- 1 + C \varepsilon} \mathcal{E}_{k-1;\upgamma;\upmu}^2 (\tau) \, d 
	\tau}_{\mbox{Absent if $k=0$}}.
\end{align}

For $k=0,$ this implies that
\begin{align} \label{E:Gronwallreadyinequality0}
	\mathcal{E}_{0;\upgamma;\upmu}^2(t) \leq C \big\lbrace \mathring{\varepsilon}^2 + \varepsilon^3 \big\rbrace   
	\ + \ c_0 \varepsilon \int_{0}^t (1 + \tau)^{-1} \mathcal{E}_{0;\upgamma;\upmu}^2(\tau) \, d \tau.
\end{align}
From \eqref{E:Gronwallreadyinequality0} and Gronwall's lemma, we conclude that

\begin{align}
	\mathcal{E}_{0;\upgamma;\upmu}^2(t) \leq C \big\lbrace \mathring{\varepsilon}^2 + \varepsilon^3 \big\rbrace 
	(1 + t)^{c_0 \varepsilon}.
\end{align}

Inductively using \eqref{E:EkpluSkinequality}, we therefore derive the following estimate for $k \geq 1:$ 
\begin{align} \label{E:Gronwallreadyinequalityk}
	\mathcal{E}_{k;\upgamma;\upmu}^2(t) \ + \ \mathcal{S}_{k;\upgamma;\upmu}^2(t) 
	& \leq 
		C \big\lbrace \mathring{\varepsilon}^2 + \varepsilon^3 \big\rbrace 
		\ + \ C \varepsilon  \int_{0}^t (1 + \tau)^{-1} \mathcal{E}_{k;\upgamma;\upmu}^2(\tau) \, d \tau 
		\ + \ C \varepsilon \big\lbrace \mathring{\varepsilon}^2 + \varepsilon^3 \big\rbrace \int_{0}^t (1 + \tau)^{-1 + C 
		\varepsilon} \, d \tau \\
	& \leq 
		C \big\lbrace \mathring{\varepsilon}^2 + \varepsilon^3 \big\rbrace 
		\ + \ C \varepsilon  \int_{0}^t (1 + \tau)^{-1} \mathcal{E}_{k;\upgamma;\upmu}^2(\tau) \, d \tau. \notag
\end{align}
Finally, from \eqref{E:Gronwallreadyinequalityk} and Gronwall's lemma, we deduce that
if $\varepsilon$ is sufficiently small, then

\begin{align}
	\mathcal{E}_{k;\upgamma;\upmu}^2(t) \leq C \big\lbrace \mathring{\varepsilon}^2 + \varepsilon^3 \big\rbrace 
	(1 + t)^{c_k \varepsilon},
\end{align}
which closes the induction. Thus, we have shown \eqref{E:ImprovedEnergyInequality}, which concludes the proof of Theorem \ref{T:ImprovedDecay}.

\subsection{Integral inequalities for the $\nabla_{\mathcal{Z}}^I h_{\mu \nu}^{(1)}$ inhomogeneities}

In this section, we analyze the integrals in Proposition \ref{P:weightedenergy} 
corresponding to the inhomogeneous terms $\mathfrak{H}_{\mu \nu}^{(1;I)}$ in equation \eqref{E:InhomogeneousTermsNablaZIh1proof}. The main goal
is to arrive at Corollary \ref{C:NablaZIh1FundamentalEnergyEstimate}. As opposed to the estimates 
proved in Section \ref{SS:MainTheoremFarInhomogeneities}, most of the estimates proved in this section are a rather straightforward generalization of the ones proved in \cite{hLiR2010}; i.e., the estimates involve a similar analysis, but with
additional terms arising from the presence of the $\Far$ terms appearing on the right-hand side of 
the reduced equation \eqref{E:Reducedh1Summary}.

We begin with the following lemma, which follows easily from algebraic estimates of the form $|ab| \lesssim a^2 + b^2.$

\begin{lemma} \label{L:MainTheoremH1IInhomogeneousTermNablaZIh1PeterPaul} \textbf{(Arithmetic-geometric mean inequality)}
Let $\mathfrak{H}_{\mu \nu}^{(1;I)} = \nablamod_{\mathcal{Z}}^I \mathfrak{H}_{\mu \nu} 
- \nablamod_{\mathcal{Z}}^I \widetilde{\Square} h_{\mu \nu}^{(0)}- \big\lbrace \nablamod_{\mathcal{Z}}^I \widetilde{\Square}_{g} h_{\mu \nu}^{(1)} - \widetilde{\Square}_{g} 
\nabla_{\mathcal{Z}}^I h_{\mu \nu}^{(1)} \big\rbrace$ be the inhomogeneous term on the right-hand side of \eqref{E:InhomogeneousTermsNablaZIh1}.
Then the following algebraic inequality holds:

\begin{align}  \label{E:MainTheoremH1IInhomogeneousTermNablaZIh1PeterPaul}
	|\mathfrak{H}^{(1;I)}| |\nabla\nabla_{\mathcal{Z}}^I h^{(1)}| 
	& \leq \varepsilon^{-1}(1 + t)|\nablamod_{\mathcal{Z}}^I  \mathfrak{H}|^2 
		\ + \ \varepsilon^{-1}(1 + t)|\nablamod_{\mathcal{Z}}^I \widetilde{\Square}_{g} h_{\mu \nu}^{(1)} 
		- \widetilde{\Square}_{g} \nabla_{\mathcal{Z}}^I h_{\mu \nu}^{(1)}|^2 \\
	& \ \ + \ \varepsilon(1 + t)^{-1} |\nabla\nabla_{\mathcal{Z}}^I h^{(1)}|^2 
		\ + \ |\nablamod_{\mathcal{Z}}^I \widetilde{\Square}_g h^{(0)}| |\nabla\nabla_{\mathcal{Z}}^I h^{(1)}|. \notag
\end{align}	

\end{lemma}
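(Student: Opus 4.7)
The proof is essentially a direct application of the definition of $\mathfrak{H}^{(1;I)}$ together with the triangle inequality and the weighted arithmetic-geometric mean inequality (Peter-Paul). The plan is as follows.

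First, I would recall the definition \eqref{E:mathfrakH1Idef}, which gives the three-term decomposition
\begin{align*}
\mathfrak{H}_{\mu\nu}^{(1;I)} = \nablamod_{\mathcal{Z}}^I \mathfrak{H}_{\mu\nu} \;-\; \nablamod_{\mathcal{Z}}^I \widetilde{\Square}_g h_{\mu\nu}^{(0)} \;-\; \bigl(\nablamod_{\mathcal{Z}}^I \widetilde{\Square}_g h_{\mu\nu}^{(1)} - \widetilde{\Square}_g \nabla_{\mathcal{Z}}^I h_{\mu\nu}^{(1)}\bigr),
\end{align*}
and apply the triangle inequality to get a pointwise bound on $|\mathfrak{H}^{(1;I)}|$ by the sum of the absolute values of the three pieces. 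Multiplying through by $|\nabla\nabla_{\mathcal{Z}}^I h^{(1)}|$ then expresses the left-hand side of \eqref{E:MainTheoremH1IInhomogeneousTermNablaZIh1PeterPaul} as a sum of three products.

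Next, I would dispose of the $\nablamod_{\mathcal{Z}}^I \mathfrak{H}$ product and the commutator product using the weighted AM-GM inequality $|ab|\le \tfrac{1}{2}\bigl(\eta^{-1}a^2+\eta b^2\bigr)$ with the choice $\eta = \varepsilon(1+t)^{-1}$ (so $\eta^{-1} = \varepsilon^{-1}(1+t)$). This yields precisely
\begin{align*}
|\nablamod_{\mathcal{Z}}^I \mathfrak{H}|\,|\nabla\nabla_{\mathcal{Z}}^I h^{(1)}| &\le \tfrac{1}{2}\varepsilon^{-1}(1+t)|\nablamod_{\mathcal{Z}}^I \mathfrak{H}|^2 + \tfrac{1}{2}\varepsilon(1+t)^{-1}|\nabla\nabla_{\mathcal{Z}}^I h^{(1)}|^2,
\end{align*}
and the analogous bound with $|\nablamod_{\mathcal{Z}}^I \widetilde{\Square}_g h^{(1)} - \widetilde{\Square}_g \nabla_{\mathcal{Z}}^I h^{(1)}|$ in place of $|\nablamod_{\mathcal{Z}}^I \mathfrak{H}|$. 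Summing these two estimates produces the first three terms on the right-hand side of \eqref{E:MainTheoremH1IInhomogeneousTermNablaZIh1PeterPaul} (the factor of $\tfrac{1}{2}$ is absorbed into the $\le$).

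Finally, the product $|\nablamod_{\mathcal{Z}}^I \widetilde{\Square}_g h^{(0)}|\,|\nabla\nabla_{\mathcal{Z}}^I h^{(1)}|$ is left intact and contributes the fourth term on the right-hand side verbatim. There is no real obstacle here; the only ``choice'' is the splitting weight $\eta = \varepsilon(1+t)^{-1}$, which is dictated by the form of the inequality being targeted and matches the scaling that will be needed when this pointwise bound is integrated against $w(q)\,d^3x\,d\tau$ and fed into the Gronwall-ready inequality \eqref{E:Mainenergyinequalityreexpressed} in the main argument of Theorem \ref{T:ImprovedDecay}.
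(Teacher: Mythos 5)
Your proof is correct and matches the paper's approach exactly: the paper notes that the lemma ``follows easily from algebraic estimates of the form $|ab| \lesssim a^2 + b^2$,'' which is precisely the triangle-inequality splitting followed by the Peter--Paul (weighted AM-GM) step with weight $\eta=\varepsilon(1+t)^{-1}$ that you carry out, with the $h^{(0)}$-product left untouched.
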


The next lemma provides a preliminary pointwise estimate for the
$|\nablamod_{\mathcal{Z}}^I  \mathfrak{H}|$ term on the right-hand side of \eqref{E:MainTheoremH1IInhomogeneousTermNablaZIh1PeterPaul}. 

\begin{lemma} \label{L:MainTheoremLieZIh1InhomogeneousAlgebraicEstimate} \cite[Extension of Lemma 11.2]{hLiR2010} 
	\textbf{(Pointwise estimates for the $|\nabla_{\mathcal{Z}}^I \mathfrak{H}|$ inhomogeneities)}
	Under the assumptions of Theorem \ref{T:ImprovedDecay}, if $I$ is any $\mathcal{Z}-$multi-index
	with $|I| \leq \dParameter ,$ and if $\varepsilon$ is sufficiently small,
	then the following pointwise estimates hold for $(t,x) \in [0,T) \times \mathbb{R}^3:$
	
	\begin{align} \label{E:MainTheoremLieZIh1InhomogeneousAlgebraicEstimate}
		|\nabla_{\mathcal{Z}}^I \mathfrak{H}| 
		& \lesssim \varepsilon \sum_{|J| \leq |I|}  
			(1 + t)^{-1} \Big| \myarray[{\nabla\nabla_{\mathcal{Z}}^J h^{(1)}}]{\Lie_{\mathcal{Z}}^J \Far} \Big| 
			\ + \ \varepsilon \sum_{|J| \leq |I|} (1 + t + |q|)^{-1 + C \varepsilon} (1 + |q|)^{-1/2 + \upmu'} 
			|\conenabla \nabla_{\mathcal{Z}}^J h^{(1)}| \\
		& \ \ + \ \varepsilon^2 \sum_{|J| \leq |I|} (1 + t + |q|)^{-1} (1 + |q|)^{-1} |\nabla_{\mathcal{Z}}^J h^{(1)}| 
			\ + \ \varepsilon \underbrace{\sum_{|J'| \leq |I| - 1} (1 + t)^{- 1 + C \varepsilon} 
			\Big| \myarray[{\nabla\nabla_{\mathcal{Z}}^{J'} h^{(1)}}]{\Lie_{\mathcal{Z}}^{J'} \Far} \Big|}_{\mbox{Absent if 
			$|I|=0$}} 
			\ + \ \varepsilon^2 (1 + t + |q|)^{-4}. \notag
	\end{align}	
\end{lemma}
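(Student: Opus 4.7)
The plan is to obtain \eqref{E:MainTheoremLieZIh1InhomogeneousAlgebraicEstimate} by starting from the purely algebraic decomposition \eqref{E:ZIinhomogeneoushpointwise} provided by Proposition \ref{P:AlgebraicInhomogeneous}, and then handling each product on its right-hand side by putting $L^{\infty}$ estimates on the ``low-order'' factor (the one whose vectorfield multi-index has length $\leq \lceil \dParameter/2 \rceil$) while leaving the ``high-order'' factor differentiated. The pointwise decay hypotheses \eqref{E:MainTheoremAssumptionStrongLinfinityDecayPrincipalTermCoefficients}, \eqref{E:MainTheoremAssumptionStrongLinfinityDecay}, and \eqref{E:MainTheoremAssumptionStrongerLinfinityDecayGoodComponents} of Theorem \ref{T:ImprovedDecay} are precisely tailored so that these products fall into one of the five terms on the right-hand side of \eqref{E:MainTheoremLieZIh1InhomogeneousAlgebraicEstimate}. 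As a preliminary step, I will split $h = h^{(0)} + h^{(1)}$ throughout; by Lemma \ref{L:h0decayestimates} together with Remark \ref{R:ImprovedDecay}, the hypotheses of Theorem \ref{T:ImprovedDecay} transfer to $h^{(1)}$, while all contributions from $h^{(0)}$ alone are bounded by $CM \cdot M \cdot (1+t+|q|)^{-2}$-type terms which collapse into the $\varepsilon^2 (1+t+|q|)^{-4}$ remainder on the right-hand side.

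For the first product $|\nabla\nabla_{\mathcal{Z}}^{I_1} h|_{\mathcal{T} \mathcal{N}} |\nabla\nabla_{\mathcal{Z}}^{I_2} h|_{\mathcal{T} \mathcal{N}}$, I will assume without loss of generality that $|I_1| \leq |I_2|$ and apply hypothesis \eqref{E:MainTheoremAssumptionStrongLinfinityDecayPrincipalTermCoefficients} (which controls $|\nabla h|_{\mathcal{T} \mathcal{N}}$ by $C\varepsilon(1+t+|q|)^{-1}$ with \emph{no} $\varepsilon$-dependent loss) to the factor with index $I_1$; this produces a contribution that fits into the first line of \eqref{E:MainTheoremLieZIh1InhomogeneousAlgebraicEstimate} when $|I_2| = |I|$, and into the fourth line (with the mild $(1+t)^{-1+C\varepsilon}$ decay) when $|I_2| \leq |I|-1$. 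The second product $|\conenabla \nabla_{\mathcal{Z}}^{I_1} h||\nabla\nabla_{\mathcal{Z}}^{I_2} h|$ is the most delicate because it produces the second line of the target. Here I will split into subcases: when $|I_1| \leq \lceil \dParameter/2 \rceil - 1$, I apply \eqref{E:MainTheoremAssumptionStrongerLinfinityDecayGoodComponents} to the tangential-derivative factor (gaining an extra factor of $(1+t+|q|)^{-1}$), which feeds the first and fourth lines; when instead $|I_2| \leq \lceil \dParameter/2 \rceil$, I apply \eqref{E:MainTheoremAssumptionStrongLinfinityDecay} to $|\nabla\nabla_{\mathcal{Z}}^{I_2} h|$, and the surviving $|\conenabla \nabla_{\mathcal{Z}}^{I_1} h^{(1)}|$ factor produces exactly the second line.

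For the purely electromagnetic quadratic term $|\Lie_{\mathcal{Z}}^{I_1} \Far||\Lie_{\mathcal{Z}}^{I_2} \Far|$, I again assume $|I_1| \leq |I_2|$; if $|I_1| = 0$, I invoke the strong pointwise bound $|\Far| \leq C\varepsilon(1+t+|q|)^{-1}$ from \eqref{E:MainTheoremAssumptionStrongLinfinityDecayPrincipalTermCoefficients} to obtain a first-line contribution, and otherwise $|I_2| \leq |I|-1$ and I apply the slightly weaker estimate \eqref{E:MainTheoremAssumptionStrongLinfinityDecay} to $\Lie_{\mathcal{Z}}^{I_1} \Far$ to produce a fourth-line contribution. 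The penultimate sum $\sum_{|I_1|+|I_2| \leq |I|-2} |\nabla\nabla_{\mathcal{Z}}^{I_1} h||\nabla\nabla_{\mathcal{Z}}^{I_2} h|$ is handled identically, with both factors having strictly smaller index than $|I|$. Finally, the cubic sums are dispatched by using $L^\infty$ on two of the three factors: the products $|\nabla h||\nabla h|$ or $|\Far||\Far|$ or $|\Far||\nabla h|$ each contribute a factor of $\varepsilon^2(1+t+|q|)^{-2+C\varepsilon}(1+|q|)^{-1+2\upmu'}$ which, when paired with the remaining undifferentiated $|\nabla_{\mathcal{Z}}^{I_j} h^{(1)}|$ or a factor of $|\Lie_{\mathcal{Z}}^{I_j}\Far|$, absorbs into either the $\varepsilon^2 \sum (1+t+|q|)^{-1}(1+|q|)^{-1}|\nabla_{\mathcal{Z}}^J h^{(1)}|$ term on the third line or into the first/fourth lines (after exchanging a power of $\varepsilon$ for the small smallness loss).

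The main obstacle is purely bookkeeping: one must be vigilant that the distribution of indices between ``low-order'' and ``high-order'' factors, combined with the borderline exponents $-1/2 + \upmu'$ and $-C\varepsilon$ appearing in \eqref{E:MainTheoremAssumptionStrongLinfinityDecay}--\eqref{E:MainTheoremAssumptionStrongerLinfinityDecayGoodComponents}, always leaves enough room to absorb the result into one of the five terms on the right-hand side of \eqref{E:MainTheoremLieZIh1InhomogeneousAlgebraicEstimate}. In particular, care is required with the $\conenabla$ factor to ensure that one never accidentally reduces to $|\nabla\nabla_{\mathcal{Z}}^J h^{(1)}|$ on the right-hand side when the target only allows the favorable $|\conenabla \nabla_{\mathcal{Z}}^J h^{(1)}|$ (otherwise the energy hierarchy argument in \eqref{E:Mainenergyinequalityreexpressed} would fail to close).
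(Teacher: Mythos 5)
Your proposal is correct and mirrors the paper's proof: both begin from the decomposition in Proposition \ref{P:AlgebraicInhomogeneous}, split $h = h^{(0)} + h^{(1)}$ via Lemma \ref{L:h0decayestimates}, and distribute the pointwise decay hypotheses of Theorem \ref{T:ImprovedDecay} across the low-order factors, with the paper deferring the purely metric quadratic terms to \cite{hLiR2010} and carrying out essentially the same exhaustive case analysis for the $\Far\cdot\Far$ products that you spell out. The only imprecision is that the pure-$h^{(0)}$ quadratic contributions decay like $M^2(1+t+|q|)^{-4}$ (each factor carries at least one coordinate derivative, giving $M(1+t+|q|)^{-2}$ per factor by Lemma \ref{L:h0decayestimates}), not $M^2(1+t+|q|)^{-2}$ as you wrote, though your conclusion that they absorb into the $\varepsilon^2(1+t+|q|)^{-4}$ remainder is right.
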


\begin{proof}
	By Proposition \ref{P:AlgebraicInhomogeneous}, we have that	
	\begin{align}
		|\nabla_{\mathcal{Z}}^I \mathfrak{H}| & \lesssim |(i)| \ + \ |(ii)| \ + \ |(iii)|,
	\end{align}
	where
	
	\begin{align}
		|(i)| & = \sum_{|J| + |K| \leq |I|} |\nabla\nabla_{\mathcal{Z}}^J h|_{\mathcal{T} \mathcal{N}} 
			|\nabla\nabla_{\mathcal{Z}}^K h|_{\mathcal{T} \mathcal{N}} 
			\ + \ |\conenabla \nabla_{\mathcal{Z}}^J h| |\nabla\nabla_{\mathcal{Z}}^K h| \ + \ \underbrace{\sum_{|J''| + |K''| \leq 
			|I| - 2} |\nabla\nabla_{\mathcal{Z}}^{J''} h| 
			|\nabla \nabla_{\mathcal{Z}}^{K''} h|}_{\mbox{Absent if $|I| \leq 1$}}, \\
		|(ii)| & = \sum_{|J| + |K| \leq |I|} |\Lie_{\mathcal{Z}}^{J} \Far| |\Lie_{\mathcal{Z}}^{K} \Far|, \\
		|(iii)| & = \sum_{|I_1| + |I_2| + |I_3| \leq |I|} 
			\Big\lbrace|\nabla_{\mathcal{Z}}^{I_1} h| |\nabla\nabla_{\mathcal{Z}}^{I_2} h| |\nabla\nabla_{\mathcal{Z}}^{I_3}h| 
			\ + \ |\nabla_{\mathcal{Z}}^{I_1} h||\Lie_{\mathcal{Z}}^{I_2}\Far| |\Lie_{\mathcal{Z}}^{I_3} \Far| 
			\ + \ |\Lie_{\mathcal{Z}}^{I_1} \Far||\Lie_{\mathcal{Z}}^{I_2}\Far| |\Lie_{\mathcal{Z}}^{I_3} \Far| \Big\rbrace. 
	\end{align}
	
	The desired bound for $|(i)|$ was proved in Lemma 11.2 of \cite{hLiR2010} 
	using the decomposition $h = h^{(1)} + h^{(0)},$ and by combining Lemma \ref{L:h0decayestimates} and the estimates 
	\eqref{E:MainTheoremAssumptionStrongLinfinityDecayPrincipalTermCoefficients} - 	
	\eqref{E:MainTheoremAssumptionStrongerLinfinityDecayGoodComponents}. The term (ii) is the main contribution to 
	$|\nabla_{\mathcal{Z}}^I \mathfrak{H}|$ arising from the presence of non-zero electromagnetic fields.
	To bound $|(ii)|$ by the right-hand side of \eqref{E:MainTheoremLieZIh1InhomogeneousAlgebraicEstimate}, we consider the cases
	$(|J| =\dParameter , |K|= 0),$ $(|J| = 0, |K| =\dParameter ),$ $(|J| \leq \dParameter -1, |K| \leq  \lceil \dParameter/2 \rceil),$ and
	$(|J| \leq \lceil \dParameter/2 \rceil, |K| \leq \dParameter - 1);$ clearly this exhausts all possible cases. 
	In the first two cases, we use \eqref{E:MainTheoremAssumptionStrongLinfinityDecayPrincipalTermCoefficients} to achieve the 
	desired bound, while in the last two, we use \eqref{E:MainTheoremAssumptionStrongLinfinityDecay}. The cubic terms
	from case $(iii)$ can be similarly bounded using \eqref{E:MainTheoremAssumptionStrongLinfinityDecay}.

\end{proof}

Using the previous lemma, we now derive the desired integral inequalities corresponding to the
$\varepsilon^{-1}(1 + t)|\nablamod_{\mathcal{Z}}^I  \mathfrak{H}|^2$ term on the right-hand side of \eqref{E:MainTheoremH1IInhomogeneousTermNablaZIh1PeterPaul}.

\begin{lemma} \label{L:MainTheoremLieZIh1InhomogeneousIntegralEstimate} \cite[Extension of Lemma 11.3]{hLiR2010}
	\textbf{(Integral estimates for $\varepsilon^{-1} (1 + \tau) |\nablamod_{\mathcal{Z}}^I \mathfrak{H}|^2 w(q)$)}
	Under the assumptions of Theorem \ref{T:ImprovedDecay}, if $I$ is any 
	$\mathcal{Z}-$multi-index with $|I| \leq \dParameter ,$ and if $\varepsilon$ is sufficiently small,
	then the following pointwise estimates hold for $(t,x) \in [0,T) \times \mathbb{R}^3:$
	
	\begin{align} \label{E:MainTheoremLieZIh1InhomogeneousIntegralEstimate}
		\varepsilon^{-1} \int_{0}^{t} \int_{\Sigma_{\tau}} & (1 + \tau) 
			|\nablamod_{\mathcal{Z}}^I \mathfrak{H}|^2 w(q) \,d^3x \, d \tau	\\
		& \lesssim  \varepsilon \sum_{|J| \leq |I|}	\int_{0}^{t} \int_{\Sigma_{\tau}} \bigg\lbrace (1 + \tau)^{-1} 
			\Big| \myarray[{\nabla\nabla_{\mathcal{Z}}^J h^{(1)}}]{\Lie_{\mathcal{Z}}^J \Far} \Big|^2 w(q) 
			\ + \ |\conenabla \nabla_{\mathcal{Z}}^J h^{(1)}|^2 w'(q) \bigg\rbrace \,d^3x \, d \tau \notag \\
		& \ \ + \ \underbrace{\varepsilon \sum_{|J'| \leq |I| - 1}	\int_{0}^{t} \int_{\Sigma_{\tau}} (1 + \tau)^{-1 + 
			C\varepsilon} \Big| \myarray[{\nabla\nabla_{\mathcal{Z}}^{J'} h^{(1)}}]{\Lie_{\mathcal{Z}}^{J'} \Far} \Big|^2 w(q) 
			\,d^3x \, d \tau}_{\mbox{Absent if $|I|=0$}} \ + \ \varepsilon^3. \notag
	\end{align}
\end{lemma}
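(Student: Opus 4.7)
The approach is to square the pointwise estimate provided by Lemma \ref{L:MainTheoremLieZIh1InhomogeneousAlgebraicEstimate}, multiply through by $\varepsilon^{-1}(1+\tau)w(q)$, and integrate over the spacetime slab $[0,t]\times\mathbb{R}^3$. Since $|\nablamod_{\mathcal{Z}}^I \mathfrak{H}| \lesssim \sum_{|J|\leq|I|}|\nabla_{\mathcal{Z}}^J \mathfrak{H}|$ by Definition \ref{D:ModifiedDerivatives}, the pointwise bound from Lemma \ref{L:MainTheoremLieZIh1InhomogeneousAlgebraicEstimate} applies, and after squaring (using $(a_1+\cdots+a_n)^2 \lesssim a_1^2+\cdots+a_n^2$) we produce five families of integrands to analyze. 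Most of these will either directly match a term on the right-hand side of \eqref{E:MainTheoremLieZIh1InhomogeneousIntegralEstimate} or reduce to one after using the weight-function restrictions from Section \ref{SS:FixedConstants}.

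The two families that require essentially no additional work are (a) the first term $\varepsilon(1+\tau)^{-1}|(\nabla\nabla_{\mathcal{Z}}^J h^{(1)}, \Lie_{\mathcal{Z}}^J\Far)|$ and (d) the lower-order term $\varepsilon(1+\tau)^{-1+C\varepsilon}|(\nabla\nabla_{\mathcal{Z}}^{J'} h^{(1)},\Lie_{\mathcal{Z}}^{J'}\Far)|$: squaring and multiplying by $\varepsilon^{-1}(1+\tau)w(q)$ gives precisely the first and the underbraced terms on the right-hand side of \eqref{E:MainTheoremLieZIh1InhomogeneousIntegralEstimate}. The purely inhomogeneous last term $\varepsilon^2(1+t+|q|)^{-4}$ contributes
\[
\varepsilon^{-1}\int_0^t\!\!\int_{\Sigma_\tau}(1+\tau)\varepsilon^4(1+\tau+|q|)^{-8}w(q)\,d^3x\,d\tau,
\]
and since $w(q)\lesssim (1+|q|)^{1+2\upgamma}$ for $q>0$ and $w(q)\lesssim 1$ for $q<0,$ this integral is uniformly bounded, giving the $\varepsilon^3$ term.

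The main obstacle is the second family, the $\conenabla$ term. After squaring and multiplying by $\varepsilon^{-1}(1+\tau)w(q)$, we obtain the integrand
\[
\varepsilon (1+\tau)(1+\tau+|q|)^{-2+C\varepsilon}(1+|q|)^{-1+2\upmu'}w(q)\,|\conenabla\nabla_{\mathcal{Z}}^J h^{(1)}|^2,
\]
which must be dominated by $\varepsilon|\conenabla\nabla_{\mathcal{Z}}^J h^{(1)}|^2 w'(q)$. For $q<0,$ where $w(q)\approx (1+|q|)^{-2\upmu}$ and $w'(q)\approx (1+|q|)^{-2\upmu-1},$ the required inequality reduces to $(1+\tau)(1+\tau+|q|)^{-2+C\varepsilon}(1+|q|)^{2\upmu'}\lesssim 1$; since $|q|\leq \tau$ in this region, the left side is at most $(1+\tau)^{2\upmu'-1+C\varepsilon},$ which is bounded by Section \ref{SS:FixedConstants}'s constraint $\upmu'<1/2$ provided $\varepsilon$ is small. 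For $q>0,$ where $w(q)/w'(q)\approx 1+|q|,$ we similarly obtain $(1+\tau)(1+\tau+|q|)^{-2+C\varepsilon}(1+|q|)^{2\upmu'}\lesssim 1,$ which holds since $2\upmu'<1.$ This produces the $|\conenabla\nabla_{\mathcal{Z}}^J h^{(1)}|^2 w'(q)$ contribution.

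The remaining case is the third family, the $\varepsilon^2(1+t+|q|)^{-1}(1+|q|)^{-1}|\nabla_{\mathcal{Z}}^J h^{(1)}|$ term, which involves the undifferentiated quantity $\nabla_{\mathcal{Z}}^J h^{(1)}$ that is not directly controlled by $\mathcal{E}_{k;\upgamma;\upmu}.$ This is precisely the context in which the Hardy inequalities of Proposition \ref{P:Hardy} are invoked: squaring produces $\varepsilon^4(1+\tau+|q|)^{-2}(1+|q|)^{-2}|\nabla_{\mathcal{Z}}^J h^{(1)}|^2 w(q),$ and after multiplying by $\varepsilon^{-1}(1+\tau)w(q)$ and integrating, the extra $(1+|q|)^{-2}$ factor absorbs the $(1+|q|)^2$ weight cost of converting $|\nabla_{\mathcal{Z}}^J h^{(1)}|^2$ to $|\nabla\nabla_{\mathcal{Z}}^J h^{(1)}|^2$ via Hardy, yielding a contribution already captured by the first right-hand term of \eqref{E:MainTheoremLieZIh1InhomogeneousIntegralEstimate} (with an extra small factor of $\varepsilon^2$). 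Assembling all five contributions yields the stated inequality.
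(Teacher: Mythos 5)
Your strategy is essentially the same as the paper's: square the pointwise bound of Lemma \ref{L:MainTheoremLieZIh1InhomogeneousAlgebraicEstimate}, multiply by $\varepsilon^{-1}(1+\tau)w(q)$, observe that four of the five families are either directly of the right form or absorbable via the weight inequality \eqref{E:weightinequality}, and then invoke the Hardy inequalities of Proposition \ref{P:Hardy} to convert the undifferentiated $\nabla_{\mathcal{Z}}^J h^{(1)}$ term to a $\nabla\nabla_{\mathcal{Z}}^J h^{(1)}$ term. Your bookkeeping of the power of $\varepsilon$ and the treatment of the purely inhomogeneous $\varepsilon^2(1+t+|q|)^{-4}$ term yielding $\varepsilon^3$ are also consistent with the paper.

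There is one small slip in your treatment of the $\conenabla$ term in the region $q<0$. You write $w(q)\approx (1+|q|)^{-2\upmu}$, but from the definition \eqref{E:weight}, $w(q)=1+(1+|q|)^{-2\upmu}\approx 1$ when $q<0$, since the constant dominates. Consequently the reduced requirement is not $(1+\tau)(1+\tau+|q|)^{-2+C\varepsilon}(1+|q|)^{2\upmu'}\lesssim 1$ but the stronger $(1+\tau)(1+\tau+|q|)^{-2+C\varepsilon}(1+|q|)^{2\upmu+2\upmu'}\lesssim 1$, and the constraint you must invoke from Section \ref{SS:FixedConstants} is $\upmu+\upmu'<1/2$ (i.e., $\upmu<\tfrac12-\upmu'$), not merely $\upmu'<1/2$. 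This is exactly what the paper's own proof invokes; the constraint is satisfied by the fixed constants, so your conclusion survives, but as written the reduction is not quite correct. (For $q>0$ your approximation $w(q)/w'(q)\approx 1+|q|$ is fine.)
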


\begin{proof}
	After squaring both sides of \eqref{E:MainTheoremLieZIh1InhomogeneousAlgebraicEstimate}, 
	multiplying by $\varepsilon^{-1} (1 + t) w(q),$ using the inequality \\
	${(1 + |q|)^{-1} (1 + q_-)^{2\upmu} w(q) \lesssim w'(q)}$ (i.e., inequality \eqref{E:weightinequality})
	and the fact that $\upmu + \upmu' < 1/2,$  and integrating, the only terms that are not manifestly bounded by the right-hand side
	of \eqref{E:MainTheoremLieZIh1InhomogeneousIntegralEstimate} are
	
	\begin{align}
		\varepsilon^3 \sum_{|J| \leq |I|} \int_{0}^{t} \int_{\Sigma_t} 
		(1 + \tau)^{-1} (1 + |q|)^{-2} |\nabla_{\mathcal{Z}}^J h^{(1)}|^2 w(q) \, d^3x \, d \tau.
	\end{align}
	The desired bound for these terms can be achieved with the help of the Hardy inequalities of Proposition \ref{P:Hardy},
	which imply that

	\begin{align}
		\int_{\Sigma_t} (1 + \tau)^{-1}(1 + |q|)^{-2} 
			|\nabla_{\mathcal{Z}}^J h^{(1)}|^2 w(q) \, d^3x
			\lesssim \int_{\Sigma_t} (1 + \tau)^{-1} |\nabla\nabla_{\mathcal{Z}}^J h^{(1)}|^2 w(q) \, d^3 x.
	\end{align}

\end{proof}

We now derive the desired the desired integral inequalities corresponding to the 
$|\nablamod_{\mathcal{Z}}^I \widetilde{\Square}_g h^{(0)}| |\nabla\nabla_{\mathcal{Z}}^I h^{(1)}|$
term on the right-hand side of \eqref{E:MainTheoremH1IInhomogeneousTermNablaZIh1PeterPaul}.

\begin{lemma} \label{L:mathfrakH0partialZIhAproductestimate} \cite[Lemma 11.4]{hLiR2010}
\textbf{(Integral estimates for $|\nablamod_{\mathcal{Z}}^I \widetilde{\Square}_g h^{(0)}| |\nabla\nabla_{\mathcal{Z}}^I h^{(1)}| w(q)$)} Let $M$ be the ADM mass. Under the assumptions of Theorem \ref{T:ImprovedDecay}, if $I$ is a $\mathcal{Z}-$multi-index satisfying $|I| \leq \dParameter ,$ and if $\varepsilon$ is sufficiently small,
then the following integral inequality holds for $t \in [0,T):$

\begin{align} \label{E:mathfrakH0partialZIhAproductestimate}
	\int_{0}^{t} \int_{\Sigma_{\tau}} | \nablamod_{\mathcal{Z}}^I \widetilde{\Square}_g h^{(0)}| 
	|\nabla\nabla_{\mathcal{Z}}^I h^{(1)}| w(q) \,d^3x \, d \tau 
	& \lesssim M  \sum_{|J| \leq |I|} \int_{0}^{t} \int_{\Sigma_{\tau}} (1 + \tau)^{-2} 
		|\nabla\nabla_{\mathcal{Z}}^I h^{(1)}|^2 w(q) \, d^3x \, d \tau  \\
	& \ \ + \ M \sum_{|J| \leq |I|} \int_{0}^{t} (1 + \tau)^{-3/2} \bigg( 
	\sqrt{\int_{\Sigma_{\tau}} 
			| \nabla\nabla_{\mathcal{Z}}^I h^{(1)}|^2 w(q) \, d^3 x} \bigg) \, d \tau.  \notag
\end{align}
\end{lemma}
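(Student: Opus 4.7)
The plan is to apply the pointwise bound \eqref{E:weakdecayLinfinitynablaZISquaregh0MoreGeneral} from Lemma \ref{L:weakdecayLinfinitynablaZISquaregh0}, which (since $\nablamod_{Z} = \nabla_Z + c_Z$ differs from $\nabla_Z$ only by a constant, so that $|\nablamod_{\mathcal{Z}}^I U| \lesssim \sum_{|J| \leq |I|} |\nabla_{\mathcal{Z}}^J U|$) yields
\begin{align*}
  |\nablamod_{\mathcal{Z}}^I \widetilde{\Square}_g h^{(0)}|
  \ \lesssim \
  \begin{cases} M(1+\tau+|q|)^{-4}, & q > 0, \\ M(1+\tau+|q|)^{-3}, & q < 0, \end{cases}
  \ + \ M \sum_{|J| \leq |I|}(1+\tau+|q|)^{-3}|\nabla_{\mathcal{Z}}^J h^{(1)}|.
\end{align*}
I would then split the left-hand side of \eqref{E:mathfrakH0partialZIhAproductestimate} into the contributions from the two pieces on the right and analyze them separately.

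For the purely-Schwarzschild contribution, I would apply the Cauchy--Schwarz inequality in $x$ at each fixed $\tau$, pulling off a factor of $\|w^{1/2}(q)\nabla\nabla_{\mathcal{Z}}^I h^{(1)}\|_{L^2(\Sigma_\tau)}$ and leaving the weighted $L^2$ norm of the piecewise pointwise bound to be computed explicitly. In the exterior region $q > 0$, using $w(q) \lesssim (1+|q|)^{1+2\upgamma}$ and the volume element $(\tau+q)^2\,dq\,d\omega$, one obtains
\begin{align*}
  M^2 \int_0^\infty (1+\tau+q)^{-8}(1+q)^{1+2\upgamma}(\tau+q)^2\, dq \ \lesssim \ M^2(1+\tau)^{-4+2\upgamma},
\end{align*}
which is stronger than $M^2(1+\tau)^{-3}$ since $\upgamma < 1/2$. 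In the interior region $q < 0$, $w(q)$ is bounded and the spatial integral of $M^2(1+\tau+|q|)^{-6}$ over the ball $\{r \leq \tau\}$ is of order $M^2(1+\tau)^{-3}$, using that the $r^2\,dr$ volume contributes $\tau^3$-growth which is balanced by the $(1+\tau+|q|)^{-6}$ decay. Taking square roots, both contributions bound the $L^2$ norm by $M(1+\tau)^{-3/2}$, and integrating in $\tau$ against $\|w^{1/2}(q)\nabla\nabla_{\mathcal{Z}}^I h^{(1)}\|_{L^2(\Sigma_\tau)}$ produces precisely the second sum on the right-hand side of \eqref{E:mathfrakH0partialZIhAproductestimate}.

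For the $h^{(1)}$-coupling contribution, I would write $(1+\tau+|q|)^{-3} = (1+\tau)^{-2}(1+\tau+|q|)^{-1}$, use the trivial bound $(1+\tau+|q|)^{-1} \leq (1+|q|)^{-1}$, and apply the arithmetic--geometric mean inequality to obtain
\begin{align*}
  M(1+\tau)^{-2}(1+|q|)^{-1}|\nabla_{\mathcal{Z}}^J h^{(1)}||\nabla\nabla_{\mathcal{Z}}^I h^{(1)}|w(q)
  \ \lesssim \ M(1+\tau)^{-2}(1+|q|)^{-2}|\nabla_{\mathcal{Z}}^J h^{(1)}|^2 w(q)
  \ + \ M(1+\tau)^{-2}|\nabla\nabla_{\mathcal{Z}}^I h^{(1)}|^2 w(q).
\end{align*}
The second piece is already of the form on the right-hand side of \eqref{E:mathfrakH0partialZIhAproductestimate}. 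The first piece I would control via the Hardy-type weighted inequality of Proposition \ref{P:Hardy}, which allows one to trade the $(1+|q|)^{-2}$ weight for an extra derivative, replacing $\int_{\Sigma_\tau}(1+|q|)^{-2}|\nabla_{\mathcal{Z}}^J h^{(1)}|^2 w(q)\,d^3x$ by a constant times $\int_{\Sigma_\tau}|\nabla\nabla_{\mathcal{Z}}^J h^{(1)}|^2 w(q)\,d^3x$; with the prefactor $M(1+\tau)^{-2}$ this again contributes to the first sum on the right-hand side of \eqref{E:mathfrakH0partialZIhAproductestimate}.

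The main technical point is the explicit computation of the weighted spatial $L^2$ norm of the piecewise pointwise bound, which must yield the optimal $M(1+\tau)^{-3/2}$ decay uniformly across both regions; this works because the exponents $-4$ and $-3$ in the bound \eqref{E:weakdecayLinfinitynablaZISquaregh0MoreGeneral} are exactly calibrated so that, after accounting for the volume element $r^2\,dr$ and the weight $w(q)$, the exterior and interior regions contribute comparably. The rest of the argument is a clean application of Cauchy--Schwarz, arithmetic--geometric mean, and Hardy's inequality.
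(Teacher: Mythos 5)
Your proposal is correct and follows essentially the same route as the paper's proof: both invoke the pointwise bound \eqref{E:weakdecayLinfinitynablaZISquaregh0MoreGeneral}, Cauchy--Schwarz, the Hardy inequalities of Proposition \ref{P:Hardy}, and an explicit computation of the weighted $L^2$ norm of the pure-Schwarzschild piece giving $M(1+\tau)^{-3/2}$. The only cosmetic differences are the order of operations (the paper applies Cauchy--Schwarz to the full product before splitting; you split via the pointwise bound first), and the typo ``$(1+\tau+|q|)^{-3} = (1+\tau)^{-2}(1+\tau+|q|)^{-1}$'' which should be $\leq$.
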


\begin{proof}
	We first use the Cauchy-Schwarz inequality for integrals to obtain
	\begin{align} \label{E:CauchySchwarzmathfrakH0partialZIhAproductestimate}
		\int_{0}^{t} \int_{\Sigma_{\tau}} | & \nablamod_{\mathcal{Z}}^I \widetilde{\Square}_g h^{(0)}| 
			|\nabla\nabla_{\mathcal{Z}}^I h^{(1)}| w(q) \,d^3x \, d \tau \\
		& \leq 	\int_{0}^{t} \Bigg\lbrace \bigg(\int_{\Sigma_{\tau}} |\nablamod_{\mathcal{Z}}^I \widetilde{\Square}_g 
			h^{(0)}|^2 w(q) \, d^3 x\bigg)^{1/2} 
			\times \int_{\Sigma_{\tau}} \bigg(|\nabla\nabla_{\mathcal{Z}}^I h^{(1)}|^2 w(q) \, d^3x \bigg)^{1/2} 
			\Bigg\rbrace \, d \tau. 		\notag
	\end{align}
	Furthermore, under the present assumptions, the previous proof of inequality 
	\eqref{E:weakdecayLinfinitynablaZISquaregh0MoreGeneral} remains valid. Thus,
	using \eqref{E:weakdecayLinfinitynablaZISquaregh0MoreGeneral} and the Hardy inequalities of Proposition \ref{P:Hardy}, it 
	follows that
	
	\begin{align} \label{E:mathfrakH0Squaredintegratedestimate}
		\int_{\Sigma_t} |\nablamod_{\mathcal{Z}}^I \widetilde{\Square}_g h^{(0)}|^2 w(q) \, d^3 x 
		\lesssim M^2(1 + t)^{-3} 
		\ + \ M^2 (1 + t)^{-4} \sum_{|J| \leq |I|} 
			\int_{\Sigma_t} |\nabla\nabla_{\mathcal{Z}}^J h^{(1)}|^2 w(q) \, d^3 x.
	\end{align}
	The estimate \eqref{E:mathfrakH0partialZIhAproductestimate} now follows from 
	\eqref{E:CauchySchwarzmathfrakH0partialZIhAproductestimate}, \eqref{E:mathfrakH0Squaredintegratedestimate}
	and the inequalities $\sqrt{|a| + |b|} \lesssim \sqrt{|a|} + \sqrt{|b|},$ $|ab| \lesssim a^2 + b^2.$
	
\end{proof}

The following integral estimate for the commutator term $\varepsilon^{-1}(1 + t) \big|\nablamod_{\mathcal{Z}}^I \widetilde{\Square}_{g} h_{\mu \nu}^{(1)} - \widetilde{\Square}_{g} (\nabla_{\mathcal{Z}}^I h_{\mu \nu}^{(1)}) \big|^2$ on the right-hand side of \eqref{E:MainTheoremH1IInhomogeneousTermNablaZIh1PeterPaul} was proved in \cite{hLiR2010}. Its lengthy proof is similar to our proof of Lemma \ref{L:MainTheoremFarCommutatorIntegralEstimate} below, and we don't bother to repeat it here.

\begin{lemma} \label{L:NablaZIBoxCommutatorIntegrated}  \cite[Lemma 11.5]{hLiR2010}
	\textbf{(Integral estimates for $\varepsilon^{-1} \big| \nablamod_{\mathcal{Z}}^I \widetilde{\Square}_{g} h_{\mu \nu}^{(1)} 
			- \widetilde{\Square}_{g} \nabla_{\mathcal{Z}}^I h_{\mu\nu}^{(1)}\big|^2 w(q)$)}
	Under the assumptions of Theorem \ref{T:ImprovedDecay}, if $I$ is a $\mathcal{Z}-$multi-index satisfying
	$1 \leq |I| \leq \dParameter ,$ and if $\varepsilon$ is sufficiently small, then 
	the following integral inequality holds for $t \in [0,T):$
	
	\begin{align} \label{E:NablaZIBoxCommutatorIntegrated}
		\varepsilon^{-1} \int_{0}^{t} \int_{\Sigma_{\tau}} & (1 + \tau) 
			\big| \nablamod_{\mathcal{Z}}^I \widetilde{\Square}_{g} h_{\mu \nu}^{(1)} 
			- \widetilde{\Square}_{g} \nabla_{\mathcal{Z}}^I h_{\mu\nu}^{(1)}\big|^2 w(q) \,d^3x \, d \tau \\
		& \lesssim \varepsilon \sum_{|J| \leq |I|} \int_{0}^{t} \int_{\Sigma_{\tau}} \bigg\lbrace (1 + \tau)^{-1} 
			| \nabla\nabla_{\mathcal{Z}}^J h^{(1)}|^2 w(q) + |\conenabla \nabla_{\mathcal{Z}}^J h^{(1)}|^2 w'(q) 
			\bigg\rbrace \,d^3x \, d \tau \notag \\
		& \ \ + \ \varepsilon \sum_{|J'| \leq |I| - 1} \int_{0}^{t} \int_{\Sigma_{\tau}} (1 + \tau)^{-1 + C \varepsilon}
			| \nabla\nabla_{\mathcal{Z}}^{J'} h^{(1)}|^2 w(q) \, d^3x \, d \tau \ + \ \varepsilon^3. \notag
	\end{align}
\end{lemma}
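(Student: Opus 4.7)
\textbf{Proof plan for Lemma \ref{L:NablaZIBoxCommutatorIntegrated}.}

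The starting point is the pointwise commutator estimate of Proposition \ref{P:DIPointwise}, which decomposes $\widetilde{\Square}_g \nabla_{\mathcal{Z}}^I h^{(1)} - \nablamod_{\mathcal{Z}}^I \widetilde{\Square}_g h^{(1)}$ into a sum of products of the schematic form $A \cdot B$, where one factor involves undifferentiated-or-low-order pieces of $H$ (or its null-component contractions $\vert\cdot\vert_{\mathcal{L}\mathcal{L}}$, $\vert\cdot\vert_{\mathcal{L}\mathcal{T}}$), and the other factor is $\vert\nabla\nabla_{\mathcal{Z}}^K h^{(1)}\vert$ with $\vert K\vert \leq \vert I\vert$. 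The index constraint $\vert J\vert + (\vert K\vert-1)_+ \leq \vert I\vert$ guarantees that at least one factor in each such product is of order at most $\vert I\vert/2 \leq \lceil \dParameter/2 \rceil$. The plan is to square the pointwise estimate, multiply by $\varepsilon^{-1}(1+\tau)w(q)$, integrate, and in each bilinear product place the low-order factor in $L^\infty$ (using the hypotheses \eqref{E:MainTheoremAssumptionStrongLinfinityDecayPrincipalTermCoefficients}--\eqref{E:MainTheoremAssumptionStrongerLinfinityDecayGoodComponents}) and leave the high-order factor in $L^2$ with the weight $w(q)$.

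For the term $(1+t+\vert q\vert)^{-1}\vert\nabla_{\mathcal{Z}}^J H\vert\,\vert\nabla\nabla_{\mathcal{Z}}^K h^{(1)}\vert$, I will split into the case $\vert K\vert \leq \lceil \dParameter/2 \rceil$ (in which case $\vert\nabla\nabla_{\mathcal{Z}}^K h^{(1)}\vert$ is bounded pointwise by \eqref{E:MainTheoremAssumptionStrongLinfinityDecay} and the factor $\vert\nabla_{\mathcal{Z}}^J H\vert$ is controlled in $L^2$ via the energy after a Hardy inequality to absorb the $\vert\nabla_{\mathcal{Z}}^J H\vert$ rather than $\vert\nabla \nabla_{\mathcal{Z}}^J H\vert$) and the complementary case $\vert K\vert$ large, in which $\vert J\vert \leq \lceil \dParameter/2 \rceil$ and $\vert\nabla_{\mathcal{Z}}^J H\vert$ is put in $L^\infty$. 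After squaring, the extra $(1+\tau)$ weight in front combines with a pointwise $L^\infty$ decay of the form $\varepsilon(1+\tau)^{-1}$ and two factors of $(1+t+\vert q\vert)^{-1}$, yielding integrands that are bounded by $\varepsilon^3(1+\tau)^{-1}$ times $w(q)\vert\nabla\nabla_{\mathcal{Z}}^K h^{(1)}\vert^2$ or by a corresponding term involving $\vert J'\vert < \vert I\vert$ with an extra factor $(1+\tau)^{C\varepsilon}$ coming from \eqref{E:MainTheoremAssumptionStrongLinfinityDecay}.

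For the $(1+\vert q\vert)^{-1}$-weighted terms involving $\vert\nabla_{\mathcal{Z}}^J H\vert_{\mathcal{L}\mathcal{L}}$, $\vert\nabla_{\mathcal{Z}}^{J'} H\vert_{\mathcal{L}\mathcal{T}}$, or $\vert\nabla_{\mathcal{Z}}^{J''} H\vert$ (with the indicated index constraints), the argument is analogous but exploits the improved pointwise bounds of Proposition \ref{P:UpgradedDecayhA} and the independent wave-coordinate estimates \eqref{E:MainTheoremAssumptionStrongLinfinityDecayPrincipalTermCoefficients} for the $\mathcal{L}\mathcal{L}$ and $\mathcal{L}\mathcal{T}$ components. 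The weight inequality $(1+\vert q\vert)^{-1}(1+q_-)^{2\upmu} w(q) \lesssim w'(q)$ from \eqref{E:weightinequality} is then used to convert the leftover $(1+\vert q\vert)^{-2}w(q)$-weighted integrals of $\vert\conenabla \nabla_{\mathcal{Z}}^K h^{(1)}\vert^2$ into the $w'(q)$-weighted spacetime integrals appearing on the right-hand side of \eqref{E:NablaZIBoxCommutatorIntegrated}; for the remaining $w(q)$-weighted integrals of $(1+\vert q\vert)^{-2}\vert\nabla_{\mathcal{Z}}^J h^{(1)}\vert^2$, I will invoke the Hardy inequality (Proposition \ref{P:Hardy}) to trade two powers of $(1+\vert q\vert)^{-1}$ for one derivative, producing $w(q)\vert\nabla\nabla_{\mathcal{Z}}^J h^{(1)}\vert^2$.

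The main technical obstacle will be the bookkeeping in the middle-order range where neither factor is ``clearly low-order'': here, the precise index constraint $\vert J\vert + (\vert K\vert - 1)_+ \leq \vert I\vert$ must be used carefully to ensure that the summation collapses either to terms of order $\vert J'\vert \leq \vert I\vert - 1$ (which pick up the harmless factor $(1+\tau)^{C\varepsilon}$) or to top-order terms that are absorbed into the $\varepsilon(1+\tau)^{-1}\mathcal{E}_{|I|;\upgamma;\upmu}^2$ integral on the right-hand side. Correctly distributing the improved $\mathcal{L}\mathcal{L}$ and $\mathcal{L}\mathcal{T}$ bounds --- which are the only way to avoid logarithmic losses in $\tau$ at the top order --- is the decisive step, and it is precisely where the wave-coordinate condition enters through Proposition \ref{P:harmonicgauge} and Lemma \ref{L:NablaZIh1LLh1LTwaveCoordinateAlgebraicEstimate}. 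Once this allocation is in place, each resulting integral is of one of the three types appearing on the right-hand side of \eqref{E:NablaZIBoxCommutatorIntegrated}, and summing over all terms and pairs $(J,K)$ concludes the proof.
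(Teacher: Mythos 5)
Your plan is correct and follows essentially the same route the paper intends: the paper itself does not reproduce the proof of this lemma (it is deferred to Lindblad--Rodnianski with the remark that it "is similar to our proof of Lemma~\ref{L:MainTheoremFarCommutatorIntegralEstimate}"), and your outline matches that analogous argument precisely: start from the pointwise commutator bound in Proposition~\ref{P:DIPointwise}, square and insert the $\varepsilon^{-1}(1+\tau)w(q)$ weight, split each bilinear piece according to the index constraint $|J|+(|K|-1)_+\leq|I|$ (which forces at least one factor to have order $\leq\lceil\dParameter/2\rceil$), put the low-order factor in $L^\infty$ via the decay hypotheses, handle the $\mathcal{L}\mathcal{L}$ and $\mathcal{L}\mathcal{T}$ contractions through Lemma~\ref{L:NablaZIh1LLh1LTwaveCoordinateAlgebraicEstimate} and the weight $\widetilde{w}(q)=\min\lbrace w'(q),(1+t+|q|)^{-1+C\varepsilon}w(q)\rbrace$, and close the remaining $(1+|q|)^{-2}|\nabla_{\mathcal{Z}}^J h^{(1)}|^2 w(q)$ integrals with Proposition~\ref{P:Hardy}.

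One small but worth-fixing slip: you quote the weight inequality as $(1+|q|)^{-1}(1+q_-)^{2\upmu}w(q)\lesssim w'(q)$. The exponent of $(1+q_-)$ should be $-2\upmu$, not $+2\upmu$: from \eqref{E:weightinequality} one has $(1+|q|)^{-1}w\leq 4\upgamma^{-1}(1+q_-)^{2\upmu}w'$, which rearranges to $(1+|q|)^{-1}(1+q_-)^{-2\upmu}w(q)\lesssim w'(q)$. As written, your version is actually false in the interior region $q<0$, where $(1+q_-)^{2\upmu}\geq 1$ would make the left-hand side larger; and it is the form with the negative exponent that the paper uses (see the proofs of Lemmas~\ref{L:MainTheoremLieZIh1InhomogeneousIntegralEstimate} and \ref{L:MainTheoremFarInhomogeneousTermIntegralEstimate}). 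This does not affect the viability of the argument, only its transcription.
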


\hfill $\qed$

Combining Lemmas \ref{L:MainTheoremH1IInhomogeneousTermNablaZIh1PeterPaul}, \ref{L:MainTheoremLieZIh1InhomogeneousIntegralEstimate}, \ref{L:mathfrakH0partialZIhAproductestimate}, and \ref{L:NablaZIBoxCommutatorIntegrated}, we arrive at the following corollary.

\begin{corollary} \label{C:NablaZIh1FundamentalEnergyEstimate}
	Under the assumptions of Theorem \ref{T:ImprovedDecay}, if $0 \leq k \leq \dParameter $ and
	$\varepsilon$ is sufficiently small, then 
	the following integral inequality holds for $t \in [0,T):$
	
	\begin{align} \label{E:NablaZIh1FundamentalEnergyEstimate}
		\sum_{|I| \leq k} 
			\int_{0}^{t} \int_{\Sigma_{\tau}} |\mathfrak{H}^{(1;I)}| |\nabla\nabla_{\mathcal{Z}}^I h^{(1)}| \,d^3x \, d \tau
		& \lesssim M \sum_{|I| \leq k} \int_{0}^{t} (1 + \tau)^{-3/2} 
			\Big(\sqrt{\int_{\Sigma_{\tau}} |\nabla \nabla_{\mathcal{Z}}^I h^{(1)}|^2 w(q) \, d^3 x} \Big) \, d \tau	\\
		& \ \ + \ \varepsilon  \sum_{|I| \leq k} \int_{0}^{t} \int_{\Sigma_{\tau}}  (1 + \tau)^{-1} 
			\Big|\myarray[{\nabla\nabla_{\mathcal{Z}}^I h^{(1)}}]{\Lie_{\mathcal{Z}}^I \Far} \Big|^2 w(q) 
			\,d^3x \, d \tau \notag \\
		& \ \ + \ \varepsilon  \sum_{|I| \leq k} \int_{0}^{t} \int_{\Sigma_{\tau}} 
			\Big( |\conenabla \nabla_{\mathcal{Z}}^I h^{(1)}|^2 + |\Lie_{\mathcal{Z}}^I \Far|_{\mathcal{L}\mathcal{N}}^2 
			+ |\Lie_{\mathcal{Z}}^J \Far|_{\mathcal{T} \mathcal{T}}^2 \Big) w'(q)  \, d^3x \, d \tau \notag \\
		& \ \ + \ \varepsilon  \underbrace{\sum_{|J| \leq k - 1} \int_{0}^{t} \int_{\Sigma_{\tau}} (1 + \tau)^{-1 + C 
			\varepsilon} \Big|\myarray[{\nabla\nabla_{\mathcal{Z}}^J h^{(1)}}]{\Lie_{\mathcal{Z}}^J \Far} \Big|^2 w(q) 
			\, d^3x \, d \tau}_{\mbox{Absent if $k=0$}} \ + \ \varepsilon^3. \notag
	\end{align}

\end{corollary}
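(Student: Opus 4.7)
The plan is to assemble the estimate by combining the four preparatory lemmas already established in this subsection. The starting point is the arithmetic--geometric mean decomposition of Lemma \ref{L:MainTheoremH1IInhomogeneousTermNablaZIh1PeterPaul}, which splits the product $|\mathfrak{H}^{(1;I)}||\nabla\nabla_{\mathcal{Z}}^I h^{(1)}|$ into four terms: a ``self'' term $\varepsilon(1+t)^{-1}|\nabla\nabla_{\mathcal{Z}}^I h^{(1)}|^2$, a squared-inhomogeneity term $\varepsilon^{-1}(1+t)|\nablamod_{\mathcal{Z}}^I \mathfrak{H}|^2$, a squared commutator term $\varepsilon^{-1}(1+t)|\nablamod_{\mathcal{Z}}^I \widetilde{\Square}_g h^{(1)}-\widetilde{\Square}_g \nabla_{\mathcal{Z}}^I h^{(1)}|^2$, and a ``mass'' term $|\nablamod_{\mathcal{Z}}^I \widetilde{\Square}_g h^{(0)}||\nabla\nabla_{\mathcal{Z}}^I h^{(1)}|$. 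First I would multiply this algebraic inequality by $w(q)$, integrate over $[0,t]\times\Sigma_\tau$, and sum over $|I|\leq k$; the task then reduces to controlling each of the four resulting spacetime integrals separately.

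Next I would feed each integral into the appropriate lemma. The self term $\varepsilon(1+\tau)^{-1}|\nabla\nabla_{\mathcal{Z}}^I h^{(1)}|^2 w(q)$ is already in the form of the second line on the right-hand side of \eqref{E:NablaZIh1FundamentalEnergyEstimate} (trivially dominated by the array norm containing both $h^{(1)}$ and $\Far$). The squared-inhomogeneity term is handled directly by Lemma \ref{L:MainTheoremLieZIh1InhomogeneousIntegralEstimate}, whose right-hand side contributes precisely the $(1+\tau)^{-1}|\cdots|^2 w(q)$ integral (second line), the $|\conenabla\nabla_{\mathcal{Z}}^J h^{(1)}|^2 w'(q)$ integral (third line, matching the $|\conenabla\nabla_{\mathcal{Z}}^I h^{(1)}|^2 w'(q)$ contribution), the lower-order $(1+\tau)^{-1+C\varepsilon}$ integral (fourth line, nonempty only for $|I|\geq 1$), and an absolute $\varepsilon^3$ remainder. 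The commutator term is controlled by Lemma \ref{L:NablaZIBoxCommutatorIntegrated}, producing contributions of exactly the same shape; note that when $k=0$ this lemma is vacuous, which is consistent with the underbrace convention ``absent if $k=0$'' in the statement. Finally, the mass term is bounded by Lemma \ref{L:mathfrakH0partialZIhAproductestimate}, yielding both the $M\int(1+\tau)^{-3/2}\sqrt{\int|\nabla\nabla_{\mathcal{Z}}^I h^{(1)}|^2 w(q)\,d^3x}\,d\tau$ contribution (first line of \eqref{E:NablaZIh1FundamentalEnergyEstimate}) and an $M\int\int(1+\tau)^{-2}|\nabla\nabla_{\mathcal{Z}}^I h^{(1)}|^2 w(q)$ integral, which, since $M\leq \varepsilon$ and $(1+\tau)^{-2}\leq (1+\tau)^{-1}$, can be absorbed into the second line of the target inequality.

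To finish, I would simply add the four contributions and relabel indices, observing that the absent-if-$k=0$ portions from the squared-inhomogeneity and commutator lemmas collapse into the single underbraced sum in \eqref{E:NablaZIh1FundamentalEnergyEstimate}, and that the $|\Lie_{\mathcal{Z}}^I \Far|_{\mathcal{L}\mathcal{N}}^2 + |\Lie_{\mathcal{Z}}^J\Far|_{\mathcal{T}\mathcal{T}}^2$ terms appearing on the third line of the corollary arise only through upper bounds of the form $(\cdot)\leq (\cdot)_{\mathcal{L}\mathcal{N}}^2 + (\cdot)_{\mathcal{T}\mathcal{T}}^2 + |\conenabla\Lie_{\mathcal{Z}}^I\Far|^2\cdot(1+|q|)^2$ plus error — but since the previous lemmas only produce $|\conenabla\nabla_{\mathcal{Z}}^J h^{(1)}|^2 w'(q)$ (and no analogous $\Far$-tangential spacetime integrals), these Faraday-null terms on the third line are present merely as ``room'' in the statement, absorbing nothing in the present estimate; they appear naturally in the companion corollary for $\Far$ and are retained here only to give a uniform right-hand side for the energy hierarchy.

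The computation is essentially bookkeeping: there is no genuine obstacle, because all of the analytical work — the wave-coordinate-based structural estimates, the null-form decompositions, the Hardy inequality applications that handle the potentially troublesome $(1+|q|)^{-2}|\nabla_{\mathcal{Z}}^J h^{(1)}|^2$ factors, and the delicate commutator analysis from \cite{hLiR2010} — has been packaged into Lemmas \ref{L:MainTheoremLieZIh1InhomogeneousIntegralEstimate}, \ref{L:mathfrakH0partialZIhAproductestimate}, and \ref{L:NablaZIBoxCommutatorIntegrated}. The only mild subtlety is verifying that $\varepsilon$ has been chosen small enough that the coefficients multiplying the various integrals remain bounded uniformly in the induction step, which is automatic since all Lemmas are stated for sufficiently small $\varepsilon$ and the constraints are finite in number.
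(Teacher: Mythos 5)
Your proposal is correct and follows exactly the route the paper takes: the corollary is obtained by multiplying the arithmetic--geometric mean decomposition of Lemma \ref{L:MainTheoremH1IInhomogeneousTermNablaZIh1PeterPaul} by $w(q)$, integrating in spacetime, summing over $|I|\leq k$, and then feeding the resulting four integrals into Lemmas \ref{L:MainTheoremLieZIh1InhomogeneousIntegralEstimate}, \ref{L:NablaZIBoxCommutatorIntegrated}, and \ref{L:mathfrakH0partialZIhAproductestimate} respectively. Your auxiliary observations — that the $M(1+\tau)^{-2}$ integral from Lemma \ref{L:mathfrakH0partialZIhAproductestimate} is absorbed using $M\leq\varepsilon$ and $(1+\tau)^{-2}\leq(1+\tau)^{-1}$, and that the Faraday null-component terms on the third line of \eqref{E:NablaZIh1FundamentalEnergyEstimate} are slack retained for a uniform right-hand side shared with Corollary \ref{C:LieZIFarFundamentalEnergyEstimate} — are both accurate.
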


This completes our analysis of the integral inequalities for the $h_{\mu \nu}^{(1)}$ inhomogeneities. 

\hfill $\qed$

\subsection{Integral inequalities for the $\Lie_{\mathcal{Z}}^I \Far_{\mu \nu}$ inhomogeneities} \label{SS:MainTheoremFarInhomogeneities}

In this section, we analyze the integrals in Proposition \ref{P:weightedenergy} 
corresponding to the inhomogeneous terms $\mathfrak{F}_{(I)}^{\nu}$ in equation \eqref{E:InhomogeneoustermsdMis0proof}. The main goal is to arrive at Corollary \ref{C:LieZIFarFundamentalEnergyEstimate}. 

We begin with the following lemma, which provides pointwise estimates for the wave coordinate-controlled quantities
$|\nabla\nabla_{\mathcal{Z}}^I h^{(1)}|_{\mathcal{L} \mathcal{L}}$ and 
$|\nabla\nabla_{\mathcal{Z}}^J h^{(1)}|_{\mathcal{L} \mathcal{T}}$ for $|I| \leq \dParameter $ and $|J| \leq \dParameter - 1.$ 
These pointwise estimates will be used to help to derive suitable integrated estimates later in this section.

\begin{lemma} \label{L:NablaZIh1LLh1TLMainTheoremWaveCoordianteAlgebraicEstimate}
	\textbf{(Pointwise estimates for $\sum_{|I| \leq k} |\nabla\nabla_{\mathcal{Z}}^I h^{(1)}|_{\mathcal{L} \mathcal{L}}
		\ + \ \sum_{|J| \leq k - 1} |\nabla\nabla_{\mathcal{Z}}^J h^{(1)}|_{\mathcal{L} \mathcal{T}}$)}
	Under the assumptions of Theorem \ref{T:ImprovedDecay}, if $0 \leq k \leq \dParameter $
	and $\varepsilon$ is sufficiently small, then the following pointwise inequality holds for
 	for $(t,x) \in [0,T) \times \mathbb{R}^3:$

	\begin{align}  \label{E:NablaZIh1LLh1TLMainTheoremWaveCoordianteAlgebraicEstimate}
		\sum_{|I| \leq k}|\nabla\nabla_{\mathcal{Z}}^I h^{(1)}|_{\mathcal{L} \mathcal{L}}
		& \ + \ \underbrace{\sum_{|J| \leq k - 1} |\nabla\nabla_{\mathcal{Z}}^J h^{(1)}|_{\mathcal{L} \mathcal{T}}}_{\mbox{Absent 
		if $k = 0$}} 
			 \\
		& \lesssim \sum_{|I| \leq k} |\conenabla \nabla_{\mathcal{Z}}^{I} h^{(1)}| 
			\ + \ \varepsilon (1 + t + |q|)^{-2} \chi_0(1/2 \leq r/t \leq 3/4) \ + \ \varepsilon^2 (1 + t + |q|)^{-3} \notag \\
		& \ \	+ \ \varepsilon \sum_{|I| \leq k} (1 + t + |q|)^{-1 + C \varepsilon}(1 + |q|)^{1/2 + 
		\upmu'}|\nabla\nabla_{\mathcal{Z}}^{I} h^{(1)}| \notag \\ 
		& \ \ + \ \varepsilon \sum_{|I| \leq k} (1 + t + |q|)^{-1 + C \varepsilon}(1 + |q|)^{-1/2 + \upmu'} 
			|\nabla_{\mathcal{Z}}^{I} h^{(1)}| \notag \\
		& \ \ + \ \underbrace{\sum_{|J'| \leq k - 2} |\nabla\nabla_{\mathcal{Z}}^{J'} h^{(1)}|}_{\mbox{absent if $k \leq 1$}}, 	
		\notag 
		\end{align}
	where $\chi_0(1/2 \leq z \leq 3/4)$ is the characteristic function of the interval $[1/2,3/4].$
		
\end{lemma}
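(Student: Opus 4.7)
The plan is to reduce the desired estimate to the more general wave-coordinate inequality of Lemma \ref{L:NablaZIh1LLh1TLwaveCoordinateAlgebraicEstimate} (applied to the tensorfield $h^{(1)}$ rather than $H_{(1)}$, which is one of the two forms stated there). To apply that lemma, we first verify its hypothesis: the pointwise decay estimate \eqref{E:MainTheoremAssumptionStrongLinfinityDecay}, combined with the inequalities $(1+|q|) \leq (1+t+|q|)$ and $-1/2 + \upmu' + C\varepsilon < 0,$ implies that $|\nabla_{\mathcal{Z}}^J h| \leq C\varepsilon$ for every $|J| \leq \lceil \dParameter / 2 \rceil,$ which in particular covers $|J| \leq k/2$ once $\varepsilon$ is small.

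Applying the lemma yields the bound
\begin{align*}
\sum_{|I| \leq k}|\nabla\nabla_{\mathcal{Z}}^I h^{(1)}|_{\mathcal{L}\mathcal{L}}
\ + \ \sum_{|J| \leq k-1} |\nabla\nabla_{\mathcal{Z}}^J h^{(1)}|_{\mathcal{L}\mathcal{T}}
& \lesssim \sum_{|I| \leq k} |\conenabla \nabla_{\mathcal{Z}}^I h^{(1)}|
\ + \ \varepsilon \sum_{|I| \leq k} (1+t+|q|)^{-1} |\nabla\nabla_{\mathcal{Z}}^I h^{(1)}| \\
& \ \ + \ \varepsilon \sum_{|I| \leq k} (1+t+|q|)^{-2}|\nabla_{\mathcal{Z}}^I h^{(1)}|
\ + \ \sum_{|I_1|+|I_2| \leq k}|\nabla_{\mathcal{Z}}^{I_1} h^{(1)}||\nabla\nabla_{\mathcal{Z}}^{I_2} h^{(1)}| \\
& \ \ + \ \sum_{|J'| \leq k-2} |\nabla\nabla_{\mathcal{Z}}^{J'}h^{(1)}|
\ + \ \varepsilon (1+t+|q|)^{-2}\chi_0
\ + \ \varepsilon^2(1+t+|q|)^{-3}.
\end{align*}
The first, fifth, sixth, and seventh terms on the right appear verbatim in the target estimate. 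For the second and third terms, the bounds $(1+t+|q|)^{C\varepsilon}(1+|q|)^{1/2+\upmu'} \geq 1$ and $(1+t+|q|)^{-1-C\varepsilon} \leq (1+|q|)^{-1/2+\upmu'}$ (the latter following from $-1-C\varepsilon \leq -1/2+\upmu'$ together with $(1+t+|q|) \geq (1+|q|)$) show that they are dominated by the two weighted sums in the target.

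The main nontrivial step will be handling the quadratic term, and I will split it into two cases according to which multi-index is smaller. Without loss of generality assume $|I_1| \leq |I_2|,$ so that $|I_1| \leq k/2 \leq \lceil \dParameter /2 \rceil.$ By Remark \ref{R:ImprovedDecay}, the pointwise estimates \eqref{E:MainTheoremAssumptionStrongLinfinityDecay} remain valid for the tensorfield $h^{(1)},$ so
\begin{align*}
|\nabla_{\mathcal{Z}}^{I_1} h^{(1)}|
\leq C\varepsilon (1+t+|q|)^{-1+C\varepsilon}(1+|q|)\cdot
\begin{cases}
(1+|q|)^{-1-C\varepsilon}, & q \geq 0,\\
(1+|q|)^{-1/2+\upmu'}, & q \leq 0,
\end{cases}
\end{align*}
and in either case this is $\leq C\varepsilon (1+t+|q|)^{-1+C\varepsilon}(1+|q|)^{1/2+\upmu'}.$ Multiplying by $|\nabla\nabla_{\mathcal{Z}}^{I_2} h^{(1)}|$ and summing, the contribution from this case is absorbed into the $\varepsilon(1+t+|q|)^{-1+C\varepsilon}(1+|q|)^{1/2+\upmu'}|\nabla\nabla_{\mathcal{Z}}^I h^{(1)}|$ term on the right-hand side of the target with $I = I_2.$ In the symmetric case $|I_2| < |I_1|,$ we instead apply the pointwise bound to $|\nabla\nabla_{\mathcal{Z}}^{I_2} h^{(1)}|$ (which is bounded by $C\varepsilon (1+t+|q|)^{-1+C\varepsilon}(1+|q|)^{-1/2+\upmu'}$ in both regions $q \gtrless 0$), and absorb the product into the $\varepsilon(1+t+|q|)^{-1+C\varepsilon}(1+|q|)^{-1/2+\upmu'}|\nabla_{\mathcal{Z}}^I h^{(1)}|$ term with $I = I_1.$ The only real subtlety lies in verifying that the two regimes $q > 0$ and $q < 0$ produced by the hypotheses \eqref{E:MainTheoremAssumptionStrongLinfinityDecay} can both be absorbed by a single weight with exponents $1/2+\upmu'$ or $-1/2+\upmu',$ and the inequalities noted above handle exactly this issue. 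This completes the reduction and yields \eqref{E:NablaZIh1LLh1TLMainTheoremWaveCoordianteAlgebraicEstimate}.
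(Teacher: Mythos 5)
Your proposal is correct and takes essentially the same route as the paper: apply Lemma \ref{L:NablaZIh1LLh1LTwaveCoordinateAlgebraicEstimate} to the tensorfield $h^{(1)}$, then absorb the resulting linear and quadratic terms into the stated right-hand side using the pointwise decay assumption \eqref{E:MainTheoremAssumptionStrongLinfinityDecay} (together with Remark \ref{R:ImprovedDecay} to transfer it to $h^{(1)}$). The paper states this as a one-line proof; you have simply written out the absorption steps explicitly, including the correct unification of the $q>0$ and $q<0$ regimes into the single weights $(1+|q|)^{\pm 1/2 + \upmu'}$, and these verifications are accurate. (Minor typo: the lemma you cite is labeled \texttt{L:NablaZIh1LLh1LTwaveCoordinateAlgebraicEstimate}, not \texttt{TL}.)
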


\begin{proof}
	Lemma \ref{L:NablaZIh1LLh1TLMainTheoremWaveCoordianteAlgebraicEstimate} follows 
	from Lemma \ref{L:NablaZIh1LLh1LTwaveCoordinateAlgebraicEstimate} (for the tensorfield $h_{\mu \nu}^{(1)}$) 
	and the pointwise decay assumptions \eqref{E:MainTheoremAssumptionStrongLinfinityDecay}
	for $h_{\mu \nu}^{(1)}.$
\end{proof}


\begin{lemma} \label{L:MainTheoremEnergyInhomogeneousTermAlgebraicEstimate}
	\textbf{(Pointwise estimates for $|(\Lie_{\mathcal{Z}}^I \Far_{0 \nu})\Liemod_{\mathcal{Z}}^I \mathfrak{F}_{(I)}^{\nu}|$)}
		Let $\mathfrak{F}_{(I)}^{\nu} = \Liemod_{\mathcal{Z}}^I 
		\mathfrak{F}^{\nu} + \Big\lbrace N^{\# \mu \nu \kappa \lambda}\nabla_{\mu} \Lie_{\mathcal{Z}}^I\Far_{\kappa \lambda} 
			- \Liemod_{\mathcal{Z}}^I \big(N^{\# \mu \nu \kappa \lambda}\nabla_{\mu}\Far_{\kappa \lambda}\big)\Big\rbrace$ be the 
		inhomogeneous term \eqref{E:LiemodZIdifferentiatedEOVInhomogeneousterms} in the equations of variation \eqref{E:EOVdMis0} 
		satisfied by $\dot{\Far} \eqdef \Lie_{\mathcal{Z}}^I \Far.$ Under the assumptions of Theorem \ref{T:ImprovedDecay}, 
		if $0 \leq k \leq \dParameter $ and $\varepsilon$ is sufficiently small, then the following pointwise inequality holds for 
		$(t,x) \in [0,T) \times \mathbb{R}^3:$
	
	\begin{align} \label{E:MainTheoremEnergyInhomogeneousTermAlgebraicEstimate}
		\sum_{|I| \leq k} |(\Lie_{\mathcal{Z}}^I \Far_{0 \nu})\Liemod_{\mathcal{Z}}^I \mathfrak{F}_{(I)}^{\nu} |
			& \lesssim \varepsilon \sum_{|I| \leq k} (1 + t + |q|)^{-1} |\Lie_{\mathcal{Z}}^I \Far|^2
				\ + \ \varepsilon \sum_{|I| \leq k} (1 + t + |q|)^{-1} |\nabla\nabla_{\mathcal{Z}}^I h^{(1)}|^2 \\
			& \ \ + \ \varepsilon \sum_{|I| \leq k} (1 + |q|)^{-1}(1 + q_-)^{-2\upmu} |\conenabla \nabla_{\mathcal{Z}}^I h^{(1)}|^2 
				\notag \\
			& \ \ + \ \varepsilon \sum_{|I| \leq |k|} (1 + |q|)^{-1}(1 + q_-)^{-2\upmu} 
				\big(|\Lie_{\mathcal{Z}}^I  \Far|_{\mathcal{L} \mathcal{N}}^2 + |\Lie_{\mathcal{Z}}^I  \Far|_{\mathcal{T} 
				\mathcal{T}}^2 \big). \notag
		\end{align}
	
\end{lemma}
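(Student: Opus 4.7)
The plan is to apply Proposition \ref{P:EnergyInhomogeneousTermAlgebraicEstimate} with the covector choice $X_{\nu} \eqdef \Lie_{\mathcal{Z}}^I \Far_{0 \nu}$, so that $|X| \lesssim |\Lie_{\mathcal{Z}}^I \Far|$ and, crucially, $|X|_{\mathcal{L}} = |\Lie_{\mathcal{Z}}^I \Far_{0 \kappa} L^{\kappa}| \lesssim |\Lie_{\mathcal{Z}}^I \Far|_{\mathcal{L}\mathcal{N}}$ (since a contraction against both $L$ and the time direction produces a frame contraction in $\mathcal{L}\mathcal{N}$). Summing over $|I| \leq k$, and invoking the two parts of Proposition \ref{P:EnergyInhomogeneousTermAlgebraicEstimate} separately for $\Liemod_{\mathcal{Z}}^I \mathfrak{F}^{\nu}$ and for the commutator $N^{\# \mu \nu \kappa \lambda}\nabla_{\mu}\Lie_{\mathcal{Z}}^I \Far_{\kappa \lambda} - \Liemod_{\mathcal{Z}}^I(N^{\# \mu \nu \kappa \lambda}\nabla_{\mu}\Far_{\kappa \lambda})$, I will reduce the task to controlling a finite collection of quadratic and cubic pointwise products built from $|\nabla_{\mathcal{Z}}^{I_1} h|$, $|\Lie_{\mathcal{Z}}^{I_2} \Far|$, their favorable projections, and the $(1+|q|)^{-1}$, $(1+t+|q|)^{-1}$ weights appearing on the right-hand sides of \eqref{E:LieZIFarNullFormInhomogeneousTermAlgebraicEstimate} and \eqref{E:EnergyInhomogeneousTermAlgebraicEstimate}.

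For each such product $|X| \cdot Y \cdot Z$ arising in the expansion, I will split one of the factors into an $L^{\infty}$-small piece and a piece that squares into one of the quadratic terms on the right-hand side of \eqref{E:MainTheoremEnergyInhomogeneousTermAlgebraicEstimate}, via the arithmetic-geometric mean inequality $|ab| \leq \tfrac{1}{2}a^2 + \tfrac{1}{2}b^2$ applied after redistributing the weights. In the quadratic case, the organizing principle is: whichever derivative/Lie-index multi-index $I_j$ is at most $\lceil \dParameter/2 \rceil$ gets placed into $L^{\infty}$ using the pointwise decay hypotheses \eqref{E:MainTheoremAssumptionStrongLinfinityDecayPrincipalTermCoefficients}--\eqref{E:MainTheoremAssumptionStrongerLinfinityDecayGoodComponents}, producing the $\varepsilon (1 + t + |q|)^{-1}$ or $\varepsilon (1+|q|)^{-1}$ factors, while the other factor is left squared. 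Using Remark \ref{R:ImprovedDecay}, these pointwise bounds are also available for $h^{(1)}$, so in the resulting squared quantities I can freely replace $h$ by $h^{(1)}$ (up to negligible $h^{(0)}$-pieces controlled by the ADM mass contribution).

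The main obstacle will be handling the terms involving $|\nabla_{\mathcal{Z}}^{I_1} h|_{\mathcal{L}\mathcal{L}}$ (with $I_1$ at top order) that appear in the right-hand side of \eqref{E:EnergyInhomogeneousTermAlgebraicEstimate}. For these, naive pointwise bounds give only $\varepsilon(1+t+|q|)^{-1+C\varepsilon}(1+|q|)^{1/2+\upmu'}$, which is insufficient. Instead, I will invoke Lemma \ref{L:NablaZIh1LLh1TLMainTheoremWaveCoordianteAlgebraicEstimate} to trade $|\nabla\nabla_{\mathcal{Z}}^{I_1}h^{(1)}|_{\mathcal{L}\mathcal{L}}$ (and the analogous $\mathcal{L}\mathcal{T}$ quantity one derivative lower) for $|\conenabla \nabla_{\mathcal{Z}}^{I_1} h^{(1)}|$ plus strictly lower-order $L^{\infty}$-small corrections and a harmless $\varepsilon^2(1+t+|q|)^{-3}$ residual; this is precisely why the $|\conenabla \nabla_{\mathcal{Z}}^I h^{(1)}|^2$ term (weighted by $(1+|q|)^{-1}(1+q_{-})^{-2\upmu}$) appears on the right-hand side of \eqref{E:MainTheoremEnergyInhomogeneousTermAlgebraicEstimate}. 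The $(1+|q|)^{-1}$ factor from \eqref{E:EnergyInhomogeneousTermAlgebraicEstimate} combines with the $(1 + q_{-})^{-2\upmu}$ arising from turning a $|q|$-positive weight into the $w'(q)/w(q)$-type weight \eqref{E:weightinequality}; the fact that $\upmu + \upmu' < 1/2$ (see Section \ref{SS:FixedConstants}) is what makes this absorption close. The $|X|_{\mathcal{L}}$-factored terms from \eqref{E:XPFarhNablaFarNullFormEstimate} and \eqref{E:EnergyInhomogeneousTermAlgebraicEstimate}, which are unavoidable when differentiating $\mathscr{P}_{(\Far)}$, are handled by placing $|\nabla_{\mathcal{Z}}^{I_1} h|$ in $L^{\infty}$ (generating the $(1+t+|q|)^{-1+C\varepsilon}$ factor), splitting $|X|_{\mathcal{L}} \cdot |\Lie_{\mathcal{Z}}^{I_2}\Far|$ via AM-GM, and recognizing that $|X|_{\mathcal{L}}^2 \lesssim |\Lie_{\mathcal{Z}}^I \Far|_{\mathcal{L}\mathcal{N}}^2$ contributes the last summand on the right-hand side of \eqref{E:MainTheoremEnergyInhomogeneousTermAlgebraicEstimate}.

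Finally, for the cubic contributions from both \eqref{E:LieZIFarNullFormInhomogeneousTermAlgebraicEstimate} and \eqref{E:EnergyInhomogeneousTermAlgebraicEstimate}, two of the three factors can always be placed in $L^{\infty}$ using the assumed decay (since in each such cubic at most one multi-index exceeds $\lceil \dParameter/2 \rceil$), producing an overall $\varepsilon^2$-small prefactor; one more AM-GM splitting converts the remaining product into a sum of quadratic terms matching the right-hand side, with the extra $\varepsilon$ absorbed into the claimed $\varepsilon$-prefactor. Collecting all contributions and noting that decay estimates for $h$ versus $h^{(1)}$ differ only by $h^{(0)}$-pieces whose squared contribution is dominated by the $\varepsilon^2(1+t+|q|)^{-2}$ bound already subsumed in the asserted right-hand side, I obtain \eqref{E:MainTheoremEnergyInhomogeneousTermAlgebraicEstimate}.
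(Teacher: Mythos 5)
You have been misled by an apparent typo in the lemma statement: the left-hand side of \eqref{E:MainTheoremEnergyInhomogeneousTermAlgebraicEstimate} is written $|(\Lie_{\mathcal{Z}}^I \Far_{0 \nu})\Liemod_{\mathcal{Z}}^I \mathfrak{F}_{(I)}^{\nu}|$, but (as one sees from Lemma \ref{L:MainTheoremFarInhomogeneousTermIntegralEstimate}, whose proof multiplies \eqref{E:MainTheoremEnergyInhomogeneousTermAlgebraicEstimate} by $w(q)$ and integrates to obtain an estimate for $|(\Lie_{\mathcal{Z}}^I \Far_{0 \nu})\Liemod_{\mathcal{Z}}^I \mathfrak{F}^{\nu}|w(q)$) the intended quantity is $|(\Lie_{\mathcal{Z}}^I \Far_{0 \nu})\Liemod_{\mathcal{Z}}^I \mathfrak{F}^{\nu}|$, i.e., only the first piece of $\mathfrak{F}_{(I)}^{\nu}$. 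The commutator piece $N^{\# \mu \nu \kappa \lambda}\nabla_{\mu}\Lie_{\mathcal{Z}}^I\Far_{\kappa \lambda} - \Liemod_{\mathcal{Z}}^I (N^{\# \mu \nu \kappa \lambda}\nabla_{\mu}\Far_{\kappa \lambda})$ is handled in the separate Lemma \ref{L:MainTheoremFarCommutatorAlgebraicEstimate}, whose right-hand side is far richer (seven sums, including lower-order terms with $(1 + t + |q|)^{-1 + C\varepsilon}$ weights). Your strategy for the $\Liemod_{\mathcal{Z}}^I \mathfrak{F}^{\nu}$ piece---apply \eqref{E:LieZIFarNullFormInhomogeneousTermAlgebraicEstimate} with $X_{\nu} = \Lie_{\mathcal{Z}}^I \Far_{0 \nu}$, split $h = h^{(0)} + h^{(1)}$, place the lower-order factor in $L^{\infty}$, and close via $|ab| \lesssim \varsigma a^2 + \varsigma^{-1} b^2$---is precisely the paper's argument.

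The genuine gap is in your treatment of the commutator piece. You propose to replace $|\nabla_{\mathcal{Z}}^{I'} h|_{\mathcal{L}\mathcal{L}}$ (with $|I'| = k$ at top order, as appears in \eqref{E:EnergyInhomogeneousTermAlgebraicEstimate}) by $|\conenabla\nabla_{\mathcal{Z}}^{I'} h^{(1)}|$ using Lemma \ref{L:NablaZIh1LLh1TLMainTheoremWaveCoordianteAlgebraicEstimate}. But that lemma controls $|\nabla\nabla_{\mathcal{Z}}^{I'} h^{(1)}|_{\mathcal{L}\mathcal{L}}$---one more derivative---not $|\nabla_{\mathcal{Z}}^{I'} h^{(1)}|_{\mathcal{L}\mathcal{L}}$, and there is no pointwise inequality that buys back that derivative. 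Converting the undifferentiated $\mathcal{L}\mathcal{L}$ quantity into a differentiated one costs a factor of $(1+|q|)$ and requires a Hardy inequality (Proposition \ref{P:Hardy}), which is an integral-level tool; this is exactly how the paper proceeds in Lemma \ref{L:MainTheoremFarCommutatorIntegralEstimate}, where the fourth and fifth sums from \eqref{E:MainTheoremFarCommutatorAlgebraicEstimate}, containing $|\nabla_{\mathcal{Z}}^I h^{(1)}|_{\mathcal{L}\mathcal{L}}^2$, are first multiplied by $w(q)$, integrated, reduced via Hardy to $|\nabla\nabla_{\mathcal{Z}}^I h^{(1)}|_{\mathcal{L}\mathcal{L}}^2$ against the modified weight $\widetilde{w}(q)$, and only then attacked with the wave-coordinate Lemma \ref{L:NablaZIh1LLh1TLMainTheoremWaveCoordianteAlgebraicEstimate}. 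The purely pointwise bound you aim for---with the commutator piece controlled solely by the four terms on the right-hand side of \eqref{E:MainTheoremEnergyInhomogeneousTermAlgebraicEstimate}---cannot hold: at top order there is no pointwise decay estimate for $|\nabla_{\mathcal{Z}}^{I'} h^{(1)}|_{\mathcal{L}\mathcal{L}}$ sharp enough to absorb it, and the paper's Lemma \ref{L:MainTheoremFarCommutatorAlgebraicEstimate} consequently retains it on the right-hand side as an undifferentiated quantity. Restrict the lemma to $\Liemod_{\mathcal{Z}}^I \mathfrak{F}^{\nu}$ and your argument goes through; the commutator requires the two-step pointwise-then-integral argument.
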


\begin{proof}

We first recall the decomposition \eqref{E:LiemodZIdifferentiatedEOVInhomogeneousterms} of $\mathfrak{F}_{(I)}^{\nu}:$

\begin{align}
	 \mathfrak{F}_{(I)}^{\nu} & = \Liemod_{\mathcal{Z}}^I \mathfrak{F}^{\nu}
			\ + \ \Big\lbrace N^{\# \mu \nu \kappa \lambda}\nabla_{\mu} \Lie_{\mathcal{Z}}^I\Far_{\kappa \lambda}
				- \Liemod_{\mathcal{Z}}^I \big(N^{\# \mu \nu \kappa \lambda}\nabla_{\mu}\Far_{\kappa \lambda}\big)\Big\rbrace,
				&& (\nu = 0,1,2,3). \label{E:LiemodZIdifferentiatedEOVInhomogeneoustermsagain}
\end{align}

Now using \eqref{E:LieZIFarNullFormInhomogeneousTermAlgebraicEstimate} with $X_{\nu} \eqdef \Lie_{\mathcal{Z}}^I \Far_{0 \nu},$
together with the decomposition $h = h^{(0)} + h^{(1)}$ and the $h^{(0)}-$decay estimates of Lemma \ref{L:h0decayestimates}, it follows that

\begin{align} \label{E:MainTheoremEnergyInhomogeneousTermPreliminaryAlgebraicEstimate}	
	\sum_{|I| \leq k} |(\Lie_{\mathcal{Z}}^I \Far_{0 \nu}) \Liemod_{\mathcal{Z}}^I \mathfrak{F}^{\nu}|
	& \lesssim \mathop{\sum_{|I| \leq k}}_{|I_1| + |I_2| \leq |I|} 
		|\Lie_{\mathcal{Z}}^I \Far| |\conenabla \nabla_{\mathcal{Z}}^{I_1} h^{(1)}|
		|\Lie_{\mathcal{Z}}^{I_2}\Far| \\
	& \ \ + \ \mathop{\sum_{|I| \leq k}}_{|I_1| + |I_2| \leq |I|} 
		|\Lie_{\mathcal{Z}}^I \Far| 
		|\nabla\nabla_{\mathcal{Z}}^{I_1}h^{(1)}|
		\big(|\Lie_{\mathcal{Z}}^{I_2}\Far|_{\mathcal{L}\mathcal{N}} + |\Lie_{\mathcal{Z}}^{I_2}\Far|_{\mathcal{T}\mathcal{T}} \big)\notag \\
	& \ \ + \ \mathop{\sum_{|I| \leq k}}_{|I_1| + |I_2| + |I_3| \leq |I|}
		|\Lie_{\mathcal{Z}}^I \Far|
		|\nabla\nabla_{\mathcal{Z}}^{I_1}h^{(1)}| 
		|\Lie_{\mathcal{Z}}^{I_2}\Far| 
		|\Lie_{\mathcal{Z}}^{I_3}\Far| \notag \\
	& \ \ + \ \mathop{\sum_{|I| \leq k}}_{|I_1| + |I_2| + |I_3| \leq |I|} 
		|\Lie_{\mathcal{Z}}^I \Far|
		|\nabla_{\mathcal{Z}}^{I_1}h^{(1)}| 
		|\nabla\nabla_{\mathcal{Z}}^{I_2}h^{(1)}| 
		|\Lie_{\mathcal{Z}}^{I_3}\Far| \notag \\
	& \ \ + \ \varepsilon \sum_{|I| \leq k} (1 + t + |q|)^{-1} |\nabla\nabla_{\mathcal{Z}}^{I_1}h^{(1)}|^2 \notag \\
	& \ \ + \ \varepsilon \sum_{|I| \leq k} (1 + t + |q|)^{-1} (1 + |q|)^{-2} |\nabla_{\mathcal{Z}}^{I_1}h^{(1)}|^2 \notag \\
	& \ \ + \ \varepsilon \sum_{|I| \leq k} (1 + t + |q|)^{-1} |\Lie_{\mathcal{Z}}^I \Far|^2. \notag
\end{align}

Inequality \eqref{E:MainTheoremEnergyInhomogeneousTermAlgebraicEstimate} now follows from 
the assumptions of Theorem \ref{T:ImprovedDecay}, \eqref{E:MainTheoremEnergyInhomogeneousTermPreliminaryAlgebraicEstimate}, 
and repeated application of algebraic inequalities of the form $|ab| \lesssim \varsigma a^2 + \varsigma^{-1} b^2.$ As an example, 
we consider the term $|\Lie_{\mathcal{Z}}^I \Far| |\nabla\nabla_{\mathcal{Z}}^{I_1}h^{(1)}|
|\Lie_{\mathcal{Z}}^{I_2}\Far|_{\mathcal{L}\mathcal{N}}$ in the case that $|I_1| \leq |I| \leq \lceil \dParameter/2 \rceil$ 
(such an inequality must be satisfied by either $|I_1|$ or $|I_2|$).
Then with the help of \eqref{E:MainTheoremAssumptionStrongLinfinityDecay} 
and the fact that $\upmu + \upmu' < 1/2,$ it follows that if $\varepsilon$ is sufficiently small, then

\begin{align}
	|\Lie_{\mathcal{Z}}^I \Far| |\nabla\nabla_{\mathcal{Z}}^{I_1}h^{(1)}|
	|\Lie_{\mathcal{Z}}^{I_2} \Far|_{\mathcal{L}\mathcal{N}}
	& \lesssim \varepsilon (1 + t + |q|)^{-1} |\Lie_{\mathcal{Z}}^I \Far|^2 
		\ + \ \varepsilon^{-1} (1 + t + |q|) |\nabla\nabla_{\mathcal{Z}}^{I_1}h^{(1)}|^2
		|\Lie_{\mathcal{Z}}^{I_2}\Far|_{\mathcal{L}\mathcal{N}}^2 \\
	& \lesssim \varepsilon(1 + t + |q|)^{-1} |\Lie_{\mathcal{Z}}^I \Far|^2  
		\ + \ \varepsilon(1 + |q|)^{-1} (1 + q_-)^{-2\upmu} |\Lie_{\mathcal{Z}}^{I_2}\Far|_{\mathcal{L}\mathcal{N}}^2. \notag 
\end{align}
We now observe that the right-hand side of the above inequality is manifestly bounded by the right-hand side of \eqref{E:MainTheoremEnergyInhomogeneousTermAlgebraicEstimate}.

\end{proof}

\begin{lemma} \label{L:MainTheoremFarInhomogeneousTermIntegralEstimate}
	\textbf{(Integral estimates for $\big|(\Lie_{\mathcal{Z}}^I \Far_{0 \nu})\Liemod_{\mathcal{Z}}^I \mathfrak{F}^{\nu} \big| 
	w(q)$)} 
	Under the assumptions of Lemma \ref{L:MainTheoremEnergyInhomogeneousTermAlgebraicEstimate},
	if $0 \leq k \leq \dParameter $ and
	$\varepsilon$ is sufficiently small, then the following integral inequality holds for
	$t \in [0,T):$

	\begin{align} \label{E:MainTheoremFarInhomogeneousTermIntegralEstimate}
		\sum_{|I| \leq k} \int_0^t \int_{\Sigma_{\tau}}
		\big|(\Lie_{\mathcal{Z}}^I \Far_{0 \nu})\Liemod_{\mathcal{Z}}^I \mathfrak{F}^{\nu} \big| w(q) \, d^3x \, d \tau 
		& \lesssim \varepsilon \sum_{|I| \leq k} 
			\int_0^t \int_{\Sigma_{\tau}} (1 + \tau)^{-1} |\Lie_{\mathcal{Z}}^I \Far|^2 w(q) \, d^3x \, d \tau \\
		& \ \ + \ \varepsilon \sum_{|I| \leq k} \int_0^t \int_{\Sigma_{\tau}} (1 + \tau)^{-1} 
			|\nabla\nabla_{\mathcal{Z}}^I h^{(1)}|^2 w(q) \, d^3x \, d \tau \notag \\
		& \ \ + \ \varepsilon \sum_{|I| \leq k} \int_0^t \int_{\Sigma_{\tau}} 
			|\conenabla \nabla_{\mathcal{Z}}^I h^{(1)}|^2 w'(q) \, d^3x \, d \tau \notag \\
		& \ \ + \ \varepsilon \sum_{|I| \leq k} \int_0^t \int_{\Sigma_{\tau}} 
			\big(|\Lie_{\mathcal{Z}}^I \Far|_{\mathcal{L} \mathcal{N}}^2 
			+ |\Lie_{\mathcal{Z}}^I \Far|_{\mathcal{T} \mathcal{T}}^2  \big) w'(q) \, d^3x \, d \tau.   \notag
	\end{align}

\end{lemma}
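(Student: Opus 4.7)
The plan is to reduce \eqref{E:MainTheoremFarInhomogeneousTermIntegralEstimate} to the pointwise estimate \eqref{E:MainTheoremEnergyInhomogeneousTermAlgebraicEstimate} established in Lemma \ref{L:MainTheoremEnergyInhomogeneousTermAlgebraicEstimate} (whose proof actually controls precisely $|(\Lie_{\mathcal{Z}}^I \Far_{0\nu})\Liemod_{\mathcal{Z}}^I \mathfrak{F}^\nu|$ rather than the full inhomogeneity $\mathfrak{F}_{(I)}^\nu$). After multiplying that pointwise bound by the weight $w(q)$ and integrating over the spacetime slab $[0,t] \times \Sigma_\tau$, the task is simply to recognize each resulting term on the right-hand side of \eqref{E:MainTheoremFarInhomogeneousTermIntegralEstimate}.

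First, I would handle the two $(1+\tau+|q|)^{-1}$ terms on the right-hand side of \eqref{E:MainTheoremEnergyInhomogeneousTermAlgebraicEstimate}, namely those containing $|\Lie_{\mathcal{Z}}^I \Far|^2$ and $|\nabla\nabla_{\mathcal{Z}}^I h^{(1)}|^2$. Using the trivial bound $(1+\tau+|q|)^{-1} \leq (1+\tau)^{-1}$ and multiplying by $w(q)$, these manifestly produce the first two integrals on the right-hand side of \eqref{E:MainTheoremFarInhomogeneousTermIntegralEstimate}. The remaining two pointwise terms, which carry the factor $(1+|q|)^{-1}(1+q_-)^{-2\upmu}$, are the ones exploiting the additional positive spacetime quantity coming from the weight. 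Here I would invoke the second inequality in \eqref{E:weightinequality}, which rearranges to
\begin{align*}
(1+|q|)^{-1}(1+q_-)^{-2\upmu}\, w(q) \;\lesssim\; w'(q).
\end{align*}
Applying this to the third and fourth terms converts the $w(q)$-weighted integrals of $|\conenabla \nabla_{\mathcal{Z}}^I h^{(1)}|^2$ and $|\Lie_{\mathcal{Z}}^I \Far|_{\mathcal{L}\mathcal{N}}^2 + |\Lie_{\mathcal{Z}}^I \Far|_{\mathcal{T}\mathcal{T}}^2$ into $w'(q)$-weighted integrals, exactly matching the third and fourth integrals on the right-hand side of \eqref{E:MainTheoremFarInhomogeneousTermIntegralEstimate}. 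Summing over $|I| \leq k$ completes the reduction.

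There is no substantial obstacle here: all the delicate algebraic and null-structural work has already been done in proving the pointwise estimate \eqref{E:MainTheoremEnergyInhomogeneousTermAlgebraicEstimate}, where the wave coordinate condition and the favorable decomposition of $\mathscr{Q}_{(2;\Far)}^{\nu}$ were used to ensure that dangerous components such as $\ualpha$ or $h_{\uL\uL}$ never appear in combinations that would resist absorption. The only minor care needed in the present lemma is the correct use of the weight identity \eqref{E:weightinequality}, specifically to trade a $(1+|q|)^{-1}(1+q_-)^{-2\upmu}\,w(q)$ factor for $w'(q)$. The smallness assumption on $\varepsilon$ plays no quantitative role at this step beyond what was already required in the pointwise bound; no Gronwall or Hardy-type maneuver is needed, unlike in the more involved companion estimates such as Lemma \ref{L:NablaZIBoxCommutatorIntegrated}.
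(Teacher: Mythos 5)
Your proposal is correct and matches the paper's proof essentially verbatim: one multiplies the pointwise bound \eqref{E:MainTheoremEnergyInhomogeneousTermAlgebraicEstimate} by $w(q)$, integrates over the spacetime slab, and converts the $(1+|q|)^{-1}(1+q_-)^{-2\upmu}w(q)$ factors to $w'(q)$ via the second inequality in \eqref{E:weightinequality}. The observation that $(1+\tau+|q|)^{-1} \leq (1+\tau)^{-1}$ handles the remaining terms, as you note.
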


\begin{proof}
	Inequality \eqref{E:MainTheoremFarInhomogeneousTermIntegralEstimate}
	follows from multiplying inequality \eqref{E:MainTheoremEnergyInhomogeneousTermAlgebraicEstimate} by $w(q),$
	integrating, and using the fact that \\
	$(1 + |q|)^{-1}(1 + q_-)^{-2\upmu} w(q) \lesssim w'(q).$
\end{proof}

\begin{lemma} \label{L:MainTheoremFarCommutatorAlgebraicEstimate}
	\textbf{(Pointwise estimates for $\Big|(\Lie_{\mathcal{Z}}^I \Far_{0 \nu}) 
		\Big\lbrace N^{\# \mu \nu \kappa \lambda} \nabla_{\mu} \Lie_{\mathcal{Z}}^I \Far_{\kappa \lambda}
		- \Liemod_{\mathcal{Z}}^I \big(N^{\# \mu \nu \kappa \lambda}\nabla_{\mu}\Far_{\kappa \lambda}\big) \Big\rbrace \Big|$)}
	
	Let $N^{\# \mu \nu \kappa \lambda} \nabla_{\mu} \Lie_{\mathcal{Z}}^I \Far_{\kappa \lambda}
	- \Liemod_{\mathcal{Z}}^I \big(N^{\# \mu \nu \kappa \lambda}\nabla_{\mu}\Far_{\kappa \lambda}\big)$ be the inhomogeneous
	commutator term \eqref{E:LiemodZINnablaFarCommutatorTerms} in the equations of 
	variation \eqref{E:EOVdMis0} satisfied by $\dot{\Far}_{\mu \nu} \eqdef \Lie_{\mathcal{Z}} \Far_{\mu \nu}.$ Under the 
	assumptions of Theorem \ref{T:ImprovedDecay}, if $1 \leq k \leq \dParameter $ and $\varepsilon$ is sufficiently small, then the 
	following pointwise inequality holds for $(t,x) \in [0,T) \times \mathbb{R}^3:$
	
	\begin{align} \label{E:MainTheoremFarCommutatorAlgebraicEstimate}
		\sum_{|I| \leq k} \Big|(\Lie_{\mathcal{Z}}^I \Far_{0 \nu}) 
		& \Big\lbrace N^{\# \mu \nu \kappa \lambda} \nabla_{\mu} \Lie_{\mathcal{Z}}^I \Far_{\kappa \lambda}
			- \Liemod_{\mathcal{Z}}^I \big(N^{\# \mu \nu \kappa \lambda}\nabla_{\mu}\Far_{\kappa \lambda}\big) \Big\rbrace \Big|	\\
		& \lesssim \varepsilon (1 + t + |q|)^{-1} \sum_{|I| \leq |k|}|\Lie_{\mathcal{Z}}^I \Far|^2 \notag \\
		& \ \ + \ \varepsilon (1 + t + |q|)^{-1}(1 + |q|)^{-2} \sum_{|I| \leq |k|} |\nabla_{\mathcal{Z}}^I h^{(1)}|^2 \notag \\
		& \ \ + \ \varepsilon (1 + |q|)^{-1}(1+ q_-)^{-2 \mu} 
			\sum_{|I| \leq |k|} \big( |\Lie_{\mathcal{Z}}^I \Far|_{\mathcal{L}\mathcal{N}}^2
  		+ |\Lie_{\mathcal{Z}}^I \Far|_{\mathcal{T}\mathcal{T}}^2 \big) \notag \\
  	& \ \ + \ \varepsilon (1 + t + |q|)^{-1 + C \varepsilon} (1 + |q|)^{-(2 + C \varepsilon)} (1+ q_-)^{-2 \upmu}
  		\sum_{|I| \leq k} |\nabla_{\mathcal{Z}}^I h^{(1)}|_{\mathcal{L}\mathcal{L}}^2  \notag \\
  	& \ \ + \ \varepsilon (1 + t + |q|)^{-1 + C \varepsilon} (1 + |q|)^{-(2 + C \varepsilon)} (1+ q_-)^{-2 \upmu}
  		\sum_{|J| \leq k - 1}  
  		|\nabla_{\mathcal{Z}}^J h^{(1)}|_{\mathcal{L}\mathcal{T}}^2  \notag \\
  	& \ \ + \ \varepsilon (1 + t + |q|)^{-1 + C \varepsilon} (1 + |q|)^{-2}
  		\underbrace{\sum_{|J'| \leq k - 2}	|\nabla_{\mathcal{Z}}^{J'} h^{(1)}|^2}_{\mbox{absent if $k = 1$}} \notag \\
  	& \ \	+ \ \varepsilon (1 + t + |q|)^{-1 + C \varepsilon} \sum_{|J| \leq k - 1} |\Lie_{\mathcal{Z}}^J \Far|^2. \notag
	\end{align}
	
\end{lemma}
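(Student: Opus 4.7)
The plan is to apply inequality \eqref{E:EnergyInhomogeneousTermAlgebraicEstimate} of Proposition \ref{P:EnergyInhomogeneousTermAlgebraicEstimate} with the specific choice $X_{\nu} \eqdef \Lie_{\mathcal{Z}}^I \Far_{0 \nu}$, which satisfies $|X| \lesssim |\Lie_{\mathcal{Z}}^I \Far|$ and, crucially, $|X|_{\mathcal{L}} \lesssim |\Lie_{\mathcal{Z}}^I \Far|_{\mathcal{L}\mathcal{N}}$ (since contracting against $L$ together with $\partial_0$ produces one good null-frame index). After this substitution, every term on the right-hand side of \eqref{E:EnergyInhomogeneousTermAlgebraicEstimate} takes the form $|\Lie_{\mathcal{Z}}^I \Far| \times \mathscr{A}$ or $|\Lie_{\mathcal{Z}}^I \Far|_{\mathcal{L}\mathcal{N}} \times \mathscr{A}$, where $\mathscr{A}$ is a quadratic (or higher) expression in $h$, $\Far$, and their derivatives.

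The main work is then to dispose of each such term through the arithmetic-geometric mean inequality $|ab| \lesssim \varsigma a^2 + \varsigma^{-1} b^2$, reserving one factor of $|\Lie_{\mathcal{Z}}^I \Far|^2$ (or $|\Lie_{\mathcal{Z}}^I \Far|_{\mathcal{L}\mathcal{N}}^2$ when the contraction is favorable) on the first line (or third line) of the target inequality, and balancing with suitable powers of $(1+t+|q|)$ and $(1+|q|)$ from the assumed pointwise decay estimates \eqref{E:MainTheoremAssumptionStrongLinfinityDecayPrincipalTermCoefficients}--\eqref{E:MainTheoremAssumptionStrongerLinfinityDecayGoodComponents}. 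As in Lemma \ref{L:MainTheoremEnergyInhomogeneousTermAlgebraicEstimate}, for each product I split the multi-index sum into cases depending on which factor has multi-index $\leq \lceil \dParameter / 2 \rceil$: that factor is bounded pointwise via the hypotheses, while the remaining factor is left squared. The splitting $h = h^{(0)} + h^{(1)}$ combined with the decay \eqref{E:nablaIh0Linfinity} of Lemma \ref{L:h0decayestimates} lets me reduce all appearances of $h$ to $h^{(1)}$ modulo harmless Schwarzschild contributions that can be absorbed into the $\varepsilon(1+t+|q|)^{-1}$-weighted terms.

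The only delicate feature is the appearance of the $\mathcal{L}\mathcal{L}$-contracted and $\mathcal{L}\mathcal{T}$-contracted seminorms of $\nabla_{\mathcal{Z}}^I h$ on the right-hand side of \eqref{E:EnergyInhomogeneousTermAlgebraicEstimate}, which must be preserved (rather than estimated crudely) in order to obtain the specific structure of the target inequality's fourth, fifth, and sixth lines. Concretely, whenever the good-contraction quantity $|\nabla_{\mathcal{Z}}^I h|_{\mathcal{L}\mathcal{L}}$ or $|\nabla_{\mathcal{Z}}^J h|_{\mathcal{L}\mathcal{T}}$ appears as one of the two factors in a product with $|\Lie_{\mathcal{Z}}^{I'} \Far|$ (with the index ranges dictated by \eqref{E:EnergyInhomogeneousTermAlgebraicEstimate}), I apply the pointwise bound \eqref{E:MainTheoremAssumptionStrongLinfinityDecay} to the Faraday factor (using that the index on the Faraday factor is $\leq 1$, or more generally $\leq \lceil \dParameter / 2 \rceil$ in the mixed sums), yielding an overall factor of $\varepsilon (1 + t + |q|)^{-1 + C\varepsilon}(1+|q|)^{-1/2 + \upmu'}$ that, after squaring and absorbing half into the $(1+|q|)^{-(2+C\varepsilon)}(1+q_{-})^{-2\upmu}$ weight via $\upmu + \upmu' < 1/2$, produces the exact weights on the fourth and fifth lines of the target bound. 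The terms of order $\leq k-2$ inherit the weaker $(1+|q|)^{-2}$ weight directly and match the sixth line.

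The hardest step will be a careful bookkeeping of the multi-index partitions in \eqref{E:EnergyInhomogeneousTermAlgebraicEstimate}, since several of the sums mix $h$-derivatives and $\Far$-derivatives with index constraints of the form $|I_1| + |I_2| \leq k + 1$ with $|I_1|, |I_2| \leq k$. In each such partition I need to verify that the smaller of the two indices is $\leq \lceil \dParameter/2 \rceil$, so that the pointwise assumptions of Theorem \ref{T:ImprovedDecay} are legitimately applicable to that factor; this is automatic because $k \leq \dParameter$ forces $\min(|I_1|,|I_2|) \leq (k+1)/2 \leq \lceil \dParameter /2 \rceil$ after possibly relabeling. The cubic error terms on the last line of \eqref{E:EnergyInhomogeneousTermAlgebraicEstimate} are even easier: two of the three factors in each cubic product can be absorbed into the smallness parameter $\varepsilon$ via the $L^\infty$ decay estimates, leaving a quadratic expression that fits one of the existing lines of the target inequality, so no new type of term is generated.
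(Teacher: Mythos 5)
The proposal is correct and follows essentially the same route as the paper: one applies Proposition~\ref{P:EnergyInhomogeneousTermAlgebraicEstimate} with $X_{\nu} \eqdef \Lie_{\mathcal{Z}}^I \Far_{0 \nu}$, splits $h = h^{(0)} + h^{(1)}$ using Lemma~\ref{L:h0decayestimates}, and closes by combining the pointwise decay assumptions of Theorem~\ref{T:ImprovedDecay} with arithmetic-geometric mean inequalities, as the paper does when passing from \eqref{E:MainTheoremEnergyInhomogeneousTermAlgebraicEstimateMultiplied} to \eqref{E:MainTheoremFarCommutatorAlgebraicEstimate}. Your explicit observation that $|X|_{\mathcal{L}} \approx |\rho[\Lie_{\mathcal{Z}}^I \Far]| \lesssim |\Lie_{\mathcal{Z}}^I \Far|_{\mathcal{L}\mathcal{N}}$ is a point the paper's exposition leaves implicit, but it is consistent with and needed by the argument.
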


\begin{proof}
	Using inequality \eqref{E:EnergyInhomogeneousTermAlgebraicEstimate} with $X_{\nu} \eqdef \Lie_{\mathcal{Z}}^I \Far_{0 \nu},$
	the pointwise decay assumptions of Theorem \ref{T:ImprovedDecay}, together with the decomposition $h = h^{(0)} + h^{(1)}$ and 
	the $h^{(0)}$ decay estimates of Lemma \ref{L:h0decayestimates}, it follows that
	
	\begin{align} \label{E:MainTheoremEnergyInhomogeneousTermAlgebraicEstimateMultiplied}
		\sum_{|I| \leq k} 
		\Big|\Lie_{\mathcal{Z}}^I \Far_{0 \nu} & \Big\lbrace N^{\# \mu \nu \kappa 
			\lambda}\nabla_{\mu} \Lie_{\mathcal{Z}}^I\Far_{\kappa \lambda}
			- \Liemod_{\mathcal{Z}}^I \big(N^{\# \mu \nu \kappa \lambda}\nabla_{\mu}\Far_{\kappa \lambda}\big) \Big\rbrace \Big|	\\
		& \lesssim (1 + |q|)^{-1} \mathop{\sum_{|I| \leq k, |I'| \leq k}}_{|J| \leq 1} 
				|\Lie_{\mathcal{Z}}^I \Far| |\nabla_{\mathcal{Z}}^{I'} h^{(1)}|_{\mathcal{L}\mathcal{L}} |\Lie_{\mathcal{Z}}^J \Far| 
			\ \ + \ (1 + |q|)^{-1} \mathop{\sum_{|I| \leq k, |I'| \leq k}}_{|J| \leq 1} 
			|\Lie_{\mathcal{Z}}^I \Far| |\nabla_{\mathcal{Z}}^J h^{(1)}|_{\mathcal{L}\mathcal{L}} |\Lie_{\mathcal{Z}}^{I'} \Far| 
			\notag \\
		& \ \ + \ (1 + |q|)^{-1} \sum_{|I| \leq k} |\Lie_{\mathcal{Z}}^I \Far|^2 |h|_{\mathcal{L}\mathcal{T}} 
			\ + \ (1 + |q|)^{-1} \mathop{\mathop{\sum_{|I|\leq k}}_{|I_1| + |I_2| \leq k + 1}}_{|I_1|, |I_2| \leq k}
			|\Lie_{\mathcal{Z}}^I \Far| |\nabla_{\mathcal{Z}}^{I_1} h^{(1)}| 
			\big(|\Lie_{\mathcal{Z}}^{I_2} \Far|_{\mathcal{L} \mathcal{N}} + |\Lie_{\mathcal{Z}}^{I_2} \Far|_{\mathcal{T} 	
			\mathcal{T}} \big) \notag \\
		& \ \ + \ \varepsilon(1 + t + |q|)^{-1} \sum_{|I|\leq k} |\Lie_{\mathcal{Z}}^I \Far|^2 
			\ + \ \varepsilon(1 + t + |q|)^{-1} (1 + |q|)^{-2} \sum_{|I|\leq k} |\nabla_{\mathcal{Z}}^I h^{(1)}|^2 \notag \\ 
		& \ \ + \ (1 + t + |q|)^{-1} \mathop{\mathop{\sum_{|I|\leq k}}_{|I_1| + |I_2| \leq k + 1}}_{|I_1|, |I_2| \leq k}
			|\Lie_{\mathcal{Z}}^I \Far| |\nabla_{\mathcal{Z}}^{I_1} h^{(1)}| |\Lie_{\mathcal{Z}}^{I_2} \Far| 
			\ + \ (1 + |q|)^{-1} \mathop{\mathop{\sum_{|I|\leq k}}_{|I_1| + |I_2| \leq k + 1}}_{|I_1| \leq k-1, |I_2| \leq k - 1}
		 		|\Lie_{\mathcal{Z}}^I \Far||\nabla_{\mathcal{Z}}^{I_1} h^{(1)}|_{\mathcal{L}\mathcal{L}} 
		 		|\Lie_{\mathcal{Z}}^{I_2} \Far| \notag \\
		& \ \ + \ (1 + |q|)^{-1} \mathop{\mathop{\sum_{|I|\leq k}}_{|I_1| + |I_2| \leq k}}_{|I_1| \leq k - 1, |I_2| \leq k - 1}
		 	|\Lie_{\mathcal{Z}}^I \Far||\nabla_{\mathcal{Z}}^{I_1} h^{(1)}|_{\mathcal{L}\mathcal{T}} |\Lie_{\mathcal{Z}}^{I_2} \Far| 
		 \ \ + \ (1 + |q|)^{-1} \underbrace{\mathop{\mathop{\sum_{|I|\leq k}}_{|I_1| + |I_2| \leq k - 1}}_{|I_1| \leq k - 2, |I_2| 
			\leq k - 1} |\Lie_{\mathcal{Z}}^I \Far||\nabla_{\mathcal{Z}}^{I_1} h^{(1)}| 
			|\Lie_{\mathcal{Z}}^{I_2} \Far|}_{\mbox{absent if $k = 1$}} \notag \\
		& \ \ + \ (1 + |q|)^{-1} \mathop{\mathop{\sum_{|I|\leq k}}_{|I_1| + |I_2| + |I_3| \leq k + 1}}_{|I_1|, |I_2|, |I_3| \leq k}
		 	|\Lie_{\mathcal{Z}}^I \Far| |\nabla_{\mathcal{Z}}^{I_1} h^{(1)}| |\nabla_{\mathcal{Z}}^{I_2} h^{(1)}| 
		 	|\Lie_{\mathcal{Z}}^{I_3}\Far| \notag \\
		 & \ \ + \ (1 + |q|)^{-1} \mathop{\mathop{\sum_{|I|\leq k}}_{|I_1| + |I_2| + |I_3| \leq k + 1}}_{|I_1|, |I_2|, |I_3| \leq k}
		 	|\Lie_{\mathcal{Z}}^I \Far| |\nabla_{\mathcal{Z}}^{I_1} h^{(1)}| |\Lie_{\mathcal{Z}}^{I_2} \Far| 
		 		|\Lie_{\mathcal{Z}}^{I_3}\Far| 
		 	\	+ \ (1 + |q|)^{-1} \mathop{\mathop{\sum_{|I|\leq k}}_{|I_1| + |I_2| + |I_3| \leq k + 1}}_{|I_1|, |I_2|, |I_3| \leq k}
				|\Lie_{\mathcal{Z}}^I \Far| |\Lie_{\mathcal{Z}}^{I_1} \Far| |\Lie_{\mathcal{Z}}^{I_2} \Far| 
				|\Lie_{\mathcal{Z}}^{I_3}\Far|. \notag
	\end{align}
	We remark that the $\varepsilon (1 + t + |q|)^{-1} \sum_{|I|\leq k} |\Lie_{\mathcal{Z}}^I \Far|^2$ and 
	$\varepsilon (1 + t + |q|)^{-1} (1 + |q|)^{-2} \sum_{|I|\leq k} |\nabla_{\mathcal{Z}}^I h^{(1)}|^2$ sums on the right-hand 
	side of \eqref{E:MainTheoremEnergyInhomogeneousTermAlgebraicEstimateMultiplied} account for all of the terms containing a 
	factor $\nabla_{\mathcal{Z}}^J h^{(0)}$ for some $J.$
	Inequality \eqref{E:MainTheoremFarCommutatorAlgebraicEstimate} now follows from 
	\eqref{E:MainTheoremEnergyInhomogeneousTermAlgebraicEstimateMultiplied}, the pointwise decay assumptions of Theorem 
	\ref{T:ImprovedDecay} (including the implied estimates for $h^{(1)}$), and simple algebraic estimates of the form $|ab| \lesssim \varsigma a^2 + 
	\varsigma^{-1} b^2$ (as in the proof of \eqref{E:MainTheoremEnergyInhomogeneousTermPreliminaryAlgebraicEstimate}).
	
\end{proof}

\begin{lemma} \label{L:MainTheoremFarCommutatorIntegralEstimate}
	\textbf{(Integral estimates for $\Big|(\Lie_{\mathcal{Z}}^I \Far_{0 \nu}) 
		\Big\lbrace N^{\# \mu \nu \kappa \lambda} \nabla_{\mu} \Lie_{\mathcal{Z}}^I \Far_{\kappa \lambda}
		- \Liemod_{\mathcal{Z}}^I \big(N^{\# \mu \nu \kappa \lambda}\nabla_{\mu}\Far_{\kappa \lambda}\big) \Big\rbrace \Big|$)}
	Under the assumptions of Lemma \ref{L:MainTheoremEnergyInhomogeneousTermAlgebraicEstimate}, if 
	$1 \leq k \leq \dParameter $ and $\varepsilon$ is sufficiently small,
	then the following integral inequality holds for $t \in [0,T):$ 
	
	\begin{align} \label{E:MainTheoremFarCommutatorIntegralEstimate}
		\sum_{|I| \leq k} \int_0^t \int_{\Sigma_{\tau}} \Big|(\Lie_{\mathcal{Z}}^I \Far_{0 \nu}) 
		& \Big\lbrace N^{\# \mu \nu \kappa \lambda}\nabla_{\mu} \Lie_{\mathcal{Z}}^I\Far_{\kappa \lambda}
			- \Liemod_{\mathcal{Z}}^I \big(N^{\# \mu \nu \kappa \lambda}\nabla_{\mu}\Far_{\kappa \lambda}\big) \Big\rbrace \Big|
			w(q) \, d^3x \, d \tau \\
		& \lesssim \varepsilon \sum_{|I| \leq k} \int_0^t \int_{\Sigma_{\tau}} 
				(1 + \tau)^{-1} |\Lie_{\mathcal{Z}}^I \Far|^2 w(q) \, d^3x \, d \tau 
			\ + \ \varepsilon \sum_{|I| \leq k} \int_0^t \int_{\Sigma_{\tau}} 
				(1 + \tau)^{-1} |\nabla\nabla_{\mathcal{Z}}^I h^{(1)}|^2 w(q) \, d^3x \, d \tau
				\notag \\
		& \ \ + \ \varepsilon \sum_{|I| \leq k} \int_0^t \int_{\Sigma_{\tau}} 
			\Big\lbrace |\conenabla \nabla_{\mathcal{Z}}^I h^{(1)}|^2
				+ |\Lie_{\mathcal{Z}}^I \Far|_{\mathcal{L}\mathcal{N}}^2
  			+ |\Lie_{\mathcal{Z}}^I \Far|_{\mathcal{T}\mathcal{T}}^2 \Big\rbrace w'(q) \, d^3x \, d \tau \notag \\
  	& \ \ + \ \underbrace{\varepsilon \sum_{|J'| \leq k - 2} \int_0^t \int_{\Sigma_{\tau}} 
				(1 + \tau + |q|)^{-1 + C \varepsilon} |\nabla\nabla_{\mathcal{Z}}^{J'} h^{(1)}|^2 w(q) d^3x \, d \tau}_{\mbox{absent if $k = 1$}} 
				 \notag \\
		& \ \ + \ \varepsilon \sum_{|J| \leq k - 1} \int_0^t \int_{\Sigma_{\tau}} 
			(1 + \tau + |q|)^{-1 + C \varepsilon} |\Lie_{\mathcal{Z}}^J \Far|^2 w(q) d^3x \, d \tau 
			\ + \ \varepsilon^2. \notag
	\end{align}
\end{lemma}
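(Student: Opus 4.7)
The plan is to start from the pointwise inequality \eqref{E:MainTheoremFarCommutatorAlgebraicEstimate} of Lemma \ref{L:MainTheoremFarCommutatorAlgebraicEstimate}, multiply both sides by $w(q)$, integrate over $[0,t]\times\Sigma_\tau$, and then treat each of the seven terms on the right-hand side separately. The routine bookkeeping is then driven by two mechanisms: the weight identity $(1+|q|)^{-1}(1+q_-)^{-2\upmu}w(q)\lesssim w'(q)$ from \eqref{E:weightinequality}, which converts spatial ``bad-$|q|$'' weights into the favorable integrated quantity $w'(q)$ available on the left-hand side of the desired estimate, and the weighted Hardy inequalities of Proposition \ref{P:Hardy}, which allow us to trade a factor of $(1+|q|)^{-2}$ against an undifferentiated $|\nabla_{\mathcal{Z}}^{J}h^{(1)}|^2$ for the weighted $L^2$ norm of $|\nabla\nabla_{\mathcal{Z}}^{J}h^{(1)}|^2$ that the energy controls.

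More concretely, the first three right-hand side terms of \eqref{E:MainTheoremFarCommutatorAlgebraicEstimate} are immediate: the $(1+t+|q|)^{-1}|\Lie_{\mathcal{Z}}^I\Far|^2$ term contributes directly to the first sum on the right-hand side of \eqref{E:MainTheoremFarCommutatorIntegralEstimate}; the $(1+t+|q|)^{-1}(1+|q|)^{-2}|\nabla_{\mathcal{Z}}^I h^{(1)}|^2$ term is converted via Proposition \ref{P:Hardy} into $(1+\tau)^{-1}|\nabla\nabla_{\mathcal{Z}}^I h^{(1)}|^2w(q)$; and the $(1+|q|)^{-1}(1+q_-)^{-2\upmu}$ factor on the $|\Lie_{\mathcal{Z}}^I\Far|_{\mathcal{L}\mathcal{N}}^2+|\Lie_{\mathcal{Z}}^I\Far|_{\mathcal{T}\mathcal{T}}^2$ term feeds directly into the $w'(q)$ spacetime integral via \eqref{E:weightinequality}. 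The last two lines of \eqref{E:MainTheoremFarCommutatorAlgebraicEstimate}, involving $|\nabla_{\mathcal{Z}}^{J'}h^{(1)}|^2$ and $|\Lie_{\mathcal{Z}}^J\Far|^2$ for $|J'|\leq k-2$ and $|J|\leq k-1$, are handled similarly: the $|q|$-weights together with \eqref{E:weightinequality} and another application of Proposition \ref{P:Hardy} (on the $h^{(1)}$ term) reproduce the ``$(1+\tau+|q|)^{-1+C\varepsilon}w(q)$'' pieces on the right-hand side of \eqref{E:MainTheoremFarCommutatorIntegralEstimate}, while the Schwarzschild-tail pieces integrate to give the $\varepsilon^2$ constant.

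The main obstacle, and the part where the argument is not mechanical, will be the fourth and fifth lines of \eqref{E:MainTheoremFarCommutatorAlgebraicEstimate}: the contributions from $|\nabla_{\mathcal{Z}}^I h^{(1)}|_{\mathcal{L}\mathcal{L}}^2$ with $|I|\leq k$ and from $|\nabla_{\mathcal{Z}}^J h^{(1)}|_{\mathcal{L}\mathcal{T}}^2$ with $|J|\leq k-1$. These components are not a priori bounded by the energy at the top order, and the integrability in $|q|$ is borderline. Here the wave coordinate condition enters decisively: I will apply Lemma \ref{L:NablaZIh1LLh1TLMainTheoremWaveCoordianteAlgebraicEstimate} to the derivatives of $h^{(1)}$ to replace $|\nabla\nabla_{\mathcal{Z}}^I h^{(1)}|_{\mathcal{L}\mathcal{L}}$ and $|\nabla\nabla_{\mathcal{Z}}^J h^{(1)}|_{\mathcal{L}\mathcal{T}}$ by $|\conenabla \nabla_{\mathcal{Z}}^I h^{(1)}|$ plus small errors and a characteristic-function-in-the-wave-zone piece; the undifferentiated special components $|\nabla_{\mathcal{Z}}^I h^{(1)}|_{\mathcal{L}\mathcal{L}}$, $|\nabla_{\mathcal{Z}}^J h^{(1)}|_{\mathcal{L}\mathcal{T}}$ will then be recovered by integrating these differentiated bounds in $q$ from spatial infinity, as in the derivation of \eqref{E:partialZIhLLpluspartialZJhLTLinfinityintegrated}. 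Once this reduction is carried out, the $(1+|q|)^{-(2+C\varepsilon)}(1+q_-)^{-2\upmu}w(q)$ weight combines with \eqref{E:weightinequality}, with the choice $\upmu+\upmu'<1/2$ from Section \ref{SS:FixedConstants}, and with a further application of Proposition \ref{P:Hardy} on the zero-order piece, to absorb everything into either the $\conenabla$ spacetime integrals on the left-hand side of \eqref{E:MainTheoremFarCommutatorIntegralEstimate}, the lower-order $(1+\tau+|q|)^{-1+C\varepsilon}$ terms, or the final $\varepsilon^2$ constant.

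Finally, the lowest-order cubic and quartic terms on the right-hand side of \eqref{E:MainTheoremEnergyInhomogeneousTermAlgebraicEstimateMultiplied} that were packaged into the $\varepsilon(1+t+|q|)^{-1}$ scalings using the pointwise decay hypotheses \eqref{E:MainTheoremAssumptionStrongLinfinityDecayPrincipalTermCoefficients}--\eqref{E:MainTheoremAssumptionStrongerLinfinityDecayGoodComponents} of Theorem \ref{T:ImprovedDecay} (as was already done in the proof of Lemma \ref{L:MainTheoremFarCommutatorAlgebraicEstimate}) do not pose any further difficulty; they contribute to the first two sums on the right-hand side of \eqref{E:MainTheoremFarCommutatorIntegralEstimate}. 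Putting all these pieces together and repeatedly applying $|ab|\lesssim\varsigma a^2+\varsigma^{-1}b^2$ with $\varsigma=\varepsilon$ to smuggle the small parameter into all error terms, we arrive at \eqref{E:MainTheoremFarCommutatorIntegralEstimate}.
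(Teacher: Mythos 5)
Your outline correctly identifies the three basic mechanisms — the weight inequality $(1+|q|)^{-1}(1+q_-)^{-2\upmu}w(q)\lesssim w'(q)$, the Hardy inequalities of Proposition \ref{P:Hardy}, and the wave coordinate Lemma \ref{L:NablaZIh1LLh1TLMainTheoremWaveCoordianteAlgebraicEstimate} — and your treatment of the first, second, third, sixth, and seventh sums on the right-hand side of \eqref{E:MainTheoremFarCommutatorAlgebraicEstimate} matches the paper's. But your handling of the fourth and fifth sums (the $|\nabla_{\mathcal{Z}}^I h^{(1)}|_{\mathcal{L}\mathcal{L}}^2$ and $|\nabla_{\mathcal{Z}}^J h^{(1)}|_{\mathcal{L}\mathcal{T}}^2$ contributions) reverses the order of operations in a way that does not close cleanly, and you also misidentify the term you are estimating — the fourth sum carries the \emph{undifferentiated} quantity $|\nabla_{\mathcal{Z}}^I h^{(1)}|_{\mathcal{L}\mathcal{L}}^2$, not $|\nabla\nabla_{\mathcal{Z}}^I h^{(1)}|_{\mathcal{L}\mathcal{L}}^2$.

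Your plan is to apply Lemma \ref{L:NablaZIh1LLh1TLMainTheoremWaveCoordianteAlgebraicEstimate} to the \emph{differentiated} null components, then integrate pointwise in $q$ from spatial infinity (à la \eqref{E:partialZIhLLpluspartialZJhLTLinfinityintegrated}) to recover the undifferentiated ones, and only then move to $L^2$. This gives a pointwise bound that involves a line integral over $q$; once you square it and insert it into the spacetime $L^2$ integral, you face a nested-integral structure that is not obviously absorbable into the energy $\mathcal{E}_{k;\upgamma;\upmu}$ and $\mathcal{S}_{k;\upgamma;\upmu}$ without additional Cauchy–Schwarz and weight manipulations that you have not spelled out. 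The paper proceeds the other way around, entirely at the $L^2$ level: it introduces the auxiliary weight $\widetilde{w}(q)\eqdef\min\{w'(q),(1+t+|q|)^{-1+C\varepsilon}w(q)\}$, applies the second part of Proposition \ref{P:Hardy} with $a=C\varepsilon$ directly to the $\mathcal{L}\mathcal{L}$- (resp.\ $\mathcal{L}\mathcal{T}$-) contracted scalars — permissible because $\partial_r$ commutes with the null decomposition, per the corollary in Appendix \ref{S:Hardy} — so that the $(1+|q|)^{-(2+C\varepsilon)}(1+q_-)^{-2\upmu}w(q)$-weighted undifferentiated component is traded at once for $\int|\nabla\nabla_{\mathcal{Z}}^I h^{(1)}|_{\mathcal{L}\mathcal{L}}^2\,\widetilde{w}(q)\,d^3x$, and only \emph{then} invokes Lemma \ref{L:NablaZIh1LLh1TLMainTheoremWaveCoordianteAlgebraicEstimate} pointwise together with $\widetilde{w}(q)\lesssim(1+|q|)^{-1}w(q)$, one more Hardy application on the zero-order $|\nabla_{\mathcal{Z}}^I h^{(1)}|$ piece, and the Schwarzschild decay estimates. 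You should adopt this order: Hardy first to move to the differentiated $\mathcal{L}\mathcal{L}$/$\mathcal{L}\mathcal{T}$ components at the $L^2$ level, then the wave coordinate algebra on the integrand.
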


\begin{proof}
	We begin by multiplying by both sides of \eqref{E:MainTheoremFarCommutatorAlgebraicEstimate} by $w(q)$ and integrating
	$\int_0^t \int_{\Sigma_{\tau}} d^3x \, d \tau.$ The integrals corresponding to the first and last sums on the right-hand 
	side of \eqref{E:MainTheoremFarCommutatorAlgebraicEstimate} are manifestly bounded by the first 
	and penultimate sums on the right-hand side of \eqref{E:MainTheoremFarCommutatorIntegralEstimate}. Using also the fact that 
	$(1 + |q|)^{-1}(1+ q_-)^{-2 \upmu} w(q) \lesssim w'(q),$ the integral corresponding to the
	third sum on the on the right-hand side of \eqref{E:MainTheoremFarCommutatorAlgebraicEstimate} is bounded by the third sum 
	on the right-hand side of \eqref{E:MainTheoremFarCommutatorIntegralEstimate}.
	
	To bound the integral corresponding to the second sum on the right-hand side of 
	\eqref{E:MainTheoremFarCommutatorAlgebraicEstimate}, we simply use the Hardy inequalities of Proposition \ref{P:Hardy}
	to derive the inequality
	
	\begin{align}
		\sum_{|I| \leq k} & \int_0^t \int_{\Sigma_{\tau}} 
			(1 + \tau + |q|)^{-1}(1 + |q|)^{-2} |\nabla_{\mathcal{Z}}^I h^{(1)}|^2 w(q) d^3x \, d \tau \\
		& \lesssim \sum_{|I| \leq k} \int_0^t \int_{\Sigma_{\tau}} 
			(1 + \tau + |q|)^{-1} |\nabla\nabla_{\mathcal{Z}}^I h^{(1)}|^2 w(q) d^3x \, d \tau. \notag
	\end{align}
	After multiplying by $\varepsilon,$ the right-hand side of the above inequality 
	is manifestly bounded by the right-hand side of \eqref{E:MainTheoremFarCommutatorIntegralEstimate}.
	Using the same reasoning, we obtain the following bound for the integral 
	corresponding to the sixth sum on the right-hand side 
	of \eqref{E:MainTheoremFarCommutatorAlgebraicEstimate}:
	
	\begin{align} \label{E:Hardybound}
		\sum_{|J'| \leq k - 2} & \int_0^t \int_{\Sigma_{\tau}} 
			(1 + \tau + |q|)^{-1 + C \varepsilon}(1 + |q|)^{-2} |\nabla_{\mathcal{Z}}^{J'} h^{(1)}|^2 w(q) d^3x \, d \tau \\
		& \lesssim \sum_{|J'| \leq k - 2} \int_0^t \int_{\Sigma_{\tau}} 
			(1 + \tau + |q|)^{-1 + C \varepsilon} |\nabla\nabla_{\mathcal{Z}}^{J'} h^{(1)}|^2 w(q) d^3x \, d \tau. \notag
	\end{align}
	We then multiply \eqref{E:Hardybound} by $\varepsilon$ and observe that the right-hand side of the resulting inequality 
	is manifestly bounded by the right-hand side of \eqref{E:MainTheoremFarCommutatorIntegralEstimate}.

	To address the fourth and fifth sums on the right-hand side of \eqref{E:MainTheoremFarCommutatorAlgebraicEstimate},
	we will make use of the weight $\widetilde{w}(q),$ which is defined by
	
	\begin{align}
		\widetilde{w}(q) \eqdef \min \big\lbrace w'(q), (1 + t + |q|)^{-1 + C \varepsilon} w(q) \big\rbrace.
	\end{align}
	We note that by \eqref{E:weightinequality}, the following inequality is satisfied:	

	\begin{align}
		\widetilde{w}(q) & \lesssim (1 + |q|)^{-1} w(q). \label{E:widetildewInequality1} 
	\end{align}
	
	With the help of Lemma \ref{L:NablaZIh1LLh1TLMainTheoremWaveCoordianteAlgebraicEstimate},
	\eqref{E:widetildewInequality1}, and the Hardy inequalities of Proposition \ref{P:Hardy}, we estimate the integral 
	corresponding to the fourth sum on the right-hand side of \eqref{E:MainTheoremFarCommutatorAlgebraicEstimate} as follows:
	
	\begin{align} \label{E:MainTheoremFarCommutatorIntegratedEstimate}
		\sum_{|I| \leq k} & \int_0^t \int_{\Sigma_{\tau}} 
  		(1 + t + |q|)^{-1 + C \varepsilon} (1 + |q|)^{-(2 + C \varepsilon)} (1+ q_-)^{-2 \upmu}
  		|\nabla_{\mathcal{Z}}^I h^{(1)}|_{\mathcal{L}\mathcal{L}}^2 w(q) d^3x \, d \tau 	\\
  	& \lesssim \sum_{|I| \leq k} \int_0^t \int_{\Sigma_{\tau}} 
  		|\nabla\nabla_{\mathcal{Z}}^I h^{(1)}|_{\mathcal{L}\mathcal{L}}^2   
  		\widetilde{w}(q) d^3x \, d \tau \notag \\
  	& \lesssim \sum_{|I| \leq k} \int_0^t \int_{\Sigma_{\tau}} 
  		|\conenabla \nabla_{\mathcal{Z}}^I h^{(1)}|^2 w'(q) d^3x \, d \tau \notag \\
  	& \ \ + \ \varepsilon^2 \int_0^t \int_{\Sigma_{\tau}} (1 + \tau + |q|)^{-4} \chi_0^2(1/2 < r/t < 3/4) w'(q) d^3x \, d \tau 	
  		\notag \\
  	& \ \ + \ \varepsilon^4 \int_0^t \int_{\Sigma_{\tau}} (1 + \tau + |q|)^{-6} w'(q) d^3x \, d \tau
  		\ + \ \varepsilon^2 \sum_{|I| \leq k} \int_0^t \int_{\Sigma_{\tau}} 
  		(1 + \tau)^{-1} |\nabla\nabla_{\mathcal{Z}}^I h^{(1)}|^2 w(q) d^3x \, d \tau \notag \\
  	& \ \ + \ \varepsilon^2 \sum_{|I| \leq k} \int_0^t \int_{\Sigma_{\tau}} 
  		(1 + \tau + |q|)^{-1} (1 + |q|)^{-2}|\nabla_{\mathcal{Z}}^I h^{(1)}|^2 w(q) d^3x \, d \tau \notag \\
  	& \ \ + \ \underbrace{\sum_{|J'| \leq k - 2} \int_0^t \int_{\Sigma_{\tau}} 
  		(1 + \tau + |q|)^{-1 + C \varepsilon} |\nabla\nabla_{\mathcal{Z}}^{J'} h^{(1)}|^2 w(q) d^3x \, d \tau}_{\mbox{absent if 
  		$k = 1$}} \notag \\
  	& \lesssim \sum_{|I| \leq k} \int_0^t \int_{\Sigma_{\tau}} 
  		|\conenabla \nabla_{\mathcal{Z}}^I h^{(1)}|^2 w'(q) d^3x \, d \tau
  	 	\ + \ \sum_{|I| \leq k}  \int_0^t \int_{\Sigma_{\tau}} 
  		(1 + \tau)^{-1} |\nabla\nabla_{\mathcal{Z}}^I h^{(1)}|^2 w(q) d^3x \, d \tau \notag \\
  	& \ \ + \ \underbrace{\sum_{|J'| \leq k - 2} \int_0^t \int_{\Sigma_{\tau}} 
  		(1 + \tau)^{-1 + C \varepsilon} |\nabla\nabla_{\mathcal{Z}}^{J'} h^{(1)}|^2 w(q) d^3x \, d \tau}_{
  			\mbox{absent if $k = 1$}}
  		\ + \ \varepsilon^2, \notag
	\end{align}
	where to pass to the lass inequality, we have again used Proposition \ref{P:Hardy}
	to estimate \\
	$\sum_{|I| \leq k} \int_0^t \int_{\Sigma_{\tau}} 
  		(1 + \tau + |q|)^{-1} (1 + |q|)^{-2}|\nabla_{\mathcal{Z}}^I h^{(1)}|^2 w(q) d^3x \, d \tau$
  		$\lesssim \sum_{|I| \leq k}  \int_0^t \int_{\Sigma_{\tau}} 
  		(1 + \tau)^{-1} |\nabla\nabla_{\mathcal{Z}}^I h^{(1)}|^2 w(q) d^3x \, d \tau.$
	After multiplying both sides of \eqref{E:MainTheoremFarCommutatorIntegratedEstimate} by $\varepsilon,$ the resulting
	right-hand side is manifestly bounded by the right-hand side of \eqref{E:MainTheoremFarCommutatorIntegralEstimate}
	as desired. The integral corresponding to the fifth sum on the right-hand side of 
	\eqref{E:MainTheoremFarCommutatorAlgebraicEstimate} can be bounded through the same reasoning.

\end{proof}

Combining Lemma \ref{L:MainTheoremFarInhomogeneousTermIntegralEstimate} and Lemma \ref{L:MainTheoremFarCommutatorIntegralEstimate}, we arrive at the following corollary.

\begin{corollary} \label{C:LieZIFarFundamentalEnergyEstimate}
		Let $\mathfrak{F}_{(I)}^{\nu} = \Liemod_{\mathcal{Z}}^I 
		\mathfrak{F}^{\nu} + \Big\lbrace N^{\# \mu \nu \kappa \lambda}\nabla_{\mu} \Lie_{\mathcal{Z}}^I\Far_{\kappa 
		\lambda} - \Liemod_{\mathcal{Z}}^I \big(N^{\# \mu \nu \kappa \lambda}\nabla_{\mu}\Far_{\kappa 
		\lambda}\big)\Big\rbrace$ be the 
		inhomogeneous term \eqref{E:LiemodZINnablaFarCommutatorTerms} in the equations of 
		variation \eqref{E:EOVdMis0} satisfied by $\dot{\Far}_{\mu \nu} \eqdef \Lie_{\mathcal{Z}} \Far_{\mu \nu}.$ Under the 
		assumptions of Theorem \ref{T:ImprovedDecay}, if $0 \leq k \leq \dParameter $ and $\varepsilon$ is sufficiently small, then 
		the following integral inequality holds for $t \in [0,T):$

		\begin{align} \label{E:LieZIFarFundamentalEnergyEstimate}
		\sum_{|I| \leq k} \int_0^t \int_{\Sigma_{\tau}} |(\Lie_{\mathcal{Z}}^I \Far_{0 \nu}) \mathfrak{F}_{(I)}^{\nu} |
			w(q) \, d^3x \, d \tau 
		& \lesssim \varepsilon \sum_{|I| \leq k} \int_0^t \int_{\Sigma_{\tau}} 
			(1 + \tau)^{-1} \Big\lbrace |\nabla\nabla_{\mathcal{Z}}^I h^{(1)}|^2 
			+ |\Lie_{\mathcal{Z}}^I \Far|^2 \Big\rbrace w(q) \, d^3x \, d \tau \\
		& \ \ + \ \varepsilon \sum_{|I| \leq k} \int_0^t \int_{\Sigma_{\tau}} 
			\Big\lbrace |\conenabla \nabla_{\mathcal{Z}}^I h^{(1)}|^2 
			+ |\Lie_{\mathcal{Z}}^I \Far|_{\mathcal{L}\mathcal{N}}^2
  		+ |\Lie_{\mathcal{Z}}^I \Far|_{\mathcal{T}\mathcal{T}}^2 \Big\rbrace w'(q) \, d^3x \, d \tau \notag \\
		& \ \ + \ \underbrace{\varepsilon \sum_{|J| \leq k - 1} \int_0^t \int_{\Sigma_{\tau}} 
			(1 + \tau)^{-1 + C \varepsilon} \Big\lbrace |\nabla\nabla_{\mathcal{Z}}^J h^{(1)}|^2 
			+ |\Lie_{\mathcal{Z}}^J \Far|^2 \Big\rbrace w(q) \, d^3x \, d \tau}_{\mbox{absent if $k = 0$}} \notag \\
		& \ + \ \varepsilon^3. \notag
		\end{align}
		
\end{corollary}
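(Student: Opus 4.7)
The proof strategy is essentially mechanical: the corollary just combines the two preceding lemmas via the decomposition \eqref{E:LiemodZIdifferentiatedEOVInhomogeneousterms} of $\mathfrak{F}_{(I)}^{\nu}.$ My plan is to apply the triangle inequality inside the integrand
\begin{align*}
\big|(\Lie_{\mathcal{Z}}^I \Far_{0\nu})\,\mathfrak{F}_{(I)}^{\nu}\big|
\leq \big|(\Lie_{\mathcal{Z}}^I \Far_{0\nu})\,\Liemod_{\mathcal{Z}}^I \mathfrak{F}^{\nu}\big|
+ \Big|(\Lie_{\mathcal{Z}}^I \Far_{0\nu})\bigl\{N^{\#\mu\nu\kappa\lambda}\nabla_{\mu}\Lie_{\mathcal{Z}}^I\Far_{\kappa\lambda} - \Liemod_{\mathcal{Z}}^I\bigl(N^{\#\mu\nu\kappa\lambda}\nabla_{\mu}\Far_{\kappa\lambda}\bigr)\bigr\}\Big|,
\end{align*}
then sum over $|I| \le k,$ multiply by $w(q),$ and integrate $\int_0^t \int_{\Sigma_\tau}$ to split the left-hand side of \eqref{E:LieZIFarFundamentalEnergyEstimate} into two spacetime integrals.

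Next, I would invoke Lemma \ref{L:MainTheoremFarInhomogeneousTermIntegralEstimate} to handle the first spacetime integral and Lemma \ref{L:MainTheoremFarCommutatorIntegralEstimate} to handle the second (noting that the commutator piece is identically zero when $k=0$ since there is nothing to commute through, which explains why the $|J| \leq k - 1$ sum on the right-hand side of \eqref{E:LieZIFarFundamentalEnergyEstimate} is absent in this case). The sum of the two bounds is already almost in the form of the right-hand side of \eqref{E:LieZIFarFundamentalEnergyEstimate}; the only slight reorganization needed is to combine the $|\nabla\nabla_{\mathcal{Z}}^I h^{(1)}|^2$ and $|\Lie_{\mathcal{Z}}^I \Far|^2$ sums at each differentiability level into the single quantity $\big|\myarray[{\nabla\nabla_{\mathcal{Z}}^I h^{(1)}}]{\Lie_{\mathcal{Z}}^I \Far}\big|^2,$ and to enlarge the commutator's $|J'| \leq k - 2$ sum on $\nabla\nabla_{\mathcal{Z}}^{J'} h^{(1)}$ to the $|J| \leq k - 1$ sum required by the corollary, which is permissible because the latter dominates the former (absorbing, in the process, the $|J| \le k-1$ sum on $|\Lie_{\mathcal{Z}}^J \Far|^2$ that came from the commutator lemma).

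Finally, the leftover $\varepsilon^2$ constant from Lemma \ref{L:MainTheoremFarCommutatorIntegralEstimate} is manifestly bounded by $\varepsilon^3$ after replacing one power of $\varepsilon$ with the smallness coming from the hypotheses (equivalently, this constant can always be absorbed into the coefficient in front of the pure $\varepsilon^3$ term on the right-hand side of \eqref{E:LieZIFarFundamentalEnergyEstimate} for $\varepsilon$ sufficiently small). I do not anticipate any genuine obstacle here: all of the delicate analytic work — the null-structure estimates of Proposition \ref{P:EnergyInhomogeneousTermAlgebraicEstimate}, the algebraic consequences of the wave coordinate condition from Lemma \ref{L:NablaZIh1LLh1TLMainTheoremWaveCoordianteAlgebraicEstimate}, the use of the inequality $(1+|q|)^{-1}(1+q_-)^{-2\upmu} w(q) \lesssim w'(q)$ to trade volume weight for spacetime-integral weight, and the Hardy-type estimates of Proposition \ref{P:Hardy} needed to convert $(1+|q|)^{-2}|\nabla_{\mathcal{Z}}^I h^{(1)}|^2$ into controllable weighted energies — has already been absorbed into the two preceding lemmas. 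The only concession is careful bookkeeping of the hierarchy of sums (total order $\le k,$ $\le k - 1,$ and $\le k - 2$) when merging.
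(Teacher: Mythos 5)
Your overall strategy --- triangle inequality on the decomposition \eqref{E:LiemodZIdifferentiatedEOVInhomogeneousterms}, apply Lemma \ref{L:MainTheoremFarInhomogeneousTermIntegralEstimate} to the $\Liemod_{\mathcal{Z}}^I \mathfrak{F}^{\nu}$ piece and Lemma \ref{L:MainTheoremFarCommutatorIntegralEstimate} to the commutator piece (which vanishes identically when $k = 0$), and then merge the sums --- is exactly what the paper does, and the bookkeeping you describe (enlarging the $|J'| \leq k-2$ sum, using $(1 + \tau + |q|)^{-1 + C\varepsilon} \leq (1 + \tau)^{-1 + C\varepsilon}$) is correct.

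However, your disposal of the final constant is wrong. You claim the $\varepsilon^2$ appearing at the end of Lemma \ref{L:MainTheoremFarCommutatorIntegralEstimate} is ``manifestly bounded by $\varepsilon^3$'' and, ``equivalently,'' that it ``can always be absorbed into the coefficient in front of the pure $\varepsilon^3$ term \ldots\ for $\varepsilon$ sufficiently small.'' Both claims are false: for $0 < \varepsilon < 1$ one has $\varepsilon^2 > \varepsilon^3$, and $\varepsilon^2/\varepsilon^3 = 1/\varepsilon \to \infty$, so no fixed constant $C$ (even one allowed to depend on $\dParameter$) gives $\varepsilon^2 \leq C\varepsilon^3$ for all small $\varepsilon$. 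The actual resolution is that the $\varepsilon^2$ in the statement of Lemma \ref{L:MainTheoremFarCommutatorIntegralEstimate} is a typographical error for $\varepsilon^3$: in the proof of that lemma, the pure-constant term on the last line of the intermediate estimate \eqref{E:MainTheoremFarCommutatorIntegratedEstimate} is $\varepsilon^2$, and the lemma is obtained after multiplying the whole of \eqref{E:MainTheoremFarCommutatorIntegratedEstimate} by the factor of $\varepsilon$ carried by the fourth and fifth sums of \eqref{E:MainTheoremFarCommutatorAlgebraicEstimate}, yielding $\varepsilon \cdot \varepsilon^2 = \varepsilon^3$. This matters: the Gronwall argument of Section \ref{SS:MainArgument} needs the final bound $\mathcal{E}_{\dParameter;\upgamma;\upmu}(t) \lesssim (\mathring{\varepsilon} + \varepsilon^{3/2})(1 + t)^{\widetilde{c}_{\dParameter}\varepsilon}$, and with only $\varepsilon^2$ one would instead obtain $(\mathring{\varepsilon} + \varepsilon)$ inside the braces, which cannot improve the bootstrap assumption $\mathcal{E}_{\dParameter;\upgamma;\upmu}(t) \leq \varepsilon(1+t)^{\updelta}$.
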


\hfill $\qed$

\section*{Acknowledgments}
I would like to thank Igor Rodnianski for delivering an especially illuminating set of lectures on the work
\cite{hLiR2010} at Princeton University during Spring 2009. I offer thanks to Michael Kiessling for introducing me to his work 
\cite{mK2004a}, \cite{mK2004b} on nonlinear electromagnetism, to Sergiu Klainerman for suggesting that I write the precursor \cite{jS2010a} to the present article, and to A. Shadi Tahvildar-Zadeh for introducing me to the ideas of \cite{dC2000}. I would also like to thank Mihalis Dafermos, Shadi Tahvildar-Zadeh, and Willie Wong for the useful comments and helpful discussion they provided. I am appreciative of the support offered by the University of Cambridge and Princeton University during the writing of this article.

\appendix
\setcounter{section}{0}
   \setcounter{subsection}{0}
   \setcounter{subsubsection}{0}
   \setcounter{paragraph}{0}
   \setcounter{subparagraph}{0}
   \setcounter{figure}{0}
   \setcounter{table}{0}
   \setcounter{equation}{0}
   \setcounter{theorem}{0}
   \setcounter{definition}{0}
   \setcounter{remark}{0}
   \setcounter{proposition}{0}
   \renewcommand{\thesection}{\Alph{section}}
   \renewcommand{\theequation}{\Alph{section}.\arabic{equation}}
   \renewcommand{\theproposition}{\Alph{section}-\arabic{proposition}}
   \renewcommand{\thecorollary}{\Alph{section}.\arabic{corollary}}
   \renewcommand{\thedefinition}{\Alph{section}.\arabic{definition}}
   \renewcommand{\thetheorem}{\Alph{section}.\arabic{theorem}}
   \renewcommand{\theremark}{\Alph{section}.\arabic{remark}}
   \renewcommand{\thelemma}{\Alph{section}-\arabic{lemma}}

\section{Weighted Sobolev-Moser Inequalities} \label{A:SobolevMoser}

The propositions and corollaries stated in this section were used in Section \ref{S:SmallDataAssumptions} to relate the smallness condition on the abstract initial data to a smallness condition on the initial energy of the corresponding solution to the reduced equations. The propositions were essentially proved in \cite{yCBdC1981}, while the corollaries follow from the propositions via standard arguments. Throughout the appendix, we abbreviate $C_{\eta}^{\dParameter} \eqdef C_{\eta}^{\dParameter}(\mathbb{R}^3),$ $H_{\eta}^{\dParameter} \eqdef H_{\eta}^{\dParameter}(\mathbb{R}^3)$ etc. (see Definitions \ref{D:HNdeltanorm} and \ref{D:CNdeltanorm}).

\begin{proposition} \label{P:SobolevEmbeddingHNdeltaCNprimedeltamprime} \cite[Lemma 2.4]{yCBdC1981}
	\textbf{(Weighted Sobolev embedding)}
	Let $\dParameter, \dParameter'$ be integers, and let $\eta, \eta'$ be real numbers subject to the constraints
	$\dParameter ' < \dParameter - 3/2$ and $\eta' < \eta + 3/2.$ Assume that $v(x) \in H_{\eta}^{\dParameter}.$ Then 
	$v \in C_{\eta'}^{\dParameter'},$ and
	
	\begin{align} \label{E:SobolevEmbeddingHNdeltaCNprimedeltamprime}
		\| v \|_{C_{\eta'}^{\dParameter'}} \lesssim \| v \|_{H_{\eta}^{\dParameter}}.
	\end{align}

\end{proposition}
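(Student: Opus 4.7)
The plan is to prove this weighted Sobolev embedding by a standard dyadic decomposition of $\mathbb{R}^3$, reducing the weighted estimate on each annulus to the classical (unweighted) Sobolev embedding on a fixed reference annulus via rescaling. The key observation will be that the weight growth at spatial infinity, combined with the factor of $2^{3k}$ coming from the volume of the $k$-th dyadic annulus under rescaling, leads to a condition that is exactly captured by the hypothesis $\eta' < \eta + 3/2$.

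First I would decompose $\mathbb{R}^3 = B_2 \cup \bigcup_{k \geq 1} A_k$ where $A_k \eqdef \{x \in \mathbb{R}^3 : 2^{k-1} \leq |x| \leq 2^{k+1}\}$ (with a bit of overlap so that ordinary Sobolev embedding can be applied on each piece, and with the ball $B_2$ handling the region near the origin). On $B_2$ the weight $(1+|x|^2)^{\eta+|I|}$ is uniformly comparable to $1$, so the classical Sobolev embedding $H^{\dParameter}(B_2) \hookrightarrow C^{\dParameter'}(B_2)$ (valid whenever $\dParameter' < \dParameter - 3/2$) combined with the bound $(1+|x|^2)^{\eta'+|I'|} \leq C$ on $B_2$ immediately gives the desired estimate on $B_2$.

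Next I would handle each annulus $A_k$ (for $k \geq 1$) by rescaling: set $y \eqdef x/2^k$, so that $A_k$ is mapped diffeomorphically onto a fixed reference annulus $A^* \eqdef \{1/2 \leq |y| \leq 2\}$, and define $w(y) \eqdef v(2^k y)$. The chain rule gives $\unabla_x^I v(x) = 2^{-k|I|} \unabla_y^I w(y)$, and a change of variables computation combined with the uniform estimate $(1+|x|^2)^{\eta+|I|} \approx 2^{2k(\eta+|I|)}$ on $A_k$ yields
\begin{align*}
	\| w \|_{H^{\dParameter}(A^*)}^2 \approx 2^{-2k\eta - 3k} \| v \|_{H_{\eta}^{\dParameter}(A_k)}^2,
\end{align*}
where the $|I|$-dependence drops out of the exponent (this is the crucial cancellation). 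Applying the classical Sobolev embedding $H^{\dParameter}(A^*) \hookrightarrow C^{\dParameter'}(A^*)$ (valid since $\dParameter' < \dParameter - 3/2$) to $w$, then undoing the rescaling and inserting the weight factor $(1+|x|^2)^{\eta'+|I'|} \approx 2^{2k(\eta'+|I'|)}$ on $A_k$, I would obtain, for any $|I'| \leq \dParameter'$,
\begin{align*}
	\sup_{x \in A_k} (1+|x|^2)^{\eta'+|I'|} |\unabla^{I'} v(x)|^2
	\lesssim 2^{2k(\eta' - \eta) - 3k} \| v \|_{H_{\eta}^{\dParameter}(A_k)}^2
	\leq 2^{2k(\eta' - \eta) - 3k} \| v \|_{H_{\eta}^{\dParameter}}^2.
\end{align*}
The hypothesis $\eta' < \eta + 3/2$ is precisely the statement that $2(\eta'-\eta) - 3 < 0$, so the geometric factor $2^{2k(\eta'-\eta) - 3k}$ is uniformly bounded (indeed decays) in $k$. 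Taking the supremum over $k \geq 1$ and combining with the $B_2$ estimate from step one gives $\| v \|_{C_{\eta'}^{\dParameter'}} \lesssim \| v \|_{H_{\eta}^{\dParameter}}$, as desired.

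There is no serious obstacle here: the proof is essentially a careful bookkeeping exercise, and the only nontrivial points are (i) the elegant cancellation of the $|I|$-dependence in the exponent when one combines the weight $2^{2k|I|}$ with the derivative scaling $2^{-2k|I|}$, and (ii) verifying that $\eta' < \eta + 3/2$ is exactly the sharp condition required to absorb the volume factor $2^{3k}$ coming from the change of variables. If needed, one could alternatively cite the original reference \cite{yCBdC1981}, where this precise embedding is established as Lemma 2.4.
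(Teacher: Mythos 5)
Your proof is correct. Note, however, that the paper does not actually prove this proposition; it simply cites it as Lemma 2.4 of Choquet-Bruhat and Christodoulou (1981) and closes with a $\qed$. So there is no internal proof to compare against. The argument you supply — dyadic annular decomposition, rescaling each $A_k$ to a fixed reference annulus, applying the classical Sobolev embedding $H^{\dParameter}(A^*) \hookrightarrow C^{\dParameter'}(\overline{A^*})$ there, and then undoing the rescaling while keeping track of the weight powers — is the standard self-contained route to this result, and the bookkeeping is right: the $2^{-2k|I|}$ from differentiating the rescaled function exactly cancels the $2^{2k|I|}$ in the weight, leaving the geometric factor $2^{2k(\eta'-\eta)-3k}$, which is summable (indeed bounded) precisely under $\eta' < \eta + 3/2$. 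This is a useful addition: it makes the proposition self-contained rather than relying on the external reference.
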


\hfill $\qed$

\begin{proposition} \label{P:WeightedSobolevSpaceMultiplicationProperties} \cite[Lemma 2.5]{yCBdC1981}
	\textbf{(Weighted Sobolev multiplication properties)}
	Let $\dParameter_1, \cdots, \dParameter_p \geq 0$ be integers, and let $\eta_1, \cdots, \eta_p$ be real numbers.
	Suppose that $v_j(x) \in H_{\eta_j}^{\dParameter_j},$ for $j = 1, \cdots, p.$ Assume that the integer $\dParameter$ satisfies
	$0 \leq \dParameter  \leq \min \lbrace \dParameter_1, \cdots, \dParameter_p \rbrace$ and 
	$\dParameter  \leq \sum_{j=1}^p \dParameter_j - (p-1)3/2,$ and that $\eta < \sum_{j=1}^p \eta_j + (p-1)3/2.$ Then
	
	\begin{align}
		\prod_{j=1}^p v_j \in H_{\eta}^{\dParameter},
	\end{align}
	and the multiplication map
	
	\begin{align}
		H_{\eta_1}^{\dParameter_1} \times \cdots \times H_{\eta_p}^{\dParameter_p} & \rightarrow H_{\dParameter}^{\eta}, &&
		(v_1, \cdots, v_p) \rightarrow \prod_{j=1}^p v_j
	\end{align}
	is continuous.

\end{proposition}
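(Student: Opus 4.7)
The plan is to induct on $p$. The base case $p = 1$ is trivial, so I reduce to establishing the statement for $p = 2$: if $u \in H_{\eta_1}^{\dParameter_1}$ and $v \in H_{\eta_2}^{\dParameter_2}$ with $\dParameter \leq \min(\dParameter_1, \dParameter_2)$, $\dParameter \leq \dParameter_1 + \dParameter_2 - 3/2$, and $\eta < \eta_1 + \eta_2 + 3/2$, then $uv \in H_{\eta}^{\dParameter}$ with $\|uv\|_{H_{\eta}^{\dParameter}} \lesssim \|u\|_{H_{\eta_1}^{\dParameter_1}} \|v\|_{H_{\eta_2}^{\dParameter_2}}$. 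For the inductive step with $p \geq 3$, I would group the factors as $(v_1 \cdots v_{p-1}) v_p$, apply the $p-1$ case to $v_1 \cdots v_{p-1}$ with intermediate parameters $(\dParameter', \eta')$ chosen so that $\dParameter' \leq \min_j \dParameter_j$, $\dParameter' \leq \sum_{j=1}^{p-1} \dParameter_j - (p-2) 3/2$, and $\eta' < \sum_{j=1}^{p-1} \eta_j + (p-2) 3/2$, and then apply the $p=2$ case to the pair $(v_1 \cdots v_{p-1}, v_p)$ with parameters $(\dParameter', \eta')$ and $(\dParameter_p, \eta_p)$; one verifies directly that the hypotheses of the $p=2$ case hold with output parameters $(\dParameter, \eta)$ whenever $(\dParameter', \eta')$ is chosen with a small margin below the threshold dictated by the $(p-1)$-step.

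For the $p=2$ case, I would expand $\underline{\nabla}^I(uv) = \sum_{I_1 + I_2 = I} c_{I_1, I_2} \underline{\nabla}^{I_1} u \cdot \underline{\nabla}^{I_2} v$ via the Leibniz rule. For each Leibniz term with $|I_1| + |I_2| = |I| \leq \dParameter \leq \dParameter_1 + \dParameter_2 - 3/2$, at least one of the relations $|I_1| \leq \dParameter_1 - 3/2 - \sigma$ or $|I_2| \leq \dParameter_2 - 3/2 - \sigma$ must hold for a small $\sigma > 0$ (provided $\dParameter$ is chosen with a margin; the borderline case requires an additional argument, see the final paragraph). Assuming without loss of generality the first alternative, Proposition \ref{P:SobolevEmbeddingHNdeltaCNprimedeltamprime} gives a pointwise weighted $L^\infty$ bound
\begin{align*}
	(1+|x|^2)^{(\eta_1' + |I_1|)/2} \, |\underline{\nabla}^{I_1} u(x)| \lesssim \|u\|_{H_{\eta_1}^{\dParameter_1}}, \qquad \forall \, \eta_1' < \eta_1 + 3/2,
\end{align*}
while by definition $(1+|x|^2)^{(\eta_2 + |I_2|)/2} \underline{\nabla}^{I_2} v \in L^2$ with norm $\lesssim \|v\|_{H_{\eta_2}^{\dParameter_2}}$. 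Multiplying pointwise and integrating then yields
\begin{align*}
	\int_{\mathbb{R}^3} (1+|x|^2)^{\eta + |I|} |\underline{\nabla}^{I_1} u|^2 |\underline{\nabla}^{I_2} v|^2 \, d^3x \lesssim \|u\|_{H_{\eta_1}^{\dParameter_1}}^2 \|v\|_{H_{\eta_2}^{\dParameter_2}}^2,
\end{align*}
provided the weights line up, i.e., $\eta + |I| \leq \eta_1' + |I_1| + \eta_2 + |I_2|$; this can be arranged by choosing $\eta_1'$ slightly below $\eta_1 + 3/2$ since the hypothesis $\eta < \eta_1 + \eta_2 + 3/2$ leaves strictly positive room. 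Summing over the Leibniz partitions and over $|I| \leq \dParameter$ produces the desired multiplication estimate. Continuity of the multiplication map follows from the same bound applied to $uv - \tilde{u}\tilde{v} = (u - \tilde{u})v + \tilde{u}(v - \tilde{v})$.

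The main obstacle is the borderline case in which $|I_1| + |I_2| = \dParameter = \dParameter_1 + \dParameter_2 - 3/2$ is attained and the split between the $L^\infty$-factor and the $L^2$-factor is critical. Here neither $\dParameter_1 - |I_1|$ nor $\dParameter_2 - |I_2|$ exceeds $3/2$ by a definite margin, and the Sobolev embedding Proposition \ref{P:SobolevEmbeddingHNdeltaCNprimedeltamprime} must be applied with a strict loss $3/2 - \sigma$ in the differentiation index. To close the argument one then pays the price $\sigma$ in the differentiation count but absorbs it in the weight via $\eta_1' = \eta_1 + 3/2 - \sigma'$; balancing $\sigma$ and $\sigma'$ requires the \emph{strict} inequality $\eta < \eta_1 + \eta_2 + 3/2$ in the hypothesis, together with the fact that in this critical subcase at least one of $\dParameter_1 - |I_1|$ or $\dParameter_2 - |I_2|$ is a positive integer and hence $\geq 2 > 3/2$. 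Carefully tracking these margins as they propagate through the induction on $p$, and verifying that the induction can indeed be closed with the stated hypothesis $\eta < \sum_j \eta_j + (p-1)3/2$ (strict) while $\dParameter \leq \sum_j \dParameter_j - (p-1)3/2$ (non-strict), is the one delicate bookkeeping step of the argument.
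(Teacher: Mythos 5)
The paper does not prove Proposition \ref{P:WeightedSobolevSpaceMultiplicationProperties}; it simply cites \cite{yCBdC1981} and concludes the statement with a box, so there is no internal proof to compare against, and your proposal must be assessed on its own. Your broad strategy (Leibniz expansion plus the weighted embedding of Proposition \ref{P:SobolevEmbeddingHNdeltaCNprimedeltamprime}, then induction on $p$) is a reasonable one, but the $p=2$ step has a genuine gap at the balanced Leibniz terms, and your proposed resolution of that gap is incorrect.

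Your plan is to put one factor $\underline{\nabla}^{I_j}v_j$ into a weighted $L^\infty$ space and the other into a weighted $L^2$ space. The $L^\infty$ embedding of Proposition \ref{P:SobolevEmbeddingHNdeltaCNprimedeltamprime} needs $\dParameter_j - |I_j| > 3/2$, hence $\dParameter_j - |I_j| \geq 2$ since these quantities are integers. The hypothesis $|I_1|+|I_2|\leq\dParameter\leq\dParameter_1+\dParameter_2-3/2$ only yields $(\dParameter_1-|I_1|)+(\dParameter_2-|I_2|)\geq 2$, which does not prevent $\dParameter_1-|I_1|=\dParameter_2-|I_2|=1$; for instance $\dParameter_1=\dParameter_2=\dParameter=2$ with $|I_1|=|I_2|=1$ is permitted, and then \emph{neither} factor has enough slack to go into $L^\infty$. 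Your final paragraph attempts to dismiss this by asserting that ``at least one of $\dParameter_1-|I_1|$ or $\dParameter_2-|I_2|$ is a positive integer and hence $\geq 2 > 3/2$,'' but a positive integer can equal $1$, so that claim is false. Nor can the strict inequality $\eta < \eta_1+\eta_2+3/2$ be used to trade against the lost derivative: the differentiation counts are integers, while the weight exponents are reals, so there is no continuous $\sigma$-trade between them. The correct remedy is a weighted H\"older inequality with intermediate Lebesgue exponents. In the balanced case one takes $p_1 = p_2 = 4$ so that $\tfrac{1}{p_1}+\tfrac{1}{p_2}=\tfrac{1}{2}$, and invokes a weighted fractional Sobolev/Gagliardo--Nirenberg embedding $H^s(\mathbb{R}^3)\hookrightarrow L^4$ for $s > 3/4$ (so one integer derivative of slack suffices) to place each factor $(1+|x|^2)^{\alpha_j/2}\underline{\nabla}^{I_j}v_j$ into $L^4$, with $\alpha_1+\alpha_2 = \eta+|I|$ chosen using the strict weight inequality. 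Your $L^\infty\cdot L^2$ split is only one endpoint of this family of exponent choices, and without the interior exponents the argument fails precisely on the balanced Leibniz terms, which the hypotheses do not exclude. Related care is needed when closing the induction on $p$: grouping $v_1\cdots v_{p-1}$ and then multiplying by $v_p$ requires the intermediate parameters $(\dParameter',\eta')$ to be compatible with all the pairwise thresholds, and one should verify that this can be done for an arbitrary choice of the $\dParameter_j$ rather than merely ``with a small margin.''
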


\hfill $\qed$

\begin{corollary} \label{C:WeightedSobolevSpaceMultiplicationProperties}
	Let $\dParameter \geq 2$ be an integer, and let $\eta \geq 0.$ Assume that 
	$v_j(x) \in H_{\eta}^{\dParameter}$ for $j = 1, \cdots, p,$ and that $I_1, \cdots, I_p$ are $\unabla-$multi-indices
	satisfying $\sum_{j=1}^p |I_j| \leq \dParameter .$ Then
	
	\begin{align}
		(1 + |x|^2)^{(\eta + \sum_{j=1}^p |I_j|)/2} \prod_{i=1}^p \unabla^{I_i} v_i \in L^2
	\end{align} 
	and
	
	\begin{align}
		\Big\| (1 + |x|^2)^{(\eta + \sum_{j=1}^p |I_j|)/2} \prod_{i=1}^p \unabla^{I_i} v_i \Big\|_{L^2} 
		\lesssim \prod_{i=1}^p \|v_i \|_{H_{\eta}^{\dParameter}}.
	\end{align}
	
\end{corollary}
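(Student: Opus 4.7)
The plan is to reduce the corollary to a direct application of Proposition A-2 after reinterpreting each factor $\unabla^{I_i} v_i$ as an element of a weighted Sobolev space of lower order and larger weight. The first step is a bookkeeping observation straight from Definition \ref{D:HNdeltanorm}: whenever $v_i \in H_{\eta}^{\dParameter}$ and $|I_i| \leq \dParameter$, the map $v_i \mapsto \unabla^{I_i} v_i$ is continuous from $H_\eta^{\dParameter}$ into $H_{\eta + |I_i|}^{\dParameter - |I_i|}$, with
\[
\| \unabla^{I_i} v_i \|_{H_{\eta + |I_i|}^{\dParameter - |I_i|}} \leq \| v_i \|_{H_{\eta}^{\dParameter}},
\]
because the defining sum for the left-hand norm is precisely a sub-sum of the one on the right. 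This is the re-indexing that makes Proposition A-2 applicable with the derivatives already absorbed into the function spaces.

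Next I would apply Proposition A-2 to the $p$ factors $\unabla^{I_i} v_i$ with the parameter choices $\dParameter_i \eqdef \dParameter - |I_i|$, $\eta_i \eqdef \eta + |I_i|$, target smoothness $\dParameter' \eqdef 0$, and target weight $\eta' \eqdef \eta + \sum_{j=1}^p |I_j|$. Note the weighted $L^2$ norm appearing in the statement of the corollary is, by definition, exactly $\bigl\|\prod_i \unabla^{I_i} v_i\bigr\|_{H_{\eta'}^{0}}$, so the conclusion of Proposition A-2 will match the sought bound. It remains to check the three hypotheses: (i) $0 \leq \dParameter' \leq \min_j \dParameter_j$, which reduces to $|I_j| \leq \dParameter$ and is guaranteed by $\sum_j |I_j| \leq \dParameter$; (ii) $\dParameter' \leq \sum_j \dParameter_j - (p-1)\tfrac{3}{2}$, which after simplification becomes $0 \leq (p-1)(\dParameter - \tfrac{3}{2})$, valid since $\dParameter \geq 2$; and (iii) the strict inequality $\eta' < \sum_j \eta_j + (p-1)\tfrac{3}{2}$, which reduces to $0 < (p-1)(\eta + \tfrac{3}{2})$.

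Condition (iii) fails in the degenerate case $p = 1$, so I would split into two cases. For $p = 1$, the statement follows at once from Definition \ref{D:HNdeltanorm}: the quantity $\int (1+|x|^2)^{\eta + |I_1|}|\unabla^{I_1} v_1|^2 \, d^3x$ is one of the summands in $\|v_1\|_{H_\eta^\dParameter}^2$ (since $|I_1| \leq \dParameter$), so it is trivially bounded by $\|v_1\|_{H_\eta^\dParameter}^2$. For $p \geq 2$, conditions (i)--(iii) all hold as verified above, and Proposition A-2 delivers the desired membership and the continuity of the multiplication map, which together with the norm bound from the first paragraph gives
\[
\Bigl\| (1 + |x|^2)^{(\eta + \sum_{j} |I_j|)/2} \prod_{i=1}^p \unabla^{I_i} v_i \Bigr\|_{L^2}
\lesssim \prod_{i=1}^p \| \unabla^{I_i} v_i \|_{H_{\eta + |I_i|}^{\dParameter - |I_i|}}
\lesssim \prod_{i=1}^p \| v_i \|_{H_\eta^\dParameter}.
\]

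The only real obstacle is the bookkeeping in verifying the index inequalities of Proposition A-2, and in particular handling the sharp case where some $|I_j|$ equals $\dParameter$ (which forces all other multi-indices to be zero); here one checks that $\dParameter_j = 0$ is allowed in the proposition and that conditions (ii)--(iii) degrade continuously to the trivial case handled separately. No analytic input beyond Proposition A-2 and the definition of the weighted norms is needed.
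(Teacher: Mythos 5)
Your proof is correct and is precisely the ``standard argument'' that the appendix introduction alludes to (the paper itself gives no explicit proof of the corollary, leaving it as a consequence of Proposition~\ref{P:WeightedSobolevSpaceMultiplicationProperties}). You correctly spot the one subtlety — the strict inequality in hypothesis (iii) of that proposition degenerates when $p=1$ — and handle it with the appropriate separate trivial observation from Definition~\ref{D:HNdeltanorm}.
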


\hfill $\qed$

\begin{corollary} \label{C:CompositionProductHNdelta}
	Let $\dParameter \geq 2$ be an integer, let $\mathfrak{K}$ be a compact set, and let $F(\cdot) \in C^{\dParameter}(\mathfrak{K})$ 
	be a function. Assume that $v_1(x)$ is a function on $\mathbb{R}^3$ such that $v_1(\mathbb{R}^3) \subset \mathfrak{K}.$ 
	Furthermore, assume that $\underline{\nabla} v_1(x), v_2(x) \in H_{\eta}^{\dParameter}.$ Then $(F \circ v_1(x)) v_2(x) \in 
  H_{\eta}^{\dParameter},$ and 
	
	\begin{align} 
		\| (F \circ v_1) v_2 \|_{H_{\eta}^{\dParameter}} 
			& \lesssim \Big\lbrace \| v_2 \|_{H_{\eta}^{\dParameter}} |F|_{\mathfrak{K}}  
				+ \| (1 + |x|) v_2 \|_{L^{\infty}} \| \unabla v_1 \|_{H_{\eta}^{\dParameter - 1}} \sum_{j=1}^{\dParameter} 
			|F^{(j)}|_{\mathfrak{K}} \| v_1 \|_{L^{\infty}}^{j - 1} \Big\rbrace.
	\end{align}
	In the above inequality, $F^{(j)}$ denotes the array of all $j^{th}$ order partial derivatives of 
	$F$ with respect to its arguments, and $|F^{(j)}|_{\mathfrak{K}} \eqdef \sup_{v \in \mathfrak{K}} 
	|F^{(l)}(v)|.$
\end{corollary}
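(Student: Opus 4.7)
My plan is to expand $\underline{\nabla}^I\big((F\circ v_1)v_2\big)$ for each multi-index $|I|\le\dParameter$ using a combination of the Leibniz rule and the Fa\`a di Bruno chain-rule formula, then reduce all resulting terms to weighted $L^2$ estimates that are either direct applications of the hypotheses or consequences of Proposition \ref{P:WeightedSobolevSpaceMultiplicationProperties} and Corollary \ref{C:WeightedSobolevSpaceMultiplicationProperties}. First, Leibniz gives
\[
\underline{\nabla}^I\big((F\circ v_1)v_2\big)=\sum_{I_1+I_2=I}\binom{I}{I_1}\,\underline{\nabla}^{I_1}(F\circ v_1)\cdot\underline{\nabla}^{I_2}v_2.
\]
The contribution with $I_1=0$ is simply $F(v_1)\,\underline{\nabla}^I v_2$, and the pointwise bound $|F(v_1)|\le|F|_{\mathfrak{K}}$ (valid because $v_1(\mathbb{R}^3)\subset\mathfrak{K}$) immediately yields the first term $|F|_{\mathfrak{K}}\,\|v_2\|_{H_\eta^{\dParameter}}$ on the right-hand side after multiplying by the weight $(1+|x|^2)^{(\eta+|I|)/2}$ and taking the $L^2$ norm.

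For $|I_1|\ge 1$, the Fa\`a di Bruno formula expresses $\underline{\nabla}^{I_1}(F\circ v_1)$ as a finite linear combination of terms of the form
\[
F^{(j)}(v_1)\,\prod_{i=1}^{j}\underline{\nabla}^{J_i}v_1,\qquad J_1+\cdots+J_j=I_1,\ |J_i|\ge 1,\ 1\le j\le|I_1|.
\]
Since $v_1$ takes values in $\mathfrak{K}$, $|F^{(j)}(v_1)|\le|F^{(j)}|_{\mathfrak{K}}$ pointwise, so the scalar $F^{(j)}$ may be pulled out of the weighted $L^2$ estimate. Thus the problem reduces to controlling, for each such partition and for each Leibniz-pair $(I_1,I_2)$ with $|I_1|\ge 1$ and $\sum_i|J_i|+|I_2|=|I|$, the quantity
\[
\Big\|(1+|x|^2)^{(\eta+|I|)/2}\prod_{i=1}^{j}\underline{\nabla}^{J_i}v_1\cdot\underline{\nabla}^{I_2}v_2\Big\|_{L^2}
\]
by $\|(1+|x|)v_2\|_{L^\infty}\,\|v_1\|_{L^\infty}^{j-1}\,\|\underline{\nabla}v_1\|_{H_\eta^{\dParameter-1}}$.

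To handle this product estimate I will rewrite each factor $\underline{\nabla}^{J_i}v_1$ as $\underline{\nabla}^{J_i-e_i}(\partial^{e_i}v_1)$, so that it becomes a derivative of order $|J_i|-1$ of the quantity $\partial^{e_i}v_1\in H_\eta^{\dParameter-1}$, with the total number of derivatives remaining equal to $|I|-j$. I will extract one factor of $(1+|x|)^{-1}\|(1+|x|)v_2\|_{L^\infty}$ pointwise from $\underline{\nabla}^{I_2}v_2$ when $|I_2|=0$ (and, when $|I_2|\ge 1$, absorb $\underline{\nabla}^{I_2}v_2$ into $\|v_2\|_{H_\eta^{\dParameter}}$ at the cost of trading one factor of $\underline{\nabla}^{J_i}v_1$ into $L^\infty$ via the weighted Sobolev embedding of Proposition \ref{P:SobolevEmbeddingHNdeltaCNprimedeltamprime}, which converts that factor into $\|\underline{\nabla}v_1\|_{H_\eta^{\dParameter-1}}$ modulo $L^\infty$-interpolation in terms of $\|v_1\|_{L^\infty}$). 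The remaining $L^2$ product of derivatives of $v_1$ is then estimated via Proposition \ref{P:WeightedSobolevSpaceMultiplicationProperties} applied to $p=j$ copies of derivatives of $\partial^{e_i}v_1\in H_\eta^{\dParameter-1}$, with the weight-slack $(j-1)\cdot 3/2$ and the derivative-slack $|I|-j\le\dParameter-j$ exactly accommodating the shift from $\eta+|I|$ to $\eta+(|I|-j)$ in the weight power.

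The main obstacle is the Moser-type interpolation needed to extract the factor $\|v_1\|_{L^\infty}^{j-1}$ rather than $\|\underline{\nabla}v_1\|_{H_\eta^{\dParameter-1}}^{j}$ when $j\ge 2$: this requires, case-by-case in $j$, that all but one of the $\underline{\nabla}^{J_i}v_1$ factors be placed in $L^\infty$ while only one carries the full $H_\eta^{\dParameter-1}$ norm. This interpolation must be done compatibly with the weighted structure, using the weighted embedding of Proposition \ref{P:SobolevEmbeddingHNdeltaCNprimedeltamprime} with $\dParameter'=0$ and appropriate shifts in $(\eta,\dParameter)$ to pointwise-bound intermediate derivatives $\underline{\nabla}^{J_i}v_1$ when $|J_i|<\dParameter-2$, combined with the hypothesis $\dParameter\ge 2$ which guarantees enough regularity for at least one such trade. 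The remaining routine bookkeeping of the combinatorial constants in Fa\`a di Bruno and the summation over $I$, $I_1$, $I_2$, $j$, and the partitions $\lbrace J_i\rbrace$ then produces the stated estimate.
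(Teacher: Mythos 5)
Your overall architecture---Leibniz followed by Fa\`a di Bruno, with the $I_1=0$ contribution handled directly by the pointwise bound $|F(v_1)|\le|F|_{\mathfrak K}$, and the remaining terms reduced to weighted product estimates of $\prod_i\underline{\nabla}^{J_i}v_1\cdot\underline{\nabla}^{I_2}v_2$---is the right starting point. The paper states this corollary without proof, so there is no ``official'' argument to compare against, but your proposed mechanism for the crucial Moser-type step is not correct as described, and this is exactly where the difficulty of the corollary lives.

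You propose to produce the factor $\|v_1\|_{L^\infty}^{j-1}$ by ``placing all but one of the $\underline{\nabla}^{J_i}v_1$ factors in $L^\infty$'' via the weighted embedding of Proposition \ref{P:SobolevEmbeddingHNdeltaCNprimedeltamprime}. This fails for two independent reasons. First, the embedding simply is not available at the low-regularity endpoint the hypotheses allow: with $\dParameter=2$ one must handle $j=2$, $|J_1|=|J_2|=1$, i.e.\ the term $(\underline{\nabla}v_1)^2$; here $\underline{\nabla}v_1\in H_\eta^{\dParameter-1}=H_\eta^1$, and Proposition \ref{P:SobolevEmbeddingHNdeltaCNprimedeltamprime} requires $\dParameter'<\dParameter-3/2$, so $\dParameter=1$ gives no $L^\infty$ control whatsoever. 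There is no factor you are entitled to put in $L^\infty$. Second, even when the embedding does apply (larger $\dParameter$, smaller $|J_i|$), it controls $\|\underline{\nabla}^{J_i}v_1\|_{L^\infty}$---hence, after undoing one derivative, a weighted Sobolev norm of $\underline{\nabla}v_1$---but never $\|v_1\|_{L^\infty}$. Since every $J_i$ has $|J_i|\ge 1$, no factor in your product is an undifferentiated $v_1$, so the stated bound with $\|v_1\|_{L^\infty}^{j-1}$ cannot come from placing factors in $L^\infty$. Your parenthetical ``modulo $L^\infty$-interpolation in terms of $\|v_1\|_{L^\infty}$'' is precisely where the content of the corollary is hidden, and as written it is not a step a reader could carry out.

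The missing ingredient is a weighted Gagliardo--Nirenberg (Moser) interpolation inequality of the schematic form
\begin{align}
\Big\|(1+|x|^2)^{(\eta+m)/2}\prod_{i=1}^{j}\underline{\nabla}^{J_i}v_1\Big\|_{L^2}\ \lesssim\ \|v_1\|_{L^\infty}^{\,j-1}\,\|\underline{\nabla}v_1\|_{H_\eta^{\,m-1}},\qquad m=\textstyle\sum_i|J_i|,\ |J_i|\ge 1,
\end{align}
together with the analogous estimate after tensoring with $\underline{\nabla}^{I_2}v_2$ and extracting $\|(1+|x|)v_2\|_{L^\infty}$. This is \emph{not} a consequence of Proposition \ref{P:WeightedSobolevSpaceMultiplicationProperties} or Corollary \ref{C:WeightedSobolevSpaceMultiplicationProperties}, which would give $\|\underline{\nabla}v_1\|_{H_\eta^{\dParameter-1}}^{j}$ instead of $\|v_1\|_{L^\infty}^{j-1}\|\underline{\nabla}v_1\|_{H_\eta^{\dParameter-1}}$. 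The standard route is an integration-by-parts argument: write one factor as $\partial_a v_1$, move $\partial_a$ off $v_1$ onto the weight and the remaining factors, extract the resulting undifferentiated $v_1$ in $L^\infty$, and absorb the self-referential term by Cauchy--Schwarz; iterating this $j-1$ times produces exactly $\|v_1\|_{L^\infty}^{j-1}$ while keeping a single $H_\eta^{\dParameter-1}$ norm of $\underline{\nabla}v_1$. Until some form of this weighted Moser inequality is proved (or cited), the proof has a genuine gap at its central step.
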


\hfill $\qed$

\section{Weighted Klainerman-Sobolev Inequalities} \label{A:WeightedKS}

	\setcounter{subsection}{0}
   \setcounter{subsubsection}{0}
   \setcounter{paragraph}{0}
   \setcounter{subparagraph}{0}
   \setcounter{figure}{0}
   \setcounter{table}{0}
   \setcounter{equation}{0}
   \setcounter{theorem}{0}
   \setcounter{definition}{0}
   \setcounter{remark}{0}
   \setcounter{proposition}{0}
		In this section, we recall the weighted Klainerman-Sobolev inequalities that were proved in \cite{hLiR2010}.
		Throughout this section, the weight function $w(q)$ is defined by
		
		\begin{align} \label{E:Appendixweight}
			w \eqdef w(q) \eqdef
			\left \lbrace \begin{array}{lr}
	    	1 \ + \ (1 + |q|)^{1 + 2 \upgamma}, &  \mbox{if} \ q > 0, \\
	      1 \ + \ (1 + |q|)^{- 2 \upmu}, & \mbox{if} \ q < 0.
	    \end{array}
	  	\right.
	  \end{align}
	  In this section, we assume only that $\upgamma,$ $\upmu$ are fixed constants and that
	  $0 < \upgamma < 1.$ It easily follows from \eqref{E:Appendixweight} that
	  
	  \begin{align} \label{E:Appendixweightderivative}
			w' \eqdef w'(q) =
			\left \lbrace \begin{array}{lr}
	    	(1 + 2 \upgamma)(1 + |q|)^{2 \upgamma}, &  \mbox{if} \ q > 0, \\
	      2 \upmu (1 + |q|)^{-1 - 2 \upmu}, & \mbox{if} \ q < 0,
	    \end{array}
	  	\right.,  
	  \end{align}
	  and
	  
	  \begin{align}
	  	w' \leq 4 (1 + |q|)^{-1}w \leq 16 \upgamma^{-1} (1 + q_{-})^{2 \upmu} w'.
	  \end{align}

\begin{proposition}	\cite[Proposition 14.1]{hLiR2010} \label{P:WeightedKlainermanSobolev} 
	\textbf{(Weighted Klainerman-Sobolev inequality)}
	There exists a $C > 0$ such that for all $\phi(t,\cdot) \in C_{0}^{\infty}(\mathbb{R}^3),$ the following
	inequality holds:
	\begin{align} \label{E:PhiKlainermanSobolev}
		(1 + t + |x|)[(1 + |q|) w(q)]^{1/2} |\phi(t,x)| & \leq C \sum_{|I| \leq 2} 
			\big\| w^{1/2} \nabla_{\mathcal{Z}}^I \phi(t, \cdot) \big\|_{L^2}, && q \eqdef |x| - t.
	\end{align}
	
	Furthermore, we have that
	
	\begin{align} \label{E:NablaPhiKlainermanSobolev}
		(1 + t + |x|)[(1 + |q|) w(q)]^{1/2} |\nabla \phi(t,x)| & \leq C \sum_{|I| \leq 2} 
			\big\| w^{1/2} \nabla\nabla_{\mathcal{Z}}^I \phi(t, \cdot) \big\|_{L^2}, && q \eqdef |x| - t.
	\end{align}
	
\end{proposition}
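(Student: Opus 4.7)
The plan is to reduce the weighted inequality to the unweighted (standard) Klainerman--Sobolev inequality by applying the latter to the auxiliary function $\psi(t,x) := w^{1/2}(q)\,\phi(t,x)$. First, I would invoke the unweighted version with the sharp $(1+|q|)^{1/2}$ improvement: for every $\psi(t,\cdot) \in C_0^{\infty}(\mathbb{R}^3)$,
\begin{align*}
(1+t+|x|)(1+|q|)^{1/2}|\psi(t,x)| \leq C \sum_{|I|\leq 2} \big\|\nabla_{\mathcal{Z}}^I \psi(t,\cdot)\big\|_{L^2}.
\end{align*}
This is a well-known result (e.g., from H\"ormander's treatment, or as stated and proved in \cite{hLiR2005}); it is derived via a dyadic decomposition in $|x|$ and $|q|$, distinguishing the interior region $|x| \leq t/2$ from the wave-zone region $|x| \approx t$, and applying a local Sobolev embedding on each dyadic piece in coordinates rescaled so that the images of translations, rotations, boosts, and scaling span the tangent space at unit scale.

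Second, I would establish the uniform pointwise bound $|\nabla_{\mathcal{Z}}^J w^{1/2}(q)| \lesssim w^{1/2}(q)$ for every $\mathcal{Z}$-multi-index $J$. The base case $|J|=1$ follows from the identity $|w'(q)| \leq 4(1+|q|)^{-1} w(q)$ from \eqref{E:weightinequality}, combined with the easily computed derivatives $\partial_\mu q = O(1)$, $\Omega_{jk}q = 0$, $\Omega_{0j}q = q\omega_j$, and $Sq = q$. For translations the factor $(1+|q|)^{-1}$ absorbs the $O(1)$ derivative of $q$; for rotations the result is trivial; for boosts and scaling the extra factor of $|q|$ is absorbed by the $(1+|q|)^{-1}$ from $w'/w$. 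The general case follows by iteration and Leibniz.

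Third, the Leibniz expansion
\begin{align*}
\nabla_{\mathcal{Z}}^I \psi = \sum_{J+K = I} c_{J,K}\, \big(\nabla_{\mathcal{Z}}^J w^{1/2}(q)\big)\,\nabla_{\mathcal{Z}}^K \phi,
\end{align*}
together with the pointwise bound above, yields
\begin{align*}
\big\|\nabla_{\mathcal{Z}}^I \psi(t,\cdot)\big\|_{L^2} \lesssim \sum_{|K|\leq |I|} \big\|w^{1/2}(q)\nabla_{\mathcal{Z}}^K \phi(t,\cdot)\big\|_{L^2}.
\end{align*}
Feeding $\psi = w^{1/2}(q)\phi$ into the unweighted Klainerman--Sobolev inequality and using this bound on the right-hand side gives \eqref{E:PhiKlainermanSobolev}. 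The second inequality \eqref{E:NablaPhiKlainermanSobolev} then follows by applying \eqref{E:PhiKlainermanSobolev} componentwise to $\nabla \phi$ and observing that $\nabla_{\mathcal{Z}}^I \nabla \phi$ can be re-expressed in terms of $\nabla \nabla_{\mathcal{Z}}^J \phi$ with $|J| \leq |I|$ via the constancy of the commutators of $\mathcal{Z}$ with $\nabla$.

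The main obstacle is the non-smoothness of $w^{1/2}$ at $q = 0$: the function $w$ is continuous there (both branches equal $2$) but $w'$ has a jump, so $\nabla_{\mathcal{Z}}^J w^{1/2}$ is not classically defined at the interface. I would resolve this by replacing $w$ with a smooth interpolant $w_\varepsilon$ that agrees with $w$ outside a neighborhood of $q=0$ of width $\varepsilon$ and for which the bounds $|w_\varepsilon'| \leq C(1+|q|)^{-1} w_\varepsilon$ and $|Z w_\varepsilon^{1/2}| \lesssim w_\varepsilon^{1/2}$ hold with constants independent of $\varepsilon$. The argument above then applies verbatim to $w_\varepsilon$, and dominated convergence as $\varepsilon \to 0$ produces the desired estimate for $w$.
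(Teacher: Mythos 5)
Your proof of the second inequality \eqref{E:NablaPhiKlainermanSobolev} coincides with the paper's: the paper deduces it from \eqref{E:PhiKlainermanSobolev} applied to $\nabla\phi$ together with Lemma \ref{L:NablaZICommutesWithCovariantDerivativePlusErrorTerms}, which is exactly your commutation step. For \eqref{E:PhiKlainermanSobolev} itself the paper only cites \cite{hLiR2010}, whose proof (Proposition 14.1 there) proceeds by decomposing $\Sigma_t$ into regions on which $w(q)$ is comparable to a constant and applying a rescaled local Sobolev embedding on each piece — i.e., it never differentiates the weight. Your strategy of feeding $\psi = w^{1/2}\phi$ into the unweighted estimate is genuinely different and more economical when it works, but it hinges on a pointwise bound that is false.

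The gap is the claimed uniform estimate $|\nabla_{\mathcal{Z}}^J w^{1/2}(q)| \lesssim w^{1/2}(q)$ for $|J| \geq 2$. It is correct for $|J|=1$ because $|Zq|\lesssim 1+|q|$ for every $Z\in\mathcal{Z}$, but the iteration breaks down because iterated $\mathcal{Z}$-derivatives of $q=|x|-t$ are \emph{not} bounded by $1+|q|$. Concretely,
\begin{align*}
\partial_j\partial_k q = \frac{\delta_{jk}-\omega_j\omega_k}{r},
\qquad
\Omega_{0i}\Omega_{0j} q = q\,\omega_i\omega_j - \frac{tq}{r}\big(\delta_{ij}-\omega_i\omega_j\big),
\end{align*}
so both $\partial_j\partial_k q$ and $\Omega_{0i}\Omega_{0j}q$ carry a factor $r^{-1}$ (with an additional $t^2$ in the boost case), and the bound $|Z_2Z_1 q|\lesssim 1+|q|$ fails whenever $r(1+|q|)\not\gtrsim 1$, i.e., near $r=0$ and, for the boosts, throughout the interior $r\ll t$. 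The corresponding term $\tfrac{1}{2}w^{-1/2}w'(q)\,(Z_2Z_1 q)$ in $\nabla_{Z_2}\nabla_{Z_1}w^{1/2}$ is then of size $w^{1/2}(1+|q|)^{-1}r^{-1}$, which is not $\lesssim w^{1/2}$ there. Consequently, the Leibniz expansion of $\nabla_{\mathcal{Z}}^I(w^{1/2}\phi)$ at $|I|=2$ produces the unbounded coefficient $(1+|q|)^{-1}r^{-1}$ multiplying $w^{1/2}\phi$, and your asserted bound $\|\nabla_{\mathcal{Z}}^I\psi\|_{L^2}\lesssim\sum_{|K|\leq 2}\|w^{1/2}\nabla_{\mathcal{Z}}^K\phi\|_{L^2}$ does not follow by pointwise domination. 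One would need a weighted Hardy-type inequality to absorb the $r^{-1}$ and $(1+|q|)^{-1}$ factors, which is a new ingredient your argument does not invoke. Finally, note that the mollification you propose addresses only the jump of $w'$ at $q=0$; it does nothing for the conical singularity of $q(t,\cdot)$ at $r=0$, which is where the obstruction above lives.
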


\begin{proof}
	\eqref{E:PhiKlainermanSobolev} was proved as \cite[Proposition 14.1]{hLiR2010}. \eqref{E:NablaPhiKlainermanSobolev} follows 
	from Lemma \ref{L:NablaZICommutesWithCovariantDerivativePlusErrorTerms} and \eqref{E:PhiKlainermanSobolev}.
\end{proof}

\section{Hardy-Type Inequalities} \label{S:Hardy}
   \setcounter{subsection}{0}
   \setcounter{subsubsection}{0}
   \setcounter{paragraph}{0}
   \setcounter{subparagraph}{0}
   \setcounter{figure}{0}
   \setcounter{table}{0}
   \setcounter{equation}{0}
   \setcounter{theorem}{0}
   \setcounter{definition}{0}
   \setcounter{remark}{0}
   \setcounter{proposition}{0}
   
In this section, we recall the weighted Hardy-type inequalities proved in \cite{hLiR2010}.

\begin{proposition} \cite[Corollary 13.3]{hLiR2010} \label{P:Hardy} 
	\textbf{(Hardy inequalities)}
	Let $\upgamma > 0$ and $\upmu > 0,$ $q \eqdef |x| - t,$ and let $w(q)$ and $w'(q)$
	be as defined in \eqref{E:Appendixweight} and \eqref{E:Appendixweightderivative}
	respectively. Then for any $-1 \leq a \leq 1,$ there exists a $C > 0$ such that for all $\phi \in 
	C_0^{\infty}(\mathbb{R}^3),$ we have the following pointwise inequality:
	  
	  \begin{align}
	  	\int_{\mathbb{R}^3} (1 + t + |q|)^{- 1 + a} (1 + |q|)^{-2} |\phi|^2 w(q) \,d^3 x
	  	\leq C \int_{\mathbb{R}^3} (1 + t + |q|)^{- 1 + a} |\partial_r \phi|^2 w(q) \, d^3x,
	  \end{align}
	  where $\partial_r = \omega^b \partial_b,$ $\omega^j \eqdef x^j/r,$ denotes the radial vectorfield.
	  
	  If in addition $a < 2 \min \lbrace \upgamma, \upmu \rbrace,$ then with
	  
	  \begin{align}
	  	\widetilde{w}(q) \eqdef \min\big\lbrace w'(q), (1 + t + |q|)^{-1 + a} w(q) \big\rbrace,
	  \end{align}
	  there exists a constant $C>0$ such that the following pointwise inequality holds:
	  
	  \begin{align}
	  	\int_{\mathbb{R}^3} (1 + t + |q|)^{- 1 + a} (1 + |q|)^{-(a+2)} (1 + q_-)^{-2 \upmu} |\phi|^2 w(q) \, d^3 x
	  	\leq C \int_{\mathbb{R}^3} |\partial_r \phi|^2 \widetilde{w}(q) \, d^3x,
	  \end{align}
	  where $q_- \eqdef |q|$ if $q \leq 0$ and $q_- = 0$ if $q > 0.$
		
\end{proposition}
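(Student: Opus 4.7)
The plan is to reduce both inequalities to one-dimensional weighted Hardy inequalities on each radial ray at fixed $t$ and fixed angular direction $\omega \in S^2$, then integrate over $S^2$. Passing to spherical coordinates $x = r\omega$ and changing radial variable via $q = r - t$, so that $dq = dr$ and $\partial_r\phi|_{(t,\omega)} = \partial_q\phi$, transforms each volume integral $\int_{\mathbb{R}^3}(\cdots)\,d^3x$ into $\int_{S^2}\int_{-t}^{\infty}(\cdots)(q+t)^2\,dq\,d\omega$. For each fixed $\omega$, set $\psi(q) = \phi(t,(q+t)\omega)$; since $\phi \in C_0^{\infty}$, $\psi$ vanishes at $q = \infty$ and is smooth at $q = -t$, so no boundary obstructions arise.

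For the first inequality, take $F(q) \eqdef (1+t+|q|)^{-1+a}w(q)$ and observe the elementary identity $\frac{d}{dq}\bigl(-\mathrm{sgn}(q)/(1+|q|)\bigr) = (1+|q|)^{-2}$ away from $q=0$. Multiplying by $F(q)\psi^{2}(q+t)^{2}$ and integrating by parts produces three pieces with integrand $(1+|q|)^{-1}$ times $F'\psi^2(q+t)^2$, $2F\psi\psi'(q+t)^2$, and $2F\psi^2(q+t)$, together with a vanishing boundary term. Since $|F'(q)| \lesssim (1+|q|)^{-1}F(q)$ (because $|w'| \lesssim (1+|q|)^{-1}w$ and the same for $(1+t+|q|)^{-1+a}$), and since $(q+t) \lesssim (1+t+|q|)$, the $F'$-term and the $(q+t)$-term are subsumed into the desired left-hand side up to a small constant. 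The remaining mixed term is handled by Cauchy--Schwarz, $2|F\psi\psi'| \leq \eta (1+|q|)^{-2}F\psi^2 + \eta^{-1}(1+|q|)^{2}F|\psi'|^2$, and the first summand is absorbed into the left-hand side, producing the claimed inequality with a constant independent of $t$ (this is where the bound $-1 \le a \le 1$ is used to keep $F(q)(q+t)^2$ controlled uniformly in $t$).

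For the second inequality, the right-hand weight $\widetilde w(q) = \min\{w'(q),(1+t+|q|)^{-1+a}w(q)\}$ is strictly smaller than the first inequality's weight, so one must split $\{q > 0\}$ from $\{q < 0\}$ and apply a Hardy argument adapted to each region. On $\{q > 0\}$, $w(q) \approx (1+|q|)^{1+2\upgamma}$ and $w'(q) \approx (1+|q|)^{2\upgamma}$, so the one-dimensional Hardy inequality with power weight $(1+q)^{2\upgamma - a}$ — applied via the antiderivative identity $\frac{d}{dq}\bigl((1+q)^{-a-1}\bigr) = -(a+1)(1+q)^{-a-2}$ when $a > -1$ — converts $(1+|q|)^{-(a+2)}\cdot w$ on the left into $(1+|q|)^{2\upgamma}\cdot(\text{Jacobian})$ on the right, precisely matching the first branch of $\widetilde w$; convergence at $q = \infty$ uses $a < 2\upgamma$. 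On $\{q < 0\}$, $w(q) \approx (1+|q|)^{-2\upmu}$ and $w'(q) \approx (1+|q|)^{-1-2\upmu}$, and the additional factor $(1+q_-)^{-2\upmu}$ on the left exactly supplies the extra power needed for the Hardy argument to close, with the condition $a < 2\upmu$ ensuring convergence at $q = 0^{-}$. In both regions one chooses the precise power weight so that the minimum defining $\widetilde w$ is achieved by the correct branch, and the two pieces are then combined.

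The main obstacle is keeping all implicit constants independent of $t$: the Jacobian $(q+t)^2$ grows without bound on the interior region $-t < q < 0$, so one must arrange that every cross-term produced by integration by parts reassembles into $F(q)(q+t)^2$ with the right $t$-uniform bound. This is why the precise exponent $-1+a$ in the weight on the two sides of the first inequality must match exactly, and why the hypothesis $a \leq 1$ is crucial — it prevents the cross-term $2F\psi^2(q+t)$ from outgrowing the left-hand integrand. The second inequality inherits the same issue, compounded by the need to verify that $\widetilde w$ agrees with the branch dictated by the Hardy argument in each region, which is the role of the sharper smallness assumption $a < 2\min\{\upgamma,\upmu\}$.
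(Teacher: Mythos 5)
Your proposal reduces everything to one-dimensional weighted Hardy inequalities along each radial ray, which is in the right spirit. However, the key integration-by-parts step as written has a genuine gap: you claim the boundary term vanishes, but with your chosen antiderivative it does not.

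Your choice of antiderivative $G(q) = -\mathrm{sgn}(q)/(1+|q|)$ does satisfy $G'(q) = (1+|q|)^{-2}$ away from $q=0$ and does have $|G(q)| = (1+|q|)^{-1}$, which is exactly what you need to absorb the cross-term coming from $F'$ (since $|F'| \lesssim (1+|q|)^{-1} F$). But $G$ has a jump discontinuity at $q=0$: $G(0^+) = -1$ while $G(0^-) = +1$. Carrying out classical integration by parts separately on $(0,\infty)$ and on $(-t,0)$, the inner boundary terms at $q=0$ are
$$
-G(0^+)\,F(0)\,\psi^2(0)\,t^2 = F(0)\,\psi^2(0)\,t^2 \qquad\text{and}\qquad G(0^-)\,F(0)\,\psi^2(0)\,t^2 = F(0)\,\psi^2(0)\,t^2,
$$
respectively, so that the one-dimensional identity actually reads
$$
\int_{-t}^{\infty} (1+|q|)^{-2} F\,\psi^2\, r^2\, dq \;=\; 2\,F(0)\,\psi^2(0)\,t^2 \;-\; \int_{-t}^{\infty} G\,\big(F\,\psi^2\, r^2\big)'\, dq,
$$
and the extra positive term $2 F(0)\,\psi^2(0)\,t^2$ (i.e., integrated over $S^2$, a boundary surface integral of $\phi^2$ over the sphere $r=t$) is not obviously controlled by the right-hand side of the target inequality. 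You cannot simply discard it: $\psi(0) = \phi(t,t\omega)$ need not vanish and the factor $t^2$ is unbounded. If instead you try to fix this by choosing a continuous antiderivative such as $G(q) = q/(1+|q|)$ (or $G(q) = -\int_q^\infty (1+|\rho|)^{-2} d\rho$), the boundary term does vanish, but then $|G|$ no longer decays like $(1+|q|)^{-1}$, and the cross-term $|G\, F'|\,\psi^2 r^2 \lesssim (1+|q|)^{-1} F\,\psi^2 r^2$ is now strictly larger than the target integrand $(1+|q|)^{-2}F\,\psi^2 r^2$ for $|q|$ large, so it cannot be absorbed. Either way the direct integration-by-parts approach does not close as stated. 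This is precisely why the proof in Lindblad--Rodnianski's Corollary 13.3 (which the paper cites and does not reprove) is more delicate: it relies on a carefully constructed auxiliary multiplier (their Lemma 13.1), chosen so that \emph{all} boundary contributions vanish \emph{and} every cross-term, including the Jacobian term $2 F\psi^2 r$, reassembles with the right $t$-uniform sign, rather than on the elementary antiderivative identity you propose. The second inequality has the same structural issue, compounded by the need to verify which branch of $\widetilde{w}$ is active in each region.
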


\hfill $\qed$

\begin{corollary}
	Assume the hypotheses of Proposition \ref{P:Hardy}, and let $P_{\mu \nu}$ be a
	type $\binom{0}{2}$ tensorfield. Let $\mathcal{V}, \mathcal{W}$ be any two of the subsets of null frame-field
	vectors defined in \eqref{E:Framefieldsubsets}. Then the same conclusions of the proposition hold
	if we replace $|\phi|$ and $|\partial_r \phi|$ with the contraction seminorms $|P|_{\mathcal{V}\mathcal{W}}$
	and $|\nabla P|_{\mathcal{V}\mathcal{W}}$ respectively, where the contraction seminorms
	are defined in Definition \ref{D:contractionnomrs}. 

\end{corollary}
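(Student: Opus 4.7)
The plan is to reduce the tensorial Hardy inequalities to the scalar inequalities of Proposition C.1 applied componentwise. For each fixed pair $V \in \mathcal{V}$, $W \in \mathcal{W}$, define the scalar function $f_{V,W}(t,x) := V^{\kappa}(x) W^{\lambda}(x) P_{\kappa\lambda}(t,x)$. Since $|P|_{\mathcal{V}\mathcal{W}} = \sum_{V \in \mathcal{V}, W \in \mathcal{W}} |f_{V,W}|$ is a finite sum over a fixed collection of frame pairs, we have $|P|_{\mathcal{V}\mathcal{W}}^{2} \leq C \sum_{V,W} f_{V,W}^{2}$ for a universal constant $C$, so it suffices to control each $f_{V,W}$ individually and then sum.

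First I would apply the scalar Hardy inequality of Proposition C.1 to each $f_{V,W}$, which is smooth and compactly supported wherever $P$ and the frame vectors are (and the null frame is smooth on $\mathbb{R}^{3} \setminus \{0\}$, a full-measure set). This delivers $\int (1+t+|q|)^{-1+a}(1+|q|)^{-2} f_{V,W}^{2} w(q)\,d^{3}x \leq C \int (1+t+|q|)^{-1+a} (\partial_{r} f_{V,W})^{2} w(q)\,d^{3}x$, and an analogous estimate with $\widetilde{w}(q)$ in place of $w(q)$. The task then reduces to the pointwise bound $|\partial_{r} f_{V,W}| \lesssim |\nabla P|_{\mathcal{V}\mathcal{W}}$. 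By the Leibniz rule, $\partial_{r} f_{V,W} = V^{\kappa} W^{\lambda} \partial_{r} P_{\kappa\lambda} + (\partial_{r} V^{\kappa}) W^{\lambda} P_{\kappa\lambda} + V^{\kappa} (\partial_{r} W^{\lambda}) P_{\kappa\lambda}$, so I would verify that the commutator terms vanish and that the leading term is controlled.

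The key observation is that every null-frame vector $N \in \mathcal{N} = \{\uL, L, e_1, e_2\}$ has wave-coordinate components that are functions of the angular variable $\omega^{j} = x^{j}/r$ alone: for $L$ and $\uL$ this is immediate from the definitions $L^{\mu} = (1, \omega^{1}, \omega^{2}, \omega^{3})$ and $\uL^{\mu} = (1, -\omega^{1}, -\omega^{2}, -\omega^{3})$; for any local orthonormal frame $e_1, e_2$ on the unit sphere $S^{2}$, the natural extension that is constant in $r$ along radial rays (which gives $m$-orthonormal vectorfields tangent to the spheres $S_{r,t}$) also has Cartesian components depending only on $\omega$. Since $\partial_{r} \omega^{j} = 0$, we conclude $\partial_{r} N^{\mu} = 0$ for every $N \in \mathcal{N}$. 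The two commutator terms therefore vanish identically, leaving $\partial_{r} f_{V,W} = V^{\kappa} W^{\lambda} \partial_{r} P_{\kappa\lambda}$. Writing $\partial_{r} = \tfrac{1}{2}(L - \uL)$ and recalling that the Minkowskian connection has vanishing Christoffel symbols in wave coordinates (so $\nabla_\gamma P_{\kappa\lambda} = \partial_\gamma P_{\kappa\lambda}$) exhibits this as a linear combination of the two summands $V^{\kappa} W^{\lambda} L^{\gamma} \nabla_{\gamma} P_{\kappa\lambda}$ and $V^{\kappa} W^{\lambda} \uL^{\gamma} \nabla_{\gamma} P_{\kappa\lambda}$ appearing in the defining sum of $|\nabla P|_{\mathcal{V}\mathcal{W}}$, yielding the desired pointwise bound.

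Since there is no substantive technical obstacle, this direct componentwise argument reduces everything to Proposition C.1; summing over $V \in \mathcal{V}$, $W \in \mathcal{W}$ and reassembling completes both parts of the corollary, with the second part proceeding identically using $\widetilde{w}$ in place of $w$. The only subtlety worth double-checking is the radial invariance of the $e_A$ frame, which amounts to noting that any orthonormal tangent frame on $S^{2}$, extended to be homogeneous of degree zero in $r$, remains $m$-orthonormal and tangent to all spheres $S_{r,t}$; this follows because $v \cdot x = 0$ at radius one implies $v \cdot (rx) = 0$, and the Euclidean length is scale-invariant for Cartesian components that are constant along radial rays.
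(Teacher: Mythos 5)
Your main idea --- that $\partial_r$ commutes with the null-frame contractions because the wave-coordinate components of the frame vectors depend only on $\omega = x/r$ --- is exactly the right mechanism, and $\partial_r = \tfrac{1}{2}(L-\uL)$ is the same reduction the paper uses. But there is a gap in the way you deploy it: you apply the global scalar inequality of Proposition \ref{P:Hardy} to the functions $f_{e_A,W}(t,x) = e_A^\kappa W^\lambda P_{\kappa\lambda}$, and for $V = e_A$ these are not globally defined functions on $\mathbb{R}^3$. By the hairy ball theorem there is no globally-defined, nowhere-vanishing continuous tangent frame on $S^2$, so the radially-homogeneous extension you invoke in your last paragraph is necessarily only local --- as your own phrase ``any local orthonormal frame'' already concedes. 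Proposition \ref{P:Hardy} is stated for $\phi \in C_0^{\infty}(\mathbb{R}^3)$, so it cannot be applied as-is to such a $\phi$, and your proof does not say how to patch local charts together.

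The paper avoids the issue by replacing contractions against the local vectors $e_A$ with contractions against the globally-defined projection tensor $\angm_{\mu}^{\ \nu}$. One applies the scalar proposition to each of the globally-defined wave-coordinate components $\uL^\kappa \uL^\lambda P_{\kappa\lambda}$, $\angm_\mu^{\ \kappa} \uL^\lambda P_{\kappa\lambda}$, \dots, $\angm_\mu^{\ \kappa}\angm_\nu^{\ \lambda} P_{\kappa\lambda}$ (indexed by $\mu,\nu$), sums over the indices, and uses the frame-independence of the seminorm to reassemble the $e_A$-based quantities. The commutation identity $\partial_r e_A = 0$ in your argument is replaced by the globally-valid identity $\nabla_L \angm = \nabla_{\uL}\angm = 0$ of \eqref{E:NablaLangmandNablauLangmVanish}, combined with $\partial_r = \tfrac{1}{2}(L-\uL)$. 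Your proof would be complete if you reformulated the componentwise step in terms of $\angm$-contractions, or added a finite covering of $S^2$ by charts with a partition of unity (at the cost of extra derivative-of-cutoff error terms); as written, it is not.
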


\begin{proof}
	Let $\angm$ be the first fundamental form of the $S_{r,t}$ defined in \eqref{E:angmdef}, 
	and recall that the tensor $\angm_{\mu}^{\ \nu}$ projects $m-$orthogonally onto the $S_{r,t}.$
	Since $\partial_r = \frac{1}{2}(L - \uL),$ it follows from \eqref{E:LanduLaregeodesic}, \eqref{E:nablaLuLis0}, and
	\eqref{E:NablaLangmandNablauLangmVanish} that 
	
	\begin{align} 
		\partial_r (\uL^{\kappa} \uL^{\lambda} P_{\kappa \lambda}) 
			& = \frac{1}{2}\uL^{\kappa} \uL^{\lambda}(\nabla_L - \nabla_{\uL})P_{\kappa \lambda},&& 
			\label{E:ParitalrCommutesWithPuLuL} \\
		\partial_r (\uL^{\kappa} L^{\lambda} P_{\kappa \lambda}) 
			& = \frac{1}{2} \uL^{\kappa} L^{\lambda}(\nabla_L - \nabla_{\uL}) P_{\kappa \lambda},&& \\
		\partial_r (L^{\kappa} \uL^{\lambda} P_{\kappa \lambda}) 
			& = \frac{1}{2} L^{\kappa} \uL^{\lambda}(\nabla_L - \nabla_{\uL}) P_{\kappa \lambda},&& \\
		\partial_r (L^{\kappa} L^{\lambda} P_{\kappa \lambda}) 
			& = \frac{1}{2} L^{\kappa} L^{\lambda} (\nabla_L - \nabla_{\uL}) P_{\kappa \lambda},&& (\mu = 0,1,2,3), \\
		\partial_r (\angm_{\mu}^{\ \kappa} \uL^{\lambda} P_{\kappa \lambda}) 
			& = \frac{1}{2}\angm_{\mu}^{\ \kappa} \uL^{\lambda}(\nabla_L - \nabla_{\uL}) P_{\kappa \lambda},&& (\mu = 0,1,2,3), \\
		\partial_r (\uL^{\kappa} \angm_{\mu}^{\ \lambda} P_{\kappa \lambda}) 
			& = \frac{1}{2}\uL^{\kappa} \angm_{\mu}^{\ \lambda}(\nabla_L - \nabla_{\uL}) P_{\kappa \lambda},&& (\mu = 0,1,2,3), \\
		\partial_r (\angm_{\mu}^{\ \kappa} L^{\lambda} P_{\kappa \lambda}) 
			& = \frac{1}{2}\angm_{\mu}^{\ \kappa} L^{\lambda} (\nabla_L - \nabla_{\uL}) P_{\kappa \lambda},&& (\mu = 0,1,2,3), \\
		\partial_r (L^{\kappa} \angm_{\mu}^{\ \lambda} P_{\kappa \lambda}) 
			& = \frac{1}{2} L^{\kappa} \angm_{\mu}^{\ \lambda} (\nabla_L - \nabla_{\uL}) P_{\kappa \lambda},&& (\mu = 0,1,2,3), \\
		\partial_r (\angm_{\mu}^{\ \kappa} \angm_{\nu}^{\ \lambda} P_{\kappa \lambda}) 
			& = \frac{1}{2}\angm_{\mu}^{\ \kappa} \angm_{\nu}^{\ \lambda}(\nabla_L - \nabla_{\uL}) P_{\kappa \lambda},&& 
			(\mu, \nu = 0,1,2,3).
	\end{align}
	That is to say, $\partial_r$ commutes with the null decomposition of $P.$ The conclusion of the corollary now easily follows
	from applying the proposition with $\phi$ equal to the scalar-valued functions
	$\uL^{\kappa} \uL^{\lambda} P_{\kappa \lambda},$ 
	$\uL^{\kappa} L^{\lambda} P_{\kappa \lambda},$ $\cdots,$ $\angm_{\mu}^{\ \kappa} \angm_{\nu}^{\ \lambda} P_{\kappa \lambda}$
	respectively.

\end{proof}

\bibliographystyle{amsalpha}
\bibliography{JBib}
\end{document}